\documentclass{amsart}

\usepackage{arxiv_math}

\title{Naturality in real Heegaard Floer theory}

\author[Gary Guth]{Gary Guth}
\address{Department of Mathematics\\Stanford University\\
		Building 380\\
		Stanford, California 94305}
\email{gmguth@stanford.edu}

\author[Ciprian Manolescu]{Ciprian Manolescu}
\address{Department of Mathematics\\Stanford University\\
		Building 380\\
		Stanford, California 94305}
\email{cm5@stanford.edu}

\begin{document}
\begin{abstract}
	In a previous paper we defined real Heegaard Floer homology, an invariant of three-manifolds equipped with involutions. Here we prove that real Heegaard Floer homology is natural, and that it admits an action of the equivariant mapping class group. We also establish naturality of real link Floer homology and real sutured Floer homology, and give a definition of involutive real Heegaard Floer homology.
\end{abstract}
\maketitle
\setcounter{tocdepth}{1} 
\tableofcontents

\section{Introduction}\label{sec:Introduction}

A real manifold $(Y, \tau)$ is an oriented smooth manifold together with a smooth involution $\tau:Y \to Y$ with codimension-two fixed point set. Real Seiberg-Witten theory \cite{tian-wang, nakamura1, kato, KMT:Ktheory, KMT:homology, li:HMR, baraglia-hekmati} assigns invariants to real three- and four-manifolds, by lifting the involution in an anti-linear way to the spinor bundle, and counting invariant solutions to the Seiberg-Witten equations. Among its best-known applications are proofs that cables of the figure-eight are not slice \cite{KPT:cables, KPT:cables2}, the existence of an exotic embedding of $\RP^2$ in $S^4$ \cite{miyazawa}, and of various exotic involutions on four-manifolds \cite{hugheskimmiller_branched, baraglia_miyazawa}. 

Ordinary (``unreal'') Seiberg-Witten theory has a symplectic counterpart, Heegaard Floer theory, which is more computable and has numerous applications \cite{os_holodisks, os_properties_apps}. In \cite{guth_manolescu2025real}, the authors developed real Heegaard Floer homology as the symplectic counterpart of real monopole Floer homology. This allowed for the general calculation of the analogues of Miyazawa's knot invariant; see \cite[Section 7]{guth_manolescu2025real} and \cite{srivastava}. Further, using bordered techniques, Lipshitz and Ozsv\'ath \cite{lipshitz_ozsvath:real_bordered} gave an algorithmic procedure for calculating the (hat) real Heegaard Floer homology of any real three-manifold with connected fixed point set. For general real manifolds, the hat version can be computed combinatorially via nice diagrams, a la Sarkar-Wang \cite{sarkarwang}, by the work of \cite{BGX:real_sutured}.

We expect that real Heegaard Floer theory will also lead to invariants of real four-manifolds that are more amenable to computations than the gauge-theoretic ones. However, for this one first needs to prove that the real Heegaard Floer groups are functorial under equivariant cobordisms.

In this paper we take the first step towards functoriality, by proving that it holds under equivariant diffeomorphisms (which can be thought of as the identity cobordism with different identifications at the ends). Most of the work involves establishing naturality: whereas in \cite{guth_manolescu2025real} we only showed that the isomorphism class of real Heegaard Floer homology is an invariant, here we define the group itself as a natural invariant. Specifically, the real Heegaard Floer homology is constructed from a real Heegaard diagram. In \cite{guth_manolescu2025real} we showed that a sequence of moves between such diagrams induces an isomorphism on Floer homology, and here we show that this isomorphism is independent of the sequence. This gives a transitive system of groups (as in \cite[Definition 1.1]{JTZ_naturality_mapping_class_groups}), and its colimit is the natural invariant.

\begin{definition}
A {\em based real manifold} is a triple $(Y, \tau, p)$, where $(Y, \tau)$ is a real manifold and $p$ is a base point on the fixed point locus $Y^{\tau}$. We let $\RMan_*$ be the category whose  objects are closed, connected, based real three-manifolds $(Y, \tau, p)$, and whose morphisms are basepoint-preserving equivariant diffeomorphisms. 
\end{definition}

If $\mathbf{k}$ is a field, we let $\mathbf{k}\text{-}\mathsf{Vect}$ be the category of $\mathbf{k}$-vector spaces. Similarly, if $R$ is a ring,  we let $R\text{-}\mathsf{Mod}$ be the category of modules over $R$. 
Let $\F$ be the field of two elements.

\begin{thm}\label{thm:real-nat}
For $\circ \in \{-, \infty, +, \hat{\phantom{o}} \}$, there are functors 
$$\HFR^{\circ}: \RMan_* \to \Fmod$$
such that for any triple $(Y, \tau, p)$, the groups $\HFR^{\circ}(Y, \tau, p)$ are isomorphic to the real Heegaard Floer homology groups defined in \cite{guth_manolescu2025real}. Moreover, isotopic diffeomorphisms induce identical maps on $\HFR^{\circ}$.
\end{thm}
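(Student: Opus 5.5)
The plan is to adapt the Juhász--Thurston--Zemke framework \cite{JTZ_naturality_mapping_class_groups} to the equivariant setting. First I fix the class of diagrams. Following \cite{guth_manolescu2025real}, a real Heegaard diagram for $(Y,\tau,p)$ is an embedded Heegaard surface $\Sigma\subset Y$ preserved by $\tau$, with $\tau$ exchanging the two handlebodies. The attaching curves satisfy $\tau(\boldsymbol\alpha)=\boldsymbol\beta$, and the basepoint is $p\in\Sigma\cap Y^\tau$. The chain complex is Lagrangian Floer homology of $\mathbb T_\alpha$ with its fixed locus under the anti-symplectic involution on $\mathrm{Sym}^g(\Sigma)$ induced by $\tau$ and the half-twist swapping $\alpha$ and $\beta$. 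I will regard these diagrams as \emph{embedded}, so that diffeomorphisms act on them. I then define a graph $\mathcal G$ whose vertices are embedded real diagrams. Its edges are of four types: equivariant isotopies of the attaching curves, real handleslides (an $\alpha$-handleslide paired with its $\tau$-image $\beta$-handleslide), real stabilizations, and equivariant diffeomorphisms isotopic to the identity through equivariant diffeomorphisms fixing $p$. \cite{guth_manolescu2025real} already assigns isomorphisms to these edges: continuation maps, triangle maps, stabilization maps, and tautological maps. To obtain a functor to $\Fmod$, it suffices to show these assignments form a \emph{strong Heegaard invariant} in the equivariant sense. The colimit over the resulting transitive system then defines $\HFR^\circ(Y,\tau,p)$.

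The key steps, in order, are as follows.

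\begin{enumerate}
\item Prove an equivariant analogue of the JTZ connectivity and simple-connectivity theorem. Any two real diagrams of $(Y,\tau,p)$ are connected by edges. Every loop in $\mathcal G$ is generated by distinguished rectangles, where two moves of different types commute. It is also generated by handleswap triangles and by the loops arising from simple handleslide and stabilization relations.
\item Verify that the edge maps satisfy functoriality along each type, commutativity for each distinguished rectangle, and triviality on handleswaps.
\item Check the continuity axiom. A diagram automorphism isotopic to the identity within $\mathcal G$ must act as the identity.
\end{enumerate}

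For step 1, I would work with equivariant Morse functions. The quotient description is useful here: $\tau$ restricted to $\Sigma$ with $\tau(\alpha)=\beta$ corresponds to a $\tau$-invariant self-indexing Morse function $f$ with $f\circ\tau=3-f$. Equivariant Cerf theory for such anti-invariant functions then lets one redo the JTZ argument. Alternatively, one can pass to the half-diagram: $\Sigma/\tau$ is a surface with boundary, or a non-orientable surface, carrying only the $\alpha$-curves. In that description, real moves become ordinary moves on the $\alpha$-data, so JTZ's two-parameter Cerf analysis applies almost verbatim.

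For step 2, commutation of triangle and continuation maps follows from associativity of holomorphic polygon counts restricted to invariant curves. The real versions of these counts are available from \cite{guth_manolescu2025real}, where transversality for invariant disks was achieved by doubling to the fixed set. The handleswap relation reduces, after a neighborhood computation, to a genus-three local model as in JTZ. The real stabilization there is a genus-two (paired) stabilization near $p$, and the model must be computed explicitly.

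Given the strong Heegaard invariant, functoriality follows formally. An equivariant diffeomorphism $\phi$ pushes diagrams forward, and $\HFR^\circ(\phi)$ is the tautological map composed with the canonical identifications. Isotopic $\phi$ give equal maps by continuity. The identification with the groups of \cite{guth_manolescu2025real} is immediate, since the colimit is isomorphic to each vertex group.

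I expect the main obstacle to be step 1, the equivariant simple-connectivity of $\mathcal G$. Generic two-parameter families of equivariant functions can contain singularities on the fixed surface $\Sigma^\tau$ that are not present in the nonequivariant setting, for example births of pairs exchanged by $\tau$ occurring on $Y^\tau$. These would produce new elementary loops, whose maps would then need separate verification. I would first attempt the reduction to the quotient half-diagram, hoping that it turns the problem into the JTZ theorem for a sutured or bordered-type manifold, where naturality is already known.
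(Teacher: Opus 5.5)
Your overall framework (embedded real diagrams, a graph of moves, an equivariant strong-invariant axiom system proved via two-parameter real Cerf theory, then a colimit) is the one the paper uses. However, the proposal omits the two ingredients that are genuinely new in the real setting, and the shortcut you suggest for the first one would not work.

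\textbf{Missing elementary loops.} The list of elementary loops in your step 1 is incomplete. Real Cerf theory has new strata on the fixed set.
In codimension one, a $\{1\}$-orbit tangency between the one-dimensional manifolds $W^u(\tau(p))$ and $W^s(p)$ of an index-one point $p$ produces a new move, the crossover (Definition~\ref{def:foot_swap}). It must be decomposed into a $\{1\}$-stabilization, real handleslides and a destabilization.
In codimension two, the fixed-point singularity $x_1^2-x_2^2+x_3^5$ has a link made of two $\{1\}$-stabilizations and one $\mathbb{Z}/2$-destabilization. Resolving it forces the \emph{simple trade} loop (Definition~\ref{def:simple-trade}) into the axiom list. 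Proving that $\CFR^\circ$ sends this loop to the identity requires a new holomorphic-curve computation (Theorem~\ref{thm:trade-loop-is-id}). That computation uses neck-stretching with a matching condition in $\mathrm{Sym}^k(\Diamond)^R$, where the diagonal now has codimension one, together with an explicit mod-$2$ count.
Passing to $\Sigma/\tau$ does not reduce any of this to JTZ. The fixed curve becomes boundary of the quotient. The image of an $\alpha$-curve reflects off that boundary and acquires double points wherever $\alpha$ meets $\tau(\alpha)$ off the fixed set. Finally, $\{1\}$-stabilizations, crossovers and trades have no counterpart among ordinary Heegaard moves, so there is no ordinary diagram to which the unreal theorem applies.

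\textbf{The basepoint.} Naturality is really proved for the sutured manifold $Y(p)$, which needs a tangent plane at $p$. Compatibility with $\tau$ forces this plane to be anti-invariant, and such planes form $\mathrm{Gr}_2^R(T_pY)\cong S^1$, not $S^2$ as in the unreal case.
Consequently an equivariant isotopy fixing $p$ carries a winding number $w\in\mathbb{Z}$. Two such isotopic diffeomorphisms are only isotopic through framing-preserving maps after composing with $\delta^{-w}$, where $\delta$ is the $2\pi$ rotation about $Y^\tau$ supported near $p$ (Lemma~\ref{lem:rel}).
Your step ``isotopic $\phi$ give equal maps by continuity'' therefore does not follow. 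The continuity axiom only handles diagram diffeomorphisms isotopic to $\mathrm{Id}_\Sigma$. Moreover, a loop of nonzero winding cannot be filled by a family that keeps $p$ on a transverse real Heegaard surface: the gradient at $p$ lies in the two-dimensional $(-1)$-eigenspace of $d\tau_p$, and it would have to vanish somewhere in such a filling.
You need a separate argument that $\delta$ acts as the identity. The paper (Theorem~\ref{thm:bspt-twist-trivial}) takes a diagram $H_0$ on which $\delta$ misses the attaching curves. It relates $\delta(H_0)$ to $H_0$ through a $\mathbb{Z}/2$-stabilization near $p$, an isotopy and a destabilization. Only then does it assemble the transitive system over all framings and take the colimit.
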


\begin{definition}\label{def:real-MCG} Let $(Y, \tau)$ be a closed real three-manifold. 
    Let $\mathrm{Diff}(Y, \tau)$ be the group of equivariant diffeomorphisms of $Y$ and let $\mathrm{Diff}_0(Y, \tau)$ be the subgroup consisting of those diffeomorphisms equivariantly isotopic to the identity. Then, the \emph{equivariant mapping class group of $(Y, \tau)$} is the group
    \begin{align*}
        \mathrm{MCG}(Y, \tau) = \mathrm{Diff}(Y, \tau)/\mathrm{Diff}_0(Y, \tau).
    \end{align*}
    For a based real manifold $(Y, \tau, p)$, the \emph{based equivariant mapping class group} is 
    \begin{align*}
        \mathrm{MCG}(Y, \tau,p) = \mathrm{Diff}(Y, \tau,p)/\mathrm{Diff}_0(Y, \tau,p),
    \end{align*}
    where all groups involved consist of maps fixing $p$.
\end{definition}

Theorem~\ref{thm:real-nat} has the following immediate consequence.
\begin{cor}
\label{cor:real-MCG}
   The real Floer homology groups have well-defined actions of $\mathrm{MCG}(Y, \tau,p)$.  
\end{cor}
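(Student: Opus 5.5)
The corollary follows from Theorem~\ref{thm:real-nat} by restricting the functor to automorphisms of one object. Fix a based real manifold $(Y,\tau,p)$. The automorphism group of $(Y,\tau,p)$ in $\RMan_*$ is exactly $\mathrm{Diff}(Y,\tau,p)$: basepoint-preserving equivariant diffeomorphisms of $Y$. Applying the functor $\HFR^{\circ}$ to this group gives a map
\[
\rho\colon \mathrm{Diff}(Y,\tau,p)\to \mathrm{Aut}_{\F[U]}\bigl(\HFR^{\circ}(Y,\tau,p)\bigr), \qquad \phi\mapsto \HFR^{\circ}(\phi).
\]
In the hat case the target is $\F$-linear automorphisms instead. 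Functoriality gives $\HFR^{\circ}(\phi\circ\psi)=\HFR^{\circ}(\phi)\circ\HFR^{\circ}(\psi)$ and $\HFR^{\circ}(\mathrm{id})=\mathrm{id}$. So each $\HFR^{\circ}(\phi)$ is invertible, with inverse $\HFR^{\circ}(\phi^{-1})$, and $\rho$ is a group homomorphism.

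It remains to show that $\rho$ descends to the quotient $\mathrm{MCG}(Y,\tau,p)$, that is, that $\mathrm{Diff}_0(Y,\tau,p)\subseteq\ker\rho$. Let $\phi\in \mathrm{Diff}_0(Y,\tau,p)$. Then $\phi$ is isotopic to $\mathrm{id}$ through equivariant diffeomorphisms fixing $p$, which are morphisms of $\RMan_*$. The last clause of Theorem~\ref{thm:real-nat} says isotopic diffeomorphisms induce identical maps, so $\HFR^{\circ}(\phi)=\HFR^{\circ}(\mathrm{id})=\mathrm{id}$. Because $\mathrm{Diff}_0(Y,\tau,p)$ is a normal subgroup, $\rho$ factors through a homomorphism $\mathrm{MCG}(Y,\tau,p)\to\mathrm{Aut}(\HFR^{\circ}(Y,\tau,p))$. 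This homomorphism is the desired action.

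The only point needing care is how to read "isotopic" in Theorem~\ref{thm:real-nat}. It must mean isotopic within the morphisms of $\RMan_*$: equivariantly, and fixing the basepoint throughout. That reading matches the definition of $\mathrm{Diff}_0(Y,\tau,p)$ in Definition~\ref{def:real-MCG}, so the isotopy invariance applies directly. Everything else is formal.
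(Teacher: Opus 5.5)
Your proposal is correct and is exactly the formal argument the paper has in mind when it calls this corollary an immediate consequence of Theorem~\ref{thm:real-nat}. Your reading of ``isotopic'' as equivariant isotopy through basepoint-fixing maps is also the right one: the proof of that theorem establishes precisely that elements of $\mathrm{Diff}^\tau_0(Y,\tau,p)$ act by the identity.
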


There are analogues of Theorem~\ref{thm:real-nat} in a few other related settings. 

In \cite{hendricks:real}, Hendricks defined an invariant of strongly invertible knots called {\em real knot Floer homology}. In \cite{xiao} and \cite{xiao2}, Xiao extended this to {\em real link Floer homology}, an  invariant of multi-based generalized strongly invertible  links $\mathbb{L} :=(L, \w, \z)$ with $L \subset Y$ and $\w$ and $\z$ being collections of basepoints. (See \cite[Definition 4.1]{xiao2} for the details.) If $k$ is the number of link components preserved by $\tau$ setwise, and $l$ is the number of pairs of link components interchanged by $\tau$, then the real link Floer homology $\HFLR^-$ takes the form of a module over $\F[u_1,\dots, u_k, U_1, \dots U_l]$. There is also a version $\HFLRhat$, which is an $\F$-vector space. Let $\RLink_*^{k,l}$ be the category of multi-based generalized strongly invertible links  with fixed $k$ and $l$, with morphisms being equivariant diffeomorphisms of $(Y, \t)$ preserving $\mathbb{L}$. 

\begin{thm}\label{thm:links-nat}
There are functors
\begin{align*} \HFLR^-&: \RLink_*^{k,l} \to \F[u_1,\dots, u_k, U_1, \dots U_l]\text{-}\mathsf{Mod},\\
 \HFLRhat &: \RLink_*^{k,l} \to \F\text{-}\mathsf{Vect},
 \end{align*}
that produce, up to isomorphism, the real link Floer homologies from \cite{xiao}. Moreover, isotopic diffeomorphisms induce identical maps. Hence, the mapping class group of $\mathbb{L}$ acts on $\HFLR^-(\mathbb{L})$ and $ \HFLRhat(\mathbb{L}).$
\end{thm}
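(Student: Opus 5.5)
We follow the strategy of Juh\'asz--Thurston--Zemke for Theorem~\ref{thm:links-nat}, adapted to the real setting as in Theorem~\ref{thm:real-nat}. First we fix the class of diagrams: real (multi-pointed) Heegaard diagrams for $\mathbb{L}$, i.e.\ diagrams $(\Sigma, \alphas, \betas, \w, \z)$ embedded in $Y$, with $\Sigma$ preserved by $\t$, $\t(\alphas)=\betas$, and the basepoints arranged compatibly with $\t$ as in \cite{xiao2}. Next we define a graph whose vertices are such diagrams. Its edges are equivariant isotopies of the attaching curves, equivariant handleslides, equivariant (de)stabilizations away from the basepoints, and diffeomorphisms of $(Y,\t)$ isotopic to the identity through equivariant diffeomorphisms fixing $L$. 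On each edge we place the map defined in \cite{xiao}: continuation maps, triangle maps, stabilization maps, or tautological maps. Because $\t$ swaps $\alphas$ and $\betas$, every move is automatically made on both sides at once, and the real chain complex is the Floer complex of $\Ta$ with the real Lagrangian $\mathbb{T}_\alpha^{\t}$ fixed by the induced involution. All of these maps are therefore maps of the real complexes, and they are $\F[u_1,\dots,u_k,U_1,\dots,U_l]$-equivariant. The variable $u_i$ corresponds to a $\t$-fixed basepoint pair on an invariant component, and $U_j$ to a swapped pair of basepoints.

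The heart of the proof is to check the JTZ axioms for this graph, in the form of a strong Heegaard invariant. The checks are as follows.
\begin{itemize}
\item Functoriality of the isotopy and diffeomorphism maps.
\item Commutation of distinguished rectangles, i.e.\ pairs of independent moves.
\item Commutation along handleswap loops.
\item Continuity: a diffeomorphism isotopic to the identity on $\Sigma$ induces the identity.
\item Simple handleswap invariance.
\end{itemize}
We plan to reduce each of these to its unreal counterpart by the same mechanism used for $\HFR$ in the proof of Theorem~\ref{thm:real-nat}. The idea is to realize every real map as a count of $\t$-invariant holomorphic polygons, equivalently polygons on the quotient or doubled diagram. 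Then each associativity or commutation argument from \cite{JTZ_naturality_mapping_class_groups} and Zemke's multipointed link version holds equivariantly, since the relevant degenerations occur in invariant families.

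The link setting introduces new features beyond the closed case. Stabilizations now occur next to basepoints or link components, and the moves include those that add or remove free basepoint pairs. We must also ensure that the invariant components, on which $\t$ reverses the knot, still give moves that connect any two real diagrams of $\mathbb{L}$. For connectivity we take Xiao's invariance proof in \cite{xiao, xiao2}. We upgrade it to a connectivity statement in the sense of simple connectivity modulo the listed loops, using equivariant Cerf theory for $\t$-invariant Morse functions adapted to $L$. This upgrade is the main obstacle. We must show that every loop of real diagrams factors into distinguished rectangles and handleswaps. Our approach is to pass to the quotient orbifold, or to use the fixed-point/doubling correspondence from the proof of Theorem~\ref{thm:real-nat}. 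In that picture a real diagram corresponds to a half-diagram, and the JTZ two-parameter Cerf analysis can be run on the half while keeping track of the link and the basepoints.

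With these checks in place, the groups over all real diagrams of $\mathbb{L}$ form a transitive system, and we define $\HFLR^-(\mathbb{L})$ and $\HFLRhat(\mathbb{L})$ as its colimit. An equivariant diffeomorphism $\phi$ carries diagrams to diagrams, and the tautological maps give $\phi_*$; functoriality follows from naturality of the transitive system. If $\phi$ is isotopic to the identity through morphisms of $\RLink^{k,l}_*$, the continuity axiom makes $\phi_*$ equal the canonical identification, which is the identity. The last statement, the action of the mapping class group of $\mathbb{L}$, then follows immediately, as in Corollary~\ref{cor:real-MCG}.
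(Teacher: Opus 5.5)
\textbf{There is a genuine gap, and it sits exactly at the step you flag as the main obstacle.} Your outline matches the paper's architecture: strong Heegaard invariant, then transitive system, then colimit. (The paper routes the link case through the real sutured manifold $Y(L,\w,\z)$ and Theorem~\ref{thm:transitive_system}.) But your list of axioms stops at distinguished rectangles, continuity and simple handleswaps, and you aim to show that every loop of real diagrams factors into those. In the real setting this is false.

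\textbf{The missing loop.} Generic two-parameter families of real functions $f\circ\tau=-f$ contain a codimension-two singularity on the fixed set: the $A_4(\{1\})$ singularity $x_1^2-x_2^2+x_3^5$, with unfolding $\lambda_1x_3+\lambda_2x_3^3+x_3^5$. A loop around it crosses two $\{1\}$-orbit birth--deaths and one $\Z/2$-orbit birth--death. The resulting loop of diagrams is the \emph{simple trade}: two $\{1\}$-stabilizations, three real handleslides, a diffeomorphism, and a $\Z/2$-destabilization. It cannot be resolved into rectangles and handleswaps, so an extra trade-invariance axiom is needed. For the same reason, $\{1\}$- and $\Z/2$-stabilizations must be separate edge types in your graph.

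Running the JTZ Cerf analysis on a quotient or ``half'' diagram will miss this loop. It also misses the other new codimension-one features:
\begin{itemize}
\item degenerate critical points on the fixed set, where no invariant separating surface exists, so $\{1\}$-stabilizations appear only after inserting diffeomorphism strata;
\item flow lines from an index-2 point $\tau(p)$ to its partner $p$, which necessarily cross the fixed set and produce the crossover move.
\end{itemize}
The paper handles all of this by classifying singularities and bifurcations of real functions and gradients directly.

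\textbf{Trade invariance does not reduce to the unreal theory.} Forgetting $\tau$, the trade loop becomes a loop of ordinary moves whose unreal maps compose to the identity. The real maps, however, count $\tau$-invariant rectangles, i.e.\ Floer theory of $\Ta$ against $M^R$, and nothing relates the two compositions. The paper proves the axiom by an explicit neck-stretching computation on the quadruple diagrams $\cQ_{a,b}$, $\cQ_{b,c}$, $\cQ_{c,d}$.

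The new difficulty there is that $\mathrm{Diag}^k(\Diamond)$ meets $\Sym^k(\Diamond)^R$ in codimension one. So independence of the matching divisor must be proved on each component of the complement separately. The base count $\#\cM_{R,\bm a}(d)\equiv 1$ is then extracted from a triple $\{1\}$-stabilization of a diagram for the branched double cover of the two-component unlink.

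\textbf{Smaller corrections.}
\begin{itemize}
\item The real complex is $\CF(\Ta, M^R)$, where $M^R$ is the fixed locus of the anti-symplectic involution on $\Sym(\Sigma)$. It is not a ``real Lagrangian $\mathbb{T}_\alpha^\tau$''.
\item With several basepoints the complexes are curved, so you must quotient the curvature before taking homology.
\item Moves adding or removing free basepoint pairs are not needed, since morphisms in $\RLink_*^{k,l}$ fix $\mathbb{L}$ together with its basepoints.
\end{itemize}
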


There is also an extension of real Heegaard Floer homology to real sutured manifolds. The resulting invariant is called real sutured Floer homology and is developed in \cite{BGX:real_sutured}.  Let $\RSut$ be the category of real sutured manifolds. 

\begin{thm}\label{thm:sutured-nat}
There is a functor
$$ \RSFH: \RSut \to  \F\text{-}\mathsf{Vect},$$
that produces, up to isomorphism, the real sutured Floer homologies from \cite{BGX:real_sutured}. Isotopic diffeomorphisms induce identical maps, and therefore the mapping class group of a real sutured manifold acts on the real sutured Floer homology.
\end{thm}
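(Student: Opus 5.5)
We plan to follow the Juhász--Thurston--Zemke framework for sutured Floer homology and adapt it to the real setting, with the equivariant ingredients handled as in the proof of Theorem~\ref{thm:real-nat}. A real sutured manifold $(M,\gamma,\tau)$ is represented by real sutured Heegaard diagrams $(\Sigma,\alphas,\betas)$ embedded equivariantly in $M$. By the construction of \cite{BGX:real_sutured}, each such diagram has a real sutured Floer complex, computed from the Lagrangian Floer theory of the fixed loci of the induced antiholomorphic involution on $\mathrm{Sym}^d(\Sigma)$. We take the set of isotopy classes of embedded real sutured diagrams as the vertices of a graph. We give it four kinds of edges: $\alpha$-equivalences and $\beta$-equivalences (equivariant isotopies and handleslides, possibly in equivariant pairs), equivariant (de)stabilizations, and equivariant diffeomorphisms isotopic to the identity in $M$. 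We then want to show that the resulting weak Heegaard invariant is a strong Heegaard invariant. By \cite[Theorem 2.39]{JTZ_naturality_mapping_class_groups}, adapted equivariantly, this yields a projectively transitive (here honestly transitive, over $\F$) system. Its colimit defines $\RSFH(M,\gamma,\tau)$. Functoriality under equivariant diffeomorphisms comes from pushing forward diagrams, and equivariantly isotopic diffeomorphisms act trivially because they factor through the diffeomorphism edges of the graph.

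The key steps, in order, are the following.

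First, we establish an equivariant version of the JTZ simple-handleswap and loop results. Any two real sutured diagrams of the same manifold are connected by equivariant moves; this connectivity comes from \cite{BGX:real_sutured} or from an equivariant Cerf theory for $\tau$-invariant sutured Morse functions. Any loop in the graph must decompose into the distinguished rectangles, stabilization slides and simple handleswaps. For this we rerun the JTZ argument on the quotient or with equivariant gradient-like vector fields. In the sutured setting, fixed points of $\tau$ on the suture and boundary require care, but there are no basepoints to worry about.

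Second, we verify each axiom for the real maps. Functoriality for $\alpha$/$\beta$-equivalences follows from the real triangle maps and their associativity, using the real quadrilateral counts already developed for the closed case. Commutation of triangle maps with the real stabilization maps follows from the real gluing/neck-stretching argument near an invariant stabilization region. Continuity, meaning that an isotopy-trivial diffeomorphism induces the identity, follows from the continuation-map computation. Handleswap invariance is the last axiom.

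We expect the main obstacle to be the simple handleswap axiom. Here we must compute the real triangle maps on a genus-two equivariant handleswap region and show that their composition with the diffeomorphism map is the identity. The real Maslov index and the invariant moduli spaces differ from the ordinary ones, and the handleswap may be an equivariant pair of swaps or a single swap meeting the fixed locus. We would first try to reduce to the closed-case computation carried out for Theorem~\ref{thm:real-nat}: we would glue the sutured diagram to an invariant product piece, or use the standard trick of passing to a diagram where the handleswap region is separated by a neck from the sutured boundary. If that reduction works, handleswap invariance follows from the neck-stretching argument already used for stabilization.
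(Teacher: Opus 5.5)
\textbf{The gap.} Your plan rests on the claim that, once you run the Cerf-theoretic argument with equivariant gradient-like vector fields, every loop of real moves decomposes into distinguished rectangles, stabilization slides and simple handleswaps. That claim is false in the real setting. So verifying functoriality, commutativity, continuity and handleswap invariance does not give path-independence.

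For real functions ($f\circ\tau=-f$), the Hessian at a critical point on the fixed curve is forced to be singular. An equivariant splitting lemma then puts the germ in the form $x_1^2-x_2^2+h(x_3)$ with $h$ odd.
\begin{itemize}
\item In one-parameter families this gives a birth--death on the fixed set, i.e.\ a $\{1\}$-stabilization. This is a different move from the $\Z/2$-stabilization coming from a $\tau$-exchanged pair of ordinary births.
\item In two-parameter families it gives the singularity $x_3^5+\lambda_2x_3^3+\lambda_1x_3$ on the fixed set (the $A_4(\{1\})$ singularity), which has no unreal counterpart.
\end{itemize}
The link of an $A_4(\{1\})$ point consists of two $\{1\}$-stabilizations and one $\Z/2$-destabilization. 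After resolving the generalized moves, one is left with a pentagon: two $\{1\}$-stabilizations, a real equivalence made of three handleslides, an equivariant diffeomorphism, and a $\Z/2$-destabilization. This pentagon cannot be filled by any loop on your list.

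The paper therefore adds a fifth axiom, trade invariance, and proves it by a separate holomorphic computation. Neck-stretching on the three handleslide quadruple diagrams shows that each handleslide map acts as the corresponding map on the unstabilized part, times $\mathbf{a}\mapsto\mathbf{b}\mapsto\mathbf{c}\mapsto\mathbf{d}$ on the stabilized part. A new difficulty there is that the diagonal has codimension one in the real locus of the symmetric product of the rectangle. So the space of matching divisors is disconnected, and invariance of the matched count has to be argued stratum by stratum. Without identifying this loop and verifying this axiom, you have not shown that the composite around it is the identity, so $F(\eta)$ could still depend on the path.

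\textbf{Smaller points.}
\begin{itemize}
\item Real Cerf theory also produces a new codimension-one move, the crossover. It comes from a $\tau$-invariant tangency between the one-dimensional $W^s(p)$ and $W^u(\tau(p))$ through the fixed set. It must be rewritten as a $\{1\}$-stabilization, real handleslides and a $\{1\}$-destabilization, and it generates further codimension-two configurations that must be resolved before the reduction goes through.
\item Separate $\alpha$- and $\beta$-equivalence edges make no sense, since $\betas=\tau(\alphas)$; there is a single kind of real equivalence edge.
\item Stabilizations must be split by orbit type. $\{1\}$-stabilizations change the framing of the fixed set induced by the splitting, so they are needed even for connectivity.
\item Passing to the quotient does not reduce to Juh\'asz--Thurston--Zemke: $Y/\tau$ is a single compression body with its boundary folded by $\tau$, not a Heegaard splitting.
\item The handleswap is not the main obstacle. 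A simple real handleswap is a $\tau$-exchanged pair of disjoint unreal handleswaps; handleswaps meeting the fixed set are first reduced to this case using $\{1\}$-stabilizations. Its invariance then follows from the unreal computation by equivariant neck-stretching.
\end{itemize}

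The formal part of your plan agrees with the paper: the transitive system, the colimit, and isotopic diffeomorphisms acting trivially because they are edges of the graph.
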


The proofs of Theorems~\ref{thm:real-nat}, \ref{thm:links-nat} and \ref{thm:sutured-nat} follow the same strategy as in the unreal setting, where naturality was proved by Juh\'asz, Thurston, and Zemke in \cite{JTZ_naturality_mapping_class_groups}. We outline below the main steps, emphasizing what is new compared to \cite{JTZ_naturality_mapping_class_groups}.

In Section~\ref{sec:real floer invariants} we set up some of the background. Following \cite{guth_manolescu2025real}, we list the real Heegaard moves between real Heegaard diagrams.  These suffice to define a weak real Heegaard invariant. We then list the axioms (corresponding to elementary loops of real Heegaard moves) that are needed to upgrade to a strong real Heegaard invariant; that is, to prove naturality. These axioms parallel those that appear in \cite{JTZ_naturality_mapping_class_groups}, with one notable exception: We now encounter a {\em simple trade}, a loop in which a pair of trivial orbit stabilizations are canceled by a single $\Z/2$-orbit stabilization. (See \Cref{fig:simple_trade_loop}.) We also formulate Theorem~\ref{thm:naturality}, that real Heegaard Floer homology and its variants are strong Heegaard invariants, and show that it implies Theorems~\ref{thm:real-nat}, \ref{thm:links-nat} and \ref{thm:sutured-nat}.

In Section~\ref{sec:Singularities} we study the singularities of {\em real functions} $f: Y \to [-1,1]$, that is, those that satisfy $f\circ \tau = -f$. We distinguish between critical points that appear in pairs (related by the involution $\tau$) and those that are fixed by $\tau$. The former can be treated as in \cite{JTZ_naturality_mapping_class_groups}, whereas the latter require a new analysis. We classify real singularities up to codimension $2$. The main novelty (compared to the unreal case) is that in codimension $2$ we may encounter an $A_4$ singularity on the fixed point set.  

In Section~\ref{sec:Families of Real gradients} we go from singularities of functions to those of real gradient flows. We classify bifurcations of such flows up to codimension $2$.

In Section~\ref{sec:constructing hd} we describe how gradient flow bifurcations (of codimension $0$, $1$ and $2$) produce real Heegaard diagrams, real Heegaard moves, and loops of such moves. We draw these explicitly. In particular, we encounter a new move called a {\em crossover}, coming from the failure of a transversality condition on the fixed point set. 

In Section~\ref{sec:reducing moves} we decompose  the real Heegaard moves from Section~\ref{sec:constructing hd} into simpler ones. For example, the crossover is a combination of a stabilization, some handleslides, and a destabilization. We also decompose the loops of Heegaard moves into those needed in the definition of a strong real Heegaard invariant. The new $A_4$ singularity is the origin of the simple trade loop.

In Section~\ref{sec:reducing no_monodromy} we prove that having a strong real Heegaard invariant does indeed lead to naturality.

In Section~\ref{sec:real hf} we check that the axioms in the definition of a strong real Heegaard invariant hold for real Heegaard Floer homology (and its link and sutured variants). We thus establish Theorem~\ref{thm:naturality}. Showing that the simple trade induces the identity requires analyzing specific holomorphic curves, and the analysis is somewhat similar (but not the same) as in the proof of handleswap invariance in \cite[Section 9.3]{JTZ_naturality_mapping_class_groups}. In the process  we give a cylindrical reformulation of real Heegaard Floer theory, in the manner of Lipshitz's reformulation of the unreal theory \cite{Lipshitz_cylindrical}. This is, of course, closely related to the real bordered theory of \cite{lipshitz_ozsvath:real_bordered}.

Finally, in Section~\ref{sec:HFRI} we use the naturality for $\HFR^\circ$ to construct an involutive version of real Heegaard Floer homology. Recall that in \cite{hendricks_manolescu_Invol}, Hendricks and the second author used the conjugation involution $\iota$ on the Heegaard Floer complex $\CF^\circ$ to define involutive Heegaard Floer homology, $\HFI^\circ(Y, \s)$, as the homology of the mapping cone of $1+\iota$. Since then, involutive Heegaard Floer homology has found various topological applications, e.g., to homology cobordism \cite{DHST:hcob, HHSZ_surgery_exact_invol} and concordance \cite{DHST:concordance, DKMPS:cable_fig_8}. We can mimic the construction using real Heegaard Floer complexes and define involutive real Heegaard Floer homology, $\HFRI^\circ(Y, \tau, \s)$. Naturality is used in showing that the conjugation involution is well-defined up to chain homotopy. We deduce the following.

\begin{thm}
\label{thm:HFRI}
Let $(Y, \tau)$ be a real three-manifold and $\s$ a self-conjugate real $\SpinC$ structure. For $\circ \in \{-, \infty, +, \hat{\phantom{o}} \}$, the isomorphism class of $\HFRI^\circ(Y, \tau, \s)$ is an invariant of the triple $(Y, \tau, \s)$.
\end{thm}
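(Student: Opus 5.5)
We follow the template of \cite{hendricks_manolescu_Invol}. The plan is to build a conjugation map $\iota$ on a real Heegaard Floer complex, show it is well defined up to chain homotopy using naturality (Theorem~\ref{thm:real-nat}, via Theorem~\ref{thm:naturality}), and conclude that the quasi-isomorphism type of the mapping cone of $1+\iota$ is an invariant.

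\emph{Construction of $\iota$.} Fix a real Heegaard diagram $\mathcal{H}=(\Sigma,\alphas,\betas,z)$ for $(Y,\tau)$, with basepoint on the fixed locus, chosen admissible for $\s$. Its conjugate diagram $\bar{\mathcal{H}}=(-\Sigma,\betas,\alphas,z)$ is again a real Heegaard diagram for $(Y,\tau)$; the involution on $\Sigma$ induced by $\tau$ is still compatible, since reversing the orientation of $\Sigma$ and swapping the roles of $\alphas$ and $\betas$ are both symmetric under $\tau$. The tautological identification of invariant holomorphic disks, $u\mapsto\bar u$ (precompose with complex conjugation and use the conjugate almost complex structure), gives a chain isomorphism
\[
\eta\colon \CFR^\circ(\mathcal{H},\s)\to\CFR^\circ(\bar{\mathcal{H}},\bar{\s}).
\]
Here we use that real holomorphic curves, defined as fixed points of the anti-holomorphic involution on the symmetric product, are carried to real curves. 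Since $\s$ is self-conjugate, $\bar{\s}=\s$.

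Both $\mathcal{H}$ and $\bar{\mathcal{H}}$ represent $(Y,\tau,p)$, so by \cite{guth_manolescu2025real} they are connected by a sequence of real Heegaard moves. Composing the induced maps gives $\Phi(\bar{\mathcal{H}},\mathcal{H})$, and we set $\iota=\Phi(\bar{\mathcal{H}},\mathcal{H})\circ\eta$. The key input from this paper is that, since $\HFR^\circ$ arises from a strong real Heegaard invariant, $\Phi$ is independent of the chosen sequence of moves up to chain homotopy. This requires the chain-level version of naturality, as in \cite{hendricks_manolescu_Invol}: the axioms of Section~\ref{sec:real hf} hold at the chain homotopy level. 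We then define $\CFRI^\circ=\mathrm{Cone}(Q(1+\iota))$ over $\F[Q]/(Q^2)$, and $\HFRI^\circ$ as its homology.

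\emph{Invariance.} Let $\mathcal{H}'$ be another admissible real diagram for $(Y,\tau,p)$. We need the square
\[
\Phi(\bar{\mathcal{H}},\bar{\mathcal{H}}')\circ\eta=\eta\circ\Phi(\mathcal{H},\mathcal{H}')
\]
to commute up to homotopy. For this we check move by move that $\eta$ intertwines the real move maps (isotopies, handleslides, trivial and $\Z/2$-orbit stabilizations) with the corresponding moves on the conjugate diagrams. This is automatic for stabilizations. For handleslides and isotopies it follows from the same conjugation symmetry of the relevant triangle and continuation counts, which is a real version of \cite[Prop.~2.3]{hendricks_manolescu_Invol}. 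Combining this with naturality gives $\iota'\simeq\Phi\,\iota\,\Phi^{-1}$. A homotopy-commutative square between the maps $1+\iota$ and $1+\iota'$, with horizontal homotopy equivalences, yields a quasi-isomorphism of cones, hence an isomorphism of $\HFRI^\circ$. Changing the basepoint along the connected component of $Y^\tau$, or changing $p$ to another fixed point, requires either including basepoint moves among the Heegaard moves or restricting to fixed $p$. Because the statement only concerns isomorphism class, we can fix $p$ and observe that the diagrams form a connected family.

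\emph{Main obstacle.} The delicate step is ensuring that naturality holds at the chain homotopy level and not only on homology, together with the compatibility of $\eta$ with the real triangle maps. For the latter we would mimic the unreal argument via conjugate almost complex structures. We would also verify that the generic choice of real (anti-$\tau$-symmetric) almost complex structures remains generic after conjugation, so that $\eta$ is genuinely a chain map.
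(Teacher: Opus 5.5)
Your proposal is correct and follows the same route as the paper. The paper simply runs the Hendricks--Manolescu argument, using the chain-level real naturality from the proof of Theorem~\ref{thm:real-nat} to make $\Phi_{\overline{\mathcal{H}}\to\mathcal{H}}$ well defined up to homotopy; your check that $\eta$ intertwines the move maps and your cone quasi-isomorphism argument are exactly the steps that citation supplies.

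One real-specific point is worth stating explicitly. The conjugate diagram $\overline{\mathcal{H}}$ induces the oppositely oriented anti-invariant plane at $p$, so $\Phi_{\overline{\mathcal{H}}\to\mathcal{H}}$ passes between different framings. Its path-independence therefore rests on the equivariant basepoint-twist result (Theorem~\ref{thm:bspt-twist-trivial}), not only on the strong-invariant axioms for a fixed sutured manifold.
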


\bigskip
\noindent {\bf Note on AI use.} We used Gemini Deep Think and Claude to produce first drafts  of the proofs of Propositions~\ref{prop:standard_form_real_sing} and ~\ref{prop:fv}, Corollary~\ref{cor:weaklycon}, Lemmas  \ref{lem:sym_6.20}, \ref{lem:sym_6.21} and \ref{lem:sym_6.23}. 
 The proofs were checked and then rewritten by the authors.

During the revision process we used GPT-5.6 Sol to look for typographical and mathematical errors, which we then fixed.

\bigskip
\noindent {\bf Acknowledgements.} 
We are grateful to Kristen Hendricks, Andr\'as Juh\'asz, Robert Lipshitz, Eha Srivastava, Dylan Thurston, Yonghan Xiao, and Ian Zemke for helpful conversations. We thank Mohammed Abouzaid for computer code that helped us draw some of the figures.

The authors were supported by the Simons Collaboration Grant on New Structures in Low-Dimensional Topology. CM was also supported by a Simons Investigator Award and NSF grant DMS-2522743.

\section{Real Heegaard Floer invariants}\label{sec:real floer invariants}
\subsection{Real sutured manifolds}
\label{sec:sutured-manifolds}
As in \cite{JTZ_naturality_mapping_class_groups}, we will work with sutured manifolds, so that our naturality results will extend to other variants of real Heegaard Floer homology (sutured, knot, and link; see \cite{BGX:real_sutured,xiao}.) We briefly review the relevant objects, closely following \cite[Section 2]{JTZ_naturality_mapping_class_groups}.

\begin{definition}\label{def:sutured}
A \emph{sutured manifold} $(Y,\g)$ is a compact, oriented
3-manifold $Y$ with boundary, together with a set $\g \subset
\partial Y$ of pairwise disjoint annuli. Furthermore, the interior of each component of
$\g$ contains a homologically
nontrivial oriented simple closed curve, called a \emph{suture}. The union of the
sutures will be denoted $s(\g)$.
In addition, every component of $R(\g) = \partial Y \setminus
\text{Int}(\g)$ is oriented. Define $R_+(\g)$ (respectively
$R_-(\g)$) to be the set of those components of $\partial Y \setminus
\text{Int}(\g)$ whose orientations agree (respectively disagree) with the
orientation of $\partial Y$, or equivalently, whose
normal vectors point out of (respectively into) $Y$.
The orientation on $R(\g)$ must be coherent with respect to~$s(\g)$;
i.e., if $\eta$ is a component of $\partial
R(\g)$ and is given the boundary orientation, then $\eta$ must
represent the same homology class in $H_1(\g)$ as some suture.%
\end{definition}

\begin{rem}
    We will always assume our sutured manifolds are \emph{proper}, which is to say that the maps $\pi_0(\g) \to \pi_0(\partial Y)$ and $\pi_0(\partial Y) \to \pi_0(Y)$ are surjective. We will also assume our manifolds are \emph{balanced}, i.e., $\chi(R_+(\g)) = \chi(R_-(\g))$.
\end{rem}

\begin{definition}\label{def:real-sutured}
    A \emph{real sutured manifold} $(Y, \gamma, \tau)$ is a sutured manifold $(Y, \gamma)$ equipped with an orientation-preserving involution $\tau$ whose fixed point set is codimension-2 and such that $\tau$ exchanges $R_+$ and $R_-$.
\end{definition}

We note that because of the symmetry condition, $(Y, \gamma, \tau)$ is necessarily balanced.

\subsection{Real sutured diagrams}
\label{sec:sutured-diagrams}

\begin{definition}
Let $\S$ be a compact oriented surface with boundary. An
\emph{attaching set in $\S$} is a one-dimensional smooth submanifold
$\etas \subset \text{Int}(\S)$ such that each component of $\S \setminus \etas$ contains at least one component
of $\partial \S$. The isotopy class of~$\etas$ will be denoted $[\etas]$.
\end{definition}

\begin{definition} \label{def:scb}
A sutured manifold $(Y,\gamma)$ is a \emph{sutured compression body} if either there is an attaching
set $\etas \subset R_{+}(\gamma)$ such that if we compress
$R_{+}(\gamma)$ inside~$Y$ along all the components of $\etas$, we get
a surface that is isotopic to $R_{-}(\gamma)$ relative to $\gamma$,
or there is an attaching
set $\etas \subset R_{-}(\gamma)$ such that if we compress
$R_{-}(\gamma)$ inside~$Y$ along all the components of $\etas$, we get
a surface that is isotopic to $R_{+}(\gamma)$ relative to $\gamma$. We call
$\etas$ an \emph{attaching set} for $(Y,\gamma)$.\\

Given a set of attaching curves $\etas$ in $\Sigma$, we define $C(\etas)$ to be the sutured compression body obtained by attaching 3-dimensional 1-handles to $\S \times [0,1]$ along $\etas\times\{0\}$ and $\gamma = \partial\S \times [0,1]$. We will write $C_{\pm}(\etas)$ for the  boundary components of $C(\etas)$ which do not include the sutures. If~$\etas$ and~$\etas'$ are two attaching sets in~$\S$, then we say they are
\emph{compression equivalent}, and we write
$\etas \sim \etas'$, if there is a diffeomorphism $d \colon C(\etas) \to C(\etas')$
such that $d|_{C_-(\etas)}$ is the identity. This is an equivalence relation that
descends to isotopy classes of attaching sets.
So we will write $[\etas] \sim [\etas']$ if $\etas \sim \etas'$.
\end{definition}

\begin{definition}
A \emph{real sutured diagram} is a quadruple $(\S, \alphas, \betas, \tau)$, where $\S$ is a compact oriented surface with boundary,
and $\alphas$ and $\betas$ are two attaching sets in~$\S$ with the property that $\tau(\alphas) = \betas$. An \emph{isotopy diagram} is a quadruple $(\S, [\alphas], [\betas], \tau)$, where $(\S,\alphas,\betas, \tau)$ is a real sutured diagram. 
\end{definition}

\begin{rem}
    Following \cite{JTZ_naturality_mapping_class_groups}, given isotopy classes $A = [\alphas]$ and $B = [\betas]$, we will usually write $H = (\Sigma, A, B, \t)$ for an isotopy class of diagrams and $\cH = (\Sigma, \alphas, \betas, \t)$ for a representative of the class.
\end{rem}

\begin{definition}
Let $(Y,\gamma, \tau)$ be a real sutured manifold. Then $(\S, \alphas,
\betas, \tau)$ is an \emph{(embedded) real sutured diagram of $(Y,\gamma, \tau)$}
if
\begin{enumerate}
\item $\S \subset Y$ is an oriented surface with $\partial \S =
  s(\gamma)$ as oriented 1-manifolds,%
\item the components of $\alphas$ bound disjoint disks to the negative side of $\S$,
\item if we compress $\S$ along $\alphas$, we get a surface isotopic to
  $R_-(\gamma)$ relative to $\gamma$.
\end{enumerate}
In short, $\S$ splits $(Y,\gamma, \tau)$ into two symmetric sutured compression bodies, with attaching sets
$\alphas$ and $\betas$.
\end{definition}

\begin{lemma} \label{lem:existence}
Every real sutured manifold $(Y,\gamma, \tau)$ admits a real sutured diagram.
\end{lemma}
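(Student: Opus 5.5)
We will build the diagram from a \emph{real} sutured function $f\colon Y\to[-1,1]$, i.e.\ one with $f\circ\tau=-f$, in the spirit of the Morse-theoretic existence proof in the unreal setting. Then $\S=f^{-1}(0)$ is $\tau$-invariant. Since $\tau$ reverses $f$, its restriction to $\S$ is orientation-reversing on $\S$ and orientation-preserving on $Y$. The $\beta$ curves are then obtained from the $\alpha$ curves by applying $\tau$.

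\textbf{Step 1: an equivariant collar.} Because $\tau$ swaps $R_+$ and $R_-$ and preserves $\g$, we first choose an equivariant collar of $\partial Y$. On it we set $f=-1$ near $R_-$, $f=+1$ near $R_+$, and on $\g\cong s(\g)\times[-1,1]$ we take $f$ to be the projection to the interval coordinate. For this to be compatible, we must arrange that $\tau$ acts on $\g$ by reversing the interval coordinate. This is possible after an equivariant isotopy of the product structure on $\g$, since $\tau$ preserves the sutures setwise and exchanges the two boundary circles of each annulus or pair of annuli.

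\textbf{Step 2: extend equivariantly and make it Morse.} Extend $f$ over $Y$ by any function $g$ with the prescribed boundary behavior, and replace it with $\frac12(g-g\circ\tau)$. This function satisfies $f\circ\tau=-f$ and keeps the boundary conditions, because the collar data were chosen to be anti-invariant.

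Next we perturb $f$ equivariantly to make it Morse. This is the main obstacle.
\begin{itemize}
\item Away from $Y^\tau$, generic anti-invariant perturbations suffice, since the space of anti-invariant functions restricted to a free orbit is unconstrained.
\item Near $Y^\tau$, anti-invariance forces $f=0$ on $Y^\tau$ and forces $f$ to be odd in the normal directions, on which $\tau$ acts by $-1$. At a fixed point, $df$ must lie in the $(-1)$-eigenspace, and the Hessian pairs fixed directions with normal directions. In local coordinates $(x,y_1,y_2)$ with $\tau(x,y)=(x,-y)$ we have $f=y_1a(x,y)+y_2b(x,y)$. The condition $df=0$ at a fixed point means $a=b=0$ there, and a generic choice of $(a,b)$ along the circle $Y^\tau$ has no common zeros. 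Hence we can arrange that there are no critical points on $Y^\tau$ at all.
\end{itemize}
I would justify the required genericity by an equivariant transversality argument applied to the anti-invariant section $df$. Critical points then come in pairs $p,\tau(p)$ with $f(\tau p)=-f(p)$, and the index of $\tau p$ is $3$ minus the index of $p$.

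\textbf{Step 3: self-indexing and the diagram.} Next we make $f$ equivariantly self-indexing, with index-$1$ critical points at $-1/2$ and index-$2$ critical points at $+1/2$. We also cancel any index-$0$ and index-$3$ critical points, in $\tau$-related pairs, using properness of the sutured manifold. Both operations are performed on one critical point of each pair, and the modification is transported by $\tau$ to its partner, which is possible because the pair is disjoint from $Y^\tau$. We then choose a $\tau$-invariant gradient-like vector field $v$, obtained by averaging a metric under $\tau$; this works because $-\tau^*$ preserves anti-invariant gradients.

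Set $\S=f^{-1}(0)$, and let $\alphas$ be the intersection of $\S$ with the stable manifolds of the index-$1$ points. Invariance of $v$ then gives $\tau(\alphas)=$ the unstable manifolds of the index-$2$ points intersected with $\S$, and we define this to be $\betas$. Standard Morse theory shows that $\S$ splits $Y$ into the two compression bodies $f^{-1}[-1,0]$ and $f^{-1}[0,1]$, with $\partial\S=s(\g)$ and the required disk and compression properties. We therefore obtain an embedded real sutured diagram $(\S,\alphas,\betas,\tau)$.
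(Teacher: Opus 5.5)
Your route differs from the paper's. The paper either cites \cite{nagase} or points to its own Morse-theoretic analysis, which runs as follows.
\begin{enumerate}
\item A generic real gradient is real Morse (in particular it has no critical points on $Y^\tau$, as you also observe) and real Morse--Smale.
\item The boundary $\Sigma_0$ of a regular neighborhood of $\Gamma_{01}$ is separating but not invariant. Flowing it halfway along the gradient, rescaled so that the time-one map carries $\Sigma_0$ to $\tau(\Sigma_0)$, gives an invariant separating surface.
\item Index-zero points are kept, producing an overcomplete diagram, and a spanning tree then selects honest $\alpha$ curves.
\end{enumerate}
You instead make $f$ self-indexing so that $f^{-1}(0)$ is invariant for free, and you cancel $0$--$1$ and $2$--$3$ pairs symmetrically. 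For bare existence this is a reasonable and more elementary route. The paper's version has the advantage that it yields a contractible space of invariant separating surfaces, which it reuses for families. Your Steps 1 and 2 are fine, with two small corrections: $s(\gamma)$ must first be isotoped to be $\tau$-invariant, since the definition does not assume this, and $f$ should be a collar coordinate near $R_\pm$, not constant there.

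Step 3, however, has a gap as justified. A rearrangement of critical values is supported near the union $K_p$ of flow lines through $p$, not near $p$ alone. So what matters is whether $K_p$ and $\tau(K_p)$ interact, not whether $p$ avoids $Y^\tau$.

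Suppose some flow line runs from $\tau(p)$ (index $2$) up to $p$ (index $1$). Then $f(\tau(p))<f(p)$ for every function for which $v$ is gradient-like. No rearrangement, equivariant or not, can then place $p$ at $-1/2$ and $\tau(p)$ at $+1/2$, and the supports near $K_p$ and $\tau(K_p)$ overlap, so you cannot transport a modification from one to the other. In codimension one such a line is $\tau$-invariant and crosses $Y^\tau$ in the middle level set; this is exactly the crossover phenomenon of the paper, and your argument does not see it.

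The missing input is that an equivariant perturbation makes the real gradient Morse--Smale. The bad event is that the one-dimensional $W^s(p)$ meets the curve $Y^\tau$ inside the invariant surface $f^{-1}(0)$, which is a codimension-one condition. This is what the paper's genericity lemma for real Morse--Smale gradients provides, and it is the same input that guarantees $\Gamma_{01}\cap\tau(\Gamma_{01})=\emptyset$ in the paper's approach.

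Once $v$ is real Morse--Smale (so it must be chosen before Step 3, not at the end), you can also skip the pair-by-pair transport:
\begin{enumerate}
\item Take any self-indexing Lyapunov function $g$ for $v$.
\item Replace it by $\tfrac12(g-g\circ\tau)$. This has the same self-indexing critical values.
\item It is still gradient-like for $v$: since $d\tau(v)=-v$, we have $v(-g\circ\tau)>0$ away from critical points.
\end{enumerate}
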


\begin{proof}
This follows from \cite{nagase}. Alternatively, this follows from the Morse theoretic analyses of \Cref{sec:Singularities} and \Cref{sec:Families of Real gradients}.
\end{proof}

\begin{definition}
Let $(Y,\gamma, \tau)$ be a real sutured manifold. A \emph{real Heegaard surface of $(Y,\g,\tau)$} is an oriented surface $\S \subset Y$ such that $\partial \S = s(\g)$ and $\S$ divides $(Y,\g,\tau)$ into two sutured compression bodies exchanged by the involution.
\end{definition}

\subsection{Moves on diagrams and weak Heegaard invariants}
\label{sec:moves-diagrams}

Let us introduce a notion of equivalence for isotopy diagrams.

\begin{definition}
We say that the isotopy diagrams $(\S_1, A_1, B_1)$ and $(\S_2, A_2, B_2)$ are \emph{real-equivalent}
if $\S_1 = \S_2$ and $A_1 \sim A_2$. Note that the symmetry condition forces $B_1 \sim B_2$ as well.
\end{definition}

\begin{definition}\label{def:Handleslide} (cf. \cite[Definition 2.10]{JTZ_naturality_mapping_class_groups})
    Let $\eta_1$ and $\eta_2$ be two disjoint simple closed curves in $\S$, and fix an embedded arc~$a$ from $\eta_1$ to~$\eta_2$ whose interior is disjoint from $\eta_1 \cup \eta_2$ and from~$\partial \S$. Then a regular neighborhood of the graph $\eta_1\cup a \cup \eta_2$ is a planar surface with three boundary components: one is isotopic to $\eta_1$, the other is isotopic to $\eta_2$, and the third is a new curve $\eta_1'$ that we call the curve obtained by \emph{handlesliding $\eta_1$ over $\eta_2$ along the arc $a$}. 

    Suppose $\etas$ and $\etas'$ are two systems of attaching circles. We say that $\etas$ and $\etas'$ are \emph{related by a handleslide} if there are components $\eta_1$ and $\eta_2$ of $\etas$ and a component $\eta_1'$ of $\etas'$ such that $\eta_1'$ can be obtained by handle-sliding $\eta_1$ over $\eta_2$ along some arc whose interior is disjoint from~$\etas$, and $\etas' = (\etas \setminus \eta_1) \cup \eta_1'$. If $D$ and $D'$ are isotopy classes of attaching sets, then they are related by a handleslide if they have representatives $\etas$ and $\etas'$, respectively, such that $\etas$ and $\etas'$ are related by a handleslide.
\end{definition}

We note that by \cite[Lemma 2.11]{JTZ_naturality_mapping_class_groups} we have $\etas \sim \etas'$ if and only if they are related by a sequence of handleslides. 

We now recall the basic moves which relate real sutured Heegaard diagrams. 

\begin{definition}\label{def:real-Handleslide}
    We say that the real sutured diagram $(\S_2, \alphas_2, \betas_2, \tau_2)$ is obtained from $(\S_1, \alphas_1, \betas_1, \tau_1)$ by a \emph{real handleslide} if
    \begin{enumerate}
        \item $\S_1 = \S_2$, 
        \item $\tau_1 = \tau_2$,
        \item $\alphas_1$ and $\alphas_2$ are related by a handleslide (and necessarily $\betas_1$ and $\betas_2$ are also related by a handleslide).
    \end{enumerate}
\end{definition}

In the real setting, there are two different stabilization operations, appearing in both trivial and $\Z/2$-orbits. 

\begin{definition} \label{def:Z2-stab}
The real sutured diagram $(\S_2, \alphas_2, \betas_2,\tau_2)$ is obtained from $(\S_1, \alphas_1, \betas_1, \tau_1)$ by a \emph{$\Z/2$-stabilization}
if
\begin{itemize}
\item there is a disk $D \subset \S_1$ and a punctured torus $T \subset \S_2$ such that $\S_1 \setminus D = \S_2 \setminus T$, $D \cap \tau_1(D) = \emptyset$ and  $T \cap \tau_2(T) = \emptyset$,
\item $\alphas_1  = \alphas_2 \cap (\S_2 \setminus (T\cup \tau_2(T)))$,
\item $\alphas_2 \cap T$ and $\betas_2 \cap T$ are simple closed curves that intersect each other transversely in a single point; the same holds for $\alphas_2 \cap \tau_2(T)$ and $\betas_2 \cap \tau_2(T)$.
\end{itemize}
In the reverse direction, we also say that $(\S_1, \alphas_1, \betas_1,\tau_1)$ is obtained from $(\S_2, \alphas_2, \betas_2, \tau_2)$ by a \emph{$\Z/2$-destabilization}. See \Cref{fig:simple_Z2_stab}.

\begin{figure}[h]
\def\svgwidth{.8\linewidth}
\begingroup%
  \makeatletter%
  \providecommand\color[2][]{%
    \errmessage{(Inkscape) Color is used for the text in Inkscape, but the package 'color.sty' is not loaded}%
    \renewcommand\color[2][]{}%
  }%
  \providecommand\transparent[1]{%
    \errmessage{(Inkscape) Transparency is used (non-zero) for the text in Inkscape, but the package 'transparent.sty' is not loaded}%
    \renewcommand\transparent[1]{}%
  }%
  \providecommand\rotatebox[2]{#2}%
  \newcommand*\fsize{\dimexpr\f@size pt\relax}%
  \newcommand*\lineheight[1]{\fontsize{\fsize}{#1\fsize}\selectfont}%
  \ifx\svgwidth\undefined%
    \setlength{\unitlength}{779.52755906bp}%
    \ifx\svgscale\undefined%
      \relax%
    \else%
      \setlength{\unitlength}{\unitlength * \real{\svgscale}}%
    \fi%
  \else%
    \setlength{\unitlength}{\svgwidth}%
  \fi%
  \global\let\svgwidth\undefined%
  \global\let\svgscale\undefined%
  \makeatother%
  \begin{picture}(1,0.24)%
    \lineheight{1}%
    \setlength\tabcolsep{0pt}%
    \put(0,0){\includegraphics[width=\unitlength,page=1]{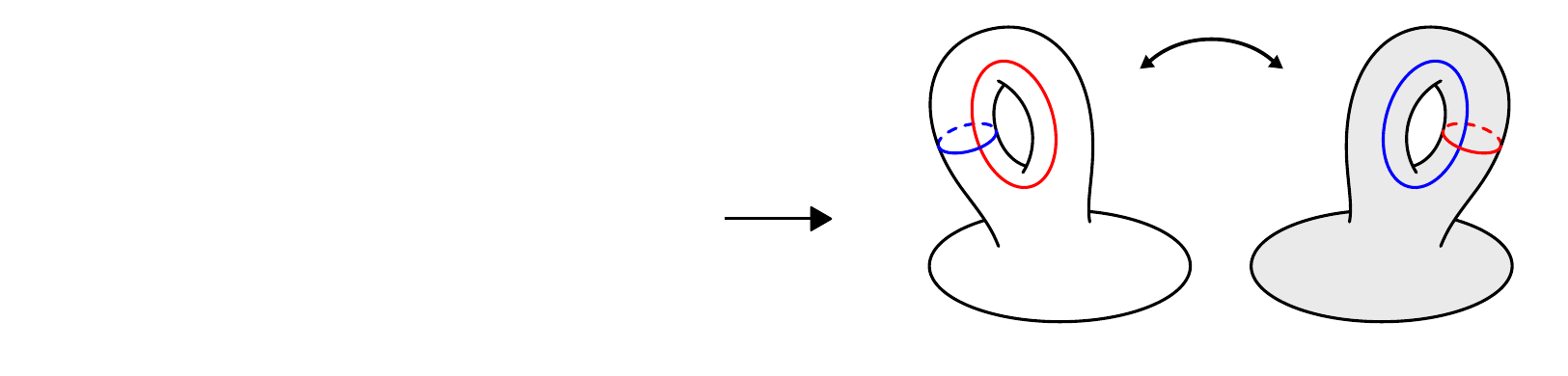}}%
    \put(0.78082025,0.22282774){\color[rgb]{0,0,0}\makebox(0,0)[t]{\smash{\begin{tabular}[t]{c}{\small$\t'$}\end{tabular}}}}%
    \put(0,0){\includegraphics[width=\unitlength,page=2]{simple_Z2_stab.pdf}}%
    \put(0.21509295,0.22282774){\color[rgb]{0,0,0}\makebox(0,0)[t]{\smash{\begin{tabular}[t]{c}{\small$\t$}\end{tabular}}}}%
    \put(0.11172365,0.00009086){\color[rgb]{0,0,0}\makebox(0,0)[t]{\smash{\begin{tabular}[t]{c}{\small$D$}\end{tabular}}}}%
    \put(0,0){\includegraphics[width=\unitlength,page=3]{simple_Z2_stab.pdf}}%
    \put(0.6755269,0.00009086){\color[rgb]{0,0,0}\makebox(0,0)[t]{\smash{\begin{tabular}[t]{c}{\small$T$}\end{tabular}}}}%
  \end{picture}%
\endgroup%

    \caption{A (simple) $\Z/2$-stabilization.}
\label{fig:simple_Z2_stab}
\end{figure}

Let $H_1$ and $H_2$ be isotopy diagrams. Then $H_2$ is obtained from $H_1$ by a \emph{$\Z/2$-(de)sta\-bi\-li\-za\-tion} if
they have representatives $(\S_2, \alphas_2, \betas_2,\tau_2)$ and $(\S_1, \alphas_1, \betas_1,\tau_1)$, respectively, such that $(\S_2, \alphas_2, \betas_2,\tau_2)$ is obtained from $(\S_1, \alphas_1, \betas_1,\tau_1)$ by a $\Z/2$-(de)stabilization.
\end{definition}

\begin{definition} \label{def:1-stab}
The sutured diagram $(\S_2, \alphas_2, \betas_2,\tau_2)$ is obtained from $(\S_1, \alphas_1, \betas_1,\tau_1)$ by a \emph{$\{1\}$-stabilization}
if
\begin{itemize}
\item there is a disk $D \subset \S_1$ which is fixed setwise by $\tau_1$ and a punctured torus $T \subset \S_2$ fixed setwise by $\tau_2$ such that $\S_1 \setminus D = \S_2 \setminus T$,
\item $\alphas_1  = \alphas_2 \cap (\S_2 \setminus T)$,
\item $\alphas_2 \cap T$ and $\betas_2 \cap T$ are simple closed curves that intersect each other transversely in a single point and are exchanged by the involution.
\end{itemize}
We also say that $(\S_1, \alphas_1, \betas_1)$ is obtained from $(\S_2, \alphas_2, \betas_2)$ by a \emph{$\{1\}$-destabilization}. See \Cref{fig:simple_triv_stab}.

Just as before, if $H_1$ and $H_2$ are isotopy diagrams, then $H_2$ is obtained from $H_1$ by a \emph{$\{1\}$-(de)sta\-bi\-li\-za\-tion} if
they have representatives $(\S_2, \alphas_2, \betas_2,\tau_2)$ and $(\S_1, \alphas_1, \betas_1,\tau_1)$, respectively, such that $(\S_2, \alphas_2, \betas_2,\tau_2)$ is obtained from $(\S_1, \alphas_1, \betas_1,\tau_1)$ by a $\{1\}$-(de)stabilization.
\end{definition}

\begin{figure}[h]
\def\svgwidth{.8\linewidth}
\begingroup%
  \makeatletter%
  \providecommand\color[2][]{%
    \errmessage{(Inkscape) Color is used for the text in Inkscape, but the package 'color.sty' is not loaded}%
    \renewcommand\color[2][]{}%
  }%
  \providecommand\transparent[1]{%
    \errmessage{(Inkscape) Transparency is used (non-zero) for the text in Inkscape, but the package 'transparent.sty' is not loaded}%
    \renewcommand\transparent[1]{}%
  }%
  \providecommand\rotatebox[2]{#2}%
  \newcommand*\fsize{\dimexpr\f@size pt\relax}%
  \newcommand*\lineheight[1]{\fontsize{\fsize}{#1\fsize}\selectfont}%
  \ifx\svgwidth\undefined%
    \setlength{\unitlength}{779.52755906bp}%
    \ifx\svgscale\undefined%
      \relax%
    \else%
      \setlength{\unitlength}{\unitlength * \real{\svgscale}}%
    \fi%
  \else%
    \setlength{\unitlength}{\svgwidth}%
  \fi%
  \global\let\svgwidth\undefined%
  \global\let\svgscale\undefined%
  \makeatother%
  \begin{picture}(1,0.26545455)%
    \lineheight{1}%
    \setlength\tabcolsep{0pt}%
    \put(0,0){\includegraphics[width=\unitlength,page=1]{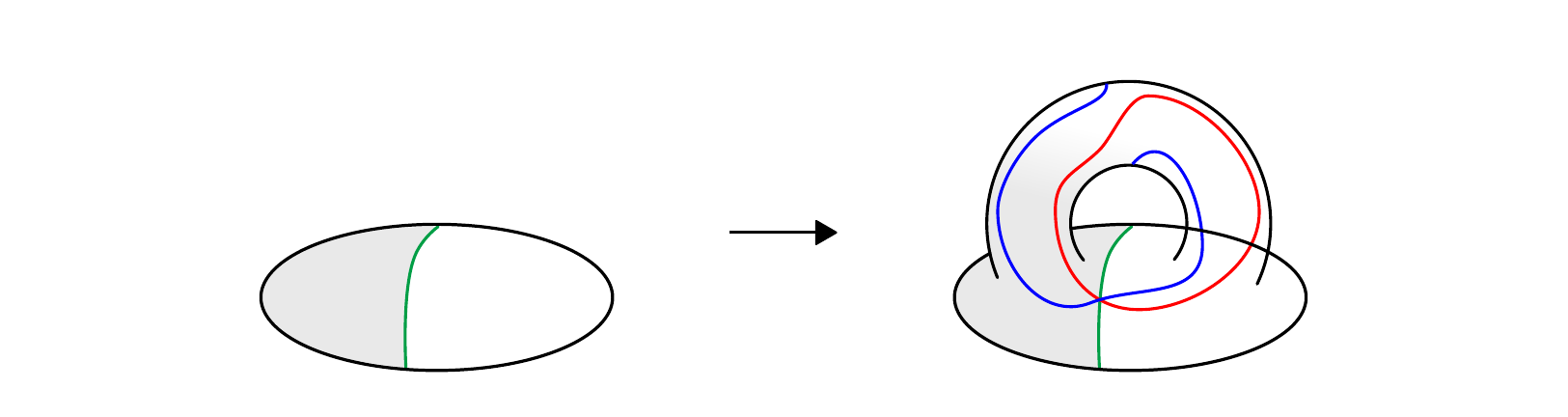}}%
    \put(0.27749825,-0.00199323){\color[rgb]{0,0,0}\makebox(0,0)[t]{\smash{\begin{tabular}[t]{c}{\small$D$}\end{tabular}}}}%
    \put(0.71797985,-0.00281534){\color[rgb]{0,0,0}\makebox(0,0)[t]{\smash{\begin{tabular}[t]{c}{\small$T$}\end{tabular}}}}%
    \put(0,0){\includegraphics[width=\unitlength,page=2]{simple_triv_stab.pdf}}%
    \put(0.72739857,0.24150187){\color[rgb]{0,0,0}\makebox(0,0)[t]{\smash{\begin{tabular}[t]{c}{\small$\t'$}\end{tabular}}}}%
    \put(0,0){\includegraphics[width=\unitlength,page=3]{simple_triv_stab.pdf}}%
    \put(0.2713531,0.24150187){\color[rgb]{0,0,0}\makebox(0,0)[t]{\smash{\begin{tabular}[t]{c}{\small$\t$}\end{tabular}}}}%
  \end{picture}%
\endgroup%

    \caption{A (simple) $\{1\}$-stabilization.}
\label{fig:simple_triv_stab}
\end{figure}

\begin{remark}
    We will often write $\cO$-stabilization when we do not want to distinguish between the two orbit types of stabilizations.
\end{remark}

If $d \colon \S \to \S'$ is an equivariant diffeomorphism of surfaces and $C$ is an isotopy class of attaching sets in $\S$,
then $d(C)$ is defined as $[d(\etas)]$, where $\etas$ is an arbitrary attaching set representing $C$.

\begin{definition}
Given isotopy diagrams $H_1 = (\S_1, A_1, B_1, \tau_1)$ and $H_2 = (\S_2, A_2, B_2,\tau_2)$, a diffeomorphism\footnote{We emphasize that we define diffeomorphisms of real Heegaard diagrams to be equivariant; for this reason we will not always include the adjective ``equivariant''.} $d \colon H_1 \to H_2$
is an orientation-preserving equivariant diffeomorphism $d \colon \S_1 \to \S_2$ such that $d(A_1) = A_2$ and $d(B_1) = B_2$.
\end{definition}

\begin{definition} \label{def:big-graph}
Let $\G$ be the graph
whose class of vertices $|\G|$ consists of real isotopy diagrams and, for
$H_1$,$H_2 \in |\G|$, the edges
$\G(H_1,H_2)$ can be written as a union
\[
\G(H_1,H_2) = \G_{\a\b}(H_1,H_2)
\cup \G_{\{1\}}(H_1, H_2) \cup\G_{\Z/2}(H_1, H_2) \cup \G_{\text{diff}}(H_1,H_2).
\]
The set $\G_{\a\b}(H_1,H_2)$
consists of a single arrow if $H_1$ and $H_2$ are real-equivalent and
is empty otherwise. The set $\G_{\{1\}}(H_1, H_2)$ consists of a single arrow
if $H_2$ is obtained from $H_1$ by a $\{1\}$-stabilization or a
$\{1\}$-destabilization and is empty otherwise. Similarly, set $\G_{\Z/2}(H_1, H_2)$ consists of a single arrow
if $H_2$ is obtained from $H_1$ by a $\Z/2$-stabilization or a
$\Z/2$-destabilization and is empty otherwise.
Finally, $\G_{\text{diff}}(H_1,H_2)$ consists of all diffeomorphisms from $H_1$ to $H_2$.
\end{definition}

\begin{proposition}\label{prop:diag-connected-weak}
The isotopy diagrams $H_1$, $H_2 \in |\G|$ can be connected by an oriented path if and only if they define equivariantly diffeomorphic real sutured
manifolds. Furthermore, the existence of an unoriented path from $H_1$ to $H_2$ implies the existence of an oriented one.
\end{proposition}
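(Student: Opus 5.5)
The plan is to treat the two implications and the claim about unoriented paths separately.

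\emph{Each edge type preserves the manifold.} I would check this first, since each edge of $\G$ is designed to do so.
\begin{itemize}
\item Real-equivalence: if $A_1 \sim A_2$, then $\tau(A_1)\sim\tau(A_2)$. The diffeomorphism $d\colon C(\alphas_1)\to C(\alphas_2)$ that is the identity on $C_-$ can be conjugated by $\tau$ to give the corresponding map on the $\beta$ side. Gluing $d$ with $\tau d\tau$ along $\S$ produces an equivariant diffeomorphism of the glued manifolds.
\item Stabilizations: a $\{1\}$- or $\Z/2$-stabilization is an equivariant connected sum with a genus-one diagram of $S^3$, or a symmetric pair of such. This leaves $(Y,\gamma,\tau)$ unchanged up to equivariant diffeomorphism; concretely, the added handle pair cancels, and the cancellation can be done $\tau$-symmetrically.
\item Diffeomorphism edges: these induce equivariant diffeomorphisms by construction.
\end{itemize}
Hence any path, oriented or not, joins diagrams of equivariantly diffeomorphic real sutured manifolds. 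This gives the ``only if'' direction and half of the second statement.

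\emph{The converse.} Given an equivariant diffeomorphism $\phi\colon(Y_1,\gamma_1,\tau_1)\to(Y_2,\gamma_2,\tau_2)$, the diffeomorphism edge $\phi|_{\S_1}$ lets me assume $Y_1=Y_2=Y$ and compare two embedded real diagrams of the same $(Y,\gamma,\tau)$. I would use the equivariant Reidemeister--Singer theorem for real Heegaard splittings. This is established in \cite{guth_manolescu2025real} (following \cite{nagase}), and can also be recovered from the real Morse theory of \Cref{sec:Singularities} and \Cref{sec:Families of Real gradients}.

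The argument runs as follows.
\begin{enumerate}
\item Realize each diagram by a real Morse function $f$ with $f\circ\tau=-f$, so that $\tau$ exchanges index-$1$ and index-$2$ critical points.
\item Connect the two real functions by a generic one-parameter family of real gradient-like pairs.
\item Read off the changes along the path:
\begin{itemize}
\item equivariant isotopies, which give diffeomorphism edges isotopic to the identity;
\item handleslides among the $\alpha$ curves, with their $\tau$-images among the $\beta$ curves, which give $\G_{\a\b}$ edges;
\item birth/death of an index-$1$/index-$2$ pair, which gives a stabilization or destabilization.
\end{itemize}
\item Classify the birth/death events. A pair whose two critical points are exchanged by $\tau$ sits on the fixed level $f^{-1}(0)$ and produces a $\{1\}$-stabilization. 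A birth away from the fixed set comes with its mirror birth and produces a $\Z/2$-stabilization.
\end{enumerate}

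The main obstacle is the codimension-one events on the fixed set that are not of the standard form, such as the crossover move mentioned in the introduction. These must be decomposed into the basic edges. I would first try the route suggested in the introduction: stabilize, perform handleslides, then destabilize.

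\emph{Unoriented paths give oriented ones.} Every edge type is reversible by an edge of the same type:
\begin{itemize}
\item real-equivalence is symmetric;
\item a stabilization edge is reversed by the corresponding destabilization, which the definition includes in the same edge set;
\item a diffeomorphism $d$ is reversed by $d^{-1}$.
\end{itemize}
So reversing each backward edge of an unoriented path yields an oriented path. In fact this argument gives the second statement without any appeal to the first.
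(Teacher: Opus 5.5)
Your proposal is correct and follows the paper's approach: the paper's entire proof is a citation of Nagase's equivariant Reidemeister--Singer theorem \cite{nagase} (see also \cite[Lemma 3.16]{guth_manolescu2025real}), and your converse direction rests on that same result. Your additional checks are correct: every edge type of $\G$ preserves the real sutured manifold, and every edge can be reversed by an edge of the same type, which gives the unoriented-to-oriented claim directly; these simply make explicit the easy parts that the citation leaves implicit.
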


\begin{proof}
This follows from the work of Nagase \cite{nagase}; also see \cite[Lemma 3.16]{guth_manolescu2025real}. 
\end{proof}

\begin{definition} \label{def:weak-Heegaard}
Let $\cS^R$ be a set of diffeomorphism types of real sutured manifolds, and let $\cC$ be any category. Let $\G(\cS^R)$ be the full subgraph of $\G$ spanned by those
isotopy diagrams $H$ for which $S(H) \in \cS^R$.
A \emph{weak Heegaard invariant of $\cS^R$}
is a morphism of graphs
$F \colon \G(\cS^R) \to \cC$ such that for every arrow $e$ of $\G(\cS^R)$ the image $F(e)$ is an isomorphism.
\end{definition}

\begin{definition}\label{def:S-cat}
As in \cite{JTZ_naturality_mapping_class_groups} we will consider three different classes.
    \begin{enumerate}
    \item $\cS_{man}^R$, the set of $[Y(p, \xi), \t]$, where $(Y, p,\t)$ is a based real manifold and $\xi = (\xi_1, \xi_2, \xi_3)$ is a framing of $p$ so that the restriction $d\t|_{\mathrm{span}(\xi_1,\xi_2)}$ is orientation reversing (this implies that $\xi_3$ points in the direction tangent to the fixed point set). 
    \item $\cS_{link}^R$, the set of $[Y(L, \w, \z), \t]$, where $(Y, \t)$ is a real manifold and $L \sub Y$ is a generalized strongly invertible link with at least one pair of $\w$ and $\z$ markings on each component (which are also fixed setwise by $\t$).
    \item $\cS^R_{sut}$, the set of all real sutured manifolds $[(Y, \gamma, \t)]$. 
\end{enumerate}
\end{definition}

\begin{thm}[\cite{guth_manolescu2025real}] \label{thm:HF-weak}
 The functors
 \[
 \widehat{\CFR} \text{, }\CFR^- \text{, } \CFR^+ \text{, }\CFR^\infty \colon \G(\cS^R_{man}) \to \cC
 \]
 are weak real Heegaard invariants of $\cS^R_{man}$ (where the $U$-action is trivial on $\widehat{\CFR}$). Here, $\cC$ is the homotopy category of $\F[U]$-complexes.
\end{thm}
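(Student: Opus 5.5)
We will assemble this from \cite{guth_manolescu2025real}, since Theorem~\ref{thm:HF-weak} is attributed to that paper. What has to be checked is that the earlier invariance proof translates into the present graph formalism. Concretely, we must assign a chain homotopy equivalence to each arrow of $\G(\cS^R_{man})$, and these assignments must be well-defined on \emph{isotopy} diagrams, not merely on honest diagrams.

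The first step is to fix the vertex assignment. An isotopy diagram $H=(\S,A,B,\tau)$ in $\G(\cS^R_{man})$ has a basepoint on the fixed locus of $\tau$ in $\S$. To it we assign $\CFR^\circ$ of some admissible, suitably generic real representative $\cH$, with a choice of symmetric almost complex structure on $\operatorname{Sym}^g(\S)$. Following the unreal treatment of \cite{JTZ_naturality_mapping_class_groups}, the complex is defined as the Lagrangian Floer complex of $\Tb \cap \operatorname{Fix}$ inside the fixed locus of the induced anti-symplectic involution. The point is that different choices of representative and of almost complex structure are related by equivariant isotopies of $\alphas$, which automatically move $\betas=\tau(\alphas)$, and by real paths of almost complex structures. \cite{guth_manolescu2025real} shows that these produce chain homotopy equivalences, via continuation maps and Hamiltonian isotopy invariance for real Lagrangian Floer homology. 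Hence $\CFR^\circ(H)$ is well-defined up to chain homotopy equivalence, and the homotopy category $\cC$ is exactly the right target. At this stage we are not asking for canonical maps, only for some isomorphism attached to each arrow.

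The second step treats the edges type by type. For an $\a\b$ arrow, $A_1\sim A_2$, so by \cite[Lemma 2.11]{JTZ_naturality_mapping_class_groups} the two diagrams are related by a sequence of handleslides of $\alphas$. Each of these is a real handleslide, and we take the composite of the real triangle maps of \cite{guth_manolescu2025real}, which count holomorphic triangles fixed by the involution. The triangle map for a small handleslide is a quasi-isomorphism, proved via the top-generator argument in the real setting, and this is taken from the earlier paper. For $\{1\}$- and $\Z/2$-stabilization arrows, we use the stabilization isomorphisms of \cite{guth_manolescu2025real}. These are the identification of generators $\x\mapsto \x\times\{c\}$ in the $\{1\}$ case, and $\x\mapsto \x\times\{c,\tau(c)\}$ in the $\Z/2$ case, and they are chain isomorphisms for suitably stretched almost complex structures. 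Finally, a diffeomorphism arrow $d$ is assigned the tautological isomorphism obtained by pushing forward all data along $d$.

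The last point is that every assigned map is invertible in $\cC$, and this is exactly the invariance theorem of \cite{guth_manolescu2025real}. Note that Definition~\ref{def:weak-Heegaard} does not require the $\a\b$ assignment to be independent of the chosen handleslide sequence; any choice gives a morphism of graphs. The main obstacle, which we address by citing \cite{guth_manolescu2025real}, is admissibility. Intermediate diagrams in a handleslide sequence must be made real-admissible by equivariant finger moves, and one must check that this can be done compatibly with the symmetry, so that Gromov compactness holds for the real moduli spaces used by the triangle maps.
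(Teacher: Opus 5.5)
Your overall architecture matches the paper's. The statement is quoted from \cite{guth_manolescu2025real}, and Section~\ref{sec:real hf} repackages it just as you propose: real-equivalence arrows go to polygon-counting maps, stabilizations to $\mathbf{x}\mapsto\mathbf{x}\times\{c\}$ and $\mathbf{x}\mapsto\{a\}\times\mathbf{x}\times\{\tau(a)\}$, and diffeomorphisms to push-forwards. Two parts of your write-up are wrong, though. The first is minor: the complex is not a Floer complex ``inside the fixed locus.'' It is $\CF^\circ(\mathbb{T}_\alpha, M^R)$ in $M=\Sym^g(\Sigma)$, where an anti-invariant form (from Perutz) makes $\mathbb{T}_\alpha$ Lagrangian and $M^R$ is the Lagrangian fixed set of $R$. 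Its generators are $\mathbb{T}_\alpha\cap M^R=(\mathbb{T}_\alpha\cap\mathbb{T}_\beta)^R$.

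The more substantive problem is the handleslide step, whose justification you attribute to a source that does not contain it. \cite{guth_manolescu2025real} proved handleslide invariance with continuation maps, using Perutz's realization of handleslides as Hamiltonian isotopies of $\mathbb{T}_\alpha$. It did not use polygon maps and a top-generator argument. Moreover, the relevant polygons are not ``triangles fixed by the involution.'' Because $(\Sigma,\alphas',\betas)$ is not a real diagram, $\alphas$ and $\betas=\tau(\alphas)$ must be moved at once. The map therefore counts $R$-invariant rectangles with corners $\Theta_{\alpha',\alpha}$, $\mathbf{x}$, $\tau(\Theta_{\alpha',\alpha})$, $\mathbf{y}$ (equivalently, triangles with edges on $\mathbb{T}_{\alpha'}$, $\mathbb{T}_\alpha$, $M^R$). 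It is a chain map only because the symmetric conformal structure on the rectangle is rigid, so invariant rectangles cannot break into pairs of triangles.

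That these rectangle maps are homotopy equivalences is proved in the present paper, not the earlier one. The argument compares them with continuation maps in Lemmas~\ref{lem:iso-is-rectangle-map} and~\ref{lem:handleslide-is-rectangle-map}; the latter degenerates the dynamic-boundary strip into a monogon and a triangle. It then concludes with Proposition~\ref{prop:Compatibility1}. For the weak statement alone, the simplest repair is to assign the continuation maps themselves to isotopies and handleslides. Those are exactly what \cite{guth_manolescu2025real} shows are equivalences, and with that change the rest of your argument goes through.
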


There are also variants of real Heegaard Floer homology for knots \cite{hendricks:real}, links \cite{xiao}, and sutured manifolds. The invariance results of \cite{xiao} and \cite{BGX:real_sutured} can also be phrased in this language.

\begin{thm}[\cite{xiao2}] \label{thm:HFL-weak}
 The functors
 \[
 \widehat{\CFLR} \text{, }\CFLR^- \text{, } \CFLR^+ \text{, }\CFLR^\infty \colon \G(\cS^R_{link}) \to \cC
 \]
 are weak real Heegaard invariants of $\cS^R_{link}$ (where the polynomial action is trivial on $\widehat{\CFLR}$). Here, $\cC$ is the homotopy category of curved $\F[u_1, \hdots, u_k, U_1,\hdots, U_\ell]$-complexes.
\end{thm}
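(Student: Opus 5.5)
This statement is attributed to Xiao, so the plan is to recast the invariance results of \cite{xiao, xiao2} in the language of Definition~\ref{def:weak-Heegaard}. Concretely, we must produce, for every arrow $e \colon H_1 \to H_2$ of $\G(\cS^R_{link})$, a morphism $F(e)$ in the homotopy category of curved $\F[u_1,\dots,u_k,U_1,\dots,U_\ell]$-complexes, and check that it is a homotopy equivalence. We set $F$ on vertices first. Given an isotopy diagram $H$, we choose an admissible real representative $\cH=(\S,\alphas,\betas,\t)$ with the markings $\w,\z$ placed compatibly with $\t$, together with a generic real family of almost complex structures. We then let $F(H)$ be Xiao's real link Floer complex $\CFLR^\circ(\cH)$. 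The hat version is obtained by setting the variables to zero.

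To see that $F(H)$ is well defined in $\cC$, we use the standard continuation maps. Two choices of representative differ by an equivariant isotopy of $\alphas$, with $\betas=\t(\alphas)$ moving along. Two choices of almost complex structure also differ. In both cases we use Xiao's continuation maps, built from real (i.e.\ $\t$-invariant, fixed-locus) moduli spaces. The usual argument shows that these maps are canonical up to chain homotopy and compose correctly. This gives a transitive system, so we can identify all representatives canonically in $\cC$.

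We then assign maps to edges. For an $\a\b$-edge, i.e.\ real-equivalence, we use Lemma 2.11 of \cite{JTZ_naturality_mapping_class_groups}. It writes the equivalence as a sequence of handleslides among the $\alphas$, each inducing the $\t$-image handleslide among the $\betas$, and we compose Xiao's real triangle maps along this sequence. For a $\{1\}$- or $\Z/2$-stabilization edge, we use Xiao's stabilization isomorphisms. These identify generators with the new intersection point or points added, and that identification is a chain isomorphism for a suitable neck-stretched almost complex structure. For a diffeomorphism edge, we push forward the diagram and the almost complex structure, which gives a tautological isomorphism. Xiao's invariance theorem states that each of these maps is a homotopy equivalence.

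The main obstacle is the $\a\b$-edge. A weak invariant only requires some isomorphism for each edge, but we must fix one assignment per arrow. So we would have to check that the composite of triangle maps does not depend on the chosen handleslide sequence, or else fix a choice of sequence. Since weak invariance only requires that $F(e)$ be an isomorphism, choosing one sequence for each pair suffices here; independence is exactly the strong-invariant content of Theorem~\ref{thm:naturality}, which is deferred. The remaining technical point is to verify that Xiao's real triangle counts respect the curved differential with the $u_i$ and $U_j$ weights. This follows because the curvature terms are symmetric under $\t$.
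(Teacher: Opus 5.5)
Your proposal is correct and follows the paper's route: the paper gives no independent proof, but cites Xiao's invariance results and notes they can be rephrased in the language of Definition~\ref{def:weak-Heegaard}, exactly as you do (one admissible representative or transitive system per isotopy diagram, one fixed handleslide sequence per real-equivalence arrow, with path-independence deferred to the strong axioms). Your closing sentence on curvature is not really an argument, but it is also not needed, since compatibility of the move maps with the curved differentials is already part of the cited invariance theorem.
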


\begin{thm}[\cite{BGX:real_sutured}] \label{thm:RSFH-weak}
 The functor
 \[
 \RSFC \colon \G(\cS^R_{sut}) \to \cC
 \]
is a weak real Heegaard invariant of $\cS^R_{sut}$. Here, $\cC$ is the homotopy category of $\F$-complexes.
\end{thm}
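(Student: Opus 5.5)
The plan is to reduce the statement to the invariance results already established in \cite{BGX:real_sutured}, rephrased in the graph language of Definition~\ref{def:weak-Heegaard}. Concretely, I would define $\RSFC$ on each vertex $H=(\S,A,B,\tau)$ of $\G(\cS^R_{sut})$ by choosing an admissible representative $\cH=(\S,\alphas,\betas,\tau)$ together with a generic real family of almost complex structures. I would then set $\RSFC(H)$ to be the real sutured Floer complex of $\cH$ in \cite{BGX:real_sutured}, i.e.\ the Lagrangian Floer complex of the fixed loci of the involution induced by $\tau$ on the relevant symmetric product. This is well defined as an object of the homotopy category of $\F$-complexes, because changing the representative within the isotopy class, or changing the almost complex structure, gives continuation maps that are chain homotopy equivalences. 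Those maps are canonical up to homotopy, so the choices form a transitive system in the homotopy category, and I would replace $\RSFC(H)$ by its colimit.

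Next I would assign a chain homotopy equivalence to each edge type in Definition~\ref{def:big-graph}. For an arrow in $\G_{\a\b}(H_1,H_2)$, the attaching sets satisfy $A_1\sim A_2$, so by \cite[Lemma 2.11]{JTZ_naturality_mapping_class_groups} they differ by a sequence of handleslides. The symmetry $\tau(\alphas)=\betas$ makes each of these a real handleslide in the sense of Definition~\ref{def:real-Handleslide}. I would assign the composite of the real triangle maps that \cite{BGX:real_sutured} shows are equivalences; for now only its existence as an isomorphism is needed, not independence of the chosen sequence. For $\{1\}$- and $\Z/2$-(de)stabilization arrows, I would use the stabilization isomorphisms of \cite{BGX:real_sutured}. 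These come from the identification of generators $\x\mapsto \x\times\{c\}$ (respectively $\x\times\{c,\tau c\}$) together with a neck-stretching argument showing that the differentials agree. For an arrow in $\G_{\text{diff}}$, I would use the tautological isomorphism obtained by pushing forward generators and the almost complex structure.

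With these assignments, every edge goes to an isomorphism in $\cC$, which is exactly the definition of a weak real Heegaard invariant. I would also invoke Proposition~\ref{prop:diag-connected-weak} only to note that the vertices in a given component of the graph all represent the same real sutured manifold.

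The main obstacle is admissibility. The real sutured complex requires a \emph{real}-admissible diagram, one with no nonnegative $\tau$-invariant periodic domains, and this must be achievable by equivariant isotopies in every isotopy class. Moreover, each move must be realizable through admissible representatives. I would handle this with a $\tau$-equivariant version of the winding argument: wind along an arc and simultaneously along its $\tau$-image, as is done for the analogous statement in \cite{guth_manolescu2025real}. This is the step where the details of \cite{BGX:real_sutured} must be checked to apply verbatim.
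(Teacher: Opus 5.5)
Your proposal is correct and matches the paper, which gives no separate argument for this theorem: it imports the invariance results of \cite{BGX:real_sutured} and notes that they amount to a morphism of graphs sending every edge of $\G(\cS^R_{sut})$ to an isomorphism. Your transitive-system and colimit bookkeeping, whose canonicity is really part of what Section~\ref{sec:real hf} proves, is more than a weak invariant needs: it suffices to fix one admissible representative for each isotopy diagram and to choose any isomorphism for each edge.
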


\subsection{Strong Real Heegaard Floer Invariants}

We now outline the additional axioms which must be satisfied in order to obtain natural invariants of real manifolds.

\begin{definition}\label{def:distinguished-rect}
Let $H_i = (\S_i,[\alphas_i],[\betas_i], \tau_i)$ be real isotopy diagrams for $1 \le i \le 4$.
A \emph{distinguished rectangle} in $\G$ is a subgraph
\begin{align*}
\begin{tikzcd}[ampersand replacement = \&]
    H_1 \ar[r,"e"] \ar[d,"f"] \& H_2 \ar[d,"g"]\\
    H_3 \ar[r,"h"] \ar[r] \& H_4
\end{tikzcd}
\end{align*}
of $\G$ that satisfies one of the following properties:
\begin{enumerate}
\item\label{item:rect-alpha-stab} Either both $e$ and $h$ are real-equivalences while $f$ and $g$ are both $\cO$-stabilizations.
\item\label{item:rect-alpha-diff}  Both $e$ and $h$ are real-equivalences while $f$ and $g$ are both equivariant diffeomorphisms. In this case, we necessarily have
$\S_1 = \S_2$ and $\S_3 = \S_4$. We require, in addition, that the diffeomorphisms
$f \colon \S_1 \to \S_3$ and $g \colon \S_2 \to \S_4$ are the same.
\item\label{item:rect-stab-stab} The maps $e$ and $h$ are $\cO_1$-stabilizations while $f$ and $g$ are $\cO_2$-stabilizations, such that there are disjoint orbits of disks $\cO_1 \times D_1$ and $\cO_2\times D_2$ and disjoint orbits of punctured tori $\cO_1 \times T_1$, $\cO_2 \times T_2 \subset \S_4$ satisfying
$\S_1 \setminus ((\cO_1 \times D_1) \cup (\cO_2 \times D_2))= \S_4 \setminus (\cO_1 \times T_1) \cup (\cO_2 \times T_2)$, and such that
$\S_2 = (\S_1 \setminus (\cO_1 \times D_1)) \cup (\cO_1 \times T_1)$ and $\S_3 = (\S_1 \setminus (\cO_2 \times D_2)) \cup (\cO_2 \times T_2)$.
\item\label{item:rect-stab-diff} The maps $e$ and $h$ are $\cO$-stabilizations, while $f$ and $g$ are equivariant diffeomorphisms. Furthermore,
there are orbits of disks $(\cO \times D) \subset \S_1$ and $(\cO \times D') \subset \S_3$ and orbits of punctured tori $T\times \cO \subset \S_2$ and
$\cO \times T' \subset \S_4$ such that $\S_1 \setminus (\cO\times D) = \S_2 \setminus (\cO\times T)$ and $\S_3 \setminus (\cO\times D') = \S_4 \setminus (\cO\times T')$,
and the diffeomorphisms $f$, $g$ satisfy $f(\cO\times D) = \cO\times D'$,  $g(\cO\times T) = \cO\times T'$, and $f|_{\S_1 \setminus (\cO\times D)} = g|_{\S_2 \setminus (\cO\times T)}$.
\end{enumerate}
\end{definition}

\begin{figure}[h]
\def\svgwidth{.8\linewidth}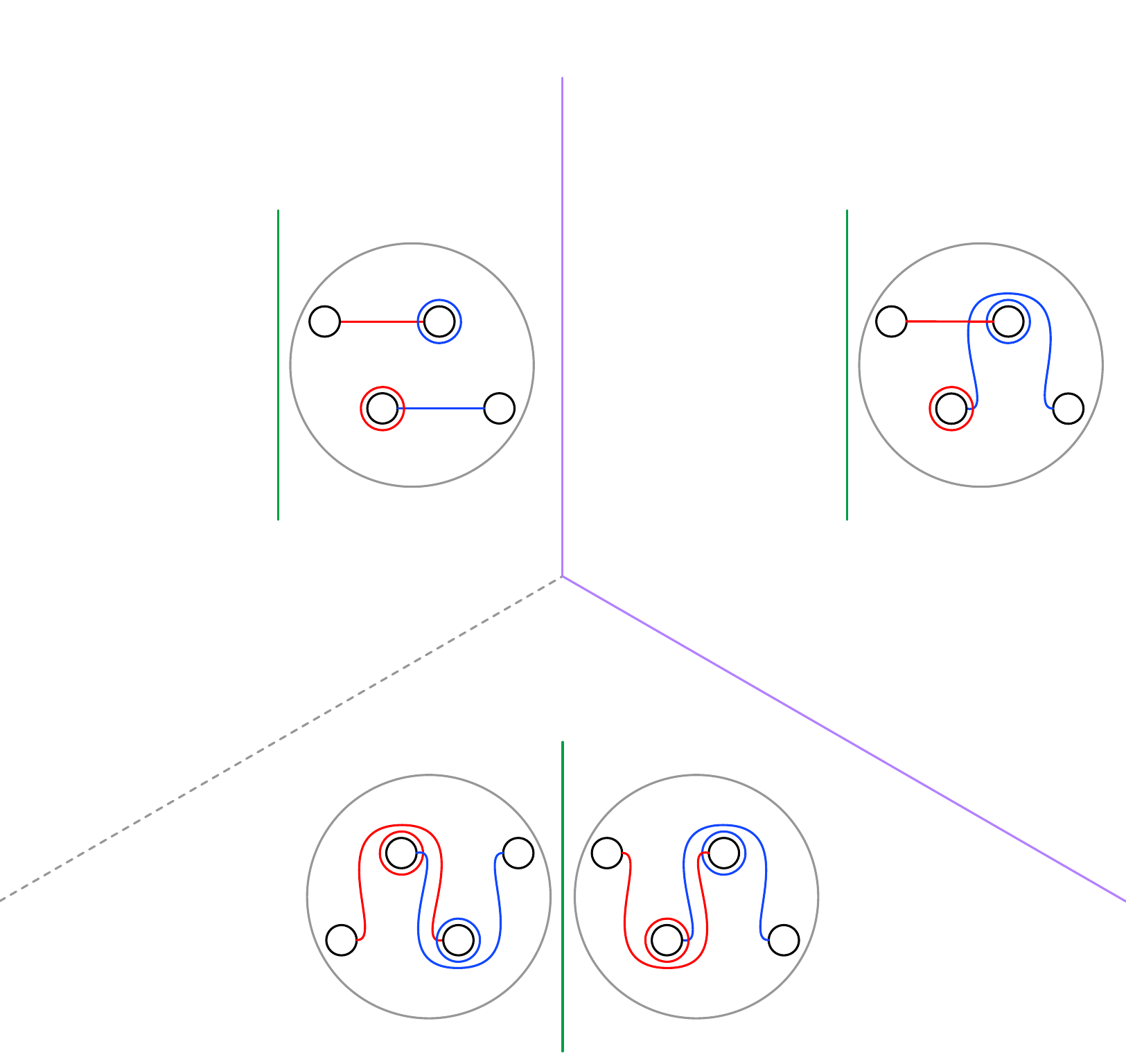
    \caption{A simple real handleswap loop. As usual in Heegaard Floer theory, the alpha curves are in red and the beta curves in blue. Also, the fixed point set is in green. The purple lines represent real equivalences, and the dashed line represents a diffeomorphism. See Remark~\ref{rem:color_code}.}
\label{fig:simple_swap}
\end{figure}

\begin{definition} \label{def:simple-handleswap}
A \emph{simple real handleswap} is a subgraph of $\G$ of the form 
\begin{align*}
\begin{tikzcd}[ampersand replacement = \&]
    H_1 \ar[rr,"e"] \& \& H_2\ar[ld,"f"] \\
    \& H_3 \ar[ul,"g"] \& 
\end{tikzcd}
\end{align*}
such that
\begin{enumerate}
\item $H_i = (\S_0 \# \S\# \S_1,[\alphas_i],[\betas_i], \tau)$ are isotopy diagrams for $i \in \{\,1,2,3\,\}$,
where $\S_0$ is a genus two surface,
\item $e$ and $f$ are real equivalences and $g$
  is an equivariant diffeomorphism,
\item in the surface $P = (\S_0\# \S \# \S_1) \setminus (\S)$ (which is a pair of punctured genus two surfaces), the above triangle
is conjugate to the triangle in \Cref{fig:simple_swap}; i.e.,
there is a diffeomorphism that throws $P \cap H_i$ onto the pictures
in the green circles, sending the $\alpha$-circles in~$P$ to the two red circles,
and the $\beta$-circles in $P$ to the two blue circles,
\item $\S_0$ and $\S_1$ are exchanged by $\tau$,
\item \label{it:identical} in $\S$, the diagrams $H_1$, $H_2$, and $H_3$ are identical.
\end{enumerate}
\end{definition}

\begin{remark}
We could have combined the two real handleslides from \Cref{def:simple-handleswap} into a single real equivalence. We chose to keep them separate to be in line with \cite[Definition 2.32]{JTZ_naturality_mapping_class_groups}. 
\end{remark}

\begin{figure}[h]
\def\svgwidth{.8\linewidth}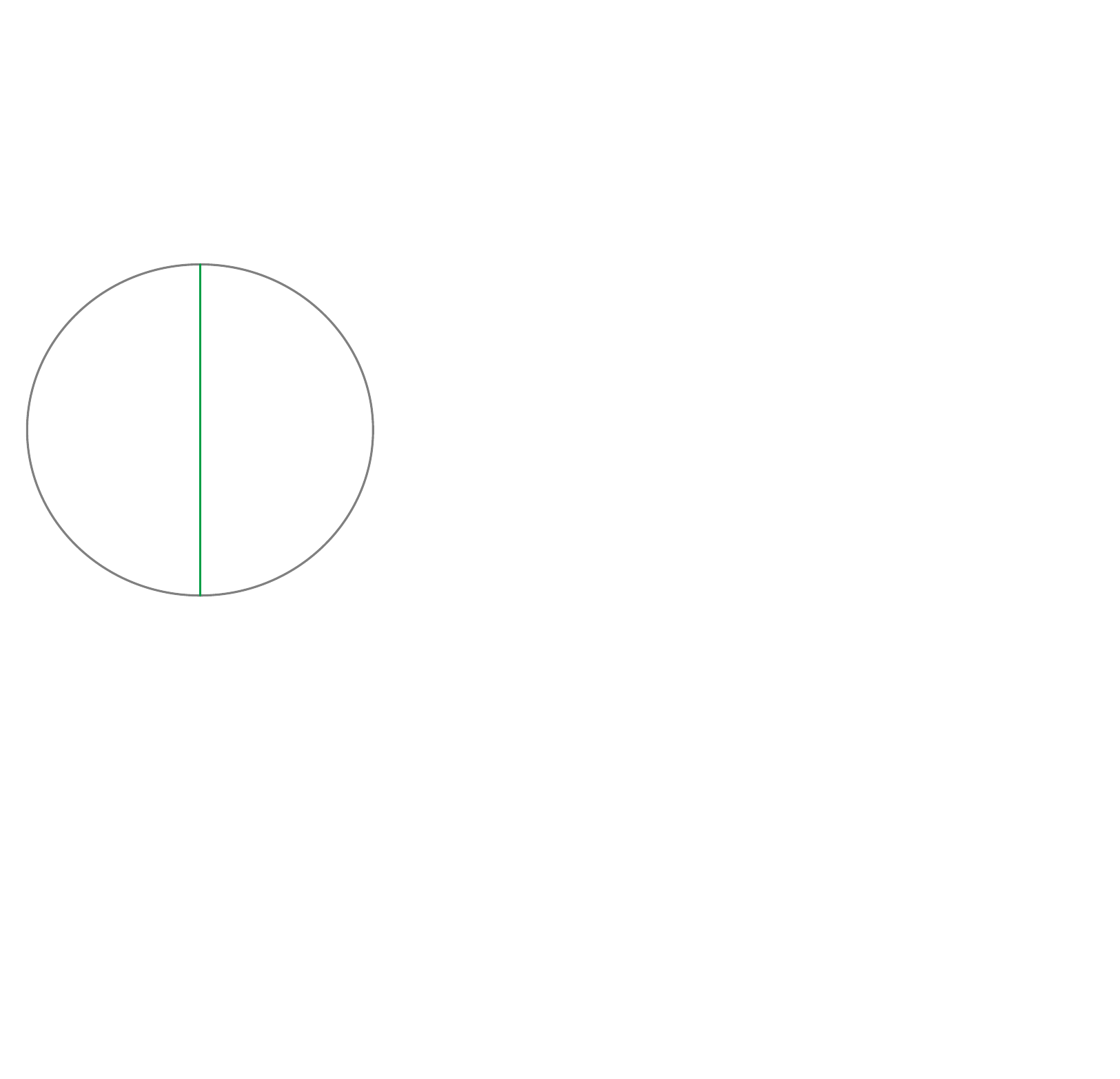
    \caption{A simple trade loop. The black lines indicate (de)stabilizations. The letters in the circles indicate which feet of the handles are identified, and how. (We emphasize that for a $\{1\}$-stabilization, while the attaching circles are symmetric along a vertical axis, the gluing is by reflection in a horizontal axis.) We chose the letters $R$, $F$, $G$ because they have no symmetries, so their position fully describes the gluing.}
\label{fig:simple_trade_loop}
\end{figure}

\begin{definition}\label{def:simple-trade}
    A \emph{simple trade} is a subgraph of $\cG_R$ of the form
        \begin{align*}
    \begin{tikzcd}[ampersand replacement = \&,column sep = small]
        \&\& H_1 \ar[rrd,"e"] \&\& \\
        H_5 \ar[urr,"i"] \&\&\&\& H_2  \ar[dl,"f"] \\
       \& H_4 \ar[ul,"h"] \& \& H_3  \ar[ll,"g"] \&\\
    \end{tikzcd}
    \end{align*}

such that 
\begin{enumerate}
\item $H_1 = (\S \# \S_0,[\alphas_1],[\betas_1], \tau_1)$, $H_2 = (\S \# \S_0\#\S_1,[\alphas_2],[\betas_2], \tau_2)$, $H_i = (\S \# \S_0\#\S_1\#\S_2,[\alphas_i],[\betas_i], \tau_i)$ for $i \in \{3,4\}$, and $H_5 = (\S \# \S_0,[\alphas_5],[\betas_5], \tau_5)$ are isotopy diagrams where $\S_i$ are genus 1 surfaces,
\item $e$ and $f$ are $\{1\}$-stabilizations, $g$
 is a real equivalence (consisting of three real handleslides: first $G$ over $F$, then $F$ over $R$, and then $R$ over $F$), $h$ is an equivariant diffeomorphism, and $i$ is a $\Z/2$-destabilization,
\item in the complement of $\S$, the above pentagon
is conjugate to the pentagon in \Cref{fig:simple_trade_loop}; i.e.,
there is a diffeomorphism that throws $P \cap H_i$ onto the pictures
in the gray circles, sending the $\alpha$-circles in~$P$ to the three red circles,
and the $\beta$-circles in $P$ to the three blue circles.
\end{enumerate}
\end{definition}

\begin{definition}\label{def:strong-Heegaard}
Let $\cS^R$ be a set of equivariant diffeomorphism types of real sutured manifolds.
A \emph{strong real Heegaard invariant of $\cS^R$} is a weak Heegaard
invariant $F \colon \G(\cS^R) \to \cC$ that satisfies the following
axioms:
\begin{enumerate}
\item\label{item:strong-funct} \textbf{Functoriality}: The restrictions 
  of $F$ to $\G_{\a\b}(\cS^R)$ and~$\G_{\text{diff}}(\cS^R)$ are functors to~$\cC$. Furthermore, if
  $e \colon H_1 \to H_2$
  is an $\cO$-stabilization and $e' \colon H_2 \to H_1$ is the corresponding $\cO$-destabilization,
  then $F(e') \sim F(e)^{-1}$.
\item\label{item:strong-commute} \textbf{Commutativity}: For every
  distinguished rectangle
\begin{align*}
\begin{tikzcd}[ampersand replacement = \&]
    H_1 \ar[r,"e"] \ar[d,"f"] \& H_2 \ar[d,"g"]\\
    H_3 \ar[r,"h"] \ar[r] \& H_4
\end{tikzcd}
\end{align*}
in $\G(\cS^R)$, we have $F(g) \circ F(e) \sim F(h) \circ F(f)$.
\item\label{item:strong-cont} \textbf{Continuity}: If $H \in |\G(\cS^R)|$ and $e \in \G_{\text{diff}}(H,H)$ is a diffeomorphism isotopic to $\text{Id}_\S$, then $F(e) \sim \text{Id}_{F(H)}$.
\item\label{item:strong-handleswap} \textbf{Handleswap invariance}: For every simple handleswap
\begin{align*}
\begin{tikzcd}[ampersand replacement = \&]
    H_1 \ar[rd,"e"] \\
    H_3 \ar[u,"g"] \& \ar[l,"f"] H_2
\end{tikzcd}
\end{align*}
in $\G(\cS^R)$, we have $F(g) \circ F(f) \circ F(e) \sim \text{Id}_{F(H_1)}$.
\item\label{item:strong-trade}\textbf{Trade invariance:} For every simple trade pentagon 
    \begin{align*}
    \begin{tikzcd}[ampersand replacement = \&,column sep = small]
        \&\& H_1 \ar[rrd,"e"] \&\& \\
        H_5 \ar[urr,"i"] \&\&\&\& H_2  \ar[dl,"f"] \\
       \& H_4 \ar[ul,"h"] \& \& H_3  \ar[ll,"g"] \&\\
    \end{tikzcd}
    \end{align*}
    in $\G(\cS^R)$, we have $F(i)\circ F(h)\circ F(g) \circ F(f) \circ F(e) \sim \text{Id}_{F(H_1)}$.
\end{enumerate}
\end{definition}

The main result of this paper is the following:
\begin{thm}\label{thm:naturality}
    The following are strong real Heegaard invariants:
    \begin{enumerate}
        \item Real sutured Floer homology;
        \item Real Heegaard Floer homology;
        \item Real link Floer homology.
    \end{enumerate}
\end{thm}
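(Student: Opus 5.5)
We verify the five axioms of Definition~\ref{def:strong-Heegaard} for the weak invariants of Theorems~\ref{thm:HF-weak}, \ref{thm:HFL-weak} and \ref{thm:RSFH-weak}. The plan is to reduce as much as possible to the unreal arguments of \cite{JTZ_naturality_mapping_class_groups}. The key point is that a real Heegaard Floer complex is Lagrangian Floer homology of $\mathbb{T}_\alpha$ with the fixed locus of the anti-symplectic involution $\tau$ in $\mathrm{Sym}^g(\S)$, or equivalently counts $\tau$-invariant curves on the doubled diagram.

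\textbf{Functoriality and continuity.} For real equivalences, the map is a composition of real triangle maps, defined by counting $\tau$-symmetric holomorphic triangles. Functoriality on $\G_{\a\b}$ then follows from associativity of real triangle and quadrilateral maps. The required degeneration of real rectangles is the real analogue of \cite[Section 9.2]{JTZ_naturality_mapping_class_groups}, with the real small-triangle and nearest-point arguments carried out $\tau$-equivariantly. Diffeomorphism maps are tautological, so they are strictly functorial. For continuity, we follow JTZ: an isotopy to the identity through equivariant diffeomorphisms is realized, for a suitable equivariant almost complex structure family, by a continuation map homotopic to the identity. Here we use that isotopic equivariant diffeomorphisms can be connected through equivariant isotopies supported away from the diagram curves after a real isotopy of attaching sets. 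The destabilization axiom follows because real stabilization maps send $\mathbf{x}\mapsto \mathbf{x}\times c$ for the new intersection point $c$ (one $\tau$-fixed point $c$ in a $\{1\}$-stabilization, or $c\times\tau(c)$ in a $\Z/2$-stabilization).

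\textbf{Commutativity.} We check the distinguished rectangles case by case. Cases (\ref{item:rect-alpha-diff}) and (\ref{item:rect-stab-diff}) are naturality of holomorphic counts under equivariant diffeomorphisms. Case (\ref{item:rect-stab-stab}) is immediate from the explicit form of stabilization maps. Case (\ref{item:rect-alpha-stab}) needs that the real triangle maps commute with stabilization. This is the real version of the stabilization/triangle-map compatibility, proved by a neck-stretching argument at the connected-sum point. For a $\Z/2$-orbit we stretch at two symmetric necks simultaneously. For a $\{1\}$-orbit we use a $\tau$-symmetric neck, so the model count reduces to real disks in the torus with the single fixed intersection point.

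\textbf{Handleswap and trade invariance.} For the simple real handleswap, the two genus-two summands $\S_0,\S_1$ are exchanged by $\tau$. Real curves are therefore determined by their restriction to one side, and the loop decomposes as a $\tau$-symmetric doubling of the unreal handleswap. Our plan is to degenerate along the necks separating $\S$, $\S_0$ and $\S_1$. The composite then becomes the identity on the $\S$ part tensored with the JTZ handleswap composite on $\S_0$, which is the identity by \cite[Section 9.3]{JTZ_naturality_mapping_class_groups}.

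The genuinely new step, and the main obstacle, is trade invariance. We would pass to the cylindrical reformulation (real curves in $\S\times[0,1]\times\R$ with boundary on $\alphas\times\{1\}\times\R$ and the $\tau$-fixed boundary condition) and stretch along the neck separating $\S$ from the genus-three region. We then compute the pentagon explicitly on a model generator $\mathbf{x}\times\theta$. The two $\{1\}$-stabilizations contribute fixed intersection points. The three real handleslides contribute triangle maps, which in the model region we would compute via small triangles plus a parity/index argument ruling out other domains. The $\Z/2$-destabilization then projects onto a symmetric pair of points.

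The expected difficulty lies in the handleslide $R$ over $F$ that passes across the fixed arc. The relevant triangles there can have domains meeting the fixed-point set, and their moduli spaces are real curves with possible boundary bubbling. We would first try to identify the composite with $\mathbf{x}\times\theta\mapsto\mathbf{x}\times\theta'$ modulo higher-energy terms, and then kill those terms by a filtration/area argument that makes the composite upper triangular with identity diagonal. Finally we use Theorem~\ref{thm:HF-weak} invertibility to conclude that the composite is chain homotopic to the identity, mirroring the handleswap analysis. The link and sutured versions follow verbatim, since all model computations occur away from basepoints and sutures.
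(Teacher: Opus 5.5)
Your treatment of Axioms (1)--(4) is broadly in line with the paper. The real equivalence maps there are counts of $\tau$-invariant rectangles, i.e.\ triangles with one edge on $M^R$. The paper also reruns the JTZ arguments with $\mathbb{T}_\beta$ replaced by $M^R$, and treats the handleswap as a symmetric doubling of the JTZ count.

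\textbf{The gap is in trade invariance, at your last step.} You propose to show the composite is upper triangular with identity on the associated graded, and then to ``use invertibility to conclude that the composite is chain homotopic to the identity.'' That inference fails. A filtered chain map $\mathrm{id}+N$, with $N$ strictly filtration-decreasing, is always an isomorphism. It is homotopic to the identity only if $N$ is null-homotopic. Invertibility of every loop map is already guaranteed by weak invariance, so it contributes nothing. The entire content of the trade axiom is that the off-diagonal terms vanish, so you need an exact computation. For a sufficiently stretched symmetric $J$, each of the three handleslide maps must satisfy $F_{\mathcal{Q}\#\mathcal{Q}_*}((\mathbf{z}\times\Theta)\otimes(\mathbf{y}\times\mathbf{a})\otimes(\tau(\mathbf{z})\times\tau_0(\Theta)))=F_{\mathcal{Q}}(\mathbf{z}\otimes\mathbf{y}\otimes\tau(\mathbf{z}))\times\mathbf{b}$ on the nose. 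Then the pentagon composite is $(\varphi|_\Sigma)_*\circ\Phi^{\alpha\to\alpha'}_{\beta\to\beta'}\otimes\mathrm{id}$, and continuity finishes the argument.

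\textbf{``Small triangles plus a parity argument'' also does not reach that computation.} After stretching at the $\tau$-fixed neck point, the model curves are rectangles with $n_{p_0}=k=n_p(u)$. They are subject to the matching condition $\rho^{p_0}(u_0)=\rho^p(u)\in\mathrm{Sym}^k(\Diamond)^R$. You need the following ingredients.
\begin{enumerate}
\item The index formula $\mu_R(\psi_0)=n_{p_0}(\psi_0)+\mu(\mathbf{x},\Theta)$, to exclude every model generator other than $\Theta$.
\item The count $\#\mathcal{M}_{R,\Theta}(\mathbf{d})\equiv 1$ for every generic divisor $\mathbf{d}$.
\end{enumerate}
The genuinely new difficulty is not specific to the slide of $R$ over $F$; the paper handles all three slides identically. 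The difficulty is that the real part of the diagonal has codimension one in $\mathrm{Sym}^k(\Diamond)^R$, so its complement is disconnected. Hence the JTZ path-of-divisors argument only gives independence of $\mathbf{d}$ within each stratum $\mathrm{Sym}^{k-2\ell}(\Diamond^R)\times\mathrm{Sym}^{\ell}(\Diamond/R)$. In addition, degenerations along $\tau$-fixed closed curves cannot be excluded by index, and must be shown to cancel in pairs.

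The paper resolves this as follows.
\begin{enumerate}
\item It pushes the divisor into the corner $v_{\alpha\beta}$. This gives $\#\mathcal{M}_{R,\Theta}(\mathbf{d})\equiv(\#\mathcal{M}_{R,\mathbf{a}}(d))^k\cdot\sum_{n_{p_0}(\psi)=0}\#\mathcal{M}_R(\psi)$, independent of the stratum.
\item It evaluates $\#\mathcal{M}_{R,\mathbf{a}}(d)\equiv 1$ using a triple $\{1\}$-stabilization of a genus one diagram for $\Sigma_2(U\cup U)$, together with invariance of $\widehat{\mathrm{HFR}}$.
\item The $n_{p_0}=0$ factor is $1$, because it is a quasi-isomorphism between one-dimensional complexes.
\end{enumerate}
Without these ingredients, your plan for Axiom (5) does not close.
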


\begin{definition} \label{def:isotopic}
  Suppose that $H_1$ and~$H_2$ are two isotopy diagrams of
  $(Y,\gamma,\tau)$ with $H_i = (\S_i,A_i,B_i,\tau_i)$, and let $\iota_i :
  \Sigma_i \to Y$ be the inclusion for $i \in \{1,2\}$.  Then a
  diffeomorphism $d : H_1 \to H_2$ is \emph{isotopic to the identity
    in~$Y$} if $\iota_2 \circ d \colon \S_1 \to Y$ is equivariantly isotopic to
  $\iota_1 \colon \S_1 \to Y$ relative to $s(\gamma)$.
\end{definition}

\begin{definition} \label{def:subgraph} Let $(Y,\g,\tau)$ be a real sutured
  manifold. Then $\G_{(Y,\g,\tau)}$ is the subgraph of~$\G$ whose vertex
  set $|\G_{(Y,\g,\tau)}|$ consists of all isotopy
  diagrams of $(Y,\g,\tau)$. The set of edges between $H_1$, $H_2 \in
  |\G_{(Y,\g,\tau)}|$ is defined to be
  \[
  \G_{(Y,\g,\tau)}(H_1,H_2) = \G_{\a\b}(H_1,H_2) \cup \G_{\{1\}}(H_1,H_2) \cup
  \G_{\Z/2}(H_1,H_2) \cup (\G)^0_\text{diff}(H_1,H_2),
  \]
  where $(\G)^0_\text{diff}(H_1,H_2)$ is the set of diffeomorphisms from $H_1$ to $H_2$
  isotopic to the identity in~$Y$.
\end{definition}

In Section~\ref{sec:reducing no_monodromy} we will establish a stronger version of Nagase's result:
\begin{proposition}\label{prop:connected-strong}
    Any two vertices in $\G_{(Y,\g,\tau)}$ can be connected by an oriented path.
\end{proposition}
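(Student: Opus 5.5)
The plan is to adapt the Morse-theoretic proof of \cite[Proposition 2.36]{JTZ_naturality_mapping_class_groups} to the equivariant setting, using the analysis of real functions and real gradient flows from \Cref{sec:Singularities} and \Cref{sec:Families of Real gradients}. Given two vertices $H_1, H_2$ of $\G_{(Y,\g,\tau)}$, I would first realize each as coming from a real Morse function with a real gradient-like vector field. Concretely, I would choose a real sutured function $f_i \colon Y \to [-1,1]$ with $f_i \circ \tau = -f_i$, together with a $\tau$-equivariant gradient-like field $v_i$, such that $\S_i = f_i^{-1}(0)$, the index one critical points lie in $f_i^{-1}([-1,0))$, and the index two critical points are their $\tau$-images. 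The $\alpha$-curves are then the intersections of the ascending manifolds of the index-one points with $\S_i$, and the $\beta$-curves are their $\tau$-images. Since $H_i$ is embedded in $Y$, this pair $(f_i,v_i)$ exists by the codimension-$0$ part of the analysis in \Cref{sec:constructing hd}.

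Next I would connect $(f_1,v_1)$ to $(f_2,v_2)$ by a generic one-parameter family $(f_t,v_t)$ of real functions and real gradient-like fields. Such a family exists because the space of real sutured functions compatible with the sutures is contractible; it is convex once $\gamma$ is fixed. By the classification of codimension-one real singularities and flow bifurcations, the family has finitely many bifurcation times. Away from these times the induced isotopy diagrams vary by equivariant diffeomorphisms isotopic to the identity in $Y$, which give edges of $(\G)^0_{\mathrm{diff}}$; the point is that the level sets $f_t^{-1}(0)$ move by an ambient equivariant isotopy. At each bifurcation time the diagram changes by one of the moves listed in \Cref{sec:constructing hd}:
\begin{enumerate}
\item a handleslide, from an index $1$--$1$ flow line;
\item a $\Z/2$-orbit birth or death, from a pair of $\tau$-exchanged births or deaths;
\item a $\{1\}$-stabilization, from a birth or death on the fixed point set;
\item a crossover.
\end{enumerate}
By \Cref{sec:reducing moves}, the crossover decomposes into a stabilization, handleslides and a destabilization. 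Every step can therefore be traversed by an oriented edge. Stabilizations and destabilizations have edges in both directions, real equivalences are symmetric, and a diffeomorphism isotopic to the identity in $Y$ has an inverse with the same property. Concatenating gives the oriented path.

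The main obstacle will be making sure that each local move is realized by edges that stay inside $\G_{(Y,\g,\tau)}$. In particular, the identifications of the diagram before and after a bifurcation have to be diffeomorphisms isotopic to the identity in $Y$ (rel $s(\gamma)$), and not merely abstract equivariant diffeomorphisms. I would handle this by constructing all the identifications through equivariant flows of $v_t$ or ambient equivariant isotopies supported near the bifurcation. The delicate point is near the fixed point set, where genericity only holds among equivariant families. This is where the real transversality results of \Cref{sec:Families of Real gradients} are needed to rule out any non-generic behavior beyond the crossover.
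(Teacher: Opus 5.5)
Your strategy is the paper's own: the proof there runs \cite[Proposition 2.36]{JTZ_naturality_mapping_class_groups} using \Cref{cor:weaklycon}, \Cref{prop: codim 1 to heegaard moves} and \Cref{lem:sym_6.21}. However, the step you use to produce the edges between bifurcation times fails as written.

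You take the Heegaard surface to be $f_t^{-1}(0)$ and assert that it moves by an ambient equivariant isotopy away from bifurcations. This breaks down for real functions:
\begin{itemize}
\item The critical values of an orbit $\{p,\tau(p)\}$ are $\pm f_t(p)$. For an index-one point $p$, the value $f_t(p)$ can cross $0$ at a time that is not a bifurcation of the gradient at all.
\item At that moment $f_t^{-1}(0)$ passes through both $p$ and $\tau(p)$ and changes topology. Afterwards it no longer separates $C_{01}(f_t)$ from $C_{23}(f_t)$.
\item Near a crossover this is forced. A flow line from $\tau(p)$ to $p$ gives $-f(p)<f(p)$, so $f(p)>0$. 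Hence the level set is not a Heegaard surface for the diagrams on either side of the crossover.
\end{itemize}
The paper instead uses the space $\Sigma^R(f_t,v_t)$ of real separating surfaces. These are invariant and transverse to $v_t$, but they are not level sets. For separable pairs this space is nonempty and contractible by \Cref{lem:real splittings contractible}, which gives continuous families of surfaces on each interval of separable gradients. \Cref{lem:sym_6.21}, via \Cref{lem:sym_isotopy_extension}, then converts these families into equivariant diffeomorphisms isotopic to the identity in $Y$. At the non-separable bifurcations ($\{1\}$-births/deaths and crossovers), the surfaces on the two sides are built separately, as in the proof of \Cref{prop: codim 1 to heegaard moves}. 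There one modifies a non-invariant separating surface, for instance by adding a tube, and pushes it along a rescaled flow to make it invariant.

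There are also smaller omissions:
\begin{itemize}
\item To get the path you must connect pairs $(f,v)$, not just functions. This is what the weak contractibility of $\FV(Y,\gamma,\tau)$ in \Cref{cor:weaklycon} supplies.
\item The stabilizations produced at bifurcations are generalized: a $\{1\}$-stabilization of type $(k)$, or a $\Z/2$-stabilization of type $(k,\ell,c)$ in which $D$ and $\tau(D)$ may overlap. The stabilization edges of $\G_{(Y,\gamma,\tau)}$ are simple, so you need the decomposition of \Cref{sec:reducing moves} for these as well, not only for the crossover. That decomposition gives a simple stabilization, real handleslides and, when $c>0$, crossovers.
\item Index $0$--$1$ and $2$--$3$ births give overcomplete diagrams. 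These become genuine diagrams after choosing spanning trees, and different trees give real-equivalent diagrams.
\end{itemize}
With these repairs your argument goes through. Since every edge type is symmetric, the resulting path is automatically oriented, as you say.
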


\begin{definition} \label{def:iso}
  Given a weak real Heegaard invariant $F \colon \G(\cS^R) \to \cC$ and an
  oriented path $\eta$ in $\G(\cS^R)$ of the form
  \[
  H_0 \stackrel{e_1}{\longrightarrow} H_1
  \stackrel{e_2}{\longrightarrow} \dots
  \stackrel{e_n}{\longrightarrow} H_n,
  \]
  define $F(\eta)$ to be the isomorphism
  \[
  F(e_n) \circ \dots \circ F(e_1) \,\colon\, F(H_0) \to F(H_n).
  \]
\end{definition}

Naturality of a real Heegaard invariant means that the equivalence $F(\eta)$ is independent of the choice of path $\eta$ up to homotopy. For strong real Heegaard invariants, this is indeed the case; in Section~\ref{sec:reducing no_monodromy} we will prove the following. 

\begin{thm} \label{thm:iso} Let $\cS^R$ be a set of diffeomorphism
  types of real sutured manifolds containing $[(Y,\g, \tau)]$ and let
  $F \colon \G(\cS^R) \to \cC$ be a strong real Heegaard invariant. Given real
  isotopy diagrams $H$, $H' \in |\G_{(Y,\g,\tau)}|$ and any two oriented
  paths $\eta$ and $\nu$ in $\G_{(Y,\g,\tau)}$ from $H$ to $H'$, we
  have
  \[
  F(\eta) = F(\nu).
  \]
\end{thm}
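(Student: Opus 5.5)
The plan is to follow the strategy of \cite[Theorem 2.39]{JTZ_naturality_mapping_class_groups}, replacing their Cerf-theoretic input by its equivariant version from the later sections. It suffices to show that every oriented loop $\eta\nu^{-1}$ in $\G_{(Y,\g,\tau)}$ based at $H$ satisfies $F=\mathrm{Id}$. Using functoriality (axiom \eqref{item:strong-funct}), a backwards destabilization edge can be replaced by the corresponding stabilization, so we may treat unoriented loops. Each loop will be decomposed into distinguished rectangles, simple handleswaps, simple trades, and loops of diffeomorphisms isotopic to the identity.

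\textbf{Step 1: realize loops by families of real functions.} Fix an equivariant embedding of each diagram in the loop. A real isotopy diagram of $(Y,\g,\tau)$ comes from a real Morse function $f$ with $f\circ\tau=-f$ and a real gradient-like vector field $v$, up to a contractible set of choices. Each edge of the loop is realized by a one-parameter family $(f_t,v_t)$ with only codimension-one bifurcations, using the classification in Section~\ref{sec:Families of Real gradients}. Edges isotopic to the identity in $Y$ are covered by continuity \eqref{item:strong-cont}: an equivariant isotopy of $\S$ in $Y$ lifts to a family of data with no bifurcations.

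\textbf{Step 2: fill the loop by a generic two-parameter family.} The space of real Morse-type functions with fixed boundary behaviour is contractible: it is convex after averaging $f\mapsto \tfrac12(f-f\circ\tau)$, and the same holds for vector fields. So the loop of pairs $(f,v)$ bounds a disk. We perturb this disk to be generic among real families. Its codimension-two points are then the finitely many singularities listed in Sections~\ref{sec:Singularities}--\ref{sec:Families of Real gradients}. These are the free-orbit analogues of the JTZ list, together with the new fixed-locus phenomena: the $A_4$ point on $Y^\tau$ and the degenerations involving crossovers. Removing small disks around these points, the loop becomes a product of small loops around codimension-two points, conjugated by paths. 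Away from those points there are only codimension-one walls and transverse double crossings of walls.

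\textbf{Step 3: check each local loop.} Using Section~\ref{sec:constructing hd}, each local loop gives an explicit loop of real Heegaard moves. Using Section~\ref{sec:reducing moves}, we rewrite it in terms of elementary moves; a crossover, for example, is a stabilization, some handleslides, and a destabilization. We then verify that $F$ of each such loop is the identity:
\begin{itemize}
\item double crossings of walls, or of a wall with an isotopy, are distinguished rectangles, handled by \eqref{item:strong-commute};
\item loops among handleslides lie in $\G_{\a\b}$, handled by functoriality \eqref{item:strong-funct};
\item swallowtail and handle-cancellation loops along free orbits reduce as in JTZ, giving simple real handleswaps \eqref{item:strong-handleswap};
\item the $A_4$ point on the fixed locus reduces to the simple trade, handled by \eqref{item:strong-trade}.
\end{itemize}

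\textbf{Main obstacle.} I expect the hardest step to be the reduction in Step 3 of the fixed-locus codimension-two loops to the axioms. This means showing that the loop around an $A_4$ fixed point, and the loops around crossovers, are, after conjugating by real equivalences and diffeomorphisms, exactly a simple trade composed with rectangles and handleswaps. It also requires care with basepoints and with the surface-level diffeomorphisms chosen when identifying diagrams. My first approach would be to draw the real Heegaard diagram in a model neighbourhood of the degenerate fixed critical point and track the attaching curves around the loop explicitly. I would then match the resulting sequence with the pentagon of \Cref{fig:simple_trade_loop}, using real handleslides to normalize each intermediate diagram.
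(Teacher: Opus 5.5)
Your proposal follows essentially the same route as the paper. You reduce to loops via the functoriality axiom, fill the loop by a generic two-parameter family in $\FV(Y,\gamma,\tau)$, turn the codimension-two links into loops of real Heegaard moves, and resolve these (Section~\ref{sec:reducing moves}) into the simple loops of \Cref{def:strong-Heegaard}, with the fixed-locus $A_4$ point producing the simple trade.

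Two steps are stated loosely or left implicit, but the paper supplies both as in \cite{JTZ_naturality_mapping_class_groups}:
\begin{itemize}
\item The filling disk exists because $\FV(Y,\gamma,\tau)$ is weakly contractible via the fibration with contractible fibers in \Cref{cor:weaklycon}. Pairs $(f,v)$ are not convex, so your convexity argument does not give this directly.
\item The overcomplete diagrams that occur in the family must be converted to honest diagrams via spanning trees. Of the resulting cells, the paper only needs to recheck the simple trade cell.
\end{itemize}
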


In particular, for any two vertices $H$ and $H'$, there is a canonical homotopy equivalence between them.
\begin{definition}
  If $H$ and $H'$ are isotopy diagrams
  of $(Y,\g,\tau)$, then let
  \[
  F_{H\ra H'} = F(\eta),
  \]
  where $\eta$ is an arbitrary oriented path connecting $H$ to~$H'$.
  By Theorem~\ref{thm:iso}, the map $F_{H\ra H'}$ does not depend on the
  choice of~$\eta$ up to homotopy.
\end{definition}

\subsection{Transitive systems of curved chain complexes} 

Finally, before proceeding to our analysis of the space of real Heegaard diagrams, we recall the notion of a  transitive system.

\begin{definition}\label{def:transitive-sys-curved-cxs}
    Let $\cC$ be a category and $A$ a set. Let $\mathrm{Trans}_A(\cC)$ be the \emph{category of transitive systems over $\cC$ indexed by $A$}. An object of this category consists of a map $S: A \ra \operatorname{Ob }(\cC)$ together with a collection of distinguished morphisms $\Psi_{a\to b}: S(a) \ra S(b)$ for all $a,b \in A$ which satisfy
    \begin{enumerate}
        \item $\Psi_{a \ra a} = \id$;
        \item $\Psi_{b \ra c} \circ \Psi_{a\ra b} = \Psi_{a\ra c}$.
    \end{enumerate}
    We will at times write objects in this category as $\{S(a)\}_{a \in A}, \{\Psi_{a \to b}\}_{a,b \in A})$ if we want to emphasize the collection of morphisms; we will write $S$ when the distinguished morphisms are clear from the context.
    \end{definition}

    \begin{definition}
    If $S_1$ and $S_2$ are two transitive systems over $\cC$ indexed by $A_1$ and $A_2$, then a \emph{morphism of transitive systems} $F: S_1 \to S_2$  is a collection of morphisms $F_{a_1, a_2}: S_1(a_1) \ra S_2(a_2)$ indexed over $A_1\times A_2$ satisfying 
    \begin{align*}
        \begin{tikzcd}[ampersand replacement = \&]
            S_1(a_1) \ar[r,"F_{a_1,a_2}"]\ar[d,"\Psi_{a_1 \ra b_1}" left] \& S_2(a_2) \ar[d,"\Psi_{a_2 \ra b_2}"]\\
            S_1(b_1) \ar[r,"F_{b_1,b_2}"] \& S_2(b_2) 
        \end{tikzcd}
    \end{align*}
    for all pairs $(a_1, b_1), (a_2, b_2) \in A_1 \times A_2$.

    The identity map of a transitive system $(\{S(a)\}_{a \in A}, \{\Psi_{a \to b}\}_{a, b \in A})$ is defined by the diagrams
    \begin{align*}
        \begin{tikzcd}[ampersand replacement = \&]
            S(a) \ar[r,"\Psi{a\to b}"]\ar[d,"\Psi_{a \ra c}" left] \& S(b) \ar[d,"\Psi_{b \ra d}"]\\
            S(c) \ar[r,"\Psi_{c\to d}"] \& S(d).
        \end{tikzcd}
    \end{align*}
    for all $a, b, c, d \in A$.
    A morphism of transitive systems $F: S_1 \to S_2$ is called an {\em isomorphism} if there is a morphism $G: S_2 \to S_1$ such that $F \circ G$ and $G \circ F$ are the identity morphisms.
\end{definition}

\begin{thm}\label{thm:transitive_system}
    Let $F: \cG^R(\cS^R) \ra \cC$ be a strong real Heegaard invariant. Then, the map $|\G_{(Y,\g,\tau)}| \ra \mathrm{Ob}(\cC)$ given by $H \mapsto F(H)$, together with the collection $\{F_{H \to H'}\}_{H, H' \in |\G_{(Y,\g,\tau)}|}$, forms a transitive system. Furthermore, diffeomorphisms of real manifolds induce isomorphisms of transitive systems. 
\end{thm}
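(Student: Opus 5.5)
The first claim is a formal consequence of Theorem~\ref{thm:iso} together with Proposition~\ref{prop:connected-strong}. Fix a real sutured manifold $(Y,\g,\tau)$ with $[(Y,\g,\tau)] \in \cS^R$. By Proposition~\ref{prop:connected-strong}, any two vertices $H, H' \in |\G_{(Y,\g,\tau)}|$ are joined by an oriented path $\eta$, so $F_{H\to H'} = F(\eta)$ is defined. By Theorem~\ref{thm:iso} it is independent of $\eta$, with equality in $\cC$; recall that $\cC$ is a homotopy category, so ``$\sim$'' is equality there.

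The transitive system axioms then follow as below.
\begin{enumerate}
\item For $\Psi_{H\to H}=\id$, take $\eta$ to be the empty path at $H$, whose image under $F$ is the identity. If one prefers nonempty paths, take the identity diffeomorphism in $(\G)^0_{\text{diff}}(H,H)$, which maps to the identity by the functoriality axiom.
\item For $\Psi_{H'\to H''}\circ\Psi_{H\to H'}=\Psi_{H\to H''}$, concatenate a path from $H$ to $H'$ with one from $H'$ to $H''$. Since $F$ of a concatenation is the composite by Definition~\ref{def:iso}, path-independence gives the identity.
\end{enumerate}

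For the second claim, let $d\colon (Y,\g,\tau)\to (Y',\g',\tau')$ be an equivariant diffeomorphism of real sutured manifolds. For each embedded isotopy diagram $H=(\S,A,B,\tau|_\S)$ of $Y$, the image $d(H)=(d(\S),d(A),d(B),\tau'|_{d(\S)})$ is an isotopy diagram of $Y'$. The restriction $d|_\S$ is an arrow in $\G_{\text{diff}}(H,d(H))$, though not in general isotopic to the identity in $Y'$. I would define
\[
F(d)_{H,H'} = F_{d(H)\to H'}\circ F(d|_\S) \colon F(H)\to F(H').
\]

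To show this is a morphism of transitive systems, I must check that $F_{d(K)\to K'}\circ F(d|_{\S_K})\circ F_{H\to K} = F_{d(H)\to K'}\circ F(d|_{\S_H})$. By transitivity it suffices to show
\[
F(d|_{\S_K})\circ F_{H\to K} = F_{d(H)\to d(K)}\circ F(d|_{\S_H}).
\]
Choose an oriented path $\eta$ from $H$ to $K$ in $\G_{(Y,\g,\tau)}$. Its image $d(\eta)$ is an oriented path in $\G_{(Y',\g',\tau')}$: real equivalences and stabilizations are preserved by $d$, and a diffeomorphism $\phi$ isotopic to the identity in $Y$ is carried to $d\phi d^{-1}$, which is isotopic to the identity in $Y'$. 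It then suffices to verify the commutation edge by edge.
\begin{enumerate}
\item For a real equivalence or an $\cO$-stabilization edge $e$, the square formed by $e$, $d(e)$, and the two restrictions of $d$ is a distinguished rectangle of type \eqref{item:rect-alpha-diff} or \eqref{item:rect-stab-diff}, so it commutes by the commutativity axiom.
\item For a diffeomorphism edge $\phi$, we have $d\circ\phi = (d\phi d^{-1})\circ d$ as diffeomorphisms. Hence the square commutes by functoriality of $F$ on $\G_{\text{diff}}$.
\end{enumerate}

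Finally, compatibility with composition of diffeomorphisms, $F(d'\circ d)=F(d')\circ F(d)$, follows from functoriality on $\G_{\text{diff}}$ together with the naturality just established. Applying this to $d^{-1}$, and using that the identity of $Y$ induces the identity morphism because $\id_\S\in(\G)^0_{\text{diff}}$, shows that $F(d)$ is an isomorphism.

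The main obstacle is the edge-by-edge verification: one must be careful that the squares really are distinguished rectangles in the sense of Definition~\ref{def:distinguished-rect}. This is especially delicate in the stabilization case, where the disks and tori must correspond under $d$ and the orbit type $\cO$ must be preserved, which holds because $d$ is equivariant.
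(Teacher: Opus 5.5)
Your proposal is correct and follows the paper's own route. The transitive-system axioms come from path-independence (Theorem~\ref{thm:iso}) applied to the constant path and to concatenations, and the diffeomorphism squares come from the Commutativity and Functoriality axioms. Your edge-by-edge check spells out what the paper compresses into a single appeal to the Commutativity Axiom: distinguished rectangles of types (2) and (4) for real equivalences and stabilizations, and functoriality on $\G_{\text{diff}}$ for diffeomorphism edges. You should also state that destabilization edges are handled by inverting the corresponding stabilization square, using $F(e')\sim F(e)^{-1}$.
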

\begin{proof}
    Given $H \in |\G_{(Y,\g,\tau)}|$, the $F_{H \ra H}$ can be represented by $F(\eta)$ for any loop $\eta \in \pi_1(\G_{(Y,\g,\tau)}, H)$. In particular, we may take $\eta$ to be the constant map, $*_H$. Therefore, $F_{H \to H} = F(*_H) = \id_{F(H)}$. 
    
    If $H, H', H'' \in |\G_{(Y,\g,\tau)}|$, let $\eta_{H \ra H''}$, $\eta_{H \ra H'}$, and $\eta_{H' \ra H''}$ be paths from $H$ to $H''$, $H$ to $H'$, and $H'$ to $H''$ respectively. From \Cref{thm:iso} it follows that $$F_{H \ra H''} = F(\eta_{H \ra H''}) = F(\eta_{H' \ra H''} \circ \eta_{H \ra H'}).$$ Functoriality implies that $$F(\eta_{H' \ra H''} \circ \eta_{H \ra H'}) = F(\eta_{H' \ra H''}) \circ F(\eta_{H \ra H'}) = F_{H' \ra H''}\circ F_{H \ra H'}.$$ 

    Let $\varphi: (Y_0, \t_0) \to  (Y_1, \t_1)$ be a diffeomorphism of real manifolds and let $H_0$ and $H_1$ be  real isotopy diagrams for $(Y_0, \t_0)$. Let $d_i = \varphi|_{\S_i}$ for $i\in \{0,1\}$ so that $H_i' := d_i(H_i)$ are isotopy diagrams for $(Y_1, \t_1)$ (again, $i\in \{0,1\}$). The diagram 
    \begin{align*}
        \begin{tikzcd}[ampersand replacement = \&]
            F(H_0) \ar[r,"F(d_0)"] \ar[d,"F_{H_0\to H_1}" left] \& F(H_0')\ar[d,"F_{H_0'\to H_1'}" right] \\
            F(H_1) \ar[r,"F(d_1)"] \& F(H_1')
        \end{tikzcd}
    \end{align*}
    is commutative by the Commutativity Axiom (\Cref{def:strong-Heegaard} \Cref{item:strong-commute}). In this way, $\varphi$ induces a morphism $F(\varphi)$ of transitive systems. The map induced by $\varphi^{-1}$ is easily seen to satisfy $F(\varphi^{-1}) = F(\varphi)^{-1}$. 
\end{proof}

\subsection{Naturality of the real Heegaard Floer invariants}
Equipped with Theorems~\ref{thm:naturality},~\ref{thm:iso}, and~ \ref{thm:transitive_system}, we are ready to establish naturality of the real Heegaard Floer invariants. The simplest case is real sutured Floer homology.

\begin{proof}[Proof of \Cref{thm:sutured-nat}]
By Theorem~\ref{thm:naturality}, the morphism $F = \RSFH$ is a strong real Heegaard invariant of $\cS^R_{sut}$. Hence, the groups~$F(H)$ and the isomorphisms $F_{H\to H'}$ form a transitive system by \Cref{thm:transitive_system}.  In the same proof, we showed how a diffeomorphism $\varphi$ of $(Y, \g, \t)$ induced a morphism of transitive systems. 

All that remains is to show that isotopic diffeomorphisms induce identical maps on~$\RSFH$. Equivalently, we show that for any diffeomorphism $\varphi : (Y,\g,\t) \to (Y,\g,\t)$ isotopic to~$\id_{(Y,\g,\t)}$, we have $F(\varphi) =\id_{\RSFH(Y,\g,\t)}$. This is a consequence of \Cref{thm:iso}. Let $H$ be an isotopy diagram of $(Y,\g,\t)$, and let $d = \phi|_H$ and $H' = \phi(H)$.  Since $d$ is isotopic to the identity, it corresponds to an edge $\eta$ of $\G_{(Y,\g,\t)}$ connecting $H$ and $H'$. By \Cref{thm:iso}, $F(\eta) = F_{H \to H'}$. Hence, the map $F(\varphi)$ induces the identity map of the transitive system.

Following \cite[Remark 1.2]{JTZ_naturality_mapping_class_groups}, we define the canonical invariant $\RSFH(Y, \g, \t)$ to be the colimit of this transitive system. 
\end{proof}

The case of the real Heegaard Floer invariants for closed (pointed) three-manifolds is slightly more subtle. In order to associate a real sutured manifold to $(Y, \t, p)$, we must also choose a framing $\xi = (\xi_1, \xi_2, \xi_3)$ of $p$ so that $\xi_3 \in T_pC$, as in \Cref{def:S-cat}. Let $\RMan_{*, \xi}$ be the category of real 3-manifolds equipped with framed basepoints like this. In this category, a morphism from $(Y_0, \t_0, p_0, \xi_0)$ to $(Y_1, \t_1, p_1, \xi_1)$ is a pointed, equivariant diffeomorphism $\varphi: (Y_0, \t_0, p_0) \ra (Y_1, \t_1, p_1)$ satisfying $d\varphi_p(\xi_0) = \xi_1$. 

By way of \cite[Definition 2.3]{JTZ_naturality_mapping_class_groups}, the data $(Y, \t, p, \xi)$ determines a real sutured manifold $(Y(p), \gamma(p, \xi), \t)$: We  replace $p$ with the unit sphere bundle of $T_pY$ equipped with the suture $\gamma(p,\xi)$ which is the unit circle bundle of the plane $\mathrm{span}(\xi_1, \xi_2)$.  

\begin{proposition}\label{prop:framed-HFR-nat}
    There is a functor 
    \begin{align*}
        \CFR^\circ: \RMan_{*,\xi} \ra \mathrm{Trans}_{\cG^R_{(Y, \t, p)}}(\cC),
    \end{align*}
    where $\cC$ is the homotopy category of chain complexes over the ring $\F[U]$.
\end{proposition}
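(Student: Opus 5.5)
We will deduce \Cref{prop:framed-HFR-nat} from \Cref{thm:naturality} and \Cref{thm:transitive_system}, following the proof of \Cref{thm:sutured-nat} and \cite[Section 1]{JTZ_naturality_mapping_class_groups}.

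\textbf{Objects.} Given an object $(Y,\t,p,\xi)$ of $\RMan_{*,\xi}$, pass to the real sutured manifold $(Y(p),\gamma(p,\xi),\t)$. Its isotopy class lies in $\cS^R_{man}$. The involution extends to the unit sphere of $T_pY$ because $p\in Y^\t$. It exchanges $R_+$ and $R_-$ precisely because $d\t$ reverses the orientation of $\mathrm{span}(\xi_1,\xi_2)$ and fixes $\xi_3$, so $\t$ swaps the two hemispheres cut out by the suture. Real isotopy diagrams of $(Y(p),\gamma(p,\xi),\t)$ are identified with the vertices of $\cG^R_{(Y,\t,p)}$. By \Cref{thm:HF-weak} together with \Cref{thm:naturality}(2), $\CFR^\circ$ is a strong real Heegaard invariant. \Cref{thm:transitive_system} then assigns to this object the transitive system $(\{\CFR^\circ(H)\}_H,\{F_{H\to H'}\}_{H,H'})$ in the homotopy category of $\F[U]$-complexes.

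\textbf{Morphisms.} Let $\varphi\colon(Y_0,\t_0,p_0,\xi_0)\to(Y_1,\t_1,p_1,\xi_1)$ be a morphism. Since $d\varphi_{p_0}(\xi_0)=\xi_1$, $\varphi$ induces an equivariant diffeomorphism of the blown-up sutured manifolds carrying $\gamma(p_0,\xi_0)$ to $\gamma(p_1,\xi_1)$. By the last part of \Cref{thm:transitive_system}, this gives a morphism of transitive systems whose components are $F_{d(H)\to H'}\circ F(d)\circ F_{H''\to H}$, where $d=\varphi|_{\S}$.

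\textbf{Functoriality.}
\begin{itemize}
\item Identities: for $\varphi=\id$ each $d$ is the identity, so $F(d)=\id$ by the functoriality axiom restricted to $\G_{\text{diff}}$. The induced morphism is therefore the identity of the transitive system.
\item Composition: for $\psi\circ\varphi$, restrict both maps to a common diagram $H$. Then $F((\psi\circ\varphi)|_\S)=F(\psi|_{\varphi(\S)})\circ F(\varphi|_\S)$, again by functoriality on $\G_{\text{diff}}$.
\item Well-definedness: the composite morphism does not depend on the chosen representative diagrams, because any two choices differ by the canonical maps $F_{H\to H'}$. These are compatible with diffeomorphism maps by the Commutativity axiom applied to distinguished rectangles of type (2) and (4), as in the proof of \Cref{thm:transitive_system}.
\end{itemize}

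\textbf{Main obstacle.} The subtle point is that a morphism of transitive systems must be defined for all pairs of indices, not only for pairs of the form $(H,\varphi(H))$. The component at a general pair $(H_0,H_1)$ is defined as $F_{\varphi(H_0)\to H_1}\circ F(\varphi|_{\S_0})$. One then checks that this collection is compatible with all $\Psi$-maps. That check reduces to \Cref{thm:iso} and the commutative square from the proof of \Cref{thm:transitive_system}: $\varphi$ sends an oriented path in $\G_{(Y_0,\g_0,\t_0)}$ to an oriented path in $\G_{(Y_1,\g_1,\t_1)}$, and each edge of the pushed-forward path is intertwined with the original edge by a distinguished rectangle.
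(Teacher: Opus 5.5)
This is essentially the paper's argument: $\CFR^\circ$ is a strong real Heegaard invariant (Theorem~\ref{thm:naturality}), so Theorems~\ref{thm:iso} and~\ref{thm:transitive_system} give a functor from real sutured manifolds to transitive systems, and one then precomposes with $(Y,\tau,p,\xi)\mapsto(Y(p),\gamma(p,\xi),\tau)$; your identity, composition and general-index-pair checks spell out what the paper leaves implicit. One small correction to your construction of objects: $\tau$ swaps $R_\pm$ because $d\tau_p$ preserves the plane $\mathrm{span}(\xi_1,\xi_2)$, reverses its orientation, and preserves the orientation of $T_pY$, and hence reverses the plane's coorientation; this forces $T_pC\subset\mathrm{span}(\xi_1,\xi_2)$, so $d\tau_p$ cannot fix $\xi_3$, and the clause ``fixes $\xi_3$'' should be dropped (the parenthetical in Definition~\ref{def:S-cat} has the same slip).
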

\begin{proof}
    As in the sutured case, Theorems~\ref{thm:naturality} and \ref{thm:iso} give rise to a functor $\cS_{man}^R \to \mathrm{Trans}_{\cG^R_{(Y, \t, p)}}(\cC)$. By precomposing with the functor taking $(Y, \t, p, \xi)$ to $(Y(p), \gamma(p, \xi), \t)$ we obtain a natural invariant of $(Y, \t, p, \xi)$, which we denote by $\CFR^\circ(Y, \t, p, \xi)$.
\end{proof}
\begin{rem}
    More generally, there is a functor 
    \begin{align*}
        \CFR^\circ: \RMan_{*,\vec{\xi}} \ra \mathrm{Trans}_{\cG^R_{(Y, \t, p)}}(\cC)
    \end{align*}
    where $\RMan_{*,\vec{\xi}}$ is the category of multi-pointed real three-manifolds equipped with a multi-framing $\vec{\xi}$ (i.e., a framing for each basepoint) and $\cC$ is the homotopy category of curved  chain complexes over the ring $\F[U_{\w}]:=\F[U_{w_1}, \hdots, U_{w_\ell}]$, where $\ell = |\w|$.
\end{rem}

In order to remove the framing dependence, we must define canonical isomorphisms $$\iota_{\xi, \xi'}: \CFR^\circ(Y, \t, \w, \xi) \to \CFR^\circ(Y, \t, \w, \xi').$$ This is more subtle than in the unreal case. Note that a framing of $T_pY$ is equivalent to a choice of an \emph{oriented} 2-plane bundle $V \sub T_pY$. Forgetting the symmetry, the fiber of the forgetful map $\Man_{*,\xi} \to \Man_*$ is $\Gr_2(T_pY)$, the space of \emph{oriented} 2-planes in $T_pY$, which is diffeomorphic to $S^2$. However, in the present situation, the fiber of the forgetful map $$\RMan_{*,\xi} \to \RMan_*$$ is not simply connected. Indeed, the fiber is $\Gr_2^R(T_pY)$, the subspace of $\Gr_2(T_pY)$ consisting of planes $V$ satisfying $d\t(V) = -V$; we call such planes \emph{anti-invariant}. The map assigning $V \in \Gr_2^R(T_pY)$ its positive unit normal vector establishes a homeomorphism $\Gr_2^R(T_pY) \ra S^1$. Therefore, we must prove that the monodromy about this loop is trivial. 

\subsubsection{Equivariant basepoint twist}
Let us establish some notation. 

\begin{definition}\label{def:pointed-diffeos}
    Fix a based real $3$-manifold $(Y,\tau,p)$ as well as an anti-invariant plane $V_0 \in \Gr_2^R(T_pY)$.
    \begin{enumerate}
        \item Define $\mathrm{Diff}^\t(Y,\t, p)$ to be the group of $\tau$-equivariant, orientation-preserving diffeomorphisms of $Y$ fixing $p$, with the $C^\infty$ topology, and let $\mathrm{Diff}^\t_0(Y,\t, p)$ be the subgroup of those diffeomorphisms that are equivariantly isotopic to $\id_Y$ through elements of $\mathrm{Diff}^\t(Y,\t, p)$.  
        \item Let $G \leq GL^{+}(T_pY)$ be the centralizer of $d\tau_p$ in the group of orientation-preserving automorphisms of $T_pY$, and let $G_{0}$ be its identity component.
    \end{enumerate}
\end{definition}

\begin{rem}
    We note that $G$ consists of the automorphisms preserving the splitting $T_pY = \ell_p \oplus N_p$ where $\ell_p := T_pC$ and $N_p$ is the $(-1)$-eigenspace of $d\t$. Writing $A = (a, B) \in GL(\ell_p) \times GL(N_p)$, we have $G = \{\, (a,B) : a \cdot \det B > 0 \,\}$ and $G_{0} = \{\, (a,B) : a > 0,\ \det B > 0 \,\} \cong \mathbb{R}_{>0} \times GL^{+}(N_p)$.
\end{rem}

For $\chi \in \mathrm{Diff}^\t(Y,\t, p)$, the differential $d\chi_p$ lies
in the group $G$ of Definition~\ref{def:pointed-diffeos}, so, having fixed
$V_0 \in\Gr_2^R(T_pY)$, we obtain an evaluation map
\[
\ev_{V_0} \colon \mathrm{Diff}^\t(Y,\t, p) \longrightarrow\Gr_2^R(T_pY),
\qquad
\ev_{V_0}(\chi) := d\chi_p(V_0).
\]
\begin{remark}[Conventions]
\label{rem:conventions}
We fix once and for all an orientation of $\ell_p$ near $p$ as well as a positive vector $\nu_p$; orient $N_p$ by declaring that an ordered basis $(v_1, v_2)$ of $N_p$ is positive if $(\nu_p,v_1, v_2)$ is a positive basis of $T_pY$. Orient $\Gr_2^R(T_pY)$ as the unit circle of $N_p$. We may therefore identify $\pi_1(\Gr_2^R(T_pY), V_0)$ with $\Z$. 
\end{remark}

\begin{definition}
\label{def:twist}
On an open set $U \sub Y$ containing $p$, fix $\t$-invariant coordinates
$\kappa \colon U \to \R^3$ such that $\kappa(p) = 0$, $\t$
acts as $R_\pi = \mathrm{diag}(1,-1,-1)$, the chart is
orientation-preserving, and $d\kappa_p$ carries the positive direction of
$\ell_p$ to $+\partial_x$.  Here $R_\theta \in SO(3)$ denotes the rotation
by angle $\theta$ about the $x$-axis, counterclockwise with respect to the
induced orientation of the $(y,z)$-plane.  Fix a smooth function
$\lambda \colon [0,\infty) \to [0,1]$ with $\lambda \equiv 1$ on $[0,r_1]$
and $\lambda \equiv 0$ on $[r_2,\infty)$, where $0 < r_1 < r_2 < 1$.  For
$\alpha \in \R$ define 
\[
\Delta_{\alpha} :=
\begin{cases}
\kappa^{-1} \circ \bigl( x \mapsto R_{\alpha \lambda(|x|)}\, x \bigr)
\circ \kappa & \text{on } U, \\[2pt]
\id_Y & \text{on } Y \setminus U,
\end{cases}
\]
and for $n \in \Z$ define the \emph{$n$-fold equivariant basepoint twist}
\[
\delta^{n} := \Delta_{2\pi n},
\qquad
\delta := \delta^{1} .
\]
\end{definition}

Since $\lambda \equiv 0$ on $[r_2,\infty)$, the two formulas glue smoothly
and $\Delta_\alpha$ is supported in $\kappa^{-1}(\overline{B}_{r_2})$;
since $\lambda \equiv 1$ on $[0,r_1]$, the map $\Delta_\alpha$ agrees with
the rotation $R_\alpha$ near $p$, and in particular $\delta^n$ is the
identity near $p$.

We record the following facts about the map $\alpha \mapsto \Delta_\alpha$.

\begin{lemma}
\label{lem:twist}
\begin{enumerate}
\item The map $\alpha \mapsto \Delta_\alpha$ is a smooth homomorphism
$\R \to \mathrm{Diff}^\t(Y,\t,p)$; that is, each $\Delta_\alpha$ is a
$\t$-equivariant, orientation-preserving diffeomorphism fixing $p$ (and
$C \cap U$ pointwise), and
$\Delta_{\alpha+\beta} = \Delta_\alpha \circ \Delta_\beta$.  
\item Identifying $T_pY$ with $\R^3$ via $d\kappa_p$, we have
$d(\Delta_\alpha)_p = R_\alpha$.  Hence $d(\delta^{n})_p = \id_{T_pY}$, so
$\delta^{n}$ is a morphism $(Y,\t,p,V) \to (Y,\t,p,V)$ of\, $\RManV$ for
\emph{every} $V \in \Gr_2^R(T_pY)$; moreover $s \mapsto \Delta_{2\pi n s}$
is a path in $\mathrm{Diff}^\t(Y,\t,p)$ from $\id_Y$ to $\delta^{n}$, so
$\delta^n \in \mathrm{Diff}^\t_0(Y,\t,p)$, and
$[\ev_{V_0} \circ \Delta_{2\pi n \,\cdot\,}] = n \in
\pi_1(\Gr_2^R(T_pY)) \cong \Z$ for every $V_0$.
\end{enumerate}
\end{lemma}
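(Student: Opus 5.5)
The plan is a direct verification in the chart $\kappa$. Everything is supported in $\kappa^{-1}(\overline{B}_{r_2})$, so it suffices to work with the model map $\Phi_\alpha(x) := R_{\alpha\lambda(|x|)}\,x$ on $\R^3$. We check its properties there and transport them back via $\kappa$; outside $U$ every $\Delta_\alpha$ is the identity.

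For part (1), we proceed in four steps.
\begin{enumerate}
\item \emph{Smoothness.} Joint smoothness in $(\alpha,x)$ is clear away from $x=0$, since $|x|$ is smooth there. Near $0$ we have $\lambda\equiv 1$, so $\Phi_\alpha = R_\alpha$ is linear and smooth.
\item \emph{Group law.} Rotations preserve $|x|$, so $\lambda(|\Phi_\beta(x)|)=\lambda(|x|)$. Hence
\[
\Phi_\alpha(\Phi_\beta(x)) = R_{\alpha\lambda(|x|)}R_{\beta\lambda(|x|)}x = R_{(\alpha+\beta)\lambda(|x|)}x,
\]
because rotations about a fixed axis commute and add angles. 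Thus $\Phi_{\alpha+\beta}=\Phi_\alpha\circ\Phi_\beta$ and $\Phi_0=\id$. In particular each $\Phi_\alpha$ is a diffeomorphism with inverse $\Phi_{-\alpha}$.
\item \emph{Equivariance.} The involution acts by $R_\pi=\mathrm{diag}(1,-1,-1)$, which is itself a rotation about the $x$-axis. It therefore commutes with every $R_\theta$ and preserves $|x|$, giving $\Phi_\alpha\circ R_\pi = R_\pi\circ\Phi_\alpha$. Transporting by $\kappa$ gives $\tau$-equivariance on $U$; off $U$ the map is the identity.
\item \emph{Orientation and fixed points.} Orientation preservation follows from the path $s\mapsto\Phi_{s\alpha}$ joining $\Phi_\alpha$ to $\id$ through diffeomorphisms, or directly from $\det>0$. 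The $x$-axis, which is $\kappa(C\cap U)$, is fixed pointwise since $R_\theta$ fixes it; in particular $p$ is fixed.
\end{enumerate}

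For part (2), near $0$ the map $\Phi_\alpha$ equals the linear map $R_\alpha$, so $d(\Delta_\alpha)_p=R_\alpha$ under $d\kappa_p$. Since $R_{2\pi n}=\id$, we get $d(\delta^n)_p=\id$. Hence $d\delta^n_p(V)=V$ for every $V$, and $\delta^n$ is a morphism of $\RManV$. The path $s\mapsto\Delta_{2\pi ns}$, $s\in[0,1]$, lies in $\mathrm{Diff}^\t(Y,\t,p)$ by part (1) and runs from $\id$ to $\delta^n$, so $\delta^n\in\mathrm{Diff}^\t_0(Y,\t,p)$.

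Finally, $\ev_{V_0}(\Delta_{2\pi ns}) = R_{2\pi ns}(V_0)$. The rotation $R_\theta$ fixes $\ell_p$ and acts on $N_p$ (the $(y,z)$-plane) by rotation through $\theta$. It therefore rotates the positive unit normal of $V_0$, which lies in $N_p$, by angle $2\pi ns$. Here we use that $V_0$ is anti-invariant, so it contains $\ell_p^{\perp}$-data appropriately and its normal lies in $N_p$. The identification $\Gr_2^R\cong S^1$ goes by the unit normal, and both this circle and the rotation direction are oriented via the same orientation of $N_p$ from Remark~\ref{rem:conventions}. Because the chart is orientation preserving and sends the positive direction of $\ell_p$ to $+\partial_x$, the loop winds $n$ times positively.

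The only delicate point is this sign and orientation bookkeeping, namely checking that "counterclockwise in the induced orientation of the $(y,z)$-plane" matches the orientation of $N_p$ fixed by $(\nu_p,v_1,v_2)$. This reduces to the chart being orientation preserving with $\nu_p\mapsto$ a positive multiple of $\partial_x$.
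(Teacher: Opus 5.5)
Your direct verification in the chart $\kappa$ is correct. It is exactly the routine check the paper dismisses as ``straightforward'': the model map $x\mapsto R_{\alpha\lambda(|x|)}x$ preserves $|x|$, commutes with $R_\pi$, and equals the linear map $R_\alpha$ near the origin. The one phrase to tighten is your justification that the normal lies in $N_p$: anti-invariance forces $V_0=\ell_p\oplus L$ for a line $L\subset N_p$, so the unit normal of $V_0$ lies in $N_p$ and $R_\theta$ rotates it through angle $\theta$.
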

\begin{proof}
    Straightforward.
\end{proof}

\begin{definition}
\label{def:winding}
Let $V_0, V_1 \in \Gr_2^R(T_pY)$, and let $\Phi = \{\Phi_s\}_{s\in[0,1]}$
be an isotopy through elements of $\mathrm{Diff}^\t(Y,\t,p)$ whose
endpoints satisfy
$d(\Phi_0)_p(V_0) = d(\Phi_1)_p(V_0) = V_1$ in the oriented sense.  Then
$\ev_{V_0} \circ \Phi$ is a loop based at $V_1$, and we define the
\emph{winding number}
\[
w(\Phi) :=
\bigl[\, \ev_{V_0} \circ \Phi \,\bigr]
\;\in\; \pi_1\bigl(\Gr_2^R(T_pY), V_1\bigr) \cong \Z ,
\]
the identification with $\Z$ being given by the orientation of
\Cref{rem:conventions}.
\end{definition}

The following is the analogue of
\cite[Lemma~2.45]{JTZ_naturality_mapping_class_groups} in the real
setting. It plays the role of the calibration of
identifications by basepoint twists.

\begin{lemma}
\label{lem:rel}
Let $(Y,\t,p)$ be a based real $3$-manifold, let
$V_0, V_1 \in \Gr_2^R(T_pY)$, and let $\Phi = \{\Phi_s\}_{s \in [0,1]}$ be
an isotopy through elements of $\mathrm{Diff}^\t(Y,\t,p)$ whose endpoints
$\phi_0 := \Phi_0$ and $\phi_1 := \Phi_1$ satisfy
$d(\phi_0)_p(V_0) = d(\phi_1)_p(V_0) = V_1$ in the oriented sense.  Set
$w := w(\Phi)$.  Then $\phi_0$ is isotopic to $\delta^{-w} \circ \phi_1$
through $\t$-equivariant diffeomorphisms fixing $p$ and carrying $V_0$ to
$V_1$ in the oriented sense; that is, through morphisms
$(Y,\t,p,V_0) \to (Y,\t,p,V_1)$ of\, $\RManV$.
\end{lemma}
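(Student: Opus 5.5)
We will build an explicit isotopy by correcting $\Phi$ with a family of basepoint twists that cancels its winding about $p$. The basic building block is the path $s \mapsto \Delta_{2\pi w s}$ from Lemma~\ref{lem:twist}, which lies in $\mathrm{Diff}^\t(Y,\t,p)$ and winds $w$ times around $\Gr_2^R(T_pY)\cong S^1$. The idea is to cancel the winding of $\ev_{V_0}\circ\Phi$ with this path.

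First, compose to get a loop. Consider the family $\Psi_s := \Delta_{-2\pi w s}\circ \Phi_s$, which is a path in $\mathrm{Diff}^\t(Y,\t,p)$ from $\phi_0$ to $\delta^{-w}\circ\phi_1$. Its differential at $p$ is $d(\Psi_s)_p = R_{-2\pi w s}\, d(\Phi_s)_p$, where $R_\theta$ acts on $N_p$ by rotation and trivially on $\ell_p$ (Lemma~\ref{lem:twist}(2)). At both endpoints $\Psi_s$ carries $V_0$ to $V_1$, because $R_{\pm 2\pi w}=\id$. So $\ev_{V_0}\circ\Psi$ is a loop based at $V_1$.

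Next, compute its class. Identify $\Gr_2^R(T_pY)$ with the unit circle of $N_p$ via the positive normal, using the conventions of Remark~\ref{rem:conventions}. The rotations $R_\theta$ act on this circle by rotation by $\theta$, since they fix $\ell_p$ and rotate $N_p$. Hence the class of $\ev_{V_0}\circ\Psi$ is $w(\Phi)$ plus the class of $s\mapsto R_{-2\pi w s}V_1$, which is $-w$. The reason is that the circle is an abelian topological group acting on itself: the pointwise product of loops gives the sum of their classes in $\pi_1$. Therefore $\ev_{V_0}\circ\Psi$ is nullhomotopic rel endpoints.

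Now deform $\Psi$ so that it preserves $V_0\mapsto V_1$ throughout. Choose a nullhomotopy, realized as a path $g_s\in G_0$ with $g_0=g_1=\id$ such that $g_s\, d(\Psi_s)_p V_0 = V_1$ for all $s$. To build it, lift the nullhomotopic circle-valued map $s\mapsto \ev_{V_0}\Psi_s$ to rotation angles $\theta(s)\in\R$ about the $x$-axis with $\theta(0)=\theta(1)=0$, and take $g_s=R_{-\theta(s)}$. A lift with both endpoints $0$ exists exactly because the class vanishes and the endpoint values agree. Then $\Delta_{-\theta(s)}\circ\Psi_s$ is a path in $\mathrm{Diff}^\t(Y,\t,p)$ with the same endpoints $\phi_0$ and $\delta^{-w}\circ\phi_1$. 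Every member satisfies $d(\cdot)_p V_0=V_1$, and the orientation is preserved because the plane moves continuously and agrees at the endpoints. This is the required isotopy through morphisms of $\RManV$.

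The main obstacle is the bookkeeping of signs and orientations. One must check that the rotation direction of $\Delta_\alpha$ agrees with the orientation of $\Gr_2^R$ fixed in Remark~\ref{rem:conventions}, so that $\delta$ contributes $+1$ rather than $-1$; this is exactly the last clause of Lemma~\ref{lem:twist}(2). One must also justify the additivity of classes under pointwise composition. This holds because rotations about $\ell_p$ form a circle group acting freely and transitively on $\Gr_2^R(T_pY)$, so $\pi_1$-classes add. Smoothness in $s$ of the corrected family is routine, since $\theta$ can be chosen smooth.
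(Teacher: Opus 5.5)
Your proposal is correct and is essentially the paper's argument: the paper lifts $s \mapsto d(\Phi_s)_p(V_0)$ in one step to $\Theta(s)$ with $\Theta(0)=0$, $\Theta(1)=2\pi w$, and uses the isotopy $\Delta_{-\Theta(s)}\circ\Phi_s$. Your two-step correction gives the same family, since $\Delta_{-\theta(s)}\circ\Delta_{-2\pi w s} = \Delta_{-(\theta(s)+2\pi w s)}$ and $\theta(s)+2\pi w s$ is exactly that lift $\Theta(s)$, so the intermediate step through the nullhomotopic loop $\ev_{V_0}\circ\Psi$ can be skipped.
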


\begin{proof}
Identify $T_pY$ with $\R^3$ via $d\kappa_p$, so that
$d(\Delta_\alpha)_p = R_\alpha$ by \Cref{lem:twist}(2).  The map
\[
e \colon \R \longrightarrow \Gr_2^R(T_pY),
\qquad
e(\theta) := R_\theta(V_1),
\]
is a universal covering with deck group $2\pi\Z$. The path $s \mapsto \ev_{V_0}(\Phi_s) =
d(\Phi_s)_p(V_0)$ begins and ends at $V_1$; let
$\Theta \colon [0,1] \to \R$ be its unique lift through $e$ with
$\Theta(0) = 0$, so that
\[
d(\Phi_s)_p(V_0) = R_{\Theta(s)}(V_1) \quad \text{for all } s.
\]
Clearly $\Theta$ is smooth, and, by \Cref{rem:conventions},
$w = w(\Phi) = \Theta(1)/2\pi \in \Z$.

Consider the smooth family
$g_s := \Delta_{-\Theta(s)} \circ \Phi_s$ in
$\mathrm{Diff}^\t(Y,\t,p)$.  Since $\Phi_s(p) = p$,
\[
d(g_s)_p(V_0)
= R_{-\Theta(s)}\bigl( d(\Phi_s)_p(V_0) \bigr)
= R_{-\Theta(s)} R_{\Theta(s)}(V_1)
= V_1
\quad \text{for all } s ,
\]
so $g$ is an isotopy through morphisms
$(Y,\t,p,V_0) \to (Y,\t,p,V_1)$.  Its endpoints are
$g_0 = \Delta_{0} \circ \phi_0 = \phi_0$ and, since
$\Theta(1) = 2\pi w$,
\[
g_1 = \Delta_{-2\pi w} \circ \phi_1
= (\Delta_{2\pi})^{-w} \circ \phi_1
= \delta^{-w} \circ \phi_1 ,
\]
as desired.
\end{proof}

\begin{thm}\label{thm:bspt-twist-trivial}
    Fix a real pointed three-manifold $(Y, \t, p)$ as well as a framing $\xi$ for $p$. Let $\delta$ be the associated equivariant basepoint twist. Then, the induced map
    \begin{align*}
        \CFR^\circ(\delta): \CFR^\circ(Y, \t, p, \xi) \to \CFR^\circ(Y, \t, p, \xi)
    \end{align*}
    acts by the identity map.
\end{thm}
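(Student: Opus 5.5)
The strategy is to compute $\CFR^\circ(\delta)$ on a Heegaard diagram chosen so that $\delta$ acts on the Heegaard surface by a diffeomorphism isotopic to the identity there. The Continuity Axiom of \Cref{def:strong-Heegaard} together with \Cref{thm:iso} then shows that the induced map is the identity. This mirrors the unreal argument of \cite[Section 2]{JTZ_naturality_mapping_class_groups}, where the basepoint twist is seen to act trivially because it can be supported away from a suitable Heegaard surface, or isotoped to one that is. The extra input here is that everything must be done equivariantly, with the rotation taken about the fixed-point arc through $p$.

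First, choose a real sutured diagram $H=(\S,\alphas,\betas,\t)$ of $(Y(p),\gamma(p,\xi),\t)$. By the description of $\delta$ in \Cref{def:twist}, $\delta$ is supported in the ball $\kappa^{-1}(\overline B_{r_2})$ and is the identity near $p$. So $\delta$ descends to an equivariant diffeomorphism of $Y(p)$ that is the identity near the boundary sphere. Its support is the spherical shell $\{r_1\le |x|\le r_2\}$, on which it acts by the $\t$-equivariant rotations $R_{2\pi\lambda(|x|)}$ about the $x$-axis.

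Next, arrange the Heegaard surface near the boundary sphere in a standard way: $\S$ meets the shell in the annulus $A=\{z=0\}\cap\{r_1\le|x|\le r_2\}$, a collar of the suture $s(\gamma)$ in the $\xi_1\xi_2$-plane. This is possible because, after applying $d\kappa_p$, we may take $\mathrm{span}(\xi_1,\xi_2)$ to be an anti-invariant plane containing the $y$-axis. We also isotope $\alphas,\betas$ off the shell, using the flexibility provided by \Cref{prop:connected-strong} and the Morse-theoretic construction of diagrams. Then $\delta$ preserves neither $\S$ nor its plane setwise, since it rotates the shell. The fix is to replace the rotation by an equivariant isotopy of $Y(p)$, supported in the shell, from $\delta$ to a map $\delta'$ that preserves $\S$ and restricts to a $\t$-equivariant Dehn twist (or a trivial map) along the annulus $A$. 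Then $\CFR^\circ(\delta)$ equals $F(\delta'|_\S)$, composed with the canonical maps for the isotopy, by the definition of the induced map on transitive systems in \Cref{thm:transitive_system}.

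The final step is to check that $\delta'|_\S$ is equivariantly isotopic to $\id_\S$ rel $\partial\S$, so that the Continuity Axiom gives $F(\delta'|_\S)\sim\id$. An annular twist around a boundary-parallel curve in a sutured surface is isotopic to the identity only if the boundary is allowed to rotate, and the suture is preserved setwise but rotated. I would handle this through $\gamma$: the full rotation through $2\pi$ of the boundary sphere about the $x$-axis preserves the suture circle $s(\gamma)$, which lies in the $yz$-plane, as a set. That circle's rotation can be undone by an isotopy of $Y(p)$ supported in a collar of $\partial Y(p)$, which is allowed since diagrams only need to be compatible up to isotopy rel $\gamma$.

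I expect this step to be the main obstacle: showing that the boundary Dehn twist induced on $\S$ is trivial equivariantly, or that its induced map is trivial. If the direct isotopy fails, I would fall back on computation. With $\alphas,\betas$ disjoint from $A$, a boundary-parallel twist fixes every intersection point and, after an equivariant choice of almost complex structure pushed forward by $\delta'$, preserves every holomorphic disk. So $F(\delta'|_\S)$ is literally the identity on generators, as in the tautological action of diffeomorphisms on $\CFR$.
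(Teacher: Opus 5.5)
There is a genuine gap: the step where you replace $\delta$ by a map $\delta'$ with $\delta'(\Sigma)=\Sigma$, equivariantly isotopic to $\delta$ through an isotopy supported in the shell, cannot be carried out.

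\textbf{Why $\delta'$ does not exist.} On $A=\{z=0\}\cap\{r_1\le|x|\le r_2\}$ the involution acts by the reflection $(x,y)\mapsto(x,-y)$. This reflection reverses orientation and preserves each boundary circle, so conjugation by $\tau$ sends the Dehn twist $T$ to $T^{-1}$ in the mapping class group of $A$ rel $\partial A$.
\begin{itemize}
\item Hence an equivariant $\delta'|_A$ is isotopic to the identity rel $\partial A$. Your ``$\tau$-equivariant Dehn twist'' is necessarily trivial.
\item The two components of the shell minus $A$ are balls. By Cerf/Hatcher, $\delta'$, and therefore $\delta$, would then be isotopic to the identity rel the boundary of the shell.
\item But $\delta$ on the shell is the $2\pi$ sphere twist. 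It represents the nontrivial element of $\pi_0\mathrm{Diff}(S^2\times I\text{ rel }\partial)\cong\pi_1 SO(3)$, even non-equivariantly. This is a contradiction.
\end{itemize}
Non-equivariantly the twist can be made to preserve $A$ by rotating about the $z$-axis instead, which yields a boundary-parallel Dehn twist. That rotation does not commute with $\tau$, which is exactly why the real case differs. If you instead let the isotopy leave the shell, you would be asserting that $\delta$ is equivariantly isotopic rel $\partial Y(p)$ to a map acting trivially on the diagram. The theorem would then follow from \Cref{thm:iso} with no Floer input, and nothing in your argument supplies this.

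\textbf{The proposed rescues do not work either.} First, the suture-rotation step fails:
\begin{itemize}
\item With your choice, the suture lies in $\{z=0\}$, not in the $yz$-plane. Rotations about the $x$-axis do not preserve it except at angles in $\pi\mathbb{Z}$.
\item More fundamentally, undoing the twist by rotating the boundary sphere is precisely the isotopy $s\mapsto\Delta_{2\pi s}$. It moves the framing once around the generator of $\pi_1(\mathrm{Gr}_2^R(T_pY))\cong\Z$, which is the very monodromy the theorem is about. It is also not an isotopy rel $s(\gamma)$, as \Cref{def:isotopic} requires.
\end{itemize}
Second, the fallback computation proves nothing new. $F(\delta_H)$ is already the identity on generators, since $\delta$ fixes the attaching curves. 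What must be shown is that this tautological map agrees with the canonical map $F_{H\to\delta(H)}$.

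\textbf{The missing idea.} You need an explicit path in $\G_{(Y,\gamma,\tau)}$ from $\delta(H)$ back to $H$ whose induced map is also the identity on generators. The paper proceeds as follows:
\begin{itemize}
\item Via the commutativity axiom, it reduces to a single diagram $H_0$ on which $\delta$ is supported in a curve-free disk around $p$.
\item It $\Z/2$-stabilizes $\delta(H_0)$ near $p$.
\item It moves the stabilized twisted surface, by diffeomorphisms isotopic to the identity, to a $\Z/2$-stabilization of $\Sigma_0$.
\item It then destabilizes.
\end{itemize}
These moves act on generators by $\x\mapsto a\times\x\times\tau(a)$, by the identity, and by the inverse of the first, respectively. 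Hence $F_{\delta(H_0)\to H_0}\circ F(\delta_{H_0})=\id$. The pair of symmetric tubes is what gives enough room to undo the twist, which, as shown above, cannot be undone on the unstabilized surface.
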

\begin{proof}
    Intuitively, $\CFR^\circ(\delta)$ acts by the identity because we may arrange that it has no effect on the alpha and beta curves. Hence, we may work in a small neighborhood $U \sub \S$ of $p$, and exhibit a sequence of real Heegaard moves from $\delta(U)$ to $U$. This can be done entirely with stabilizations and isotopies. 

    Let $F = \CFR^\circ$. Recall that $F(Y, \t, p, \xi)$ is by definition the transitive system of $\F[U]$-complexes indexed by the set of real Heegaard diagrams for $(Y, \t, p)$. The proof amounts to showing that $F(\delta_H) = F_{H \to \delta(H)}$, or equivalently, that $F_{\delta(H) \to H}\circ F(\delta_H) = \id_{F(H)}$ for all diagrams $H$. By the commutativity axiom (\Cref{def:strong-Heegaard} \Cref{item:strong-commute}), 
    \begin{align*}
        F_{\delta(H) \to H}\circ F(\delta_H) = F_{H_0 \to H} \circ (F_{\delta(H_0) \to H_0}\circ F(\delta_{H_0}))\circ F_{H \to H_0}
    \end{align*}
    for $H_0$ any other diagram. It follows that $F_{\delta(H) \to H}\circ F(\delta_H) = \id_{F(H)}$ if and only if $F_{\delta(H_0) \to H_0}\circ F(\delta_{H_0}) = \id_{F(H_0)}$, and therefore it suffices to prove the claim for any particular diagram $H_0$. 

    Let $H_0$ be any diagram such that $\delta|_{\S_0}$ is supported on a small disk $D \sub \S_0$ which is disjoint from all attaching curves. Apply $\delta$ and consider the surface $\delta(D)$. The generators of $\CFR^\circ(\delta(H_0))$ and $\CFR^\circ(H_0)$ are identical. \Cref{fig:twist_diffeo} shows cross-sections of $\delta(\S)$ near $p$; we first perform a $\Z/2$-stabilization as shown in the transition between the first and second columns of the diagram. The remaining columns show an isotopy of the stabilization of $\delta(\S_1)$ to a surface which is obtained from $\S_0$ by a $\Z/2$-stabilization. This sequence of real Heegaard moves taking $\delta(H_0)$ to $H_0$ consists entirely of (de)stabilizations and diffeomorphisms isotopic to the identity. Hence, the composition $F_{\delta(H_0)} \circ F(\delta_{H_0}) = \id_{F(H_0)}$ as claimed.
\end{proof}

\begin{figure}[h]
\def\svgwidth{1\linewidth}
\begingroup%
  \makeatletter%
  \providecommand\color[2][]{%
    \errmessage{(Inkscape) Color is used for the text in Inkscape, but the package 'color.sty' is not loaded}%
    \renewcommand\color[2][]{}%
  }%
  \providecommand\transparent[1]{%
    \errmessage{(Inkscape) Transparency is used (non-zero) for the text in Inkscape, but the package 'transparent.sty' is not loaded}%
    \renewcommand\transparent[1]{}%
  }%
  \providecommand\rotatebox[2]{#2}%
  \newcommand*\fsize{\dimexpr\f@size pt\relax}%
  \newcommand*\lineheight[1]{\fontsize{\fsize}{#1\fsize}\selectfont}%
  \ifx\svgwidth\undefined%
    \setlength{\unitlength}{1000.62992126bp}%
    \ifx\svgscale\undefined%
      \relax%
    \else%
      \setlength{\unitlength}{\unitlength * \real{\svgscale}}%
    \fi%
  \else%
    \setlength{\unitlength}{\svgwidth}%
  \fi%
  \global\let\svgwidth\undefined%
  \global\let\svgscale\undefined%
  \makeatother%
  \begin{picture}(1,0.6203966)%
    \lineheight{1}%
    \setlength\tabcolsep{0pt}%
    \put(0,0){\includegraphics[width=\unitlength,page=1]{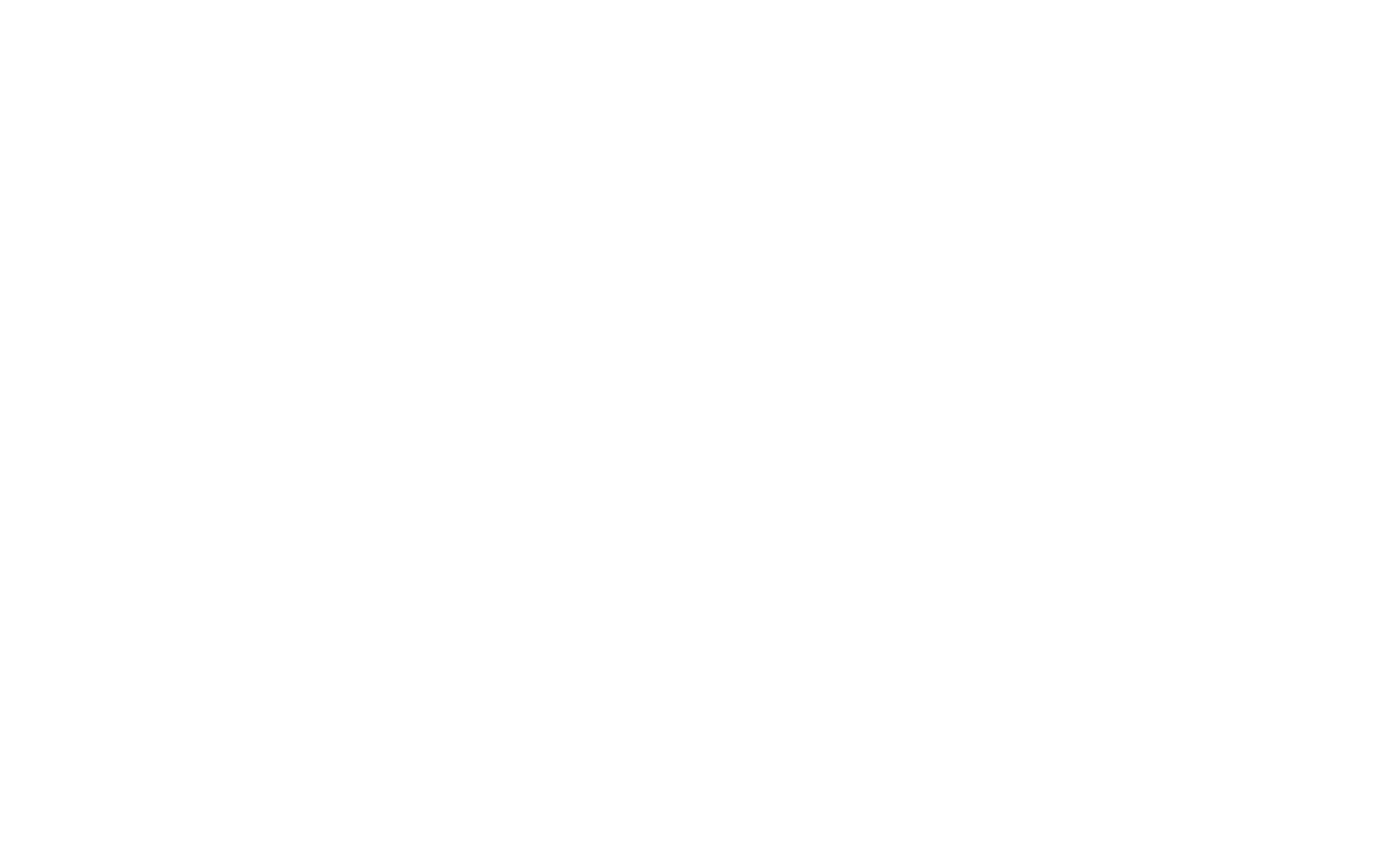}}%
    \put(0.00977133,0.33632193){\color[rgb]{0,0,0}\makebox(0,0)[t]{\smash{\begin{tabular}[t]{c}{$x$}\end{tabular}}}}%
    \put(0.12737573,0.40094571){\color[rgb]{0,0,0}\makebox(0,0)[t]{\smash{\begin{tabular}[t]{c}{\tiny$p$}\end{tabular}}}}%
    \put(0.51618966,0.00989643){\color[rgb]{0,0,0}\makebox(0,0)[t]{\smash{\begin{tabular}[t]{c}{$t$}\end{tabular}}}}%
    \put(0,0){\includegraphics[width=\unitlength,page=2]{twist_diffeo.pdf}}%
    \put(0.32365035,0.40094571){\color[rgb]{0,0,0}\makebox(0,0)[t]{\smash{\begin{tabular}[t]{c}{\tiny$p$}\end{tabular}}}}%
    \put(0.52059483,0.40094571){\color[rgb]{0,0,0}\makebox(0,0)[t]{\smash{\begin{tabular}[t]{c}{\tiny$p$}\end{tabular}}}}%
    \put(0.71686941,0.40094571){\color[rgb]{0,0,0}\makebox(0,0)[t]{\smash{\begin{tabular}[t]{c}{\tiny$p$}\end{tabular}}}}%
    \put(0.91381386,0.40094571){\color[rgb]{0,0,0}\makebox(0,0)[t]{\smash{\begin{tabular}[t]{c}{\tiny$p$}\end{tabular}}}}%
  \end{picture}%
\endgroup%

    \caption{Here is the equivariant basepoint twist displayed in the coordinates described in \Cref{def:twist}; the vertical direction displays slices of $\S_0$ along the $x$-axis of the image of $\delta(\S_0)$. The involution is given by rotation about the axis normal to the page. Horizontal slices show a sequence of real Heegaard moves relating $\delta(\S_0)$ to $\S_0$. The gray lines are used to keep track of the attaching regions of the bands amid the isotopies. In the transition between the first and second columns we  performed a stabilization, which we  indicated by the four bands. (Each pair of bands makes up a tube.) The remaining columns depict an isotopy from a stabilization of $\delta(\S_0)$ to a $\Z/2$-stabilization of $\S_0$.}
\label{fig:twist_diffeo}
\end{figure}

We are at last situated to prove \Cref{thm:real-nat}.

\begin{proof}[Proof of \Cref{thm:real-nat}]
Let $\CFR^\circ$ be any version of real Heegaard Floer homology. Since $\CFR^\circ$ is a strong real Heegaard invariant, it determines a transitive system. It remains to show that elements of $\mathrm{Diff}^\t_0(Y, \t, p)$ act by the identity. 

Let $\xi_0, \xi_1 \in \Gr_2^R(T_pY)$ be two framings of $p$. Let $\phi \in \mathrm{Diff}^\t_0(Y, \t, p)$ such that $d \phi_p(\xi_0) = \xi_1$. According to \Cref{prop:framed-HFR-nat}, there is an induced map
\begin{align*}
    \iota_{\xi_0 \to \xi_1}:=\CFR^\circ(\phi): \CFR^\circ(Y, \t,p,\xi_0) \to \CFR^\circ(Y, \t,p,\xi_1).
\end{align*}
This map is independent of $\phi$. Indeed, if $\psi$ is another such map, we may choose an equivariant isotopy $\Phi = \{\Phi_s\}_{s \in [0,1]}$ in $\mathrm{Diff}^\t_0(Y, \t, p)$ from $\phi$ to $\psi$ (these exist by the definition of $\mathrm{Diff}^\t_0(Y, \t, p)$). For $w = w(\Phi)$, Lemma~\ref{lem:rel} shows that $\phi$ and $\delta^{-w}\circ \psi$ are isotopic as morphisms $(Y,\tau,p,\xi_0) \to (Y,\tau,p,\xi_1)$. Hence, by \Cref{prop:framed-HFR-nat} and functoriality,
\[
\CFR^\circ(\phi)
= \CFR^\circ(\delta^{-w}\circ \psi)
= \CFR^\circ(\delta)^{-w} \circ \CFR^\circ(\psi)
= \CFR^\circ(\psi),
\]
where the last equality is by Theorem~\ref{thm:bspt-twist-trivial}. It follows that $$(\{\CFR^\circ(Y, \t, p, \xi)\}_{\xi \in \Gr^R(T_pY)}, \{\iota_{\xi_0\to\xi_1}\}_{\xi_0,\xi_1 \in \Gr^R(T_pY)})$$ form a transitive system, which we define to be $\CFR^\circ(Y, \t, p)$. By passing to homology and taking the colimit as in \cite[Remark 1.2]{JTZ_naturality_mapping_class_groups}, we obtain the desired functor
\begin{align*}
    \HFR^\circ: \RMan_* \to \Fmod.
\end{align*}
This completes the proof.
\end{proof}
\begin{rem}
    The case of multiple basepoints  follows similarly. The only subtlety is that in the case that $|\w| > 1$, we must work in the category of curved complexes. Therefore, the invariant takes the form of a functor 
    \begin{align*}
        \CFR^\circ: \RMan_{*} \to \mathrm{Trans}_{\cG^R_{(Y, \t, \w)}}(\cC)
    \end{align*}
    where $\cC$ is the homotopy category of chain complexes over the ring $\F[U_\w]$. The curvature is given by $\partial^2 = \sum_{i=1}^n U_i^2,$ and in these cases, we may instead work over the quotient ring $\F[U_1, \hdots, U_n]/(U_1^2 + \hdots U_n^2)$ and then pass to homology and take the colimit.
\end{rem}

\begin{proof}[Proof of \Cref{thm:links-nat}]
    By \Cref{thm:transitive_system}, $\CFLR^\circ$ defines a functor from the category $\cS^R_{link}$ to the category of transitive systems over the homotopy category of curved chain complexes over $\F[u_\z, U_\w]$. By composing with the functor $(Y, L, \t, \w, \z) \mapsto (Y(L,\w, \z), \t)$, we may assign invariants to multi-based generalized strongly invertible  links. By quotienting out the curvature and then passing to homology, we obtain the desired invariants $\HFLR^\circ$.
\end{proof}

\section{Singularities of real functions}\label{sec:Singularities}
In this section, we recall some basic singularity theory and its equivariant version. We will follow the exposition of \cite[Section 4]{JTZ_naturality_mapping_class_groups}, as well as \cite{bao2025morsehomologyequivariance,borodzik2025familiesmorsefunctionsmanifolds,siersma,golubitsky1985singularities,wall_equivariant_jets}

Throughout, $\Yt$ is a real manifold with fixed set $C$ of codimension 2. We allow $Y$ to have boundary, but this will not play a significant role. 

\begin{defn}
    A \emph{real function} on $(Y, \tau)$ is a smooth function $f: Y \ra [-1,1]$ satisfying $f\circ \tau = -f$. Let $C^\infty_R(Y,\tau)$ denote the space of real functions on $(Y, \tau)$.
\end{defn}

\begin{defn}
    Let $f$ be a real function on $\Yt$. We say that an orbit $\Z/2 \cdot p \subset Y$ is a \emph{critical orbit} if $df_{g\cdot p} = 0$ for $g \in \Z/2$. A critical orbit $\Z/2\cdot p$ is \emph{non-degenerate} if the Hessian of $f$ is non-singular at $p$ (and therefore is non-degenerate at each $g\cdot p$). The \emph{index} of a critical point $p$, which we denote $\cI(p)$, is the number of negative eigenvalues of the Hessian at $p$. 
    
    We note that in the real setting, the index is not preserved by the group action: rather $\cI(\tau(p)) = 3 - \cI(p)$.
\end{defn}

Critical points of real functions come in either $(\Z/2)/\{1\}$- or $(\Z/2)/(\Z/2)$-orbits (which we henceforth write as $\Z/2$- or $\{1\}$-orbits). Since critical points are generically  isolated, analyzing critical points appearing in $\Z/2$-orbits can be reduced immediately to the non-equivariant setting. Critical points which are fixed by the involution are more subtle and require a careful analysis. 

For a real 3-manifold with codimension-2 fixed point set, there are two relevant local models. Near a point which is fixed by the $\Z/2$-action, there is a neighborhood equivalent to $(\R^3, \tau)$ where $\t(x,y,z) = (y,x,-z)$. For points on which $\Z/2$ acts freely, the model is simply $(\R^3, \id)$.

\begin{defn}
    Let $\cE_3(\tau)$ be the space of germs at 0 of $\Z/2$-equivariant functions $f: (\R^3, \tau) \ra (\R, -\id)$. We call elements of $\cE_3(\tau)$ \emph{real germs}. Let $\cE_3(\id)$ be the space of germs at 0 of $\Z/2$-equivariant functions $f: (\R^3, \id) \ra (\R, \id)$. Let $\cD_3(\Z/2)$ be the group of germs of $\Z/2$-equivariant diffeomorphisms $g: (\R^3, 0) \ra (\R^3, 0)$. The group $\cD_3(\Z/2)$ acts on $\cE_3(\rho)$ by $g\cdot f:= f \circ g^{-1}$ for $\rho \in \{\id, \tau\}$. Two real germs $f$ and $f'$ are \emph{equivalent} if they lie in the same $\cD_3(\Z/2)$-orbit. 
    
    In our setting, we will sometimes refer to equivalence classes of germs of $\cE_3(\tau)$ as \emph{$\{1\}$-orbit singularities} and equivalence classes of germs of $\cE_3(\id)$ as \emph{$\Z/2$-orbit singularities}. 
\end{defn}

In the following subsections we classify singularities of real functions of codimension up to 2 in the space of real functions. 

\subsection{$\Z/2$-singularities}

\begin{defn}
    A real function $f: Y \ra \R$ is a \emph{real Morse function} if all of its critical orbits are nondegenerate.
\end{defn}

If $p$ is a nondegenerate point of index $k$, the condition $f\circ \tau = -f$ forces $\tau(p)$ to be a critical point of index $3-k$. In particular, the critical points of real Morse functions always come in $\Z/2$-orbits. Therefore, the usual Morse lemma implies the following. 

\begin{lemma}[Morse Lemma for $\Z/2$-critical orbits]
    Let $f: Y \ra \R$ be a real function and let $\Z/2\cdot p$ be a  nondegenerate $\Z/2$-critical orbit. In a neighborhood of $p$ in $Y$, there is a coordinate system in which $f$ has the form 
    \begin{align*}
        f(x) = f(p) +\varepsilon_1 \cdot x_1^2 + \varepsilon_2 \cdot x_2^2 + \varepsilon_3\cdot x_3^2,
    \end{align*}
    where $\varepsilon_i \in \{-1,1\}.$ There is a symmetric coordinate system near $\tau(p)$. The class of such $\Z/2$-singularities is called $A_1(\Z/2)$.
\end{lemma}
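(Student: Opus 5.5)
The plan is to reduce this directly to the classical (non-equivariant) Morse lemma, using that the orbit $\Z/2\cdot p$ is free. The first step is to check that $p \neq \tau(p)$, so that $p$ does not lie on the fixed set $C$. If $p$ were fixed, then $f\circ\tau = -f$ would give $f(p) = -f(p)$. More importantly, differentiating twice at $p$ gives $\mathrm{Hess}_p f (d\tau_p v, d\tau_p w) = -\mathrm{Hess}_p f(v,w)$. The eigenvalues of $d\tau_p$ on $T_pY$ are $(1,-1,-1)$, so the line $\ell_p = T_pC$ is fixed by $d\tau_p$. For $v \in \ell_p$ this yields $\mathrm{Hess}_p f(v,v) = -\mathrm{Hess}_p f(v,v) = 0$. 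Similarly, for $v \in \ell_p$ and $w \in N_p$, the relation $H(v,-w) = -H(v,w)$ is automatic, but $H(v,v)=0$ and $H(w,w') = -H(-w,-w') = -H(w,w')$ force the Hessian to vanish on $N_p$ as well. So $H$ is purely off-diagonal between $\ell_p$ and $N_p$, which has rank at most $2 < 3$. Hence a fixed critical point is never nondegenerate, and any nondegenerate critical orbit is a free $\Z/2$-orbit. This matches the remark in the text that nondegenerate critical points come in $\Z/2$-orbits.

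Next I choose a neighborhood $U$ of $p$ with $U \cap \tau(U) = \emptyset$, which is possible since $p\neq\tau(p)$ and $\tau$ is continuous. On $U$, $f$ is an ordinary smooth function with a nondegenerate critical point at $p$. The usual Morse lemma then provides coordinates $\phi\colon U' \to \R^3$ near $p$ with $f\circ\phi^{-1}(x) = f(p) + \sum \varepsilon_i x_i^2$, where the number of negative $\varepsilon_i$ is $\cI(p)$.

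For the symmetric chart near $\tau(p)$, I transport these coordinates by defining $\phi' := \phi\circ\tau$ on $\tau(U')$. Then $f\circ\phi'^{-1}(x) = f(\tau(\phi^{-1}x)) = -f(p) - \sum\varepsilon_i x_i^2$. This is again of Morse form, now with constant $f(\tau(p))$ and signs $-\varepsilon_i$, which recovers $\cI(\tau(p)) = 3-\cI(p)$. Because $U'$ and $\tau(U')$ are disjoint, the pair of charts is equivariant by construction, meaning $\tau$ intertwines them as $x\mapsto x$ with $f \mapsto -f$.

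I do not expect a real obstacle. The only point needing care is the first step, ruling out fixed points via the Hessian computation; everything after that is the standard Morse lemma applied on one side and transported by $\tau$ to the other.
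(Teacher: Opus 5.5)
Your proof is correct and is essentially the paper's argument: a nondegenerate critical point cannot lie on the fixed set, so the classical Morse lemma applies near $p$, and transporting the chart by $\tau$ gives the symmetric chart at $\tau(p)$ with all signs flipped. The only difference is how you exclude fixed points. The paper uses the parity observation that $\cI(p)=3-\cI(p)$ is impossible, and in any case the paper's convention already takes a $\Z/2$-critical orbit to be a free orbit. Your explicit Hessian computation is the coordinate-free version of Lemma~\ref{lem:hessian_corank}.
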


\begin{lemma}
    Real Morse functions are generic in $C_R^\infty\Yt$. 
\end{lemma}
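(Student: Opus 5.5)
The argument reduces genericity to two transversality statements, one for each orbit type of critical point. The first fact to record is that a real function has no critical points on the fixed set $C$ that are nondegenerate. In the local model $(\R^3,\tau)$ near a point of $C$ we may linearize $\tau$ as $\mathrm{diag}(1,-1,-1)$. The condition $f\circ\tau=-f$ forces $f(x,0,0)=-f(x,0,0)$, so $f$ vanishes identically on $C$. Hence $\partial_x f$ and $\partial_x^2 f$ vanish along $C$. At a critical point $p\in C$ the Hessian therefore has a zero eigenvalue, and $p$ is automatically degenerate. Consequently, to show real Morse functions are generic it suffices to prove two things for a residual (indeed open dense) set of $f\in C^\infty_R(Y,\tau)$:
\begin{enumerate}
\item $df$ has no zeros on $C$;
\item all zeros of $df$ off $C$ are nondegenerate.
\end{enumerate}

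For (1), observe that along $C$ the differential $df_p$ annihilates $T_pC$. It is also anti-invariant under $d\tau_p$, so it lies in the two-dimensional space $N_p^*$. This is the dual of the $(-1)$-eigenspace, with $d\tau$ acting by $+1$ there after the sign twist. We thus obtain a section $s_f$ of the rank-two bundle $N^*C\to C$ over the one-dimensional manifold $C$. The map $f\mapsto s_f$ is a submersion onto sections. Indeed, real functions $f=g-g\circ\tau$ can realize any first-order normal jet along $C$: take $g$ locally equal to $\tfrac12(ay+bz)\rho$ for a $\tau$-invariant cutoff $\rho$. By the parametric transversality theorem, applied to the evaluation map $C^\infty_R\times C\to N^*C$, a residual set of $f$ has $s_f$ transverse to the zero section. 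Since $\dim C=1<2$, transversality means $s_f$ never vanishes. Openness follows because $C$ is compact, or by exhausting with compact pieces in the noncompact or boundary case.

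For (2), work on the open set $Y\setminus C$, where $\tau$ acts freely. Consider the map $j^1\colon C^\infty_R\times (Y\setminus C)\to T^*Y$ given by $(f,p)\mapsto df_p$. At a point $p\notin C$, choose a neighborhood $W$ with $W\cap\tau(W)=\emptyset$. Any function $h$ supported in $W$ yields the real perturbation $h-h\circ\tau$, which agrees with $h$ on $W$. So the equivariance constraint imposes no restriction locally, and the map is a submersion onto $T_p^*Y$ just as in the nonequivariant case. The parametric transversality theorem then gives that for residual $f$ the section $df$ is transverse to the zero section on $Y\setminus C$. This is exactly nondegeneracy of the critical points there; the orbit pairs automatically come out together, since $\tau(p)$ is also nondegenerate.

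Intersecting the two residual sets gives residuality, and hence density. The main obstacle is near $C$, where (1) and (2) must mesh. Part (2) alone only controls critical points on compact subsets of $Y\setminus C$, so a priori critical points might accumulate onto $C$. This is ruled out by (1): if $s_f$ is nowhere zero on compact $C$, then $df\neq0$ on a tubular neighborhood $\nu(C)$. It therefore suffices to apply (2) on the compact set $Y\setminus\nu(C)$, which also makes the Morse condition open. One must check that the open condition (1) is preserved when shrinking $\nu(C)$; this is immediate by continuity of $df$. In the case where $Y$ has boundary, one adds the standard requirement of no critical points on $\partial Y$, handled by the same transversality argument along $\partial Y$.
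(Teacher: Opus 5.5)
Your argument is correct and is essentially the paper's proof. Near $C$ you use the two-dimensional space of anti-invariant covectors (the paper's $\mathcal{A}$, your $N_p^*$) to make $df$ nonvanishing along the one-dimensional set $C$, and away from $C$ you perturb freely on disjoint pairs $W,\tau(W)$ (the paper's balls $B_i\cup\tau(B_i)$); your compactness step makes precise what the paper's sketch leaves implicit, namely that the perturbation near $C$ only has to remove zeros of $df$ on $C$, after which a collar of $C$ contains no critical points and everything else is handled off $C$.

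One slip in your motivating remark: $\partial_x^2 f=0$ on $C$ does not by itself make the Hessian singular. The right reason is that $f$ is odd in the normal variables, so $f_{yy}$, $f_{yz}$, $f_{zz}$ also vanish on $C$ and the Hessian has the rank-at-most-two form of Lemma~\ref{lem:hessian_corank}; alternatively, a nondegenerate fixed critical point would need index $k=3-k$. This does not affect the proof, since (1) and (2) already imply the Morse condition.
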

\begin{proof}
    This follows as in \cite{bao2025morsehomologyequivariance}, so we only sketch the proof.  Let $f \in C_R^\infty \Yt$. We claim that any such $f$ can be made Morse by a small (equivariant) perturbation. In a neighborhood of a point $p$ of $C$, we may choose coordinates in which $\tau(x,y,z) = (y,x,-z)$. The space of real linear functionals, $\cA = \{\phi \in (\R^3)^*: \phi \circ \tau = - \phi \}$ is two dimensional. A standard argument shows that $f|_{\nu(p)} + A$ is Morse for a generic $A\in \cA$. By covering $C$ by finitely many balls and using a partition of unity, we may perturb $f$ so that it is a real Morse function in a neighborhood of $C$. Furthermore, it is clear that we can ensure that the perturbed function is close to $f$. 

    We then cover $Y \smallsetminus \nu(C)$ by pairs of disjoint balls  $B_i \cup \tau(B_i)$. Each $f|_{B_i}$ can be made Morse by a small perturbation; we apply the symmetric perturbation to $f|_{\tau(B_i)}$. Again, these functions can be stitched together with a partition of unity. The resulting function can be made arbitrarily close to $f$ and is real Morse.
\end{proof}

We turn now to 1- and 2-parameter families of real functions. 

\begin{thm}\label{thm:real singularities Z/2 orbit}
    In a generic 1-parameter family of smooth functions, the only degenerate $\Z/2$-critical orbits $\Z/2 \cdot p$ that appear have normal form 
    \begin{align*}
        f(x) = f(p) +\varepsilon_1 \cdot x_1^2 + \varepsilon_2 \cdot x_2^2 + x_3^3,
    \end{align*}
    where $\varepsilon_i \in \{-1,1\}.$ We refer to these as $A_2(\Z/2)$ singularities.
    
    In generic 2-parameter families of smooth functions, there also appear degenerate $\Z/2$-critical orbits $\Z/2 \cdot p$ of the form 
    \begin{align*}
        f(x) = f(p) +\varepsilon_1 \cdot x_1^2 + \varepsilon_2 \cdot x_2^2 + \varepsilon_3 \cdot x_3^4,
    \end{align*}
    where $\varepsilon_i \in \{-1,1\}.$ We refer to these as $A_3^\pm(\Z/2)$ singularities (depending on the sign of $x_3^4$).
\end{thm}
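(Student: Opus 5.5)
The plan is to reduce the statement to the classical (non-equivariant) classification of singularities in generic families, using the fact that a $\Z/2$-critical orbit consists of two distinct points $p \neq \tau(p)$ lying off the fixed set $C$.

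\textbf{Step 1: localize away from $C$.} Choose a small ball $B$ around $p$ with $B \cap \tau(B) = \emptyset$. Restriction defines a map from real functions on $(Y,\tau)$ to arbitrary smooth functions on $B$. This map is onto jets at $p$: given any germ $g$ at $p$, extend it by $-g\circ\tau$ on $\tau(B)$ and use a $\tau$-invariant cutoff. So the restriction $C^\infty_R(Y,\tau) \to C^\infty(B)$ is a submersion on jet spaces. Consequently the multijet transversality theorem, applied to $j^k f|_B$, holds for generic real families exactly as in the unreal case. In other words, a generic $n$-parameter family of real functions restricts near $p$ to a generic $n$-parameter family of ordinary functions. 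The behaviour near $\tau(p)$ is then determined by $f\circ\tau=-f$.

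\textbf{Step 2: apply the stratification of jet space.} In $J^k(B,\R)$, the singularity classes $A_k$ form submanifolds of codimension $k$ inside the critical locus. The critical locus itself has codimension $3$, which is absorbed by the $3$ spatial dimensions. The remaining classes (corank $\ge 2$, e.g.\ $D_4$, or $A_{\ge 4}$) have codimension $\ge 3$ beyond this. By Thom transversality for the jet extension $(t,x)\mapsto j^k f_t(x)$ on $[0,1]^n\times B$:
\begin{itemize}
\item for $n=1$ only $A_1$ and $A_2$ occur;
\item for $n=2$ additionally $A_3$ occurs.
\end{itemize}
Corank $2$ has codimension $3$ in function space (the Hessian has a $2$-dimensional kernel), so it is excluded.

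\textbf{Step 3: normal forms.} Apply the splitting lemma and finite determinacy. An $A_2$ germ is equivalent to $\pm x_1^2\pm x_2^2 + x_3^3$, where the sign of $x_3^3$ can be absorbed by $x_3\mapsto -x_3$. An $A_3$ germ is equivalent to $\pm x_1^2\pm x_2^2\pm x_3^4$, and the sign of the quartic term is an invariant. These coordinate changes are performed on $B$ and transported to $\tau(B)$ by $\tau$, giving the symmetric coordinate system near $\tau(p)$ with $f$ negated there.

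The main obstacle is Step 1: checking that the equivariance constraint does not destroy transversality. Equivalently, one must verify that the map from real families to jets at $p$ is surjective even when $p$ is close to $C$ (the stratum of $\Z/2$-orbits is not closed). I would handle this by treating the region near $C$ separately, as was done in the genericity lemma for real Morse functions. Any degeneracy whose orbit converges to $C$ would produce a degenerate critical point on $C$, and this falls under the $\{1\}$-orbit analysis treated separately. On each compact subset of $Y\setminus C$ the surjectivity argument above applies directly, and a countable cover finishes the argument.
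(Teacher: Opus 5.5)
Your proposal follows the same route as the paper, which simply observes that a $\Z/2$-orbit consists of two distinct points off $C$. Equivariance therefore places no constraint on the local jet at $p$, and the classical non-equivariant classification applies (the paper cites Siersma); your Step 1 makes this reduction explicit.

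One correction in Step 2: $A_k$ has codimension $k-1$, not $k$, inside the critical locus (for instance, $A_1$ is open there). With your stated count, $A_2$ would not appear in $1$-parameter families, so the listed conclusions do not follow as written. With codimension $k-1$, your conclusions are correct: $A_2$ appears for $n=1$, $A_3$ for $n=2$, and corank $2$ and $A_4$ (both codimension $3$) are excluded.
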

\begin{proof}
    Singularities appearing in $\Z/2$-orbits behave exactly as individual singularities in the non-equivariant setting. See \cite{siersma} for an elementary proof.
\end{proof}

\subsection{Fixed set singularities}
We now turn to the classification of fixed set singularities up to codimension 2. We note that $A_1$- and $A_3^\pm$-singularities cannot appear in $\{1\}$-orbits. 

Let $f$ be a real function with a critical point at the origin. Recall that the \emph{corank} of $f$ is the dimension of the kernel of the Hessian of $f$. In the non-equivariant setting, the space of functions with corank greater than 1 has codimension greater than two in $C^\infty(\R^n)$. The same is true in the real setting. For simplicity, we specialize to dimension 3.

\begin{lemma}\label{lem:hessian_corank}
    Let $f \in C_R^\infty(\R^3)$ have a critical point at the origin (which is fixed by the group action). Then, the Hessian $H_f(0)$ has corank 1 or 3. Furthermore, the space of $f \in C_R^\infty(\R^3)$ with a critical point at the origin with Hessian of corank 3 has codimension 4. 
\end{lemma}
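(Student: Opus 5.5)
The plan is to analyze how the involution acts on the Hessian in the local model $\tau(x,y,z) = (y,x,-z)$. Since $f\circ\tau = -f$, differentiating twice at the fixed point $0$ gives
\[
d\tau^{T} H_f(0)\, d\tau = -H_f(0).
\]
Write $A = d\tau$, an orthogonal involution with $+1$-eigenspace $E_+ = \mathrm{span}(e_1+e_2)$ (the tangent line to $C$) and $-1$-eigenspace $E_- = \mathrm{span}(e_1-e_2, e_3)$ (two-dimensional).

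Decompose the symmetric bilinear form $H = H_f(0)$ with respect to $T_0\R^3 = E_+ \oplus E_-$. For $u,v$ in the same eigenspace we have $H(Au,Av) = H(u,v)$, so the relation $H(Au,Av) = -H(u,v)$ forces $H$ to vanish on $E_+\times E_+$ and on $E_-\times E_-$. Hence $H$ lives entirely in the off-diagonal block $E_+ \times E_- \to \R$. In an adapted basis, $H$ takes the form
\[
\begin{pmatrix} 0 & b^{T} \\ b & 0 \end{pmatrix},
\]
where $b \in \R^2$ represents a linear functional on $E_-$. Such a matrix has rank $2$ when $b \neq 0$ and rank $0$ when $b = 0$. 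So the corank is $1$ or $3$, which is the first claim. (Equivalently, the eigenvalues are $\pm|b|$ and $0$, consistent with $\cI(\tau p) = 3-\cI(p)$.)

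For the codimension count, I would work in jet space. Equivariant $1$-jets are given by the anti-invariant linear functionals, i.e.\ those vanishing on $E_+$; this space $\cA$ has dimension $2$, as already noted in the genericity lemma. Equivariant $2$-jets (the anti-invariant quadratic forms) are parametrized exactly by $b \in \R^2$, by the block computation above. The locus in $C^\infty_R(\R^3)$ of germs with a critical point at $0$ is therefore cut out by $2$ independent linear conditions on the $1$-jet. Within that locus, corank $3$ means $b = 0$, which is $2$ further independent linear conditions on the $2$-jet. The map $f \mapsto j^2 f(0)$ is a surjective linear map onto equivariant jets, since equivariant polynomials of degree $\le 2$ realize every equivariant jet (for instance by averaging). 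So the preimage of this linear subspace of codimension $2+2 = 4$ has codimension $4$.

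One might instead intend codimension relative to functions with a critical point at a varying point of $C$; in that reading, allowing the point to move along the $1$-dimensional set $C$ still leaves codimension at least $3 > 2$, which is the fact actually used later. The main subtlety is only bookkeeping: making sure ``codimension'' is taken in the jet space of real germs, and checking surjectivity onto equivariant jets. Both are routine.
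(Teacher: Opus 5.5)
Your proposal is correct and follows the paper's argument: differentiate $f\circ\tau=-f$ twice to get $(d\tau)^{T}H_f(0)\,d\tau=-H_f(0)$, show the solutions form a two-parameter family with spectrum $\{0,\pm r\}$, and then count two linear conditions on the $1$-jet plus two on the $2$-jet. The only difference is presentation: you get the off-diagonal block form from the eigenspace splitting $E_+\oplus E_-$, while the paper solves the matrix equation directly in the coordinates $\tau(x_1,x_2,x_3)=(x_2,x_1,-x_3)$, obtaining a matrix in two parameters $a,c$ with eigenvalues $0,\pm\sqrt{a^2+2c^2}$ (this matches your $|b|$ with $b=(a,\sqrt{2}\,c)$ in an orthonormal adapted basis).
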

\begin{proof}

    Choose coordinates near zero so that $\tau(x_1,x_2,x_3) = (x_2,x_1,-x_3).$ Differentiating the condition 
    \begin{align*}
        f \circ \tau = - f,
    \end{align*}
    twice at fixed points yields the relation  
    \begin{align*}
        (d\tau)^\perp \circ H_f \circ (d\tau) = - H_f. 
    \end{align*}
    In our chosen coordinates, we are therefore interested in matrices satisfying
    \begin{align*}
        \begin{bmatrix}
            d & b & -e \\
            b & a & -c \\
            -e & -c & f
        \end{bmatrix}
        = -\begin{bmatrix}
            a & b & c \\
            b & d & e \\
            c & e & f
        \end{bmatrix},
    \end{align*}
    i.e. matrices of the form 
    \begin{align*}
        \begin{bmatrix}
            a & 0 & c \\
            0 & -a & c \\
            c & c & 0
        \end{bmatrix}.
    \end{align*}
    These matrices have eigenvalues $\{0, \pm\sqrt{a^2 + 2 c^2}\}$, and therefore have corank either 1 or 3. In particular, these matrices are always singular. 

    Any real function must satisfy $f_{x_1} + f_{x_2} =0$. Having a critical point on the fixed set is thus specified by the condition $f_{x_1}=0 = f_{x_3}$, which is a codimension 2 constraint. The condition that $a = 0 = c$ (i.e. that the Hessian is corank 3) gives two further constraints. This completes the proof.
\end{proof}

In view of Lemma~\ref{lem:hessian_corank}, if we consider the space of odd functions having a critical point  \emph{somewhere} along a 1-dimensional fixed point set, its codimension is $4-1=3$. In particular, critical points with Hessians of corank 3 do not appear in generic 1- or 2-parameter families of real functions.

We now prove an equivariant splitting lemma, and establish the normal forms for the singularities we will encounter. 

\begin{prop}\label{prop:standard_form_real_sing}
    Let $f: (\R^3, 0) \ra (\R, 0)$ be a real germ with a critical point at the origin whose  Hessian has corank 1. Then, 
    \begin{enumerate}
        \item there is an equivariant change of coordinates $(x_1, x_2, x_3) \mapsto (\hat x_1, \hat x_2, \hat x_3)$ and a smooth, odd germ $h: (\R,0) \ra (\R, 0)$ such that 
        \begin{align*}
            f = \hat{x}_1^2 - \hat x_2^2 + h(\hat{x}_3);
        \end{align*}
        \item further, if $h$ has finite order $2k+1$ at the origin\footnote{As $h$ is odd, its order is necessarily odd as well.}, then the coordinates may be chosen so that 
        \begin{align*}
            f = \hat{x}_1^2 - \hat x_2^2 + \hat x_3^{2k+1}.
        \end{align*}
    \end{enumerate}
\end{prop}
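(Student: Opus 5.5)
We prove an equivariant version of the splitting lemma by first changing to linear coordinates adapted to the representation, and then following the usual parametric Morse lemma argument while keeping track of parity. Following Lemma~\ref{lem:hessian_corank}, use coordinates with $\tau(x_1,x_2,x_3)=(x_2,x_1,-x_3)$. Pass to the linear coordinates $u=(x_1+x_2)/\sqrt2$, $v=(x_1-x_2)/\sqrt2$, $w=x_3$. In these coordinates $\tau(u,v,w)=(u,-v,-w)$. Being real then means $f(u,-v,-w)=-f(u,v,w)$. By Lemma~\ref{lem:hessian_corank}, a corank-one Hessian at $0$ is nondegenerate on a $2$-plane and has a $1$-dimensional kernel.

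First we identify the kernel and the nondegenerate part. A one-dimensional $\tau$-invariant kernel of the matrix in Lemma~\ref{lem:hessian_corank} with $(a,c)\neq 0$ is spanned by $(c,-c,-a)$ in $x$-coordinates, i.e.\ a vector in the $(-1)$-eigenspace span of $v,w$. So the kernel $K$ lies in the anti-invariant plane $N$, and the Hessian restricted to the $\tau$-invariant complement $K^{\perp}$ is nondegenerate. Since $\tau^*H=-H$, the complement $K^\perp$ contains both a $(+1)$- and a $(-1)$-eigenvector of $\tau$ that are exchanged, up to sign, in value. Concretely, $H$ has signature $(1,1)$ on $K^\perp$. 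After an equivariant linear change we may therefore assume $K$ is the $w$-axis and $H|_{K^\perp}$ is a multiple of $uv$.

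Next we run the parametric Morse lemma. Write $f(u,v,w)$ and solve $\partial_u f=\partial_v f=0$ for $(u,v)=(u(w),v(w))$ by the implicit function theorem. By uniqueness and equivariance, the solution satisfies $u(-w)=u(w)$ and $v(-w)=-v(w)$. Translating by this section is an equivariant diffeomorphism. Set $h(w)=f(u(w),v(w),w)$; reality gives $h(-w)=-h(w)$, so $h$ is odd. Now $g=f-h(w)$ vanishes to second order along the $w$-axis, and Hadamard's lemma gives $g=\sum Q_{ij}(u,v,w)\,y_iy_j$ with $y=(u,v)$. Here $Q$ is a family of nondegenerate forms of signature $(1,1)$ with an equivariance property. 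The main obstacle is to diagonalize $Q$ to $\hat x_1^2-\hat x_2^2$ by a $w$-dependent linear map that commutes with the action, which must map $\hat x_1^2-\hat x_2^2$ to its negative.

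For this step I would use the null coordinates $g=\hat u\hat v$ rather than squares. Then $\tau$ acts by $(\hat u,\hat v)\mapsto(\hat u,-\hat v)$, and $\hat u\hat v$ is odd, as required. Completing the product: if $g=Au^2+2Buv+Cv^2$ with $B(0)\neq0$, we seek the factorization $g=(\alpha u+\beta v)(\gamma u+\delta v)$. The parity of the coefficients is $A$ odd in $(v,w)$, $B$ even, and $C$ odd. Choosing the root of the quadratic that deforms from the factor $u$, we can take $\alpha$ even and $\beta$ odd, and the second factor then has the opposite parity automatically. This yields $f=\hat u\hat v+h(\hat w)$. A final linear change $\hat x_1=(\hat u+\hat v)/2$, $\hat x_2=(\hat u-\hat v)/2$ returns to the original type of action, which exchanges $\hat x_1,\hat x_2$, and gives $\hat x_1^2-\hat x_2^2$. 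This proves part (1).

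For part (2), let $h$ be odd of order $2k+1$. Then $h(w)=w^{2k+1}e(w^2)$ with $e(0)\neq0$. After possibly replacing $w$ by $-w$ (equivariant) and, if needed, swapping $\hat x_1$ and $\hat x_2$ to absorb the sign, we may assume $e(0)>0$. Set $\hat x_3=w\,e(w^2)^{1/(2k+1)}$. This is odd in $w$, hence an equivariant local diffeomorphism, and it gives $h=\hat x_3^{2k+1}$.

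The sign bookkeeping in the last step needs care: a global sign on $f$ is not allowed, so the sign of $e(0)$ has to be absorbed by the swap $\hat x_1\leftrightarrow\hat x_2$ combined with $w\mapsto -w$, and this combination must still commute with $\tau$.
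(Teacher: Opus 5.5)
Your proof is correct. It follows the paper's skeleton but handles the key step, bringing the Hadamard form to normal form, by a different device.

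\textbf{Shared skeleton.} Both proofs:
\begin{itemize}
\item pass to coordinates $(u,v,w)$ with $\tau(u,v,w)=(u,-v,-w)$ and normalize the Hessian so that its kernel is the $w$-axis;
\item solve $\nabla_{(u,v)}f=0$ by the implicit function theorem, getting equivariance of the section from uniqueness;
\item subtract the odd function $h$ and write the remainder as $y^TQ(y,w)y$ by Hadamard's lemma.
\end{itemize}

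\textbf{Where you differ.} The paper conjugates $Q$ to the constant form $A_0$ (with $y^TA_0y=uv$) using the principal square root $\sqrt{A_0^{-1}Q}$. Equivariance comes from uniqueness of the principal root, and $C^TA_0C=Q$ comes from self-adjointness with respect to $A_0$. You instead factor $Au^2+2Buv+Cv^2$ into two linear factors, taking the factor $u+\beta v$ where $\beta$ is the small root of $A\beta^2-2B\beta+C=0$. This root is smooth by the implicit function theorem since $B(0)\neq 0$, and uniqueness of the small root gives it the right parity. Hence $\hat u$ is $\tau$-invariant and $\hat v$ anti-invariant.

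\textbf{What each buys.} Your route is more elementary and produces the null coordinates adapted to the $\pm1$ eigenspaces directly, but it is tied to the rank-two hyperbolic block. The square-root argument is the standard equivariant Morse lemma and works verbatim for nondegenerate blocks of any size. Your observation that the kernel lies in the $(-1)$-eigenspace also makes explicit why the paper's normalizing change ``$b=1$, $c=0$'' can be taken equivariant. That kernel can be moved to the $w$-axis by a linear map acting only on that eigenspace, which the paper leaves implicit.

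\textbf{Correction to part (2).} Your sign discussion is unnecessary, and the final paragraph is wrong. Swapping $\hat x_1\leftrightarrow\hat x_2$ together with $w\mapsto -w$ multiplies all of $f$ by $-1$, which is exactly the global sign you say is forbidden. The fix is easy either way:
\begin{itemize}
\item The equivariant substitution $w\mapsto -w$ alone flips the sign of $e$ and leaves $\hat x_1^2-\hat x_2^2$ untouched.
\item Better, no normalization is needed at all. The real $(2k+1)$-st root of the nonvanishing function $e(w^2)$ is smooth whatever its sign, which is how the paper defines $\hat z=z\rho(z)^{1/(2k+1)}$.
\end{itemize}

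A minor point: the coefficients of your linear change depend on $(u,v,w)$, not only on $w$ as you wrote. This does not affect the argument, since the Jacobian at the origin is still invertible.
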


The normal form stated in \Cref{prop:standard_form_real_sing} can be obtained formally, just as in \cite[Splitting Lemma]{siersma}. The non-equivariant statement then follows from the fact that any germ of finite codimension is finitely determined (see  \cite{Wassermann_unfolding}, for instance). Rather than formulate and prove an equivariant version of finite determinacy, we give an elementary proof of Proposition~\ref{prop:standard_form_real_sing}. This is modeled on the proof of the equivariant Morse lemma; cf. \cite[Section 2]{Arnold} and \cite[Lemma 4.1]{WassermanET}.

We make use of the following well-known fact, for which we refer to \cite[Theorem 1.29]{Higham}. 

\begin{lemma}[Principal square root]\label{lem:sqrt}
There is a neighborhood $\mathcal U$ of the identity in $\mathrm{GL}_n(\R)$ and a
smooth map $\sqrt{\,\cdot\,}\colon\mathcal U\to\mathrm{GL}_n(\R)$ with
$\bigl(\sqrt N\bigr)^2=N$ and $\sqrt I=I$, characterized as the \emph{unique} square
root of $N$ whose spectrum lies in $\{\operatorname{Re}>0\}$. It satisfies:
\[
  \sqrt{\,PNP^{-1}\,}=P\,\bigl(\sqrt N\bigr)\,P^{-1}
  \qquad\text{whenever } N,\,PNP^{-1}\in\mathcal U .
\]
\end{lemma}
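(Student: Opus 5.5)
We would prove this directly, with no appeal to \cite{Higham}. Take $\mathcal U=\{N\in \mathrm{GL}_n(\R): \|N-I\|<1\}$, where $\|\cdot\|$ is the operator norm, and define
\[
\sqrt N:=\sum_{k\ge 0}\binom{1/2}{k}(N-I)^k .
\]
The binomial series $\sum_k\binom{1/2}{k}t^k$ has radius of convergence $1$, so the matrix series converges absolutely and locally uniformly on $\mathcal U$, together with all its term-by-term derivatives. Hence $\sqrt{\,\cdot\,}$ is real-analytic, and in particular smooth. It is real-valued because the coefficients are real, and clearly $\sqrt I=I$.

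The identity $(\sqrt N)^2=N$ comes from the scalar identity $\bigl(\sum_k\binom{1/2}{k}t^k\bigr)^2=1+t$ for $|t|<1$. Since all powers of $N-I$ commute, the Cauchy product of the two absolutely convergent matrix series is legitimate, and the same coefficient identities give $(\sqrt N)^2=I+(N-I)=N$. For the spectrum, triangularize $N$ over $\mathbb C$. Each eigenvalue $\lambda$ of $N$ satisfies $|\lambda-1|\le\|N-I\|<1$, and the corresponding eigenvalue of $\sqrt N$ is the scalar series evaluated at $\lambda-1$, i.e.\ the principal branch $\sqrt\lambda$. That value has positive real part, because $\lambda$ lies in the disk $|\lambda-1|<1$, which avoids $(-\infty,0]$. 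In particular $0$ is not an eigenvalue, so $\sqrt N\in\mathrm{GL}_n(\R)$.

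Uniqueness is the step needing a small idea. Let $X$ be any matrix with $X^2=N$ and spectrum in $\{\operatorname{Re}>0\}$, and set $S=\sqrt N$.
\begin{enumerate}
\item $X$ commutes with $N=X^2$, hence with every polynomial in $N$, hence with the limit $S$ of such polynomials.
\item Expanding gives $(X-S)(X+S)=X^2-S^2+XS-SX=0$.
\item Since $X$ and $S$ commute, they are simultaneously triangularizable over $\mathbb C$. The eigenvalues of $X+S$ are therefore sums $\mu+\sigma$ with $\operatorname{Re}\mu,\operatorname{Re}\sigma>0$, so $X+S$ is invertible.
\end{enumerate}
Therefore $X=S$.

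The conjugation formula then follows from uniqueness. Suppose $N$ and $PNP^{-1}$ both lie in $\mathcal U$. The matrix $P\sqrt N P^{-1}$ squares to $PNP^{-1}$ and has the same spectrum as $\sqrt N$, which lies in the open right half-plane. By uniqueness, applied at $PNP^{-1}\in\mathcal U$, it equals $\sqrt{PNP^{-1}}$.

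The only mildly delicate point is the commutation step (1) in the uniqueness argument. It is handled by noting that $S$ is a norm-limit of polynomials in $N$.
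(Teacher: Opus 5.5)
Your proof is correct, and it takes a different route from the paper. The paper gives no argument of its own and simply cites \cite[Theorem 1.29]{Higham}.

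\emph{What the citation gives.} Higham's theorem yields the principal square root on the much larger open set of matrices with no eigenvalues on $(-\infty,0]$. It rests on the Jordan-form description of square roots of a nonsingular matrix and on the theory of primary matrix functions. That framework supplies uniqueness, smoothness, similarity-equivariance and realness all at once.

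\emph{What your argument gives.} Your proof is self-contained and elementary. The binomial series produces the root, shows it is real, and shows it is real-analytic on the ball $\|N-I\|<1$. Uniqueness comes from the commutation trick $(X-S)(X+S)=0$ with $X+S$ invertible, which avoids Jordan forms entirely. Equivariance then follows directly from uniqueness.

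The only cost is the smaller domain. This is harmless for the lemma as stated, since it only asks for some neighborhood of $I$. It is also harmless for the application in Proposition~\ref{prop:standard_form_real_sing}, where $M(p)=A_0^{-1}A(p)$ is close to $I$.

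One further point in your favor: your uniqueness holds among \emph{all} complex matrices, not only those in $\mathcal U$. That is the form in which the paper actually invokes it, for instance to conclude that $C$ is self-adjoint with respect to $A_0$ by comparing it with $A_0^{-1}C^TA_0$.
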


\begin{proof}[Proof of \Cref{prop:standard_form_real_sing}] 
    It will be more convenient to use coordinates $(u,v,z) = (x_1 +x_2,x_1-x_2,x_3)$ so that $\tau(u, v, z) = (u, -v, -z).$   Let $W=\R^2 \times 0 \subset \R^3$ and write $w=(u, v)$ for a typical element of $W$.

    In these coordinates, the Hessian is given by
    $$H = \begin{bmatrix}
        0&b & c\\b&0 & 0 \\ c& 0 & 0
    \end{bmatrix}.$$
    
    After a linear change of variables, we may assume that $b=1$ and $c=0$. Then, the kernel of the Hessian is the $z$-axis and its restriction to $W$ is the hyperbolic form $$H = \begin{bmatrix}
        0&1\\1&0
    \end{bmatrix}.$$ Therefore, the 2-jet of $f$ is $uv$.
    
   Let 
    $$\sigma: W \to W, \ \sigma(u,v):=(u,-v),$$ so that $\t(w, z) = (\sigma(w), -z).$ 
    
    Define also $A_0 = \frac{1}{2}H$, so that $w^TA_0w = uv = x^2 - y^2$ and $\sigma A_0 \sigma = -A_0$.

    Let $G(w,z)$ be the gradient of $f$ is the $w$-directions, $$G(w, z) := \nabla_w f(w, z) \in \R^2.$$ Note that since $f$ has a critical point at the origin we have $G(0,0) = 0$.  Also, $D_w G(0,0) = D^2_wf(0) = H$ is invertible, so the inverse function theorem supplies a unique smooth function $z \mapsto \xi(z)$ with $\xi(0) = 0$ and 
    \begin{equation}\label{eqn:G-zeros}
        G(\xi(z), z) = 0
    \end{equation}
    for $z$ near $0$. The symmetry of $f$ yields the following symmetry of $G$:
    \begin{align*}
        G \circ \t(w, z) = -\sigma \circ G(w,z).
    \end{align*}
    It follows that $z \mapsto\sigma(\xi(-z))$ also solves \eqref{eqn:G-zeros} and vanishes at the origin; hence the uniqueness of $\xi$ forces the relation 
   \[\xi(-z) = \sigma\, \xi(z).\]

    After the equivariant change of coordinates $(w, z) \mapsto (w - \xi(z), z)$, we may assume that $G(0, z)=0$ for all $z$. Let us keep the symbol $f$ for the function on the new coordinates.  
    
    We define
    \begin{equation}\label{eqn:define-h}
        h(z) := f(0, z).
    \end{equation}
Observe that $h$ is odd, $h(-z) = -h(z)$.

    Consider the germ $\Phi(w, z) := f(w,z) - h(z)$. By \eqref{eqn:define-h}, we have $\Phi(0,z) = 0$ and $\nabla_w \Phi(0,z) = 0$ for all $z$. Taylor's theorem with remainder  in the $w$-variables yields 
    \begin{align*}
        \Phi(w,z) = w^T A(w,z) w, \quad A(w, z) = \int_0^1 (1-s)D^2_w\Phi(sw,z) d s,
    \end{align*}
    which is a smooth family of symmetric matrices. Furthermore, $A(0,0) = \frac{1}{2}H$ which is precisely $A_0$. For points $p$ in a sufficiently small neighborhood of $(0,0)$, the matrix $$M(p) = A_0^{-1}A(p)$$ is defined and lies in 
    the neighborhood $\mathcal U$ from Lemma~\ref{lem:sqrt}. Let $C(p) = \sqrt{M(p)}$.   The symmetry of $f$ implies that:
    \begin{align}\label{eqn:C-symmetry}
        A(\t p) = - \sigma A(p) \sigma, \quad M(\t p) = \sigma M(p) \sigma, \quad C(\t p) = \sigma C(p) \sigma.
    \end{align}
    Here, the relation for $C(p)$ is a consequence of the uniqueness claim of \Cref{lem:sqrt}.

    Notice that the matrix $M=M(p)$ is self-adjoint with respect to $A_0$:
    \begin{align*}
        A_0^{-1}M^TA_0 =  A_0^{-1}(A_0^{-1}A)^TA_0 = A_0^{-1}A = M.
    \end{align*}
    Since both $C=C(p)$ and its adjoint are square roots of $M(p)$ with spectrum in $\{\operatorname{Re}>0\}$, the uniqueness part of \Cref{lem:sqrt} implies that $C$ is self-adjoint. Therefore, 
    \begin{align}\label{eqn: C-conjugate-A}
        C^T A_0 C = A.
    \end{align}

    Finally, define $\hat w = (\hat u, \hat v) =  C(w,z) w$. One can easily verify that this is a diffeomorphism germ. Further, it is equivariant by \eqref{eqn:C-symmetry}, and $\t (\hat w, z) = (\hat u, - \hat v, -z)$. Putting everything together, 
    \begin{align*}
        f(w,z)
        &= h(z)+w^TA(w,z)w \\
        &= h(z)+w^TC(w,z)^TA_0C(w,z)w  \\
        &= h(z)+\widehat w^TA_0\widehat w\\
        &= h(z) + \hat{u}\hat{v}.
    \end{align*}
   
    Taking $\hat{u} \mapsto \hat{x}_1 + \hat{x}_2$ and $\hat{v} \mapsto \hat{x}_1 - \hat{x}_2$ proves part (1) of the proposition. 

    Part (2) is standard. If $h$ has finite order $2k+1$, we can write $h(z) = z^{2k+1} \rho(z)$ with $\rho(0) \neq 0$. Finally, we define $\hat z = z\rho(z)^{1/(2k+1)}$, which yields the desired normal form. 
\end{proof}

Following \cite{golubitsky1985singularities}, we will write $\cE_x$ for the ring of smooth germs in a single variable $x$. As noted in \cite[Chapter VI, Section 2]{golubitsky1985singularities}, the space of odd germs $\cE_x(\Z/2)$ is simply
\begin{align*}
    \cE_x(\Z/2) = \cE_{u}\cdot \{x\},
\end{align*}
where $u = x^2$. Note that $\cE_x(\Z/2)$ is not a ring (the product of two odd germs is not odd) but rather a module over $\cE_u$, the ring of even germs.

Let $f(x)$ be an element of $\cE_x(\Z/2)$. Write $f(x) = r(u)x$. The \emph{tangent space at $f$} is the space
\begin{align*}
    T(f, \Z/2):= \langle r, u\, \partial_u r(u) \rangle \cdot \{x\},
\end{align*}
according to \cite[Chapter VI, Equation 3.2]{golubitsky1985singularities}. The codimension of $f$ agrees with the codimension of $T(f, \Z/2)$ in $\cE_x(\Z/2)$. It is clear that $T(x^{2k+1})$ has codimension $k$ in $\cE_x(\Z/2)$, as 
\begin{align}\label{eqn:xk-z2-codimenion}
    \dim_\R(\cE_x(\Z/2)/T(x^{2k+1}, \Z/2)) = \dim_\R((\R[x^2] \cdot \{x\})/(x^{2k+1})).
\end{align}
    
Combining the previous results yields the following classification result. 

\begin{thm}\label{thm:real singularities trivial orbit}
    In a generic 1-parameter family of smooth functions, the only degenerate $\{1\}$-critical orbits that appear are isolated, index 1 birth-death singularities and have normal form 
    \begin{align*}
        f(x) = f(p) + x_1^2 - x_2^2 + x_3^3, \quad \tau(x_1,x_2,x_3) = (x_2,x_1,-x_3),
    \end{align*}
    which we refer to as $A_2(\{1\})$-singularities.
    
    In generic 2-parameter families of smooth functions, there also appear isolated singularities of the form 
    \begin{align*}
        f(x) = f(p) + x_1^2 - x_2^2 + x_3^5, \quad \tau(x_1,x_2,x_3) = (x_2,x_1,-x_3),
    \end{align*}
    which we refer to as $A_4(\{1\})$ singularities.
\end{thm}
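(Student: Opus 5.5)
Combining Lemma~\ref{lem:hessian_corank}, Proposition~\ref{prop:standard_form_real_sing}, and the codimension count \eqref{eqn:xk-z2-codimenion}, I will classify fixed-point critical points by their codimension in $C^\infty_R(Y,\tau)$ and keep only those of codimension at most $2$. Work near a point $p\in C$ in coordinates with $\tau(x_1,x_2,x_3)=(x_2,x_1,-x_3)$. By Lemma~\ref{lem:hessian_corank}, a fixed critical point has Hessian of corank $1$ or $3$. Corank $3$ has codimension $4$ at a fixed point, hence codimension $3$ once the point may move along the one-dimensional set $C$, so it does not occur in generic $1$- or $2$-parameter families. Every fixed critical point appearing generically therefore has corank exactly $1$. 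Proposition~\ref{prop:standard_form_real_sing}(1) then puts the germ in the form $\hat x_1^2-\hat x_2^2+h(\hat x_3)$ with $h$ odd. In particular, no fixed critical point is ever nondegenerate, which matches the remark that $A_1$ and $A_3^\pm$ cannot occur in $\{1\}$-orbits.

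Next I will compute the codimension of the stratum on which $h$ has order $2k+1$. I will use transversality in the jet space of real functions: the multijet transversality theorem holds equivariantly for the free part, and for the fixed part I apply ordinary transversality to the restricted jet map along $C$, since real functions restricted to a slice form an affine space. Having a critical point on $C$ imposes $2$ conditions, namely $f_{x_1}=0$ and $f_{x_3}=0$, using that $f_{x_1}+f_{x_2}=0$ automatically. The corank-$1$ condition is open. By \eqref{eqn:xk-z2-codimenion}, requiring $h$ to have order at least $2k+1$ imposes $k-1$ further conditions, the vanishing of the odd coefficients $h'(0),h'''(0),\dots$ below order $2k+1$. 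Letting the point vary along $C$ removes one condition. The codimension is therefore $2+(k-1)-1=k$ in the space of functions. A generic family of dimension $n$ thus meets only the strata with $k\le n$, each transversally, so the corresponding parameter values are isolated points for $k=n$. The case $k=0$ (order $1$, i.e.\ $h'(0)\neq0$) is really a regular point, since then $f_{x_3}\ne0$. This fits the count: critical points on $C$ have codimension $2-1=1$, so they do not occur for an individual generic real function, consistent with real Morse functions having all critical points in $\Z/2$-orbits.

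Once the stratum is known, Proposition~\ref{prop:standard_form_real_sing}(2) gives the normal forms $x_1^2-x_2^2+x_3^{3}$ for $k=1$ and $x_1^2-x_2^2+x_3^5$ for $k=2$. The sign of the top coefficient can be absorbed by $x_3\mapsto -x_3$, which commutes with $\tau$ because $x_3$ is anti-invariant. The change $u=x_1+x_2$, $v=x_1-x_2$ used in the proof recovers the stated coordinates. To identify $A_2(\{1\})$ as a birth--death, I will use the versal unfolding $x_1^2-x_2^2+x_3^3+t x_3$: the $\{1\}$-orbit degenerate point splits into a $\Z/2$-orbit pair of nondegenerate points $(0,0,\pm\sqrt{-t/3})$, exchanged by $\tau$, of indices $1$ and $2$.

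I expect the main obstacle to be justifying genericity rigorously. This means proving an equivariant jet transversality statement along $C$, and showing that the $k$-jet conditions are submersive in the space of real functions, i.e.\ that odd perturbations $\epsilon x_3^{2j+1}$ times a bump are real functions realizing each coordinate. The rest reduces to the results already stated.
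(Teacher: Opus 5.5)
Your proposal is correct and follows the paper's route. The paper's proof is just the remark that Lemma~\ref{lem:hessian_corank}, Proposition~\ref{prop:standard_form_real_sing} and the count \eqref{eqn:xk-z2-codimenion} combine to give the classification; your jet-transversality sketch and the unfolding $x_1^2-x_2^2+x_3^3+tx_3$ (which is the paper's subsequent miniversal-deformation corollary) supply details the paper leaves implicit. One bookkeeping slip: the $k-1$ further conditions are the vanishing of $h'''(0),\dots,h^{(2k-1)}(0)$, not a list starting at $h'(0)$, because $h'(0)=0$ is the condition $f_{x_3}=0$ you already counted; your total $2+(k-1)-1=k$ is right.
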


We now analyze the behavior question of real functions near these types of singularities.

\begin{defn}
    A \emph{real deformation} of $f \in \cE_n(\tau)$ with base $\Lambda = \R^k$ is a smooth map 
    \begin{align*}
        F: \R^n \times \R^k \ra \R
    \end{align*}
    such that $F(x, 0) = f(x)$ and $F(\tau(x), \lambda) = - F(x, \lambda).$

    A real deformation $F(x, \lambda)$ of $f$ is called \emph{versal} if every other real deformation $F': \R^n \times \R^\ell \ra \R$ of $f$ can be represented in the form 
    \begin{align*}
        F'(x,\lambda') = F(g(x, \lambda'), \theta(\lambda')),
    \end{align*}
    where $g$ is a germ at zero of an $\ell$-parameter family of diffeomorphisms $\R^n \times \R^\ell \ra \R^n$ so $g(x,0) = x$ and $g(\tau(x), \lambda) = -g(x,\lambda)$ and $\theta: (\R^\ell, 0) \ra (\R^k, 0)$ satisfies $\theta(0) = 0$. 

    A versal (real) deformation whose base is of minimal dimension is called \emph{miniversal}. There is a useful criterion for determining whether a deformation is miniversal.
\end{defn}

\begin{thm}[Theorem 3.3 in \cite{golubitsky1985singularities}]\label{thm:miniversal-deformations}  
    Let $g(x) = R(u)x$, where $u = x^2$, be a germ in $\cE(\Z/2)$ and let $G(x, \lambda_1, \hdots,  \lambda_k) = R(x, \lambda_1, \hdots \lambda_k)x$ be a $k$-parameter real deformation of $g$. Then $G$ is a miniversal real deformation if and only if the codimension of $T(g, \Z/2)$ in $\cE(\Z/2)$ is $k$ and 
    \begin{align*}
        \cE(\Z/2) = T(g,\Z/2) \oplus \R \left \{\frac{\partial R}{\partial \lambda_1}, \hdots, \frac{\partial R}{\partial \lambda_k}  \right\}\cdot x.
    \end{align*}
\end{thm}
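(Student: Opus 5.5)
The plan is to follow the standard route to the versality theorem, as in Mather's and Martinet's proofs, carried out inside the category of $\Z/2$-equivariant germs. The key simplification comes from Schwarz's theorem (here elementary). An even germ in $x$ is a smooth germ in $u=x^2$, and an odd germ is of the form $a(u)x$. So every statement about $\cE_x(\Z/2)$ can be converted into a statement about modules over the ring $\cE_u$. I would take as given that the equivalence relation governing $T(g,\Z/2)$ is the one used in \cite{golubitsky1985singularities}: the generators $R$ and $u\,\partial_uR$ are the directions coming from multiplying by an even unit and from reparametrizing $x\mapsto x\,\phi(u)$. I would check at the outset that the paper's notion of real deformation matches this equivalence relation.

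\textbf{Necessity.} Suppose $G$ is versal. Take an arbitrary one-parameter real deformation $g+t\,\eta$ with $\eta=a(u)x$. Write it as $G(\tilde g(x,t),\theta(t))$ multiplied by the allowed equivariant factor, and differentiate at $t=0$. The result expresses $\eta$ as an element of $T(g,\Z/2)$ plus a linear combination of the $\partial R/\partial\lambda_i\cdot x$. Hence $T(g,\Z/2)+\R\{\partial_{\lambda_i}R\}\cdot x=\cE(\Z/2)$. Minimality of the base then forces the sum to be direct with exactly $k$ summands; otherwise a parameter could be discarded, by the sufficiency direction applied to a smaller family.

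\textbf{Sufficiency.} Assume the transversality condition with $k$ equal to the codimension. Given another deformation $G'(x,\lambda')$, first form the sum deformation $G(x,\lambda)+(G'(x,\lambda')-g)$ on the base $\R^k\times\R^\ell$. Then show that this sum is equivalent to the pullback of $G$ along the projection. The tool is the homotopy method: connect the two families by a path $F_t$ and solve the infinitesimal equation
\[
\partial_t F_t \in T_{\mathrm{ext}}(F_t,\Z/2)+\cE_{\lambda}\{\partial_{\lambda_i}R\}\cdot x
\]
for all $t\in[0,1]$, where $T_{\mathrm{ext}}$ is the extended (parametrized) tangent space with coefficients in $\cE_{u,\lambda,\lambda'}$. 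Integrating the resulting equivariant time-dependent vector fields then produces $g(x,\lambda')$ and $\theta(\lambda')$. Each such vector field has odd $x$-component and even multiplier, so the flow is equivariant automatically.

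\textbf{Main obstacle.} The hard step is passing from the pointwise condition at $\lambda=0$ to the parametrized equation above. For this I would use the Malgrange preparation theorem over $\cE_u$. Write $\mathcal M$ for the $\cE_{u,\lambda}$-module $\cE_{x,\lambda}(\Z/2)/T_{\mathrm{ext}}$. Since $\cE_{x,\lambda}(\Z/2)\cong\cE_{u,\lambda}\cdot x$, this is a finitely generated module over a ring of germs in the variables $(u,\lambda)$. The hypothesis says $\mathcal M/\mathfrak m_\lambda\mathcal M$ is spanned over $\R$ by the $\partial_{\lambda_i}R\cdot x$. Preparation (applied to the map $(u,\lambda)\mapsto\lambda$) together with Nakayama's lemma then shows that these elements generate $\mathcal M$ over $\cE_\lambda$. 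Finite generation of $\mathcal M$ follows because $T(g,\Z/2)$ has finite codimension and therefore contains $u^N\cE(\Z/2)$ for some $N$.

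This is the only genuinely analytic input; everything else is bookkeeping of parities. If this step proved delicate, I would fall back on citing \cite[Chapter VI]{golubitsky1985singularities} directly, since the statement there is exactly the one quoted.
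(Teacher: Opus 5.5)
The paper does not prove this statement; it quotes Golubitsky--Schaeffer. Your Mather--Malgrange outline is the standard route, and its analytic core is sound: reduce odd germs to $\mathcal{E}_u$-modules, apply the preparation theorem and Nakayama on the fiber over the parameter origin, then run an equivariant homotopy argument.

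The genuine problem is the compatibility check you postpone: it fails. The paper defines versality by $F'(x,\lambda')=F(g(x,\lambda'),\theta(\lambda'))$, that is, by odd reparametrizations alone, with no multiplier. By contrast, $T(g,\Z/2)=\langle r,\,u\,\partial_u r\rangle\cdot x$ and your infinitesimal equation with an ``even multiplier'' come from the contact-type equivalence $g\mapsto S\cdot g(x\phi)$ of Golubitsky--Schaeffer. If you solve your equation and integrate, the multiplier component gives $G'(x,\lambda')=S(x,\lambda')\,G(\tilde g(x,\lambda'),\theta(\lambda'))$ with $S$ an even unit. So your sufficiency direction, as written, proves only contact versality, which is weaker than what the paper's definition demands.

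To repair it, drop the multiplier and use only the right-equivalence tangent space. An odd vector field $x\phi\,\partial_x$ contributes $x\phi\,\partial_xF=\phi\,(R+2u\,\partial_uR)\cdot x$. So work with $\mathcal{M}=\mathcal{E}_{u,\lambda,\lambda',t}\cdot x\big/\mathcal{E}_{u,\lambda,\lambda',t}\{R+2u\,\partial_uR\}\cdot x$. Its fiber over the parameter origin is $\mathcal{E}_u x/\langle r+2u\,\partial_u r\rangle x$.

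You then need the short but essential identity $\langle r+2u\,\partial_u r\rangle=\langle r,\,u\,\partial_u r\rangle$ in $\mathcal{E}_u$. Finite codimension of $T(g,\Z/2)$ rules out flat $r$, so $r=u^m e(u)$ with $e(0)\neq 0$. Then $r+2u\,\partial_u r=u^m\bigl((1+2m)e+2u e'\bigr)$, so both ideals equal $\langle u^m\rangle$. The parametrized ideals $\langle R+2u\,\partial_uR\rangle$ and $\langle R,u\,\partial_uR\rangle$ can differ for $\lambda\neq 0$, but only the fiber enters the preparation-theorem argument.

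With this change your Malgrange--Nakayama step goes through unchanged, and the integrated flow is a pure odd reparametrization, as required. The necessity direction is unaffected, because the right tangent space is contained in $T(g,\Z/2)$.

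Two smaller points. First, finite generation of $\mathcal{M}$ over the full ring is automatic, since it is cyclic and generated by $x$. What finite codimension of $T(g,\Z/2)$ actually supplies is $\dim_\R \mathcal{M}/\mathfrak{m}\mathcal{M}<\infty$, where $\mathfrak{m}$ is the maximal ideal of the parameter ring; that is the hypothesis the preparation theorem needs. Second, choose the coefficients in the homotopy equation to lie in $\mathfrak{m}_{\lambda'}$. This is possible because $\partial_t F_t=G'-g$ vanishes at $\lambda'=0$, and it ensures the flow is the identity over $\lambda'=0$, giving $\tilde g(x,0)=x$ and $\theta(0)=0$.
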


Miniversal real deformations of $A_2(\{1\})$- and $A_4(\{1\})$-singularities have the following forms. 

\begin{cor}
    The $\{1\}$-orbit singularities $A_2$ and $A_4$ have miniversal deformations
    \begin{align*}
        f(x_1, x_2, x_3) = x_1^2 - x_2^2 + \lambda_1 x_3 + x_3^3
    \end{align*}
    and 
    \begin{align*}
        f(x_1, x_2, x_3) = x_1^2 - x_2^2 + \lambda_1 x_3 + \lambda_2 x_3^3 + x_3^5,
    \end{align*}
    respectively. 
\end{cor}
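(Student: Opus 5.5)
The plan is to reduce to a one-variable problem and then apply the criterion of Theorem~\ref{thm:miniversal-deformations}. By Proposition~\ref{prop:standard_form_real_sing}, an $A_2(\{1\})$ or $A_4(\{1\})$ germ is equivalent to $x_1^2 - x_2^2 + g(x_3)$ with $g(x_3) = x_3^{2k+1}$ and $k \in \{1,2\}$. The nondegenerate quadratic part $x_1^2 - x_2^2$ in the $(x_1,x_2)$-variables carries no moduli. To handle it, I will run the proof of Proposition~\ref{prop:standard_form_real_sing} with parameters. Given an arbitrary real deformation $F'(x,\lambda')$, the implicit function step that produces $\xi(z)$ and the square-root step of Lemma~\ref{lem:sqrt} both depend smoothly on $\lambda'$. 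The uniqueness arguments that forced $\xi(-z) = \sigma\xi(z)$ and \eqref{eqn:C-symmetry} go through verbatim for each fixed $\lambda'$. This yields an equivariant family of coordinate changes $g(x,\lambda')$ with $g(x,0)$ equal to the normalizing map and
\[
F'(x,\lambda') = \hat x_1^2 - \hat x_2^2 + h(\hat x_3,\lambda'),
\]
where $h(\cdot,\lambda')$ is odd for each $\lambda'$ and $h(\cdot,0)=g$. So versality of a real deformation of $f$ reduces to versality of the odd deformation $h$ of $g$ in $\cE_x(\Z/2)$, and the dimension counts match because the quadratic part contributes nothing.

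For the one-variable problem, write $g(x) = R(u)x$ with $u = x^2$ and $R(u) = u^k$. Then
\[
T(g,\Z/2) = \langle u^k,\ u\cdot k u^{k-1}\rangle \cdot\{x\} = (u^k)\,\cE_u\cdot\{x\}.
\]
By \eqref{eqn:xk-z2-codimenion} this has codimension $k$, with complement spanned by $x, x^3, \dots, x^{2k-1}$. For $k=1$, take $G = (\lambda_1 + u)x$; then $\partial R/\partial\lambda_1 = 1$, and $\R\{x\}$ is complementary to $T$. For $k=2$, take $G = (\lambda_1 + \lambda_2 u + u^2)x$; then the partials are $1$ and $u$, giving $\R\{x, x^3\}$, again complementary. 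Theorem~\ref{thm:miniversal-deformations} therefore gives miniversality of these odd deformations. Adding back $x_1^2 - x_2^2$ gives the stated formulas.

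The main obstacle is justifying the parametric splitting and the passage back from the one-variable versal statement. We must make sure the family of equivariant diffeomorphisms $g(x,\lambda)$ required in the definition of versality is smooth in $\lambda$ and equivariant, and that the reparametrization $\theta$ produced by Theorem~\ref{thm:miniversal-deformations} for $h$ can be composed with the $\lambda'$-dependent splitting. I would handle this by checking that each step in the proof of Proposition~\ref{prop:standard_form_real_sing} is a smooth, uniqueness-determined construction, so that parameters ride along for free. If needed, I would absorb the smooth $\lambda'$-dependent constant term, which vanishes by oddness, and note that the critical point may move off the origin along the $x_3$-axis. This movement is harmless, since we never need to recentre: the $\xi(z,\lambda')$ step only uses that $D_wG$ is invertible near $(0,0,0)$.
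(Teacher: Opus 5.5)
Your proposal is correct and follows the paper's route. Both arguments reduce via Proposition~\ref{prop:standard_form_real_sing} to the odd one-variable germ $x_3^{2k+1}$, compute $T(g,\Z/2) = (u^k)\cdot\{x\}$ with complement spanned by $x, x^3, \dots, x^{2k-1}$, and invoke Theorem~\ref{thm:miniversal-deformations}. The only difference is that you spell out the parametric version of the splitting argument that justifies the reduction, which the paper simply asserts with ``it suffices to consider unfoldings of the odd, one-variable germ.''
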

\begin{proof}
    By \Cref{prop:standard_form_real_sing}, it suffices to consider unfoldings of the odd, one-variable germ $g(x_3) = x_3^{2k+1}$. 
    As noted above, its tangent space is 
    \begin{align*}
        T(g,\Z/2) = \langle u^k \rangle \cdot \{x_3\},
    \end{align*}
    which clearly has codimension $k$ in $\cE(\Z/2)$. Indeed, its complement is given by 
    \begin{align*}
        \mathrm{span}_\R\left( \frac{\partial R}{\partial \lambda_1}, \hdots, \frac{\partial R}{\partial \lambda_{k-1}}\right)\cdot x = \mathrm{span}_\R\left( u^{k-1}, \hdots , u, 1\right) \cdot x.
    \end{align*}
    It follows that $G(x, \lambda_0, \hdots, \lambda_{k-1}) = (u^{k} + \lambda_{k-1} u^{k-1} + \hdots + \lambda_1 u + \lambda_0)\cdot x$ is a miniversal real deformation. 
\end{proof}

In particular, we can explicitly describe the bifurcation diagrams of these singularities. The diagrams for $A_3^\pm$-singularities are well-known, and in our situation are the same, as these singularities appear in $\Z/2$-orbits; see \Cref{fig:a3_a4_bifurcations}. Let us briefly describe the bifurcation diagram of an $A_4(\{1\})$-singularity.

\begin{figure}[h]
\def\svgwidth{.8\linewidth}
\begingroup%
  \makeatletter%
  \providecommand\color[2][]{%
    \errmessage{(Inkscape) Color is used for the text in Inkscape, but the package 'color.sty' is not loaded}%
    \renewcommand\color[2][]{}%
  }%
  \providecommand\transparent[1]{%
    \errmessage{(Inkscape) Transparency is used (non-zero) for the text in Inkscape, but the package 'transparent.sty' is not loaded}%
    \renewcommand\transparent[1]{}%
  }%
  \providecommand\rotatebox[2]{#2}%
  \newcommand*\fsize{\dimexpr\f@size pt\relax}%
  \newcommand*\lineheight[1]{\fontsize{\fsize}{#1\fsize}\selectfont}%
  \ifx\svgwidth\undefined%
    \setlength{\unitlength}{836.22047244bp}%
    \ifx\svgscale\undefined%
      \relax%
    \else%
      \setlength{\unitlength}{\unitlength * \real{\svgscale}}%
    \fi%
  \else%
    \setlength{\unitlength}{\svgwidth}%
  \fi%
  \global\let\svgwidth\undefined%
  \global\let\svgscale\undefined%
  \makeatother%
  \begin{picture}(1,0.71186441)%
    \lineheight{1}%
    \setlength\tabcolsep{0pt}%
    \put(0,0){\includegraphics[width=\unitlength,page=1]{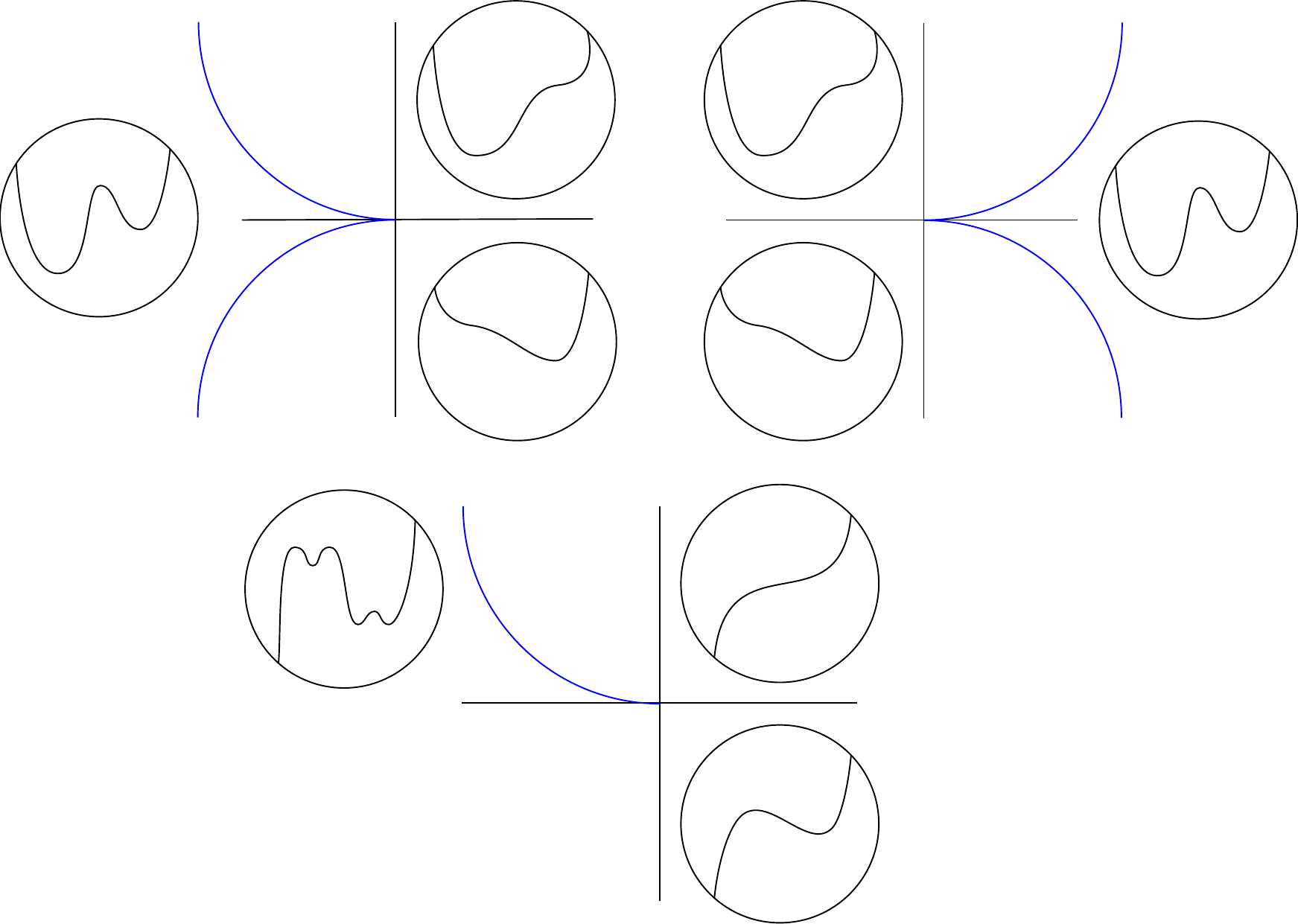}}%
    \put(0.45715001,0.13696434){\color[rgb]{0,0,0}\makebox(0,0)[lt]{\smash{\begin{tabular}[t]{l}{$A_4$}\end{tabular}}}}%
    \put(0.24333907,0.49271532){\color[rgb]{0,0,0}\makebox(0,0)[lt]{\smash{\begin{tabular}[t]{l}{$A_3^+$}\end{tabular}}}}%
    \put(0.74021793,0.49271532){\color[rgb]{0,0,0}\makebox(0,0)[lt]{\smash{\begin{tabular}[t]{l}{$A_3^-$}\end{tabular}}}}%
  \end{picture}%
\endgroup%

    \caption{$A_3^\pm(\Z/2)$ and $A_4(\{1\})$ bifurcation diagrams for $n = 1$.}
\label{fig:a3_a4_bifurcations}
\end{figure}

\begin{lemma}
    The bifurcation diagram of an $A_4(\{1\})$-singularity is as in the last frame of  \Cref{fig:a3_a4_bifurcations}.
\end{lemma}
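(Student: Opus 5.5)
By the preceding corollary, the miniversal real deformation of an $A_4(\{1\})$-singularity is $F_\lambda(x) = x_1^2 - x_2^2 + \lambda_1 x_3 + \lambda_2 x_3^3 + x_3^5$. The plan is to reduce the bifurcation diagram to that of the odd one-variable polynomial $h_\lambda(z) = z^5 + \lambda_2 z^3 + \lambda_1 z$. The quadratic part $x_1^2 - x_2^2$ is nondegenerate and independent of $\lambda$. Hence the critical points of $F_\lambda$ are exactly the points $(0,0,z)$ with $h_\lambda'(z) = 0$. Such a critical point is degenerate precisely when $h_\lambda''(z)=0$ as well.

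So the task is to describe the discriminant set in the $(\lambda_1,\lambda_2)$-plane where $p(z) := h_\lambda'(z) = 5z^4 + 3\lambda_2 z^2 + \lambda_1$ has a multiple root. Since $p$ is even, I will substitute $s = z^2$ and obtain the quadratic $q(s) = 5s^2 + 3\lambda_2 s + \lambda_1$. Real roots $z$ of $p$ correspond to roots $s \ge 0$ of $q$, with $s>0$ giving a $\Z/2$-orbit $\pm z$ of critical points and $s=0$ giving a critical point on the fixed set.

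Multiple roots of $p$ arise in two ways.
\begin{enumerate}
\item $q(0)=0$, i.e.\ $\lambda_1 = 0$. Then $z=0$ is a double root of $p$, giving a degenerate critical point on the fixed set. Generically this is an $A_2(\{1\})$ birth--death at the origin. This yields the line $\lambda_1 = 0$, with the distinguished point $\lambda=0$ where $\lambda_2=0$ as well and the singularity is $A_4(\{1\})$.
\item $q$ has a double root $s_0>0$. This happens when $9\lambda_2^2 = 20\lambda_1$ with $\lambda_2<0$, so $s_0 = -3\lambda_2/10>0$. It gives a pair of degenerate critical points $\pm\sqrt{s_0}$ exchanged by $\tau$, i.e.\ an $A_2(\Z/2)$ birth--death. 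This is the half-parabola $\lambda_1 = \tfrac{9}{20}\lambda_2^2$, $\lambda_2<0$, which is tangent to the line $\lambda_1=0$ at the origin.
\end{enumerate}
The other half of the parabola, $\lambda_2>0$, has $s_0<0$ and contributes no real critical points, so it does not appear in the diagram.

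Next I will count critical points in each complementary region by the signs of $q(0)=\lambda_1$ and of the discriminant $9\lambda_2^2-20\lambda_1$:
\begin{itemize}
\item For $\lambda_1<0$, there is exactly one positive root $s$, giving one fixed point at $z=0$ plus one $\Z/2$-pair, for three critical points in total.
\item For $\lambda_1>0$ and $\lambda_2>0$, or above the half-parabola, there is only the fixed point at $0$.
\item In the cusp-like wedge $0<\lambda_1<\tfrac{9}{20}\lambda_2^2$ with $\lambda_2<0$, there are two positive roots $s$, giving the fixed point plus two $\Z/2$-pairs.
\end{itemize}
In each region I will also track the indices. For $z=0$ the index is $1$ and its stable/unstable behavior in $x_3$ is given by the sign of $h''$ behavior, read off from the sign of $\lambda_1$. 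For the pairs, $\cI(\tau p) = 3-\cI(p)$ determines the index of the partner. This reproduces the last frame of \Cref{fig:a3_a4_bifurcations}.

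I expect the main obstacle to be matching this computation precisely with the picture, meaning the conventions for which side is which and the tangency of the parabola to the line at the origin. The tangency comes from the quadratic relation $9\lambda_2^2 = 20\lambda_1$. I also need to verify genericity, i.e.\ that the strata are exactly $A_2(\{1\})$ or $A_2(\Z/2)$ away from the origin. This holds because on those strata $h'''\neq 0$ at the degenerate point.
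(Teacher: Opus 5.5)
Your reduction to the quadratic $q(s)=5s^2+3\lambda_2 s+\lambda_1$ in $s=z^2$ is correct, and it is the same route the paper takes: substitute $y=x^2$ and read off the signs of the roots from their product and sum. Your identification of the discriminant is also correct: the full line $\lambda_1=0$ carries the $A_2(\{1\})$ points, and the half-parabola $\lambda_1=\tfrac{9}{20}\lambda_2^2$, $\lambda_2<0$, carries the $A_2(\Z/2)$ points. Be aware that the paper's proof names the parameters the other way round from the corollary: there $\lambda_1$ is the cubic and $\lambda_2$ the linear coefficient, as in (D2), so its ``$\lambda_1$-axis'' is your line $\lambda_1=0$.

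The step that fails is your chamber count. You place a critical point at $z=0$ in every chamber, but $p(0)=q(0)=\lambda_1$, so $z=0$ is critical only on the wall $\lambda_1=0$. Your own earlier remark that $s=0$ corresponds to a fixed critical point already says this. It is also forced on general grounds. By Lemma~\ref{lem:hessian_corank}, the Hessian at any critical point on the fixed set is singular; equivalently, $\cI(\tau p)=3-\cI(p)$ would give $\cI(p)=3/2$. So for parameters in an open chamber, where all critical points are nondegenerate, none can lie on $C$. Your statement that ``for $z=0$ the index is $1$'' therefore has no content.

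The correct counts are:
\begin{itemize}
\item one $\Z/2$-pair (two critical points) for $\lambda_1<0$;
\item no critical points for $\lambda_1>0$ outside the wedge;
\item two $\Z/2$-pairs (four critical points) in the wedge $0<\lambda_1<\tfrac{9}{20}\lambda_2^2$, $\lambda_2<0$.
\end{itemize}
This is exactly the paper's ``zero, two, or four real roots.'' With your counts $1,3,5$, the picture would show a fixed critical point persisting through the wall $\lambda_1=0$. In fact, crossing that wall is the birth or death, at the fixed point, of a $\Z/2$-pair consisting of an index-$1$ and an index-$2$ critical point exchanged by $\tau$. For $\lambda_2<0$, the existing pair $\pm\sqrt{-3\lambda_2/5}$ survives and a second pair is born ($2\to 4$). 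For $\lambda_2>0$, the unique pair dies ($2\to 0$).

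Once you remove the phantom point, the rest of your argument goes through, including the genericity check: $h'''(0)=6\lambda_2\neq 0$ on the line, and $h'''(\pm\sqrt{s_0})=-12\lambda_2\neq 0$ on the half-parabola.
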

\begin{proof}
    We consider the real roots of the polynomial $f(x) = 5x^4 + 3\lambda_1 x^2 + \lambda_2$.  When $9\lambda_1^2 < 20 \lambda_2$, the polynomial $g(y) = 5y^2 + 3\lambda_1 y + \lambda_2$ has no real roots. When $9\lambda_1^2 > 20 \lambda_2$, $g(y)$ has two real roots $a$ and $b$, and therefore $f(x) = g(x^2)$ has either zero, two, or four real roots, depending on the signs of $a$ and $b$. The product $ab$ is equal to $\lambda_2/5$. When $\lambda_2 < 0$, exactly one root is negative, so $f(x)$ has two real roots. When $\lambda_2 > 0$ the roots have the same sign. The sum $(a+b)$ is equal to $-3\lambda_1/5$; therefore, when $\lambda_1 > 0$ both roots are negative and $f(x)$ has no real roots and when $\lambda_1 < 0$ both roots are positive and $f(x)$ has four real roots. 
\end{proof}

In particular, while traversing a loop around an $A_4$-singularity, one witnesses a $\{1\}$-orbit $A_2$-singularity when the path crosses the positive $\lambda_1$ axis and a second as it crosses the negative $\lambda_1$ axis; then, as the path crosses the graph of $\lambda_2 = 9\lambda_1^2/20$, one witnesses a $\Z/2$-orbit $A_2$-singularity.

\section{Families of real gradients}\label{sec:Families of Real gradients}

We now turn to families of real gradients, following \cite[Section 5]{JTZ_naturality_mapping_class_groups}. See also Palis-Takens  \cite{palis_takens}, Carneiro-Palis \cite{carneiro_palis}, Vegter \cite{vegter}, and Bao-Lawson \cite{bao2025morsehomologyequivariance}. 

\begin{defn}
    An \emph{invariant manifold} of a vector field $v$ on $Y$ is a submanifold that is tangent to the vector field at each of its points. 
\end{defn}

Suppose that $p$ is a singularity of a vector field $v$ on $Y$. Then, $d v_p$ is an endomorphism of $T_pY$. The tangent space at $p$ decomposes into three $d v_p$-invariant subspaces, $T^s$, $T^u$, and $T^c$, with the property that every eigenvalue of $d v_p|_{T^s}$ has negative real part, $d v_p|_{T^u}$ has positive real part, and $d v_p|_{T^c}$ has zero real part. 

\begin{defn}
    A vector field $v$ has a \emph{hyperbolic singularity} at $p$ if none of the eigenvalues of $dv_p$ are purely imaginary, i.e. if $T^c = 0$.
\end{defn}

\begin{thm}[Center Manifold Theorem]
    Let $v$ be a $C^{r+1}$ vector field on $Y$ with a singular point at $p$. Then, $v$ has invariant manifolds $\cW^s$, $\cW^u$, and $\cW^c$ of class $C^{r+1}$, $C^{r+1}$, and $C^{r}$, respectively, that go through $p$ and are tangent at $p$ to $T^s$, $T^u$, and $T^c$,  respectively.
\end{thm}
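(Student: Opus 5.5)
The Center Manifold Theorem is classical, and I would prove it by the standard graph-transform / Lyapunov--Perron method rather than anything specific to the real setting. First I localize. Choose coordinates near $p$ identifying a neighborhood with $\R^3$ and $p$ with $0$. Write $v(x) = Lx + N(x)$ with $L = dv_p$ and $N(0)=0$, $dN(0)=0$. Split $\R^3 = T^s \oplus T^c \oplus T^u$ into $L$-invariant subspaces. Using a bump function supported in a ball of radius $\rho$, replace $N$ by a modified nonlinearity $N_\rho$ that agrees with $N$ near $0$ and has globally small Lipschitz constant $\epsilon(\rho)\to 0$. Invariant manifolds of the modified field that pass through $0$ then restrict to local invariant manifolds of $v$.

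For the stable and unstable manifolds I would use the Lyapunov--Perron integral equation on the space of solutions of $\dot x = Lx + N_\rho(x)$ with exponential decay as $t\to+\infty$, respectively $t\to -\infty$. Exponential dichotomy of $L$ (a spectral gap between $T^s$ and $T^c\oplus T^u$) makes this equation a contraction once $\epsilon$ is small. Its fixed points, parametrized by their $T^s$-component at $t=0$, form a Lipschitz graph over $T^s$ that is tangent to $T^s$. For the center manifold I would apply the same method in a weighted space of solutions growing at most like $e^{\eta|t|}$. Here $\eta$ is chosen smaller than the spectral gap to $T^s$ and $T^u$, and the fixed points are parametrized by their $T^c$-component. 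This yields the graph of a map $h^c\colon T^c\to T^s\oplus T^u$ with $h^c(0)=0$ and $dh^c(0)=0$. Invariance under the flow follows from uniqueness of the fixed points and the time-translation invariance of the equation.

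Regularity is the main obstacle. For $\cW^s$ and $\cW^u$, differentiating the fixed-point equation and using the fiber contraction theorem gives $C^{r+1}$ smoothness, since the gap is strict for every order of derivative. For $\cW^c$, the $k$-th derivative requires the rate condition $k\eta < $ gap. Hence smoothness of a fixed order requires shrinking $\rho$, and hence $\eta$, depending on $k$. The argument yields $C^{r}$ rather than $C^{r+1}$ because one derivative is consumed in controlling the Lipschitz constants of $dN_\rho$ uniformly. I would follow Vanderbauwhede's treatment, or that of \cite{carneiro_palis} and \cite{palis_takens}, for this step. Tangency to $T^s$, $T^u$, and $T^c$ then comes from $dh(0)=0$.

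Although the statement does not ask for it, I would also record that when $v$ is $\tau$-equivariant the Lyapunov--Perron operators can be chosen equivariant, with an equivariant cutoff. By uniqueness of the fixed points, $\cW^s$ and $\cW^u$ are then invariant, and $\cW^c$ can be taken invariant. This is what the subsequent real analysis uses.
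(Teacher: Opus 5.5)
Your outline is a correct sketch of the standard Lyapunov--Perron proof. The comparison with the paper is simple: the paper gives no proof at all. It states the theorem as classical background, following Juh\'asz--Thurston--Zemke, Section 5, and the dynamical-systems sources it cites, such as Palis--Takens. So your sketch just supplies what the paper defers to the literature.

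One minor inaccuracy: no derivative is actually \emph{consumed}. Vanderbauwhede's fiber-contraction argument produces a $C^{r+1}$ center manifold for a $C^{r+1}$ field, on a neighborhood depending on $r$. The $C^r$ in the statement is simply a weaker formulation, so your conclusion holds, but the mechanism you give for the loss is not the real one.

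Your closing remark about $\tau$, which you say is what the later real analysis uses, misidentifies the symmetry. A real vector field satisfies $d\tau(v) = -v$, so its flow satisfies $\tau \circ \phi_t = \phi_{-t} \circ \tau$. Thus $\tau$ is a time-reversing symmetry, not an equivariance. At a singular point $p$ on the fixed set, $d\tau_p$ anticommutes with $dv_p$, so it exchanges $T^s$ and $T^u$ and preserves $T^c$.

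Running your uniqueness argument with this symmetry, and with a cutoff compatible with $\tau$, gives $\tau(\mathcal{W}^s) = \mathcal{W}^u$ rather than invariance of each. Only $\mathcal{W}^c$ can be chosen $\tau$-invariant. This works precisely because the two-sided weight $e^{\eta|t|}$ is symmetric under $t \mapsto -t$: if $x(t)$ is an admissible solution, so is $\tau(x(-t))$.

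The exchange of stable and unstable data is what the paper actually uses. Examples are $\tau(W^u(p)) = W^s(\tau(p))$ and the fact that $\tau$ sends index-$k$ critical points to index-$(3-k)$ ones.
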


The manifolds $\cW^s$, $\cW^u$, and $\cW^c$ are called the (strong) stable, (strong) unstable, and center manifolds of the singular point $p$. The center manifold is usually not unique. 

\begin{defn}
    A smooth vector field $v$ on $Y$ has a \emph{saddle-node} at $p$ (or a \emph{quasi-hyperbolic singularity of type 1}) if $\dim T^c = 1$ and $v|_{\cW^c}$ has the form $v = a x^2 \partial_x + O(|x|^3)$ with $a \neq 0$ for some local coordinate $x$ on some center manifold $\cW^c$ around $p$.

    If $v^{\overline{\mu}}$ belongs to a 1-parameter family of vector fields $\{v^\mu\}$ and has a saddle-node at $p$, we say that it \emph{unfolds generically} if there is a center manifold for $\{v^\nu\}$ passing through $p$ at $\mu = \overline{\mu}$ so that the restriction of $v^\mu$ has the form 
    \begin{align*}
        v^\mu = (ax^2 + b(\mu - \overline{\mu})) \partial_x + O(|x^3| + |x(\mu-\overline{\mu})| + |\mu-\overline{\mu}|^2),
    \end{align*}
    with $a, b \neq 0$.
\end{defn}

\begin{defn}
    Let $p$ be a singular point of a vector field $v$ on $Y$ with flow domain $D\sub Y \times \R$ and let $\varphi: D \ra Y$ be the maximal flow of $v$. The \emph{stable set} of $p$ is 
    \begin{align*}
        W^s(p) = \{y \in Y: \lim_{t \ra \infty}\varphi(y,t) = p\},
    \end{align*}
    and the \emph{unstable set} of $p$ is 
    \begin{align*}
        W^u(p) = \{y \in Y: \lim_{t \ra -\infty}\varphi(y,t) = p\}.
    \end{align*}
\end{defn}
When $p$ is a hyperbolic singularity of $v$ then both the stable and unstable sets are injectively immersed sumbanifolds of $Y$ whose tangent spaces agree with $T^s$ and $T^u$ respectively. If $p$ is a saddle-node, $W^{s}(p)$ and $W^u(p)$ are injectively immersed submanifolds with boundary, denoted $W^{ss}(p)$ and $W^{uu}(p)$ respectively, which we call the strong stable and strong unstable manifolds. The tangent spaces of $W^{ss}(p)$ and $W^{uu}(p)$ at $p$ are equal to $T^s$ and $T^u$.

\begin{defn}\label{def:dynamic-mfld}    
    Let $p^{\mu}$ be a hyperbolic singularity of a 1-parameter family of gradients $X^\mu = (f^\mu, g^\mu)$ with a 1-dimensional stable manifold $W^s(p^{\mu})$. Let us write $\gamma_1^\mu \cup \gamma_2^\mu$ for the two components of $W^s(p^{\mu}) \smallsetminus p^\mu$. (We choose these components consistently for different values of $\mu$, so that they form two smooth families.) For some $t_0\in \R$ and $\ep > 0$ small so that $\ep > f^\mu(p^\mu)-t_0 > 0$, consider the slice $(f^\mu)^{-1}(t_0)$. Let $q^\mu=q^\mu(t_0) \in \gamma_1^\mu\cap (f^\mu)^{-1}(t_0)$. Let
    $$v^\mu_0 = \frac{dq^{\mu}}{d\mu} \in  T_{q^\mu}Y$$ 
    be the velocity of the path traced out by $q^\mu$. Define $$v^\mu = \lim_{t \ra -\infty} \dfrac{(d\phi_t^\mu)(v_0^\mu)}{||(d\phi_t^\mu)(v_0^\mu)||} \in T_{p^\mu}Y$$ where $\phi_t^\mu$ is the flow of the gradient-like vector field for $X^\mu$. Consider the projection $T_{p^\mu}Y \to  T_{p^\mu}W^u(p^\mu)$ with kernel $T_{p^\mu}W^s(p^\mu)$. Let $w^\mu$ be the image of $v^\mu$ under this projection. Define the \emph{directional manifold of $p^\mu$ relative to $\gamma_1^\mu$}  be 
    \begin{align*}
        D(p^\mu,\gamma_1^\mu) = \left\{y \in W^u(p^\mu) : \lim_{t \ra \infty} \frac{d}{dt}\left(\frac{\phi(y, t)}{\|\phi(y, t)\|} \right) \in \mathrm{span}(w^\mu) \right\}.
    \end{align*}
    See Figure~\ref{fig:directional}. The directional manifold of $p^\mu$ relative to $\gamma_2^\mu$ is defined similarly.
\end{defn}

Said another way, in a 1-parameter family, the manifold $\gamma_1^{\mu} \subset W^s(p^{\mu})$ has some normalized velocity vector in each $t$-slice. We push this vector to $p^\mu$ via the flow of $X^\mu$ to obtain a vector $v^\mu$ in the tangent space at $p^\mu$. We then project to the unstable directions to get $w^\mu$. The directional manifold is the invariant manifold of $v$ tangent to $\mathrm{span}(w^\mu)$. It can be either zero- or one-dimensional. (In our examples, it will always be the latter.)

\begin{figure}[h]
\def\svgwidth{.8\linewidth}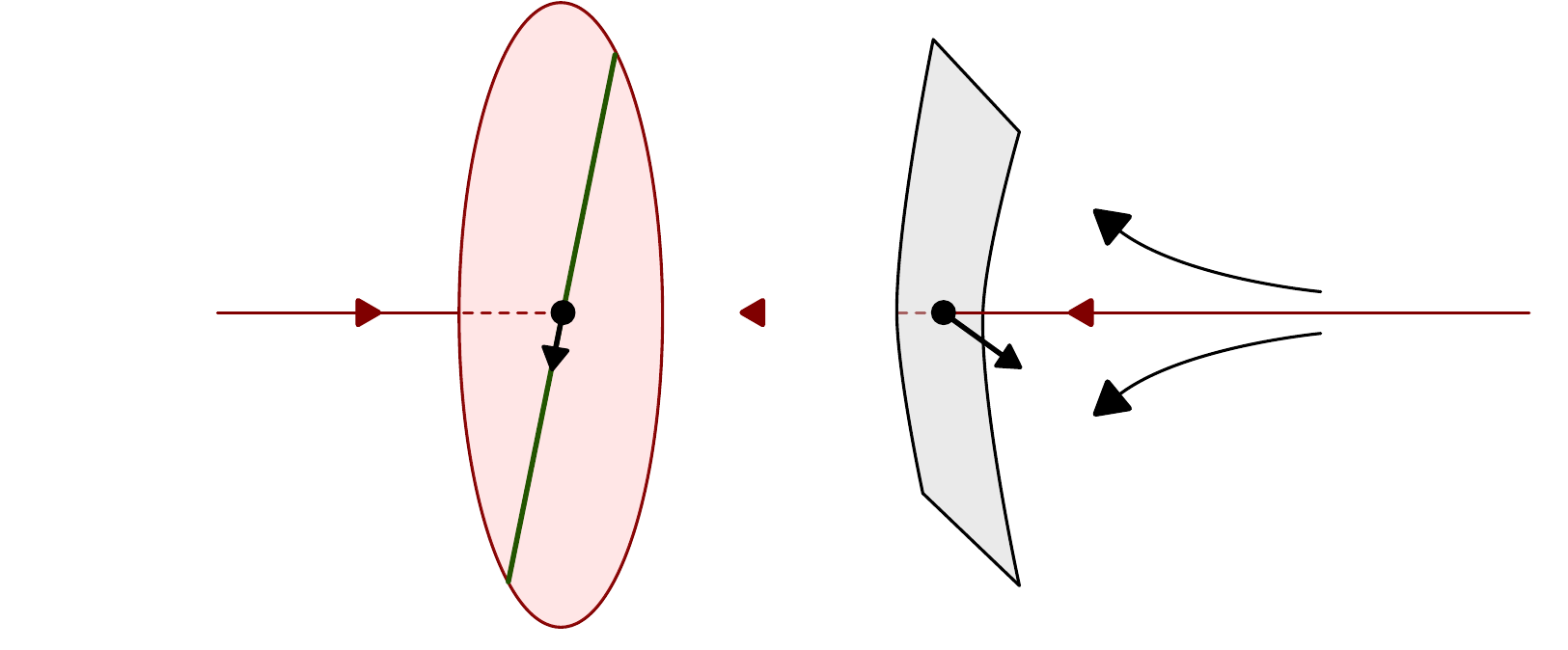
    \caption{The directional manifold of a hyperbolic critical point $p^\mu$, as well as the various vectors.}
\label{fig:directional}
\end{figure}

\begin{defn}
    A vector field $v$ on $Y$ is called \emph{real} if $d\tau(v) = -v$. 
\end{defn}

\subsection{Bifurcations of gradient-like vector fields on 3-manifolds}

Given a Riemannian manifold $(Y, g)$, a gradient vector field $X = \grad_g(f)$ is determined by a smooth function $f: Y \ra \R$ and $g$ according to the relation $g(X,Y) = df(Y)$. Gradient vector fields have no periodic orbits, and their derivative at critical points has only  real eigenvalues. We note that if $g$ is a $\tau$-invariant metric and $f$ is a real function on $Y$, then $X = \grad_g(f)$ is a real vector field:
\begin{align*}
    g(d\tau(X), Y) = g(X, d\tau(Y)) = df\circ d\tau(Y) = -df(Y).
\end{align*}

\begin{defn}
    A gradient vector field $X$ on $Y$ is \emph{Morse-Smale} if 
    \begin{enumerate}
        \item (Hyperbolicity) all singular points of $X$ are hyperbolic;
        \item (Transversality) all stable and unstable manifolds are transverse.
    \end{enumerate}
\end{defn}

Let $X^g(Y)$ be the set of all gradient vector fields on $Y$. It is well known that the set of Morse-Smale vector fields on $Y$ is open and dense in $X^g(Y)$. Let $X^g(Y,\tau)$ be the set of real gradient vector fields on $Y$. We will show that, as in the unreal case, real gradient-like vector fields are generically Morse-Smale.

Before turning to the proof, we introduce some notation and make several observations which will be useful later. 

\begin{defn}
    Suppose $A$ and $B$ are submanifolds of $Y$ which are invariant under the flow of $v$. If $A$ and $B$ have a point of tangency, there is necessarily an entire flow line of tangencies, which we call an \emph{orbit of tangency}. 

    Say $\gamma$ is an orbit of tangency which is fixed setwise by the involution $\tau$. Note that $C \cap \gamma$ is necessarily a single point, $r$. Near $r$, we may choose coordinates $(y_1, y_2, y_3)$ with respect to which $d \tau_r$ is diagonal with eigenvalues $(+1, -1, -1)$. We write $E^+$ for the positive eigenspace and $E^-$ for the negative eigenspace. Note that $E^+$ is precisely $T_rC$. 

    Suppose that $\dim A = 2$ and $B = \tau(A)$. Assume that $\gamma$ is an orbit of tangency between $A$ and $B$. Let $P$ be the plane $T_rA = T_r B$. There are two possibilities. Either $P$ is the sum of the positive eigenspace $E^+$ and $\R \langle v \rangle$, or $P$ is the negative eigenspace $E^-$. We refer to orbits of tangency of the first kind as \emph{type 1 tangencies} and orbits of the second kind as \emph{type 2 tangencies}.
\end{defn}

\begin{lemma}\label{lem: real MS is generic}
    Generic real gradient-like vector fields are Morse-Smale. 
\end{lemma}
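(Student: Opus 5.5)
We prove Lemma~\ref{lem: real MS is generic} in two stages, hyperbolicity and then transversality, perturbing only through real pairs $(f,g)$. Here $f$ is a real function and $g$ a $\tau$-invariant metric. Every perturbation will be made equivariant by pairing each modification on a ball $B$ with the symmetric modification on $\tau(B)$, or by choosing an anti-invariant modification near $C$.

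\textbf{Hyperbolicity.} By the genericity of real Morse functions proved above, a generic real $f$ has only nondegenerate critical points, and all of them lie in free $\Z/2$-orbits. Indeed, Lemma~\ref{lem:hessian_corank} shows that a critical point on $C$ always has a degenerate Hessian, so a real Morse function has no critical points on $C$ at all. For a gradient field, the eigenvalues of $dX_p$ are those of $g^{-1}H_f(p)$. These are real and nonzero, so every singularity of $X$ is hyperbolic. This property is open, so it persists under all later small perturbations.

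\textbf{Transversality.} The critical points are now finite in number and away from $C$. We follow the standard Kupka--Smale/Smale argument, adapted equivariantly as in Bao--Lawson \cite{bao2025morsehomologyequivariance}. Order the critical orbits by value of $f$, and consider one pair $(p,q)$ at a time, where $W^u(p)$ and $W^s(q)$ may fail to be transverse. Choose a regular level $f^{-1}(c)$ just below $f(p)$, and perturb $g$ (equivalently, the flow) in a thin collar $f^{-1}([c-\epsilon,c])$ so that the slice $W^u(p)\cap f^{-1}(c)$ is moved by an arbitrary small isotopy of the level set, which is transverse to $W^s(q)\cap f^{-1}(c)$. The equivariance constraint lives on the level set: $\tau$ maps $f^{-1}(c)$ to $f^{-1}(-c)$. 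If $c\neq 0$, the collar is disjoint from its $\tau$-image, so we perturb freely there and copy the perturbation by $\tau$. Because $\tau$ carries $W^u(p)$ to $W^s(\tau p)$, transversality of the pair $(p,q)$ forces transversality of the pair $(\tau q,\tau p)$, so no conflict arises. Levels can always be chosen with $c\neq 0$ by taking the collar adjacent to whichever of $p,q$ has $f$-value farther from $0$. The one exception is a pair with $f(p)>0>f(q)$ and $q=\tau(p)$; this is precisely the pair of $\tau$-symmetric submanifolds $A=W^u(p)$, $B=\tau(A)$ treated next.

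\textbf{Main obstacle: the symmetric pair through the fixed set.} For this pair the intersection pattern lives on the invariant level $\Sigma=f^{-1}(0)$, which contains $C$, and where $\tau|_\Sigma$ is an orientation-reversing involution with fixed set $C\cap\Sigma$. The relevant curves are $a=A\cap\Sigma$ and $\tau(a)$, or a curve and a point, depending on the indices. Away from $C$, tangencies between $a$ and $\tau(a)$ come in $\tau$-pairs and can be removed by a paired perturbation as above. At a point $r\in a\cap\tau(a)\cap C$, the two curves are exchanged by the reflection $\tau|_\Sigma$, and I will argue that intersections there are automatically transverse after a generic perturbation. Suppose $a$ crosses $C$ at $r$ with tangent $\ell$. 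Then $\tau(a)$ has tangent $d\tau(\ell)$, and $\ell=d\tau(\ell)$ only when $\ell$ lies in an eigenline, i.e.\ $\ell=T_rC$ or $\ell$ is the $(-1)$-eigenline in $T_r\Sigma$. These cases correspond to the type 1 and type 2 tangencies defined above. Each is a codimension-one condition on the direction of $a$ at its crossing with $C$, and we remove it by a small isotopy of $a$ supported in a half-neighbourhood of $r$, extended symmetrically. The delicate point, and the step I expect to need the most care, is checking that such an isotopy of the slice can actually be realised by an equivariant metric perturbation on an invariant collar of $\Sigma$. Since $\Sigma$ is not a regular level sandwiched away from its image, I will instead use the collar $f^{-1}([-\epsilon,\epsilon])$, which is $\tau$-invariant, and average an arbitrary perturbation by $\tau$. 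I will then verify that the averaged perturbation still moves $a$ in the prescribed direction near $r$, because the correction introduced by averaging acts on $\tau(a)$ rather than on $a$ to first order.

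Finally, there are finitely many pairs, each condition is open, and each perturbation is arbitrarily small, so an induction over pairs gives an open dense set of Morse--Smale real gradients.
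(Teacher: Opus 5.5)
Your argument is essentially the paper's: $\tau$-paired perturbations handle pairs with $q\neq\tau(p)$, and for $q=\tau(p)$ a tangency at a point $r\in C$ occurs exactly when $A$ is tangent at $r$ either to $C$ or to the $(-1)$-eigendirection of $d\tau_r$ (type 1 or type 2), and both are avoided generically. The step you flag as delicate needs no averaging over an invariant collar of $\Sigma$ and no first-order analysis: as in the paper, perturb only on $p$'s side of $\Sigma$ (near $p$) and copy the perturbation by $\tau$, so that $a$ moves by an arbitrary small isotopy $\phi$ of $\Sigma$ while $\tau(a)$ moves by $\tau\phi\tau$, and every condition at $\Sigma$ becomes a condition on $\phi(a)$ alone (also, in the lower-dimensional case the traces are two pairs of points rather than a curve and a point, and you only need the points of $a$ to miss $C$ and not be exchanged by $\tau$).
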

\begin{proof}
    We begin as in \cite{bao2025morsehomologyequivariance}. Suppose $\gamma$ is an orbit of tangency from $p$ to $q$. If $\tau(p) \neq q$, we can simply perturb the metric in a small neighborhood of $p$ (and symmetrically in a small neighborhood of $q$) to ensure that the ascending/descending manifolds of $p$ and $q$ intersect transversely. 
    
    If $\tau(p) = q$, we may potentially run into equivariant transversality issues, as modifying the metric near $p$ also alters the metric near $q$. Note that in this case, $\cI(q)= 3 - \cI(p)$, and hence it suffices to consider the case that $\cI(p) = 2$ and $\cI(q) = 1$.  

    Standardize an equivariant neighborhood of $\gamma \cong D^2 \times I$. Consider the slice $S = D^2 \times \{0\}$ which intersects the fixed point set $C$. In this slice, $A = W^u(p)\cap S$ and $\tau(A) = W^s(q)\cap S$ appear as curves which are tangent at the point $r = \gamma \cap S$ which is contained in $C$. Choose one more curve $D\sub S$ such that $\tau(D) = D$ and $T_rD = (T_r C)^\perp$. Notice that curves $A$ and $\tau(A)$ are tangent if and only if $A$ is tangent to $C$ or $D$ at $r$ (these correspond to type 1 and 2 orbits of tangency respectively). Non-equivariant transversality guarantees that generically $A$ intersects both $C$ and $D$ transversely, and hence, $A$ and $\tau(A)$ will generically be transverse. See the central frames of \Cref{fig:codim1_isotopy}.

    Hence, we may proceed as in the previous case; we simply perturb the metric near $p$ (and symmetrically near $q$) to ensure that $A$ is transverse to $C$ and $D$; doing so will guarantee that $W^u(p)$ and $W^s(q)$ intersect transversely.
\end{proof}
\begin{figure}[h]
\def\svgwidth{.8\linewidth}
\begingroup%
  \makeatletter%
  \providecommand\color[2][]{%
    \errmessage{(Inkscape) Color is used for the text in Inkscape, but the package 'color.sty' is not loaded}%
    \renewcommand\color[2][]{}%
  }%
  \providecommand\transparent[1]{%
    \errmessage{(Inkscape) Transparency is used (non-zero) for the text in Inkscape, but the package 'transparent.sty' is not loaded}%
    \renewcommand\transparent[1]{}%
  }%
  \providecommand\rotatebox[2]{#2}%
  \newcommand*\fsize{\dimexpr\f@size pt\relax}%
  \newcommand*\lineheight[1]{\fontsize{\fsize}{#1\fsize}\selectfont}%
  \ifx\svgwidth\undefined%
    \setlength{\unitlength}{822.04724409bp}%
    \ifx\svgscale\undefined%
      \relax%
    \else%
      \setlength{\unitlength}{\unitlength * \real{\svgscale}}%
    \fi%
  \else%
    \setlength{\unitlength}{\svgwidth}%
  \fi%
  \global\let\svgwidth\undefined%
  \global\let\svgscale\undefined%
  \makeatother%
  \begin{picture}(1,0.65517241)%
    \lineheight{1}%
    \setlength\tabcolsep{0pt}%
    \put(0,0){\includegraphics[width=\unitlength,page=1]{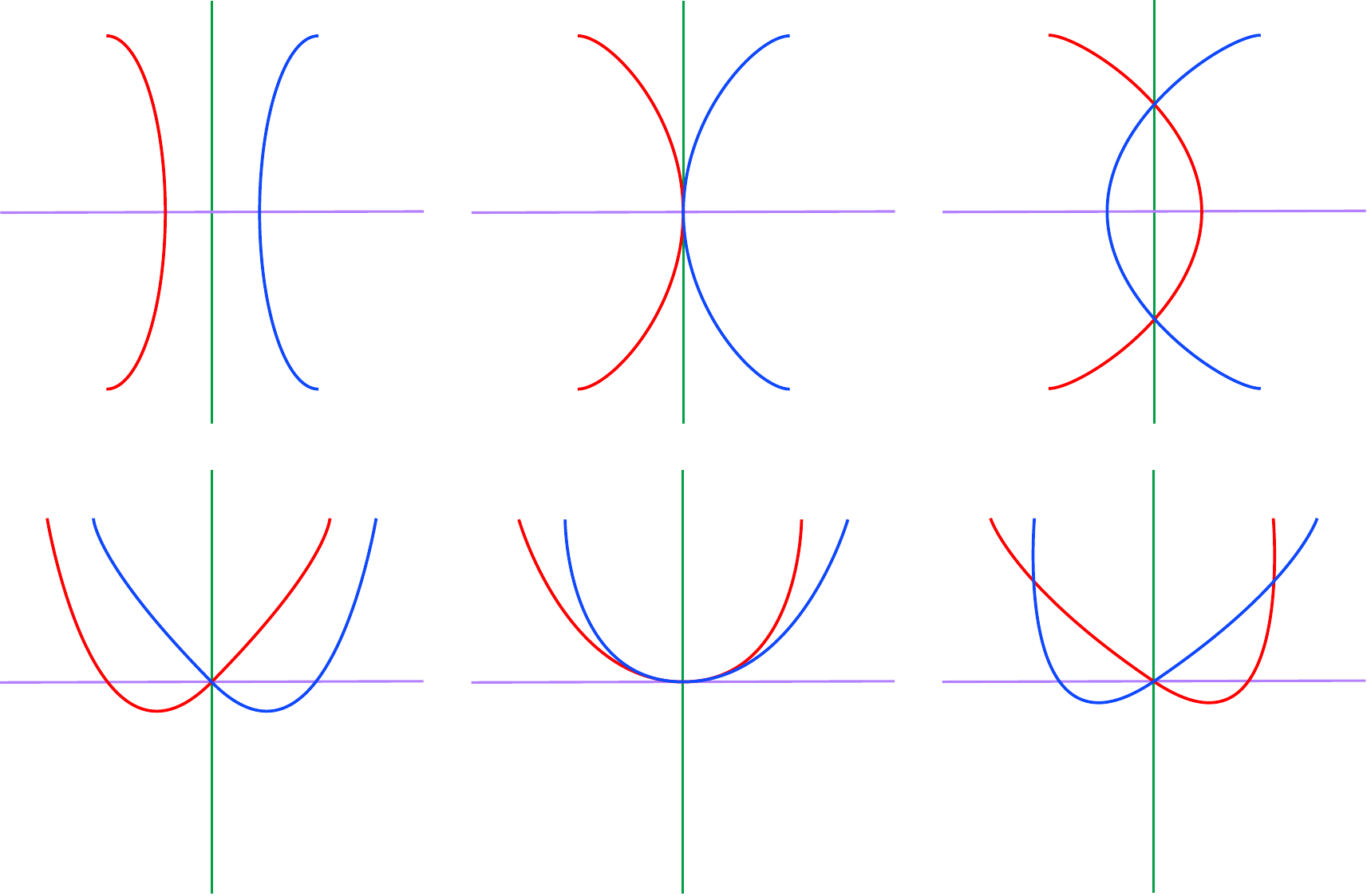}}%
    \put(0.17753831,0.64056387){\color[rgb]{0,0,0}\makebox(0,0)[t]{\smash{\begin{tabular}[t]{c}{$C$}\end{tabular}}}}%
    \put(0.00992816,0.5056753){\color[rgb]{0,0,0}\makebox(0,0)[t]{\smash{\begin{tabular}[t]{c}{$D$}\end{tabular}}}}%
    \put(0.05967345,0.39492487){\color[rgb]{0,0,0}\makebox(0,0)[t]{\smash{\begin{tabular}[t]{c}{$A$}\end{tabular}}}}%
    \put(0.25309298,0.39492487){\color[rgb]{0,0,0}\makebox(0,0)[t]{\smash{\begin{tabular}[t]{c}{$\t(A)$}\end{tabular}}}}%
  \end{picture}%
\endgroup%

    \caption{Two 1-parameter families of equivariant isotopies. The fixed set of the involution, $C$, is drawn here in green; the involution is given by reflection. The red and blue curves $A$ and $\tau(A)$ represent slices of the stable and unstable manifolds which are interchanged by the involution. In 1-parameter families, these may be tangent to one another at isolated times, as shown in the central figures. The purple line, $D$, represents an arc intersecting $C$ in a single point $p$, whose tangent space at $p$ is isomorphic to $(T_{p} C)^\perp$.  The figure illustrates that in 1-parameter families, the two failures of transversality in the equivariant setting correspond either to tangencies between $A$ and $C$, or to tangencies between $A$ and $D$ (in the non-equivariant setting).}
\label{fig:codim1_isotopy}
\end{figure}

\begin{defn}
    A \emph{$k$-parameter family of real gradients} on $\Yt$ is a family of pairs $(g^\mu, f^\mu)$ for $\mu \in \R^k$ where $\{g^\mu\}$ is a family of $\tau$-invariant metrics and $\{f^\mu\}$ is a family of real functions on $\Yt$. Let $X^\mu := \grad_{g^\mu}(f^\mu)$ for $\mu \in \R^k$. 

    Both $g^\mu$ and $f^\mu$ depend smoothly on $(x, \mu) \in Y \times \R^k$ and the set of such pairs is endowed with the Whitney topology. The topological space of such $k$-parameter families is denoted $X^g_k\Yt$.
\end{defn}

\begin{defn}
    A parameter value $\overline{\mu}\in \R^k$ is called a \emph{bifurcation value} for $\{X^\mu\}$ in $X^g_k \Yt$ if $X^{\overline{\mu}}$ fails to be Morse-Smale. 
\end{defn}

At a bifurcation value $\mub$, the vector field $X^{\mub}$ either has an \emph{orbit of tangency} between stable and unstable manifolds (i.e. an integral curve along which a stable and unstable manifold are tangent) or at least one non-hyperbolic singular point. The \emph{bifurcation set} (or \emph{bifurcation diagram}) of $\{X^\mu\}$ in $X^g_k \Yt$ is the subset of $\R^k$ consisting of all bifurcation values. 

\subsection{Codimension-1 bifurcations}\label{subsec:codim 1 birfurcations}

In codimension 1 families of gradients, we expect tangencies of the various ascending and descending manifolds. However, we require that these tangencies be as generic as possible. 

\begin{defn}{\cite[Definition 5.11]{JTZ_naturality_mapping_class_groups}}
    Let $v$ be a vector field on a 3-manifold $Y$. Let $A$ and $B$ be invariant submanifolds of $v$. Let $\varphi_t$ be the flow of $v$ and let $t_x := \gamma^{-1}(x)$ for $x \in \gamma$. We say that $A$ and $B$ have a \emph{real quasi-transversal tangency} if their intersection contains a connected integral curve $\gamma$ (which is not a single point) such that:
    \begin{enumerate}
        \item[(QT-1)] Either $\dim(T_r A + T_r B) = 2$ for all $r \in \gamma$ or, if $B = \tau(A)$ and $A$ (and therefore $B$) is 1-dimensional, then $\dim(T_{r_0}A + T_{r_0}C) = 2$ for $\gamma \cap C = \{r_0\}$.
        \item[(QT-2)] If $\dim A = 2$ and $\dim B = 2$, we impose the following additional conditions. Let $S$ be a smooth 2-dimensional cross-section for $v$ which contains $r$.
        \begin{enumerate}
            \item If $B\neq \tau(A)$ or $B = \tau(A)$ and $\gamma$ is a type 1 tangency, take coordinates $(x_1,x_2)$ near $r$ in $S$ where $A \cap S = \{x_2 = 0\}$ and $B \cap S = \{x_2 = F(x_1)\}$ for some smooth function $F$. Condition (QT-1) forces $F(0) = 0$ and $\partial_{x_1}F(0) = 0$. We require that $\partial^2_{x_1}F(0) \neq 0$.
            \item If $B = \tau(A)$ and $\gamma$ is a type 2 tangency, we choose local coordinates $(x_1,x_2)$ as above, but require that $\partial^3_{x_1}F(0) \neq 0$. 
        \end{enumerate}
    \end{enumerate}
\end{defn}

\begin{rem}
    Real quasi-transversality differs from the usual notion in two ways. First, in a 1-parameter family a 1-dimensional stable (or unstable) manifold  $A$ will generically intersect the fixed point set a finite number of times. At such an intersection, $A$ and $\t(A)$ are necessarily tangent to one another, though they are both 1-dimensional. Therefore, we impose a condition on $\dim(T_{r_0}A + T_{r_0}C)$ rather than on $\dim(T_{r_0}A + T_{r_0}B)$.
    
    Second, in the case that $\gamma$ is a type 2 tangency, $F$ is generically cubic, rather than quadratic. This can be seen as follows. Since $\gamma$ is type $2$, the manifold $A$ can be expressed as the graph of a function $F: \R \langle v \rangle \oplus E^- \ra \R$ with $F(0) = 0$ and $\partial_{x_1}F(0)= 0$. In the slice $S$, we can choose coordinates $(y_1, y_2)$ so that $A\cap S = \{y_2 = F(y_1)\}$; the symmetry implies that $B \cap S = \{y_2 = F(-y_1)\}$. Fix coordinates 
    \begin{align*}
        x_1 & = y_1 \\
        x_2 & = y_2 - F(y_1).
    \end{align*}
    In these coordinates, $A \cap S = \{x_2 = 0\}$ and $B \cap S = \{x_2 = F(-x_1)-F(x_1)\}$. Let $H(x_1) = F(-x_1)-F(x_1)$. Expanding $F(x_1) = ax_1^2 + bx_1^3 + O(x_1^4)$, we see:
    \begin{align*}
        H(x_1) = (ax_1^2 - bx_1^3 + O(x_1^4)) - (ax_1^2 + bx_1^3 + O(x_1^4)) = -2b x_1^3 + O(x_1^4).
    \end{align*}
    For this reason, we cannot expect the version of (QT-2) stated in \cite{JTZ_naturality_mapping_class_groups} to hold in all cases in the real setting. 
\end{rem}

In the equivariant setting, we of course consider \emph{orbits of orbits of tangencies}. To avoid this awkward language, we will use the term $\{1\}$-orbits of tangencies to refer to orbits of tangency which are fixed by the involution and $\Z/2$-orbits of tangency to refer to pairs of orbits of tangency which are exchanged by the involution.

\begin{rem}
\label{rem:color_code}
    In Figures~\ref{fig:codim1_bifurcations} through \ref{fig:E_codim2_bifurcations}, we provide schematics for the various codimension 1 and 2 bifurcations we will encounter. We have color-coded these figures for the sake of readability. In each figure, we have drawn the fixed point set of $Y$ as a horizontal green line. Gray lines indicate orbits corresponding to transverse intersections between stable and unstable manifolds; these can occur in three situations: between the unstable manifold of an index 1 critical point and the stable manifold of an index 2 critical point; between the unstable manifold of an index 1 critical point and the stable manifold of an index 1-2 saddle node; or between the unstable manifold of an index 1-2 saddle node and the stable manifold of an index 2 critical point. Red (respectively blue) lines correspond to quasi-transverse orbits of tangencies between stable and unstable manifolds of index 1 (respectively index 2) critical points. Finally, lines corresponding to real quasi-transverse orbits of tangency will be colored green, emphasizing that these flow lines intersect the fixed point set.
\end{rem}

\begin{lemma}
    A generic 1-parameter family $X^t$ of real gradient-like vector fields is Morse-Smale at all but finitely values of $t$, where exactly one of the following phenomena can occur:
    \begin{enumerate}
        \item $X^t$ has a single non-hyperbolic $\Z/2$-singularity;
        \item $X^t$ has a single non-hyperbolic $\{1\}$-singularity;
        \item $X^t$ has a single $\Z/2$-orbit real quasi-transversal tangency;
        \item $X^t$ has a single $\{1\}$-orbit real quasi-transversal tangency.
    \end{enumerate}
\end{lemma}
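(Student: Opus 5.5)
We follow the transversality strategy of \cite[Section 5]{JTZ_naturality_mapping_class_groups} and \cite{palis_takens}, working in the space $X^g_1\Yt$ of $1$-parameter families of pairs $(g^\mu,f^\mu)$ and splitting every failure of the Morse--Smale condition according to orbit type. The overall plan is to stratify the set of non-Morse--Smale real gradients by codimension. We show that the strata listed in (1)--(4) have codimension exactly one in $X^g(Y,\tau)$, and that every other degeneracy (and every simultaneous occurrence of two degeneracies) has codimension at least two. An equivariant parametric transversality theorem then shows that a generic path meets only the codimension-one strata, transversally and at finitely many times.

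\textbf{Singularities.} We first handle non-hyperbolic singular points. For a gradient field these are exactly the degenerate critical points of $f^\mu$.
\begin{enumerate}
\item For $\Z/2$-orbits, perturbations can be made in a ball $B$ and copied to $\tau(B)$. The non-equivariant analysis of Theorem~\ref{thm:real singularities Z/2 orbit} then applies verbatim: a generic $1$-parameter family has only $A_2(\Z/2)$ points, and these give generically unfolding saddle-nodes.
\item For fixed-point singularities we use Lemma~\ref{lem:hessian_corank} and Proposition~\ref{prop:standard_form_real_sing}. Corank $3$ has codimension $4$ at a fixed point, hence codimension $3$ once the point may move along $C$, so it does not occur.
\item In corank $1$, the germ reduces to an odd function $h(x_3)$. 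The codimension computation \eqref{eqn:xk-z2-codimenion} together with Theorem~\ref{thm:miniversal-deformations} shows that order $5$ and higher has codimension at least $2$. Hence only $A_2(\{1\})$ occurs, and it is generically unfolded by the term $\lambda_1 x_3$.
\end{enumerate}
This is the content of Theorem~\ref{thm:real singularities trivial orbit}. Finiteness of these bifurcation times follows from compactness of $Y\times[0,1]$ and isolatedness of transverse zeros.

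\textbf{Tangencies.} We next treat orbits of tangency between hyperbolic critical points.
\begin{enumerate}
\item For a $\Z/2$-orbit of tangency, the argument of Lemma~\ref{lem: real MS is generic} applies: we perturb the metric near $p$ and symmetrically near $\tau(p)$. This reduces to the non-equivariant count, giving codimension one and the quadratic condition (QT-2)(a).
\item For a $\{1\}$-orbit of tangency between $A=W^u(p)$ and $\tau(A)$, we take a slice $S$ through $r\in C$ as in the proof of Lemma~\ref{lem: real MS is generic}. There, equivariant tangency is equivalent to a non-equivariant tangency of $A\cap S$ with either $C$ (type 1) or the auxiliary curve $D$ (type 2). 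Each of these is a codimension-one condition on the non-equivariant datum $A$, which can be perturbed freely near $p$.
\item A generic crossing in case 2 gives quadratic contact of $A\cap S$ with $C$ or $D$. For type 2, the Remark following the definition shows that quadratic contact of $A$ with $D$ translates into the cubic condition $\partial^3_{x_1}F(0)\neq0$. For type 1, it gives the quadratic condition.
\item For $1$-dimensional $A$, a tangency is forced whenever $A$ meets $C$. Transversality of $A$ to $C$ is then a codimension-one event along the family, which yields (QT-1).
\end{enumerate}

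\textbf{Main obstacle.} The hardest step is the global transversality statement. We must show that the relevant evaluation maps are submersions when restricted to \emph{equivariant} perturbations, so that the parametric transversality theorem applies inside $X^g_1\Yt$ rather than in the full space of gradients. Our plan is to localize: every degeneracy is detected either at a point off $C$, where equivariant perturbations supported in $B\cup\tau(B)$ are as rich as arbitrary ones, or at a point of $C$. At points of $C$, the conditions reduce to the odd-germ calculus above or to the slice reduction of Lemma~\ref{lem: real MS is generic}, where only the non-equivariant curve $A$ near $p\neq r$ is perturbed. Finally, pairs of simultaneous degeneracies have codimension two by the standard product argument, so they are avoided, giving exactly one phenomenon at each of finitely many times.
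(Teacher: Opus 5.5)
Your overall route is the paper's. The singular part comes from the classification in Theorems~\ref{thm:real singularities Z/2 orbit} and~\ref{thm:real singularities trivial orbit}. The $\Z/2$-orbit tangencies are reduced to the unreal case. The $\{1\}$-orbit tangencies are handled by the slice reduction of Lemma~\ref{lem: real MS is generic}: tangency of $A\cap S$ with $C$ or with $D$ in the two-dimensional case, and $W^s(p)$ meeting $C$ in the one-dimensional case. Your stratification and equivariant-transversality framing makes explicit what the paper only asserts. However, one step is wrong as stated. For a type 2 tangency, quadratic contact of $A\cap S$ with $D$ does \emph{not} give the cubic condition in (QT-2)(b).

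To see this, take coordinates $(y_1,y_2)$ on $S$ with $\tau(y_1,y_2)=(-y_1,y_2)$. Then $C=\{y_1=0\}$, and any invariant $D$ has the form $\{y_2=\varphi(y_1)\}$ with $\varphi$ even. A type 2 tangency means $A\cap S=\{y_2=F(y_1)\}$ with $F(0)=F'(0)=0$, so that $\tau(A)\cap S=\{y_2=F(-y_1)\}$.

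The contact between $A$ and $\tau(A)$ is therefore governed by $F(-y_1)-F(y_1)=-\tfrac13F'''(0)\,y_1^3+O(y_1^5)$, which involves only the odd part of $F$. Quadratic contact with $D$ is the condition $F''(0)\neq\varphi''(0)$. That is a condition on the even part of $F$, it depends on the auxiliary choice of $D$, and it says nothing about $F'''(0)$.

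For example, $F(y_1)=y_1^2$ has quadratic contact with $D=\{y_2=0\}$. Yet then $A\cap S$ is itself $\tau$-invariant and agrees with $\tau(A)\cap S$ to infinite order.

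Your type 1 argument is fine. There $\tau$ negates the graph function of $A\cap S$ over $C$, so the contact of $A$ with $\tau(A)$ is exactly twice the contact of $A$ with $C$.

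The fix is to impose the correct extra genericity condition. On the codimension-one stratum of type 2 tangencies, $F'''(0)=0$ is one further independent condition on the $3$-jet of $A\cap S$ at $r$. You can vary that $3$-jet freely by perturbing near $p$, so this locus has codimension two and a generic path misses it.

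Relatedly, in the one-dimensional case (QT-1) is automatic rather than a genericity condition. At $r_0\in C$ the flow direction lies in the $(-1)$-eigenspace of $d\tau$, hence is transverse to $T_{r_0}C$. So the only codimension-one event there is $W^s(p)$ meeting $C$.
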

\begin{proof}
    With regard to singularities, the generic occurrence of only (1) and (2) follows from \Cref{thm:real singularities Z/2 orbit} and \Cref{thm:real singularities trivial orbit}. Case (3) involving orbits of tangency appearing in $\Z/2$-orbits can easily be reduced to the unreal case. Hence, it suffices to consider $\{1\}$-orbits between critical points.

    First, suppose that $\gamma$ is an orbit of tangency between the ascending manifold of an index $1$ critical point $p$ and the descending manifold of an index 2 critical point $q$. Note that it must be the case that $q = \tau(p)$. As in \Cref{lem: real MS is generic}, we choose a standard neighborhood and consider now families of curves $A_t$ and $\tau(A_t)$ for $t \in [-1,1]$ appearing in a slice $S$ containing the fixed point set $C$. Let $D$ be a curve in $S$ with $\tau(D) = D$ and $T_rD = (T_rC)^\perp$. Again, see \Cref{fig:codim1_isotopy}. In a generic 1-parameter family, $A_t$ will generically be transverse to both $C$ and $D$ at all but finitely many times $t$ (with isolated tangencies) and hence the same holds for the families $A_t$ and $\tau(A_t)$. See \Cref{fig:codim1_bifurcations}.

    Next, we consider the case that $\gamma$ is an orbit of tangency between the descending manifold of an index $1$ critical point $p$ and the ascending manifold of an index 2 critical point $q$ (which must be equal to $\tau(p)$). First, note that $W^s(p_t)$ and  $W^u(q_t)$ are tangent precisely when $W^s(p_t)$ intersects $C$. In a 1-parameter family, at isolated parameters $t$ the manifold $W^s(p_t)$ will generically intersect $C$ in a single point. 
\end{proof}

\begin{figure}[h]
\def\svgwidth{.8\linewidth}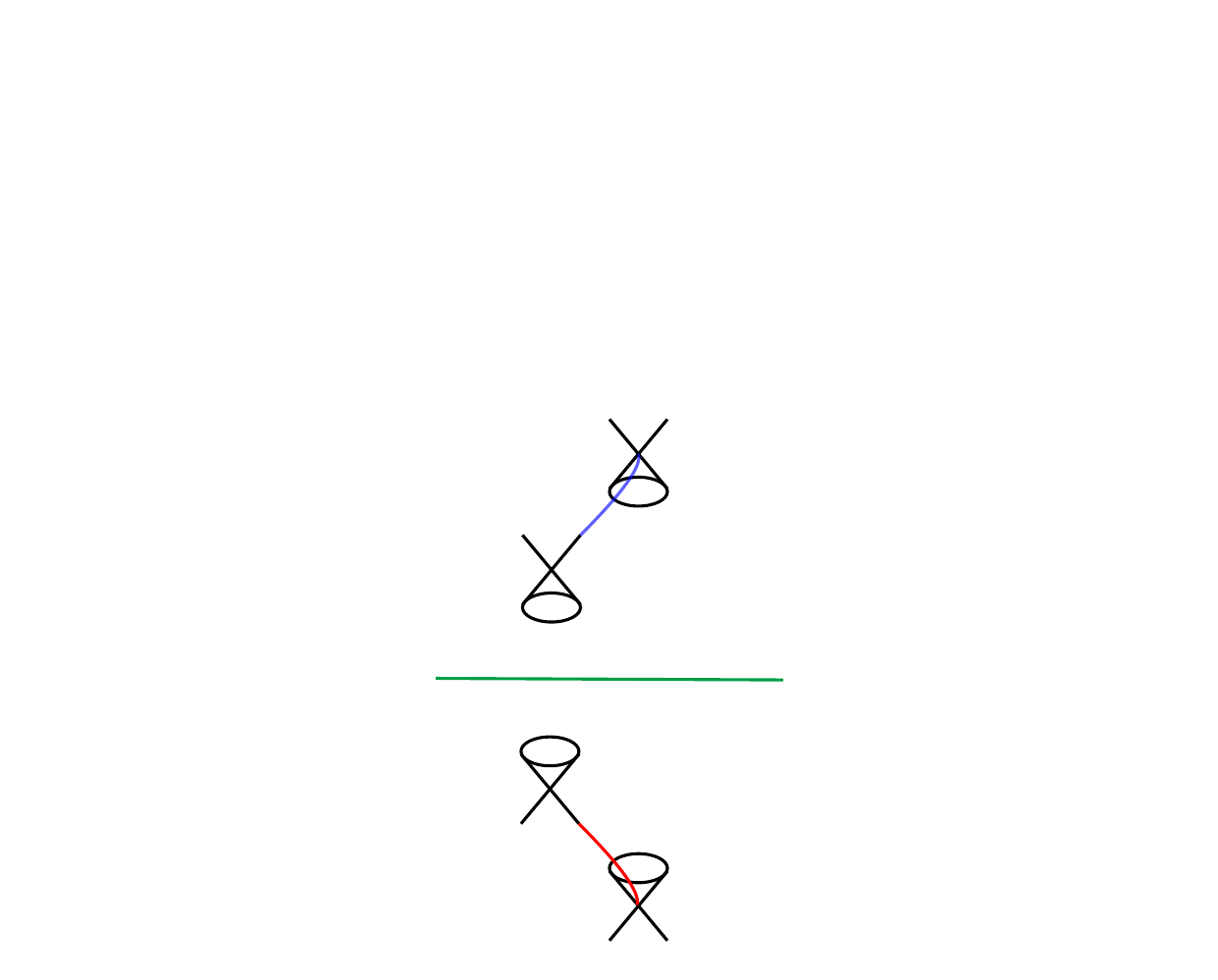
    \caption{Some possible bifurcations appearing in 1-parameter families of real gradient vector fields. The gradient flow always goes upwards. (a) an index 1-2 birth-death in a $\{1\}$-orbit; (b) an index 1-2 birth-death in a $\Z/2$-orbit; (c) an index 0-1 birth-death and a symmetric 2-3 birth-death; (d) a quasi-transversal orbit of tangency from an index 1 critical point to an index 2 critical point (these can appear in both $\{1\}$ and $\Z/2$-orbits); (e) a $\Z/2$-orbit of tangency between index 1 critical points $p_1$ and $p_2$, as well as between their index 2 images $\t(p_1)$ and $\t(p_2)$; (f) a quasi-transversal orbit of tangency from an index 2 critical point $p$ and its image $\tau(p)$.}
\label{fig:codim1_bifurcations}
\end{figure}

\begin{rem}\label{rem: nondegeneracy conditions}
    In the unreal setting, quasi-transversal orbits can be assumed to satisfy certain non-degeneracy conditions. These are denoted (ND-1)-(ND-4) in \cite{JTZ_naturality_mapping_class_groups} (similar conditions are also discussed in \cite{vegter} and \cite{carneiro_palis}). As we will ultimately pass to isotopy diagrams, these conditions do not play any substantial role. The only condition which is relevant to us is (ND-2), which states that, generically, the contracting eigenvalues of the linear part of $X^{\overline{t}}$ at a critical point $p_1^{\overline{\mu}}$ of index 1 are distinct. This implies that there is a unique 1-dimensional manifold $W^{ss}(p_1^{\overline{t}}) \sub W^s(p_1^{\overline{t}})$ so that $T_{p_1^{\overline{t}}}W^{ss}(p_1^{\overline{t}})$ is the eigenspace of $L_{p_1^{\overline{\t}}} X^{p_1^{\overline{t}}}$ corresponding to the strongest contracting eigenvalue. This condition is still generic in the equivariant setting.
\end{rem}

As in the unreal case, non-hyperbolic singularities translate to (de)stabilizations of Heegaard diagrams and flows between critical points of the same index translate to handle slides. However, as noted above, in the real case, we can also have flows from critical points of index 2 to their index 1 images under the action of the involution. This codimension-1 bifurcation gives rise to a new move of real Heegaard diagrams, which we call a \emph{crossover}. See \Cref{def:foot_swap}. While index 0-1 and index 2-3 saddle nodes do appear in the real setting, the analysis in these cases follows directly from \cite{JTZ_naturality_mapping_class_groups} since these only appear in $\Z/2$-orbits.

\subsection{Codimension-2 bifurcations}\label{subsec:codim 2 birfurcations}

We now turn to 2-parameter families of gradients. Codimension-1 bifurcations appear in open and dense families in these situations, and form smooth curves in the parameter space $\R^2$. Moreover, for a generic $\{X^\mu\}\in X_2^g\Yt$, at isolated points $\overline{\mu}$, we have the following additional bifurcations. Throughout, $\cO \in \{\{1\},\Z/2\}$. 

\begin{enumerate}[(A)]
    \item The vector field $X^\mu$ has exactly two real quasi-transversal $\cO$-orbits of tangency and all singularities are hyperbolic. There are three possible configurations: two $\Z/2$-orbits, two $\{1\}$-orbits, or one of each. 
    \item The vector field $X^\mu$ has exactly one non-hyperbolic critical orbit, which is an orbit of saddle-nodes, and exactly one real quasi-transversal $\cO$-orbit of tangency. There are several possible configurations. 
    \item The vector field $X^\mu$ has exactly two non-hyperbolic critical orbits, which are orbits of saddle-nodes, and all stable and unstable manifolds intersect transversely.
    \item The vector field $X^\mu$ has exactly one codimension 2 non-hyperbolic critical orbit (i.e. a $A_3(\Z/2)$ singularity or an $A_4(\{1\})$ singularity).
    \item All singularities of $X^\mu$ are hyperbolic and there is a violation of the quasi-transversality condition.
\end{enumerate}

As in \cite{JTZ_naturality_mapping_class_groups}, we will largely ignore bifurcations corresponding to isotopies, as ultimately, we will be interested in real Heegaard diagrams only up to isotopy. 

Next, we describe the bifurcation sets in $\R^2$ near a parameter $\overline{\mu}$ corresponding to one of the codimension 2 bifurcations (A)-(E) described above. We will also consider \emph{secondary bifurcations}, in which the singularities contributing to the failures of the transversality and hyperbolicity conditions interact with other singularities, giving rise to additional codimension-1 strata. 

Type (A) secondary bifurcations involve two simultaneous real quasi-transversal $\cO$-orbits of tangency. Necessarily, these bifurcations involve index 1 and 2 critical points. We shall largely ignore orbits of tangency involving the unstable manifold of index 1 critical points and the stable manifold of an index 2 critical point, because these correspond to isotopies of attaching curves and we ultimately pass to isotopy diagrams. 

Throughout this section, critical points of index $1$ will be written as $p^\mu$ (perhaps with some decoration) and so the index 2 points will be written $\tau(p^\mu)$. 

\begin{enumerate}
    \item[(A1)] There are four hyperbolic orbits $(\Z/2)\cdot p_1^{\overline{\mu}}$, $(\Z/2)\cdot p_2^{\overline{\mu}}$, $(\Z/2)\cdot p_3^{\overline{\mu}}$, and $(\Z/2)\cdot p_4^{\overline{\mu}}$ such that the orbits of tangencies are contained in $(\Z/2) \cdot (W^u(p_1^{\overline{\mu}})\cap W^s(p_2^{\overline{\mu}}))$ and $(\Z/2)\cdot (W^u(p_3^{\overline{\mu}})\cap W^s(p_4^{\overline{\mu}}))$. Each critical point is of index 1 or 2; the orbits are not necessarily distinct. Generically, the bifurcation diagram consists of two smooth curves which intersect transversely in a point. Such bifurcations are shown schematically in \Cref{fig:A_codim2_bifurcations}.
    
    \item[(A2)] There are three hyperbolic singular orbits $(\Z/2)\cdot p_1^{\overline{\mu}}$, $(\Z/2)\cdot p_2^{\overline{\mu}}$, and $(\Z/2)\cdot p_3^{\overline{\mu}}$ of $X^{\overline{\mu}}$ so that the orbits of tangency are contained in $(\Z/2)\cdot (W^u(p_1^{\overline{\mu}})\cap  W^s(p_2^{\overline{\mu}}))$ and $(\Z/2)\cdot (W^u(p_2^{\overline{\mu}})\cap  W^s(p_3^{\overline{\mu}}))$. Again, see \Cref{fig:A_codim2_bifurcations} for a schematic.

    \item[(A3)] There are hyperbolic orbits $(\Z/2)\cdot p_{1}^{\bar{\mu}}$ and $(\Z/2)\cdot p_{2}^{\bar{\mu}}$ of $X^{\bar{\mu}}$ which each have a $\{1\}$-orbit of tangency from $W^u(\tau(p_i))$ to $W^s(p_i)$ for $i \in \{1,2\}$; both manifolds are 1-dimensional. We allow the case that $p_{1}^{\bar{\mu}} = p_{2}^{\bar{\mu}}$, in which case $W^u(\tau(p_i))$ and $W^s(p_i)$ each intersect $C$ twice. Generically, the bifurcation diagram consists of two smooth curves which intersect transversely at $\bar{\mu}$. See \Cref{fig:A_codim2_bifurcations2}.
    
    \item[(A4)] There are three hyperbolic orbits $(\Z/2)\cdot p_{1}^{\bar{\mu}}$, $(\Z/2)\cdot p_{2}^{\bar{\mu}}$, $(\Z/2)\cdot p_{3}^{\bar{\mu}}$ of $X^{\bar{\mu}}$ such that there is a $\{1\}$-orbit of tangency from $W^u(\tau(p_1))$ to $W^s(p_1)$ as well as a $\Z/2$-orbit of tangency from $W^u(p_2)$ to $W^s(p_3)$ (and symmetrically for $\tau(p_1)$ and $\tau(p_2)$). We allow $p_{1}^{\bar{\mu}} = p_{2}^{\bar{\mu}}$ or $p_{1}^{\bar{\mu}} = p_{3}^{\bar{\mu}}$. The possible configurations are shown in \Cref{fig:A_codim2_bifurcations2}. In case (A4c), secondary bifurcations may be present due to tangencies between $W^s(p_2^{\overline{\mu}})$ and $W^u(\tau(p_2^{\overline{\mu}}))$ if $W^u(p_1^{\overline{\mu}})\cap W^s(p_2^{\overline{\mu}}) \neq \emptyset.$ See \Cref{fig:A_codim2_bifurcations2}.
\end{enumerate}

\begin{figure}[h]
\def\svgwidth{.8\linewidth}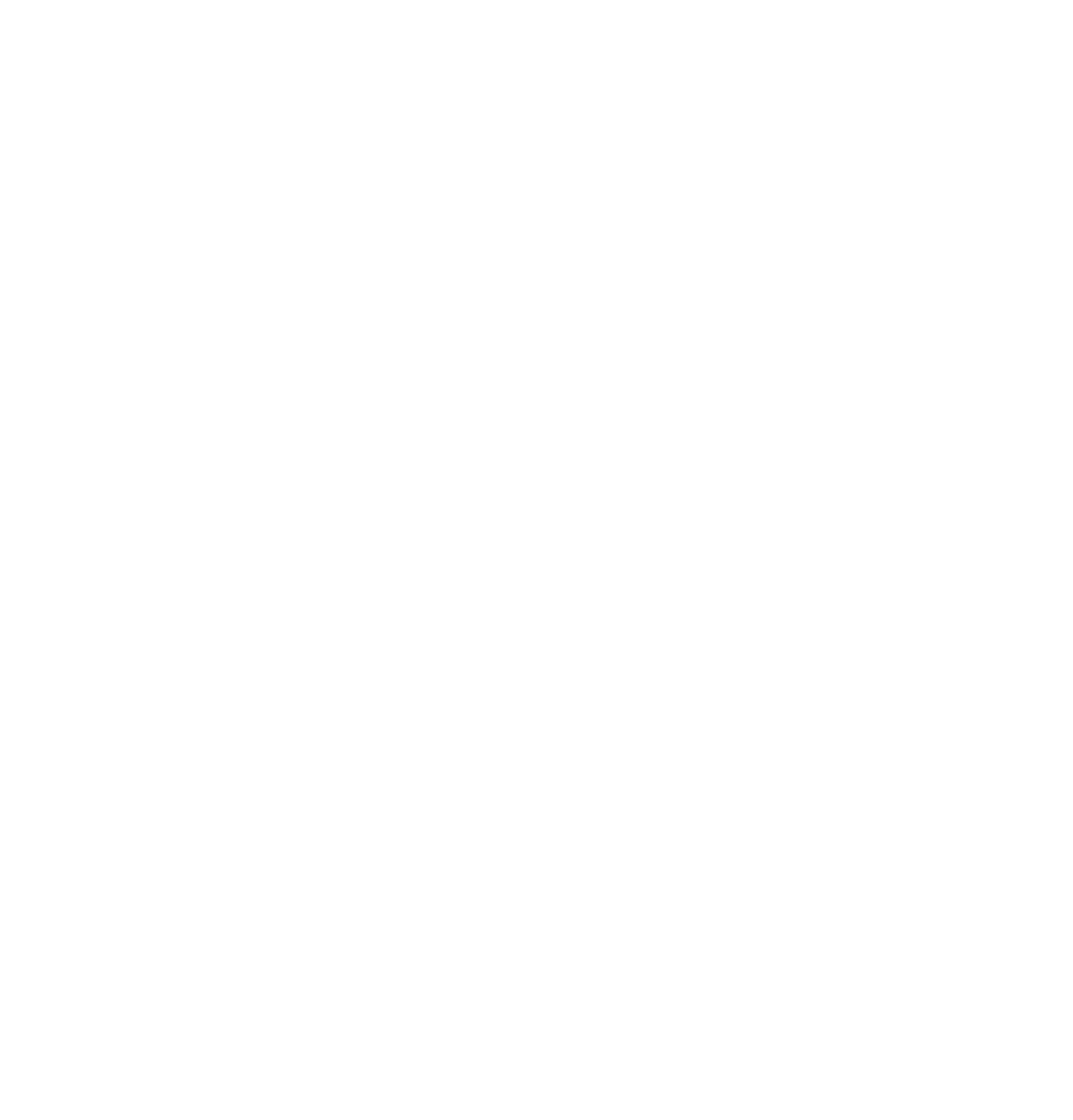
    \caption{Codimension 2 bifurcations of types (A1) and (A2), which involve simultaneous orbits of tangency between a one-dimensional and a two-dimensional submanifold.}
\label{fig:A_codim2_bifurcations}
\end{figure}

\begin{figure}[h]
\def\svgwidth{.8\linewidth}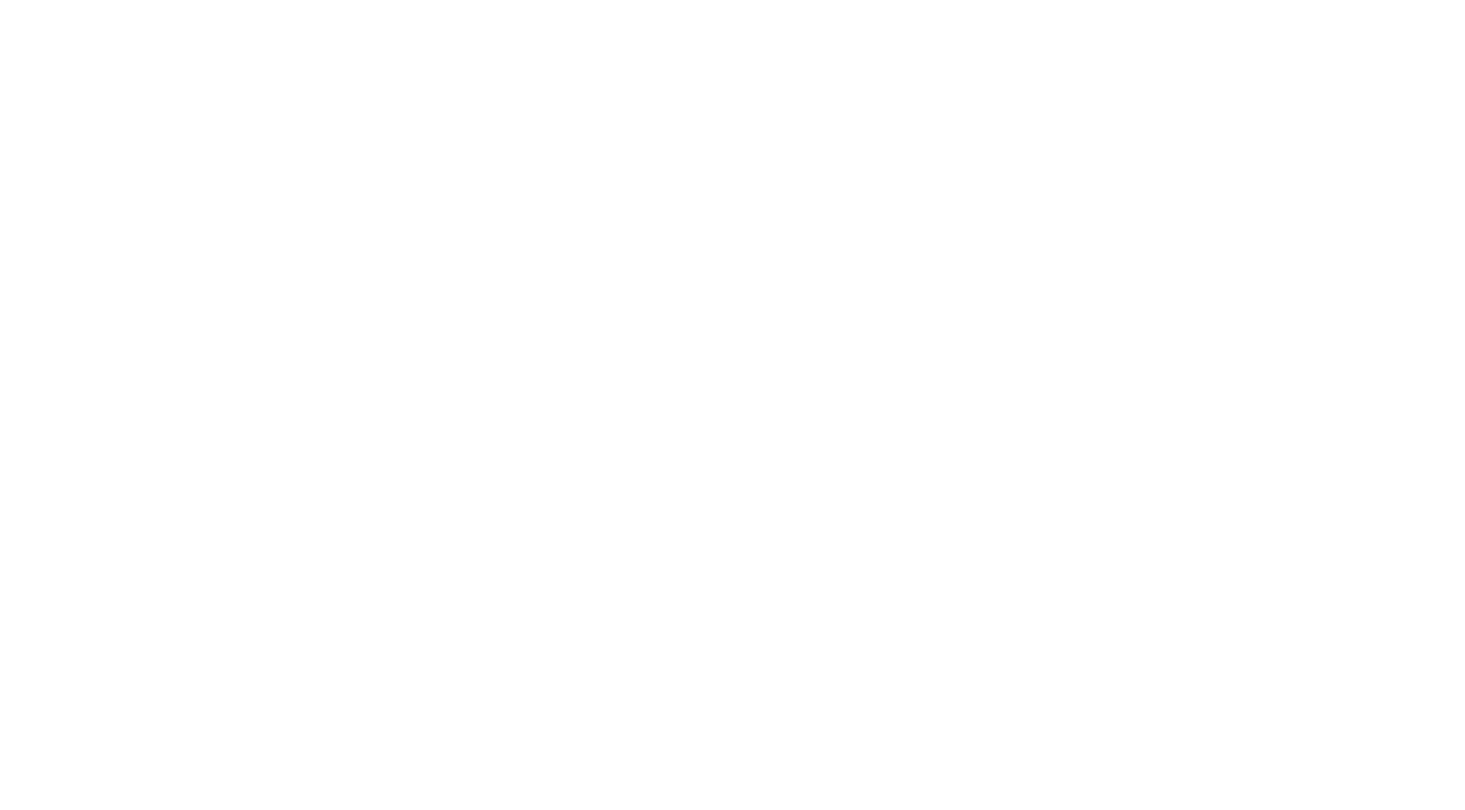
    \caption{Codimension 2 bifurcations of types (A3) and (A4) that consist of simultaneous orbits of tangency involving at least one 1-dimensional submanifold.}
\label{fig:A_codim2_bifurcations2}
\end{figure}

Type (B) secondary bifurcations involve a single saddle-node orbit and a single real quasi-transversal $\cO$-orbit of tangency. All singularities have index 1 and 2. Unlike the previous case, saddle-nodes can appear in both $\Z/2$ and $\{1\}$-orbits, which makes the analysis of secondary bifurcations in this case somewhat more involved.

\begin{enumerate}
    \item[(B1)] The vector field $X^{\overline{\mu}}$ has a saddle-node orbit $(\Z/2)\cdot p^{\overline{\mu}}$ and one quasi-transverse $\Z/2$-orbit of tangency between $(\Z/2)\cdot W^u(p_1^{\overline{\mu}})$ and $(\Z/2)\cdot W^u(p_2^{\overline{\mu}})$, where $p_1^{\overline{\mu}}$ and $p_2^{\overline{\mu}}$ are hyperbolic saddle points of index 1. The bifurcation curve consists of two curves intersecting transversely at $\overline{\mu}$. See  \Cref{fig:B_codim2_bifurcations1}.

    \item[(B2)] The vector field $X^{\overline{\mu}}$ has a saddle-node orbit $(\Z/2)\cdot p^{\overline{\mu}}$ and a hyperbolic saddle-orbit $(\Z/2)\cdot \overline{p}^{\overline{\mu}}$ and a single orbit of tangency between $W^{ss}(p^{\overline{\mu}})$ and $W^{u}(\overline{p}^{\overline{\mu}})$ (and a symmetric orbit between $W^{uu}(\tau(p^{\overline{\mu}}))$ and $W^{s}(\tau(\overline{p}^{\overline{\mu}}))$). Secondary bifurcations arise due to the occurrence of tangencies in between $W^u(p^\mu_*)$ and the unstable manifolds of index 2 critical points. If there are $s$ such tangencies, the bifurcation diagram consists of $s + 3$ codimension-1 strata meeting at $\overline{\mu}$. See \Cref{fig:B_codim2_bifurcations1}.

    \item[(B3)] The vector field $X^{\overline{\mu}}$ has a saddle-node orbit $(\Z/2)\cdot p^{\overline{\mu}}$ and a hyperbolic orbit $(\Z/2)\cdot p^{\overline{\mu}}$ with a single orbit of tangency between $W^{ss}(p^{\overline{\mu}})$ and $W^{u}(p^{\overline{\mu}})$ (and a symmetric orbit between $W^{uu}(\tau(p^{\overline{\mu}}))$ and $W^{s}(\tau(\overline{p}^{\overline{\mu}}))$). The bifurcation diagram consists of three codimension-1 strata meeting at $\overline{\mu}$. See \Cref{fig:B_codim2_bifurcations2} (where $s=1$).

    \item[(B4)] The vector field $X^{\overline{\mu}}$ has a saddle-node orbit $(\Z/2)\cdot p^{\overline{\mu}}$ and one quasi-transverse $\Z/2$-orbit of tangency between $W^s(q^{\overline{\mu}})$ and $W^u(\tau(q^{\overline{\mu}}))$, where $q^{\overline{\mu}}$ is a hyperbolic singularity of index 1. The bifurcation curve consists of two curves intersecting transversely at $\overline{\mu}$. See \Cref{fig:B_codim2_bifurcations2}.

    \item[(B5)] The vector field $X^{\overline{\mu}}$ has a saddle-node orbit $(\Z/2)\cdot p^{\overline{\mu}}$ with a single orbit of tangency between $W^{uu}(p^{\overline{\mu}})$ and $W^{ss}(\tau(p^{\overline{\mu}}))$ (this occurs when $W^{uu}(p^{\overline{\mu}})$ intersects $C$). The bifurcation diagram consists of three codimension-1 strata meeting at $\overline{\mu}$. See \Cref{fig:B_codim2_bifurcations2}.
    
\end{enumerate}

\begin{figure}[h]
\def\svgwidth{.8\linewidth}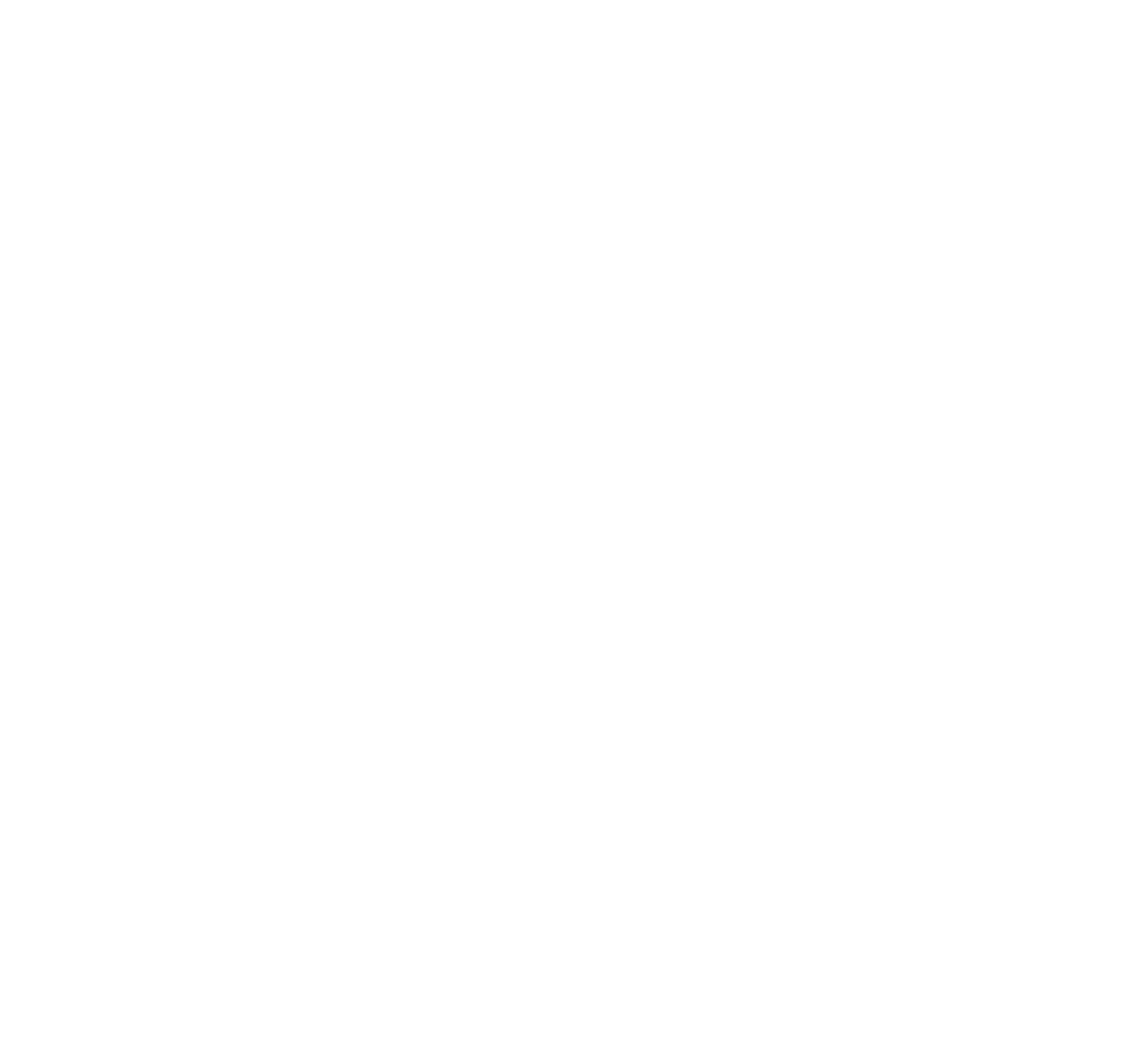
    \caption{Codimension-2 bifurcations of type (B) involving pairs of quasi-transversal orbits of tangency.}
\label{fig:B_codim2_bifurcations1}
\end{figure}

\begin{figure}[h]
\def\svgwidth{.8\linewidth}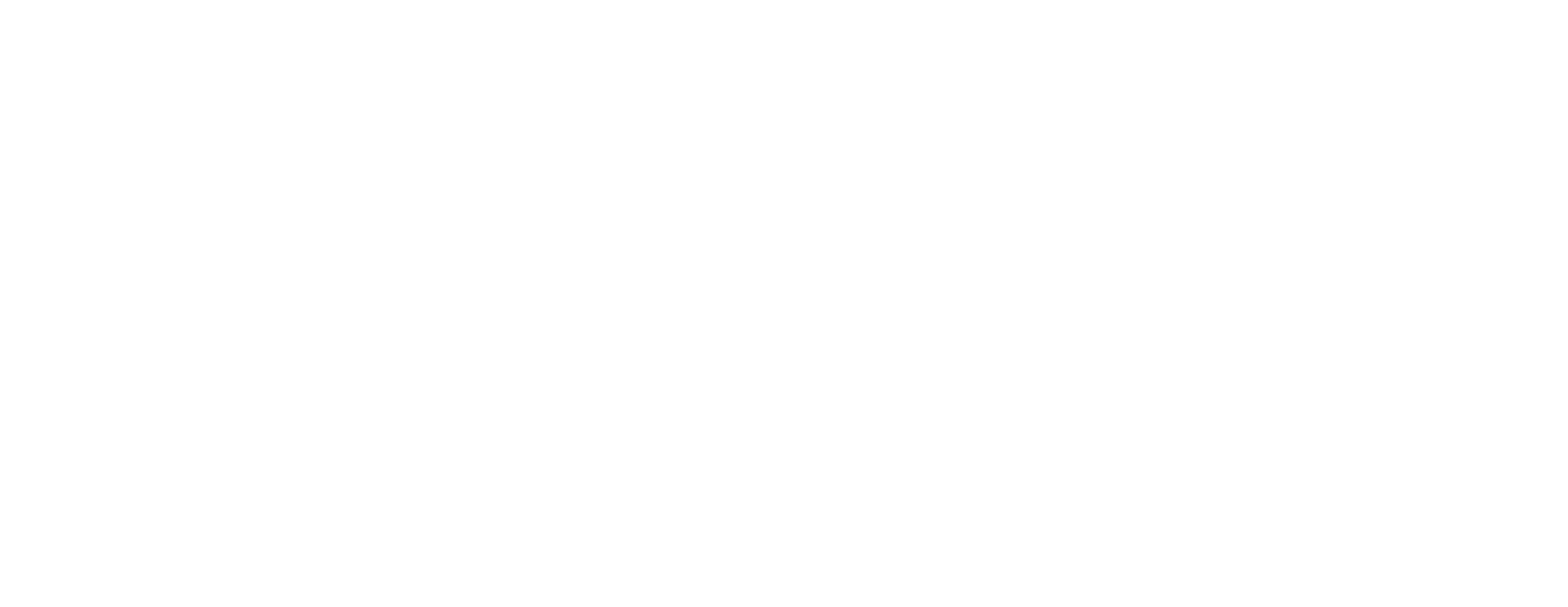
    \caption{Codimension-2 bifurcations of type (B) involving a single real quasi-transversal orbit of tangency.}
\label{fig:B_codim2_bifurcations2}
\end{figure}

\begin{rem}
    We note that bifurcations (B4) and (B5) are analogous to bifurcations (B1) and (B3), respectively, though they involve crossovers (see \Cref{def:foot_swap}), rather than handleslides. Bifurcation (B2) has no codimension 2 analogue for crossovers. 
\end{rem}

Type (C) bifurcations consist of two simultaneous saddle-node orbits. These, of course, can appear in several configurations. 

\begin{enumerate}
    \item[(C)] The vector field $X^{\overline{\mu}}$ has two saddle-node orbits $(\Z/2)\cdot p_1$ and $(\Z/2)\cdot p_2$. The bifurcation diagram consists of two curves corresponding to $(\Z/2)\cdot p_1$ and $(\Z/2)\cdot p_2$ which intersect transversely. When at least one of $\{p_1,p_2\}$ is not fixed by $\tau$, we assume that $f(p_1) < f(p_2)$; in the case that both $p_1$ and $p_2$ are fixed by $\tau$, necessarily $f(p_1) = f(p_2) = 0$. See \Cref{fig:C_codim2_bifurcations}.
\end{enumerate}

\begin{figure}[h]
\def\svgwidth{.8\linewidth}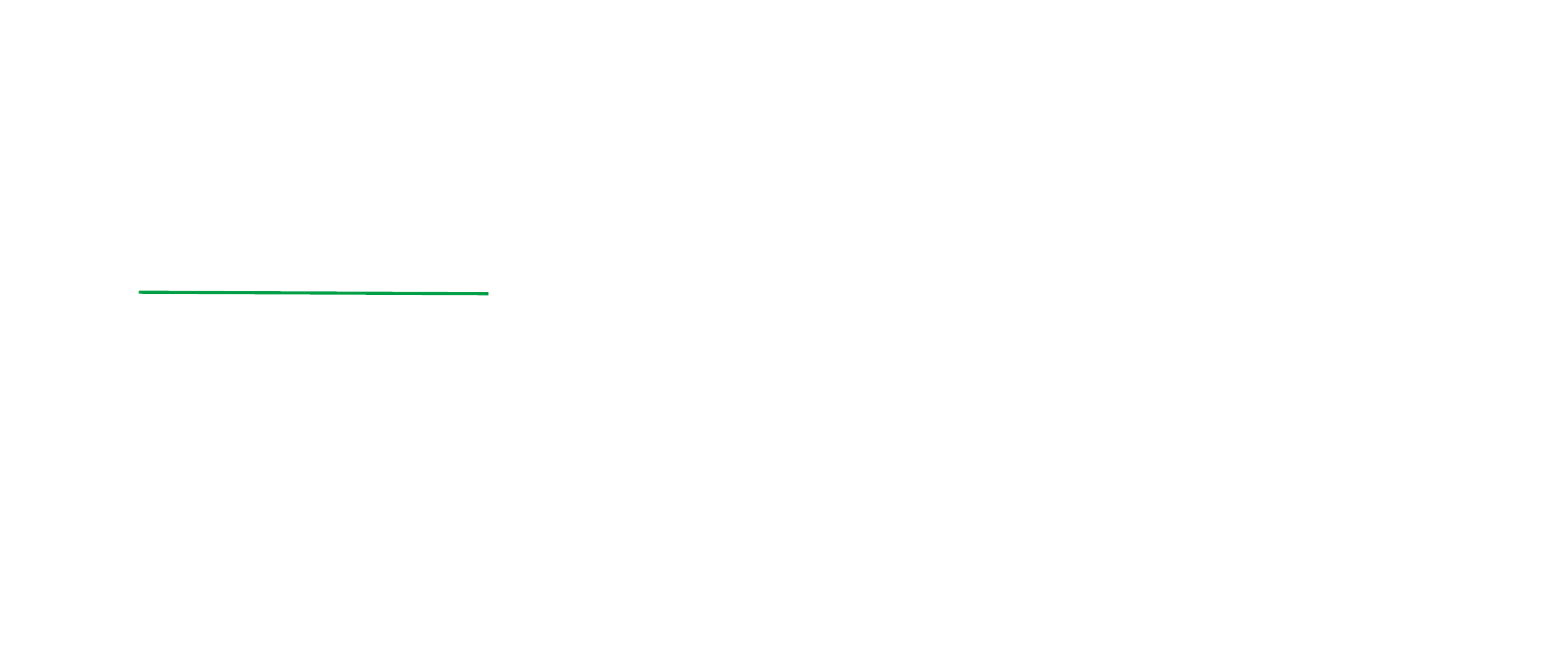
    \caption{Codimension-2 bifurcations of type (C): these come in three varieties, depending on the orbit types of the saddle nodes. In the first frame, we have varied the shading to emphasize the different kinds of transverse orbits that may appear.}
\label{fig:C_codim2_bifurcations}
\end{figure}

Type (D) bifurcations correspond to (real) codimension 2 failures of the Morse condition. In the real setting, there are two possibilities.

\begin{enumerate}
    \item[(D1)] In a neighborhood of $\overline{\mu}$, the bifurcation diagram consists of values $\mu$ for which $X^\mu$ (and therefore $f^\mu$) has a degenerate critical point near $(\Z/2)\cdot p$. For an open and dense class of families $\{f^{\mu}\}$ for which $\grad(f^\mu)$ has an $A_3^\pm$ singularity appearing in a $\Z/2$-orbit, there are $\mu$-dependent local coordinates $(x, y, z)$ in which 
    \begin{align*}
        f^\mu(x, y, z) = \pm x^4 + \mu_1 x^2 + \mu_2 x \pm y^2 \pm z^2.
    \end{align*}
    The bifurcation diagram is the familiar cusp. See the left frame of \Cref{fig:D_codim2_bifurcations}.
    \item[(D2)] In a neighborhood of $\overline{\mu}$, the bifurcation diagram consists of values $\mu$ for which $X^\mu$ (and therefore $f^\mu$) has a degenerate critical point near $p$. For an open and dense class of families $\{f^{\mu}\}$ for which $\grad(f^\mu)$ has an $A_4$ singularity appearing in a trivial orbit, there are $\mu$-dependent local coordinates $(x, y, z)$ in which 
    \begin{align*}
        f^\mu(x, y, z) = x^5 + \mu_1 x^3 + \mu_2 x + y^2 - z^2.
    \end{align*}
    See the right frame of \Cref{fig:D_codim2_bifurcations}. 
\end{enumerate}

\begin{figure}[h]
\def\svgwidth{.8\linewidth}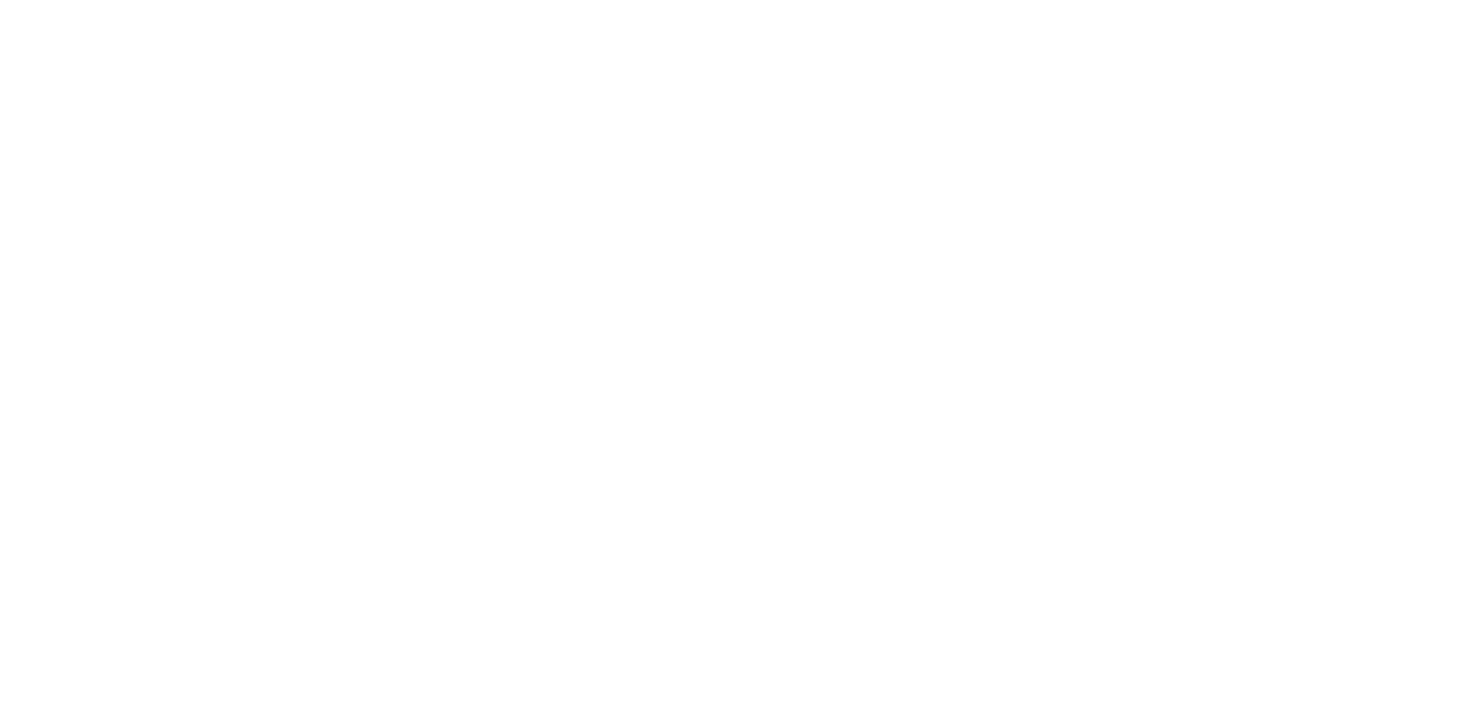
    \caption{Codimension-2 bifurcations of type (D) involving a real singularity of codimension $2$.}
\label{fig:D_codim2_bifurcations}
\end{figure}

Finally, we consider failures of real quasi-transversality.

\begin{enumerate}
    \item[(E1)] There is a violation of (QT-1), i.e. there is a parameter value $\overline{\mu}$ and hyperbolic orbits $(\Z/2)\cdot p_1$ and $(\Z/2)\cdot p_2$ for which there is an orbit of tangency between $W^s(p_1^{\overline{\mu}})$ and $W^{u}(\tau(p_2^{\overline{\mu}}))$ which are both 1-dimensional, and,  symmetrically, there is an orbit of tangency between $W^u(\tau(p_1^{\overline{\mu}}))$ and $W^s(p_2^{\overline{\mu}})$ (which is also 1-dimensional). Here, $p_1^{\mu}$ and $p_2^{\mu}$ are critical points of index 1. There are, of course, two possible configurations, depending on whether or not $p_1 = p_2$. Secondary bifurcations appear in the presence of orbits of tangency between $(\Z/2) \cdot  W^s(\tau(p_2^{\overline{\mu}}))$ and a stable manifold of dimension 2 intersecting $(\Z/2) \cdot W^u(p_1^\mu)$ or between $(\Z/2) \cdot W^u(p_1^\mu)$ and a stable manifold of dimension 2 intersecting $(\Z/2) \cdot W^u(\tau(p_2^\mu))$. See \Cref{fig:E_codim2_bifurcations}.

     \item[(E2)] There is a hyperbolic orbit $(\Z/2)\cdot p^{{\mu}}$ of $X^{\bar{\mu}}$ which has a $\{1\}$-orbit of tangency $\gamma$ between $W^u(\tau(p^\mu))$ and $W^s(p^\mu)$ as well as a quasi-transverse orbit of tangency between the directional manifold $D(p^\mu, \gamma)$ and the unstable manifold of an index 2 critical point. See \Cref{fig:E_codim2_bifurcations}.
\end{enumerate}

\begin{rem}
    In 2-parameter families, we will usually not need to consider both directional manifolds. In cases in which $\gamma$ is clear from the context, we will write $D(p^\mu)$ and suppress $\gamma$ from the notation.
\end{rem}

\begin{figure}[h]
\def\svgwidth{.8\linewidth}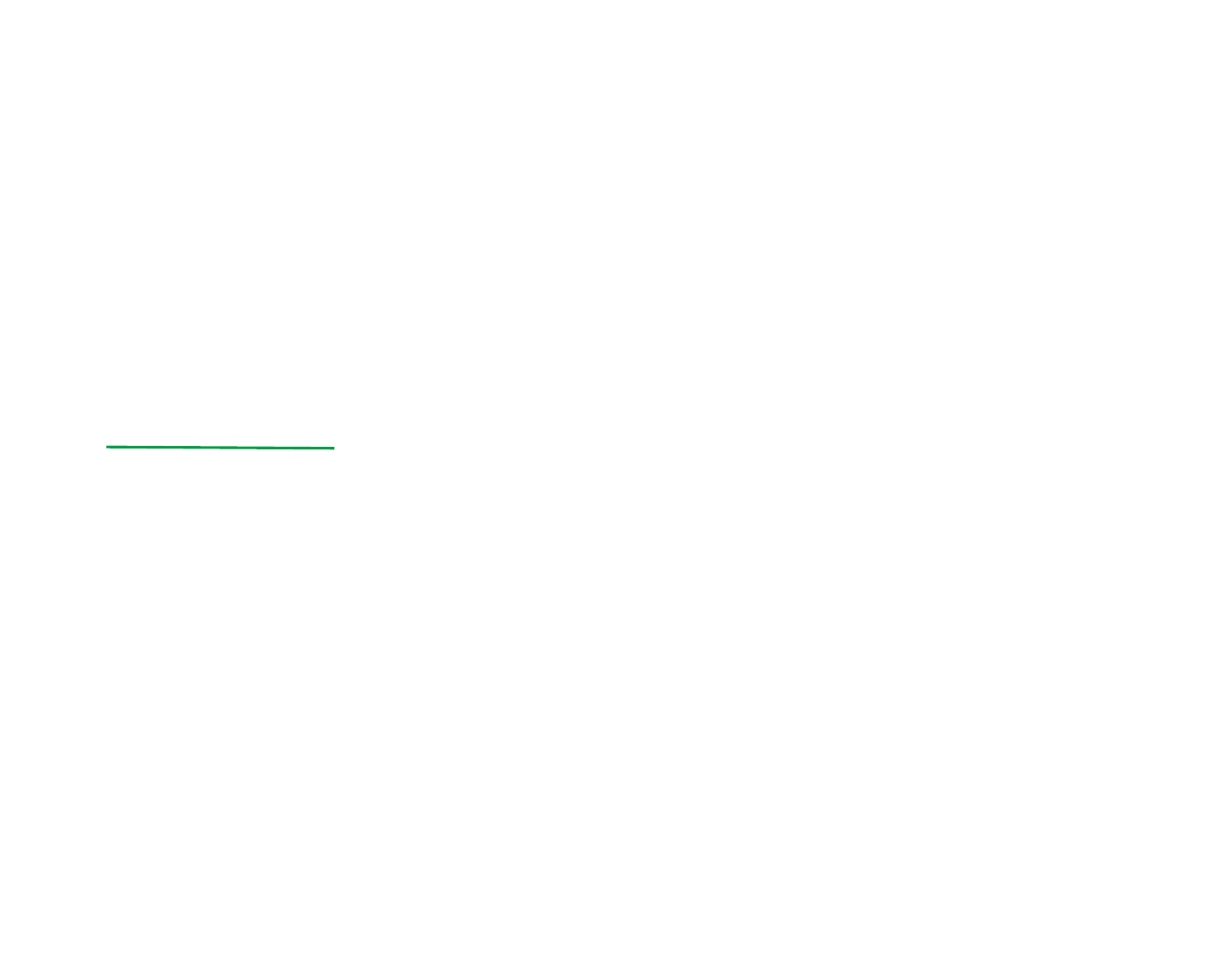
    \caption{Type (E) codimension-2 bifurcations. In the third frame, we have drawn the directional manifold in pink.}
\label{fig:E_codim2_bifurcations}
\end{figure}

\subsection{Real sutured functions and gradient-like vector fields.}

Let $\Yt$ be a real sutured 3-manifold.

\begin{defn}\label{def:sutured-function}
    A \emph{real sutured function} on $\Yt$ is a smooth function $f: Y \ra [-1,1]$ which satisfies:
    \begin{enumerate}
        \item $f^{-1}(\pm1) = R_\pm(\gamma)$ and $f^{-1}(0) \supset s(\gamma)$;
        \item $f$ has no critical points along $R(\gamma)$;
        \item $f|_{\gamma}$ has no critical points;
        \item $f \circ \tau = -f$.
    \end{enumerate}
\end{defn}

For such a function $f$, we write $C(f)$ for the set of critical points of $f$. Note that $C(f)$ consists of points rather than orbits. Necessarily, $C(f)$ lies in the interior of $Y$.

\begin{defn}
    Given a real sutured function $f$ on $\Yt$, we say that a vector field $v$ on $Y$ is \emph{a real gradient-like vector field of $f$} if 
    \begin{enumerate}
        \item $v(f) > 0$ on $Y \smallsetminus C(f)$,
        \item $C(f)$ has a neighborhood $U$ so that $v|_U = \grad_g(f|_U)$ for some $\tau$-invariant Riemannian metric $g$ on $U$.
    \end{enumerate}
\end{defn}

Let $\FV \Yt$ be the space of pairs $(f, v)$ where $f$ is a real function on $\Yt$ and $v$ is a real gradient-like vector field for $f$. The space $\FV\Yt$ is given the $C^\infty$-topology. We write $\FV_0 \Yt$ for the subspace of Morse-Smale pairs. 

\begin{defn}
    We say that $(f, v) \in \FV\Yt$ is \emph{codimension-1} if $(f, v) \not \in \FV_0\Yt$, but $v$ appears as $X^{\overline{\mu}}$ for some 1-parameter family $\{X^{\mu}\}\in X_1^g\Yt$ which is generic in the sense of \Cref{subsec:codim 1 birfurcations}. Let $\FV_1\Yt$ be the space of codimension-1 pairs and $\FV_{\le1}\Yt$ be the union $\FV_0\Yt\cup \FV_1\Yt$.

    Likewise, we say  $(f, v) \in \FV\Yt$ is \emph{codimension-2} if $(f, v) \not \in \FV_{\le1}\Yt$, but $v$ appears as $X^{\overline{\mu}}$ for some 2-parameter family $\{X^{\mu}\}\in X_2^g\Yt$ which is generic in the sense of \Cref{subsec:codim 2 birfurcations}. We write $\FV_2\Yt$ for the space of codimension-2 pairs and $\FV_{\le2}\Yt$ for the union $\FV_0\Yt\cup \FV_1\Yt \cup \FV_2\Yt$.
\end{defn}

\begin{prop}
\label{prop:fv}
    Let $(f,v)\in \FV\Yt$. Then, the space $G(f, v)^\tau$ of $\tau$-invariant Riemannian metrics on $Y$ for which $v = \grad_g(f)$ is non-empty and contractible. 
\end{prop}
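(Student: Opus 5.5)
The plan is to show that $G(f,v)^\tau$ is a nonempty convex subset of the space of $\tau$-invariant Riemannian metrics on $Y$. Contractibility then follows from a straight-line homotopy.

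The basic observation is that the condition $v=\grad_g(f)$, i.e.\ $g(v,\cdot)=df$, is affine in $g$. Positive definiteness is preserved by convex combinations, and so is $\tau$-invariance, which is a linear condition. Hence for any $g_0,g_1\in G(f,v)^\tau$ the path $g_s=(1-s)g_0+sg_1$ stays in $G(f,v)^\tau$. Fixing $g_0$, the map $(g,s)\mapsto (1-s)g+sg_0$ is a contraction, continuous in the $C^\infty$ topology. So everything reduces to nonemptiness.

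For nonemptiness I would first build a (not necessarily invariant) metric $g$ with $g(v,\cdot)=df$, and then symmetrize it.

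\textbf{Step 1 (local construction away from $C(f)$).} On $Y\smallsetminus C(f)$ we have $v(f)>0$, so $v$ is transverse to $\ker df$ and $TY=\R\langle v\rangle\oplus\ker df$. Choose any metric $h$ and set
\[ g' = \frac{df\otimes df}{v(f)} + h|_{\ker df}, \]
where the second term is extended by zero on $\R\langle v\rangle$ via the splitting. This is positive definite. For $w=av+k$ with $k\in\ker df$ one gets $g'(v,w)=a\,v(f)=df(w)$, as required.

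\textbf{Step 2 (near $C(f)$).} Near $C(f)$ we use the metric $g_U$ provided by the definition of a real gradient-like vector field. I would shrink $U$ to $U\cap\tau(U)$ if needed, though invariance will be restored in Step 4 anyway.

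\textbf{Step 3 (patching).} Glue $g_U$ and $g'$ with a partition of unity subordinate to the cover $\{U,\ Y\smallsetminus C(f)\}$. Since the defining condition is affine and pointwise, the glued metric $g$ still satisfies $g(v,\cdot)=df$.

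\textbf{Step 4 (symmetrization).} We use that $v$ is real, i.e.\ $d\tau(v)=-v$, together with $f\circ\tau=-f$. Then
\[ (\tau^*g)(v,w)=g(d\tau\, v,d\tau\, w)=-g(v,d\tau\, w)=-df(d\tau\, w)=-d(f\circ\tau)(w)=df(w). \]
So $\tau^*g$ also lies in the affine set, and by convexity so does $\tfrac12(g+\tau^*g)$, which is $\tau$-invariant.

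I expect the main points needing care to be the local construction in Step 1, namely checking positivity and smoothness of the splitting-based metric, and verifying that the realness convention on $v$ is exactly what makes $\tau^*g$ satisfy the gradient condition. Boundary behavior should cause no trouble, since $C(f)$ lies in the interior and $v(f)>0$ near $\partial Y$.
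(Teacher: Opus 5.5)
Your proposal is correct and matches the paper's proof: the paper also symmetrizes a non-invariant gradient metric via $\tfrac12(g'+\tau^*g')$, using $d\tau(v)=-v$ and $df\circ d\tau=-df$, and gets contractibility from convexity of the affine condition $g(v,\cdot)=df$ on the convex space of $\tau$-invariant metrics. The only difference is that you build the non-invariant metric by hand in Steps 1--3, whereas the paper cites \cite[Proposition 5.19]{JTZ_naturality_mapping_class_groups} for it.
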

\begin{proof}
    This is the real analogue of \cite[Proposition 5.19]{JTZ_naturality_mapping_class_groups}. By that result, there exists a metric $g'$ (not necessarily $\tau$-invariant) such that $v = \grad_g(f)$. We construct a $\tau$-invariant one by averaging:
    $$g = \frac{1}{2}(g' + \tau^*g').$$  
    We verify the gradient condition $g(v, \cdot) = df$:
    \begin{align*}
        g(v, Y) &= \frac{1}{2}(g'(v, Y) + (\tau^*g')(v, Y)) \\
        &= \frac{1}{2}(df(Y) + g'(d\tau(v), d\tau(Y))).
    \end{align*}
    Since $v$ is a real vector field, $d\tau(v) = -v$. Thus,
    \begin{align*}
        g(v, Y) 
        &= \frac{1}{2}(df(Y) - g'(v, d\tau(Y))) \\
        &= \frac{1}{2}(df(Y) - df(d\tau(Y))).
    \end{align*}
    Since $f$ is a real function, $f\circ \tau = -f$, which implies $df\circ d\tau = -df$. We deduce that  $g(v, Y)  = df(Y)$. Hence, $v = \grad_g(f)$, and $G(f, v)^\tau$ is non-empty.
    
    To show contractibility, observe that the space of all $\tau$-invariant Riemannian metrics is convex. The condition $g(v, \cdot) = df$ defines an affine subspace in the space of symmetric tensors. It follows that $G(f, v)^\tau$ is convex, and thus contractible.
\end{proof}

\begin{cor}
\label{cor:weaklycon}
    The space $\FV\Yt$ is weakly contractible. 
\end{cor}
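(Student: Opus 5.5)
We model the argument on the unreal case, \cite[Corollary 5.20]{JTZ_naturality_mapping_class_groups}, and combine Proposition~\ref{prop:fv} with a fibration-style argument. Consider the space $\mathcal{M}^\tau$ of pairs $(f,g)$, where $f$ is a real sutured function on $\Yt$ (Definition~\ref{def:sutured-function}) and $g$ is a $\tau$-invariant Riemannian metric. There is a continuous surjection
\[
\Pi \colon \mathcal{M}^\tau \longrightarrow \FV\Yt, \qquad (f,g) \mapsto (f, \grad_g f).
\]
This map is well-defined because, as computed above, the gradient of a real function with respect to a $\tau$-invariant metric is a real vector field. By Proposition~\ref{prop:fv}, the fiber over each $(f,v)$ is the convex space $G(f,v)^\tau$, which is nonempty and contractible.

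The first step is to show that $\mathcal{M}^\tau$ is contractible. The space of $\tau$-invariant metrics is convex. The space of real sutured functions is also convex: conditions (1) and (4) of Definition~\ref{def:sutured-function} are preserved under convex combinations. Conditions (2) and (3) are preserved as well, since near $R_\pm(\gamma)$ and along $\gamma$ we may fix a collar in which the relevant derivatives have a fixed sign, and convex combinations preserve that sign. Nonemptiness follows from taking any sutured function $f_0$ and averaging, $\frac12(f_0 - f_0\circ\tau)$. This average is again sutured, because $\tau$ exchanges $R_+$ and $R_-$ and preserves $\gamma$ and the suture, so the boundary conditions are compatible with antisymmetrization. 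Consequently $\mathcal{M}^\tau$ is a product of convex sets and is contractible.

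The second step is to show that $\Pi$ is a weak homotopy equivalence. Let $\phi \colon S^k \to \FV\Yt$ be a map. We lift it to $\mathcal{M}^\tau$ by choosing metrics depending continuously on the parameter. Locally in the parameter a metric can be chosen, and the averaging construction of Proposition~\ref{prop:fv} stays continuous in families once the unreal parametric statement is in hand. We then glue the local choices with a partition of unity on $S^k$; this is legitimate because each fiber $G(f,v)^\tau$ is convex, so convex combinations of valid metrics remain valid. The same argument, applied to a map of $D^{k+1}$ extending a given lift on $S^k$, shows that $\pi_k$ of $\FV\Yt$ is the image of $\pi_k(\mathcal{M}^\tau)=0$. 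Equivalently, $\phi$ lifts to $\tilde\phi$, and we can contract $\tilde\phi$ in $\mathcal{M}^\tau$ and project the contraction back down. Hence all homotopy groups of $\FV\Yt$ vanish.

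The main obstacle is the parametric existence of gradient metrics near critical points. Away from $C(f)$, a $\tau$-invariant metric with $g(v,\cdot)=df$ can be written down explicitly, continuously in $(f,v)$. Near $C(f)$, however, the metric is prescribed only by the definition of gradient-like vector field, and the critical set moves with the parameter. I would handle this exactly as in \cite[Proposition 5.19]{JTZ_naturality_mapping_class_groups}: use the local metrics $g$ furnished by the definition on neighborhoods $U$, note that $v|_U$ determines $g$ on $\mathrm{span}(v)$ up to convex choices, and symmetrize by $\tau$. Convexity of the fibers is then what makes the partition-of-unity gluing in the parameter space work.
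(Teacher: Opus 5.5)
You follow the paper's route. Its proof also uses $\pi(f,g)=(f,\grad_g f)$ on the product of real sutured functions and $\tau$-invariant metrics, asserts that $\pi$ is a Serre fibration whose fibers are contractible by Proposition~\ref{prop:fv}, and reads off $\pi_*\FV\Yt=0$. Your Step~2 tries to prove that lifting property, and it rests on one claim: that locally in the parameter you can choose a continuous family $g_s\in G(f_s,v_s)^\tau$. That claim fails where critical points disappear, so there is a genuine gap. The same gap sits under the paper's bare Serre-fibration assertion. The same example without $\tau$ also shows that the unreal parametric statement you appeal to is false.

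Here is the example. Near $p\in C$, take coordinates with $\tau(x_1,x_2,x_3)=(x_2,x_1,-x_3)$. Let $f_s=x_1^2-x_2^2+x_3^3+sx_3$ and $v_s=2x_1\partial_{x_1}-2x_2\partial_{x_2}+(x_3^2+s)\partial_{x_3}$, cut off $\tau$-equivariantly away from $p$.
\begin{itemize}
\item Both are real, and $v_0$ is the gradient of $f_0$ for the $\tau$-invariant metric $dx_1^2+dx_2^2+3\,dx_3^2$.
\item For small $s>0$ there are no critical points near $p$ and $v_s(f_s)>0$. So $s\mapsto(f_s,v_s)$ is a continuous path in $\FV\Yt$.
\item On the $x_3$-axis, $v_0=x_3^2\partial_{x_3}$ and $df_0=3x_3^2\,dx_3$. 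Hence every $g\in G(f_0,v_0)^\tau$ has $g(\partial_{x_3},\partial_{x_3})=3$ at $p$.
\item For $s>0$, $v_s(p)=s\,\partial_{x_3}$ and $df_s(p)=s\,dx_3$. Hence every $g\in G(f_s,v_s)^\tau$ has $g(\partial_{x_3},\partial_{x_3})=1$ at $p$.
\end{itemize}
So this path has no continuous lift at all, and $\pi$ is not a Serre fibration. Convexity of each fiber cannot help, because the fibers do not vary continuously with $(f,v)$; no partition-of-unity gluing in the parameter can fix this.

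The conclusion is still true, and the repair is to deform $v$ rather than lift to metrics. Fix a $\tau$-invariant metric $g_0$ and put $H_t(f,v)=(f,(1-t)v+t\,\grad_{g_0}f)$. This stays in $\FV\Yt$:
\begin{itemize}
\item Off $C(f)$, we have $(1-t)v(f)+t\,|df|^2_{g_0}>0$.
\item On a neighborhood $U$ of $C(f)$ where $v=\grad_g f$, the field $(1-t)v+t\,\grad_{g_0}f$ equals $\bigl((1-t)g^{-1}+t\,g_0^{-1}\bigr)(df,\cdot)$. Here $g^{-1},g_0^{-1}$ are the dual forms on $T^*Y$. Since this combination is positive definite and $\tau$-invariant, the field is $\grad_h f$ for the dual metric $h$.
\item Reality of the vector field is preserved under convex combination.
\end{itemize}
Thus $H$ is a deformation retraction of $\FV\Yt$ onto $\{(f,\grad_{g_0}f)\}\cong\mathcal{F}\Yt$. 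Weak contractibility then reduces to contractibility of the space of real sutured functions, and Proposition~\ref{prop:fv} is not needed.

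On that last point, your Step~1 justification of condition (3) is too quick. The condition that $f|_\gamma$ has no critical points does not force a fixed sign for any directional derivative. A level circle of $f|_\gamma$ can fold back, and there its differential is a negative multiple of $dr$ for a standard collar coordinate $r$ across the annulus. A suitable convex combination with that standard function then acquires a critical point on $\gamma$. So convexity of $\mathcal{F}\Yt$, as literally defined, needs an additional argument, for example a normalization of $f$ near $\gamma$. The paper asserts this convexity without comment as well.
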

\begin{proof}
    This follows just as in \cite[Corollary 5.20]{JTZ_naturality_mapping_class_groups}. Let $\mathcal{F}\Yt$ be the space of real sutured functions and $\mathcal{G}\Yt$ be the space of $\tau$-invariant Riemannian metrics. Both spaces are convex and thus contractible. Therefore, their product $\mathcal{F}\Yt \times \mathcal{G}\Yt$ is contractible.
    
    There is a Serre fibration
    \[
        \pi: \mathcal{F}\Yt \times \mathcal{G}\Yt \ra \FV\Yt
    \]
    defined by $\pi(f, g) = (f, \grad_g(f))$. By \Cref{prop:fv}, the map $\pi$ is surjective and has contractible fibers.
    
    We consider the long exact sequence of homotopy groups associated with the fibration $\pi$. Since the total space $\mathcal{F}\Yt \times \mathcal{G}\Yt$ and the fibers $G(f,v)^\tau$ are contractible, the homotopy groups of the base space $\FV\Yt$ also vanish. Therefore, $\FV\Yt$ is weakly contractible.
\end{proof}

\section{Constructing Heegaard diagrams}\label{sec:constructing hd}

This section follows \cite[Section 6]{JTZ_naturality_mapping_class_groups}, translating generic singularities of \Cref{subsec:codim 1 birfurcations} and \Cref{subsec:codim 2 birfurcations} to real Heegaard diagrams: generic gradients produce real Heegaard diagrams, codimension-1 singularities give real Heegaard moves between them, and codimension-2 singularities give loops of moves. 

\begin{defn}
    Let $(f, v) \in \FV_{\le2}\Yt$. We define a partition $C(f) = C_{01}(f) \cup C_{23}(f)\cup C_{fix}(f)$ as follows: a point $p \in C(f)$ belongs to $C_{01}(f)$ if one of the following holds:
    \begin{enumerate}[(1)]
        \item $p \in C_0(f) \cup C_1(f)$;
        \item $p$ is an index 0-1 birth-death appearing in a $\Z/2$-orbit;
        \item $p$ is an index 1-2 birth-death appearing in a $\Z/2$-orbit, $(f, v)$ is codimension-2 of type (B1), and $\cI(p_1) = \cI(p_2) = 1$;
        \item $p$ is an index 1-2 birth-death appearing in a $\Z/2$-orbit, $(f, v)$ is codimension-2 of type (B2) or (B3), and $\cI(\overline{p}) = 1$;
        \item $p$ is an index 1-2 birth-death appearing in a $\Z/2$-orbit, $(f, v) \in \FV_2\Yt$ is of type (C), and $q$ is another birth-death appearing in a $\Z/2$-orbit with $q \neq \tau(p)$ and $f(p) < f(q)$;
        \item $(f, v)$ is codimension-2 of type (D1), and $p$ is an index 1-0-1, 0-1-0, or a 1-2-1 birth-death-birth. 
        \item If $p$ is an index 1-2 birth-death appearing in a $\Z/2$-orbit and $(f, v)$ is of codimension-1 of type (NH), $p$ may be placed in either $C_{01}(f, v)$ or $C_{23}(f, v)$. 
    \end{enumerate}
    A point $p \in C(f)$ belongs to $C_{fix}$ if one of the following holds:
    \begin{enumerate}[(1)]
        \item $p$ is an index 1-2 birth-death appearing in a trivial orbit.
        \item $(f, v)$ is codimension-2 of type (D2), and $p$ is an $A_4$ singularity. 
    \end{enumerate}
    The remaining points are placed in $C_{23}(f).$
\end{defn}

\begin{defn}
\label{def:separable}
    A pair $(f, v) \in \FV_{\le2}\Yt$ is \emph{separable} if:
    \begin{enumerate}
        \item $C_{fix}(f) = \emptyset$;
        \item it does not involve a tangency of any 1-dimensional strong manifolds, i.e., is not codimension-2 of type (A3), (A4), (B4), (B5), (E1), or (E2);
        \item if it is codimension-2 of type (C), has birth-death singularities at $(\Z/2)\cdot p$ and $(\Z/2)\cdot q$), and at least one of $(\Z/2)\cdot p$ and $(\Z/2)\cdot q$ is a $\Z/2$-orbit, then $f(p) \neq f(q)$.
    \end{enumerate}
\end{defn}

\begin{defn}
    Let $(f, v) \in \FV\Yt$. We say that a properly embedded surface $\Sigma$ separates $(f, v)$ if
    \begin{enumerate}
        \item $\Sigma \pitchfork v$;
        \item $Y = Y_- \cup Y_+$ so that $Y_- \cap Y_+ = \Sigma$;
        \item $C_{01}(f, v) \sub Y_-$ and $C_{23}(f,v) \sub Y_+$.
    \end{enumerate}
\end{defn}
Let $\Sigma(f, v)$ be the set of separating surfaces for $(f, v)$ and let $\Sigma^R(f, v)$ be the set of real-invariant separating surfaces for $(f, v)$, i.e. surfaces $\Sigma$ such that $\tau(\Sigma) = \Sigma$ and $\tau$ is orientation-reversing on $\Sigma$. 

\begin{rem}
    We emphasize that gradients with degenerate critical points on the fixed point set $C$ cannot be separated by real surfaces. Indeed, if $\Sigma$ is real and separating, it necessarily contains $C$. Therefore, if $(f, v)$ has a degenerate critical point on $C$, the condition that $\Sigma \pitchfork v$ necessarily fails. 
\end{rem}

\begin{defn}
    Let $(f, v) \in \FV_{\le 2}\Yt$ be separable. Then, define $\Gamma_{01}(f,v)$ to be the CW complex:
    \begin{align*}
        \Gamma_{01}(f,v) = \bigcup_{p \in C_{01}(f,v)} W^s(p).
    \end{align*}
    We define $\Gamma_{23}(f, v) = \tau(\Gamma_{01}(f,v))$. 
\end{defn}

As $\Gamma_{01}(f, v)$ and $\Gamma_{23}(f, v)$ are symmetric, we will always work with the former, and simply write $\Gamma(f, v)$.

\begin{lem}\label{lem:real splittings contractible}
    If $(f, v) \in \FV_{\le 2}\Yt$ is a separable pair, then the space $\Sigma^R(f, v)$ is non-empty and contractible. 
\end{lem}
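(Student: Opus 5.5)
We follow the strategy of \cite[Lemma 6.9]{JTZ_naturality_mapping_class_groups}, adapting it to the real setting. First we identify $\Sigma(f,v)$, up to homotopy, with a space of ``levels'' along flow lines. Since $\Sigma \pitchfork v$ and $\Sigma$ separates $C_{01}$ from $C_{23}$, every flow line of $v$ that meets neither $\Gamma_{01}(f,v)$ nor $\Gamma_{23}(f,v)$ runs from $R_-(\gamma)$ to $R_+(\gamma)$ and crosses $\Sigma$ exactly once. Let $M = Y \setminus (\Gamma_{01}\cup\Gamma_{23})$. Separability ensures that this set is well defined: no fixed degenerate points, no tangencies of one-dimensional strong manifolds, and the required ordering in type (C). The flow then gives a trivialization $M \cong R_-(\gamma)' \times (0,1)$, where $R_-(\gamma)'$ is the space of flow lines. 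Under this trivialization, separating surfaces correspond to sections $s$ of this product, subject to a boundary behaviour near $\Gamma_{01}$ and $\Gamma_{23}$: $\Sigma$ must lie above $\Gamma_{01}$ and below $\Gamma_{23}$.

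Next we introduce a time coordinate. Choose a function $t \colon M \to \R$ with $v(t)=1$. The real-invariance conditions are that $\tau(\Sigma)=\Sigma$ and that $\tau$ reverses the orientation of $\Sigma$. Since $d\tau(v)=-v$, the involution $\tau$ maps flow lines to flow lines with reversed time. We can therefore choose $t$ odd, $t\circ\tau = -t$, for instance by averaging $t$ with $-t\circ\tau$; this preserves $v(t)=1$. With this choice, $\tau$ induces an involution $\bar\tau$ on the space of flow lines $B$. A surface $\Sigma$ given as the graph of $s \colon B \to \R$ is $\tau$-invariant exactly when $s\circ\bar\tau = -s$. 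Orientation reversal is then automatic, since $\Sigma$ is cooriented by $v$ and $\tau$ reverses $v$ while preserving the orientation of $Y$.

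The admissible sections form a convex set $\mathcal S$. This set consists of all $s$ with
\[ a(b) < s(b) < c(b), \]
where the bounds $a$ and $c$ come from requiring $\Sigma$ to avoid, and be positioned correctly relative to, $\Gamma_{01}$ and $\Gamma_{23}$, together with the required asymptotics near these complexes. Because $\Gamma_{23}=\tau(\Gamma_{01})$, the bounds satisfy $c = -a\circ\bar\tau$. Consequently, $\mathcal S$ is preserved by the affine involution $s \mapsto -s\circ\bar\tau$. The real sections form the fixed set of this involution on a convex set, which is again convex. It is also nonempty: given any non-real $s \in \mathcal S$ (which exists by the unreal result), the average $\tfrac12(s - s\circ\bar\tau)$ lies in $\mathcal S$ and is real. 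Hence $\Sigma^R(f,v)$ is nonempty and contractible. The same conclusion can be reached as a direct consequence of JTZ: their contractible space carries a $\Z/2$-action, and its fixed points form a convex subset.

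The main obstacle is the behaviour near $\Gamma_{01}$ and $\Gamma_{23}$, and in particular near the fixed set $C$. Near $C$ we must check that no point of $C$ lies in $\Gamma$, so that $C$ sits inside $M$ on the level $t=0$. This requires both $C_{fix}=\emptyset$ and the exclusion of the tangencies in (A3)--(E2): a one-dimensional stable manifold meeting $C$ would put $C$ into $\Gamma$. We must also check that the asymptotic constraints defining $\mathcal S$ near $\Gamma$, as in JTZ, are compatible with averaging. This amounts to showing that the constraints are convex conditions, which is how JTZ phrase them.
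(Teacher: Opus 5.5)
Your proposal is correct and is essentially the paper's argument. Your average $\tfrac12(s - s\circ\bar\tau)$ is exactly the paper's midpoint surface $\psi_{1/2}(\Sigma_0)$ between $\Sigma_0$ and $\tau(\Sigma_0)$ (obtained after rescaling $v$ by the transit time $T$), and your convex set of odd sections is the paper's identification of $\Sigma^R(f,v)$ with the anti-invariant flow-time functions $d_{\Sigma,\Sigma'}$ on a fixed real surface; one harmless slip is that flow lines in $M$ need not run from $R_-(\gamma)$ to $R_+(\gamma)$ (some emanate from points of $C_{01}$ or end at points of $C_{23}$), but each still crosses every separating surface exactly once, which is all you use.
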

\begin{proof}
    The corresponding fact for $\Sigma_\pm(f, v)$ is \cite[Proposition 6.7]{JTZ_naturality_mapping_class_groups}. The existence of such surfaces is constructive: one may simply take the boundary of a small regular neighborhood of $\Gamma(f, v)$. To see the space is contractible, they note that if $(f,v)$ is separable and $\Sigma \in \Sigma^R(f, v)$, then $\Sigma$ intersects every flow line in a point. It follows that any other separating surface $\Sigma'$ determines a homeomorphism $d_{\Sigma, \Sigma'}: \Sigma \ra \R$ where $d_{\Sigma, \Sigma'}(x)$ is the time $t$ such that $\phi_t(x) \in \Sigma'$ (here, $\phi_s$ is the flow of $v$). This establishes a homeomorphism $\Sigma(f, v) \cong C^\infty(\Sigma)$, and $C^\infty(\Sigma)$ is clearly contractible.

    In the real setting, we can employ a similar strategy. To prove existence, as above we let $\Sigma_0$ be the boundary of a small regular neighborhood of $\Gamma(v, f)$. The surface $\S_0$ is not invariant, but by reparametrizing $v$, we can ensure that $\tau(\S_0)$ is equal to $\phi_1(\S_0)$, where $\phi_t$ is the flow of $v$. It will follow that $\phi_{1/2}(\S_0)$ is an invariant separating surface.
    
    More precisely, let $W = Y \smallsetminus (N(\Gamma(v, f))\cup \tau(N(\Gamma(v, f))))$. Since $\S_0$ is separating, so is $\t(\S_0)$, and therefore, $W$ is diffeomorphic to a product. Furthermore, we have a projection 
    \begin{align*}
        \pi_{\S_0}: W \ra \S_0
    \end{align*}
    by assigning to a point $x$ the unique point in $\S_0$ which is contained in the same flow line. Define $T: \S_0 \ra \R_{\ge 0}$ by the condition that $\phi_{T(x)}(x) \in \t(\S_0)$. Extend $T$ to a function $\widetilde T: W \ra \R_{>0}$ by requiring that $\widetilde T$ is constant on flow lines: i.e., define $\widetilde T = T\circ\pi_{\S_0}$. Note that if $x \in \S_0$ flows to a point $y$ in $\t(\S_0)$, then 
    \begin{align*}
        \t(x) = \t(\phi_{-T(x)}(y)) = \phi_{T(x)}(\t(y)).
    \end{align*}
    It follows that $T(\t(x)) = T(x)$, and hence, $\widetilde T$ is $\t$-invariant as well. 

    Therefore, we can define a vector field $w_x = T(x) \cdot v_x$, with flow $\psi_t$. As we have only rescaled $v$, separating surfaces for $w$ are also separating surfaces for $v$. Now, for any $x \in \S_0$, we have that $\psi_1(x) = \t(x)$, since
    \begin{align*}
        \psi_{1}(x) = \phi_{T(x)}(x) \in \t(\S_0).
    \end{align*}
    We define $\S := \psi_{1/2}(\S_0).$ Then $\S$ is $\tau$-invariant:
    \begin{align*}
        \t(\S) &= \t(\psi_{1/2}(\S_0)) \\
        &= \psi_{-1/2}(\t(\S_0)) \\
        &= \psi_{-1/2}(\psi_{1}(\S_0))\\ 
        &= \psi_{1/2}(\S_0) = \S.
    \end{align*}
    Hence, $\S$ is a separating surface for $w$, and therefore one for $v$ as well. 
    
    The second claim follows by an identical argument as in the unreal case. If $\Sigma'$ were another separating surface, then the homeomorphism $d_{\Sigma, \Sigma'}$ is a real-invariant map $\Sigma \ra \R$, and gives a homeomorphism from $\Sigma^R(f, v)$ to the space of real invariant functions on $\Sigma$, i.e. $C^\infty_R(\Sigma)$, which is contractible.
\end{proof}


\subsection{Codimension-0}

Given a separable pair $(f, v) \in \FV_{\le2}\Yt$, we have seen that there is a contractible space of real Heegaard splittings. Of course, the process is reversible: given a real Heegaard diagram $(\Sigma, \bm \alpha, \bm \beta, \tau)$, there is an essentially unique pair $(f, v) \in \FV_0\Yt$ which realizes it. This follows exactly as in \cite[Section 6.2]{JTZ_naturality_mapping_class_groups}, so our treatment will be terse. 

Given a separable pair $(f, v) \in \FV_{\le2}\Yt$, we can choose a separating real Heegaard diagram $\Sigma \in \Sigma^R(f, v)$. If $(f, v)$ is Morse-Smale, for each $p \in C_1(f)$, we have that $W^u(p)\cap \Sigma$ is an embedded circle, as is $\tau(p) \in C_2(f)$. These curves all intersect transversely. 

\begin{defn}
    Say $(f, v) \in \FV_0\Yt$ and $\Sigma \in \Sigma^R(f, v)$. Then, we define $H(f, v, \Sigma) = (\Sigma,\bm \alpha, \bm\beta, \tau|_{\Sigma})$ to be the Heegaard diagram by taking the alpha curves to be $W^u(p)\cap \Sigma$ for  $p \in C_1(f)$ and the beta curves to be $W^s(q)\cap \Sigma$ for  $q \in C_2(f)$. Of course, the alpha curves are interchanged with the beta curves under $\tau|_{\Sigma}.$
\end{defn}

\begin{rem}
    We will usually abuse notation, and write $\t$ for the involution on $\S$ rather than $\t_\S.$
\end{rem}

Of course, $|C_1(f)|$ may be larger than the genus of $\Sigma$, in which case we obtain an \emph{overcomplete diagram} \cite[Definition 6.13]{JTZ_naturality_mapping_class_groups}. These diagrams specify handle decompositions of $\Yt$ with extra 0- and 3-handles. To obtain a Heegaard diagram in the traditional sense, one starts with the graph $\Gamma_-(f, v)$, removes the vertices $p \in C_1(f)$, and chooses a spanning tree $T$ of $\Gamma_-(f, v)$. We define $H(f, v, \Sigma, T)$ to be the Heegaard diagram obtained by choosing the alpha curves to be $W^u(p) \cap \Sigma$ for $p \in C_1(f)$ and such that $W^s(p)$ is not an edge of $T$; the symmetry determines the beta curves. Different choices of $T$ give rise to real-equivalent diagrams. 

The process can be reversed: given a real diagram $(\Sigma, \alphas, \betas,\t)$, standard Morse theory allows one to construct a pair $(f, v) \in \FV_0\Yt$ which satisfies $H(f, v, \Sigma) = (\Sigma, \alphas, \betas, \t)$. For more details, see  \cite[Section 6.2]{JTZ_naturality_mapping_class_groups}; the discussion there can be applied \emph{mutatis mutandis} in the real setting.

\subsection{Codimension-1}\label{subsec:gradient-to-HD-codim1}

In this subsection, we prove equivariant analogues of the various isotopy extension lemmas of \cite[Section 6.3]{JTZ_naturality_mapping_class_groups}.

\begin{lemma}[Symmetric Isotopy Extension, cf. Lemma 6.19 in \cite{JTZ_naturality_mapping_class_groups}]\label{lem:sym_isotopy_extension}
Suppose $\{\mathcal{H}_t = (\Sigma_{t},\alpha_{t},\beta_{t},\t_t):t\in I\}$ is a smooth 1-parameter family of real Heegaard diagrams in $(Y, \tau)$ such that $\alpha_t \pitchfork \beta_t$.
Then there exists a $\Z/2$-equivariant isotopy $d_t: Y\times I\rightarrow Y$ (i.e., $d_t \circ \tau = \tau \circ (d_t\times \id)$) such that $d_{t}(\mathcal{H}_0)=\mathcal{H}_t$. The space of such equivariant isotopies is contractible.
\end{lemma}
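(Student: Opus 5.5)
We follow the proof of \cite[Lemma 6.19]{JTZ_naturality_mapping_class_groups}, which builds the isotopy in three stages: extend the motion of the attaching curves to the surface, extend the motion of the surface to $Y$, and check that the choices form a contractible space. Equivariance is imposed at each stage, either by working on the quotient or by averaging.

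\textbf{Step 1: vector fields along $\Sigma_t$.} The family $\{\Sigma_t\}$ is a smooth family of embedded surfaces. For each $t$, $\Sigma_t$ is $\tau$-invariant, $\tau$ reverses its orientation, and $\Sigma_t \supset C$. Let $X_t$ be the velocity field along $\Sigma_t$ in $Y$ of the family. We first want $X_t$ to be equivariant: $d\tau(X_t)=X_t\circ\tau$ at points of $\Sigma_t$. Since the whole family is $\tau$-invariant, we can choose the parametrization so that this holds; if necessary, we replace $X_t$ by $\tfrac12\bigl(X_t+d\tau\circ X_t\circ\tau\bigr)$. This averaged field still has the correct normal component, because $\tau(\Sigma_t)=\Sigma_t$.

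\textbf{Step 2: moving the curves and the surface together.} We want the tangential part of $X_t$ along $\alpha_t$ to realize the motion of $\alpha_t$ within $\Sigma_t$. Since $\beta_t=\tau(\alpha_t)$, this choice automatically realizes the motion of $\beta_t$ as well. The hypothesis $\alpha_t\pitchfork\beta_t$ is what makes this possible. The intersection points of $\alpha_t$ and $\beta_t$ move smoothly, and near each one we can choose coordinates in which both curves are coordinate axes. At intersection points on $C$, which are fixed by $\tau$, we choose these coordinates $\tau$-equivariantly, so that $\tau$ swaps the two axes. This lets us prescribe a field tangent to both curves there.

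Away from the intersection points, we extend the tangential field from $\alpha_t\cup\beta_t$ to all of $\Sigma_t$ using a $\tau$-invariant partition of unity, then average once more. Averaging preserves the prescribed values on $\alpha_t\cup\beta_t$, because the prescription is itself equivariant.

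\textbf{Step 3: extending to $Y$.} We extend the field from $\Sigma_t$ to a time-dependent vector field on $Y$ using an invariant tubular neighborhood and an invariant cutoff, then average to get an equivariant field $\widetilde X_t$. Integrating $\widetilde X_t$ gives an isotopy $d_t$. It commutes with $\tau$ because the flow of an equivariant field is equivariant, and $d_t(\mathcal H_0)=\mathcal H_t$ by construction. If $Y$ has boundary, we take the cutoff to vanish near $\partial Y$ and near the sutures, as in the unreal case.

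\textbf{Contractibility.} Contractibility would follow by showing that the space of admissible equivariant generating fields is convex, and therefore contractible. Indeed, at each stage the constraints (restriction to $\Sigma_t$, restriction to the curves, equivariance) are affine. Passing from fields to isotopies is a homeomorphism onto the space of isotopies with the required property, and this transfers contractibility.

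I expect the main obstacle to be Step 2 at the fixed points. At a point of $\alpha_t\cap\beta_t\cap C$, the curves are exchanged by $\tau$ and $X_t$ must be tangent to $C$. We need to verify that the velocity of the moving intersection point is compatible with this, meaning it lies in $T C$. That holds because the intersection point stays on $C$, which is forced by uniqueness under transversality and symmetry.
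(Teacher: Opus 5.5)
Your proposal is correct and is essentially the paper's argument. Both build a $\tau$-equivariant time-dependent vector field (the paper phrases it as a $\tilde\tau$-invariant field $\nu$ on $Y\times I$ with $dt(\nu)=1$) tangent to the moving surfaces and attaching curves, using successive extensions and averaging; both integrate it and get contractibility from convexity of the space of such fields.

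The differences are cosmetic. The paper extends in the order $\alpha_*\cap\beta_*\to\alpha_*\to\beta_*$ (by pushforward under $\tilde\tau$) $\to\Sigma_*\to Y\times I$, whereas you start from an equivariant surface velocity and then correct it along the curves. Your explicit check that fixed intersection points stay on $C$, so their velocity lies in $TC$, supplies a detail the paper treats as clear.
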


\begin{proof}
This follows just as in \cite[Lemma 6.19]{JTZ_naturality_mapping_class_groups}: we construct an isotopy by integrating a time-dependent vector field on $Y \times I$. We consider the submanifolds $\Sigma_* = \bigcup_{t\in I} \Sigma_t\times \{t\}$, $\bm \alpha_* = \bigcup_{t\in I} \bm \alpha_t\times \{t\}$, and $\bm \beta_* = \bigcup_{t\in I} \bm \beta_t\times \{t\}$. Let $\tilde{\tau}(x,t) = (\tau(x), t)$ be the involution on $Y \times I$. We seek a vector field $\nu$ on $Y \times I$ which is $\tilde{\tau}$-invariant ($d\tilde{\tau}(\nu)=\nu$), positively transverse to the horizontal foliation $\cF$ (e.g., $dt(\nu)=1$), and tangent to the submanifolds $\Sigma_{*}, \alpha_{*},$ and $\beta_{*}$.

We construct $\nu$ by successive extensions, ensuring invariance using the averaging technique: $\nu = (\omega + d\tilde{\tau}(\omega))/2$. We first define $\nu$ on $T(\alpha_{*}\cap\beta_{*})$ transverse to $\cF$, which can clearly be chosen to be invariant. Then, we extend $\nu$ over $T\alpha_{*}$ in any way so that it is positively transverse to $\cF$; $\nu$ is extended over $T\beta_{*}$ by pushing forward the vector field on $T\alpha_{*}$. We then extend to $T\Sigma_{*}$: since $\Sigma_*$ is invariant, we can choose any extension and then apply the averaging trick. Finally, extend $\nu$ to $Y \times I$ invariantly.

The flow of the invariant vector field $\nu$ generates the required equivariant isotopy $D$. The space of such invariant vector fields is convex, hence contractible.
\end{proof}

\begin{lemma}[Symmetric version of Lemma 6.20 in \cite{JTZ_naturality_mapping_class_groups}]\label{lem:sym_6.20}
Let $\{\mathcal{H}_{t}\}$ and $\{\mathcal{H}_{t}^{\prime}\}$ be 1-parameter families of real Heegaard diagrams (possibly overcomplete) connecting $\mathcal{H}_{0}$ and $\mathcal{H}_{1}$. If the two families are homotopic relative to their endpoints through real diagrams, then the induced equivariant diffeomorphisms $d_{1}$, $d_{1}^{\prime}$: $Y\rightarrow Y$ are equivariantly isotopic through equivariant diffeomorphisms mapping $\mathcal{H}_{0}$ to $\mathcal{H}_{1}$.
\end{lemma}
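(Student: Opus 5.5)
The plan is to apply the parametric version of the symmetric isotopy extension argument (Lemma~\ref{lem:sym_isotopy_extension}) one dimension higher, mirroring the proof of \cite[Lemma 6.20]{JTZ_naturality_mapping_class_groups}. Let $\{\mathcal{H}_{t,s}\}_{(t,s)\in I\times I}$ be the homotopy of real diagrams, with $\mathcal{H}_{t,0}=\mathcal{H}_t$, $\mathcal{H}_{t,1}=\mathcal{H}'_t$, $\mathcal{H}_{0,s}=\mathcal{H}_0$ and $\mathcal{H}_{1,s}=\mathcal{H}_1$. Since the diagrams may be overcomplete, I would first invoke genericity to assume $\alphas_{t,s}\pitchfork\betas_{t,s}$ for all $(t,s)$, or else work only with the submanifolds $\Sigma_{t,s}$, $\alphas_{t,s}$, $\betas_{t,s}$ separately, which suffices for extension. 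On $Y\times I\times I$ consider the involution $\tilde\tau(x,t,s)=(\tau(x),t,s)$ and the total spaces $\Sigma_{**}$, $\alphas_{**}$, $\betas_{**}$, with $\tilde\tau(\alphas_{**})=\betas_{**}$ and $\Sigma_{**}$ invariant.

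The main step is constructing a $\tilde\tau$-invariant vector field $\nu$ on $Y\times I\times I$ with $dt(\nu)=1$, $ds(\nu)=0$, tangent to $\Sigma_{**}$, $\alphas_{**}$, $\betas_{**}$, and agreeing on the edges $s=0$ and $s=1$ with the vector fields $\nu_0$, $\nu_1$ already chosen for $\{\mathcal{H}_t\}$ and $\{\mathcal{H}'_t\}$ in Lemma~\ref{lem:sym_isotopy_extension}. I would build it in the same order as before: along $\alphas_{**}\cap\betas_{**}$ (invariant, so average), then on $\alphas_{**}$ arbitrarily and transport to $\betas_{**}$ via $d\tilde\tau$, then on $\Sigma_{**}$ with averaging $(\omega+d\tilde\tau\omega)/2$, and finally on all of $Y\times I\times I$. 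At each stage the conditions (tangency to invariant submanifolds, $dt=1$, $ds=0$, prescribed boundary values) are affine and preserved by averaging, so relative extensions exist via partitions of unity. Flowing $\nu$ in $t$ for fixed $s$ gives equivariant diffeomorphisms $d_{t,s}$ with $d_{t,s}(\mathcal{H}_0)=\mathcal{H}_{t,s}$, and $s\mapsto d_{1,s}$ is an equivariant isotopy from $d_1$ to $d'_1$ through maps sending $\mathcal{H}_0$ to $\mathcal{H}_{1,s}=\mathcal{H}_1$.

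One subtlety is that $d_1,d'_1$ may have been produced with different choices of vector field. I would handle this using the contractibility statement in Lemma~\ref{lem:sym_isotopy_extension}: any two choices for a fixed family are connected by a path of invariant vector fields (convex combination), which induces an isotopy through diffeomorphisms taking $\mathcal{H}_0$ to $\mathcal{H}_1$. So it suffices to prove the claim for the particular $\nu_0,\nu_1$ on the boundary edges.

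I expect the main obstacle to be the relative extension near the corners and near intersection points, specifically ensuring tangency to both $\alphas_{**}$ and $\betas_{**}$ at their intersections while keeping invariance. I would handle it as in the nonequivariant case: define $\nu$ first on the invariant stratum $\alphas_{**}\cap\betas_{**}$, note that its $d\tilde\tau$-image preserves tangency because $\tilde\tau$ swaps $\alphas_{**}$ and $\betas_{**}$, and use that averaging preserves every affine constraint. The fixed set $C\times I\times I$ poses no additional problem, since averaging produces vectors tangent to the fixed set there automatically.
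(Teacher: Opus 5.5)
Your proposal is the paper's argument: a $\hat\tau$-invariant vector field on $Y\times I\times I$ with $dt(\nu)=1$ and $ds(\nu)=0$, tangent to the total spaces of $\Sigma_{t,s},\alphas_{t,s},\betas_{t,s}$ and built by averaging. Its flow in $t$ at fixed $s$ gives the isotopy, and the ends are matched to $d_1,d_1'$ using the contractibility statement in Lemma~\ref{lem:sym_isotopy_extension}.

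One correction concerns transversality. You cannot obtain $\alphas_{t,s}\pitchfork\betas_{t,s}$ for all $(t,s)$ by genericity, since tangencies are codimension one and the edges of the square are prescribed. You also cannot work around it, since a flow tangent to both $\alphas_{**}$ and $\betas_{**}$ would carry the transverse pair at $t=0$ diffeomorphically onto the pair at every $(t,s)$. So it must be read as part of the hypothesis, namely that the homotopy is through families as in Lemma~\ref{lem:sym_isotopy_extension}; the paper uses this tacitly.
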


\begin{proof}
Let $\mathcal{H}_{t,s}$ be the homotopy on $I \times I$. Equip $Y \times I \times I$ with the involution $\hat{\tau}(x, t, s) = (\tau(x), t, s)$. We construct a $\hat{\tau}$-invariant vector field $\nu$ such that $\nu(t)=1, \nu(s)=0$, and $\nu$ is tangent to the submanifolds defined by $\mathcal{H}_{t,s}$. This construction follows the generalization of Lemma~\ref{lem:sym_isotopy_extension} to the 2-parameter setting, using the averaging technique to ensure invariance.

The flow of $\nu$ defines a 1-parameter family of equivariant diffeomorphisms $g_s: Y \to Y$, obtained by flowing from $t=0$ to $t=1$ at fixed $s$. By construction, $g_s(\mathcal{H}_0) = \mathcal{H}_1$. We know that $g_0$ is equivariantly isotopic to $d_1$, and $g_1$ to $d_1'$. The family $\{g_s\}$ provides the required equivariant isotopy.
\end{proof}

\begin{lemma}[Symmetric version of Lemma 6.21 in \cite{JTZ_naturality_mapping_class_groups}]\label{lem:sym_6.21}
Let $\{(f_{t},v_{t})\in\mathcal{FV}_{0}(Y, \tau):t\in I\}$ be a family of real gradient-like vector fields. Let $\Sigma_{i}\in\Sigma^R(f_{i},v_{i})$. Let $T^0$ be a spanning tree for $\Gamma(f_0, v_0)$. Note that $\Gamma(f_t, v_t)$ is an isotopy which takes $T^0$ to a spanning tree $T^1$ for $\Gamma(f_1,v_1)$. Let $\mathcal{H}_0, \mathcal{H}_1$ be the corresponding real diagrams. Then there is an induced equivariant diffeomorphism $d:\mathcal{H}_{0}\rightarrow\mathcal{H}_{1}$, equivariantly isotopic to the identity. The space of such equivariant diffeomorphisms is path-connected.
\end{lemma}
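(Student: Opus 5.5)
The plan is to reduce to the two previous lemmas by producing a 1-parameter family of real Heegaard diagrams interpolating between $\mathcal{H}_0$ and $\mathcal{H}_1$, and then to apply symmetric isotopy extension.

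\textbf{Step 1 (surfaces).} The family $\{(f_t,v_t)\}$ lies in $\FV_0\Yt$, so each pair is Morse--Smale and, since no fixed-set degeneracies occur, separable. By Lemma~\ref{lem:real splittings contractible}, each $\Sigma^R(f_t,v_t)$ is non-empty and contractible. The total space $\bigcup_t \Sigma^R(f_t,v_t)\times\{t\}$ should be a locally trivial bundle over $I$ with contractible fibres. I would check local triviality using the identification $\Sigma^R(f_t,v_t)\cong C^\infty_R(\Sigma)$ via flow times: for nearby $t$, the flow of $v_t$ transports a fixed invariant separating surface. Consequently there is a smooth section $t\mapsto \Sigma_t$ with the prescribed endpoints $\Sigma_0,\Sigma_1$, each $\Sigma_t$ being $\tau$-invariant. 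For the endpoint constraint I would pick a section, and then use path-connectedness of the fibres at $t=0,1$ to concatenate with paths inside those fibres.

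\textbf{Step 2 (curves).} Set $\mathcal{H}_t := H(f_t,v_t,\Sigma_t,T^t)$, where $T^t$ is the spanning tree transported by the isotopy of the graphs $\Gamma(f_t,v_t)$. The critical points vary smoothly in $t$ by hyperbolicity, and so do the stable and unstable manifolds. Hence the $\alphas_t=W^u(p_t)\cap\Sigma_t$ vary smoothly, and $\betas_t=\tau(\alphas_t)$ automatically. Morse--Smale gives $\alphas_t\pitchfork\betas_t$ for all $t$. Lemma~\ref{lem:sym_isotopy_extension} then yields an equivariant isotopy $d_t$ with $d_t(\mathcal{H}_0)=\mathcal{H}_t$, and we set $d=d_1|_{\Sigma_0}$. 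This $d$ is equivariant and isotopic to the identity in $Y$ through $d_t$.

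\textbf{Step 3 (uniqueness).} Two choices of section $\{\Sigma_t\}$ with the same endpoints are homotopic rel endpoints, because the fibres are contractible. The curves depend continuously on the surface, so the induced families of real diagrams are homotopic rel endpoints through real diagrams. Lemma~\ref{lem:sym_6.20} then shows the resulting diffeomorphisms are equivariantly isotopic through maps $\mathcal{H}_0\to\mathcal{H}_1$. Combined with the contractibility statement in Lemma~\ref{lem:sym_isotopy_extension}, this gives path-connectedness of the space of induced diffeomorphisms.

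The main obstacle is Step 1: making the family $\Sigma^R(f_t,v_t)$ into a genuinely (Serre) fibred family so that sections exist and are unique up to homotopy. My first attempt would be to build $\Sigma_t$ explicitly by the construction in the proof of Lemma~\ref{lem:real splittings contractible}. There, $\Sigma_t=\psi^t_{1/2}(\partial N(\Gamma(f_t,v_t)))$, with neighborhoods $N$ chosen smoothly in $t$, which is possible since $\Gamma(f_t,v_t)$ moves by isotopy. I would then correct the endpoints using straight-line homotopies of the flow-time functions in $C^\infty_R(\Sigma)$, which preserve invariance.
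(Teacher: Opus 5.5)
Your proposal is correct and takes essentially the same route as the paper. Contractibility (convexity) of the fibres $\Sigma^R(f_t,v_t)$ gives a path of $\tau$-invariant separating surfaces with the prescribed endpoints, via patching with anti-invariant flow-time functions. Lemma~\ref{lem:sym_isotopy_extension} then turns the resulting family of real diagrams into an equivariant isotopy whose endpoint is $d$. Finally, a homotopy between two such families together with Lemma~\ref{lem:sym_6.20} gives path-connectedness. Your handling of the transported spanning trees $T^t$ and of the endpoint correction is somewhat more explicit than the paper's sketch.
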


\begin{proof}
We first show there exists a smooth family of symmetric surfaces $\{\Sigma_t\}$ connecting $\Sigma_0$ and $\Sigma_1$ such that $\Sigma_t \in \Sigma^R(f_t, v_t)$. This follows from the fact that the fiber $\Sigma^R(f,v)$ is contractible by \Cref{lem:real splittings contractible}. We can adapt the patching argument from the original Lemma 6.21 in \cite{JTZ_naturality_mapping_class_groups}: when interpolating between two symmetric surfaces, the difference function is anti-invariant, ensuring that the interpolated path consists of symmetric surfaces. 

This yields a smooth family of symmetric diagrams $\{\mathcal{H}_t\}$. Applying Lemma~\ref{lem:sym_isotopy_extension}, we obtain an equivariant isotopy $D=(d_t)$ realizing this family. The end point $d=d_1$ is the desired equivariant diffeomorphism.

If $\{\Sigma'_t\}$ is another such family, we construct a homotopy of symmetric surfaces $\Sigma_{t,u}$ connecting them, utilizing the convexity of the fibers $\Sigma^R(f_t, v_t)$. This yields a homotopy of symmetric diagrams $\mathcal{H}_{t,u}$. Applying Lemma~\ref{lem:sym_6.20} shows that the induced diffeomorphisms are equivariantly isotopic.
\end{proof}

\begin{lemma}[Symmetric version of Lemma 6.23 in \cite{JTZ_naturality_mapping_class_groups}]\label{lem:sym_6.23}
Let $\{(f_{t},v_{t})\in\mathcal{FV}_{1}\Yt:t\in I\}$ be a 1-parameter family of separable gradients. Let $\Sigma_{i}\in\Sigma^R(f_{i},v_{i})$ be real Heegaard surfaces for $i \in \{0,1\}$. This family induces an equivariant diffeomorphism d: $\Sigma_{0}\rightarrow\Sigma_{1}$ that is well-defined up to equivariant isotopy.
\end{lemma}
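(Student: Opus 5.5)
The plan is to reduce to the codimension-0 case of Lemma~\ref{lem:sym_6.21}, as JTZ do in the proof of their Lemma~6.23, with the $\tau$-symmetry handled by averaging. Since the family is separable, Lemma~\ref{lem:real splittings contractible} makes each fiber $\Sigma^R(f_t,v_t)$ nonempty and contractible. In fact the fiber is identified with $C^\infty_R(\Sigma)$ through the flow-time function $d_{\Sigma,\Sigma'}$, and these identifications vary continuously in $t$ because separability is an open condition. Over $I$, the real separating surfaces therefore form a bundle with contractible fibers. Using the patching argument from the proof of Lemma~\ref{lem:sym_6.21}, I would choose a smooth family $\Sigma_t\in\Sigma^R(f_t,v_t)$ with the prescribed endpoints. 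The only new point is that interpolating flow-time functions by partitions of unity in $t$ preserves anti-invariance, because $\tau$ acts trivially on the parameter. Each $\Sigma_t$ is transverse to $v_t$ and $\tau$-invariant, so $\Sigma_*=\bigcup_t\Sigma_t\times\{t\}$ is a $\tilde\tau$-invariant submanifold of $Y\times I$.

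Next I would build the diffeomorphism. We forget the attaching curves, which may undergo handleslides, crossovers or birth-deaths at the finitely many codimension-1 times and are not needed for a map of surfaces. We only need an invariant vector field on $\Sigma_*$ that projects to $\partial_t$. I would take any lift $\omega$ tangent to $\Sigma_*$ with $dt(\omega)=1$ and replace it by $\nu=\tfrac12(\omega+d\tilde\tau(\omega))$. This still satisfies $dt(\nu)=1$ and is tangent to $\Sigma_*$ because $\Sigma_*$ is invariant. Flowing $\nu$ from time $0$ to time $1$ gives an equivariant diffeomorphism $d\colon\Sigma_0\to\Sigma_1$. An alternative is to flow along the gradient-like fields $v_t$ themselves, since each $\Sigma_t$ meets every flow line once. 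I prefer the averaged lift because it is manifestly equivariant, and it matches the construction in Lemma~\ref{lem:sym_isotopy_extension} with the curve constraints removed.

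For well-definedness, suppose we have two choices: families $\{\Sigma_t\}$ and $\{\Sigma'_t\}$ with the same endpoints, and lifts $\nu$ and $\nu'$. Convexity of the fibers, in the $C^\infty_R$ model, gives a homotopy $\Sigma_{t,u}$ rel endpoints through real separating surfaces. On $Y\times I\times I$ with $\hat\tau(x,t,u)=(\tau(x),t,u)$, I would build an invariant vector field tangent to $\bigcup\Sigma_{t,u}$ with $dt=1$ and $du=0$, interpolating $\nu$ and $\nu'$. Averaging again preserves all the constraints. Its time-one flows give an equivariant isotopy from $d$ to $d'$, exactly as in Lemma~\ref{lem:sym_6.20}.

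I expect the main obstacle to be the first step near the codimension-1 parameter values. There the splitting $C_{01}\cup C_{23}$ changes, and one must check that a single real surface can be chosen continuously across the bifurcation. For example, at a $\Z/2$ birth-death placed in $C_{01}$ or at a crossover time, $\Gamma(f_t,v_t)$ changes combinatorially, yet a regular neighbourhood boundary pushed by the symmetrizing flow $\psi_{1/2}$ of Lemma~\ref{lem:real splittings contractible} should vary continuously. Separability rules out the genuinely bad case $C_{fix}\neq\emptyset$, so I would verify continuity locally near each bifurcation time and then patch the pieces together.
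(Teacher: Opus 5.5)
Your construction is the paper's. The paper proves this lemma by saying it follows as in Lemma~\ref{lem:sym_6.21}: choose a path $\Sigma_t\in\Sigma^R(f_t,v_t)$ with the given endpoints using contractibility of the fibres (Lemma~\ref{lem:real splittings contractible}), integrate a $\tilde\tau$-invariant lift of $\partial_t$, and get uniqueness up to equivariant isotopy from a two-parameter family of surfaces as in Lemma~\ref{lem:sym_6.20}. Working only on $\Sigma_*$ and forgetting the curves is a sensible adjustment. At a codimension-1 gradient the $\alpha$- and $\beta$-curves may be tangent, so the transversality hypothesis of Lemma~\ref{lem:sym_isotopy_extension} is not available.

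However, you have misread the hypothesis, and the step you flag as the main obstacle would fail. The condition $(f_t,v_t)\in\mathcal{FV}_1\Yt$ for every $t$ means the whole path lies inside one codimension-1 stratum, so its bifurcation type is constant along $I$. It does not cross codimension-1 walls at finitely many times.

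Under your reading the statement would be false. A generic path crossing a $\Z/2$ index 1-2 birth-death wall consists of separable gradients at every time. Yet the real separating surfaces on the two sides have different genus: they differ by a generalized $\Z/2$-stabilization (Proposition~\ref{prop: codim 1 to heegaard moves}). So there is no continuously varying surface and no diffeomorphism $\Sigma_0\to\Sigma_1$. A crossover time cannot occur in such a family at all. There the flow line from the index-2 point $\tau(p)$ to the index-1 point $p$ would have to cross an invariant separating surface against $v$, so $\Sigma^R(f_{t_0},v_{t_0})=\emptyset$ and the gradient is not separable.

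So contractibility of each fibre does not by itself give a locally trivial family over $I$, and your proposed continuity of the pushed regular-neighbourhood boundary is false at exactly the walls you name. What you need, and what does hold, is local triviality along the stratum. The degenerate orbit or tangency moves continuously with a fixed assignment to $C_{01}$ and $C_{23}$. Hence any $\Sigma\in\Sigma^R(f_{t_0},v_{t_0})$ stays transverse to $v_t$ and separating for $t$ near $t_0$. Replace your last paragraph with this observation; the rest of your argument then coincides with the paper's.
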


\begin{proof}
This follows just as in the proof of \Cref{lem:sym_6.21}.
\end{proof}

\begin{lemma}[Symmetric version of Lemma 6.24 in \cite{JTZ_naturality_mapping_class_groups}]\label{lem:sym_6.24}
Let $A: I\rightarrow\mathcal{FV}_{0}^\tau(Y)$ be a loop based at $(f_0, v_0)$. Let $\mathcal{H}$ be a symmetric diagram for $(f_0, v_0)$. The loop A induces an equivariant diffeomorphism $d: \mathcal{H}\rightarrow\mathcal{H}$. If A is null-homotopic in $\mathcal{FV}_{0}^\tau(Y)$, then $d$ is equivariantly isotopic to $\id_{\mathcal{H}}$.
\end{lemma}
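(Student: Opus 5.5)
The plan is to follow the proof of \cite[Lemma~6.24]{JTZ_naturality_mapping_class_groups}, making every choice equivariant. First I construct $d$. Along the loop $A(t)=(f_t,v_t)$, $t\in I$, every pair is Morse--Smale and hence separable. By \Cref{lem:real splittings contractible} the fibres $\Sigma^R(f_t,v_t)$ are non-empty and contractible, so I can choose a smooth family of real separating surfaces $\Sigma_t\in\Sigma^R(f_t,v_t)$ with $\Sigma_0=\Sigma_1=\Sigma$, where $\Sigma$ is the surface of $\mathcal{H}$. Since the loop stays in $\mathcal{FV}_0^\tau(Y)$, there are no bifurcations. The critical points, the stable and unstable manifolds, and the graph $\Gamma(f_t,v_t)$ therefore all move by isotopy, and a spanning tree $T$ is carried along the loop. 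This yields a smooth family of real diagrams $\mathcal{H}_t$ with $\mathcal{H}_0=\mathcal{H}$ and $\mathcal{H}_1=d(\mathcal{H})$. Applying \Cref{lem:sym_isotopy_extension} gives an equivariant isotopy $d_t$, and I set $d=d_1$. By \Cref{lem:sym_6.21} the isotopy class of $d$ is independent of the choices of $\Sigma_t$ and $d_t$. The only subtle point is the spanning tree: the monodromy of the loop may permute the edges of $\Gamma$, so I would work with the overcomplete diagram $H(f,v,\Sigma)$, to which $d$ restricts setwise.

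Next, suppose $A$ is null-homotopic, via $A_{s}(t)=(f_{t,s},v_{t,s})$ with $A_{0}=A$, $A_1$ the constant loop, and every $A_s$ based at $(f_0,v_0)$. I choose a two-parameter family of real separating surfaces $\Sigma_{t,s}\in\Sigma^R(f_{t,s},v_{t,s})$ with $\Sigma_{0,s}=\Sigma_{1,s}=\Sigma$ and $\Sigma_{t,1}=\Sigma$. This is possible because $\Sigma^R(f,v)\cong C^\infty_R(\Sigma)$ is convex: I fix the choices on $\partial(I\times I)$ and extend over the square by the patching/convex-combination argument from \Cref{lem:sym_6.21}. The reason the combination stays real is that the relevant time-difference functions are anti-invariant, and convex combinations of anti-invariant functions are anti-invariant. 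This produces a homotopy $\mathcal{H}_{t,s}$ of real diagrams, relative to the endpoints $t\in\{0,1\}$, between the family $\mathcal{H}_t$ and the constant family.

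Then \Cref{lem:sym_6.20} shows that $d$ is equivariantly isotopic, through equivariant diffeomorphisms taking $\mathcal{H}$ to $\mathcal{H}$, to the diffeomorphism induced by the constant family, which is $\id$. To conclude that $d$ is equivariantly isotopic to $\id_{\mathcal H}$ as a diffeomorphism of diagrams, I restrict the isotopy to $\Sigma$.

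I expect the main obstacle to be the two-parameter extension of the separating surfaces with prescribed boundary values. What has to be checked is that $\Sigma^R(f,v)$ varies locally trivially over $\mathcal{FV}_0^\tau$, so that contractibility of the fibres actually lets one fill the square. I would handle this as JTZ do: use the flow-time parametrisation $d_{\Sigma,\Sigma'}$ to identify nearby fibres, and note that this identification is $\tau$-compatible because the flow of a real vector field anticommutes with $\tau$ (as in the proof of \Cref{lem:real splittings contractible}).
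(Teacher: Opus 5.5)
Your proposal is correct and follows essentially the same route as the paper: you lift the based null-homotopy to a two-parameter family of real separating surfaces that equals $\Sigma$ on three sides of the square, using contractibility of $\Sigma^R(f,v)$. This gives a homotopy of real diagrams, relative to the endpoints, between the loop's family and the constant one, and you then apply Lemma~\ref{lem:sym_6.20}. Your extra care about constructing $d$, the spanning-tree monodromy (handled via the overcomplete diagram), and local triviality of the family of separating surfaces only fills in points the paper leaves implicit; it does not change the argument.
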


\begin{proof}
Let $L: I \times I \to \mathcal{FV}_0^\tau(Y)$ be a null-homotopy of $A$. Since the fiber $\Sigma^R(f,v)$ is contractible, we can lift $L$ to a 2-parameter family of symmetric surfaces $\Sigma_{t,u}$, which is constant equal to the base surface $\Sigma$ when $t\in\{0,1\}$ or $u=1$.

This induces a family of symmetric diagrams $\mathcal{H}_{t,u}$, providing a homotopy between the path $P_0=\{\mathcal{H}_{t,0}\}$ (induced by $A$) and the constant path $P_1=\{\mathcal{H}_{t,1}\}$.

Let $d_0, d_1$ be the induced equivariant diffeomorphisms. We have $d_0=d$ and $d_1$ is induced by the constant path, so $d_1$ is equivariantly isotopic to $\id_{\mathcal{H}}$. By Lemma~\ref{lem:sym_6.20}, $d_0$ and $d_1$ are equivariantly isotopic. Therefore, $d$ is equivariantly isotopic to $\id_{\mathcal{H}}$.
\end{proof}

Next, we translate the codimension 1 bifurcations to moves between real Heegaard diagrams. There are moves appearing in both $\Z/2$ and trivial orbits.

\begin{figure}[h]
        \def\svgwidth{.9\linewidth}
\begingroup%
  \makeatletter%
  \providecommand\color[2][]{%
    \errmessage{(Inkscape) Color is used for the text in Inkscape, but the package 'color.sty' is not loaded}%
    \renewcommand\color[2][]{}%
  }%
  \providecommand\transparent[1]{%
    \errmessage{(Inkscape) Transparency is used (non-zero) for the text in Inkscape, but the package 'transparent.sty' is not loaded}%
    \renewcommand\transparent[1]{}%
  }%
  \providecommand\rotatebox[2]{#2}%
  \newcommand*\fsize{\dimexpr\f@size pt\relax}%
  \newcommand*\lineheight[1]{\fontsize{\fsize}{#1\fsize}\selectfont}%
  \ifx\svgwidth\undefined%
    \setlength{\unitlength}{779.52755906bp}%
    \ifx\svgscale\undefined%
      \relax%
    \else%
      \setlength{\unitlength}{\unitlength * \real{\svgscale}}%
    \fi%
  \else%
    \setlength{\unitlength}{\svgwidth}%
  \fi%
  \global\let\svgwidth\undefined%
  \global\let\svgscale\undefined%
  \makeatother%
  \begin{picture}(1,0.30909091)%
    \lineheight{1}%
    \setlength\tabcolsep{0pt}%
    \put(0,0){\includegraphics[width=\unitlength,page=1]{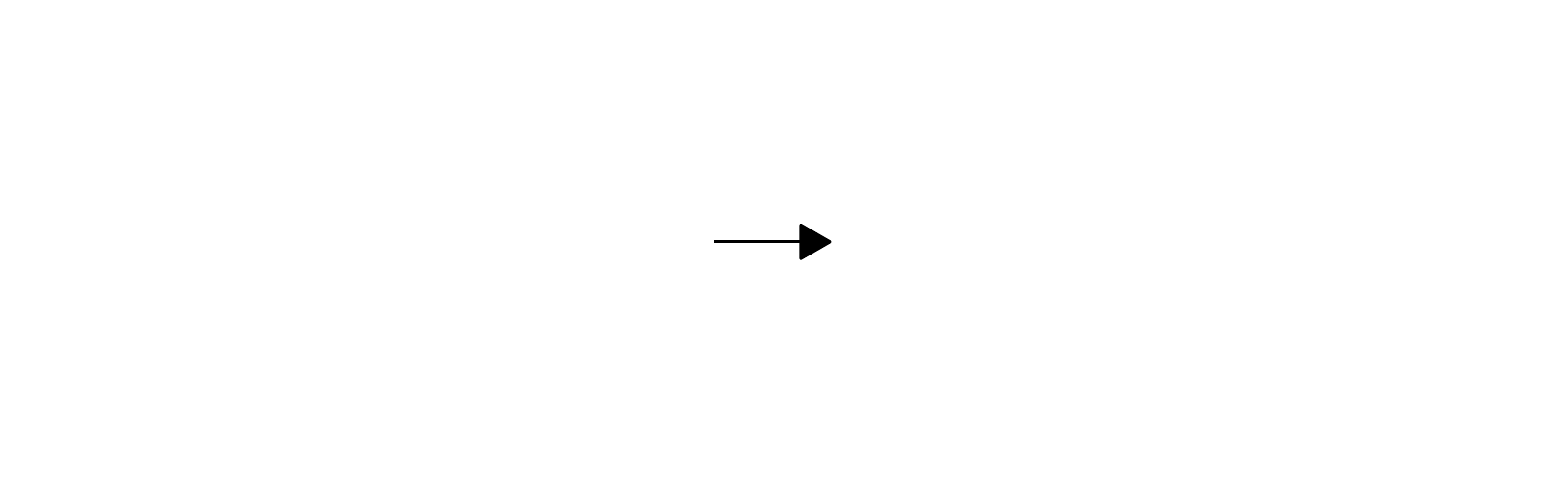}}%
    \put(0.48624023,0.18433567){\color[rgb]{0,0,0}\makebox(0,0)[t]{\smash{\begin{tabular}[t]{c}{\small$(2,3)$}\end{tabular}}}}%
    \put(0.05134849,0.1929394){\color[rgb]{0,0,0}\makebox(0,0)[t]{\smash{\begin{tabular}[t]{c}{\small$\alpha_1$}\end{tabular}}}}%
    \put(0,0){\includegraphics[width=\unitlength,page=2]{generalized_handleslide.pdf}}%
    \put(0.15101516,0.17283334){\color[rgb]{0,0,0}\makebox(0,0)[t]{\smash{\begin{tabular}[t]{c}{\small$\alpha_2$}\end{tabular}}}}%
    \put(0.05897278,0.10840449){\color[rgb]{0,0,0}\makebox(0,0)[t]{\smash{\begin{tabular}[t]{c}{\small$a$}\end{tabular}}}}%
    \put(0,0){\includegraphics[width=\unitlength,page=3]{generalized_handleslide.pdf}}%
    \put(0.61322726,0.1929394){\color[rgb]{0,0,0}\makebox(0,0)[t]{\smash{\begin{tabular}[t]{c}{\small$\alpha_1'$}\end{tabular}}}}%
  \end{picture}%
\endgroup%

            \caption{A generalized real handleslide of type $(m, n) = (2,3)$.}
        \label{fig:generalized_handleslide}
        \end{figure}

\begin{defn}\label{def:(m,n) handleslide}
    A real Heegaard diagram $(\Sigma', \bm \alpha', \bm \beta', \t')$ is obtained from $(\Sigma, \bm \alpha, \bm \beta, \t)$ by a \emph{generalized real handleslide of type $(m,n)$} if there are curves $\alpha_1, \alpha_2 \in \alphas$, a curve $\alpha_1'\in \alphas'$, and an embedded arc $a \in \Sigma$ so that 
    \begin{enumerate}
        \item $\bm \alpha'_1 \smallsetminus \alpha'  = \bm \alpha\smallsetminus \alpha_1$;
        \item $\partial a \sub \alpha_2$ and its interior is disjoint from $\bm \alpha$;
        \item there is a regular neighborhood $N$ of $\alpha_2 \cup a$ so that $\partial N = \alpha_1 \cup \alpha_1' \cup \Tilde{\alpha}_2$, where $ \Tilde{\alpha}_2$ is a small parallel pushoff of $\alpha_2$, and the interior of $N$ is disjoint from $\bm \alpha\cup \bm \alpha' \smallsetminus \{\alpha_2\}$ and 
        \item if $\alpha_2 \smallsetminus \partial a = \alpha_2^0 \cup \alpha^1_2$ so that $\alpha_2^0 \cup a$ is parallel to $\alpha_1$ and $\alpha_2^1 \cup a$ is parallel to $\alpha_1'$, then $|\alpha_2^0 \cap \bm \beta| = m$ and $|\alpha_2^1 \cap \bm \beta| = n$.
    \end{enumerate}
    Symmetric conditions hold for the beta curves. See \Cref{fig:generalized_handleslide}.
\end{defn}

\begin{figure}[h]
        \def\svgwidth{.8\linewidth}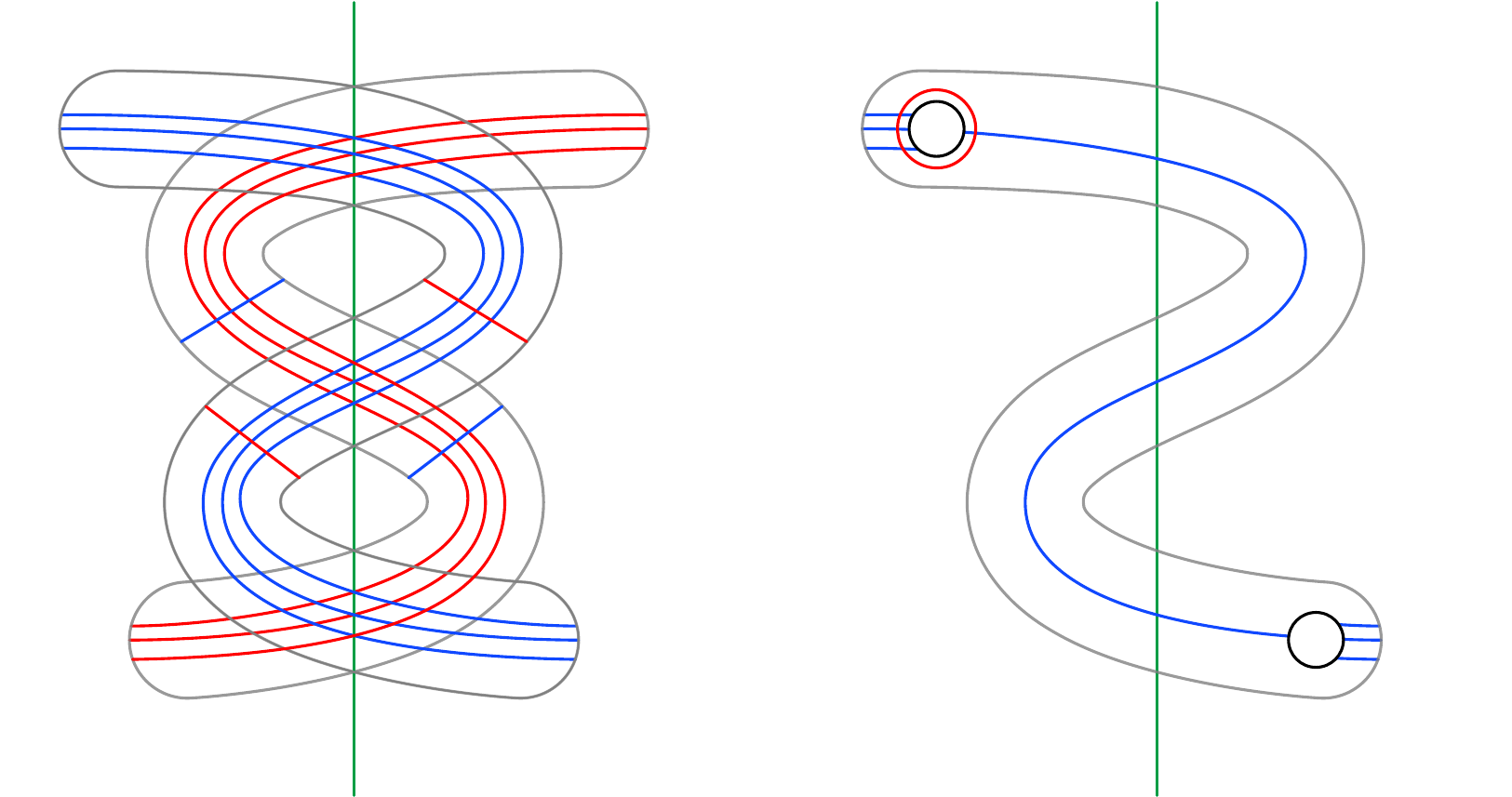
            \caption{A $\Z/2$-stabilization of type $(k, \ell, c) = (3,11,3)$.}
        \label{fig:Z2_stab}
        \end{figure}

\begin{defn}\label{def: Z/2 (k,l) stabilization}
    A real Heegaard diagram $(\Sigma', \bm \alpha', \bm \beta', \t')$ is obtained from $(\Sigma, \bm \alpha, \bm \beta, \t)$ by a \emph{generalized $\Z/2$-stabilization of type $(k,\ell, c)$} if there is a disk $D \sub \Sigma$ and a punctured torus $T \sub \Sigma'$ as well as curves $\alpha \in \bm \alpha'$ and $\beta \in \bm\beta'$ so that:
    \begin{enumerate}
        \item $D \not = \t'(D)$ and the intersection of $\partial D$ and $\t(\partial D)$ is transverse. Similarly for $T$ and $\t'(T)$;
        \item $\Sigma \smallsetminus D = \Sigma' \smallsetminus T$;
        \item $\bm \alpha \smallsetminus D = \bm \alpha' \smallsetminus T$, $\bm \beta \smallsetminus D = \bm \beta' \smallsetminus T$;
        \item $\bm \alpha \cap D$ and $\bm \beta \cap D$ consist of $\ell$ and $k$ arcs, respectively, and each component of $\bm \alpha \cap D$ intersects each component of $\bm \beta \cap D$ transversely in a single point;
        \item $\alpha$ and $\beta$ are contained in $T$ and they intersect in a single point;
        \item $(\bm \alpha \smallsetminus \alpha) \cap T$ consists of $\ell$ parallel arcs, each intersecting $\beta$ transversely;
        \item  $(\bm \beta \smallsetminus \beta) \cap T$ consists of $k$ parallel arcs, each intersecting $\alpha$ transversely;
        \item the intersection $|D\cap \t(D)|$ consists of $c$ components; 
        \item for each component of $\bm \alpha \cap D$, there is a corresponding component of $\bm \alpha'\cap T$ with the same endpoints, and similarly for the beta curves.
    \end{enumerate}
    These of course impose symmetric relations on $\t(D)$, $\t'(T),$ $\t'(\alpha),$ and $\t'(\beta)$. See \Cref{fig:Z2_stab}.
\end{defn}
\begin{rem}\label{rem: failure of separablility}
    We emphasize that $D$ and $\t(D)$ are generally not disjoint (similarly for $T$ and $\t'(T)$); it is for this reason we need to keep track of a third parameter, $c$.  See \Cref{fig:Z2_stab} for an example. 

    We also note that this operation technically depends on the choice of disk $D$; by replacing $D$ and $T$ with $\t(D)$ and $\t'(T)$, the same description above would specify a $\Z/2$-stabilization of type $(\ell, k, c)$ rather than of type $(k, \ell, c)$. For the sake of consistency, we will always label the 1-handle of $T$ by $A$ and $\tau'(T)$ by $B$ (or, in the case of multiple $\Z/2$-stabilization, the label for the $T$ handle will always come first alphabetically). Therefore, $k$ denotes the number of beta curves which crosses over the tube labeled by $A$.
\end{rem}

Now, let us turn to stabilizations appearing in trivial orbits.

\begin{defn}\label{def: trivial k-stabilizations}
    A real Heegaard diagram $(\Sigma', \bm \alpha', \bm \beta', \t')$ is obtained from $(\Sigma, \bm \alpha, \bm \beta, \t)$ by a \emph{generalized $\{1\}$-stabilization of type $(k)$} if there is a disk $D \sub \Sigma$ and a punctured torus $T \sub \Sigma'$ as well as curves $\alpha \in \bm \alpha'$ and $\beta \in \bm\beta'$ so that:
    \begin{enumerate}
        \item $D = \t(D)$, $T = \t'(T)$, $\t'(\alpha) = \beta$;
        \item $\Sigma \smallsetminus D = \Sigma' \smallsetminus T$;
        \item $\bm \alpha \smallsetminus D = \bm \alpha' \smallsetminus T$ and $\bm \beta \smallsetminus D = \bm \beta' \smallsetminus T$;
        \item $\bm \alpha \cap D$ consists of $k$ arcs and each component of $\bm \alpha \cap D$ intersects each component of $\bm \beta \cap D$ transversely in a single point;
        \item $\alpha$ and $\beta$ are contained in $T$ and intersect in a single point;
        \item $(\bm \alpha \smallsetminus \alpha) \cap T$ consists of $k$ parallel arcs, each intersecting $\beta$ transversely;
        \item for each component of $\bm \alpha \cap D$, there is a corresponding component of $\bm \alpha'\cap T$ with the same endpoints, and similarly for the beta curves.
    \end{enumerate}
    These, of course, impose symmetric relations the beta curves. See \Cref{fig:triv_stab} for an example.
\end{defn}

\begin{figure}[h]
        \def\svgwidth{.8\linewidth}
\begingroup%
  \makeatletter%
  \providecommand\color[2][]{%
    \errmessage{(Inkscape) Color is used for the text in Inkscape, but the package 'color.sty' is not loaded}%
    \renewcommand\color[2][]{}%
  }%
  \providecommand\transparent[1]{%
    \errmessage{(Inkscape) Transparency is used (non-zero) for the text in Inkscape, but the package 'transparent.sty' is not loaded}%
    \renewcommand\transparent[1]{}%
  }%
  \providecommand\rotatebox[2]{#2}%
  \newcommand*\fsize{\dimexpr\f@size pt\relax}%
  \newcommand*\lineheight[1]{\fontsize{\fsize}{#1\fsize}\selectfont}%
  \ifx\svgwidth\undefined%
    \setlength{\unitlength}{779.52755906bp}%
    \ifx\svgscale\undefined%
      \relax%
    \else%
      \setlength{\unitlength}{\unitlength * \real{\svgscale}}%
    \fi%
  \else%
    \setlength{\unitlength}{\svgwidth}%
  \fi%
  \global\let\svgwidth\undefined%
  \global\let\svgscale\undefined%
  \makeatother%
  \begin{picture}(1,0.26545455)%
    \lineheight{1}%
    \setlength\tabcolsep{0pt}%
    \put(0,0){\includegraphics[width=\unitlength,page=1]{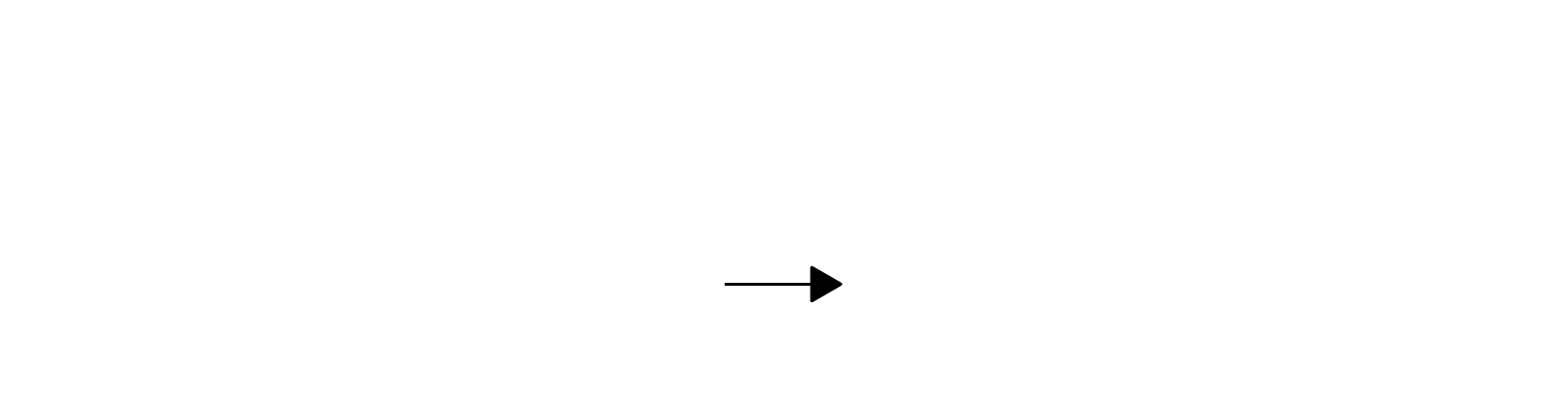}}%
    \put(0.49321661,0.11387999){\color[rgb]{0,0,0}\makebox(0,0)[t]{\smash{\begin{tabular}[t]{c}{\small$(4)$}\end{tabular}}}}%
    \put(0,0){\includegraphics[width=\unitlength,page=2]{triv_stab.pdf}}%
    \put(0.69231307,0.1144573){\color[rgb]{0,0,0}\makebox(0,0)[t]{\smash{\begin{tabular}[t]{c}{$F$}\end{tabular}}}}%
    \put(0,0){\includegraphics[width=\unitlength,page=3]{triv_stab.pdf}}%
    \put(0.83876959,0.13525377){\color[rgb]{0,0,0}\rotatebox{-180}{\makebox(0,0)[t]{\smash{\begin{tabular}[t]{c}{$\reflectbox{$F$}$}\end{tabular}}}}}%
    \put(0,0){\includegraphics[width=\unitlength,page=4]{triv_stab.pdf}}%
    \put(0.07738964,0.22595014){\color[rgb]{0,0,0}\makebox(0,0)[t]{\smash{\begin{tabular}[t]{c}{$D$}\end{tabular}}}}%
    \put(0.61617759,0.22595014){\color[rgb]{0,0,0}\makebox(0,0)[t]{\smash{\begin{tabular}[t]{c}{$T$}\end{tabular}}}}%
  \end{picture}%
\endgroup%

            \caption{A generalized $\{1\}$-stabilization of type $(4)$.}
        \label{fig:triv_stab}
        \end{figure}

\begin{defn}\label{def:foot_swap}
    A real Heegaard diagram $(\Sigma', \bm \alpha', \bm \beta', \tau')$ is obtained from $(\Sigma, \bm \alpha, \bm \beta, \t)$ by a \emph{crossover of type $(k,\ell)$} if there are twice-punctured disks $P \sub \Sigma$ and $P'\sub \Sigma'$ as well as distinguished curves $\alpha \in \bm \alpha$, $\alpha' \in \bm \alpha'$ such that:
    \begin{enumerate}
        \item $\Sigma \smallsetminus P = \Sigma' \smallsetminus P'$;
        \item $\bm \alpha \smallsetminus P = \bm \alpha' \smallsetminus P'$, $\bm \beta \smallsetminus P = \bm \beta' \smallsetminus P'$;
        \item $P$ and $P'$ are fixed setwise by $\t$ and $\t'$ (and therefore necessarily intersect $C$ in an arc);
        \item $\partial P = S \cup A \cup B$ where $S$ is fixed set-wise by $\t$, $A$ and $B$ are exchanged, and $\alpha$ and $\beta = \t(\alpha)$ are push-offs of $A$ and $B$ (similarly for $\partial P' = S'\cup A'\cup B'$);
        \item $(\bm \alpha \smallsetminus \alpha) \cap P$ consists of two collections of parallel arcs from $S$ to $B$ avoiding $C$ of size $k$ and $\ell$ respectively, each intersecting $\beta$ transversely once. The arcs $(\bm \beta \smallsetminus \beta) \cap P$ are symmetric;
        \item $(\bm \alpha' \smallsetminus \alpha') \cap P'$ consists of two collections of parallel arcs from $S'$ to $B'$  of size $k$ and $\ell$,  respectively, each intersecting $C$ transversely once, and intersecting $\beta'=\tau(\alpha')$ transversely once. The arcs $(\bm \beta' \smallsetminus \beta') \cap P'$ are symmetric;
        \item for each arc of $(\bm \alpha \smallsetminus \alpha) \cap P$, there is a corresponding component of $(\bm \alpha' \smallsetminus \alpha') \cap P'$ with the same endpoints, and similarly for the beta curves.
    \end{enumerate}
    See \Cref{fig:crossover} for an example. 
\end{defn}

\begin{figure}[h]
        \def\svgwidth{.8\linewidth}
\begingroup%
  \makeatletter%
  \providecommand\color[2][]{%
    \errmessage{(Inkscape) Color is used for the text in Inkscape, but the package 'color.sty' is not loaded}%
    \renewcommand\color[2][]{}%
  }%
  \providecommand\transparent[1]{%
    \errmessage{(Inkscape) Transparency is used (non-zero) for the text in Inkscape, but the package 'transparent.sty' is not loaded}%
    \renewcommand\transparent[1]{}%
  }%
  \providecommand\rotatebox[2]{#2}%
  \newcommand*\fsize{\dimexpr\f@size pt\relax}%
  \newcommand*\lineheight[1]{\fontsize{\fsize}{#1\fsize}\selectfont}%
  \ifx\svgwidth\undefined%
    \setlength{\unitlength}{779.52755906bp}%
    \ifx\svgscale\undefined%
      \relax%
    \else%
      \setlength{\unitlength}{\unitlength * \real{\svgscale}}%
    \fi%
  \else%
    \setlength{\unitlength}{\svgwidth}%
  \fi%
  \global\let\svgwidth\undefined%
  \global\let\svgscale\undefined%
  \makeatother%
  \begin{picture}(1,0.26181818)%
    \lineheight{1}%
    \setlength\tabcolsep{0pt}%
    \put(0,0){\includegraphics[width=\unitlength,page=1]{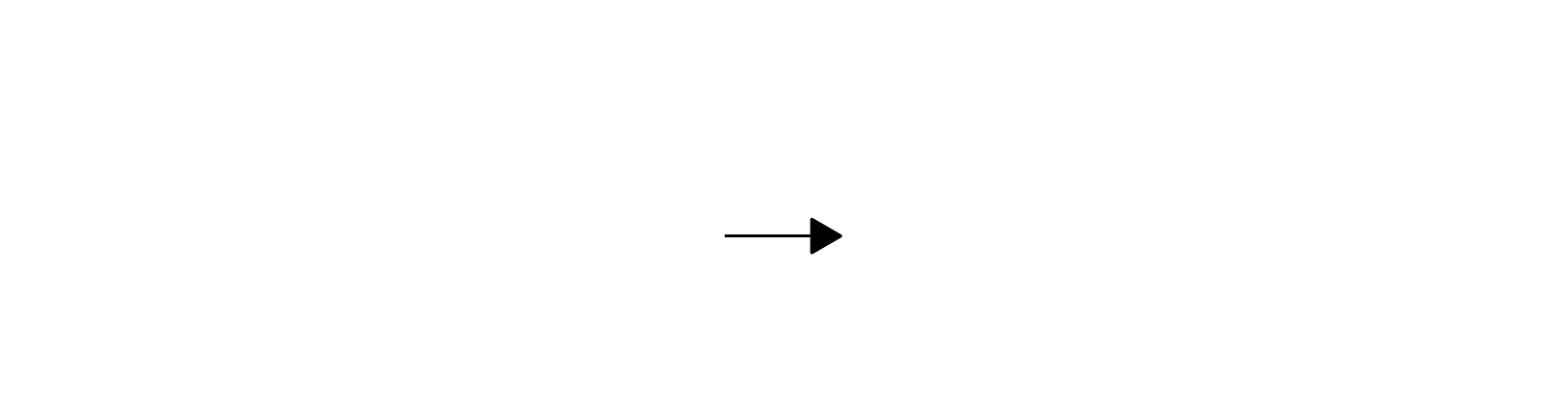}}%
    \put(0.49321661,0.1408874){\color[rgb]{0,0,0}\makebox(0,0)[t]{\smash{\begin{tabular}[t]{c}{\small$(3,2)$}\end{tabular}}}}%
    \put(0,0){\includegraphics[width=\unitlength,page=2]{crossover.pdf}}%
    \put(0.09266543,0.22074477){\color[rgb]{0,0,0}\makebox(0,0)[t]{\smash{\begin{tabular}[t]{c}{$P$}\end{tabular}}}}%
    \put(0.63145338,0.22074478){\color[rgb]{0,0,0}\makebox(0,0)[t]{\smash{\begin{tabular}[t]{c}{$P'$}\end{tabular}}}}%
  \end{picture}%
\endgroup%

            \caption{A crossover of type $(3,2)$.}
        \label{fig:crossover}
        \end{figure}

\begin{prop}\label{prop: codim 1 to heegaard moves}
    Let $ \{(f_t, v_t)\}_{t \in [-1,1]}$ be a generic 1-parameter family of real functions and real gradient-like vector fields on $\Yt$ that has a bifurcation at $t = 0$. If the bifurcation at $t = 0$ is not an index 1-2 birth-death nor a real quasi-transversal orbit of tangency of 1-dimensional submanifolds, then there is a real separating surface $\Sigma \in \Sigma^R(f_0,v_0)$. There is an $\ep > 0$ so that $\Sigma \pitchfork v_t$ for all $|t| < \ep.$ Moreover, for all $x \in (-\ep, 0)$ and $y \in (0, \ep)$ the following hold.

    \begin{enumerate}
        \item $\Sigma \in \Sigma^R(f_x, v_x) \cap \Sigma^R(f_y, v_y)$;
        \item If the bifurcation consists of an index 0-1 birth-death and an index 2-3 birth-death (which always occur together), the two diagrams differ by adding or removing a redundant alpha curve and its corresponding beta curve;
        \item If the bifurcation is a $\Z/2$-orbit of tangency between $W^u(p)$ and $W^s(q)$ for $p \in C_1(f)$ and $q \in C_2(f)$, the diagrams are related by an isotopy creating or canceling two symmetric pairs of intersection points away from $C$;
        \item If the bifurcation is a $\{1\}$-orbit of tangency between $W^u(p)$ and $W^s(\tau(p))$ for $p \in C_1(f)$ and $\tau(p) \in C_2(f)$, the diagrams are related either by an isotopy creating or canceling a pair of intersection points on $C$ or the diagrams are related by an isotopy in which a symmetric pair of intersection points are created or canceled near an intersection point on $C$;
        \item If the bifurcation is a $\Z/2$-orbit of tangency between $\Z/2 \cdot W^u(p)$ and $\Z/2 \cdot W^s(q)$ for $p$ and $q$ in $C_1(f)$, the diagrams are related by a generalized real handleslide. 
    \end{enumerate}
    
    If the bifurcation is an index 1-2 birth-death, then $\Sigma \in \Sigma^R(f_x, v_x)$ and there is a surface $\Sigma' \in \Sigma^R(f_y, v_y)$ such that diagrams $(\Sigma, \bm \alpha, \bm \beta) = H(f_x, v_x, \Sigma)$ and $(\Sigma', \bm \alpha', \bm \beta') = H(f_y, v_y, \Sigma')$ are related by either a generalized $\Z/2$-stabilization of type $(k, \ell, c)$ if there are $\ell$ flows from index 1 critical points into one of the degenerate critical points $p$ (and therefore symmetrically $\ell$ flows from $\tau(p)$ to index 2 critical points), $k$ flows from $p$ to critical points of index 2 (and symmetric flows from index 1 critical points to $\tau(p)$), and $c$ flow lines between $p$ and $\t(p)$ or a generalized $\{1\}$-stabilization of type $(m)$ if there are $m$ flows from index 1 critical points to $p$ (and $m$ symmetric flows from $p$ to index 2 critical points). Index 1-2 deaths are similar, with the roles of $x$ and $y$ reversed.

    If the bifurcation is a real quasi-transversal orbit of tangency from $W^u(\tau(p))$ to $W^s(p)$ for a critical point $p$ of index 1, then $\Sigma \in \Sigma^R(f_x, v_x)$ and there is a surface  $\Sigma' \in \Sigma^R(f_y, v_y)$ such that the diagrams $(\Sigma, \bm \alpha, \bm \beta) = H(f_x, v_x, \Sigma)$ and $(\Sigma', \bm \alpha', \bm \beta') = H(f_y, v_y, \Sigma')$ are related by a crossover of type $(k,\ell)$.
\end{prop}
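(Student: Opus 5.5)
We will follow the proof of \cite[Proposition 6.25]{JTZ_naturality_mapping_class_groups}, splitting the bifurcations into those that admit a single real separating surface for all nearby parameters and those (the $\{1\}$-orbit birth-deaths and the tangencies of $1$-dimensional manifolds through $C$) that do not.

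\textbf{Separable bifurcations.} In the first case $(f_0,v_0)$ is separable, so \Cref{lem:real splittings contractible} gives some $\Sigma \in \Sigma^R(f_0,v_0)$. Transversality $\Sigma \pitchfork v_t$ is an open condition and $\Sigma$ is compact, so it persists for $|t|<\ep$. Since the critical set moves continuously and the partition $C_{01}\cup C_{23}$ is locally constant away from $t=0$, $\Sigma$ also separates $(f_x,v_x)$ and $(f_y,v_y)$; this is (1).

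\textbf{Reading off the moves.} Items (2)--(5) come from following how the traces $W^u(p)\cap\Sigma$ and $W^s(q)\cap\Sigma$ change as $t$ passes through $0$.
\begin{itemize}
\item For a $\Z/2$-orbit bifurcation, the changes near the bifurcation and near its $\tau$-image are the unreal ones of \cite{JTZ_naturality_mapping_class_groups}, applied simultaneously and symmetrically. This gives the redundant curve pairs in (2), the symmetric pairs of bigons in (3), and the generalized handleslides in (5).
\item For (4), we pass to the slice picture of \Cref{lem: real MS is generic} and \Cref{fig:codim1_isotopy}. A type 1 tangency is a tangency of the $\alpha$-curve with $C$, so a pair of intersection points on $C$ is created or cancelled. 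A type 2 tangency is a tangency with the transverse arc $D$; by the cubic normal form in (QT-2)(b), this creates or cancels a symmetric pair of points next to an existing point on $C$.
\end{itemize}

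\textbf{Non-separable bifurcations.} For these no surface works on both sides, so we take $\Sigma \in \Sigma^R(f_x,v_x)$ and construct $\Sigma'$ by hand.

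For an index 1-2 birth in a $\Z/2$-orbit, we place $p$ in $C_{01}$. We let $\Sigma'$ be $\Sigma$ with a tube added along the short flow line from the index 1 point to the index 2 point near $p$, together with the symmetric tube. The new $\alpha$ curve is the meridian-like trace of $W^u$ of the new index 1 point, and the new $\beta$ curve is its image under $\tau$. The flow lines into and out of $p$ become the $\ell$ arcs crossing the new $\beta$ and the $k$ arcs crossing the new $\alpha$. The $c$ flow lines between $p$ and $\tau(p)$ account for the overlap of $D$ and $\tau(D)$. A final equivariant isotopy, using \Cref{lem:real splittings contractible} for $(f_y,v_y)$, makes the result $\tau$-invariant.

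For a $\{1\}$-orbit birth at $p\in C$, we use the normal form of \Cref{thm:real singularities trivial orbit}, namely $x_1^2-x_2^2+x_3^3+\lambda x_3$. After the birth, the tube is attached along the segment of $C$ joining the two new critical points, so it is itself $\tau$-invariant. We then check that $\tau$ swaps the new $\alpha$ and $\beta$ curves and count the $m$ arcs. Deaths follow by reversing time.

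\textbf{Crossover.} This is the step we expect to be the main obstacle. Here $W^s(p)$ (or $W^u(\tau(p))$) is $1$-dimensional and crosses $C$. The $\alpha$-curve of $p$ bounds a disk in the $\alpha$ handlebody and moves across the fixed set, but $\Sigma$ must contain $C$. We would proceed as follows.
\begin{itemize}
\item Take $P$ to be a $\tau$-invariant neighborhood in $\Sigma$ of the arc of $C$ together with the feet $A$ and $B=\tau(A)$ of the handle of $p$. We choose this neighborhood to meet $C$ in an arc, as condition (3) of \Cref{def:foot_swap} requires.
\item Construct $\Sigma'$ from $\Sigma$ by an equivariant finger move supported near $P$ that pushes the feet across $C$. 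For this we use the isotopy extension of \Cref{lem:sym_isotopy_extension} applied to the family of separating surfaces on each side.
\item The $k$ and $\ell$ arcs that run from $S$ to $B$ are the flow lines of $\alpha$ curves passing near the two sides of the tangency. Their new positions intersect $C$ once each, as in \Cref{fig:crossover}.
\end{itemize}
The delicate point is verifying that this can be done equivariantly with $\Sigma'$ separating for $v_y$. We would check this in the local model of the type 1 and type 2 slices.
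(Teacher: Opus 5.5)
\paragraph{Verdict.} Your treatment of the separable bifurcations agrees with the paper: the persistence of $\Sigma\pitchfork v_t$, items (1)--(5), and the type~1/type~2 slice analysis for (4). The genuine gap is the crossover. You leave it open, and the mechanism you sketch cannot close it.

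\paragraph{Why the crossover sketch fails.}
\begin{itemize}
\item \Cref{lem:sym_isotopy_extension} integrates a \emph{smooth} family of real diagrams with $\alphas_t\pitchfork\betas_t$. So it only moves surfaces within a chamber and cannot carry $\Sigma$ across $t=0$.
\item An equivariant ambient isotopy $\varphi$ of $Y$ preserves $C$, so it cannot push a foot (a region of $\Sigma\smallsetminus C$) across $C$. In $\varphi(\Sigma)$ the feet and the arcs $\varphi(\alpha_i)$ sit relative to $C$ exactly as before. Any change in how the $k+\ell$ arcs meet $C$ would have to come from the new flow, which you only assert.
\item The type~1/type~2 slice models concern tangencies between the $2$-dimensional $W^u(p)$ and $W^s(\tau(p))$. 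They do not apply to the $1$-dimensional $W^s(p)$ and $W^u(\tau(p))$ at issue here.
\item ``The two sides of the tangency'' does not by itself determine the $(k,\ell)$ split.
\end{itemize}

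\paragraph{What the paper does instead.} It builds a genuinely new surface on each side of the bifurcation.
\begin{itemize}
\item It attaches a tube along $W^s(p^t)$ to $\widetilde\Sigma_0=\partial N\bigl(\bigcup_{q\in C_0\cup C_1\smallsetminus\{p\}}W^s(q)\bigr)$. This gives non-invariant separating surfaces $\widetilde\Sigma_x$ and $\widetilde\Sigma_y$.
\item It makes them real by the reparametrized-flow trick from the proof of \Cref{lem:real splittings contractible}: rescale $v$ so that $\tau(\widetilde\Sigma)=\psi_1(\widetilde\Sigma)$, then take $\psi_{1/2}(\widetilde\Sigma)$.
\item After symmetrization, the arcs traced by the foot point $a^t$ for $t<0$ and for $t>0$ limit to a common point of $C$. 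A neighborhood of their union, together with the tubes $A,\tau(A)$ (respectively $A',\tau(A')$), gives $P$ (respectively $P'$).
\item The two points where the directional manifold $D(p,\gamma_1)\subset W^u(p)$ meets $\alpha_x$ split the crossings into $k$ and $\ell$.
\end{itemize}

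\paragraph{The stabilization steps also need repair.}
\begin{itemize}
\item \emph{$\{1\}$-orbit birth.} The new critical points are not on $C$; by \Cref{lem:hessian_corank}, nondegenerate critical points never lie on $C$. In the normal form $x_1^2-x_2^2+x_3^3+\lambda x_3$ they are $(0,0,\pm\sqrt{-\lambda/3})$. These form a $\Z/2$-orbit on a $\tau$-invariant arc transverse to $C$, so there is no ``segment of $C$ joining them.'' The paper again uses unreal surfaces on each side followed by flow-symmetrization, or an equivariant diffeomorphism matching $\Sigma_x$ and $\Sigma_y$ away from $N(p)$.
\item \emph{$\Z/2$-orbit birth.} The connecting flow line between the two new points near $p$ lies inside $Y_-$ and never meets $\Sigma$. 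The tubes must instead run along the $1$-dimensional $W^u$ of the new index-$2$ point near $p$. The symmetric tube runs along its $\tau$-image, the $1$-dimensional $W^s$ of the new index-$1$ point near $\tau(p)$. Both of these do cross $\Sigma$.
\end{itemize}
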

\begin{proof}
    Much of this follows the same strategy as \cite[Proposition 6.28]{JTZ_naturality_mapping_class_groups}, so we sketch the argument, pointing out where adaptations must be made in the symmetric setting.

    In Case (2), we consider the case of a birth, so that $f_0$ has a degenerate critical orbit $\{p, \tau(p)\}$ which splits into orbits $\{p^t_0,\tau(p^t_0)\}$ and $\{p^t_1, \tau(p^t_1)\}$ for $p^t_0 \in C_0(f)$ and $p^t_1\in C_1(f)$ and $t > 0$. For $\ep$ small, $\Sigma \pitchfork v_t$ for all $|t| < \ep$ and $p^t_0$ and $p^t_1$ are contained in $Y_-$ while $\tau(p^t_0)$ and $\tau(p^t_1)$ are contained in $Y_+$. The attaching set $(\bm \alpha', \bm \beta')$ contains exactly one more $\alpha$- and beta curve than $(\bm \alpha, \bm \beta)$, and the remaining curves are isotopic. The new $\alpha$-circle is isotopic to $W^{uu}(p_0)\cap \Sigma$ and the new $\beta$-circle is isotopic to $W^{ss}(\tau(p_0))\cap \Sigma$.

    In Cases (3) and (4), we simply consider $(\Sigma_t, \bm \alpha_t,  \bm \beta_t, \t_t) = H(f_t, v_t, \Sigma)$ for $t \in [x,y]$. At $t = 0$, we see a generic tangency. Tangencies in $\Z/2$-orbits behave exactly as in the unreal case, and tangencies in $\{1\}$-orbits as described in \Cref{subsec:codim 1 birfurcations}.
    
    Similarly, in Case (5), we fix $p$ and $q$ in $C_1(f_0)$. For all $|t| < \ep$, the surface $\Sigma$ is contained in $\Sigma^R(f_t, v_t)$. At $t = 0$, the union of the alpha curves $W^u(p) \cap \Sigma$ and  $W^u(q) \cap \Sigma$ has a regular neighborhood which is a pair of pants. The boundaries are isotopic to the $\alpha$-attaching curves before and after a generalized $(m, n)$-handleslide. The symmetric slide of beta curves occurs simultaneously.

    We turn now to index 1-2 birth-death bifurcations. We will address the case of births. Suppose that $p$ is a degenerate critical point appearing in a $\Z/2$-orbit which splits into $p^t_0 \in C_1(f_t)$ and $p^t_1 \in C_2(f_t)$; we will write $q$ for the symmetric degenerate critical point which splits into $q^t_0 \in C_1(f_t)$ and $q^t_1 \in C_2(f_t)$. Let us place $p$ in $C_{01}(f_0,v_0)$ and $q$ in $C_{23}(f_0,v_0)$. In this case, $(f_0, v_0)$ is separable, so we may choose an invariant splitting surface $\Sigma$. For small $\ep > 0$, we have that $p^t_0$ and $p^t_1$ are both contained in $Y_-$ and $q^t_0$ and $q^t_1$ are both contained in $Y_+$. The unstable manifold $W^u(q_0^y)$ intersects $\Sigma$ in two points, as does the stable manifold $W^s(p_1^y)$. Define 
    \begin{align*}
        \Sigma' = \partial (Y_-\cup N(W^u(p_0^y)) \cup W^s(q^y_1)).
    \end{align*}
    It is easy to see that the attaching sets for $\Sigma'$ are given exactly by the description of generalized stabilizations, as in \cite{JTZ_naturality_mapping_class_groups}. In the case of $\Z/2$-orbits of index 1-2 deaths, we simply reverse the roles of $x$ and $y$.

    Next, we consider the case that $p$ is a degenerate critical point which is fixed by the involution. As noted in \Cref{rem: failure of separablility}, the gradient $(f_0, v_0)$ is separable in the unreal setting (as in \cite[Definition 6.1]{JTZ_naturality_mapping_class_groups}), but not separable in the sense of Definition~\ref{def:separable} (that is, not separable by a real surface). Fix $\ep > 0$ small and choose $- \ep < x < 0$ and $0 < y < \ep$. Both $(f_x, v_x)$ and $(f_y, v_y)$ are separable, so we can choose real separating surfaces $\Sigma_x \in \Sigma^R(f_x, v_x)$ and $\Sigma_y \in \Sigma^R(f_y, v_y)$. Unlike the previous case, there is no clear way to construct $\Sigma_y$ from $\Sigma_x$. Nevertheless, we can choose an equivariant diffeomorphism of $Y$ which takes $\Sigma_x \smallsetminus N(p)$ to $\Sigma_y \smallsetminus N(p)$. In this way, we can construct separating surfaces which differ by a $\{1\}$-stabilization. Alternatively, we can choose an unreal splitting surface $\widetilde\Sigma_x$, and construct $\widetilde\Sigma_y$ as in the unreal case (or as we did for $\Z/2$-stabilizations). Then, as in the proof of \Cref{lem:real splittings contractible}, we can reparametrize $v$ and obtain real splitting surfaces $\Sigma_x$ and $\Sigma_y$ from $\widetilde\Sigma_x$ and $\widetilde\Sigma_y$ by pushing them forward by the gradient flow.

    Finally, we consider the case of a real quasi-transversal orbit of tangency $\gamma$ from $W^u(\tau(p_0))$ to $W^s(p_0)$ for a nondegenerate critical point $p^t$ of index 1 (see \Cref{fig:codim1_bifurcations} (f)). (We emphasize that, in the unreal case, such an orbit fails the quasi-transversal condition.) Define $\widetilde{\Sigma}_0$ to be the boundary of a small regular neighborhood of 
    \begin{align*}
        \bigcup \left \{ W^s(q): q \in C_0(f_0) \cup C_1(f_0) \smallsetminus \{p_0\} \right\}.
    \end{align*}
   This surface is not invariant, nor is it separating, as the index 1 critical point $p^t$ is contained in $Y_+$ .  

    Fix $\ep > 0$ small, and some $x \in (-\ep, 0)$. The manifold $W^s(p^x)$ intersects $\widetilde{\Sigma}_0$ in a pair of points $a^x$ and $b^x$. Let $\gamma_1^x$ and $\gamma_2^x$ be the associated flow lines. As $t$ varies from $-\ep$ to $0$, one of these points ($a^x$, say) traces out an arc $\tilde\lambda_1:= a^t$ for $t \in (-\ep, 0)$ in $\widetilde{\Sigma}_0$; likewise, for $t\in (0, \ep)$ we have a path $\tilde\lambda_2 :=a^t$ in $\widetilde{\Sigma}_0$. Provided $\ep$ is sufficiently small, we may assume that $\tilde\lambda_1$ and $\tilde\lambda_2$ do not intersect the stable or unstable manifolds of any critical points other than the unstable manifolds of $\tau(p_0)$. 
    
    Define a separating surface
    \begin{align*}
        \widetilde{\Sigma}_x = \partial ( Y_- \cup N(W^s(p^x))).
    \end{align*}
    The neighborhood $N(W^s(p^x))$ intersects $\Sigma$ in two disks, $D_{a^x}$ and $D_{b^x}$. Pick a neighborhood $N(\tilde\lambda_1)$ which contains $D_{a^x}$ (shrinking  $N(W^s(p^x))$ if necessary). By construction, we have that $\widetilde{\Sigma}_x = (\widetilde{\Sigma}_0 \smallsetminus(D_{a^x} \cup D_{b^x}))\cup \widetilde{A}$, where $\widetilde{A}$ is a tube (annulus) around $W^s(p^x)$. 
    
    In a similar manner, for $y \in (0,\ep)$, define
    \begin{align*}
        \widetilde{\Sigma}_y = \partial ( Y_- \cup N(W^s(p^y))),
    \end{align*}
    and so that $\widetilde{\Sigma}_y = (\widetilde{\Sigma}_0 \smallsetminus(D_{a^y} \cup D_{b^y}))\cup \widetilde A'$, where $\widetilde A'$ is a tube around $W^s(p^y)$. 

    Once again, we obtain invariant surfaces $\Sigma_x$ and $\Sigma_y$ as in the proof of \Cref{lem:real splittings contractible}, by pushing $\widetilde{\Sigma}_x$ and $\widetilde{\Sigma}_y$ forward under the flow of $v_t$ (reparametrizing $v_t$ if necessary). Similarly, the annuli $\widetilde A$ and $\widetilde A'$ can be pushed forward to annuli $A$ and $A'$ in $\Sigma_x$ and $\Sigma_y$. In the symmetric surfaces, the paths $\lambda_1$ and $\lambda_2$ limit to a point in $C$, and can be completed to a single connected path $\lambda$ in $\Sigma_x$ and $\Sigma_y$. A  neighborhood of this path meets both $A$ and $A'$: define the first pair of pants $P$ to be the union of this neighborhood together with $A$ and $\tau(A)$ and the second $P'$ to be a neighborhood of $\lambda$ together with $A'$ and $\tau(A')$. See \Cref{fig:crossover_splitting_surface}. 
    
    Let $\alpha_x := \Sigma_x \cap W^u(p^\mu)$ and $\beta_x = \t(\alpha_x)$. Define $\alpha_y$ and $\beta_y$ similarly. The directional manifold $D(p^\mu, \gamma_1^x) \sub W^u(p^\mu)$ intersects $\alpha_x$ in a pair of points; in particular, this gives a partition of the intersection points of $\alpha_x$ with $\bm \beta$ into sets of size $k$ and $\ell$. The situation is symmetric for $\Sigma_y$. In particular, the diagrams $H(f_{x}, v_x,\Sigma_x)$ and $H(f_{y}, v_y,\Sigma_y)$ differ by a crossover of type $(k, \ell).$ 
\end{proof}

\begin{figure}[h]
\def\svgwidth{.8\linewidth}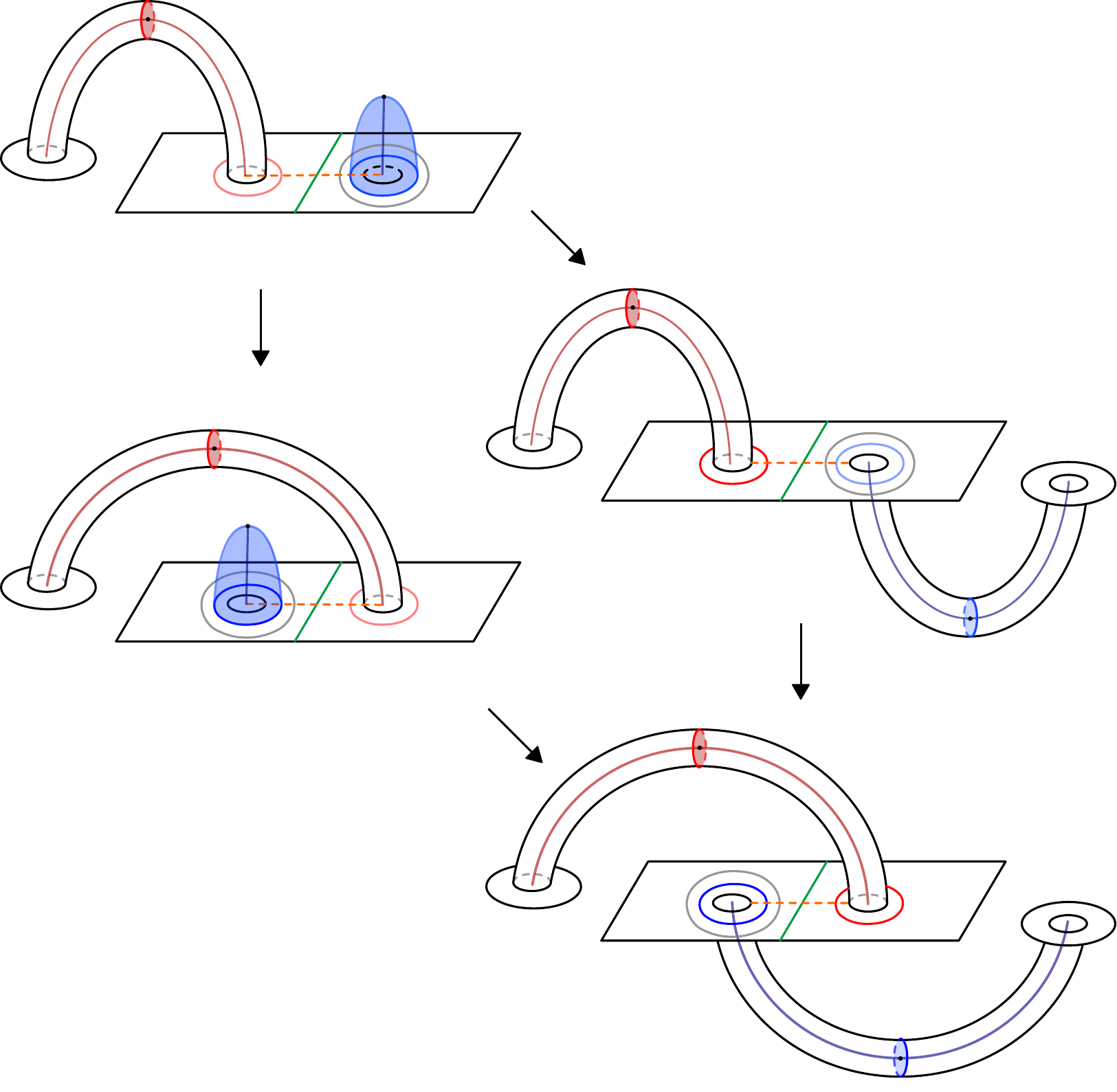
            \caption{The construction of the two invariant splitting surfaces on either side of a bifurcation associated to a real quasi-transversal orbit of tangency.}
        \label{fig:crossover_splitting_surface}
        \end{figure}

\begin{rem}
    A real Heegaard splitting of $\Yt$ determines a framing of the fixed set $C$. The main theorem of \cite{nagase} states that any two real Heegaard splittings which determine the same framing on $C$ are $\Z/2$-stably diffeomorphic. It is not hard to see that $\{1\}$-stabilizations change the framing on $C$ by $\pm 1$. We have defined $\{1\}$-stabilization of type $k$ to include both the positive and negative $\{1\}$-stabilizations we considered in \cite{guth_manolescu2025real}, and exchanging the roles of $\alpha$ and $\beta$ changes the sign of the stabilization. 
\end{rem}

Essentially the same is true in 2-parameter families near codimension-1 bifurcations.  

\begin{prop}
    Let $\{(f_\mu,v_\mu) \in \FV\Yt: \mu \in \R^2 \}$ be a generic 2-parameter family with a codimension-1 bifurcation at $\mu = 0$. Let $S$ be the (one-dimensional) stratum of the bifurcation passing through the origin. Then, there is an $\ep > 0$ so that $D_\ep^2 \smallsetminus S$ consists of two components $C_1$ and $C_2$, and for every $x \in C_1$ and $y \in C_2$, the same conclusion holds as in \Cref{prop: codim 1 to heegaard moves}.
\end{prop}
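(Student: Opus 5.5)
The plan is to reduce to the one-parameter statement, Proposition~\ref{prop: codim 1 to heegaard moves}, by a local product structure near the stratum $S$.

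\textbf{Step 1: straighten the bifurcation stratum.} Since the family is generic and the bifurcation at $\mu=0$ is of codimension one, the bifurcation set near $0$ is a smooth embedded curve $S$ through the origin, and the family is transverse to the codimension-one stratum $\FV_1\Yt$. The local models are the miniversal deformations of \Cref{thm:real singularities Z/2 orbit} and \Cref{thm:real singularities trivial orbit} for saddle-nodes, and the defining conditions (QT-1), (QT-2) for real quasi-transversal tangencies. After shrinking to $D^2_\ep$ and applying a diffeomorphism of the parameter disk, we may assume $S=\{\mu_1=0\}$. Then $D^2_\ep\smallsetminus S$ has exactly two components $C_1=\{\mu_1<0\}$ and $C_2=\{\mu_1>0\}$.

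\textbf{Step 2: choose a common surface away from the stratum.} Each component $C_i$ is contractible and contains only Morse--Smale pairs, and these are separable. By \Cref{lem:real splittings contractible}, the fibers $\Sigma^R(f_\mu,v_\mu)$ are nonempty and contractible, so we can choose a continuous family of real separating surfaces over each $C_i$. If the bifurcation is not a $\{1\}$-orbit phenomenon (index 1-2 birth-death on $C$, or a crossover), the separability argument at $\mu=0$ extends along $S$ by openness of transversality. This gives a single $\Sigma$ separating all $(f_\mu,v_\mu)$ with $|\mu|<\ep$.

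\textbf{Step 3: transfer the one-parameter conclusion.} For $x\in C_1$ and $y\in C_2$, join $x$ to $x'=(-\delta,0)$ inside $C_1$ and $y$ to $y'=(\delta,0)$ inside $C_2$. Along these paths, \Cref{lem:sym_6.21} together with \Cref{lem:sym_isotopy_extension} give equivariant diffeomorphisms, isotopic to the identity, between the diagrams at $x$ and $x'$, and likewise between those at $y$ and $y'$. These are well defined up to isotopy by \Cref{lem:sym_6.20}. Along the segment $\{\mu_2=0\}$, which is a generic one-parameter family crossing $S$ once transversely, \Cref{prop: codim 1 to heegaard moves} relates the diagrams at $x'$ and $y'$ by the appropriate move. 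The move types are preserved by isotopy diagrams, so conjugating by these diffeomorphisms gives the same conclusion for $x$ and $y$.

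\textbf{Main obstacle.} The delicate point is the $\{1\}$-orbit cases: the $\{1\}$-stabilization and the crossover. There the surfaces $\Sigma$ and $\Sigma'$ are built separately on each side, via the push-forward of an unreal splitting surface under a reparametrized flow. One must check that this construction can be carried out continuously in $\mu_2$. The approach I would try first is to perform the construction of $\widetilde\Sigma_x$ and $\widetilde\Sigma_y$ from the proof of \Cref{prop: codim 1 to heegaard moves} in families over $\mu_2$: take regular neighborhoods of $W^s(p^\mu)$, which vary smoothly, then apply the invariant time-rescaling $\widetilde T$ of \Cref{lem:real splittings contractible}, which depends continuously on parameters. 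The combinatorial type $(k,\ell)$ or $(m)$ is locally constant on $C_i$ because the counts of flow lines are, and the directional manifold varies continuously. This shows the move's type is constant along $S$ near $0$, which completes the argument.
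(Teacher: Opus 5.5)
\textbf{Gap in Step 3.} Your reduction works when one surface serves on both sides: isotopies, handleslides, and simultaneous index 0-1/2-3 births. There the curves $W^u(p^\mu)\cap\Sigma$ move by isotopy on the fixed $\Sigma$ as $\mu$ ranges over a chamber. So $[H(f_\mu,v_\mu,\Sigma)]$ is constant on $C_1$ and on $C_2$, and Step 3 goes through; the diffeomorphisms are not even needed.

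The gap is Step 3 in the remaining cases: $\Z/2$- and $\{1\}$-orbit index 1-2 births, and crossovers.
\begin{itemize}
\item A stabilization or crossover edge requires the two embedded surfaces to coincide literally in $Y$ outside the disk and punctured torus (resp.\ the pairs of pants), with the same curves there.
\item Transport inside the chambers only gives $H_x \xrightarrow{d} H_{x'} \to H_{y'} \xrightarrow{d'} H_y$, with $d$ and $d'$ unrelated equivariant diffeomorphisms isotopic to the identity in $Y$.
\item Passing to isotopy diagrams quotients only by isotopies of curves on a fixed surface, not by ambient diffeomorphisms that move $\Sigma$. So this does not make $H_x$ and $H_y$ differ by a single stabilization or crossover.
\item You would need $d$ and $d'$ to agree away from the local modification, as in a distinguished rectangle of type (4). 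That is not automatic.
\end{itemize}
Nor can you apply Proposition~\ref{prop: codim 1 to heegaard moves} directly to a transverse path from $x$ to $y$. Its $\varepsilon$ depends on the path and need not reach the endpoints, and uniformity over the disk is exactly what the statement asks for.

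\textbf{Error in Step 2.} The claim that a single $\Sigma$ separates all $(f_\mu,v_\mu)$ with $|\mu|<\varepsilon$ is false for a $\Z/2$-orbit index 1-2 birth. On the birth side, the new index-2 point near $p\in C_{01}$ lies in $Y_-$. This is why the one-parameter statement only gives $\Sigma\in\Sigma^R(f_x,v_x)$ in that case.

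\textbf{The fix.} Use the proof of Proposition~\ref{prop: codim 1 to heegaard moves} rather than its statement; this is all the paper's one-line proof says. Every construction there is pointwise in the parameter and uses only conditions at the bifurcation value that are open:
\begin{itemize}
\item transversality to $v_\mu$ of a fixed $\Sigma\in\Sigma^R(f_0,v_0)$, or, in the $\{1\}$-orbit cases, of the auxiliary unreal surface $\widetilde\Sigma_0$;
\item the location of the critical points in $Y_\pm$;
\item the flow-line counts that fix the types $(k,\ell,c)$, $(m)$, $(k,\ell)$, which are constant along $S$ near $0$ because no other bifurcation occurs there.
\end{itemize}
Choose $\varepsilon$ so that these hold on all of $D^2_\varepsilon$. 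Then, for the given $x$ and $y$, build the surfaces directly at those parameter values: attach tubes, or form the crossover pair of pants, and push forward along the reparametrized flow as in Lemma~\ref{lem:real splittings contractible}. No transport within a chamber and no continuity in $\mu_2$ is needed, so your ``main obstacle'' disappears.
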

\begin{proof}
    This follows from essentially the same proof as \Cref{prop: codim 1 to heegaard moves}.
\end{proof}

\begin{defn}\label{def:gen-rectangle}
    Let $H_i = (\Sigma_i, [\bm \alpha_i], [\bm \beta_i], \t_i)$ be real isotopy diagrams for $1 \le i \le 4$. A \emph{generalized distinguished rectangle} is a graph
    \begin{align*}
        \begin{tikzcd}[ampersand replacement = \&]
            H_1 \ar[r,"e"]\ar[d,"f"] \& H_2\ar[d,"g"] \\
            H_3 \ar[r,"h"] \& H_4
        \end{tikzcd}
    \end{align*}
    satisfying any of the conditions (1)-(4) of \Cref{def:distinguished-rect} with ``simple $\cO$-stabilization'' replaced with ``generalized $\cO$-stabilization'', and allowing for one extra possibility:
    \begin{enumerate}
        \item[(5)] Both $e$ and $h$ are crossovers of type $(k, \ell)$ while $f$ and $g$ are both real handleslides. In this case, $\Sigma_1 = \Sigma_3$ and $\Sigma_2 = \Sigma_4$ and $f$ and $g$ are required to be the same map.
    \end{enumerate}
    We also allow overcomplete diagrams.
\end{defn}

\begin{prop}\label{prop:sym_prop_6.32}
    Suppose that $\{(f_\mu, v_\mu) \in \FV_{\le 1}\Yt: \mu \in \R^2\}$ is a generic 2-parameter family and let 
    \begin{align*}
        V_1 = \{\mu\in\R^2: (f_\mu, v_\mu) \in \FV_1\Yt\}
    \end{align*}
    be the codimension-1 bifurcation set. Let $a$ be an arc in $V_1$ with endpoints $\mu_0$ and $\mu_1$ and let $R$ be a small rectangular neighborhood of $a$. Let $b_0$ and $b_1$ be the boundary arcs of $R$ which are transverse to $V_1$ and oriented so that they have the same intersection sign with $V_1$. Let $\partial b_i = y_i - x_i$ for $i \in \{0,1\}$. As this family is generic, we may assume that at $x_i$ and $y_i$ the gradients $(f_{x_i}, v_{x_i})$ and $(f_{y_i}, v_{y_i})$ are real separable; fix such separating surfaces $\Sigma_{i} \in \Sigma^R(f_{x_i}, v_{x_i})$.

    Suppose we are given surfaces $\Sigma_i'\in \Sigma^R(f_{y_i},v_{y_i})$ for $i \in \{0,1\}$ such that:
    \begin{enumerate}
        \item $\Sigma_i'$ is obtained from $\Sigma_i$ by an $\cO$-stabilization if $(f_{\mu_i},v_{\mu_i})$ is an index 1-2 birth,
        \item $\Sigma_i'$ is obtained from $\Sigma_i$ by a crossover if $(f_{\mu_i},v_{\mu_i})$ is a non-quasi-transversal orbit of tangency,
        \item $\Sigma_i = \Sigma_i'$ otherwise.
    \end{enumerate}
    Then the isotopy diagrams $H_1 = [H(f_{x_0}, v_{x_0}, \Sigma_0)]$, $H_2 = [H(f_{y_0}, v_{y_0}, \Sigma_0')]$, $H_3 = [H(f_{x_1}, v_{x_1}, \Sigma_1)]$, and $H_4 = [H(f_{y_1}, v_{y_1}, \Sigma_1')]$, fit into a generalized distinguished rectangle
    \begin{align*}
        \begin{tikzcd}[ampersand replacement = \&]
            H_1 \ar[r,"e"]\ar[d,"f"] \& H_2\ar[d,"g"] \\
            H_3 \ar[r,"h"] \& H_4
        \end{tikzcd}
    \end{align*}
    of type (2) if $(f_{\mu_i}, v_{\mu_i})$ is a handleslide, of type (4) if $(f_{\mu_i}, v_{\mu_i})$ is an index 1-2 birth-death, or type (5) if $(f_{\mu_i}, v_{\mu_i})$ is a non-quasi-transversal orbit of tangency (compare to \Cref{def:gen-rectangle}). For other types of bifurcations, we have a rectangle with arrows $e$ and $h$ representing either the identity or a simultaneous index 0-1/ index 2-3 birth-death, and $f = g$ a diffeomorphism.
\end{prop}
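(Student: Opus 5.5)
The plan is to follow the proof of \cite[Proposition 6.32]{JTZ_naturality_mapping_class_groups}, making each step equivariant with the tools of this section. First, over the rectangle $R$, I would produce a continuous family of real separating surfaces on the two halves $R \smallsetminus V_1$. The side of $R$ containing $x_0,x_1$ consists of real separable gradients, and so does the side containing $y_0,y_1$. Since the fibres $\Sigma^R(f,v)$ are non-empty and contractible by \Cref{lem:real splittings contractible}, I can choose a path of surfaces $\Sigma_t \in \Sigma^R(f_{c(t)},v_{c(t)})$ along an arc $c$ from $x_0$ to $x_1$ with the prescribed endpoints $\Sigma_0,\Sigma_1$. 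In the same way I choose $\Sigma'_t$ along an arc from $y_0$ to $y_1$ with endpoints $\Sigma'_0,\Sigma'_1$. By \Cref{lem:sym_6.21} and \Cref{lem:sym_6.23}, these paths induce equivariant diffeomorphisms $f\colon \Sigma_0\to\Sigma_1$ and $g\colon \Sigma_0'\to\Sigma_1'$. Each is well defined up to equivariant isotopy, carries the attaching curves to attaching curves up to isotopy, and gives the vertical arrows of the rectangle.

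Next I would check that the horizontal arrows $e,h$ are the moves prescribed by \Cref{prop: codim 1 to heegaard moves}. For this I apply the two-parameter version of that proposition (the proposition immediately following it) at every point of the arc $a$. This gives the same type of move, with the same combinatorial data ($(m,n)$, $(k,\ell,c)$, $(m)$ or $(k,\ell)$), all along $a$. The point is that $a$ contains no codimension-2 points, so the relevant critical points, flow lines and directional manifolds vary smoothly along $a$.

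The core step is compatibility: the surfaces should be chosen coherently across $V_1$ so that $g$ restricts to $f$ away from the region where the move happens. My plan is to build the surfaces along $a$ all at once.

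\begin{itemize}
\item \emph{Handleslides and isotopies.} A single real surface $\Sigma_\mu$ separates every $(f_\mu,v_\mu)$ for $\mu$ in a neighbourhood of $a$. Taking $\Sigma_t=\Sigma'_t$ gives $f=g$, which is a rectangle of type (2). In the isotopy and 0-1/2-3 birth-death cases it gives the rectangle with identity or birth-death arrows and $f=g$.
\item \emph{$\cO$-stabilizations.} I attach the tube $N(W^u(p_0^\mu))\cup W^s(q^\mu_1)$, or in the $\{1\}$ case use the equivariant push-forward construction, smoothly in $\mu\in a$. The symmetric isotopy extension lemma \Cref{lem:sym_isotopy_extension} then gives $f$ and $g$ agreeing off the stabilization region and carrying $\cO\times D$ to $\cO\times D'$ and $\cO\times T$ to $\cO\times T'$. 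This is type (4).
\item \emph{Crossovers.} I repeat the construction of $P$ and $P'$ from the proof of \Cref{prop: codim 1 to heegaard moves} in a family over $a$. The tubes around $W^s(p^\mu)$, the path $\lambda$ through $C$, and the directional-manifold partition all vary continuously. The resulting vertical maps are handleslides and diffeomorphisms agreeing on $\Sigma\smallsetminus P$. This is type (5), after absorbing any handleslides coming from the chosen spanning trees.
\end{itemize}

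Finally, \Cref{lem:sym_6.20} shows that the isotopy classes of $f$ and $g$ do not depend on these choices. This lets me replace the arbitrarily chosen paths $\Sigma_t,\Sigma'_t$ by the coherent ones just constructed.

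The main obstacle is the $\{1\}$-stabilization and crossover cases. There the degenerate gradient on $a$ is not separable by a real surface, so there is no single surface along $a$ to start from. I would first run the unreal construction of \cite{JTZ_naturality_mapping_class_groups} in the family, producing non-invariant surfaces $\widetilde\Sigma_\mu$ and $\widetilde\Sigma'_\mu$. I would then symmetrize them parametrically by the flow-reparametrization trick of \Cref{lem:real splittings contractible}. Since the rescaling function $\widetilde T$ depends smoothly on $\mu$ and is $\tau$-invariant, the resulting invariant surfaces should vary smoothly, with the local pieces $T$ and $P'$ varying compatibly.
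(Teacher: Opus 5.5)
Your proposal follows the same route as the paper. In the separable cases (handleslides, isotopies, $\Z/2$-stabilizations) you make \cite[Proposition 6.32]{JTZ_naturality_mapping_class_groups} equivariant. In the non-separable $\{1\}$-stabilization and crossover cases you build real separating surfaces on the two sides of the stratum that differ by the move (unreal surfaces pushed forward by the reparametrized flow), and you tie them to the given $\Sigma_i,\Sigma_i'$ with the equivariant diffeomorphisms of \Cref{lem:sym_6.21}. The only difference is cosmetic: you run the construction parametrically along all of $a$, whereas the paper does it only near the endpoints (at $b_i(\pm\epsilon)$) and leaves the propagation along $a$ to the JTZ argument.
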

\begin{proof}
    In the cases that the component of $V_1$ containing $a$ is not a $\{1\}$-stabilization or a crossover, this follows exactly as in \cite[Proposition 6.32]{JTZ_naturality_mapping_class_groups}; the gradients along $a$ are separable, and it is straightforward to verify that the relevant diagrams fit into distinguished rectangles. 

    In the case that $a$ corresponds to a $\{1\}$-stabilization, separability fails. Still, we can apply the same strategy as before. Fix parametrizations of the arcs $b_i = b_i(t)$ so that $b_i(t) = \mu_i$. For $\ep>0$ sufficiently small, $(f_{b_i(-\ep)}, v_{b_i(-\ep)})$ and $(f_{b_i(\ep)}, v_{b_i(\ep)})$ are both real separable, and we can choose real separating surfaces $\Sigma_{b_i(\pm\ep)}$ so that $\Sigma_{b_i(\ep)}$ is obtained from $\Sigma_{b_i(-\ep)}$ by attaching a tube around the descending manifold of the new index 1 critical point which is born at $\mu_i$. The previous lemma furnishes diffeomorphisms $\Sigma_{y_i} \cong \Sigma_{b_i(-\ep)}$ and $\Sigma_{x_i} \cong \Sigma_{b_i(\ep)}$. The same argument appearing in \cite[Proposition 6.32]{JTZ_naturality_mapping_class_groups} can be applied to show that this gives rise to a generalized distinguished rectangle.

    The same strategy applies to crossovers.
\end{proof}

Before moving to codimension-2 bifurcations, we note that Juh\'asz-Thurston-Zemke show that one may pass from over-complete diagrams to unreal diagrams ``without altering the relationships of the diagrams before and after the bifurcation in an essential way''. Their strategy holds in our case without change, since the behavior of index 2-3 birth-deaths is dictated by the behavior of index 0-1 birth-deaths by the symmetry. 

As in the codimension-0 case, sequences of real Heegaard moves can be translated to paths of real gradients. This also follows just as in \cite[Section 6.5]{JTZ_naturality_mapping_class_groups}, so we will not discuss it. 

\subsection{Codimension-2}\label{subsec:gradient-to-HD-codim2}

In this section, we turn to loops of real Heegaard moves. We shall consider generic 2-parameter families of real gradients $(f_\mu, v_\mu)$ on $\Yt$ with a codimension-2 singularity at $\mu = 0$. The bifurcation set will be denoted $S$. For $\ep>0$ sufficiently small, $(S \cap D_\ep^2) \smallsetminus \{0\}$ is a disjoint union of strata $S_1, \hdots, S_r$ which each have one boundary component at $0$ and the other on $S_\ep^1 = \partial D_\ep^2$. The regions between these strata will be called chambers $C_1, \hdots, C_r,$ labeled so that $C_i$ lies between $S_{i-1}$ and $S_i$ (here, we define $S_0 = S_r$). 

\begin{remark}
    Following \cite[Section 6.6]{JTZ_naturality_mapping_class_groups}, we will color-code these strata: a stratum $S_i$ will be colored purple if it corresponds to a real equivalence, black if it corresponds to a birth-death, green if it corresponds to a crossover, and dashed gray if it corresponds to a diffeomorphism.
\end{remark}

\begin{defn} (cf.\cite[Definition 6.36]{JTZ_naturality_mapping_class_groups})
    Say $\{(f_\mu, v_\mu): \mu \in \R^2\}$ is a generic 2-parameter family so that $(f_0, v_0) \in \FV_2\Yt$. For $\ep > 0$ as above, the \emph{link of the bifurcation at $0$} is an embedded polygonal curve $P \in D^2_\ep$ such that 
    \begin{enumerate}
        \item the bifurcation value $0$ lives in the interior of $P$;
        \item $P \pitchfork S$ and $|S_i \cap P| = 1$ for all $i$;
        \item each $C_i$ contains exactly 1 or 2 vertices of $P$.
    \end{enumerate}

    A \emph{surface-enhanced link} of the bifurcation at $p$ is a link $P$ together with a choice of surface $\Sigma_\mu \in \Sigma^R(f_\mu, v_\mu)$ for each vertex $\mu$ of $P$.
\end{defn}

If $C_i$ contains a single vertex of $P$, we denote it by $\mu_i$ and write $a_i$ for the edge of $P$ which intersects $S_i$. If $C_i$ contains two vertices, they are denotes $\mu_i$ and $\mu_i'$ ordered so that $\partial a_i = \mu_i' - \mu_i.$ In the case that $P$ is minimal, its vertices are $\mu_1, \hdots, \mu_r$ and its edges are $a_1, \hdots, a_r$. 

In the case that $P$ is not minimal, i.e., there is a chamber $C_i$ which contains two vertices of $P$, we will draw an extra ray in $C_i$ separating these two vertices (as in \cite{JTZ_naturality_mapping_class_groups}). We will label the edge between them by $a_i'$. On either side of the ray, we will choose two \emph{different} splitting surfaces which are related by a diffeomorphism. This is used in \cite{JTZ_naturality_mapping_class_groups} in the cases of simultaneous stabilizations and handleswaps, as separability fails. In our setting, this failure is much more frequent: indeed, we will need to include diffeomorphism strata any time there is a bifurcation that involves a $\{1\}$-stabilization or a crossover. For this reason, our links will usually not be minimal (see \cite[Definition 6.36]{JTZ_naturality_mapping_class_groups}). We will not emphasize this, departing from the convention in \cite{JTZ_naturality_mapping_class_groups}.

\begin{thm}\label{thm:codim2-to-heegaard-loops}
    Suppose $\{(f_\mu, v_\mu): \mu \in \R^2\}$ is a generic 2-parameter family so that $(f_0, v_0) \in \FV_2\Yt$. For all $\ep > 0$ sufficiently small, there is a surface-enhanced link $P \sub D_\ep^2$ of the bifurcation at $0$ so that the following hold:
    \begin{enumerate}
        \item For $i \in \{1, \hdots, r\}$, there is a point $x_i \in \partial a_i$ so that $\Sigma_{x_i}\pitchfork v_\mu$ for all $\mu \in a_i$;
        \item Consecutive isotopy diagrams $H_i$ and $H_{i+1}$ are related by the move corresponding to the type of stratum $S_i$;
        \item Each edge $a_i'$ induces a diffeomorphism $d_i:H_i \ra H_i'$ isotopic to the identity as in \Cref{lem:sym_6.21}.
    \end{enumerate}
    The surface-enhanced links for the bifurcations discussed in \Cref{subsec:codim 2 birfurcations} are described in the proof.
\end{thm}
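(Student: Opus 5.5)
The proof is a case-by-case analysis over the codimension-2 bifurcation types (A1)--(A4), (B1)--(B5), (C), (D1), (D2), (E1), (E2) listed in \Cref{subsec:codim 2 birfurcations}, following the template of \cite[Theorem 6.37]{JTZ_naturality_mapping_class_groups}. For each type I first use the local normal form or local model (the bifurcation diagrams in Figures~\ref{fig:A_codim2_bifurcations} through \ref{fig:E_codim2_bifurcations}, and \Cref{fig:a3_a4_bifurcations} for (D)) to fix $\ep$ so that $(S\cap D^2_\ep)\smallsetminus\{0\}$ consists of the strata $S_1,\dots,S_r$ listed there. I then choose a polygonal link $P$ meeting each $S_i$ once and transversely. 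In each chamber $C_i$ I place one vertex if the gradients in $C_i$ near both adjacent strata can be separated by a common real surface. Otherwise I place two vertices, separated by a dashed diffeomorphism ray.

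Property (1) comes from genericity and openness of transversality. Pick a vertex $x_i$ of the edge $a_i$ together with a surface $\Sigma_{x_i}\in\Sigma^R(f_{x_i},v_{x_i})$, which exists by \Cref{lem:real splittings contractible} because the gradients off $S$ are real separable. For a stratum of real equivalence, isotopy, or $\Z/2$-stabilization type, $\Sigma_{x_i}$ can be chosen transverse to $v_\mu$ along all of $a_i$, since these gradients remain separable after shrinking $a_i$. For $\{1\}$-stabilization and crossover strata, separability fails on $S_i$ itself. There I construct the surface at the other endpoint as in the proof of \Cref{prop: codim 1 to heegaard moves}: attach a tube along the descending manifold of the new index $1$ point (respectively along $W^s(p)$), and then symmetrize by pushing along a reparametrized flow. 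This is exactly why a diffeomorphism edge $a_i'$ is needed in each adjacent chamber.

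Property (2) then follows from \Cref{prop: codim 1 to heegaard moves} and its 2-parameter version, applied edge by edge. Property (3) follows from \Cref{lem:sym_6.21}: the gradients along $a_i'$ stay in $\FV_0$, and the space of real separating surfaces is contractible. Most cases are handled exactly as in \cite{JTZ_naturality_mapping_class_groups}, including all $\Z/2$-orbit phenomena, (A1), (A2), (B1)--(B3), (C) with a $\Z/2$-orbit, and (D1). In these cases I only need to record which strata become purple, black, or green, and to check that the unreal picture doubles symmetrically.

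The genuinely new cases are (A3), (A4), (B4), (B5), (C) with two $\{1\}$-orbits, (D2), (E1) and (E2). For each, I would draw the cross-sections of the stable and unstable manifolds near $C$ as in \Cref{fig:codim1_isotopy} and read off the sequence of crossovers, $\{1\}$-stabilizations, and handleslides met around the loop. For (E2), the directional manifold $D(p^\mu,\gamma)$ determines how the $(k,\ell)$ partition of a crossover changes as the loop crosses the secondary stratum. I expect (D2) to be the main obstacle. There the whole neighbourhood of the $A_4$ point lies on $C$, so no real surface separates $(f_0,v_0)$ or any nearby gradient on the two $\{1\}$-birth strata. I would try to use the explicit miniversal family $x^5+\mu_1x^3+\mu_2x+y^2-z^2$: build unreal separating surfaces in each chamber from the explicit descending manifolds, symmetrize them by the flow-reparametrization trick of \Cref{lem:real splittings contractible}, and check directly that the link consists of two $\{1\}$-stabilizations, one $\Z/2$-destabilization, and the handleslides and diffeomorphisms of \Cref{fig:simple_trade_loop}.
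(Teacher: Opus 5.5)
Your framework matches the paper's. You do a case-by-case analysis following \cite{JTZ_naturality_mapping_class_groups}. You use one real surface wherever the gradient is separable. Otherwise you take short arcs across each $\{1\}$-stabilization or crossover stratum, build the surfaces at their ends by tubing and symmetrizing as in the proof of \Cref{prop: codim 1 to heegaard moves}, and join the extra vertices by diffeomorphism edges via \Cref{lem:sym_6.21}. The gaps are in the case analysis itself, and this matters because the theorem also asserts what each link is.

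First, you file (B1)--(B3), and every (C) case containing a $\Z/2$-orbit, under ``exactly as in JTZ, check that the unreal picture doubles symmetrically.'' That holds only when every saddle-node lies in a $\Z/2$-orbit. Suppose the saddle-node in (B1)--(B3), or one of the two in (C), lies on $C$. A real separating surface contains $C$, so it cannot be transverse to $v_\mu$ there. These links therefore need the two-vertex construction, and there is no unreal picture to double. The $\{1\}$-orbit version of (B2) also contains a stratum with no unreal analogue. On the death side one perturbs to get a flow from $\tau(\bar p)$ to $\bar p$ through $C$. This produces a crossover of type $(m_1,m_2)$ that splits the $k$ handleslide strata into two groups, and a verbatim JTZ analysis would miss it.

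Second, for (D2), which you flag as the main obstacle, you are aiming at the wrong target: the link of an $A_4(\{1\})$ point is not the simple trade pentagon.
\begin{itemize}
\item For $x^5+\mu_1x^3+\mu_2x+y^2-z^2$, the bifurcation set near $0$ is the two half-lines of $\{\mu_2=0\}$ ($\{1\}$-orbit $A_2$ points) together with the half-parabola $20\mu_2=9\mu_1^2$, $\mu_1<0$ (a $\Z/2$-orbit of $A_2$ points).
\item So $r=3$, and there are no handleslide strata at all.
\item The moves are generalized: a $\Z/2$-destabilization of type $(k+\ell+1,\ell+2,1)$, and $\{1\}$-stabilizations of types $(k)$ and $(\ell+1)$.
\item These types are read off from three pieces of data: the $k$ and $\ell$ other flows into $p_1$ and $p_2$; the flow line joining $p_1$ and $\tau(p_2)$, with its $\tau$-image; and the $\tau$-invariant flow line joining $p_2$ and $\tau(p_2)$ through $C$.
\item Even for $k=\ell=0$, the moves of types $(1)$ and $(1,2,1)$ are not simple. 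So a direct check that the link is the pentagon of \Cref{fig:simple_trade_loop} would fail.
\end{itemize}
What you need from the local model is these flow-line counts, not surfaces. Indeed (D2) needs no special surface construction. The chambers and the $\Z/2$-stratum are separable, and only the two $\{1\}$-strata need the arc-plus-diffeomorphism treatment.

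The trade loop appears only in Section~\ref{sec:reducing moves}. There these generalized moves are resolved: the $c=1$ destabilization contributes a crossover, which is itself resolved into a $\{1\}$-stabilization, two handleslides and a $\{1\}$-destabilization. The six leftover strata are then glued at a point.
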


\begin{proof}
    As in \cite[Theorem 6.37]{JTZ_naturality_mapping_class_groups}, we ignore strata corresponding to index 0-1/ index 2-3 birth-deaths as well as those corresponding to tangencies between unstable manifolds of index 1 critical points and stable manifolds of index 2 critical points. 

    In the cases that $(f_0, v_0)$ is separable, all surfaces can be constructed from some $\Sigma \in \Sigma^R(f_0, v_0)$. When $(f_0, v_0)$ is not separable, we will construct the surfaces on either side of stabilization and crossover strata as in the proof of \Cref{prop: codim 1 to heegaard moves}. In these cases, for each stratum $S_i$, we will pick a small arc $a^{i \ra i+1}$ transverse to $S_i$ and add vertices $\partial a^{i \ra i+1} = \{\mu_i, \mu_{i+1}'\}$ to the link of the singularity. Since the gradient is not separable, the Heegaard splittings $\Sigma_{\mu_i}$ and $\Sigma_{\mu_i'}$ need not be identical, but are simply diffeomorphic according to \Cref{lem:sym_6.21}. Therefore, in these cases we are forced to add in a diffeomorphism stratum relating the two diagrams. Since these strata appear so frequently, we will not always draw them explicitly in the accompanying figures. \\

    \noindent \textbf{Type (A) bifurcations:} Bifurcations of type (A) involve simultaneous orbits of tangency, i.e. simultaneous handleslides and crossovers. Bifurcations of type (A1) and (A2) only involve real handleslides, and therefore necessarily appear in $\Z/2$-orbits, and the analysis is identical to that of \cite[Theorem 6.37]{JTZ_naturality_mapping_class_groups}. For this reason, we will focus on describing the links for bifurcations of types (A3) and (A4) and their corresponding Heegaard diagrams. When we refer to directional manifolds, we will typically not specify which flowline we are considering, as it should be clear from the context. The only bifurcation which involves both directional manifolds of the same critical point is $(A3)$ and in this case we will take care in distinguishing the two.

    \textbf{(A3):} Suppose $p^\mu_1$ and $p^\mu_2$ are critical points (not necessarily distinct) of index 1 and there is a $\{1\}$-orbit of tangency $\gamma_1^0$ from $\tau(p_1^0)$ to $p_1^0$ as well as one $\gamma_2^0$ from $\tau(p_2^0)$ to $p_2^0$. The bifurcation diagram $S$ consists of two smooth curves which intersect transversely at 0. Suppose there are $(k_1 + \ell_1)$ and $(k_2 + \ell_2)$ flows of index 2 critical points into $p_1^\mu$ and $p_2^\mu$, respectively (recall that the flow lines into $p_1^\mu$ are partitioned into two kinds by $D(p_1^\mu, \gamma_1^\mu)$ and flows into $p_2^\mu$ are partitioned by $D(p_2^\mu, \gamma_2^\mu)$). The strata $S_1$ and $S_3$ correspond to a crossover of type $(k_1,\ell_1)$ while $S_2$ and $S_4$ correspond to crossovers of type $(k_2, \ell_2)$. (See \Cref{def:foot_swap} and \Cref{fig:crossover} for an example of a crossover.) Note that when $p_1^\mu = p_2^\mu$, the flows $\gamma_1^0$ and $\gamma_2^0$ are still distinct, and we make use of both directional manifolds to produce the two different partitions.

    The link of the singularity consists of eight vertices, $\mu_i, \mu_i' \in C_i$ for $i \in \{1, 2, 3, 4\}$ obtained as follows. Choose short arcs $a^{i\ra i+1}$ from $C_i$ to $C_{i+1}$ transverse to $S_i$ (here, $i, i + 1$ are taken to be elements of $\Z/4$). We define the vertices of the link by $\partial a^{i\ra i+1} = \{\mu_i, \mu_{i+1}'\}$. As in the proof of \Cref{prop: codim 1 to heegaard moves}, we construct Heegaard diagrams $H_i, H_i'$ for $i \in \{1, 2, 3, 4\}$ so that $H_i$ and $H_{i+1}'$ differ by a crossover; the type of crossover is determined by the directional manifold of $p_1^\mu$ for $\mu \in a^{1\ra 2}$ and $p_2^\mu$ for $\mu \in a^{4\ra 1}$. See \Cref{fig:hd_a3}.

        \begin{figure}[h]
        \def\svgwidth{.8\linewidth}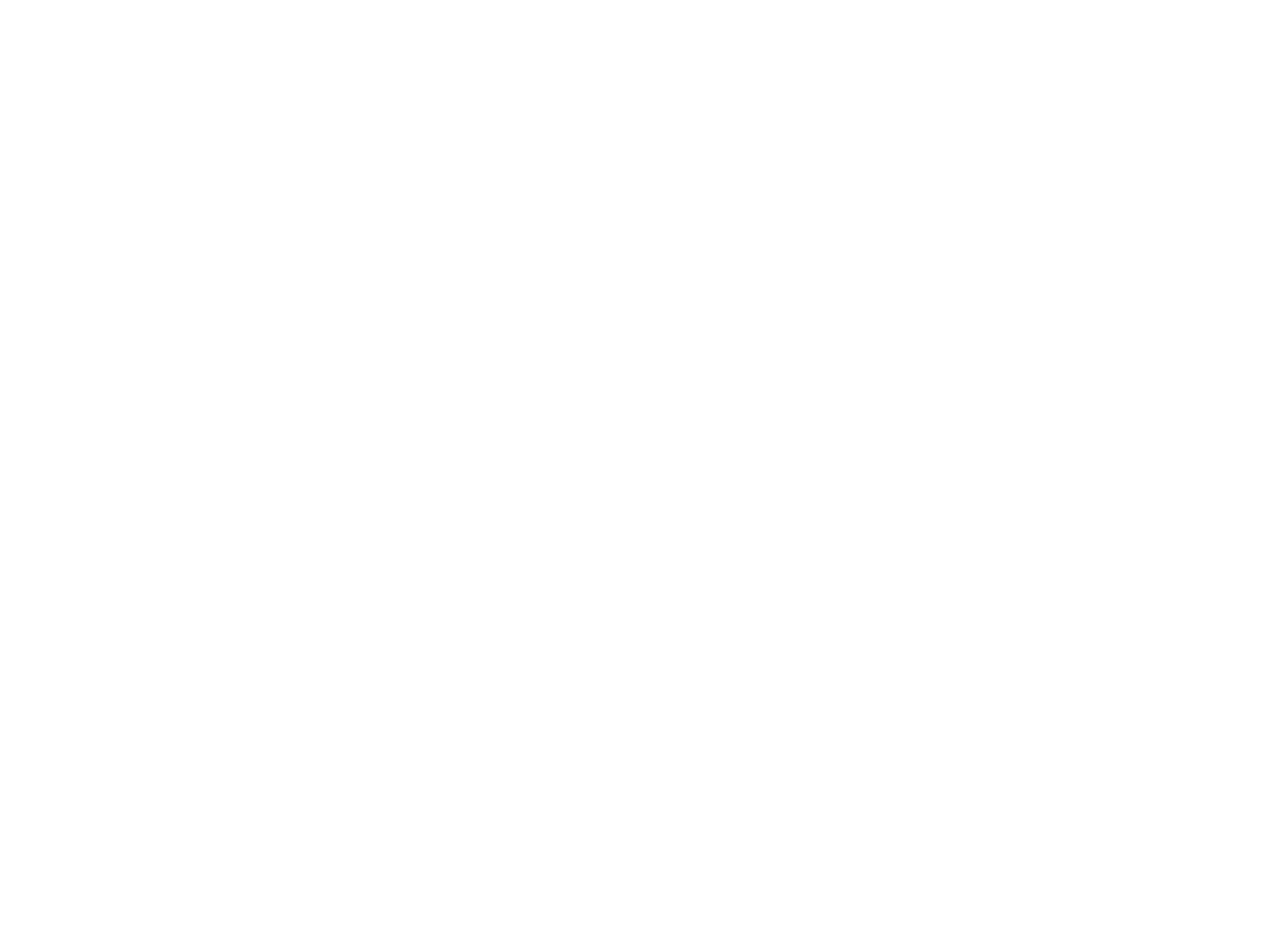
            \caption{Commuting crossovers, i.e. the link of a bifurcation of type (A3) with $(k_1, \ell_1) = (2, 1)$ and $(k_2, \ell_2) = (3, 1)$. We see two commuting crossovers.}
        \label{fig:hd_a3}
        \end{figure}

    \textbf{(A4a):} Here, $p_1^\mu$ is a hyperbolic singularity of index 1 with a $\{1\}$-orbit of tangency from $\tau(p_1^0)$ to $p_1^0$; additionally, $p_2^\mu$ and $p_3^\mu$ are hyperbolic singularities of index 1 distinct from $p_1^\mu$, and there is an orbit of tangency from $p_2^0$ to $p_3^0$. Assume there are $k+\ell$ flows out of $p_1^\mu$ to index 2 critical points. The bifurcation diagram consists of two smooth curves intersecting at the origin. The Heegaard diagrams are constructed just as in case (A3), though things are slightly simpler in this context; we need only choose six vertices, as we can take $\Sigma_1$ and $\Sigma_4$ to be a common surface (likewise for $\Sigma_2$ and $\Sigma_3$); we choose small arcs $a^{1 \ra 2}$ and $a^{3\ra 4}$ transverse to $S_1$ and $S_3$; these arcs determine four Heegaard splittings $H_1$, $H_2'$, $H_3$, $H_4'$ and we define $H_2 := H_3$ and $H_4:=H_1$ and choose diffeomorphisms relating $H_1$ with $H_4'$ and $H_3$ with $H_2'$. The stratum $S_4$ is a crossover of type $(k, \ell)$, where the partition of the flows out of $p_1^\mu$ to index 2 critical points comes from the directional manifolds of $p_1^\mu$ for $\mu \in a^{2 \ra 3}$, and $S_2$ is a crossover of type $(k + 1, \ell)$, with the partition coming from the directional manifold of $p_1^\mu$ for $\mu \in a^{4 \ra 1}$. See \Cref{fig:hd_a4a}.

        \begin{figure}[h]
        \def\svgwidth{.8\linewidth}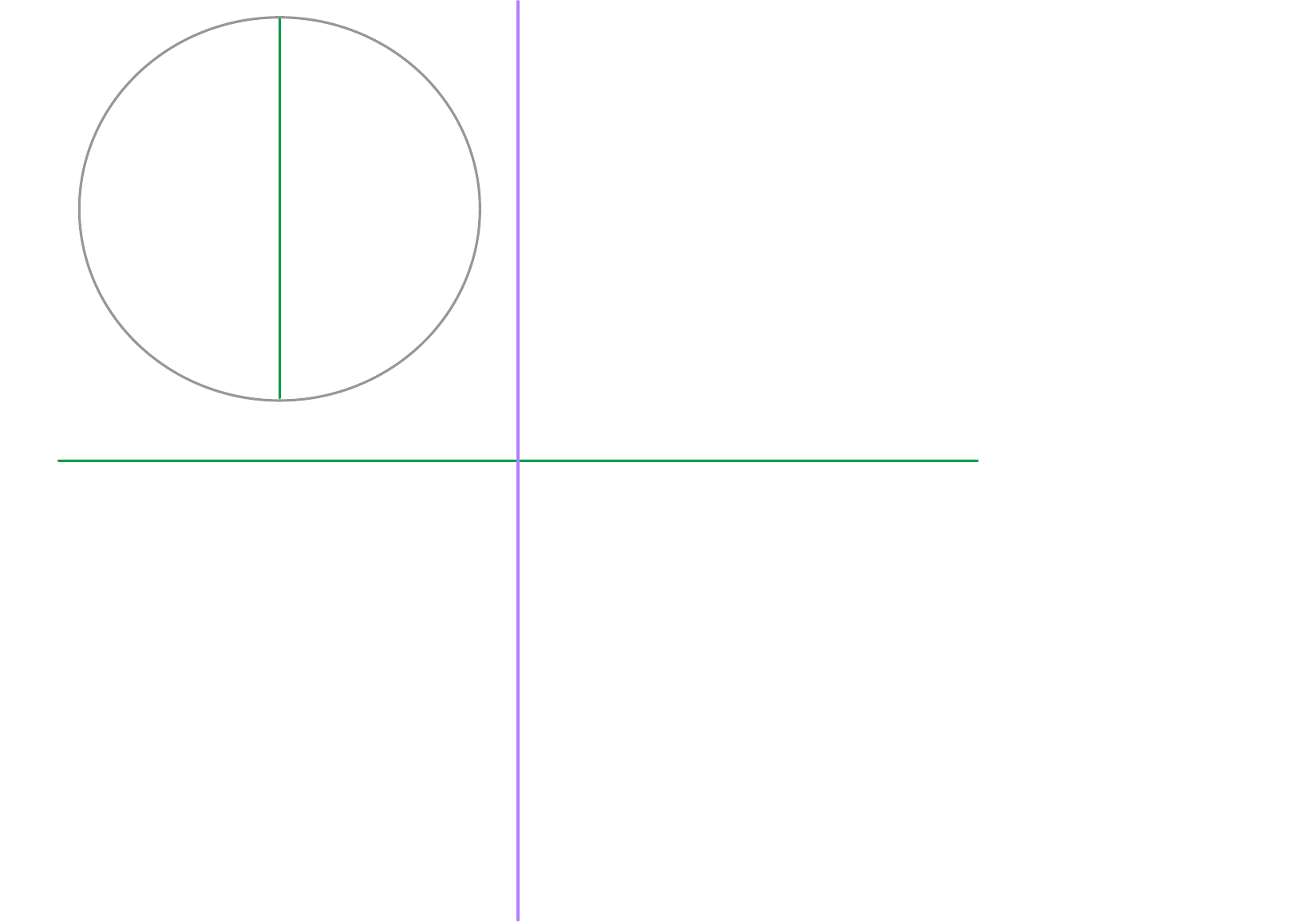
            \caption{A commutation of a real handleslide and a crossover, or the link of a bifurcation of type (A4a).}
        \label{fig:hd_a4a}
        \end{figure}

    \textbf{(A4b):} In this case, $p_1^0$ is a hyperbolic singularity of index 1 with a $\{1\}$-orbit of tangency from $\tau(p_1^0)$ to $p_1^0$; additionally, $p_2^\mu$ and $p_3^\mu$ are hyperbolic singularities of index 1 (not necessarily distinct from $p_1^0$). The bifurcation diagram consists of two smooth curves intersecting transversely at 0; the strata $S_1$ and $S_3$ correspond to crossovers while strata $S_2$ and $S_4$ are real handleslides. The Heegaard diagrams are constructed just as in case (A4a). $H_1$ and $H_4$ differ by a real handleslide, as do  $H_2$  and $H_3$, while $H_1$ and $H_2'$ differ by a crossover, as do $H_3$ and $H_4'$. Recall that in a crossover, there are two distinguished tubes (annuli) which are neighborhoods of curves $\alpha_0$ and $\beta_0 = \tau(\alpha_0)$, and in a crossover the feet (attaching regions) of the tubes are exchanged. In \Cref{fig:hd_a4b}, we show both feet of these annuli, as one foot is involved in the crossover, while the other is involved in the real handleslide.

        \begin{figure}[h]
        \def\svgwidth{.8\linewidth}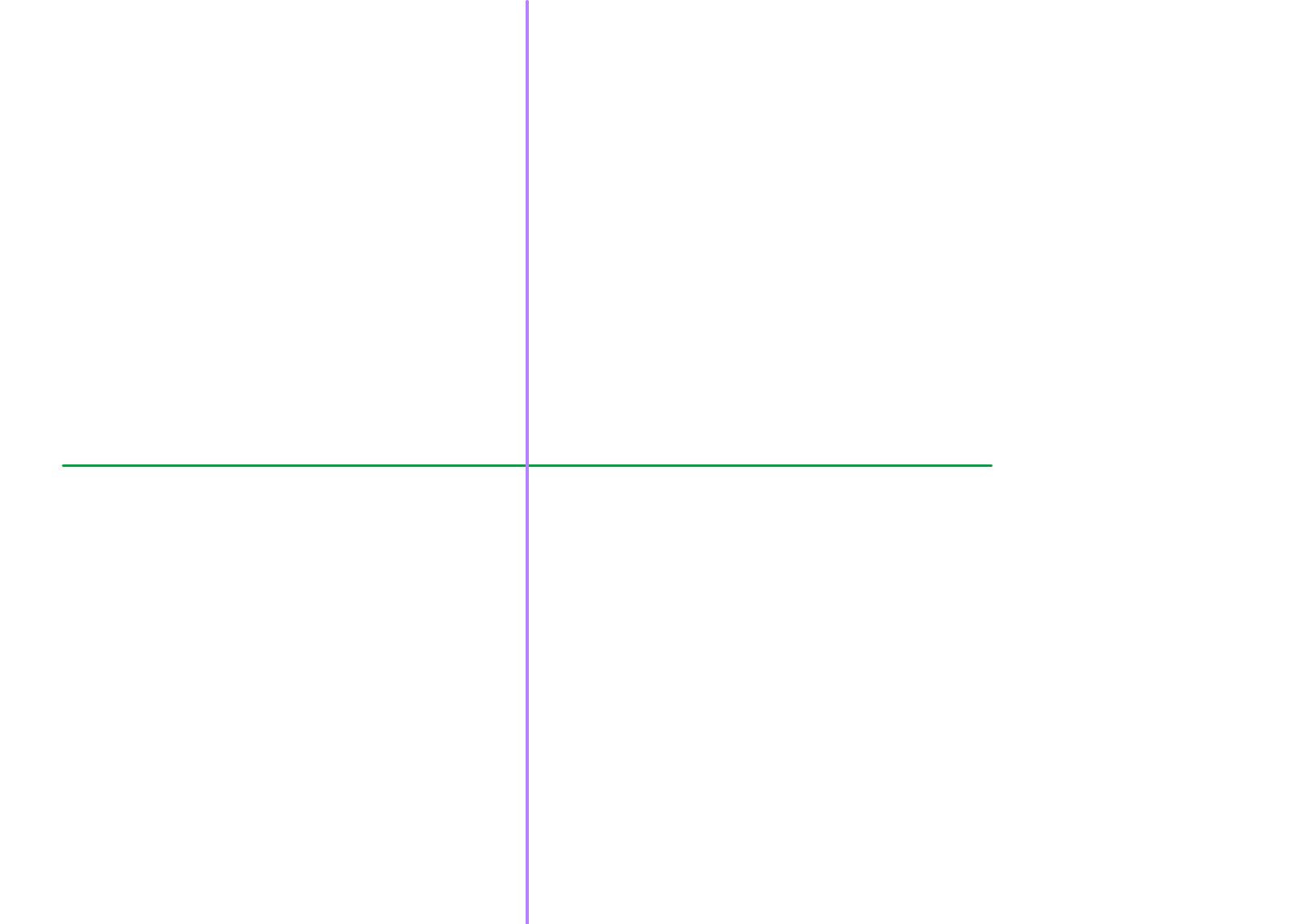
            \caption{A commutation of a real handleslide and a crossover corresponding to the link of a bifurcation of type (A4b).}
        \label{fig:hd_a4b}
        \end{figure}
    
    \textbf{(A4c):} In this case, $p_1^0$ and $p_2^0$ are both hyperbolic singularities and there is a $\Z/2$-orbit of tangency between $W^s(p_2^0)$ and $W^u(p_1^0)$ as well as a $\{1\}$-orbit of tangency between $W^u(\tau(p_1^0))$ and $W^s(p_1^0)$. Suppose that there are $k_1+\ell_1$ flows out of $p_1^0$ to index 2 critical points partitioned into two parts by the directional manifold of $p_2^\mu$. Similarly, assume there are $k_2+\ell_2$ flows from $p_2^0$ to index 2 critical points. On the right side of the diagram, there is a generalized real handleslide of type $(k_1, \ell_2)$, creating or annihilating $(\ell_2 - k_1)$ flows from $p_1^\mu$ to index 2 critical points.
        
    In this case, the bifurcation diagram consists of at least $5 + (\ell_1 - k_2) \ge 5$ curves meeting at the origin. The strata $S_1$ and $S_3$ correspond to handleslides and $S_2$ corresponds to a crossover of type $(k_1, \ell_2 + (\ell_1 - k_2))$.  Pick a point $x \in C_1$, and define $H_1$ using a separating surface $\Sigma_1 \in \S^R(f_x, v_x)$. Since $S_1$ is a handleslide stratum, we can define $\S_2 = \S_1$ as the separating surface for $H_2$ as well. To construct the remaining splitting surfaces, we use the arc $a^{2 \ra 3}$ to construct a surface $\S_3$ so that $H_3$ is obtained from $H_2$ by a crossover of type $(k_1, \ell_2 + (\ell_1 - k_2))$; the type of crossover is determined by the directional manifolds associated to  the arcs $a^{2 \ra 3}$ and $a^{5 \ra 1}$. Of the remaining strata, $S_i$ and $S_{i+1}$ are crossover strata of types $(k_2, \ell_2)$ and $(k_1, \ell_1)$, and the others are real handleslides. For the diagrams $H_4, \hdots, H_{i+1}$, we take $\S_3$ to be the splitting surface. As above, we choose an arc $a^{i-1 \ra i}$ and construct $\S_i$ so that it is obtained as a crossover of $\S_{i-1}' \cong \S_{i-1}$, producing diagrams $H_{i-1}'$ and $H_i$ (and inserting diffeomorphism strata). Likewise, we choose an arc $a^{i\ra i+1}$ to construct diagrams $H_{i}'$ and $H_{i+1}$. We take $H_{i+1}', H_{i+2}, \hdots H_{5 + (\ell_1 - k_2)}$ to have splitting surface $\S_1$. \\
    
    \begin{figure}[h]
        \def\svgwidth{.8\linewidth}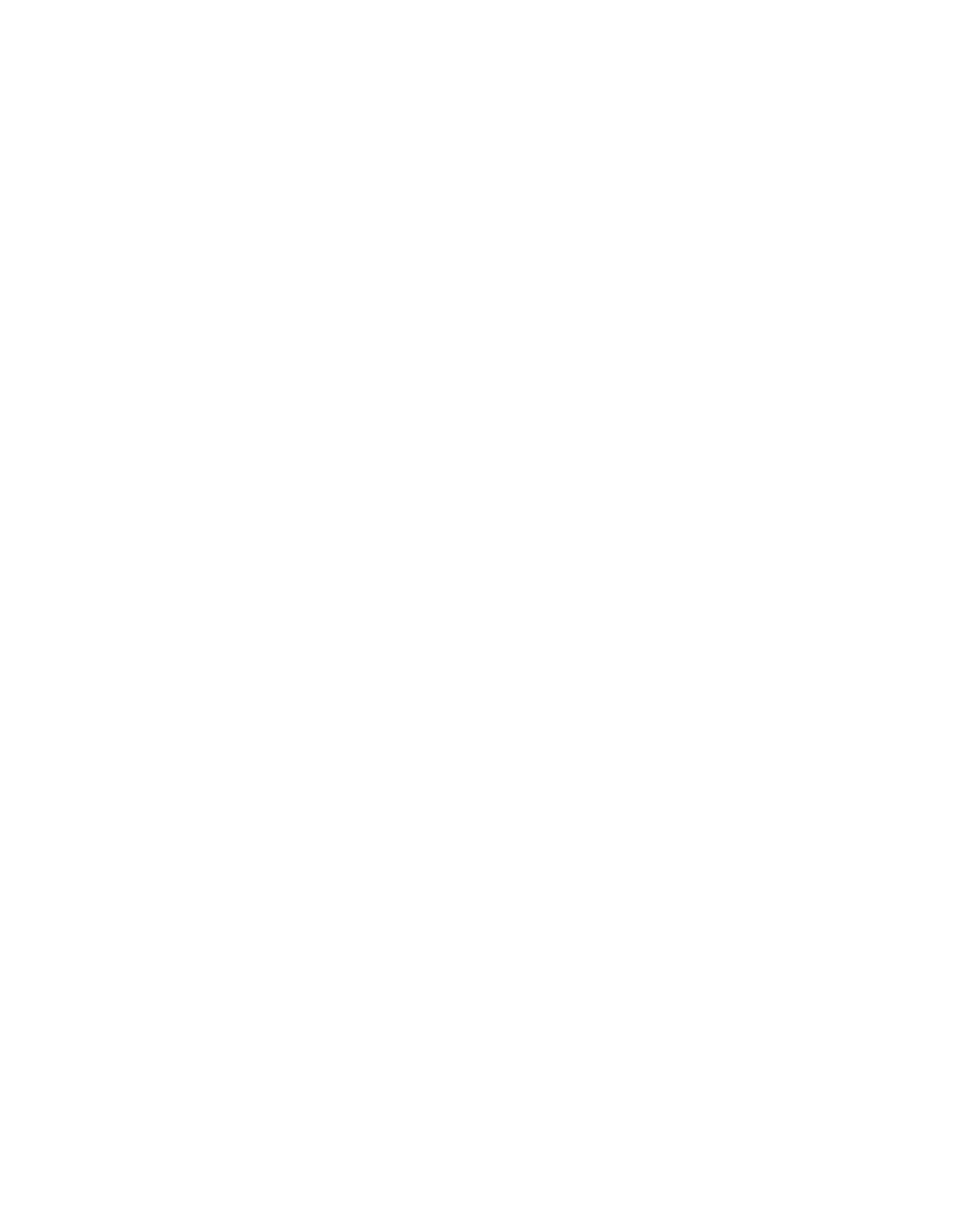
            \caption{The link of a bifurcation of type (A4c). In this example $k_1 =  k_2  = 1$, and $\ell_1 = \ell_2 =2$.}
        \label{fig:hd_a4c}
    \end{figure}

    \noindent \textbf{Type (B) bifurcations:} Bifurcations of type (B) involve stabilizations. The link $P$ contains two stabilization strata $S_1$ and $S_3$, and there is a single stratum $S_2$ on the stabilized side, which is either a real handleslide or a crossover stratum. 

    For each bifurcation, we fix a separating surface $\Sigma \in \Sigma^R(f_\mu, v_\mu)$ for a parameter $\mu$ close to $0$ on the unstabilized side of the parameter space. Let $p$ be the index 1-2 birth-death singularity which breaks into critical points (or orbits) $q^\nu_1 \in C_1(f_\nu)$ and $q^\nu_2 \in C_2(f_\nu)$. We will assume that the surface on the stabilized side, $\Sigma_\nu \in \Sigma^R(f_\nu, v_\nu)$, is obtained by attaching a tube around $W^u(q^\nu_2)$ (though we could just as well have considered the surface obtained by attaching a tube around $W^s(q^\nu_1)$). 

    \textbf{(B1):}
    There are four strata; $S_1$ and $S_3$ are stabilizations while $S_2$ and $S_4$ are real handleslides. By the non-degeneracy condition (ND-2) (see \cite[Definition 5.11]{JTZ_naturality_mapping_class_groups}, \Cref{rem: nondegeneracy conditions}), there is a 1-dimensional invariant submanifold $W^{uu}(p_2^0) \sub W^u(p_2^0)$ which partitions flows from $p_2^0$ to $p_0$ into two parts; we assume there are $k_1$ and $k_2$ such flows. 
        
    Say there are $m$ flows to $p$ from index 1 critical points and $\ell$ flows from $p$ to index 2 critical points (if $p$ is a $\{1\}$-stabilization, i.e. $\tau(p) = p$, then $m = \ell$). If $p$ is a $\Z/2$-stabilization and there are $t$ flows between $p$ and $\t(p)$, then the stabilizations $H_1 \ra H_2$ and $H_3 \ra H_4$ are of type $(\ell + k_1 +t, m, t)$ and $(\ell + k_2+t, m, t)$, respectively; if $p_0$ is a $\{1\}$-stabilization the stabilizations are of types $(\ell + k_1)$ and $(\ell + k_2)$. See Figures \ref{fig:hd_b1_Z2} and \ref{fig:hd_b1_triv}.

        \begin{figure}[h]
        \def\svgwidth{.8\linewidth}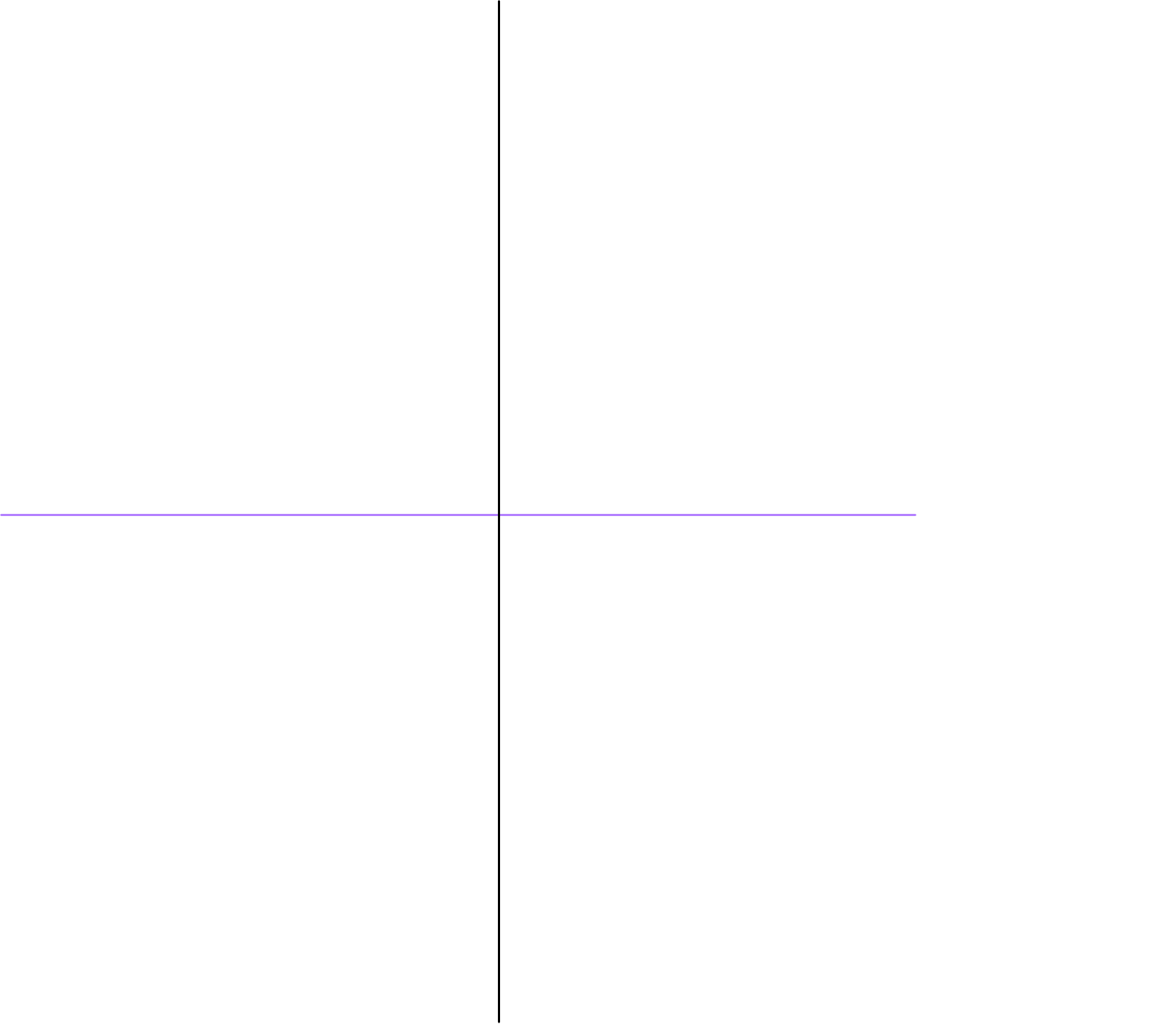
            \caption{A commutation of a real handleslide and a $\Z/2$-stabilization corresponding to the link of a bifurcation of type (B1) appearing in a $\Z/2$-orbit. Here, the two green arcs on either side of the diagrams are identified and represent a portion of the fixed point set. This example has $m = \ell = 3$, $k_1 = k_2 = 1$, and $c = 3$.}
        \label{fig:hd_b1_Z2}
        \end{figure}

        \begin{figure}[h]
        \def\svgwidth{.8\linewidth}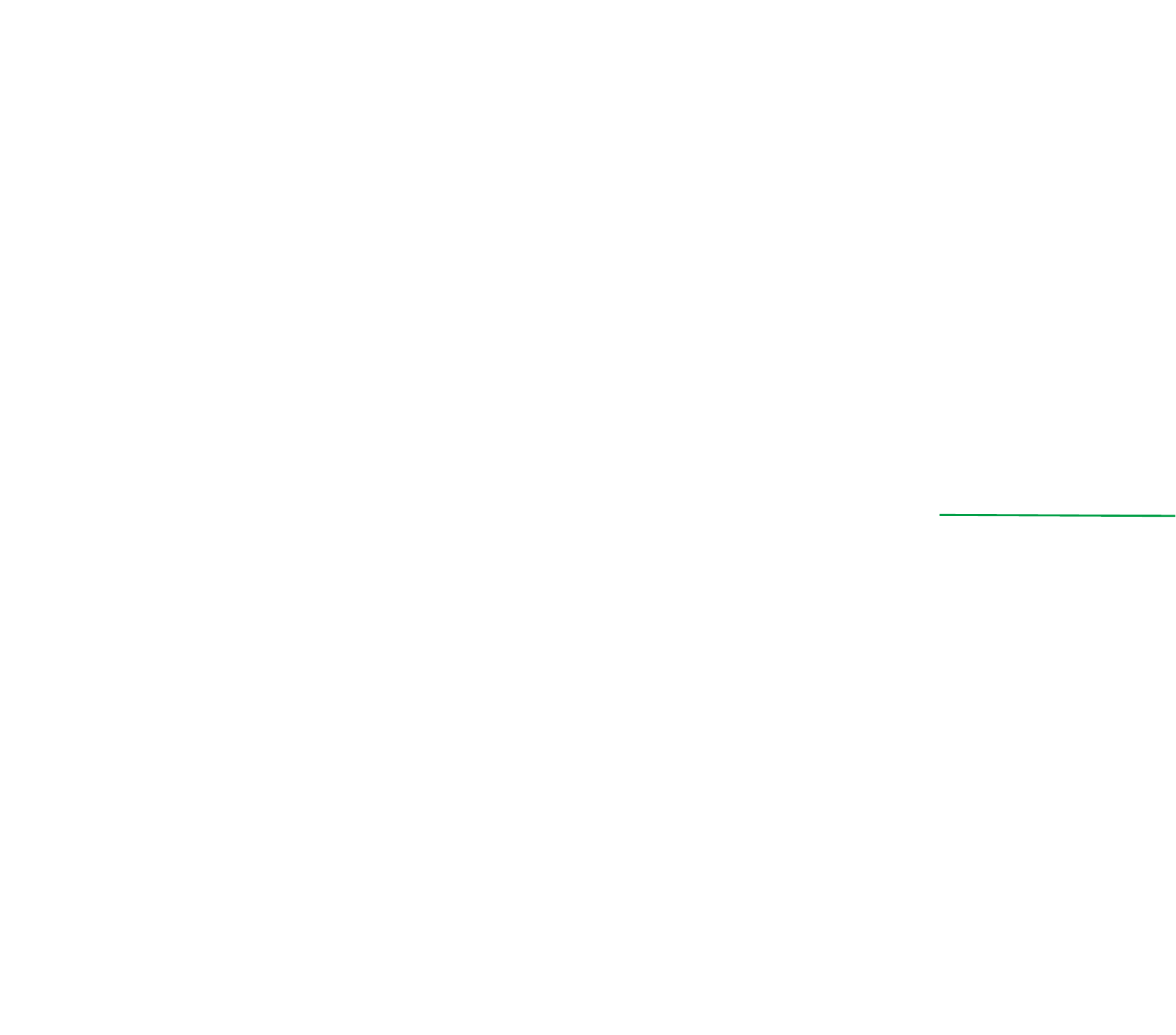
            \caption{A commutation of a real handleslide and a $\{1\}$-stabilization, which corresponds to the link of a bifurcation of type (B1) appearing in a trivial orbit. This example has $m = \ell = 3$ and $k_1 = k_2 = 1$.}
        \label{fig:hd_b1_triv}
        \end{figure}

    \textbf{(B2):} In this case, there is an orbit of tangency from $p$ to an index 1 critical point $\overline{p}^0$. For every flow from an index 1 critical point $p_*^0$ to $p$, we can perturb near $p$ on the death side of $S_1 \cup S_3$ so that there is a flow from $p_*^\mu$ to $\overline{p}^\mu$ (likewise for the symmetric critical points). In the case that $p_0$ is a $\{1\}$-orbit birth-death singularity, we can also perturb to get a flow from $\tau(\overline{p}^\mu)$ to $\overline{p}^\mu$, which corresponds to a crossover stratum. There are at least three strata; again $S_1$ and $S_3$ are stabilizations and $S_2$ is a real equivalence. 

    First, let us assume that $p$ appears in a $\Z/2$-orbit. To understand the stabilizations in this case, we assume there are $k$ flows from index 1 critical points to $p$ and $\ell$ flows from $p$ to index 2 critical points as well as $t$ flows between $p$ and $\tau(p)$. Further, there are $m_1 + m_2$ flows from the hyperbolic singularity $\overline{p}^0$ to index 2 critical points, partitioned into two sets by $W^{uu}(\overline{p}^0)$. The two stabilizations $H_1 \ra H_2$ and $H_3 \ra H_4$ are of types $(k + m_1+t, \ell, t)$ and $(k + m_2+t, \ell, t)$, respectively; on the stabilized side of the diagram, there is a single real handleslide stratum, whereas on the unstabilized side, there are $k$ real handleslide strata. See \Cref{fig:hd_b2_Z2}.

        \begin{figure}[h]
        \def\svgwidth{.8\linewidth}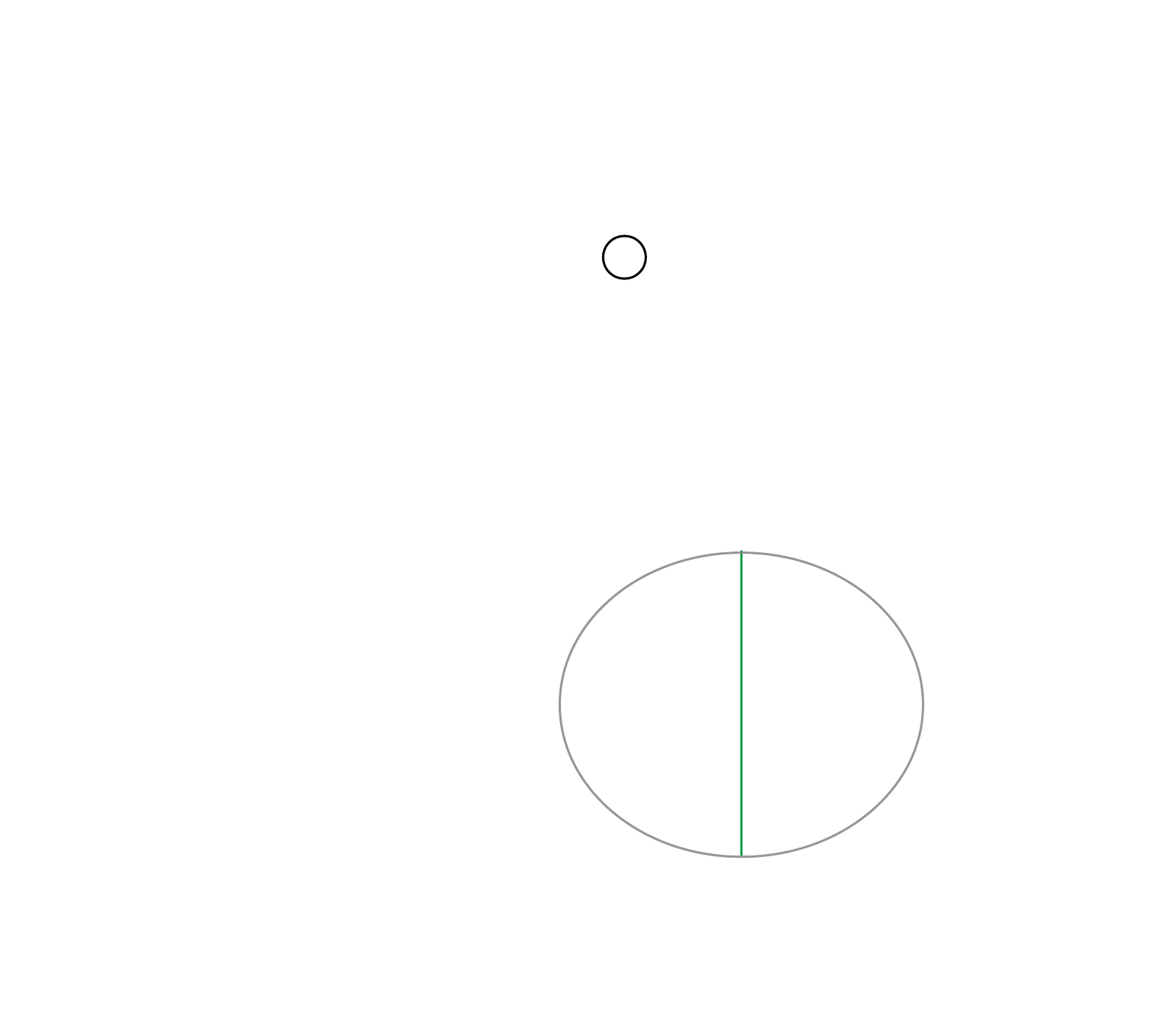
            \caption{The link of a bifurcation of type (B2) appearing in a $\Z/2$-orbit. In this example, $k = 3$, $\ell = 2$, $c=1$, $m_1 = 2$, and $m_2 = 1$.}
        \label{fig:hd_b2_Z2}
        \end{figure}
        
    Next, assume that $p$ appears in a $\{1\}$-orbit. By symmetry, there are an equal number of flows ($k$, say) in and out of $p$. Say that there are $m_1 + m_2$ flows from the hyperbolic singularity $\overline{p}^0$ to index 2 critical points, partitioned into two sets by $W^{uu}(\overline{p}^0)$. Strata $S_1$ and $S_{3}$ are $\{1\}$-stabilizations $H_1\ra H_2$ and $H_4 \ra H_3$ of type $(k+m_1)$ and $(k+m_2)$ respectively. On the stabilized side of the diagram, there is a single real handleslide stratum. On the destabilized side of the bifurcation diagram, there are a total of $k$ real handleslide strata separated into two collections by a single crossover of type $(m_1, m_2)$. See Figure \ref{fig:hd_b2_triv}.
       
    We need to be slightly careful in constructing the splittings. As above, we fix a surface $\Sigma_1$ for $H_1$ and take $\Sigma_2$ to be obtained by a stabilization. We can choose $\Sigma_3 = \S_2$. We cannot, however, take $\S_4 = \S_1$, as on the unstabilized side of the diagram, they are separated by the crossover stratum. We therefore choose $\S_4$ so that it is obtained by a destabilization of $\S_3$. If $S_i$, for $i > 3$, is the crossover stratum, we define the remaining surfaces as follows. Let $\Sigma_5 = \hdots = \Sigma_i:= \Sigma_4$ and $\Sigma_{i+1}' = \Sigma_{i+2} = \hdots \Sigma_{r-1}:= \Sigma_1$. To define $\Sigma_i'$ and $\Sigma_{i+1}$, we choose a small arc $a^{i \ra i+1}$ transverse to $S_i$, and choose $\Sigma_{i+1} \in \Sigma^R(f_{\nu}, v_\nu)$, where $\nu = \partial a^{i \ra i+1} \cap C_{i+1}$ and $\Sigma_{i}' \in \Sigma^R(f_{\eta}, v_\eta)$, where $\eta = \partial a^{i \ra i+1} \cap C_{i}$ so that $H_{i}$ is obtained from $H_{i+1}$ via a crossover. In particular, $C_i$ and $C_{i+1}$ each contain two vertices, and therefore each contains a diffeomorphism stratum.

        \begin{figure}[h]
        \def\svgwidth{.8\linewidth}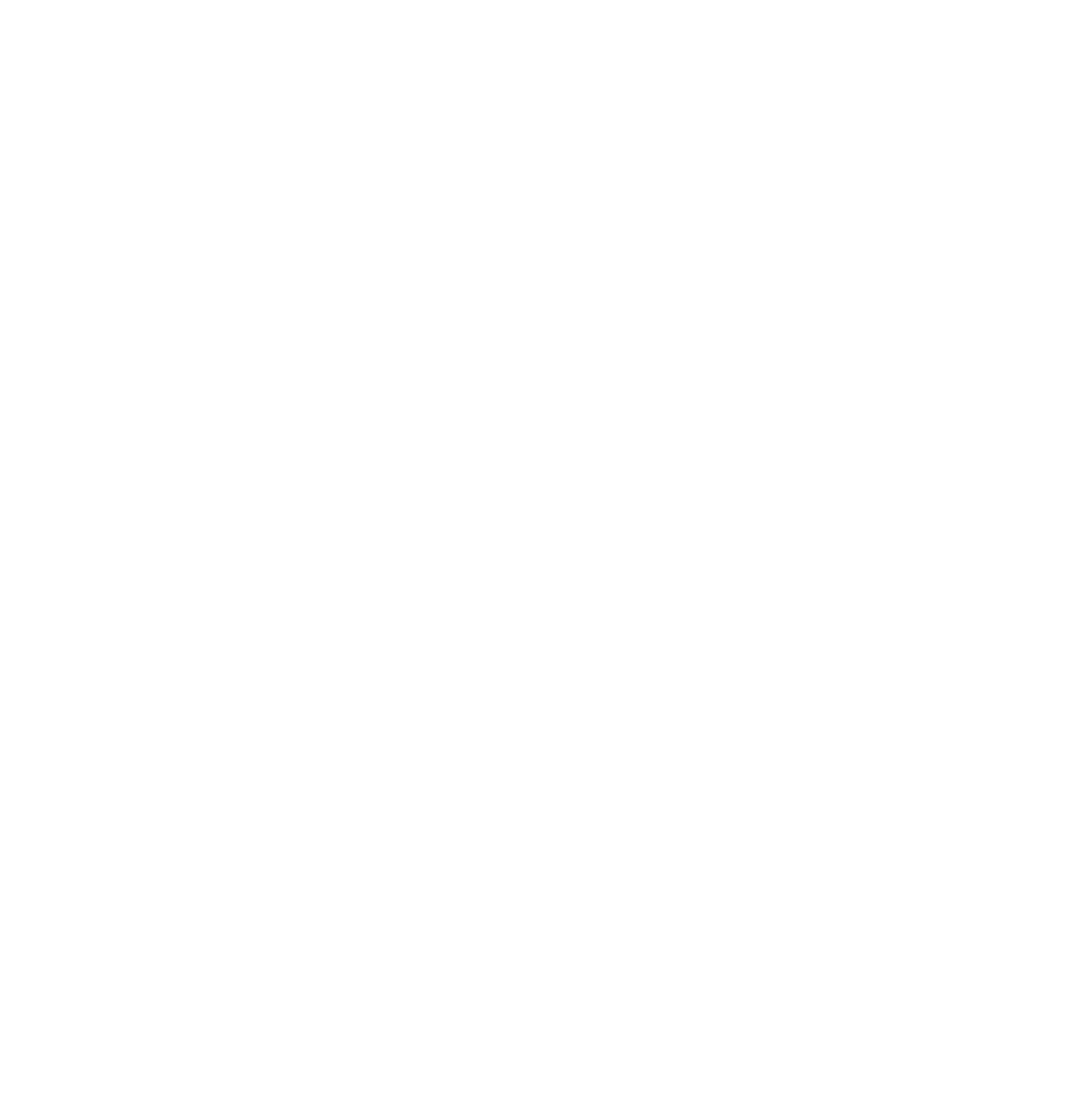
            \caption{The link of a bifurcation of type (B2) appearing in a trivial orbit. In this example, $k = 1$, $m_1 = 2$ and $m_2 = 1$. In this example, there are no additional handleslides after the crossover occurs.}
        \label{fig:hd_b2_triv}
        \end{figure}

    \textbf{(B3):} There is an orbit of tangency between the strong stable manifold of an index 1-2 birth-death $p$ and the unstable manifold of an index 1 critical point $\overline{p}$. In this case $r = 3$, and $S_1$ and $S_3$ are again stabilizations and $S_2$ is a handleslide. 
    
    Consider the case when $p$ appears in a trivial orbit. Say there are $k$ flow lines from index 1 critical points to $p$ (and symmetric flows to index 2 critical points); then the stabilizations $H_1 \ra H_2$ and $H_1 \ra H_3$ are of type $(k+1)$ and $(k)$ respectively. 

    If $p$ appears in a $\Z/2$-orbit, suppose that there are $k$ flow lines from index 1 critical points to $p$, $\ell$ flow lines from $p$ to index 2 critical points, and $c$ flows between $p$ and $\tau(p)$. Then the stabilization $H_1 \ra H_2$ and $H_1 \ra H_3$ are of type $(k+c, \ell+1,c)$ and $(k+2c, \ell, c)$ respectively.

    \begin{figure}[h]
        \def\svgwidth{.8\linewidth}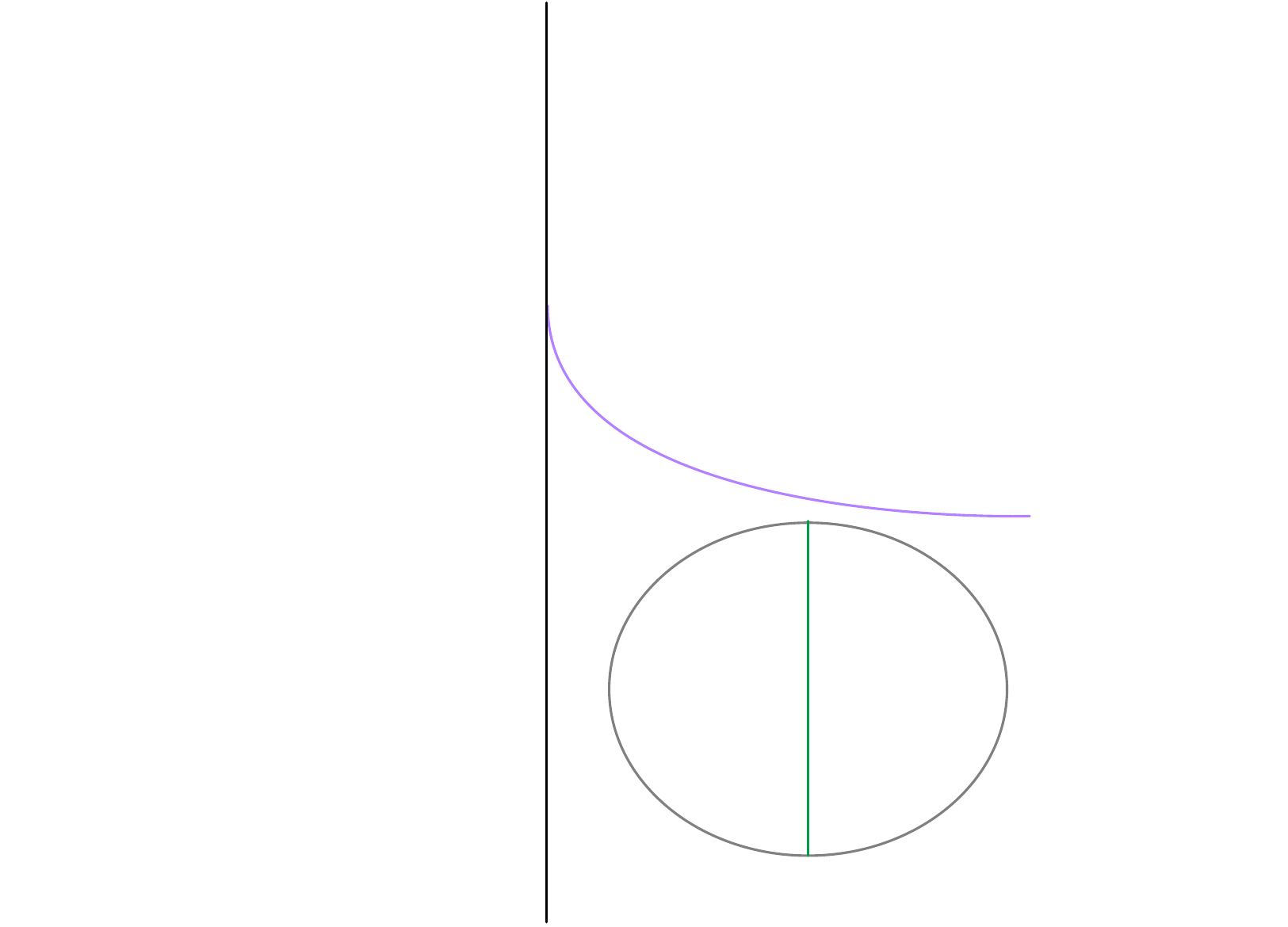
            \caption{The link of a bifurcation of type (B3) appearing in a trivial orbit. In this example, $k = 3$.}
        \label{fig:hd_b3_triv}
        \end{figure}

    \begin{figure}[h]
        \def\svgwidth{.8\linewidth}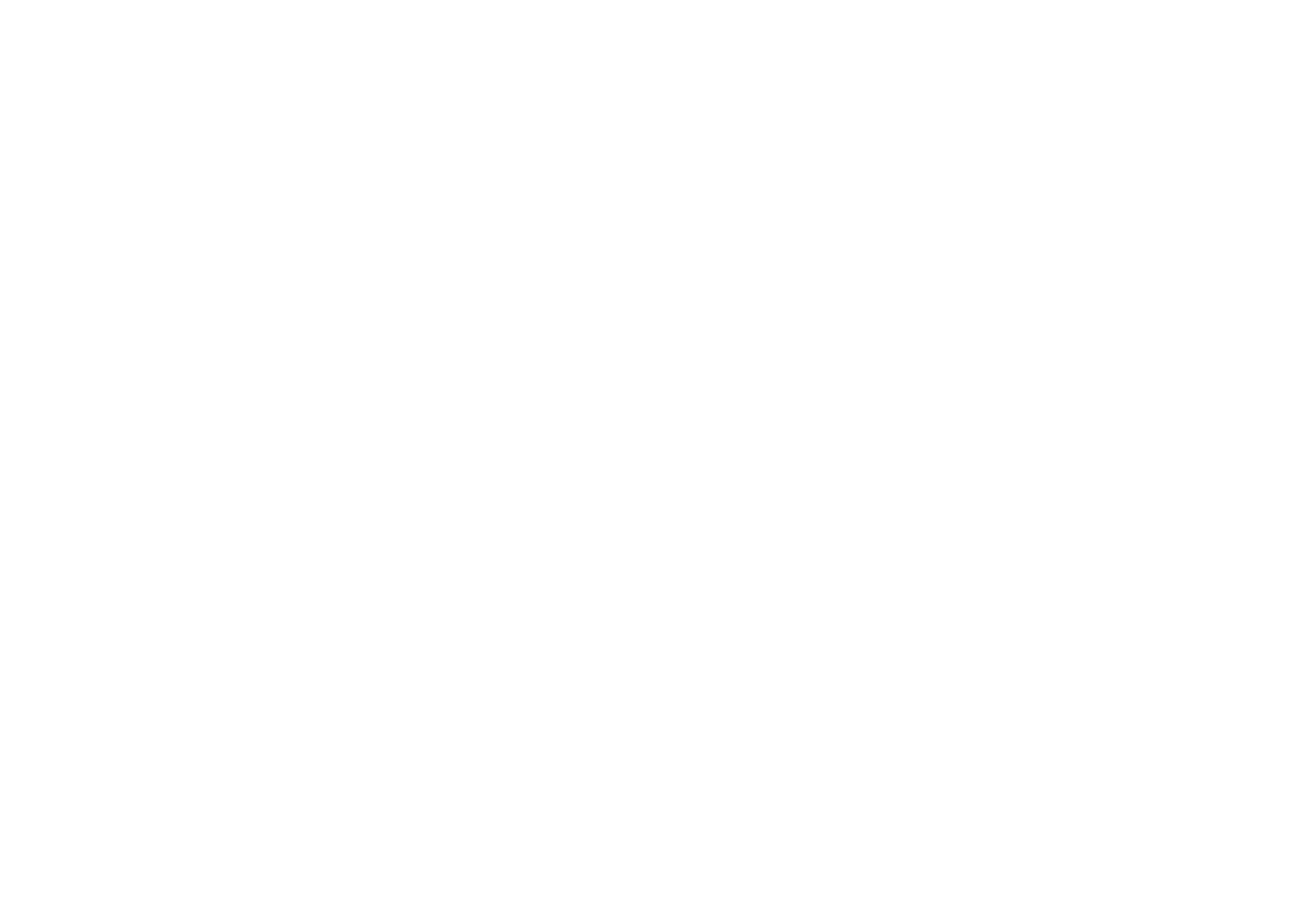
            \caption{The link of a bifurcation of type (B3) appearing in a $\Z/2$-orbit. Here, the two green arcs on either side of the diagrams are identified and represent a portion of the fixed point set. In this example, $k = 4$, $\ell=3$, and $c = 3$.}
        \label{fig:hd_b3_Z2}
        \end{figure}

    \textbf{(B4):} Suppose $\{p, \tau(p)\}$ is a $\Z/2$-orbit of index 1-2 birth-death singularities, that $p_1^\mu$ is an index 1 critical point, and that there is a real quasi-transversal orbit of tangency from $\tau(p_1^0)$ to $p_1^0$. In this case, $r = 4$, the strata $S_1$ and $S_3$ are stabilizations while $S_2$ and $S_4$ are crossovers. 

    Again, flows out of $p_1^\mu$ are partitioned into two subsets. Suppose there are $n$ flows from $p$ to critical points of index $2$, $r_1 + r_2$ flows from $p_1^0$ to $p_1$ (partitioned as such), and $m$ additional flows from index 1 critical points to $p$; additionally, say there are $c$ flows from $\tau(p)$ to $p$. Finally, assume there are $\ell_1 + \ell_2$ flows from $p^\mu_1$ to critical points of index 2. The strata $S_1$ and $S_3$ correspond to $\Z/2$-stabilizations of type $(m + r_1 + r_2 + c, n, c)$ and $(m + r_1 + r_2 + c + r_1(r_1 +\ell_1) + r_2(r_2+\ell_2), n, c + r_1 + r_2)$. The strata $S_2$ and $S_4$ are crossovers of types $(r_1n +\ell_1, r_2n+\ell_2)$ and $(r_1 +\ell_1, r_2+\ell_2)$ respectively. See \Cref{fig:hd_b4_Z2}.

    To construct the various splittings, we choose short arcs $a^{i \ra i+1}$ transverse to $S_i$ and use the parameter values $\partial a^{i\ra i+1}$ to choose $\Sigma_i'$ and $\Sigma_{i+1}$. This gives us eight vertices on our link, and each chamber contains a diffeomorphism stratum.
\begin{figure}[h]
        \def\svgwidth{.8\linewidth}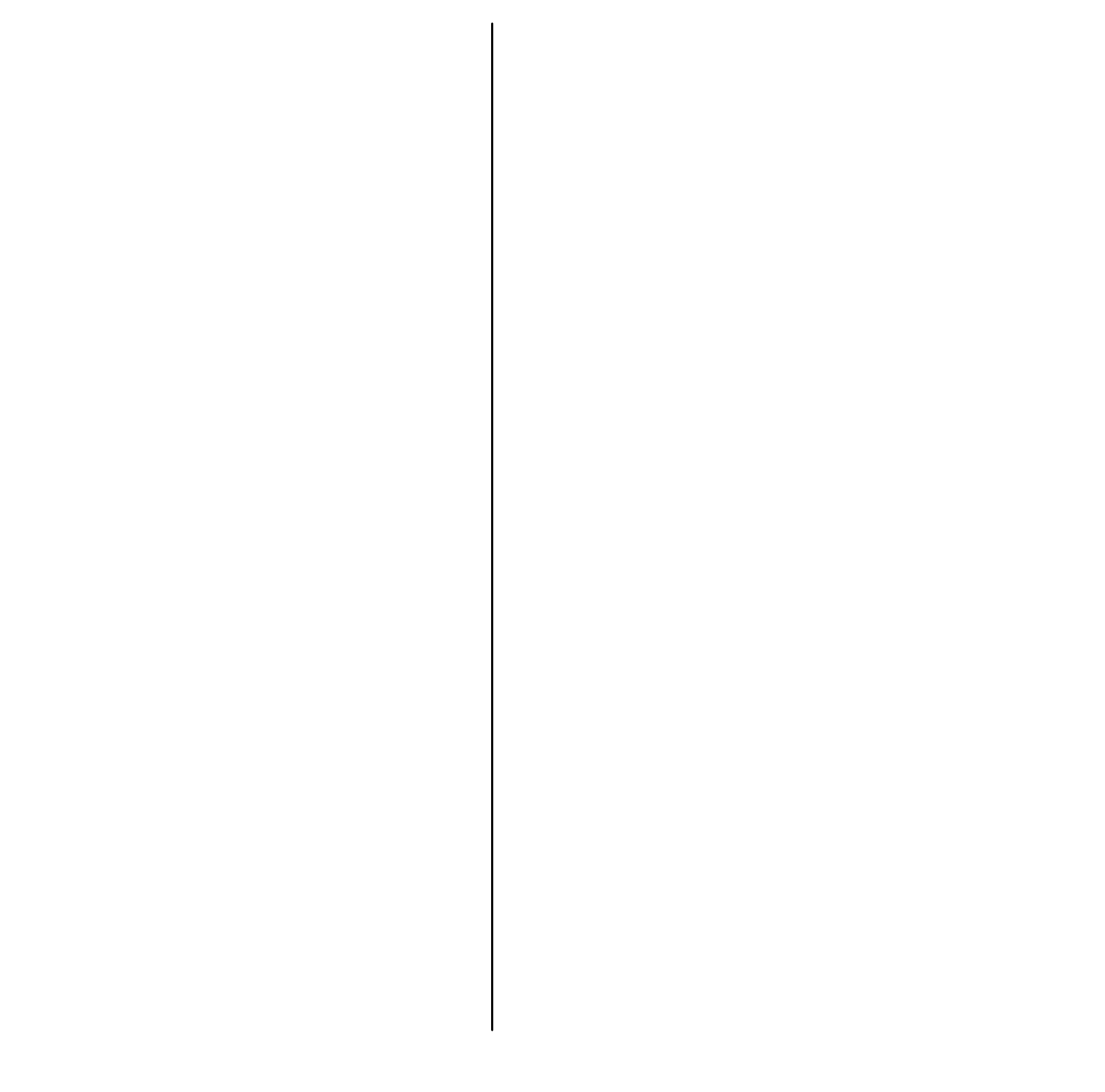
            \caption{A commutation of a crossover and a $\Z/2$-stabilization corresponding to a bifurcation of type (B4) involving a $\Z/2$-stabilization. In this example, $n = 2$, $r_1 = 1$, $r_2 = 0$, $m = 2$, $c = 0$, $\ell_1 = 1,$  $\ell_2 =2$. We highlight some of the more subtle aspects of the figure. The stable manifold of the index 2 critical point which splits off from $p$ is drawn in light blue; the unstable manifold of the symmetric index 1 critical point is drawn in pink. In $H_2$, we have indicated the pair of intersection points corresponding to the $r_1$ flows from $p_1$ to $p$ which give rise to the change in the type of crossover on the two sides of the diagram. Similarly, in $H_3$ we have indicated the intersection point corresponding to the $r_1 + r_2$ additional flow lines from $\t(p)$ to $p$ which is created when the parameter passes through the crossover stratum while moving along the stabilization stratum.}
        \label{fig:hd_b4_Z2}
        \end{figure}

    The case that $p$ is fixed by $\tau$ is much simpler, as there can be no flows from $p$ to $p_1^0$. If there are $n$ flows from $p$ to index 2 critical points and $k_1 + k_2$ flows from $p_1^0$ to index 2 critical points, then $S_1$ and $S_3$ both correspond to $\{1\}$-stabilizations $H_1 \ra H_2$ and $H_4 \ra H_3$ of type (m) while $S_2$ and $S_4$ correspond to crossovers $H_2 \ra H_3$ and $H_1 \ra H_4$ of type $(k_1, k_2)$. See \Cref{fig:hd_b4_triv}.

    \begin{figure}[h]
        \def\svgwidth{.8\linewidth}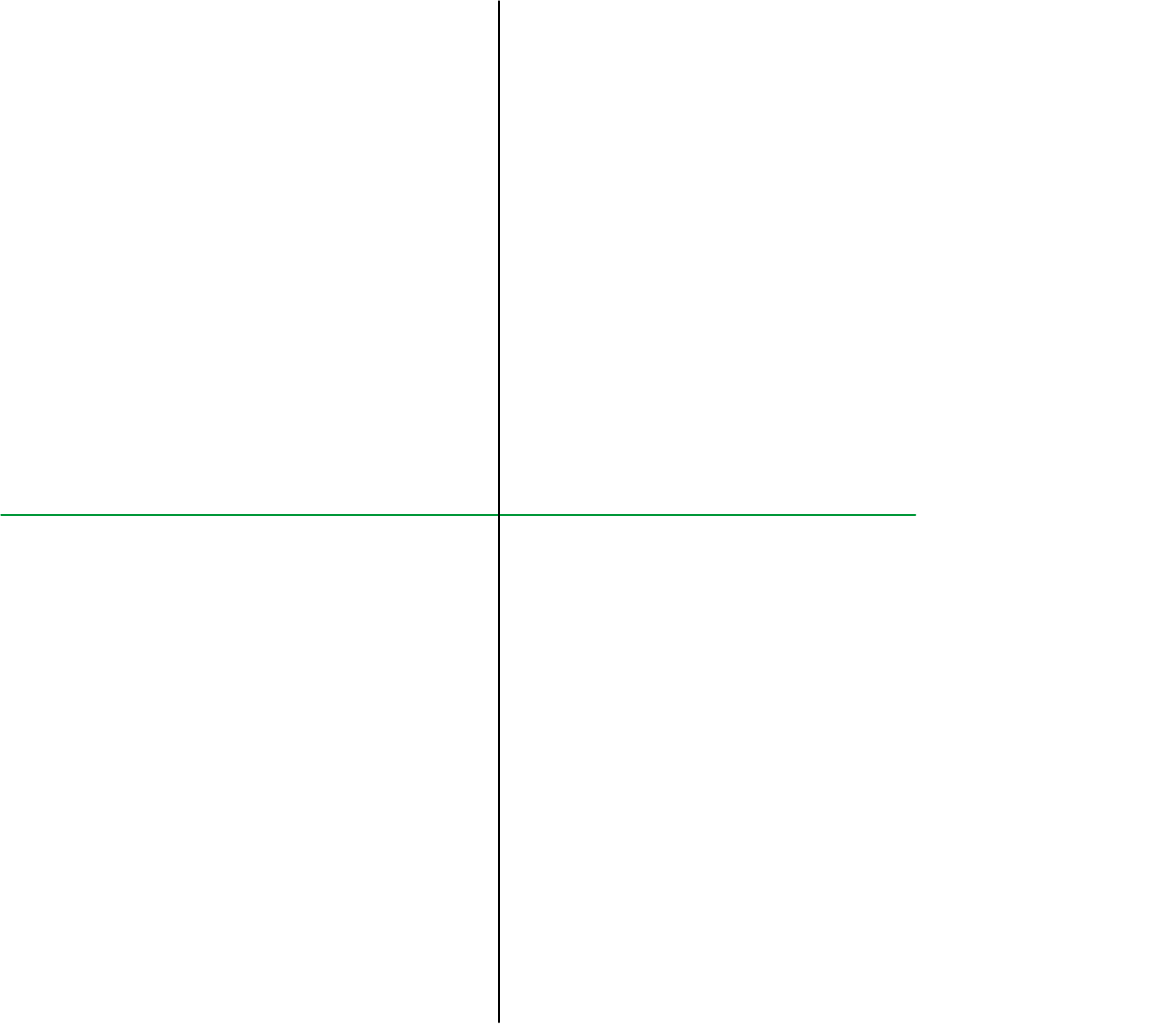
            \caption{A commutation of a crossover and a $\{1\}$-stabilization corresponding to a bifurcation of type (B4) involving a $\{1\}$-stabilization.}
        \label{fig:hd_b4_triv}
        \end{figure}

    \textbf{(B5):} Let $\{p, \tau(p)\}$ be a pair of index 1-2 birth-death singularities with a quasi-transversal orbit of tangency from $W^{uu}(\tau(p))$ to $W^{ss}(p)$. Say there are $m$ flows from $p$ to critical points of index 2 and also that there are $n$ flows from critical points of index 1 and $c$ flows from $\tau(p)$ to $p$. 

    The bifurcation diagram has three strata $S_1, S_2$, and $S_3$. Strata $S_1$ and $S_3$ are $\Z/2$-stabilizations of type $(m+(c+1), n,c+1)$ and $(m +c, n,c)$ while $S_2$ is a crossover of type $(n, 1)$. See \Cref{fig:hd_b5}. Fix $\nu \in C_1$ to choose $\S_1 \in \S^R(f_\nu, v_\nu)$. We construct $\S_2$ and $\S_3'$ by attaching tubes to $\S_1$. We define $\S_2'$ and $\S_3$ as before, by picking surfaces in $\S^R(f_\eta, v_\eta)$ for $\eta \in \partial a^{2 \ra 3}$. There are two diffeomorphism strata relating $\S_2$ and $\S_2'$ as well as $\S_3$ and $\S_3'$.\\

        \begin{figure}[h]
        \def\svgwidth{.8\linewidth}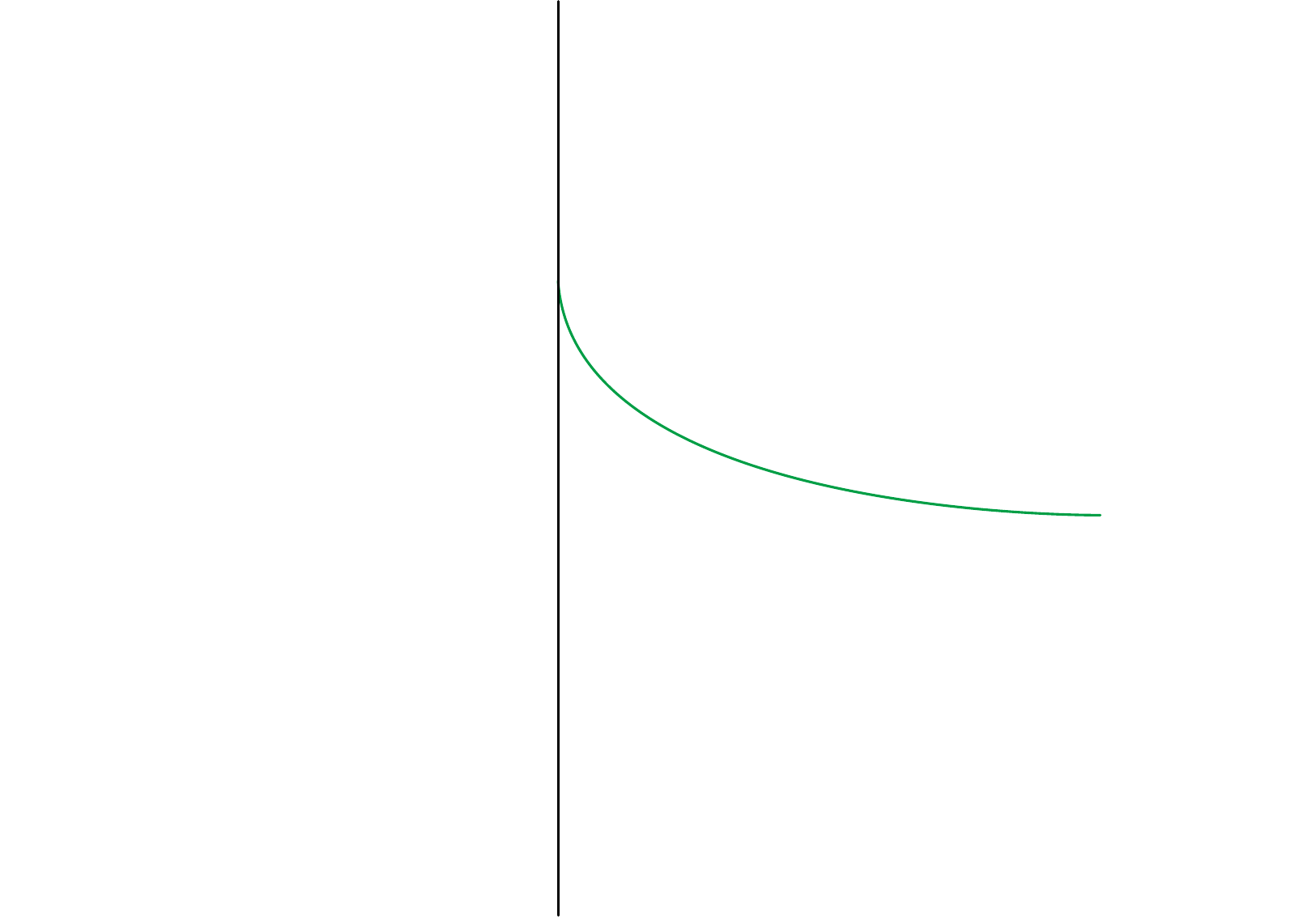
            \caption{A bifurcation of type (B5) involving a $\{1\}$-stabilization. The two green arcs on either side of the diagrams are identified and represent a portion of the fixed point set. In this example, $m = 1$, $n = 2$, $c = 1$.}
        \label{fig:hd_b5}
        \end{figure}
    
    \noindent \textbf{Type (C) bifurcations:} Type (C) bifurcations involve simultaneous stabilizations, and therefore come in three varieties, depending on the combination of orbit types.
    
    Consider the case that $(f_0, v_0)$ has a pair of $\Z/2$-orbit index 1-2 birth-death bifurcations, $\{p_1, \tau(p_1)\}$ and $\{p_2, \tau(p_2)\}$. Generically, $f(p_1) < f(p_2)$. In this case, $(f_0, v_0)$ is separable: indeed, we place $p_1, p_2 \in C_{01}(f)$ and $\tau(p_1), \tau(p_2) \in C_{23}(f)$ and choose $\Sigma_0 \in \Sigma^R(f_0, v_0)$ so that for $\ep > 0$ small enough, $\Sigma_0 \pitchfork v_\mu$ for all $|\mu| < \ep$. In this case, there are four strata: $S_1$ and $S_3$ correspond to $\Z/2$-stabilizations at $p_1$ and $\tau(p_1)$, and $S_2$ and $S_4$ correspond to $\Z/2$-stabilizations at $p_2$ and $\tau(p_2)$. 
    

    We now consider the various flow lines, beginning with flow lines between the degenerate critical points. Suppose that there are $t$ flows from $p_1$ to $p_2$, $c_i$ flows from $p_i$ to $\t(p_i)$ for $i \in \{1,2\}$, as well as $s$ flows from $p_1$ to $\t(p_2)$. Next, assume there are $m$ flows from $p_1$ to index 2 critical points, $n$ flows from index 1 critical points to $p_1$, and likewise, let $k$ and $\ell$ be the number of flows from and to $p_2$. The stabilization $H_1 \ra H_2$ is of type $(\ell + nt + c_2,k+ns,  c_2)$, $H_2 \ra H_3$ is of type $(n,m + s + c_1,  c_1)$, $H_1 \ra H_4$ is of type $(n,m + (t + \ell + c_2^2)s,  c_1 + sc_2^2)$, and $H_4 \ra H_2$ is of type $(\ell + c_2 + t,k + s,  c_2)$. 

    The underlying Heegaard splittings are constructed as follows. Choose $\mu_1$ to be any point in $C_1$ and define $\Sigma_1 = \Sigma_0$. For $\nu \in C_2 \cup S_2 \cup C_3$, we let $q_1^2(\nu)$ be the index 2 critical point which splits off from $p_1$ and for $\eta \in C_3 \cup S_3 \cup C_4$, let $q_2^1(\eta)$ be the index 1 critical point which splits off from $p_2$. By shrinking $\ep$ if necessary, we may assume that $W^u(q_1^2(\nu)) \cap W^s(q_2^1(\eta))$ is empty for all $\nu, \eta \in S_2 \cap C_3 \cap S_3$. Now, fix $\nu \in S_2$ and $\eta \in S_3$. We can obtain a surface $\Sigma_\nu \in \Sigma^R(f_\nu, v_\nu)$ from $\Sigma_0$ by attaching a pair of tubes around $W^u(q_1^2(\nu))$ and $W^s(\tau(q_1^2(\nu)))$; likewise, we construct $\Sigma_\eta \in \Sigma^R(f_\eta, v_\eta)$ from $\Sigma_0$ by attaching a pair of tubes around $W^s(q_2^1(\eta))$ and $W^u(\tau(q_2^1(\eta)))$. Choose arcs $a_2$ and $a_3$ transverse to $S_2$ and $S_3$ at $\nu$ and $\eta$, respectively, and define $\mu_2$ to be the boundary component of $a_2$ in $C_2$, $\mu_3$ the boundary component of $a_2$ in $C_3$, $\mu_3'$ the boundary component of $a_3$ in $C_3$, and $\mu_4$ the boundary component of $a_3$ in $C_4.$ Define $\Sigma_2 = \Sigma_\nu$ and $\Sigma_4 = \Sigma_\eta$. Finally, we construct $\Sigma_3$ from $\Sigma_2$ by attaching a pair of tubes around $W^s(q_2^1(\mu_3))$ and $W^u(\tau(q_2^1(\mu_3)))$ and construct $\Sigma_3'$ from $\Sigma_4$ by attaching a pair of tubes around $W^u(q_1^2(\mu_3'))$ and $W^s(\tau(q_1^2(\mu_3')))$. See \Cref{fig:hd_C_Z2_Z2}.

    \begin{figure}[h]
    \def\svgwidth{.8\linewidth}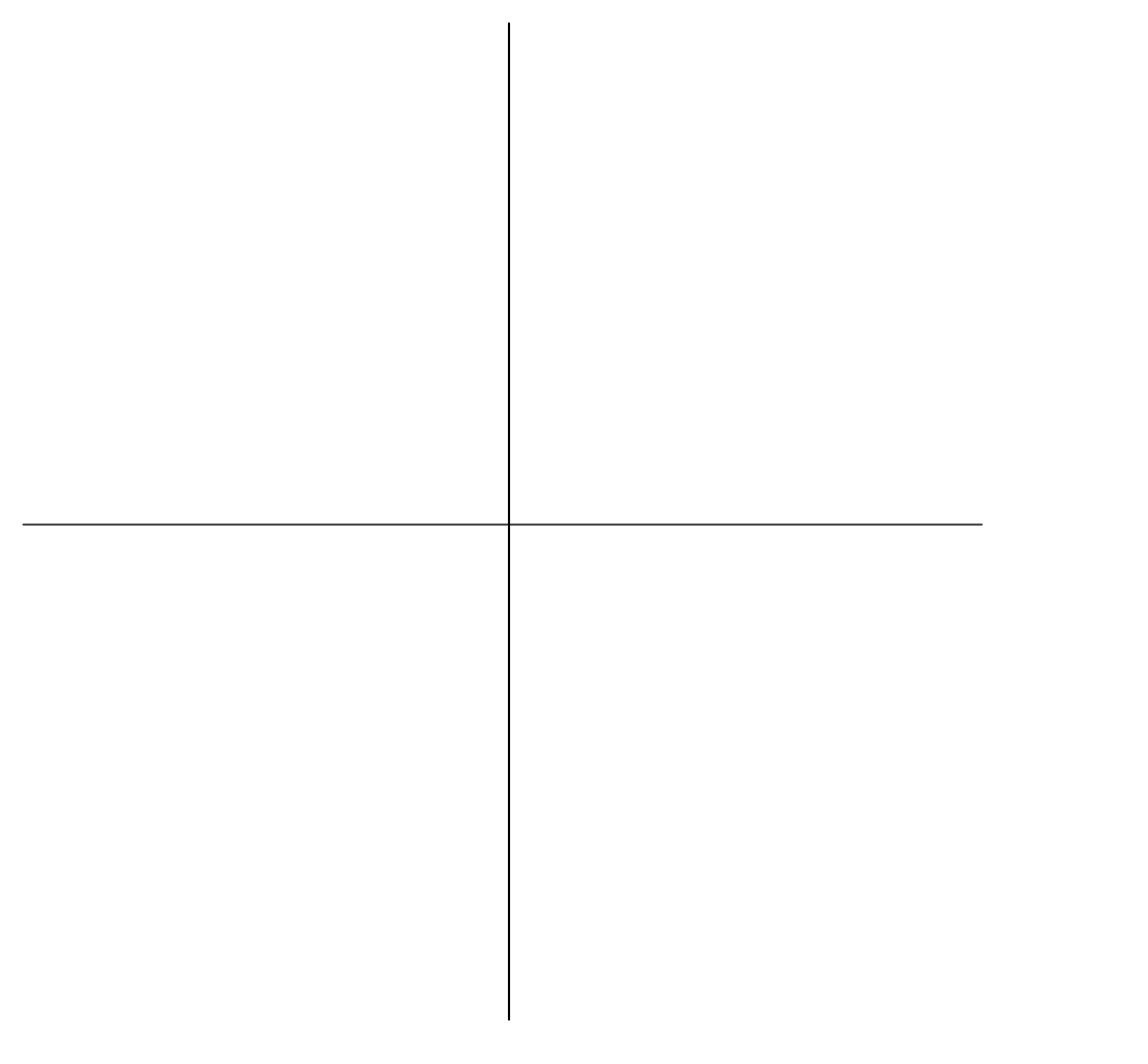
        \caption{The link of a bifurcation of type (C) involving two generalized $\Z/2$-stabilizations. In each quadrant, there are four pairs of arcs which are identified, e.g. the black arcs decorated with a single arrow on the right side of the diagram are glued together. Here, we have color-coded and labeled the various intersection points which correspond to the flow parameters determining the bifurcation type. In this example, $m= 1$, $n = 2$, $k = 2$, $\ell =1$, $c_1 = 1$, $c_2 = 5$, $s = 1$, and $t = 0$.}
    \label{fig:hd_C_Z2_Z2}
    \end{figure}

    Assume now that the orbit types of $p_1$ and $p_2$ differ: concretely assume that $p_1$ corresponds to a $\Z/2$-stabilization and $p_2$ corresponds to a $\{1\}$-stabilization. Suppose that there are $c$ flows from $p_1$ to $p_2$, $r$ flows from $p_1$ to $\tau(p_1)$, $n$ (and $k$) flows from index 1 critical points to $p_1$ (and $p_2$, respectively), $m$ (and $\ell$) flows from $p_1$ (and $p_2$, respectively) to index 2 critical points.

    There are four strata: $S_1$ and $S_3$ are $\Z/2$-stabilizations of types $(n + t\ell + r + t, m, r+t)$ and $(n+r+\ell, m, r)$ while $S_2$ and $S_4$ are $\{1\}$-stabilizations. Unlike the previous case, the gradient $(f_0, v_0)$ is not separable, but we construct the Heegaard diagrams in essentially the same way: we choose some point $\mu_1 \in C_1$ and fix $\Sigma_1 \in \Sigma^R(f_{\mu_1}, v_{\mu_1})$; we then use $\Sigma_1$ to construct surfaces $\Sigma_2$ and $\Sigma_4$ which represent the $\Z/2$- and $\{1\}$-stabilized diagrams, respectively; finally, we use $\Sigma_2$ to construct a surface $\Sigma_3$ which is obtained from it via a $\{1\}$-stabilization, and we use $\Sigma_4$ to construct $\Sigma_3'$  as a $\Z/2$-stabilization. Then, we choose the (non-minimal) link exactly as above, with five vertices and five edges (four of which are transverse to the four codimension-1 strata and one which is contained in $C_3$). See \Cref{fig:hd_C_Z2_1}.

    \begin{figure}[h]
    \def\svgwidth{.8\linewidth}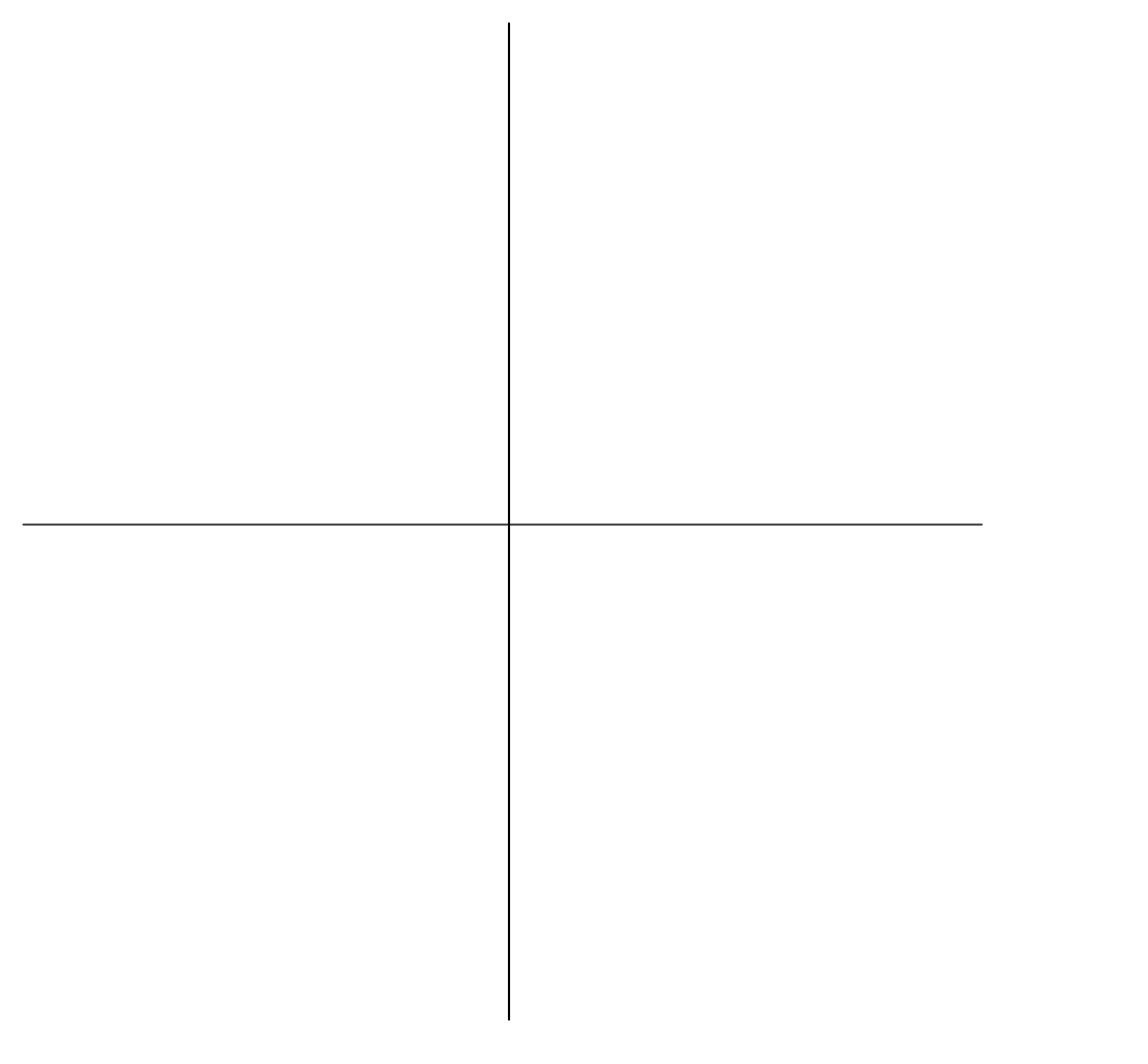
        \caption{The link of a bifurcation of type (C) involving a generalized $\Z/2$-stabilization and a generalized $\{1\}$-stabilization. Again, there is a pair of green arcs which are identified and represent portions of the fixed point set. In this example, $m = 3$, $n = 2$, $\ell = 1$, $t = 1$, $c = 1$.}
    \label{fig:hd_C_Z2_1}
    \end{figure}

    The case that $p_1$ and $p_2$ both appear in trivial orbits is comparatively much more straightforward; since the two singularities both appear on $C$, there can be no flows connecting the two degenerate critical points. Therefore, the bifurcation diagram has four strata, meeting at the origin. Suppose there are $k$ and $\ell$ flows from index 1 critical points to $p_1$ and $p_2$ respectively (and symmetrically $k$ and $\ell$ flows from $p_1$ and $p_2$ to index 2 critical points). Then, $S_1$ and $S_3$ correspond to $\{1\}$-stabilizations $H_1 \ra H_2$ and $H_4 \ra H_3$ of type $(k)$ and $S_2$ and $S_4$ correspond to stabilizations $H_1 \ra H_4$ and $H_2 \ra H_3$ of type $(\ell)$. The link is obtained in the same way as the previous two cases. See \Cref{fig:hd_c_1_1}.\\

    \begin{figure}[h]
    \def\svgwidth{.8\linewidth}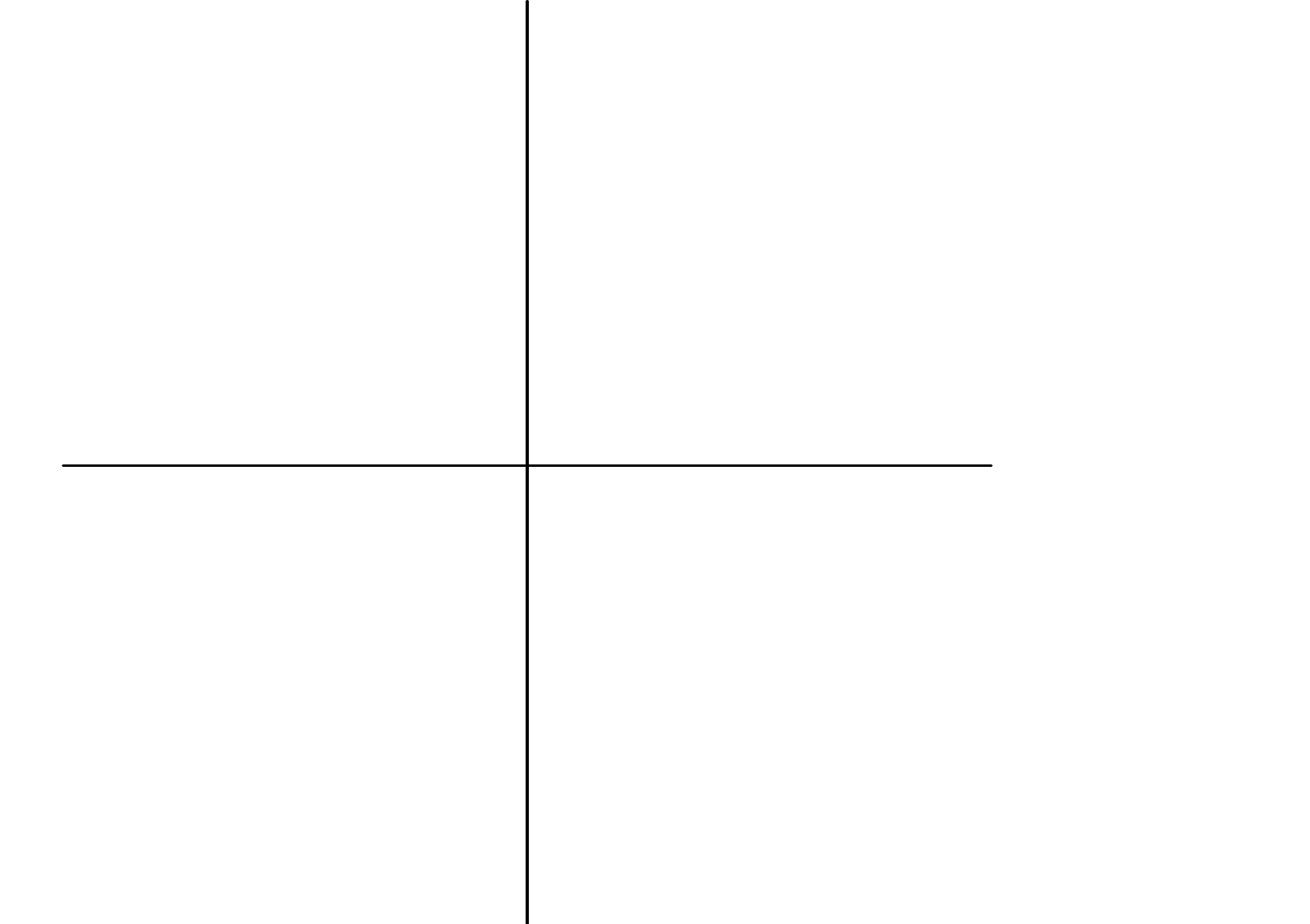
        \caption{The link of a bifurcation of type (C) involving two generalized $\{1\}$-stabilizations. In this example, $k = 1$ and $\ell = 2$.}
    \label{fig:hd_c_1_1}
    \end{figure}

    \noindent \textbf{Type (D) bifurcations:} Next, we consider bifurcations of type (D), i.e., singularities with (real) codimension 2. 
    
    \textbf{(D1):} A pair consisting of an index 1-2-1 ($A_3^+$) and an index 2-1-2 ($A_3^-$) degenerate critical point. In this case, $r = 2$ and on the stabilized side there are three index 1 critical points, $p_1$, $p_2$, and $p_3$, and three index 2 critical $\tau(p_1)$, $\tau(p_2)$, and $\tau(p_3)$. The critical point $p_1$ can cancel with $\t(p_2)$ or $\t(p_3)$ (and symmetrically $\tau(p_1)$ can cancel with $p_2$ or $p_3$). We assume that there are $c_2$ and $c_3$ flows from $p_2$ and $p_3$ to $\t(p_2)$ and $\t(p_3)$ as well as $k$ and $\ell$ flows from $p_2$ and $p_3$ to index 2 critical points (excluding the two flow lines to $\t(p_2)$). Then, the two $\Z/2$-stabilizations passing $S_1$ and $S_2$ are of types $(1,k+c_2,c_2)$ and $(1, \ell+c_3,c_3)$, respectively. See \Cref{fig:hd_d1}.

    \begin{figure}[h]
    \def\svgwidth{.8\linewidth}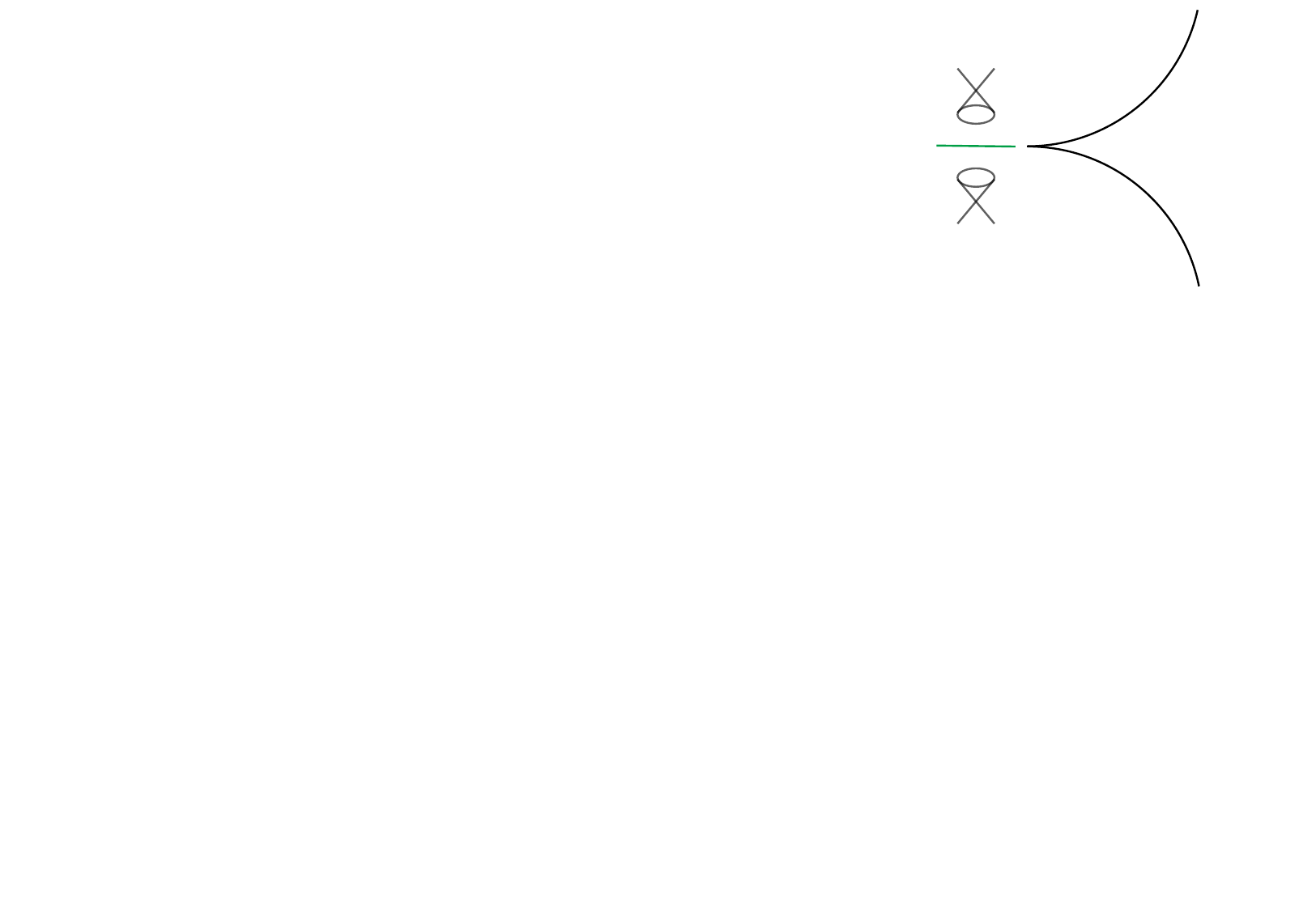
        \caption{The link of a bifurcation of type (D1). In this example, $c_2 = 2$, $c_3 = 0$, $k = 2$, and $\ell = 3$.}
    \label{fig:hd_d1}
    \end{figure}

    \textbf{(D2):} An $A_4$ degenerate critical point. Here, $r = 3$, and in $S_1$ there are four critical points, $p_1$, $p_2$, $\tau(p_1)$, and $\tau(p_2)$, where $p_1$ and $p_2$ are index 1. There is a unique flow line from $\tau(p_2)$ to $p_1$ which is taken by the involution to the unique flow line from $\tau(p_1)$ to $p_2$ (making possible a $\Z/2$-destabilization) while simultaneously, there is a unique flow line from $\tau(p_2)$ to $p_2$ which is fixed setwise by $\tau$ (making possible a $\{1\}$-destabilization). Say there are $k$ and $\ell$ flows from index 2 critical points into $p_1$ and $p_2$ respectively (not including the flow line from $\tau(p_2)$ to $p_2$). Then, the stratum $S_1$ corresponds to a $\Z/2$-destabilization of type $(k + \ell + 1, \ell +2,1)$, the stratum $S_2$ corresponds to a $\{1\}$-stabilization of type $(k)$, and stratum $S_3$ corresponds to a $\{1\}$-stabilization of type $(\ell+1)$. See \Cref{fig:hd_d2}. \\

    \begin{figure}[h]
    \def\svgwidth{.8\linewidth}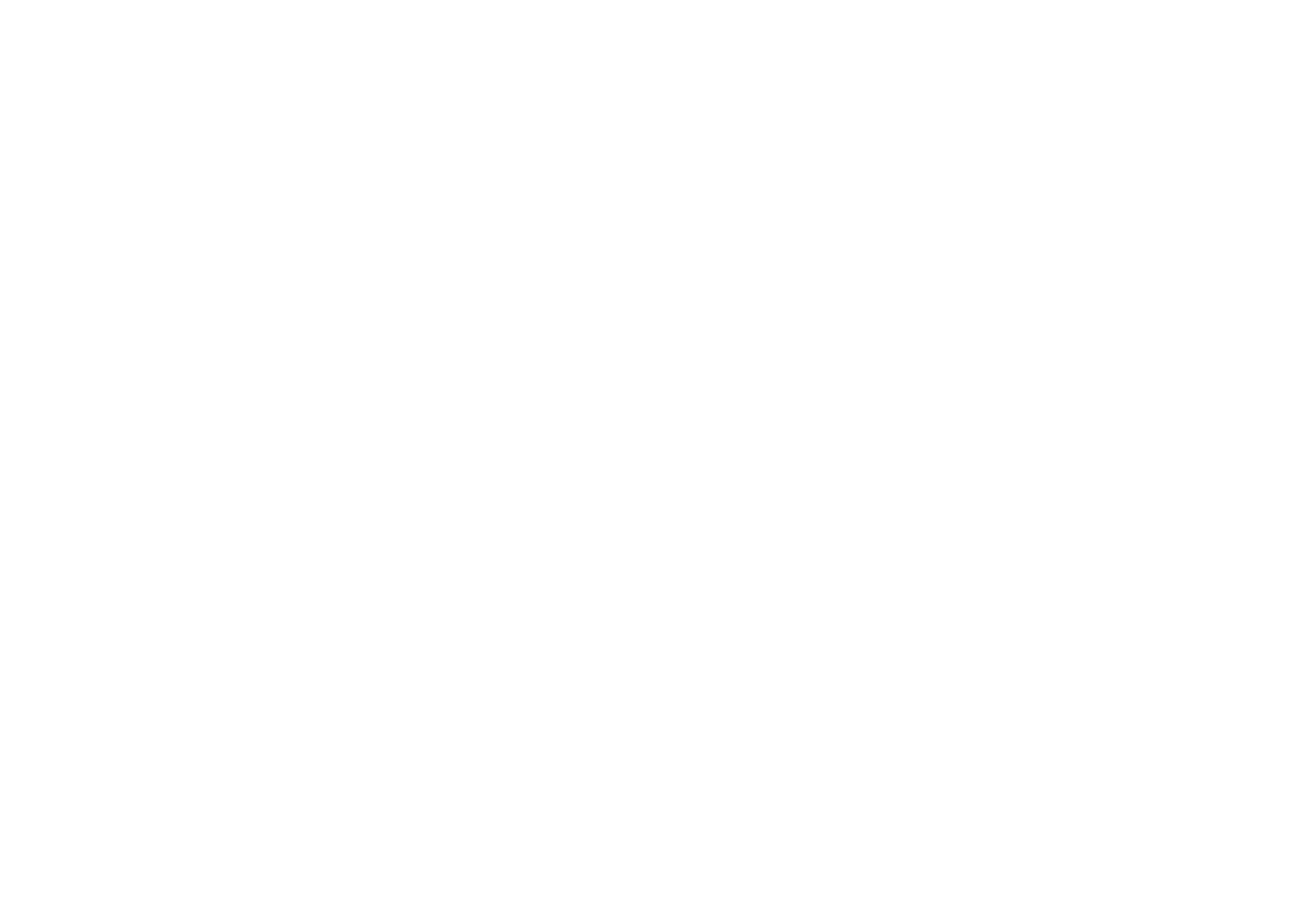
        \caption{The link of a bifurcation of type (D2). In this example $k = 3$ and $\ell = 1$. In the first region, we have highlighted the pair of symmetric tori which are used in the $\Z/2$-destabilization.}
    \label{fig:hd_d2}
    \end{figure}
    
    \noindent \textbf{Type (E) bifurcations:} Type (E) bifurcations correspond to a failure of real quasi-transversality.
    
    \textbf{(E1):} In this case, we have index 1 critical points $p_1^\mu$ and $p_2^\mu$ and a pair of orbits of tangency from $p_1^0$ to $\tau(p_2^0)$ and $p_2^0$ to $\tau(p_1^0)$. There are two cases, depending on whether or not $p_2^\mu = p_1^\mu$. 
        
    Suppose the two orbits are the same, i.e., $p_2^\mu = p_1^\mu$, and that the two flows out of $p_1^0$ connect with the two flows into $\tau(p_1^0)$. Suppose there are $k$ flows from $p_1$ to index 2 critical points and $\ell$ flows from index 1 critical points to $p_1$. Then, $r = k + \ell$ and all strata correspond to $\Z/2$-handleslides. 
    
    As in \Cref{prop: codim 1 to heegaard moves}, let $\tilde{\Sigma}_0$ be the boundary of a thin regular neighborhood $\tilde{N}_0$ of
    \begin{align*}
         \bigcup \{W^s(p): p\in C_0(f_0) \cup C_1(f_0) \smallsetminus \{p_1^0\} \}.
    \end{align*}
    We then construct the various Heegaard diagrams exactly as in \cite{JTZ_naturality_mapping_class_groups}. Choose values $\nu_i$ in $S_i$ and let $A_i$ be a small tube $\partial\nu(W^s(p_1^{\nu_i})) \smallsetminus N_0$ and let $B_i$ be an annulus around $W^s(p_2^{\nu_i})\cap \tilde\Sigma$. Ensure that $A_i$ is small enough that it is disjoint from the stable and unstable manifolds of $p_2^{\nu_i}$. Let $D_i$ and $D_i'$ be the disks $\nu(W^s(p_1^{\nu_i}))\cap \Sigma$. These are the same diagrams constructed in \cite{JTZ_naturality_mapping_class_groups}, and are not symmetric. To obtain real diagrams, we push all of these along the gradient flow. By shrinking $A_i$ if necessary, arrange that $D_i\cap \tau(D_i)$ and $D_i'\cap \tau(D_i')$ are disjoint and define $\Sigma_i$ to be $(\Sigma_0 \smallsetminus (D_i \cup D_i')\cup \tau(D_i \cup D_i'))\cup A_i \cup B_i$. Compare to \Cref{fig:crossover_splitting_surface}. 


    Finally, to construct the link of the singularity, we choose arcs $a^{i \ra i+1}$ transverse to $S_i$ short enough that $D_i \cap W^s(p_2^\mu) =  \emptyset$ for every $\mu \in a^{i \ra i+1}$. Take $\partial a^{i \ra i+1}= \{\mu_i, \mu_{i+1}'\}$ to be vertices of our link, and complete the link by choosing any arcs connecting $\mu_i$ to $\mu_i'$. The resulting Heegaard diagrams are all identical away from the tubes $A_i$ and $B_i$, and near those tubes, the loop of diagrams is shown in \Cref{fig:hd_e1}.


    The case that $p_1^\mu$ and $p_2^\mu$ are distinct is similar. We construct the Heegaard diagram in the same way, following the construction in \cite{JTZ_naturality_mapping_class_groups}, and pushing forward diagrams by the gradient flow to obtain invariant ones. In this case, however, as the annuli $A_i$ traverse the annulus $B$, there are simultaneously annuli $B_i := \tau(A_i)$ which are looping around $A:= \tau(B)$. 

    Suppose that there are $k$ flows from $p_1$ to index 2 critical points, $r$ flows from $p_2^\mu$ to $\tau(p_2^\mu)$, and $\ell$ additional flows from $p_2^\mu$ to other index 2 critical points. The bifurcation diagram consists of $(k + \ell+r)$ strata which meet at a point. When we resolve the diagram, the $k$ flows out of $p_1$ will glue to $\tau(p_1)$ to become flows from $\tau(p_2^\mu)$ to index-2 critical points, corresponding to handleslides; the $\ell$ flows out of $p_2$ glue to the flow from $\tau(p_1^0)$ to $p_2^0$ and also give rise to handleslides; finally, the $r$ flows from $p_2^0$ to $\tau(p_2^0)$ glue to become crossovers between $p_1^\mu$ and $\tau(p_1^\mu)$. See \Cref{fig:hd_e1} for an example. 

    \begin{figure}[h]
    \def\svgwidth{.8\linewidth}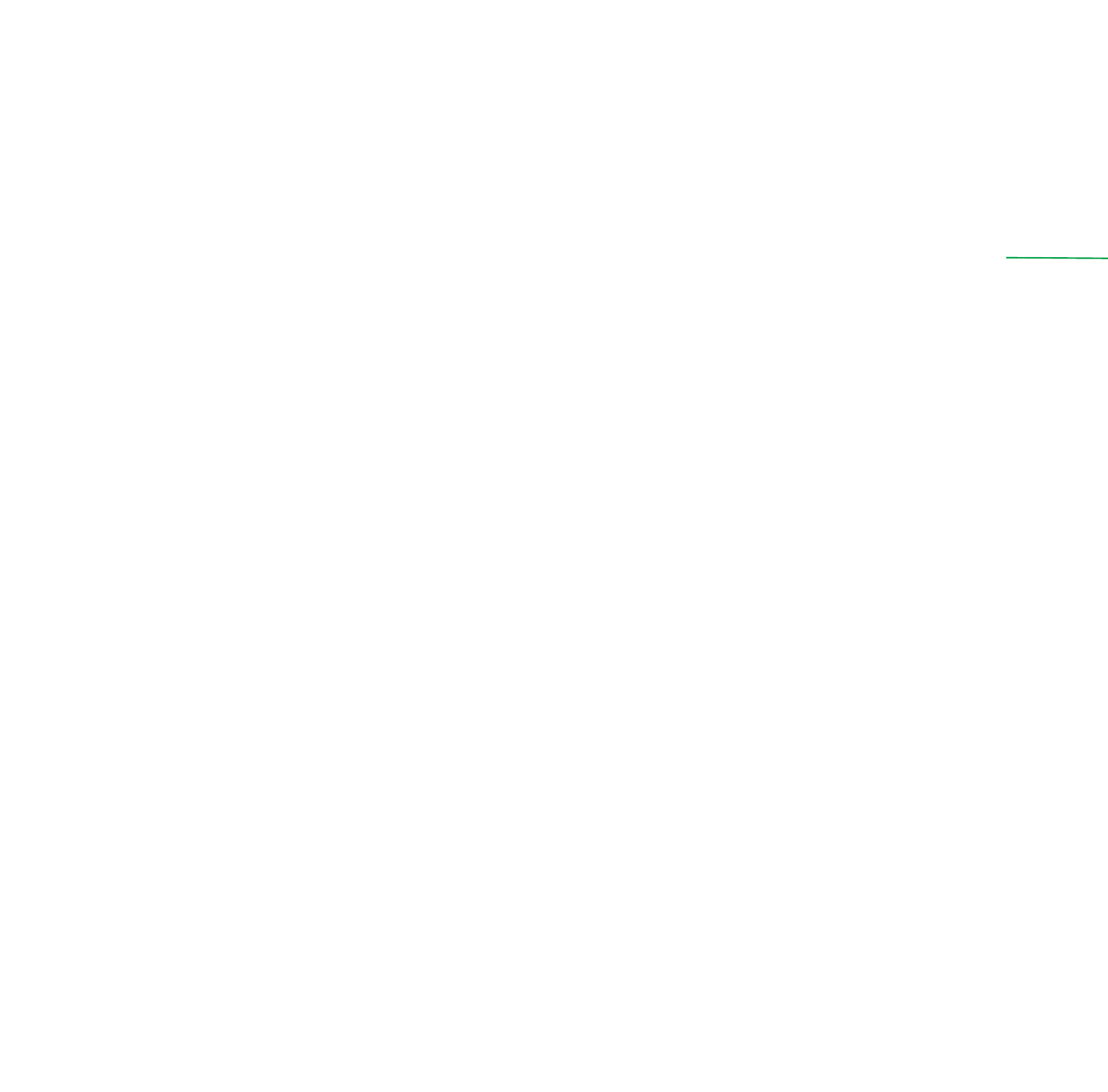
        \caption{The link of a bifurcation of type (E1). Here, we have drawn the handleswap around the annulus $A$. Each time the foot crosses $C$, the annuli $A$ and $B$ intersect, though for clarity we have only drawn $A$. In this example, $k = 2$, $\ell = 3$, and $r = 2$.}
    \label{fig:hd_e1}
    \end{figure}

    \begin{rem}
        We note that the first case is a particular case of the second; we take all $A_i$ to be isotopic to $A$, and all $B_i$ to be isotopic to $B$. In this case, we see that one foot of $A$ is looping around one foot of $B$ while symmetrically the second foot of $B$ loops the second foot of $A$. 
    \end{rem}

    \textbf{(E2):} In this case, $p_1^0$ is a hyperbolic singularity of index 1 with a $\{1\}$-orbit of tangency $\gamma^0$ from $\tau(p_1^0)$ to $p_1^0$. Additionally, there is a flow from $p_1^0$ to an index $2$ critical point $p^*$ which is tangent to the directional manifold of $p_1^{(t,0)}$, where $t \in (-\ep, \ep)$ for $\ep > 0$ small. For $y < 0$ close to $0$, the directional manifold $D(p_1^{(t,y)}, \gamma^\mu)$ partitions the flows from $p_1^0$ to index 2 critical points into two subsets of size $k+1$ and $\ell$, while for small $y > 0$ the directional manifold $D(p_1^{(t,y)},\gamma^\mu)$ partitions the flows into two subsets of size $k$ and $\ell+1$.

    The bifurcation diagram has three strata which meet in a point: $S_1$ and $S_3$ are crossovers of types $(k+1, \ell)$ and $(k, \ell+1)$, respectively, while $S_2$ is a handleslide. See \Cref{fig:hd_e2}.

    \begin{figure}[h]
    \def\svgwidth{.8\linewidth}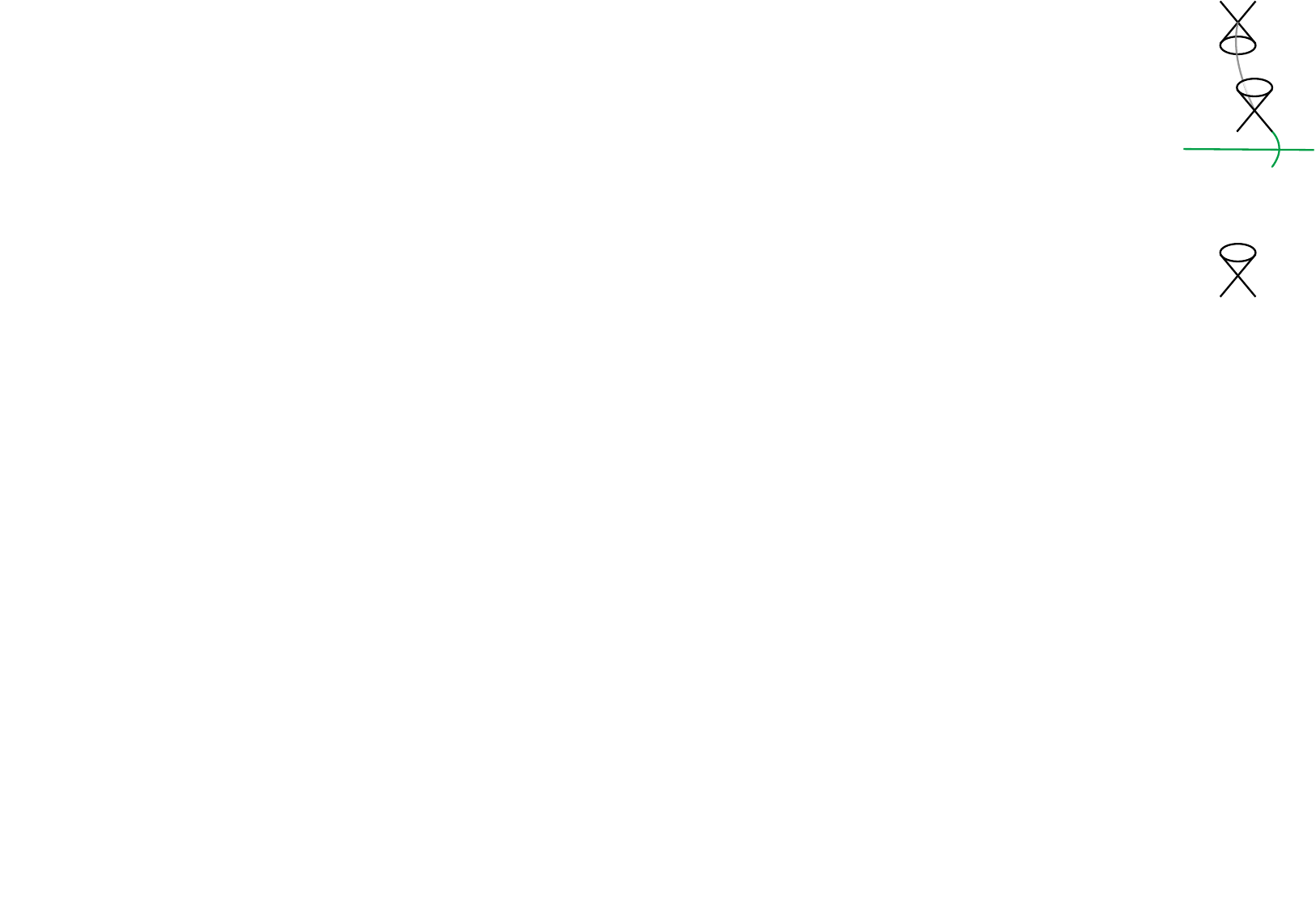
        \caption{The link of a bifurcation of type (E2). In this example, $k =\ell = 2$.}
    \label{fig:hd_e2}
    \end{figure}
    
\end{proof}

\section{Simplifying moves and loops of real Heegaard diagrams}\label{sec:reducing moves}
In this section, we decompose the moves and loops appearing in the previous sections into the simpler moves appearing in the definition of a strong real Heegaard invariant. Throughout, we will work with overcomplete diagrams, only later choosing spanning trees to obtain actual diagrams. 

In the first subsection, we describe resolutions of the basic codimension-1 strata, and discuss some relations between possible resolutions. In the remaining subsections, we return to codimension-2 bifurcations. We will resolve type (A) bifurcations first as well as type (E2) bifurcations;  the remaining codimension-2 bifurcations will typically be reduced to these cases. We will then turn to the resolution of real handleslides, which will ultimately be reduced to the unreal case. Finally, we simplify bifurcations of type (D2).

Let us briefly recall the combinatorial structures from \cite{JTZ_naturality_mapping_class_groups} which are useful in simplifying loops of real Heegaard diagrams. We  state the relevant lemmas without proofs, as they are identical to those in the unreal case.

\begin{definition}\cite[Definition 7.1]{JTZ_naturality_mapping_class_groups} \label{def:polyhedral}
  A \emph{polyhedral decomposition} of $D^2$ is a regular CW decomposition of~$D^2$
  (i.e., the attaching map of every cell is an embedding) such that every closed
  1-cell is smoothly embedded in~$D^2$.

  A \emph{bordered polyhedral decomposition} of $D^2$ is a partition
  of $D^2$ that arises as follows:
  Choose a polyhedral decomposition of~$D^2$ such that every 0-cell in the boundary~$S^1$
  has valence~3, every closed 1-cell not contained in~$S^1$
  intersects~$S^1$ in at most one 0-cell, and every
  2-cell intersects~$S^1$ in at most one 1-cell.
  Then take the union of each open $i$-cell in $S^1$ with the
  neighboring open $(i+1)$-cell in $\text{Int}(D^2)$ for $i \in
  \{0,1\}$ (where an open 0-cell is just a 0-cell).
  We call these \emph{bordered $(i+1)$-cells}.

  Let $\text{sk}_i$ denote the $i$-skeleton. A polyhedral decomposition $\mathcal{P}$ and a bordered polyhedral
  decomposition $\mathcal{R}$ are \emph{dual} if
  $\text{sk}_0(\mathcal{P}) \cap \text{sk}_1(\mathcal{R}) = \emptyset$
  and $\text{sk}_0(\mathcal{R}) \cap \text{sk}_1(\mathcal{P}) = \emptyset$,
  in each 2-cell of $\mathcal{P}$ there is a unique vertex of $\mathcal{R}$,
  and in each (bordered) 2-cell of $\mathcal{R}$ there is a unique vertex of $\mathcal{P}$.
  Furthermore, for each 1-cell~$e$ of~$\mathcal{P}$, we have $|e \cap \text{sk}_1(\mathcal{R})| = 1$,
  and for each (bordered) 1-cell~$e^*$ of~$\mathcal{R}$, we have $|e^* \cap \text{sk}_1(\mathcal{P})| = 1$.
  For an example, see  \cite[Figure 31]{JTZ_naturality_mapping_class_groups}.
\end{definition}

\begin{definition}\cite[Definition 7.2]{JTZ_naturality_mapping_class_groups} \label{def:stratification}
  We say that the partition $\mathfrak{S} = V_0 \sqcup V_1 \sqcup V_2$
  is a \emph{bordered stratification} of the disk $D^2$ if the
  following hold:
  \begin{enumerate}
  \item $V_0$ is a finite set of points in the interior of $D^2$,
  \item $V_1$ is a properly embedded 1-dimensional submanifold-with-boundary of $D^2
    \setminus V_0$, and
  \item each point $x \in V_0$ has a neighborhood $N_x$ such
    that the pair $(N_x, V \cap N_x)$ is homeomorphic to a cone
    $(D^2,I \cdot H)$ for some finite set $H \subset S^1$, where $V = V_0 \cup V_1$,
    and $I \cdot H$ is the union of the line segments connecting the origin with each point of~$H$.
  \end{enumerate}

  A bordered polyhedral decomposition
  $\mathcal{R}$ of $D^2$ is a \emph{refinement of} $\mathfrak{S}$ if
  $\text{sk}_i(\mathcal{R}) \supset V_i$ for $i \in \{0,1\}$.
  We say that a polyhedral decomposition of $D^2$ is \emph{dual to
  $\mathfrak{S}$} if it is dual to some bordered polyhedral
  decomposition $\mathcal{R}$ refining $\mathfrak{S}$.
  For an example, see  \cite[Figure 31]{JTZ_naturality_mapping_class_groups}.
\end{definition}

Of course, a generic 2-parameter family of real gradients $\cF: D^2 \ra \FV(Y, \gamma, \tau)$ gives rise to a bordered stratification of $D^2$ by defining
\begin{align*}
    V_i = \{\mu \in D^2: \cF(\mu) \in \FV_{2-i}(Y, \gamma, \tau)\}
\end{align*}
for $i = 0, 1, 2$.

\begin{definition}\cite[Definition 7.3]{JTZ_naturality_mapping_class_groups}\label{def:adapted}
  A polyhedral decomposition $\mathcal{P}$ of $D^2$ is \emph{adapted
    to the family $\mathcal{F}$} if
  \begin{enumerate}
  \item $\mathcal{P}$ is dual to $\mathfrak{S}(\mathcal{F})$,
  \item \label{item:short} each edge intersecting $V_1$ is so short
    that \Cref{prop: codim 1 to heegaard moves} applies to it,
  \item if $\overline{\mu} \in V_0$ and $\sigma$ is the 2-cell of $\mathcal{P}$
    containing $\overline{\mu}$, then $\partial \sigma$ is a link of $\overline{\mu}$ as in \Cref{thm:codim2-to-heegaard-loops},
  \item every 2-cell $\sigma$ of $\mathcal{P}$ that intersects $V_1$
    but is disjoint from $V_0$ is a quadrilateral, and $\sigma \cap
    V_1$ is an arc connecting opposite sides of $\sigma$,
  \item \label{item:disjoint-edges} any two closed 2-cells of
    $\mathcal{P}$ containing two different points of $V_0$ are
    disjoint, and any two closed 1-cells of $\mathcal{P}$ that
    intersect $V_1 \setminus S^1$ are either disjoint, or they both
    belong to a 2-cell containing a point of $V_0.$
  \end{enumerate}
\end{definition}

\begin{lemma}\cite[Lemma 7.4]{JTZ_naturality_mapping_class_groups} \label{lem:adapted} Let $\mathcal{F} \colon D^2 \to
  \FV(Y,\gamma, \tau)$ be a generic 2-parameter family. Then there exists a
  polyhedral decomposition $\mathcal{P}$ of $D^2$ adapted to
  $\mathcal{F}$. Furthermore, given a triangulation of $S^1$ such that
  each 1-cell contains at most one bifurcation point of $\mathcal{F}$
  and satisfies Condition~(\ref{item:short}) of
  Definition~\ref{def:adapted}, then we can choose $\mathcal{P}$
  such that it extends this triangulation.
\end{lemma}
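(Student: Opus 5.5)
The plan is to follow the proof of \cite[Lemma 7.4]{JTZ_naturality_mapping_class_groups}, since the statement concerns only the combinatorics of the bordered stratification $\mathfrak{S}(\mathcal{F})$ of $D^2$ and the local information from \Cref{prop: codim 1 to heegaard moves} and \Cref{thm:codim2-to-heegaard-loops}. I first check that $\mathfrak{S}(\mathcal{F})$ really is a bordered stratification in the sense of \Cref{def:stratification}. For a generic family, $V_0$ is a finite set of interior points: these are the codimension-2 bifurcations (A)--(E) of \Cref{subsec:codim 2 birfurcations}, which are isolated. The set $V_1$ is a properly embedded $1$-manifold away from $V_0$ (the codimension-1 strata). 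Near each $\overline{\mu}\in V_0$, the description of the strata $S_1,\dots,S_r$ emanating from $0$ gives the required cone structure $(D^2, I\cdot H)$. The real setting only changes which local pictures occur, not this topological structure.

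Next I build a bordered polyhedral decomposition $\mathcal{R}$ refining $\mathfrak{S}$. Around each $\overline{\mu}\in V_0$, take a small disk $D_{\epsilon}(\overline{\mu})$ with $\epsilon$ small enough that \Cref{thm:codim2-to-heegaard-loops} applies and these disks are pairwise disjoint. Subdivide $V_1$ outside these disks into short arcs, each short enough that \Cref{prop: codim 1 to heegaard moves} and its $2$-parameter analogue hold along a transverse edge. Then add extra edges to $\mathcal{R}$ until it is a regular CW decomposition satisfying the border conditions. On $S^1$, I use the given triangulation and attach to its $0$-cells edges of valence $3$.

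I then take $\mathcal{P}$ to be the dual decomposition: one vertex in each (bordered) $2$-cell of $\mathcal{R}$, and one edge crossing each (bordered) $1$-cell. I choose the vertices so that:
\begin{itemize}
\item the dual $2$-cell around each $\overline{\mu}\in V_0$ has boundary equal to a (possibly non-minimal) surface-enhanced link as in \Cref{thm:codim2-to-heegaard-loops};
\item every $2$-cell meeting $V_1$ but not $V_0$ is a quadrilateral crossed by a single arc of $V_1$ joining opposite sides.
\end{itemize}
In the real setting the links constructed in \Cref{thm:codim2-to-heegaard-loops} often have two vertices in a chamber, separated by a diffeomorphism ray. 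I would include those rays as extra $1$-cells of $\mathcal{R}$ inside the cone neighborhood, so that the dual $2$-cell boundary matches the link exactly. Condition (\ref{item:disjoint-edges}) follows by shrinking the edges crossing $V_1$ and keeping the neighborhoods of distinct points of $V_0$ separated. Near $S^1$, the dual cells are chosen to extend the prescribed triangulation, which is possible by the assumption that each boundary $1$-cell contains at most one bifurcation point and already satisfies (\ref{item:short}).

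I expect the main obstacle to be compatibility near $V_0$. The cone neighborhood must be small enough that the link of \Cref{thm:codim2-to-heegaard-loops} exists, yet its boundary must also serve as the boundary of a single $2$-cell of $\mathcal{P}$ whose edges meet $V_1$ exactly once each. The edges of this link that cross $V_1$ must also be short in the sense of (\ref{item:short}). I would handle this by first fixing the links, taking $\epsilon$ uniformly small, and only afterwards constructing $\mathcal{R}$ outside $\bigcup D_{\epsilon}(\overline{\mu})$. Since the complement is a planar region containing only codimension-$\le 1$ strata, the remaining construction is routine.
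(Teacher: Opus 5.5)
Your proposal is correct and is essentially the paper's approach: the paper states this lemma without proof, deferring to \cite[Lemma 7.4]{JTZ_naturality_mapping_class_groups} because the construction only involves the stratification of $D^2$. Your device of adding the diffeomorphism rays as extra $1$-cells of $\mathcal{R}$ is the right way to fit the non-minimal real links into that argument. One wording fix: for $\mathcal{P}$ to extend the given triangulation of $S^1$, the boundary $0$-cells of $\mathcal{R}$ must sit one in the interior of each $1$-cell of that triangulation (at the bifurcation point when there is one), not at its $0$-cells; this is exactly where the hypothesis of at most one bifurcation point per $1$-cell is used.
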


\begin{definition}\cite[Definition 7.5]{JTZ_naturality_mapping_class_groups} \label{def:coherent-surfaces} Let $\mathcal{F}
  \colon D^2 \to \FV(Y,\gamma, \tau)$ be a generic 2-parameter family and
  $\mathcal{P}$ an adapted polyhedral decomposition. A choice of
  Heegaard surfaces
  \[
  \{\, \Sigma_\mu \in \Sigma^R(\mathcal{F}(\mu)) \,\colon\, \mu \in
  \text{sk}_0(\mathcal{P}) \,\}
  \]
  is \emph{coherent} with $\mathcal{P}$ if, for every edge $e$ of
  $\mathcal{P}$ with $\partial e = \mu - \mu'$, the isotopy diagrams
  $[H(\mathcal{F}(\mu),\Sigma_\mu)]$ and $[H(\mathcal{F}(\mu'),\Sigma_{\mu'})]$
  are related as indicated by the label of $e$. A \emph{surface-enhanced
    polyhedral decomposition of $D^2$ adapted to
    $\mathcal{F}$} is a polyhedral decomposition of $D^2$ adapted to
  $\mathcal{F}$, together with a coherent choice of Heegaard surfaces.
\end{definition}

\begin{lemma}\cite[Lemma 7.6]{JTZ_naturality_mapping_class_groups} \label{lem:coherent} Let $\mathcal{F} \colon D^2 \to
  \FV(Y,\gamma, \tau)$ be a generic 2-parameter family, and suppose that
  $\mathcal{P}$ is a polyhedral decomposition of $D^2$ adapted to
  $\mathcal{F}$.  If we are given Heegaard surfaces $\Sigma_\mu \in
  \Sigma^R(\mathcal{F}(\mu))$ for $\mu \in \text{sk}_0(\mathcal{P}) \cap
  S^1$ such that, for every edge $e$ of
  $\mathcal{P}$ in $S^1$ with $\partial e = \mu - \mu'$, the isotopy diagrams
  $[H(\mathcal{F}(\mu),\Sigma_\mu)]$ and $[H(\mathcal{F}(\mu'),\Sigma_{\mu'})]$
  are related as indicated by the label of $e$, then this can be
  extended to a choice of Heegaard surfaces coherent with
  $\mathcal{P}$.
\end{lemma}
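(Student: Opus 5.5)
The plan is to follow the proof of \cite[Lemma 7.6]{JTZ_naturality_mapping_class_groups}. The surfaces are chosen vertex by vertex, and for each edge type we check that the label is realized. The real-specific input is that the moves which are not separable ($\{1\}$-stabilizations and crossovers) only ever occur on edges crossing $V_1$. On all remaining edges the label is either ``diffeomorphism isotopic to the identity'' or a move that can be realized with one common surface.

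First, edges of $\mathcal{P}$ that do not meet $V_1$ join two vertices in the same chamber. For these, \Cref{lem:sym_6.21} shows that any choice of $\Sigma_\mu$ and $\Sigma_{\mu'}$ gives isotopy diagrams related by an equivariant diffeomorphism isotopic to the identity. This is exactly the label of such an edge, so these edges impose no constraint. Consequently, a surface at a vertex is constrained only by the edges through that vertex which cross $V_1$.

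Second, consider an edge $e$ crossing $V_1$ away from $V_0$. By condition~(\ref{item:disjoint-edges}) of \Cref{def:adapted}, any two such edges are disjoint unless both lie in a 2-cell containing a point of $V_0$. So, outside the 2-cells around $V_0$, each vertex has at most one such edge, and these edges can be treated independently.
\begin{itemize}
\item If the bifurcation on $e$ is separable (a handleslide, a $\Z/2$-stabilization, or a tangency giving an isotopy), we choose at one endpoint either the given surface (if that endpoint lies on $S^1$) or an arbitrary surface in $\Sigma^R$. At the other endpoint we use the same surface, or, for a $\Z/2$-stabilization, that surface with the symmetric pair of tubes attached. \Cref{prop: codim 1 to heegaard moves} then gives the required move.
\item If the bifurcation is a $\{1\}$-stabilization or a crossover, we construct the surfaces at both endpoints simultaneously as in the proof of \Cref{prop: codim 1 to heegaard moves}: push forward non-real separating surfaces along the rescaled flow, as in \Cref{lem:real splittings contractible}.
\item If one endpoint is a boundary vertex with a prescribed surface, we first move that surface to the constructed one by an equivariant diffeomorphism isotopic to the identity, using \Cref{lem:sym_6.21} and \Cref{lem:sym_isotopy_extension}. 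We then transport the construction at the other endpoint by the same diffeomorphism. Since isotopy diagrams are insensitive to isotopic diffeomorphisms, the edge label is preserved.
\end{itemize}

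Third, for each 2-cell $\sigma$ containing $\overline{\mu}\in V_0$, the boundary $\partial\sigma$ is a link of $\overline{\mu}$. Here we use the surface-enhanced link supplied by \Cref{thm:codim2-to-heegaard-loops}. That link was built with additional diffeomorphism rays in every chamber, and these are exactly edges of the first kind, so the construction is compatible with $\mathcal{P}$. Where a vertex of $\partial\sigma$ lies on $S^1$ with a prescribed surface, we again conjugate the whole link construction by an equivariant diffeomorphism isotopic to the identity carrying the link surface to the prescribed one. Because the closed 2-cells around distinct points of $V_0$ are disjoint, these choices do not conflict with each other or with the second step.

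The main obstacle I expect is the second step in the non-separable case: it requires that the two surfaces on either side of a $\{1\}$-stabilization or crossover stratum can be chosen so that one of them is a prescribed surface. I would resolve this with the transport trick above. It relies on the contractibility statements (\Cref{lem:real splittings contractible} and \Cref{lem:sym_6.21}) to guarantee that the chosen connecting diffeomorphism is isotopic to the identity and that the resulting isotopy diagram is independent of that choice.
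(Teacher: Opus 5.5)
Your overall architecture is the one the paper relies on. The paper simply defers to the proof of \cite[Lemma 7.6]{JTZ_naturality_mapping_class_groups}, and that proof runs as yours does: the 2-cells around $V_0$ get surfaces from \Cref{thm:codim2-to-heegaard-loops}; the endpoints of the remaining $V_1$-crossing edges get surfaces from \Cref{prop: codim 1 to heegaard moves}, using the simultaneous construction for $\{1\}$-stabilizations and crossovers; condition~(\ref{item:disjoint-edges}) of \Cref{def:adapted} makes these choices independent; and every other edge is handled automatically by \Cref{lem:sym_6.21}.

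The gap is in your treatment of a prescribed boundary surface that must be matched across an interior $V_1$-crossing edge, or around a link. Your transport step sets $\Sigma_\nu := d(\Sigma'_\nu)$, where $d$ is an equivariant diffeomorphism isotopic to the identity with $d(\Sigma'_\mu)=\Sigma_\mu$. Coherence, however, requires two things:
\begin{enumerate}
\item $\Sigma_\nu\in\Sigma^R(\mathcal{F}(\nu))$, i.e.\ $\Sigma_\nu$ must be transverse to $v_\nu$ and separate $C_{01}$ from $C_{23}$;
\item the diagram at $\nu$ is $H(\mathcal{F}(\nu),\Sigma_\nu)$, whose curves are $\Sigma_\nu\cap W^u$ and $\Sigma_\nu\cap W^s$ for the flow of $v_\nu$.
\end{enumerate}
Your $d$ comes from an isotopy-extension argument adapted to $\mathcal{F}(\mu)$ and has no relation to $v_\nu$. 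So nothing guarantees that $d(\Sigma'_\nu)$ satisfies (1). Even if it did, $H(\mathcal{F}(\nu),d(\Sigma'_\nu))$ equals $d(H(\mathcal{F}(\nu),\Sigma'_\nu))$ only if $d$ preserves the stable and unstable manifolds of $v_\nu$.

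The sentence ``isotopy diagrams are insensitive to isotopic diffeomorphisms'' is false in this framework. An isotopy diagram contains the embedded surface, so $d(H)$ is a different vertex of $\G$, joined to $H$ only by a diffeomorphism edge. The labels you need to realize require the surfaces at the two ends to coincide literally (real equivalence) or to coincide off $D$ and $T$, respectively off $P$ and $P'$ ($\cO$-stabilization, crossover). The same objection applies to conjugating an entire link construction by $d$ in your third step.

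A related problem affects the separable case. You take the prescribed, or an ``arbitrary'', $\Sigma_\mu$ and reuse it at $\nu$. For a handleslide edge the label forces $\Sigma_\nu=\Sigma_\mu$, which requires $\Sigma_\mu\in\Sigma^R(\mathcal{F}(\nu))$. \Cref{prop: codim 1 to heegaard moves} guarantees this only for the particular surface in $\Sigma^R(\mathcal{F}(\mu_0))$ that it produces along the short edge, not for an arbitrary surface at an endpoint.

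The fix is to avoid transport altogether: arrange that prescribed surfaces are only ever matched across edges labelled by diffeomorphisms isotopic to the identity, which \Cref{lem:sym_6.21} handles for free. Since $V_0$ lies in the interior and $V_1$ meets $S^1$ transversely, you can take $\mathcal{P}$ (via \Cref{lem:adapted}, extending the boundary triangulation) to contain a thin collar of quadrilaterals in which $V_1$ is radial. Then the interior edges at boundary vertices lie in chambers, and the 2-cells around $V_0$ avoid $S^1$. After that, every interior $V_1$-crossing edge and every link has all of its vertices free, and your second and third steps apply verbatim with no prescribed data. If instead you want to treat a fully general adapted $\mathcal{P}$, you need an argument that starts from the prescribed surface itself; as noted above, such a surface need not even lie in $\Sigma^R(\mathcal{F}(\nu))$.
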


\subsection{Codimension-1 resolutions} First, we break down generalized stabilizations (of both types) and crossovers. 

The case of generalized $\{1\}$-stabilizations is the most straightforward, and follows just as in \cite[Section 7.1]{JTZ_naturality_mapping_class_groups}. Suppose $H'$ differs from $H$ by a generalized $\{1\}$-stabilization of type $(k)$. Such a stabilization can be replaced by a single simple $\{1\}$-stabilization and $k$ real handleslides. The diagram $H'$ is obtained from $H$ by removing a disk $D$ and replacing it with a punctured torus $T$, which contains a new pair of attaching curves $\alpha, \beta$. Fix an orientation on $\alpha$ (which determines an orientation on $\beta = \tau(\alpha)$) and let $\beta_1, \hdots, \beta_k$ be the beta curves which intersect $\alpha$. By sliding each $\beta_i$ over $\beta$ in the direction opposite the orientation on $\alpha$ (and hence the symmetric slides of the $\alpha_i$ over $\alpha$), we obtain a sequence of diagrams
\begin{align*}
    H' = H_0 \ra H_1 \ra \hdots \ra H_k.
\end{align*}
In $H_k$, $\alpha$ and $\beta$ are disjoint from all other attaching curves, and hence is obtained from $H$ by a simple $\{1\}$-stabilization. 

This construction does depend on the orientation of $\alpha$. However, as in \cite[Lemma 7.8]{JTZ_naturality_mapping_class_groups}, there is a 2-parameter family which interpolates between these two choices. See \Cref{fig:resolution_orientations_triv}. Traversing a horizontal path along the top of the diagram corresponds to the resolution given by one choice of orientation and traversing a horizontal path along the bottom of the diagram corresponds to the other. In their situation, there is also a choice of whether to perform slides of the alpha curves first or the beta curves first; this is irrelevant in our situation, as the two slides happen simultaneously. 

\begin{rem}\label{rem:resolving-to-simple-loops}
     Importantly, every codimension-2 bifurcation value in this 2-parameter family corresponds to one of the simple loops appearing in the definition of a strong real Heegaard invariant. 
\end{rem}

In interpolating between resolutions of codimension-1 strata with different resolutions, we will frequently make use of the following subgraphs of $\cG^R$.

\begin{defn}
    An \emph{$\cO$-stabilization slide} is a subgraph of $\cG^R$ of the form 
    \begin{align*}
        \begin{tikzcd}[ampersand replacement = \&]
            H_1 \ar[r,"e"]\ar[dr,"f"] \& H_2 \ar[d,"g"] \\
            \& H_3
        \end{tikzcd}
    \end{align*}
    such that 
    \begin{enumerate}
        \item $H_i = (\Sigma_i, [\bm \alpha_i], [\bm \beta_i])$ are overcomplete isotopy diagrams for $i \in \{1, 2, 3\}$ so that $\Sigma_2 = \Sigma_3$;
        \item the edges $e$ and $f$ are $\cO$-stabilizations, while $g$ is a real-equivalence;
        \item there is a disk $D \sub \Sigma_1$ and a punctured torus $T \sub \Sigma_2 = \Sigma_3$ so that $H_1|_{D}$, $H_2|_{T}$, and $H_3|_T$ are conjugate to the pictures in \Cref{fig:stabilization_slides};
        \item $H_1|_{\Sigma_1\smallsetminus D} = H_2|_{\Sigma_2\smallsetminus T} = H_3|_{\Sigma_3\smallsetminus T}$.
    \end{enumerate}
\end{defn}

\begin{figure}[h]
    \def\svgwidth{.8\linewidth}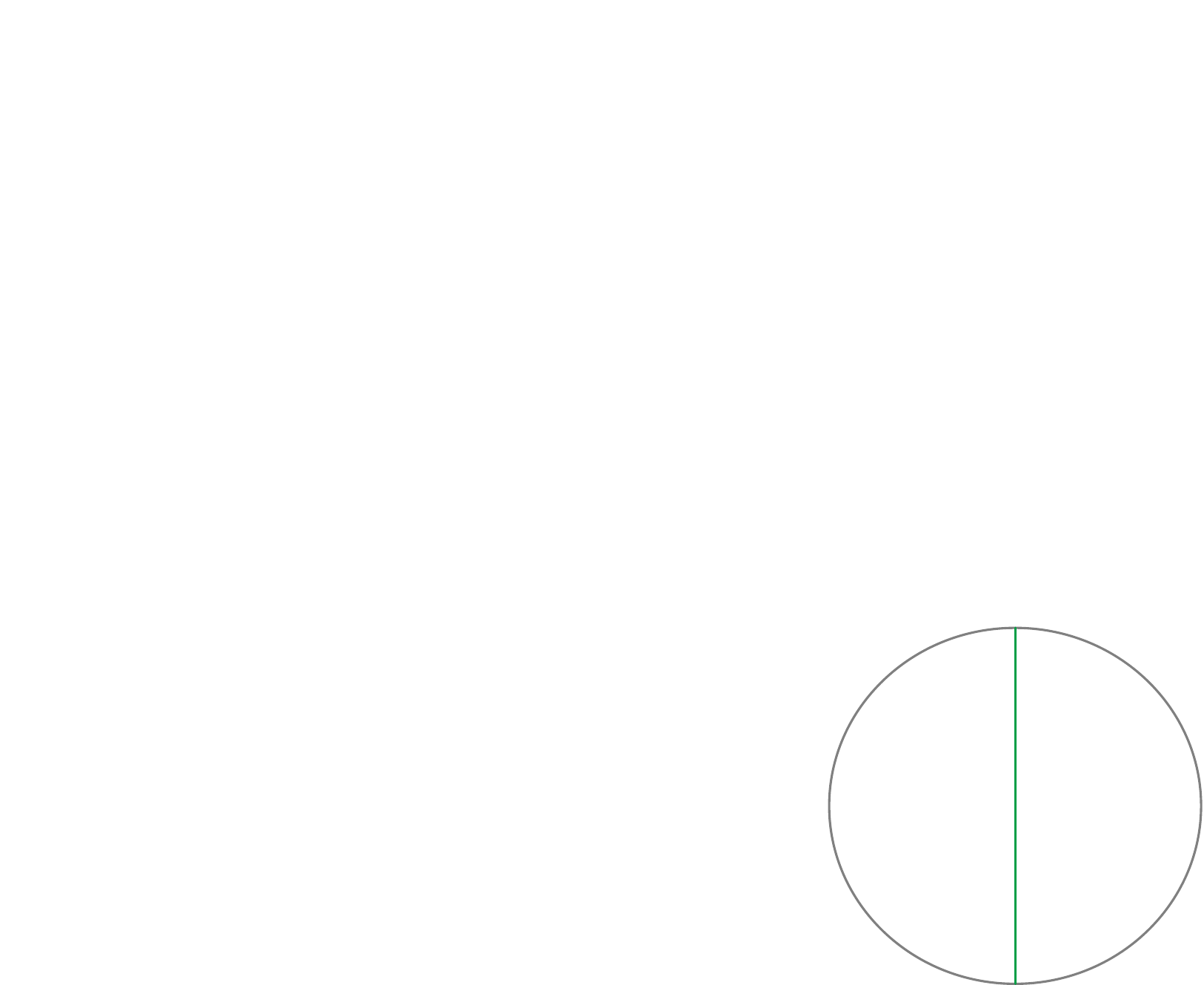
        \caption{The two kinds of stabilization slides.}
    \label{fig:stabilization_slides}
    \end{figure}

    \begin{figure}[h]
    \def\svgwidth{.8\linewidth}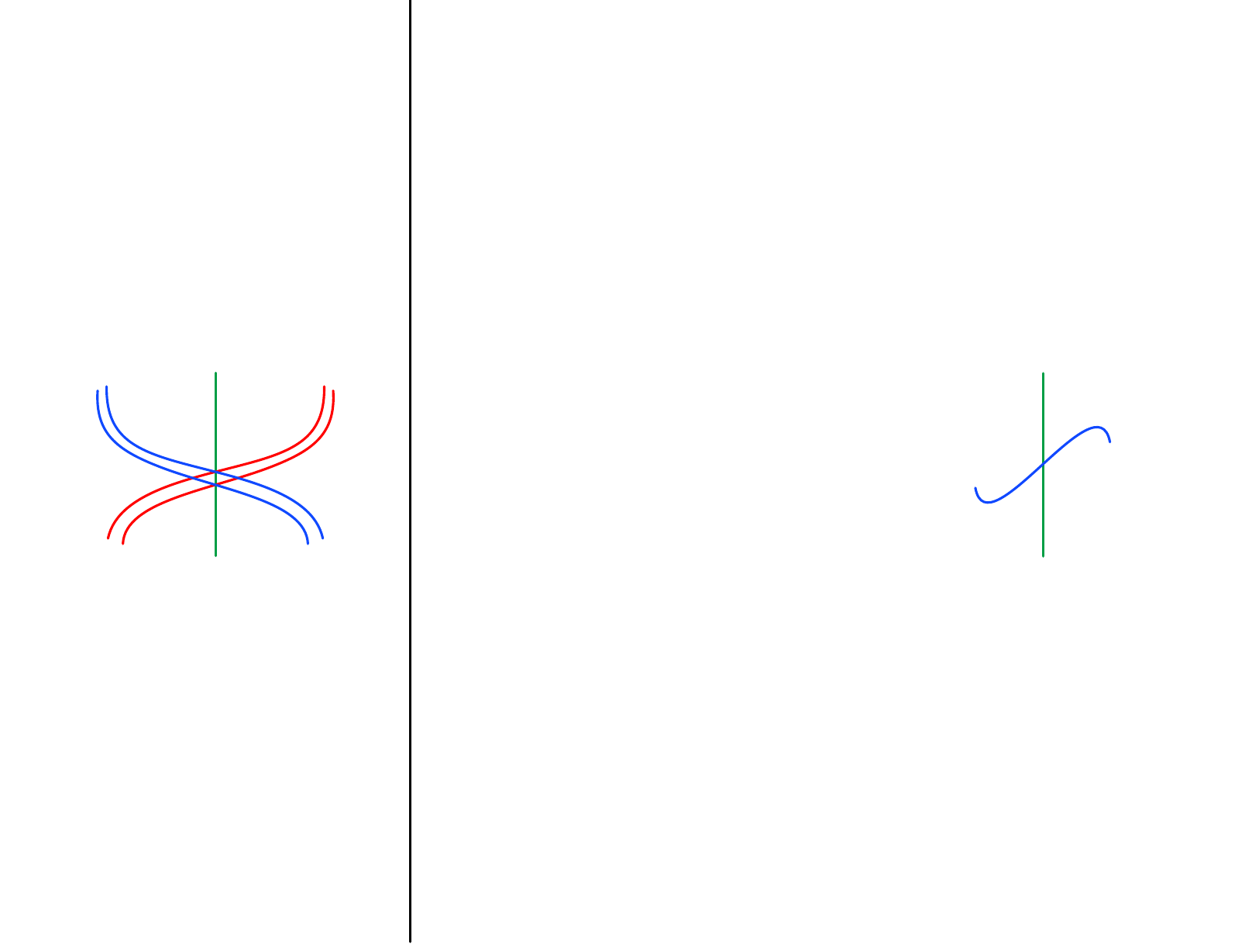
        \caption{Switching the orientation involved in a $\{1\}$-stabilization of type $(k)$. The top and bottom of the figure represent the two ways of resolving the stabilization.}
    \label{fig:resolution_orientations_triv}
    \end{figure}

Now, suppose $H'$ is obtained from $H$ by a crossover of type $(k, \ell)$. Suppose this crossover involves a neighborhood $A$ of a curve $\alpha$ and a neighborhood $B$ of the beta curve $\beta = \tau(\alpha)$. Let $\alpha_1, \hdots, \alpha_k$ and $\alpha_{k+1}, \hdots, \alpha_{k+\ell}$ be the alpha curves which intersect $\beta$ and let $\beta_1, \hdots, \beta_k$ and $\beta_{k+1}, \hdots, \beta_{k+\ell}$ be the beta curves which intersect $\alpha$. An example of such a resolution can be seen in \Cref{fig:resolution_orientations_crossover}, by traversing the strata along the top of the figure. We will refer to this figure throughout the following discussion. First, we perform a simple $\{1\}$-stabilization $H = H_0 \ra H_1$, introducing a new pair of curves $\alpha_0$ and $\beta_0$ which are exchanged by the involution (this is the first stratum in \Cref{fig:resolution_orientations_crossover}; there are two ways this can be done, as the roles of alpha and beta in the new handle can be swapped (in the language of \cite{guth_manolescu2025real}, we can change the framing of $C$ by either $+1$ or $-1)$. Next, we perform a diffeomorphism which drags $A$ and $B$ over the new handle. See the transition between the second and third chambers on the top row of \Cref{fig:resolution_orientations_crossover}. Call the resulting diagram $H_1'$. Next, we perform real handleslides over $\alpha_0$ and $\beta_0$; depending on the framing of the new handle, we either perform $k$ or $\ell$ handleslides. Following the top of \Cref{fig:resolution_orientations_crossover}, we will for now assume we have performed $\ell$ handleslides:
\begin{align*}
    H_{1}' \ra H_{2} \ra \hdots \ra H_{1 + \ell}.
\end{align*} 
We then handleslide $\alpha_0$ over $\alpha$ (and symmetrically $\beta_0$ over $\beta$), giving diagrams $H_{1 + \ell} \ra H_{2 + \ell}$. See the transition between Frames 4 and 5 of \Cref{fig:resolution_orientations_crossover}. Then, we perform $k$ handleslides
\begin{align*}
    H_{2+\ell} \ra \hdots \ra H_{2 +k + \ell},
\end{align*} 
over $\alpha_0$ (and $\beta_0)$ as in Frames 5 and 6 (or $\ell$ handleslides, in the case we have chosen the opposite framing of the $\{1\}$-stabilization). Finally, we perform a simple $\{1\}$-destabilization, giving the last move $H_{2+k+\ell} \ra H_{3+k+\ell}$. Again, this resolution is not unique, but we can interpolate between resolutions, as shown in \Cref{fig:resolution_orientations_crossover}.

    \begin{figure}[h]
    \def\svgwidth{1\linewidth}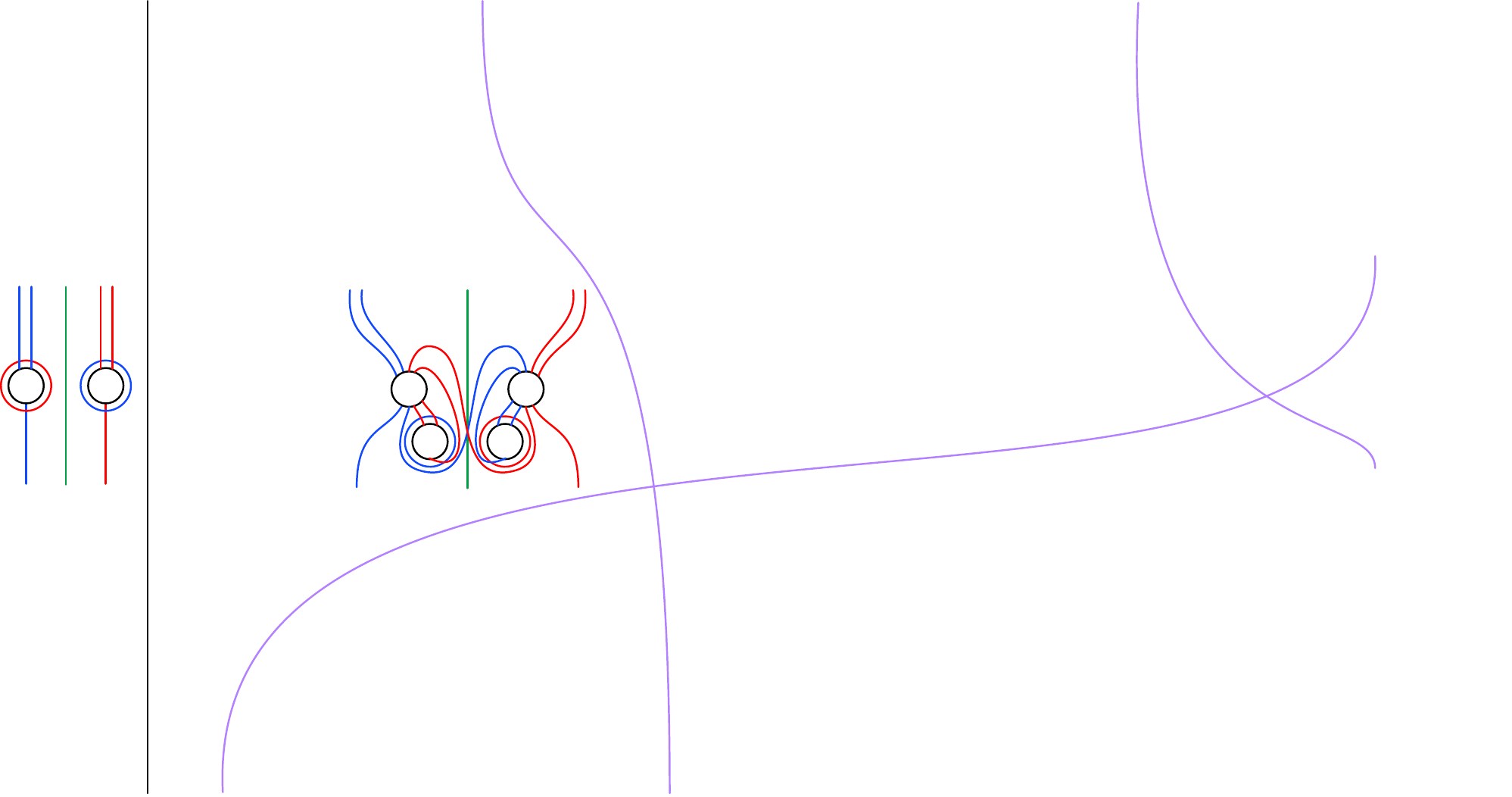
        \caption{Two ways of resolving a crossover of type $(k, \ell)$ and an interpolation between these resolutions. In both cases, there are $k+1+\ell$ real handleslides. Here, and later, the dashed purple curve represents the handleslide involving the curves created in the $\{1\}$-stabilization.}
    \label{fig:resolution_orientations_crossover}
    \end{figure}

Finally, we consider generalized $\Z/2$-stabilizations. It is slightly easier to describe the destabilization. Suppose $H'$ is obtained from $H$ by a $\Z/2$-stabilization of type $(k, \ell, c)$. Recall that there is a punctured torus $T$ in $H'$ which is removed (along with $\tau(T)$) and replaced with a pair of disks $D$ and $\tau(D)$ in $H$ (the two tori may intersect, as might the disks). The torus $T$ contains the two new attaching curves $\alpha$ and $\beta$, which we orient. By traversing $\beta$ starting from $\alpha$ following the orientation, we obtain a sequence of alpha arcs $\alpha_1, \hdots, \alpha_{\ell+c}$ (possibly with multiplicity). The curve $\beta$ intersects exactly $c$ of these $\alpha$-arcs on $C$. Likewise, the orientation of $\alpha$ gives an ordering of the $\beta$-arcs which intersect it, $\beta_1, \hdots, \beta_k$. 

To perform the destabilization, we drag $\beta$ along $\alpha$. We obtain a sequence 
\begin{align*}
    H' = H_0 \ra H_1 \ra \hdots \ra H_{\ell + c},
\end{align*}
where the diagram $H_i \ra H_{i+1}$ corresponds to a handleslide of $\alpha_i$ over $\alpha$ if $\alpha_i$ intersects $\beta$ away from the fixed set and corresponds to a crossover of type $(1, k)$ otherwise. Finally, we perform $k$ handleslides of the $\beta_1, \hdots, \beta_k$ over $\beta$:
\begin{align*}
    H_{\ell+c} \ra H_{1+\ell+c} \ra \hdots \ra H_{k+\ell+c}.
\end{align*}
At this point, $\alpha$ and $\beta$ intersect no other curve in the diagram and can be removed by a simple destabilization. For an example of this simplification, see \Cref{fig:resolution_orientations_Z2}; the sequence of moves is given by traversing a horizontal path at the top of the figure.

As in the case of $\{1\}$-stabilizations, there is a dependence on the orientations of the alpha curves, though we can interpolate between the two; see \Cref{fig:resolution_orientations_Z2}.\\

    \begin{figure}[h]
    \def\svgwidth{.8\linewidth}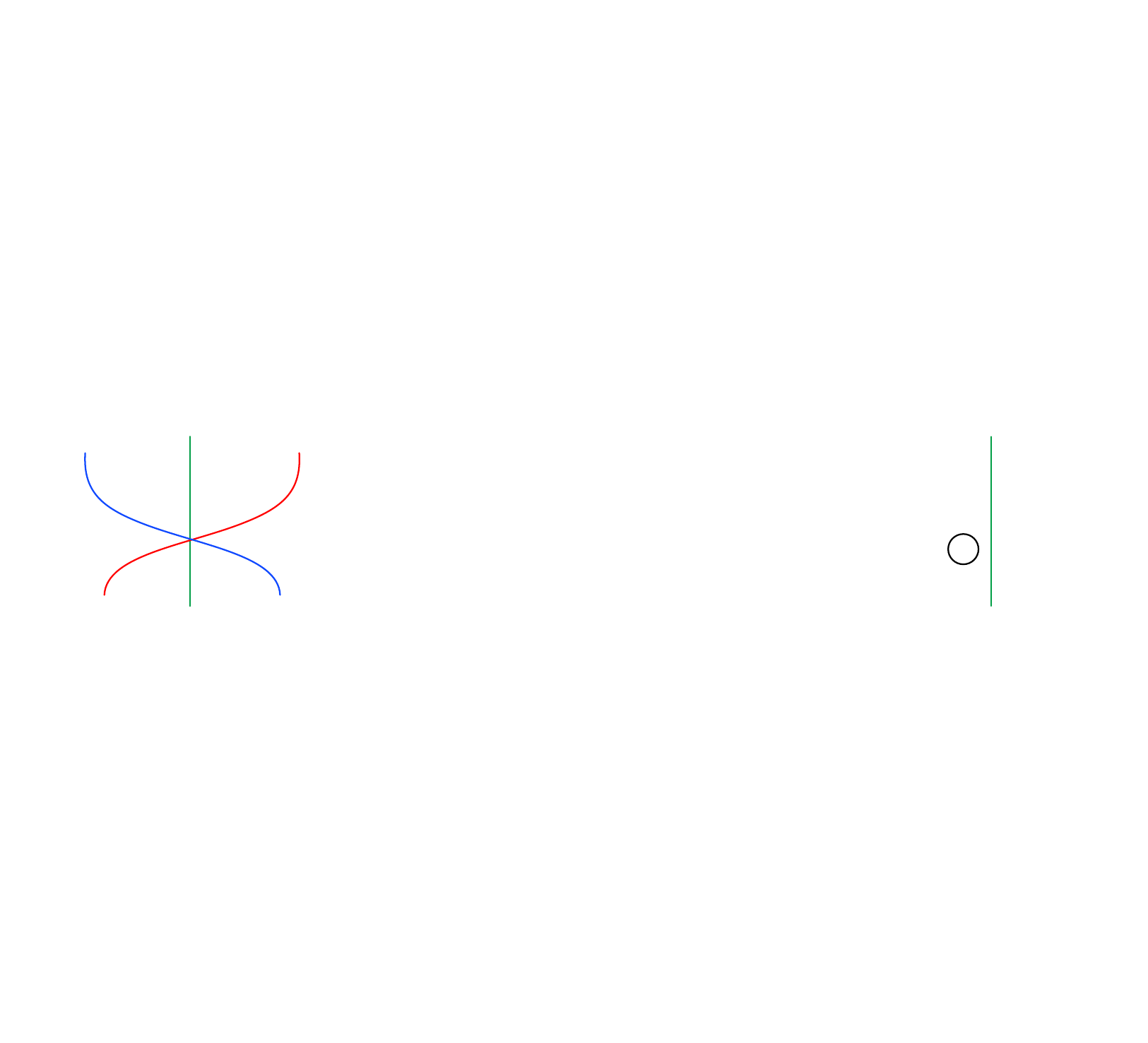
        \caption{Switching the orientation involved in a $\Z/2$-stabilization. The top and bottom of the figure represent two ways of resolving the stabilization.}
    \label{fig:resolution_orientations_Z2}
    \end{figure}

We now turn to the task of resolving the codimension-2 bifurcations appearing in Section~\ref{subsec:gradient-to-HD-codim2} into 2-parameter families whose bifurcation values correspond only to the simple loops of diagrams which appear in the definition of strong real Heegaard invariants. We provide paradigmatic examples for each bifurcation type in \Cref{appendix}. (Since there are many large figures of this kind, we put them in an  appendix to improve readability of the remaining sections.)

\subsection{Bifurcations of type (A)} 

The bifurcations of type (A1) and (A2) involve real handleslides, and do not need to be simplified, just as in the unreal case. On the other hand, bifurcations of types (A3) and (A4) require some work, as they involve real handleslides and crossovers. We must resolve these first, as we have chosen to resolve generalized $\Z/2$-stabilizations in terms of crossovers. 

Bifurcations of type (A4b) are the simplest. We simply resolve the $(k, \ell)$-crossover into a simple $\{1\}$-stabilization, followed by a sequence of $(k+1+\ell)$ real handleslides, and end with a $\{1\}$-destabilization. Since the handleslide and crossover involve different critical points, the resolved diagram consists of a single $\{1\}$-stabilization-handleslide commutation and $(k+\ell+1)$ handleslide commutations. See \Cref{fig:hd_a4b_resolved}.

(A3) is very similar: the two crossovers involve different critical points, so we may resolve them both into a $\{1\}$-stabilization followed by handleslides and a $\{1\}$-destabilization. The resolved diagram involves a $(\{1\}, \{1\})$-stabilization commutation as well as many handleslide and $\{1\}$-stabilization-handleslide commutations. See \Cref{fig:hd_a3_resolved}.

To resolve a bifurcation of type (A4a), we resolve the two crossover strata. The stratum $S_4$ is a crossover of type $(k, \ell)$ whereas $S_2$ is of type $(k +1, \ell)$, hence we must introduce a link of type (A2), i.e. a handleslide pentagon as in \cite[Figure 22]{JTZ_naturality_mapping_class_groups}.  An example is shown in \Cref{fig:hd_a4a_resolved}; the handleslide pentagon is the result of the loop around the central bifurcation point where the five handle-slide strata meet. 

Next, we consider bifurcations of type (A4c), which are the most complicated. We encourage the reader to compare the following discussion to \Cref{fig:hd_a4c_resolved}. Recall that in this setting, there are two alpha curves $\alpha$ and $\alpha'$ which may be resolved either into a pair of crossovers between $\alpha$ and $\beta = \tau(\alpha)$ and between $\alpha'$ and $\beta' = \tau(\alpha')$, or, alternatively, into a real handleslide of $\alpha'$ over $\alpha$ and a crossover between the new alpha curve and its symmetric beta. Let $S_2$, $S_i$, and $S_{i+1}$ be crossover strata of types $(k_3, \ell_3)$,  $(k_2,\ell_2)$, and $(k_1,\ell_1)$ respectively. Recall that $(k_3, \ell_3) = (k_1,\ell_2 + (\ell_1 - k_2))$.

A resolution of a crossover of type $(k, \ell)$ involves a diffeomorphism stratum, a simple $\{1\}$-stabilization, $\ell$ real handleslides, a real handleslide involving the new curves introduced in the stabilization,  $k$ more real handleslides, and then finally a simple $\{1\}$-destabilization. As in the previous section, strata which correspond to handleslides which involve the new curves from the stabilization will be drawn as dashed purple lines rather than solid. We complete the resolution as follows. Glue together the three diffeomorphism strata as in \Cref{fig:hd_a4c_resolved}. There are six stabilization strata which we connect up: the stabilization strata from the $(k_2, \ell_2)$-crossover and the $(k_3, \ell_3)$-crossover are connected, the $(k_2, \ell_2)$-crossover destabilization and $(k_1, \ell_1)$-crossover stabilization are connected, and the $(k_1, \ell_1)$-crossover destabilization and $(k_3, \ell_3)$-crossover destabilization are connected. 

Since $k_1 = k_3$, we can connect the $k_3$ handleslide strata from the resolution of the $(k_3, \ell_3)$-crossover to the $k_1$ handleslide strata from the resolution of the $(k_1,\ell_1)$-crossover. Similarly, the $\ell_2$ handleslide strata from the $(k_2,\ell_2)$-crossover can be connected to the $\ell_2$ corresponding strata in the $(k_3, \ell_3)$-crossover (again, recall that $\ell_3 = \ell_2 + (\ell_1 - k_2)$). 

The remaining strata are more complicated. In the simplest case, when $\ell_1 = k_2$, we can connect the $\ell_1$ strata from the $(k_1,\ell_1)$-crossover to the $k_2$ strata from the $(k_2,\ell_2)$-crossover. This leaves only the strata involving the curves introduced in the stabilization (i.e., the dashed purple strata). We glue these three strata together on the handleslide strata $S_1$; these five strata meet in a point, forming a handleslide pentagon. See \Cref{fig:hd_a4c_resolved} for an illustration in this case. Notice that we have used two different crossover resolutions in this example to simplify matters. We may interpolate between the two kinds of crossover resolutions as in \Cref{fig:resolution_orientations_crossover}.

In general, it may be the case that $\ell_1 > k_2$. Such an example is shown in \Cref{fig:hd_a4c_resolved2}. In this situation, we cannot match up all the handleslide strata in the three crossover resolutions. To describe the resolution, let $\alpha_1$ be the alpha curve associated to the birth-death $p_1$ and $\alpha_2$ be the alpha curve associated to the birth-death $p_2$ (i.e., these are the curves corresponding to the dashed purple strata). There are $(\ell_1 - k_2)$ beta arcs emanating from $\alpha_1$ which do not intersect $\alpha_2$. These are partitioned into two groups, of size $m_1$ and $m_2$ (there are $m_1$ arcs to the left of $\alpha_2$ and $m_2$ to the right). 

These additional arcs create two issues when trying to resolve the bifurcation. First, we must add in additional stabilization slides along the stabilization strata to interpolate between the various crossovers. This is due to the fact that the $m_2$ $\beta$-arcs which do not intersect $\alpha_2$ must be slid over $\beta_2 = \t(\alpha_2)$ before the $(k_2,\ell_2)$ crossover can be performed. On the right side of the diagram, there is a unique component of $\S \smallsetminus \alphas \smallsetminus \betas$ where the stabilization may take place. On the left half of the diagram there are three regions of the diagram in which we can stabilize, created by the handleslides preceding the $(k_2,\ell_2)$ crossover. Therefore, below the topmost stabilization strata, we must add in stabilization slides to interpolate between the two stabilization locations. There is a similar phenomenon for the other two stabilization strata. 

Second, in each crossover resolution, there is a distinguished handleslide which involves the curves $\alpha_0$ and $\beta_0$ introduced by the stabilization (the dashed purple curves). At these strata, $\alpha_0$ slides over either $\alpha_1$ or $\alpha_2$ (depending on which crossover we are near). In the interior of the diagram, there are parameters at which $\beta_0$ \emph{cannot} slide over $\beta_2$, because it is obstructed by the $m_2$ beta arcs which also slide over $\beta_2$ at various points of the resolution. At these points, we must introduce handleslide pentagons between $\beta_0$, $\beta_2$, and the $m_2$ beta arcs (i.e., we must add in singularities of type (A2)). For an example in the general case, see \Cref{fig:hd_a4c_resolved2}.

\subsection{Bifurcations of type (E2)}

Next, we resolve bifurcations of type (E2), as they  are relatively straightforward and will come up in later resolutions. We resolve the two crossovers. Both resolutions begin with a $\{1\}$-stabilization and end with a $\{1\}$-destabilization. In between the (de)-stabilizations, the first crossover involves $\ell$ real handleslides, a distinguished handleslide involving the new alpha and beta curves coming from the stabilization, and then $k$ more real handleslides. The second crossover  involves $\ell+1$ real handleslides, a distinguished handleslide involving the new alpha and beta curves coming from the stabilization, and then $k-1$ more real handleslides. The first $\ell$ slides are identical, as are the last $k-1$. The remaining real handleslides can be resolved into an (A2) singularity, forming a pentagon. See \Cref{fig:hd_e2_resolved}.

\subsection{Bifurcations of type (B)}

We now turn to bifurcations involving stabilizations. Since we have chosen to resolve generalized  $\Z/2$-stabilizations into simple $\Z/2$-stabilizations, handleslides, and crossovers, bifurcations of type (A) will appear in our resolutions. To avoid further complicating the figures, we will not depict the resolutions of crossover strata. Additionally, we will at times reduce to diagrams which contain bifurcations of types (A1)--(A4), which we have already shown how to resolve.

The simplest type (B) bifurcations are those of types (B3) and (B5). Recall that there are two cases, depending on the orbit type of the stabilization. In the trivial case, we simply resolve the $\{1\}$-stabilization stratum and obtain a resolution with no singularities. See \Cref{fig:hd_b3_triv_resolved}.

In the case of $\Z/2$-orbits, we must be more careful. The $S_1$ stratum is a stabilization of type $(k,\ell+t, t)$ while $S_3$ is of type $(k-1,\ell, t)$, so when we resolve the two generalized $\Z/2$-stabilizations, there are an unequal number of handleslide strata. More specifically, in the resolution of the $S_3$ stratum, for each crossover, we must perform an additional handleslide that is not required in the $S_1$ resolution. To account for this, we simply merge the extra handleslide with the corresponding crossover, creating an (A4c) singularity. See \Cref{fig:hd_b3_Z2_resolved} for an example.

Case (B5) is very similar. Stratum $S_1$ is a generalized $\Z/2$-stabilization of type $(m+c+1, n, c+1)$ and $S_3$ is a generalized $\Z/2$-stabilization of type $(m + c, n,c)$. We simply resolve $S_1$, and since the resolution of $S_3$ is a sub-resolution of $S_1$, there is no additional work to do. See \Cref{fig:hd_b5_resolved}.

Bifurcations of type (B1) are also relatively straightforward. In the case of a $\{1\}$-stabilization, the resolution is exactly like the unreal case. We resolve the stabilization into a simple $\{1\}$-stabilization and $k = k_1 + k_2$ real handleslides, though we must introduce $k$ pentagon slides (singularities of type (A2)). For an example, see \Cref{fig:hd_b1_triv_resolved}.

The case of generalized $\Z/2$-stabilizations is very similar. We break the type $(k, \ell, r)$ stabilization into a single simple $\Z/2$-stabilization and then a sequence of $k = k_1 + k_2$ handleslides, then a sequence of crossovers and handleslides. To resolve strata $S_1$ and $S_3$, we again introduce $k$ pentagon slides, just as in the previous case. The remaining handleslides and crossovers are identical for both strata, so no additional care is needed. For an example, see \Cref{fig:hd_b1_Z2_resolved}.

Next, consider bifurcations of type (B2). The case involving a generalized $\Z/2$-stabilization is nearly identical to the resolution in the unreal case. Recall that we have a birth-death singularity $p^0$ with $k$ flows in, $\ell$ flows out, and $c$ flows between $p^0$ and $\tau(p^0)$; additionally, there are $m_1 + m_2$ flows from $\overline{p}^0$ to index 2 critical points. The stabilization strata $S_1$ and $S_3$ are of types $(k+m_1 + c, \ell, c)$ and $(k+m_2 + c, \ell, c)$ respectively; the $m_1$ real handleslides from resolving $S_1$ cannot always be matched up with the $m_2$ real handleslides from resolving $S_3$. Therefore, we remove a small disk $D$ where these $m_1 + m_2$ strata terminate. See \Cref{fig:hd_b2_Z2_resolved}. The loop around this disk corresponds exactly to a real handleswap (i.e., a bifurcation of type (E1)).

The case of a $\{1\}$-stabilization is similar. The relevant parameters are the $k$ flows from index 1 critical points to the birth-death singularity $p^0$ and the $m_1 + m_2$ flows from the hyperbolic critical point $\overline{p}^\mu$. Stratum $S_1$ is a stabilization of type $(k+m_1)$ and $(k+m_2)$. As in the previous case, we resolve $S_1$ and $S_2$ into a simple stabilization followed by a sequence of real handleslides, but the $m_1$ handleslides from resolving $S_1$ may not match up with the $m_2$ handleslides from resolving $S_2$, so we remove a disk $D$. Unlike in the previous case, there is also a crossover stratum which terminates on $\partial D$ as well. Nevertheless, a loop around $\partial D$ is again a real handleswap, which is permitted to involve crossover strata as well as handleslides. See \Cref{fig:hd_b2_triv_resolved}.

Let us consider the case of (B4) bifurcations. The $\{1\}$-stabilization case is straightforward. Recall that there is a birth-death singularity $p^0$ with $m$ flows in and out as well as a tangency between the 1-dimensional stable manifolds of $p_1^0$ and $\tau(p_1^0)$; we assume there are $k_1 + k_2$ flows from $p_1^\mu$ to index 2 critical points partitioned into two sets. Strata $S_1$ and $S_2$ both correspond to $\{1\}$-stabilizations of type $(m)$, and therefore can be resolved simultaneously, and likewise $S_2$ and $S_4$ are both crossover strata of type $(k_1, k_2)$, so can also be resolved simultaneously. See \Cref{fig:hd_b4_triv_resolved} for an example.

The case of generalized $\Z/2$-stabilizations is more subtle. The relevant parameters are as follows: there are $n$ flows out of $p^0$ to index 2 critical points; there are $r_1 + r_2$ flows from $p_1^0$ to $p^0$ partitioned into two subsets by the directional manifold of $p_1^0$ as well as $m$ additional flow lines from additional index 1 critical points; there are also $t$ flow lines from $\tau(p^0)$ to $p^0$; there are $\ell_1 + \ell_2$ flow lines out of $p_1^\mu$ to index 2 critical points. Recall that strata $S_1$ and $S_3$ are generalized $\Z/2$-stabilizations of types $(m + r_1+r_2 + t, n, t)$ and $(m+r_1+r_2+t+r_1(r_1 + \ell_1) + r_2(r_2 + \ell_2), n, t+r_1+r_2)$ while strata $S_2$ and $S_4$ are crossovers of types $(r_1n+\ell_1, r_2n + \ell_2)$ and $(r_1+\ell_1, r_2 + \ell_2)$. 

We will begin by resolving the stabilization strata, and as we will see, this will be sufficient to reduce to previous cases. The resolutions for $S_1$ and $S_3$ are identical except that the resolution for $S_3$ involves additional crossovers (and adjacent handleslides). More precisely, let $\alpha_0$ be the alpha curve associated to the index-1 critical point born from $p^0$ and let $\alpha$ be the alpha curve for $p_1$ which is involved in the crossover. Each time $\alpha$ slides over $\alpha_0$ on the upper half of the diagram, the corresponding sequence on the lower half of the diagram after the crossover involves a sequence of handleslides, a crossover of $\alpha_0$ and $\tau(\alpha_0)$, a second series of real handleslides, and then $\alpha$ slides over $\alpha_0$. These two paths are precisely the loop around a singularity of type (A4c). We simply repeat for each time $\alpha$ slides over $\alpha_0$ (which happens $r_1 + r_2$ times in total). See \Cref{fig:hd_b4_Z2_resolved} for an example of the resolved diagram.

\subsection{Bifurcations of type (C)}

Next, we resolve bifurcations of type (C), which involve simultaneous $\cO$-stabilizations (where $\cO \in \{\Z/2, \{1\}\}$). The simplest case is that of a pair of $\{1\}$-stabilizations. Since there can be no flows between two birth-death singularities appearing in trivial orbits, we simply resolve the two $\{1\}$-stabilizations into a pair of simple $\{1\}$-stabilizations and real handleslides. See \Cref{fig:hd_c_1_1_resolved}.

Next, we consider a generalized $\Z/2$-stabilization and a generalized $\{1\}$-stabilization. The generalized $\{1\}$-stabilization is resolved into a simple $\{1\}$-stabilization and a sequence of handleslides and the generalized $\Z/2$-stabilization is resolved into a simple $\Z/2$-stabilization and a combination of handleslides and crossovers. The key parameter is the number of flows $t$ between the two kinds of birth-death singularities. For each such flow line, we must remove a small disk in the parameter space; the horizontal flow lines correspond to handleslides over the new $\{1\}$-handle, while the vertical flow lines correspond to crossovers of the new $\Z/2$-handles and handleslides of the same alpha and beta curves. The loop around this disk corresponds exactly to a real handleswap (E1); the feet of the $\Z/2$-handles are sliding around the alpha and beta curves introduced in the $\{1\}$-stabilization. See \Cref{fig:hd_C_Z2_1_resolved} for the resolution.

The case of two generalized $\Z/2$-stabilizations is the most complicated. We deal with the $\{1\}$-orbit flow lines between the degenerate critical points just as in the unreal case. For each flow line of this kind, there are strata which cannot be completed, so we remove a small disk from the parameter space; a loop around the removed disk is a handleswap. 

The new phenomenon is due to the $s$ pairs of flow lines from $p_1$ to $\tau(p_2)$. In these cases, the curve $\alpha_2$ associated to the index $1$ critical point born from $p_2$ slides over the curve $\alpha_1$ associated to the index $1$ critical point born from $p_1$. On the half of the parameter space where $\alpha_2$ has not slid over $\alpha_1$, there are many handleslides over $\alpha_1$ involved in resolving the stabilization coming from $p_1$. On the half of the parameter space where $\alpha_2$ has slid over $\alpha_1$, these handleslides do not occur, as they are slid over $\alpha_2$ instead. Similarly, crossovers involving $\alpha_1$ on the left half of the diagram may be traded for crossovers involving $\alpha_2$ on the right-hand side.

To address these issues, we will glue the extra handleslide and crossover strata on the left side of the diagram to the handleslide strata corresponding to the slide of $\alpha_2$ over $\alpha_1$. For the handleslides, this amounts to adding additional (A2) bifurcations and for the crossovers, this amounts to adding additional (A4c) bifurcations. See \Cref{fig:hd_C_Z2_Z2_resolved} for an example.

\subsection{Bifurcations of type (D)}

Bifurcations of type (D) involve codimension-2 singularities. Bifurcation (D1) is a pair consisting of an $A_3^+$ singularity and an $A_3^-$ singularity. We resolve the singularity just as in the unreal case: we introduce a simple $\Z/2$-stabilization together with a sequence of real handleslides and crossovers; the sequences corresponding to the original two strata may not match up, in which case we can resolve the singularity by adding in a real handleswap and a handleslide triangle. See \Cref{fig:hd_d1_resolved}; in the figure, we have only drawn an annular neighborhood of the beta curve corresponding to the singularity, and not its symmetric alpha curve.

Next, we consider a singularity of type (D2). Let $p^0$ be the singularity of type $\{1\}$-$A_4$ and suppose that there are $k$ flows in and out of $p^0$. The loop around the singularity involves two generalized $\{1\}$-stabilizations and then a single generalized $\Z/2$-destabilization. We resolve the first generalized $\{1\}$-stabilization stratum into a simple $\{1\}$-stabilization and $k$ real handleslides; starting from the destabilized side, the generalized $\Z/2$-stabilization is resolved into a simple $\Z/2$-stabilization followed by $k$ handleslides followed by a crossover (which we resolve momentarily). We can connect the $k$ real handleslides from the generalized $\{1\}$-stabilization with the $k$ slides from the generalized $\Z/2$-stabilization. Next, we resolve the crossover into a simple $\{1\}$-stabilization followed by two real handleslides and a simple $\{1\}$-destabilization; we connect the stabilization and destabilization strata. Finally, we resolve the second generalized $\{1\}$-stabilization into a simple $\{1\}$-stabilization and a real handleslide. It remains to extend the resolution over the central disk of \Cref{fig:hd_d2_resolved}.  The six remaining strata cannot be glued together to form the simple loops appearing in the definition of a strong real Heegaard invariant. Therefore, we simply glue the remaining strata at a point, a loop around which is exactly a simple trade loop. See \Cref{fig:hd_d2_trade_loop}.

\subsection{Bifurcations of type (E1)}

Finally, we resolve singularities of type (E1), real handleswaps. Let us briefly recall the set-up and introduce some terminology. 

\begin{defn}\label{def:real handle swaps}
    A \emph{real handleswap of type $(k, \ell, r)$} is a loop of overcomplete diagrams $\cH_0, \hdots, \cH_{k+\ell+r}$, as follows. There is a surface $\Sigma$ such that $\cH_i = (\Sigma, \bm\alpha_i, \bm\beta_i)$ for each $i$ and there is a pair of pants $P$ so that
    \begin{align*}
        \bm \alpha_i \cap (\Sigma \smallsetminus P) = \bm \alpha_j \cap (\Sigma \smallsetminus P) \text{ and }\bm \beta_i \cap (\Sigma \smallsetminus P) = \bm \beta_j \cap (\Sigma \smallsetminus P)
    \end{align*}
    for each $i$ and $j$. Inside $P$, there is a full alpha circle $\alpha_0$. The boundary of $P$ consists of three circles: $A$ which is parallel to $\alpha_0$, $B$, and $Z$. The boundary component $B$ is parallel to a beta curve $\beta_0$, which may or may not be contained in $P$. The collection $(\bm \alpha \smallsetminus \alpha_0)\cap P$ is a collection of arcs connecting $B$ to $Z$, $(\bm \beta \smallsetminus \beta_0)\cap P$ is a collection of arcs connecting $A$ to $Z$, and $C \cap P$ is a collection of arcs connecting $A$ to $Z$. Precisely $k$ of the real Heegaard moves $\cH_{i} \ra \cH_{i+1}$ are given by a real handleslide of an alpha curve over $\alpha_0$ followed by a diffeomorphism dragging $B$ along $\alpha_0$, $\ell$ of the moves are handleslides of beta curves over $\beta_0$ (followed by a diffeomorphism dragging $B$ along $\alpha_0$), and the remaining are crossovers of $\beta_0$ with $\tau(\beta_0)$.\\

    A real handleswap of type $(k, \ell, r)$ of isotopy diagrams is a loop $H_0, \hdots, H_{k + \ell+r}$ such that there are representatives $\cH_i$ for $H_i$ such that $\cH_0, \hdots, \cH_{k+\ell+r}$ is a real $(k, \ell, r)$-handleswap.
\end{defn}

The first step in the simplification process is to reduce to the case of $(k, \ell, 0)$ crossovers. Suppose $S_i$ is a crossover stratum. Each crossover stratum corresponds to an intersection point $p$ between $C$ and $\alpha_0$. Introduce a circular stratum corresponding to a $\{1\}$-stabilization of type (1) around the singularity which removes this intersection point. We resolve this into a simple $\{1\}$-stabilization stratum $S$ near $p$ and then perform a real handleslide of $\alpha_0$ and $\tau(\alpha_0)$ over the new handle. The new stratum $S$ meets $S_i$, introducing a (B2) bifurcation involving the $\{1\}$-stabilization. On the stabilized side of $S$, we have a handleslide stratum, rather than a crossover; in particular, we see a real handleswap loop of type $(k, \ell+1, r-1)$. See \Cref{fig:hd_e1_resolved0}. We repeat this process for each crossover stratum, after which we will have obtained a real handleswap loop of type $(k, \ell+r, 0)$.

Having eliminated all intersections between $\alpha_0$ and $\tau(\alpha_0)$, we can reduce to the case of ordered real handleswaps following the discussion after \cite[Definition 7.9]{JTZ_naturality_mapping_class_groups} by introducing commutation moves between diffeomorphism and handleslide strata (see especially \cite[Figures 41 and 42]{JTZ_naturality_mapping_class_groups}). 

At this point, we are actually in a position to apply the simplification process of \cite[Section 7.3]{JTZ_naturality_mapping_class_groups} equivariantly, as we have almost reduced real handleswaps to disjoint pairs of unreal handleswaps. There is one subtlety, namely that it could be the case that $\alpha_0$ and $\tau(\beta_0)$ intersect (this was the first case considered in the description of (E1) in \Cref{subsec:codim 2 birfurcations}), in which case the local pictures shown in \Cref{fig:hd_e1_resolved1} are actually connected; we identify annular neighborhoods of the two alpha curves as well as annular neighborhoods of the two beta curves. 

More precisely, Juh\'asz-Thurston-Zemke reduce their analysis to the case of (1;1)-handleswaps; this simply means that there is a single beta curve $\beta$ which intersects $\alpha_0$ and a single alpha curve $\alpha$ which intersects $\beta_0$. In their setting, the final step is to consider all alpha arcs that intersect $\alpha$ and slide them over $\alpha_0$, and likewise slide beta arcs intersecting $\alpha$ over $\beta_0$. However, this may be impossible in our setting, as it could be the case that $\beta = \tau(\alpha)$ and these curves intersect. In this case, we cannot apply real handleslides to reduce to the case of simple handleslides. By applying the Juh\'asz-Thurston-Zemke algorithm slightly more carefully we can actually preclude this possibility, as we now explain. 

The first step in their simplification process is to reduce to the case that there is a beta curve which intersects $\alpha_0$ exactly once and an alpha curve which intersects $\beta_0$ exactly once (Juh\'asz-Thurston-Zemke call this reducing to $(k,1;\ell;1)$-handleswaps). They do this by performing a $(k,1)$-stabilization involving $\alpha_0$ and a $(\ell,1)$-stabilization involving $\beta_0$ and then resolving the stabilization strata (see \cite[Figure 54]{JTZ_naturality_mapping_class_groups}). We may apply this step equivariantly as well. The crucial point for us is that in performing the $\Z/2$-stabilization, we introduce a curve $\beta'$ which intersects $\alpha_0$ once and does not intersect $\tau(\beta')$, and it is this curve that appears when we reduce to $(1,1)$-handleswaps. See \Cref{fig:hd_e1_resolved1} for an example. In short, we need to apply step 1 of their simplification process even if we began in the case of a pair of $(1,1)$-handleslides, as we need to ensure the handleswaps are disjoint.

At this point, we can apply the rest of their algorithm equivariantly with no issue to reduce to the case of a simple real handleswap.

\section{Strong real Heegaard Floer invariants have no monodromy}\label{sec:reducing no_monodromy}
We now have the tools to prove \Cref{prop:connected-strong} and \Cref{thm:iso}, which we restate here.

{
\renewcommand{\thelem}{\ref{prop:connected-strong}}
\begin{proposition}
    Any two isotopy diagrams in $\G_{(Y,\g,\tau)}$ can be connected by an oriented path.
\end{proposition}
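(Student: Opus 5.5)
The plan is to follow the strategy of \cite[Proposition 2.37]{JTZ_naturality_mapping_class_groups}, using the Morse-theoretic dictionary of \Cref{sec:constructing hd}. Let $H, H' \in |\G_{(Y,\g,\tau)}|$, with representatives $\cH = (\S,\alphas,\betas,\tau)$ and $\cH' = (\S',\alphas',\betas',\tau)$ embedded in $(Y,\g,\tau)$. By the reverse construction at the end of the codimension-0 subsection, we choose pairs $(f_0,v_0), (f_1,v_1) \in \FV_0\Yt$ and real separating surfaces $\S \in \Sigma^R(f_0,v_0)$, $\S' \in \Sigma^R(f_1,v_1)$, together with spanning trees, such that $H(f_0,v_0,\S,T_0)$ and $H(f_1,v_1,\S',T_1)$ are isotopic to $\cH$ and $\cH'$ in $Y$. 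The embedding matters here, since the arrows of $\G_{(Y,\g,\tau)}$ only allow diffeomorphisms isotopic to the identity in $Y$.

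By \Cref{cor:weaklycon}, $\FV\Yt$ is path-connected, so there is a path $\{(f_t,v_t)\}_{t\in I}$ joining the two pairs. After a small perturbation rel endpoints it is a generic 1-parameter family, so it meets $\FV_1\Yt$ transversely at finitely many times $t_1<\dots<t_n$ and lies in $\FV_0\Yt$ otherwise. Choose times $s_0=0<s_1<\dots<s_n=1$ with exactly one $t_i$ in each $(s_{i-1},s_i)$, and real surfaces $\S_{s_i}\in\Sigma^R(f_{s_i},v_{s_i})$ with $\S_{s_0}=\S$ and $\S_{s_n}=\S'$.

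On each interval $[s_{i-1},s_i]$ we argue as follows.
\begin{itemize}
\item \Cref{prop: codim 1 to heegaard moves} supplies surfaces on either side of $t_i$ whose diagrams differ by one of: an isotopy, adding or removing a redundant curve, a generalized real handleslide, a generalized $\cO$-stabilization, or a crossover.
\item On the Morse--Smale subintervals, \Cref{lem:sym_6.21} and \Cref{lem:sym_6.23} give equivariant diffeomorphisms between consecutive diagrams. These are equivariantly isotopic to the identity in $Y$, because they come from the flow of the equivariant isotopy of \Cref{lem:sym_isotopy_extension}, so they are edges of $(\G)^0_{\text{diff}}$.
\item Overcomplete diagrams are converted to honest ones by choosing spanning trees. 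Different choices differ by real equivalences, and the index 0--1/2--3 births only change the redundant curves.
\end{itemize}

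Next, every generalized move is replaced by elementary edges of $\G_{(Y,\g,\tau)}$, using the codimension-1 resolutions of \Cref{sec:reducing moves}:
\begin{itemize}
\item a generalized $\{1\}$-stabilization becomes a simple $\{1\}$-stabilization followed by real handleslides;
\item a crossover becomes a simple $\{1\}$-stabilization, a diffeomorphism isotopic to the identity, real handleslides, and a $\{1\}$-destabilization;
\item a generalized $\Z/2$-stabilization becomes a simple $\Z/2$-stabilization, handleslides, and crossovers, which are resolved in turn.
\end{itemize}
Handleslides are real equivalences. Since every edge of $\G_{(Y,\g,\tau)}$ other than a diffeomorphism has a reverse of the same type, and a diffeomorphism isotopic to the identity has an inverse of the same kind, traversing a move backwards is still allowed, and the composite is an oriented path.

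I expect the main obstacle to be the non-separable steps, namely $\{1\}$-stabilizations and crossovers, where no single real surface separates $(f_{t_i},v_{t_i})$. There I would invoke the construction in the proof of \Cref{prop: codim 1 to heegaard moves}: build unreal separating surfaces by attaching tubes, then symmetrize them by pushing along a reparametrized flow, as in \Cref{lem:real splittings contractible}. The delicate point is to check that the identification of $\S_{x}\setminus N(p)$ with $\S_{y}\setminus N(p)$ is realized by an equivariant isotopy of $Y$, and not merely by some abstract diffeomorphism. I would verify this by noting that the pushing maps are time-$t$ flows of an invariant vector field, hence equivariantly isotopic to the identity.
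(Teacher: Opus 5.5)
Your proposal is correct and follows the same route as the paper. The paper's proof cites the argument of \cite[Proposition 2.36]{JTZ_naturality_mapping_class_groups} together with the weak contractibility of the space of real gradient-like pairs, the translation of codimension-1 bifurcations into real Heegaard moves, and the equivariant diffeomorphisms of \Cref{lem:sym_6.21}. Your version is more explicit in two places: you use the codimension-1 resolutions to reduce crossovers and generalized stabilizations to edges of $\G_{(Y,\g,\tau)}$, and you check that the diffeomorphisms at the non-separable steps ($\{1\}$-stabilizations and crossovers) are isotopic to the identity in $Y$.
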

\addtocounter{lem}{-1}
}
\begin{proof}
    This is the same as \cite[Proof of Proposition 2.36]{JTZ_naturality_mapping_class_groups} and follows from our analysis of 1-parameter families of real gradients. See \Cref{cor:weaklycon}, \Cref{prop: codim 1 to heegaard moves}, and \Cref{lem:sym_6.21}. 
\end{proof}

{
\renewcommand{\thethm}
{\ref{thm:iso}}
\begin{thm}
 Let $\cS^R$ be a set of diffeomorphism types of real sutured manifolds containing $[(Y, \gamma, \tau)]$. Suppose that $F: \cG^R(\cS^R) \ra \cC$ is a strong real Heegaard invariant. Given isotopy diagrams $H, H' \in |\cG^R_{(Y, \gamma, \tau)}|$ and any two oriented paths $\eta$ and $\nu$ in $\cG^R_{(Y, \gamma, \tau)}$ connecting $H$ to $H'$, we have 
    \begin{align*}
        F(\eta) = F(\nu).
    \end{align*}
\end{thm}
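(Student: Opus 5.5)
The plan follows the proof of \cite[Theorem 2.38]{JTZ_naturality_mapping_class_groups}: realize paths of moves by paths of real gradients, fill the resulting loop by a generic disk of gradients using weak contractibility, and read off that the loop decomposes into the simple loops on which a strong real Heegaard invariant is trivial. First reduce to a loop. Since $F$ is a weak Heegaard invariant every $F(e)$ is an isomorphism, and by the Functoriality axiom $F(e')=F(e)^{-1}$ for a (de)stabilization pair. So $F(\eta)=F(\nu)$ is equivalent to $F(\bar\nu\ast\eta)=\id_{F(H)}$ for the closed loop $\bar\nu\ast\eta$ based at $H$, where $\bar\nu$ is $\nu$ traversed backwards and backwards edges are interpreted as inverse moves (for real equivalences and diffeomorphisms isotopic to the identity this uses functoriality on $\G_{\a\b}$ and $\G_{\text{diff}}$). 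Next realize the loop by gradients. Using the codimension-0 and codimension-1 correspondences of Section~\ref{sec:constructing hd} (the reverse direction of Proposition~\ref{prop: codim 1 to heegaard moves}, as in \cite[Section 6.5]{JTZ_naturality_mapping_class_groups}), we replace each edge of the loop by a generic path in $\FV_{\le1}\Yt$ whose codimension-1 crossings induce that move. Diffeomorphism edges isotopic to the identity are realized by Lemma~\ref{lem:sym_6.21} via families in $\FV_0$. This gives a loop $\gamma\colon S^1\to\FV\Yt$, together with surfaces at the vertices, that induces a path of diagrams homotopic, in the sense of $F$, to the original one.

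Now fill the loop. By Corollary~\ref{cor:weaklycon}, $\gamma$ extends to $\cF\colon D^2\to\FV\Yt$, which we perturb rel boundary to a generic 2-parameter family. It then meets only the codimension-2 bifurcations (A)--(E) of Section~\ref{subsec:codim 2 birfurcations}, at isolated points. Choose an adapted polyhedral decomposition extending the boundary triangulation (Lemma~\ref{lem:adapted}) and a coherent choice of real Heegaard surfaces extending the given ones (Lemma~\ref{lem:coherent}). Then $F$ of the boundary loop is the composite of $F$ around the boundaries of the 2-cells, so it suffices that $F$ is trivial around each 2-cell. There are three kinds of 2-cells. Cells avoiding $V_1$ carry only diffeomorphisms isotopic to the identity, and are handled by Lemma~\ref{lem:sym_6.24} and the Continuity axiom. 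Quadrilaterals crossing $V_1$ give generalized distinguished rectangles by Proposition~\ref{prop:sym_prop_6.32}. Cells around codimension-2 points are links as in Theorem~\ref{thm:codim2-to-heegaard-loops}.

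The main obstacle is the last two kinds of cells, because they involve generalized moves rather than simple ones. Here we use the resolutions of Section~\ref{sec:reducing moves}. Each generalized $\cO$-stabilization and each crossover edge is replaced by a path of simple moves (simple stabilizations, real handleslides, diffeomorphisms). Different resolution choices are interpolated by 2-parameter families whose interior singularities are only simple loops (Remark~\ref{rem:resolving-to-simple-loops}). Each generalized distinguished rectangle, including type (5) with crossovers, then subdivides into distinguished rectangles, stabilization slides and handleslide loops. Each codimension-2 link (A)--(E) subdivides into distinguished rectangles, handleslide triangles and pentagons (where $F$ is trivial by functoriality on $\G_{\a\b}$), simple real handleswaps (via the reduction of (E1)), and, for (D2), exactly one simple trade. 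On each of these pieces $F$ is trivial by the Commutativity, Handleswap invariance and Trade invariance axioms. Stabilization slides also need care: they should be trivial via functoriality of real equivalences together with the Commutativity axiom, possibly after factoring the slide through a distinguished rectangle. I expect the main bookkeeping risk to be making these resolutions compatible along shared edges between neighbouring cells. I would handle this by fixing one resolution convention per stratum and inserting the interpolating families from Figures~\ref{fig:resolution_orientations_triv}--\ref{fig:resolution_orientations_Z2} wherever the conventions disagree.
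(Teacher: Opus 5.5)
Your outline follows the same route as the paper's proof, which is itself the route of \cite[Theorem 2.38]{JTZ_naturality_mapping_class_groups}:
\begin{itemize}
\item reduce to a loop using Functoriality;
\item fill the loop by a generic disk of real gradients via Corollary~\ref{cor:weaklycon};
\item take an adapted surface-enhanced polyhedral decomposition;
\item use the resolutions of Section~\ref{sec:reducing moves} so that every 2-cell bounds one of the simple loops of Definition~\ref{def:strong-Heegaard}.
\end{itemize}
However, you omit a step the paper's proof needs: the diagrams $H(f_\mu,v_\mu,\Sigma_\mu)$ at the vertices are in general \emph{overcomplete}. The generic filling crosses index 0-1/index 2-3 birth-death strata in the interior of $D^2$; nothing in Corollary~\ref{cor:weaklycon} keeps it free of index-0 critical points. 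So some vertices carry diagrams with redundant $\alpha$- and $\beta$-curves. Some edges are then ``add or remove a redundant pair of curves'' (item (2) of Proposition~\ref{prop: codim 1 to heegaard moves} and the last case of Proposition~\ref{prop:sym_prop_6.32}). These are not vertices and edges of $\G$, because the redundant $\alpha$-curves do not form an attaching set, so $F$ is not defined on them. Consequently your closing claim that $F$ is trivial on each piece by the axioms is not yet meaningful, and your list of 2-cells skips the cells meeting these strata.

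The repair is what the paper does, following \cite{JTZ_naturality_mapping_class_groups}. After resolving to $\cP'$, choose spanning trees $T$ of $\Gamma(f_\mu,v_\mu)$, with the $\beta$-side then determined by $\tau$. This passes to honest real diagrams $H(f,v,\Sigma,T)$ and produces a decomposition $\cP''$ with the same boundary. Each 2-cell of $\cP''$ corresponds to a cell of some dimension of $\cP'$, and you must check that $F$ commutes around each of them. For most cells the analysis of \cite{JTZ_naturality_mapping_class_groups} carries over: changes of tree are real equivalences, and the index 2-3 behaviour is dictated by the index 0-1 behaviour through $\tau$. The one new case is a 2-cell of $\cP''$ coming from a simple trade cell of $\cP'$. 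There you must check that a tree for the first diagram, propagated through the two $\{1\}$-stabilizations, the three slides and the diffeomorphism, again yields a simple trade loop, so that Trade invariance applies. With this step added your argument is complete.
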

\addtocounter{thm}{-1}}

\begin{proof}
    This is nearly identical to the proof in the unreal case \cite[Theorem 2.38]{JTZ_naturality_mapping_class_groups}, so we only sketch the proof, emphasizing the necessary modifications. By the Functoriality Axiom, it suffices to prove that, for any loop $\eta$ of the form 
    \begin{align*}
        H_0 \xra{e_1} H_1\xra{e_2} \hdots \xra{e_{n-1}} H_{n-1}\xra{e_n} H_0,
    \end{align*}
    we have $F(\eta) = \id_{F(H_0)}$. We then construct a generic 2-parameter family $\cF: D^2 \ra \FV(Y, \gamma, \tau)$ and a surface-enhanced polyhedral decomposition $\cP$ of $D^2$ so that along $\partial D^2$ we have the loop $\eta$ just as in \cite{JTZ_naturality_mapping_class_groups}. We then apply the resolution process of \Cref{sec:reducing moves} to obtain a decomposition $\cP'$ with the property that each 2-cell has boundary corresponding to a simple loop appearing in the axioms of a strong real Heegaard invariant: a loop of real equivalences, a loop of equivariant diffeomorphisms, a distinguished rectangle, a simple real handleswap, a stabilization slide (which is just a degenerate distinguished rectangle), or a simple trade. The composition of such a loop of diffeomorphisms is the identity by \Cref{lem:sym_6.24}. The vertices of $\cP'$ may be overcomplete, but we can follow \cite{JTZ_naturality_mapping_class_groups} to obtain a decomposition $\cP''$ decorated by actual real Heegaard diagrams without altering the boundary. Each 2-cell of $\cP''$ corresponds uniquely to a cell (of some dimension) of $\cP'$, and we have to check that $F$ commutes along each of these. The analysis in \cite{JTZ_naturality_mapping_class_groups} carries through in our setting, so it suffices to address the case that $\sigma$ is a 2-cell of $\cP''$ corresponding to a 2-cell $\sigma_0$ in $\cP'$ so that $\partial \sigma_0$ is a simple trade loop. But in this case, the spanning tree plays no essential role. Let $K_1, \hdots, K_5$ be the real isotopy diagrams decorating the vertices of $\partial \sigma_0$. Start with a spanning tree $T^1$ for $\Gamma(K_1)$; as $K_2$ and $K_3$ are obtained by stabilizations, we naturally obtain spanning trees $T^2$ and $T^3$ for $\Gamma(K_2)$ and $\Gamma(K_3)$; a spanning tree $T^4$ for $\Gamma(K_4)$ is obtained by sliding the new edges over one another and one for $\Gamma(K_5)$ is obtained by pushing $T^4$ forward by the diffeomorphism relating $K_4$ and $K_5$. We then take $D_i = H(K_i, T^i)$ and $D_1, \hdots, D_5$ clearly still form a simple trade loop, and hence $F$ commutes.
\end{proof}

\section{Naturality of real Heegaard Floer homology}\label{sec:real hf}
In this section, we establish naturality of real Heegaard Floer homology. First, we review the construction of \cite{guth_manolescu2025real} and our proof of invariance up to isomorphism. There we proved real handleslide invariance by appealing to Perutz \cite{perutz}. For ease of computation, we will reformulate this statement in terms of holomorphic polygon counting maps. Then we will verify the remaining axioms of a strong real Heegaard Floer theory: the most interesting verification is that of the simple trade loop. To verify the computation, we will briefly set up a cylindrical reformulation of real Heegaard Floer homology.

\subsection{Real Lagrangian Floer homology}

We begin with some background on our theory. Let $(M, \omega)$ be a symplectic manifold equipped with an anti-symplectic involution $R: M \ra M$; i.e. a smooth involution satisfying $R^*(\omega) = - \omega.$ Let $L$ be a Lagrangian submanifold of $M$. Let $\cJ$ be the space of time-dependent almost complex structures on $M$ and let $\cJ_R \sub \cJ$ be the subset satisfying the condition
\begin{align*}
    J_t \circ R_* = - R_* \circ J_{1-t}.
\end{align*}
Let us write $M^R$ for the fixed point set of $R$; $M^R$ is a smooth submanifold of $M$ and is, in fact, a Lagrangian submanifold. Therefore, for a path of real-invariant almost complex structures $J_s \in \cJ_R$, we define 
\begin{align*}
    \CFR_{J_s}^\circ(L) := \CF^\circ_{J_s}(L, M^R).
\end{align*}
Note that as usual, we can construct continuation maps
\begin{align}\label{eqn:continuation_J}
    \Psi_{J_s \ra J_s'}: \CFR_{J_s}^\circ(L) \ra \CFR_{J_s'}^\circ(L)
\end{align}
which show these complexes are independent of our choice of almost complex structure. Furthermore, the notation is intended to indicate that the real Floer chain complex associated to the Lagrangian $L$ and involution $R$ is simply the usual Lagrangian Floer chain complex of $L$ and $M^R$; in particular, we impose no symmetry relation on the holomorphic strips which contribute to the differential. However, as $J$ is assumed to be real-invariant, there is a correspondence between holomorphic strips between $L$ and $M^R$ and real invariant strips between $L$ and $R(L)$, i.e. holomorphic maps satisfying: 
\begin{align*}
    u: [0,1] \times \R \ra M, \quad \begin{cases}
        u(\{1\} \times \R) \sub L \\
        u(\{0\} \times \R) \sub R(L)\\
        u(1-s, t) = R \circ u(s, t).
    \end{cases}
\end{align*}
Furthermore, the generators of $\CF_J^\circ(L, M^R)$ (i.e. elements of $L \cap M^R$) are exactly the intersection points of $L \cap R(L)$ which are fixed by $R$. In short, we will frequently make use of the fact that we may either view $\CFR_J^\circ(H)$ as the usual Lagrangian Floer chain complex of $L$ and $M^R$, or the complex generated by $(L \cap R(L))^R$ with differential which counts rigid real invariant holomorphic strips.

\subsection{Real holomorphic polygons}

As is typical in Floer theory, we obtain maps between our Floer complexes by counting holomorphic triangles. Suppose we have two Lagrangian submanifolds $L_1$ and $L_2$ of $(M, \omega)$. Just as there is a correspondence between $J$-holomorphic strips between $M^R$ and $L_1$ (as well as between $M^R$ and $L_2$) with real invariant $J$-holomorphic strips between $R(L_1)$ and $L_1$ (as well as between $R(L_2)$ and $L_2$), there is also a correspondence between $J$-holomorphic triangles between $M^R$, $L_1$, and $L_2$ and invariant $J$-holomorphic rectangles between $L_1$, $L_2$, $R(L_2)$, and $R(L_1)$. 

More generally, let $\P_k$ be a $k$-gon in the plane; we do not fix a complex structure on $\P_k$. Label the edges of $\P_k$ by $e_1, \hdots, e_k$. Let $L_1, \hdots,L_k$ be a collection of Lagrangian subspaces of $(M, \omega)$. A \emph{Whitney $n$-gon} is a map $\phi: \P_n \ra M$ such that $\phi(e_i) \sub L_i$. Let $\cC(\P_k)$ be the space of conformal structures on $\P_k$. Given $\x_i \in L_i \cap L_{i+1}$ for all $i \in \Z/k$ and a class $\psi \in \pi_2(\x_1, \hdots, \x_k)$, we define $\cM(\psi)$ to be the space of holomorphic $k$-gons $\P_k \ra M$  in the class $\psi$. The expected dimension of $\cM(\psi)$ is given by 
\begin{align*}
    \mu(\psi) + \dim \cC(\P_k) = \mu(\psi) + (k-3).
\end{align*}
In our setting, we consider Lagrangian subspaces $L_1, \hdots, L_{k-1}, M^R \sub M$.  Given a class $\psi \in \pi_2(\x_1, \hdots, \x_{k})$ and $J \in \cJ_R$, we define $\cM_R^J(\psi)$ to be the space of $J$-holomorphic $k$-gons $\P_k \ra M$ in the class $\psi$. The expected dimension of $\cM_R(\psi)$ is given by 
\begin{align*}
    \mu_R(\psi) + \dim \cC(\P_k) = \mu_R(\psi) + (k-3).
\end{align*}
The symmetry condition on $J$ sets up our correspondence between Whitney $k$-gons between Lagrangians $L_1, \hdots, L_{k-1}, M^R$ and real invariant $(2k-2)$-gons between $$R(L_{k-1}), \hdots, R(L_1), L_1, \hdots, L_{k-1}.$$ 
We remark that the space of real conformal structures on $\P_{2k-2}$ is $\cC^R(\P_{2k-2})=\cC(\P_k)$. 

We now specialize to the case of triangles (that is, real-invariant rectangles). Let $\Diamond$ be the unit disk in the complex plane with four marked points. The space of conformal structures of the rectangle $\cM(\Diamond)$ is one dimensional, parametrized by the cross ratio. In \cite{os_holodisks}, they consider the moduli space $\cM(\psi)$ of holomorphic maps of the rectangle in the homotopy class $\psi$ into $\Sym^m(\Sigma)$ without fixing the complex structure on $\Diamond$. In our setting, we will fix the almost complex structure on $\Diamond$ which is preserved by complex conjugation (which, of course, agrees with the fact that there is a unique conformal structure on the triangle). In the usual setting, holomorphic rectangle maps are not chain maps; when the cross-ratio goes to 0 or infinity, rectangles break into a pair of triangles meeting at a vertex. However, in our setting, this kind of degeneration cannot occur. Because of the symmetry, at the ends of 1-parameter families of real rectangles, only strips can break off. It follows that the real holomorphic rectangle counting maps 
\begin{align*}
    F_{L_1,L_2}: \CF^\circ(L_2, L_1)\otimes \CFR^\circ(L_1, R(L_1)) \ra \CFR^\circ(L_2, R(L_2)), 
\end{align*}
are actually chain maps in the real setting. We note here that while $F_{L_1,L_2}$ does have four inputs, the input coming from $R(L_1)\cap R(L_2)$ is determined by the one coming from $L_1 \cap L_2$.

As in the case of bigons, there is a simple relationship between the real index of a rectangle and its ordinary index. The space of conformal structures of the rectangle is one dimensional. However, the space of conformal structures on the rectangle which respect the real structure is zero dimensional, as the symmetry forces the cross-ratio to be one. More generally, the space of conformal structures of a $(2k-2)$-gon is $(2k - 5)$-dimensional, while the space of real conformal structures is $(k-3)$-dimensional (since the quotient is a $k$-gon).

\begin{lem}
    Let $\psi \in \pi_2^R(\theta, \x, R(\theta), \y)$ be a homotopy class of invariant rectangles and let $\Tilde{\psi}$ be its image in $\pi_2(\theta, \x, R(\theta), \y)$. Then 
    \begin{align}\label{eqn:index_for_rectangles}
        \ind(\tilde{\psi}) = 2\ind_R(\psi) + \frac{\sigma(L_0,\x) - \sigma(L_1,\y)}{2}.
    \end{align}
\end{lem}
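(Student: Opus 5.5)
The plan is to reduce the statement to a comparison of linearized operators, as in the bigon case from \cite{guth_manolescu2025real}. First we put the index in a convenient form. Here $\ind(\tilde\psi)$ is the Fredholm index of the $\bar\partial$-operator on the rectangle with fixed conformal structure, since the real conformal structure is the one preserved by conjugation. The real index $\ind_R(\psi)$ is the index of the same problem restricted to $R$-invariant sections, or equivalently the index of the triangle problem with boundary on $L_1$, $L_2$ and $M^R$.

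The first step is to decompose the linearized operator $D_u$ of a real rectangle $u$. The anti-holomorphic involution $\hat R(\xi)(z)=dR(\xi(\bar z))$ commutes with $D_u$ because $J\in\cJ_R$. Hence $D_u$ splits into its $(+1)$- and $(-1)$-eigenspaces, and
\[
\ind(\tilde\psi)=\ind(D_u^+)+\ind(D_u^-).
\]
The invariant part $D_u^+$ is exactly the real problem, so $\ind(D_u^+)=\ind_R(\psi)$.

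The second step is to compare the two summands. The anti-invariant sections are $J$ times invariant sections. Multiplication by $J$ identifies $D_u^-$ with the $\bar\partial$-problem on the half-rectangle (the triangle). That problem has the same boundary conditions $L_1$, $L_2$ along the interior edges, but along the real axis it uses $J\,T M^R$ in place of $TM^R$. The Maslov index of such a problem changes by the crossing contributions at the two corners lying on the fixed locus, namely $\theta$ and $R(\theta)$ being paired, and $\x$, $\y$ on $M^R$. At a corner $\x\in L_0\cap M^R$, swapping $TM^R$ for $J\,TM^R$ changes the local corner index by a quantity expressed by the signature $\sigma(L_0,\x)$. This is the quadratic form comparing $T_\x L_0$ with the pair $(TM^R, J\,TM^R)$, which is the same invariant appearing in the bigon formula. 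Applying this at $\x$ and at $\y$ with opposite orientations gives
\[
\ind(D_u^-)=\ind_R(\psi)+\tfrac12\bigl(\sigma(L_0,\x)-\sigma(L_1,\y)\bigr).
\]
Adding the two summands yields \eqref{eqn:index_for_rectangles}.

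The main obstacle is the local corner computation: pinning down signs and the factor $\tfrac12$ correctly. I would do it by a gluing argument rather than directly. Glue the real rectangle to the real bigons at $\x$ and $\y$ used in \cite{guth_manolescu2025real}, whose index formula already has this correction term, and use additivity of both $\ind$ and $\ind_R$ under gluing. The corners at $\theta$ and $R(\theta)$ are interior to the half-plane problem, so they contribute symmetrically and cancel. This reduces the corrections to those at $\x$ and $\y$ only.
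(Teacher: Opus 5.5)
Your eigenspace splitting is a genuinely different, and rather clean, replacement for the paper's first step. The paper uses $\pi_1(U(n)/O(n))\cong\Z$ to show that $\delta:=\ind(\tilde\psi)-2\ind_R(\psi)$ depends only on the Lagrangian data at the vertices. In your version, since $\hat R$ commutes with $D_u$, you get $\delta=\ind(D_u^-)-\ind(D_u^+)$. Both operators are half-rectangle problems that differ only by $TM^R$ versus $J\,TM^R$ on the axis, so $\delta$ is determined by the linear data at $\x$ and $\y$. (That identification comes from restricting anti-invariant sections to the half-domain; multiplying by $J$ would also turn $TL_0$, $TL_1$ into $J\,TL_0$, $J\,TL_1$.)

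The gap is in the normalization, which you correctly flag as the main obstacle but do not actually supply. Gluing real bigons at $\x$ and $\y$, or pairs of bigons at $\theta$ and $R(\theta)$, only relates $\delta$ for rectangles with different corners. It shows that $\delta-\frac12(\sigma(L_0,\x)-\sigma(L_1,\y))$ is unchanged under such splicings. But every one of these relations is equally well satisfied by $\delta=\frac12(\sigma(L_0,\x)-\sigma(L_1,\y))+K$ for an arbitrary constant $K$. So the argument as written never determines the sign or the factor $\frac12$, let alone rules out a shift.

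You need one absolute computation. The paper supplies it with the explicit class in \Cref{fig:equivariant_quadrilateral}. In the unreal setting it has a one-parameter family of slit representatives, so $\ind=0$. The symmetric conformal structure and equivariant slits leave a unique representative, so $\ind_R=0$. The formula holds there, and splicing then propagates it.

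Within your framework you could instead carry out the corner computation you deferred. The transversality $L_0\pitchfork R(L_0)$ at $\x$ forces $T_\x L_0$ to be transverse to both $T_\x M^R$ and $J\,T_\x M^R$. Hence $T_\x L_0$ is the graph of a nondegenerate symmetric form $A_\x$ over $T_\x M^R$. Rotating $TM^R$ to $J\,TM^R$ through $e^{sJ\pi/2}TM^R$ produces $n_\pm(A_\x)$ crossings at $\x$ (one sign of eigenvalue, depending on conventions), and likewise $n_\pm(A_\y)$ at $\y$. These enter with opposite signs, because the axis is the incoming edge at one corner and the outgoing edge at the other. The $n/2$ parts cancel, leaving $\pm\frac12(\sigma(A_\x)-\sigma(A_\y))$, with the sign fixed by the convention defining $\sigma$.

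Equivalently, identify these corner changes with those at the two ends of a real strip. The constant strip shows they are negatives of each other, and the bigon formula then determines them up to a constant that cancels. It is this step, not the gluing, that fixes the formula.
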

\begin{proof}
    This is similar to \cite[Proposition 2.3]{guth_manolescu2025real}. The index of a holomorphic rectangle $u$ in the class $\psi$ is given by trivializing $u^*TM$ over the disk and considering the loop $\lambda$ of Lagrangian subspaces in $\C^n$ determined by $L_0$, $L_1$, $R(L_0)$, and $R(L_1)$ which in turn determines an integer, as the fundamental group of the Lagrangian Grassmannian $\pi_1(U(n)/O(n)) \cong \Z$. We write $\mu(\lambda) = \ind(\tilde{\psi})$ and $\mu_R(\lambda) = \ind_R(\psi)$. 

    Consider the quantity 
    \begin{align*}
        \delta(\lambda) = 2 \mu_R(\lambda) - \mu(\lambda).
    \end{align*}
    This quantity only depends on the Lagrangian subspaces at the vertices, as concatenating with the generator of $\pi_1(U(n)/O(n))$ increases $\mu(\lambda)$ by two and increases $\mu_R(\lambda)$ by one.

    Both $\ind$ and $\ind_R$ are additive under splicing in bigons at $\x$ and $\y$ and pairs of bigons at $\theta$ and $R(\theta)$. Any configuration of Lagrangians at the vertices can be obtained by splicing in bigons, so it suffices to check the formula holds in a single case. Consider the rectangle $\psi: \D \ra \Sym^2(\Sigma)$ shown in \Cref{fig:equivariant_quadrilateral}. This rectangle has $\ind(\psi) = 0$ (and the corresponding moduli space has expected dimension 1). Indeed, for each component and for each slit length, there is a map from $\mathbb{P}_4$ (of varying of conformal class) to the domain. In the real case, the conformal structure on $\Diamond$ is not permitted to vary, and the cuts must be made equivariantly. Therefore, there is a unique slit length for which there is a holomorphic representative for $\psi$. Hence, $\ind_R(\psi) = 0$ (and so the moduli space of real rectangles has expected dimension 0). In this case, Equation \eqref{eqn:index_for_rectangles} holds. 
\end{proof}

\begin{figure}[h]
\def\svgwidth{.8\linewidth}
\begingroup%
  \makeatletter%
  \providecommand\color[2][]{%
    \errmessage{(Inkscape) Color is used for the text in Inkscape, but the package 'color.sty' is not loaded}%
    \renewcommand\color[2][]{}%
  }%
  \providecommand\transparent[1]{%
    \errmessage{(Inkscape) Transparency is used (non-zero) for the text in Inkscape, but the package 'transparent.sty' is not loaded}%
    \renewcommand\transparent[1]{}%
  }%
  \providecommand\rotatebox[2]{#2}%
  \newcommand*\fsize{\dimexpr\f@size pt\relax}%
  \newcommand*\lineheight[1]{\fontsize{\fsize}{#1\fsize}\selectfont}%
  \ifx\svgwidth\undefined%
    \setlength{\unitlength}{779.52755906bp}%
    \ifx\svgscale\undefined%
      \relax%
    \else%
      \setlength{\unitlength}{\unitlength * \real{\svgscale}}%
    \fi%
  \else%
    \setlength{\unitlength}{\svgwidth}%
  \fi%
  \global\let\svgwidth\undefined%
  \global\let\svgscale\undefined%
  \makeatother%
  \begin{picture}(1,0.57090909)%
    \lineheight{1}%
    \setlength\tabcolsep{0pt}%
    \put(0,0){\includegraphics[width=\unitlength,page=1]{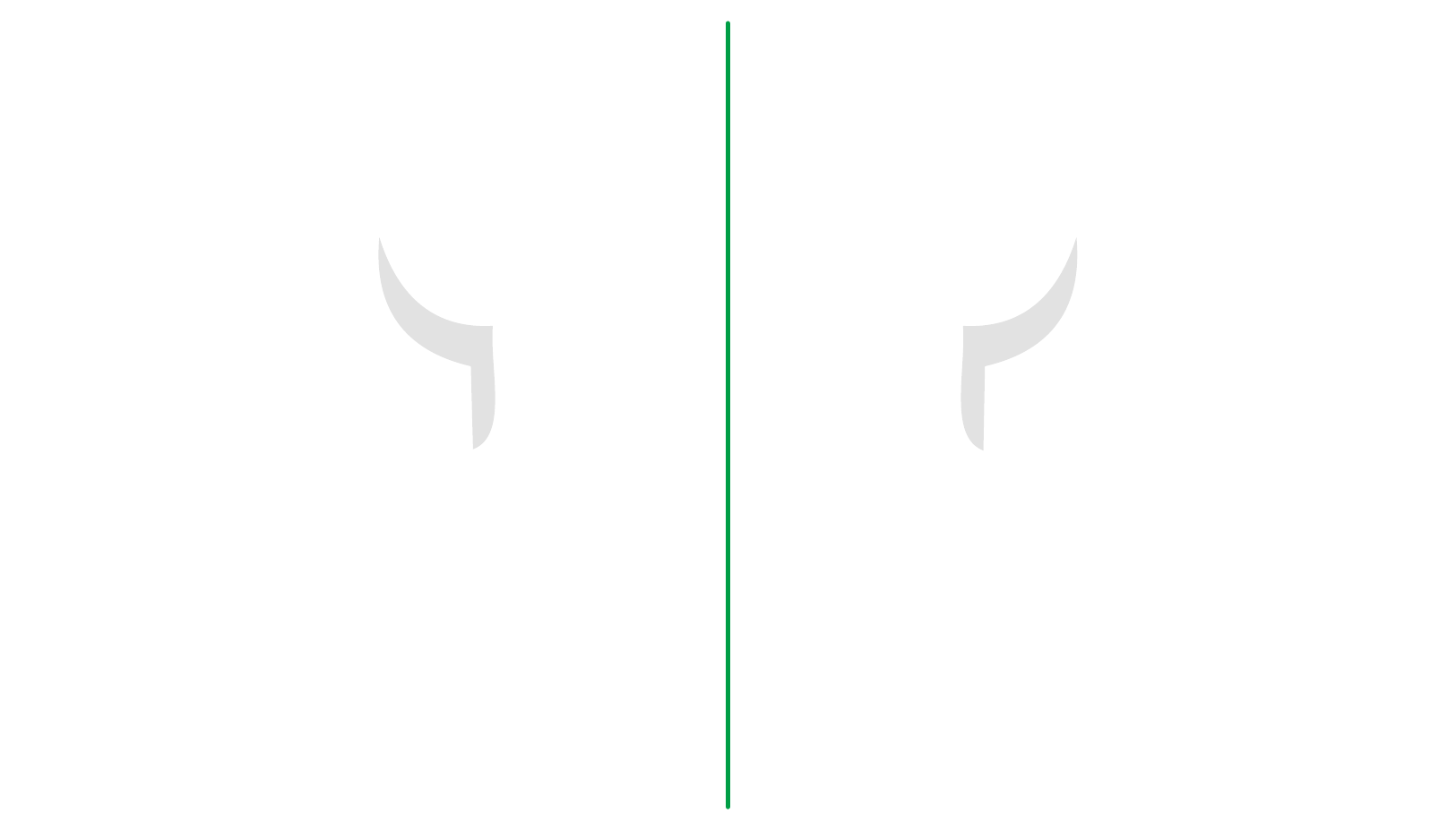}}%
    \put(0.33163918,0.3931234){\color[rgb]{0,0,0}\makebox(0,0)[t]{\smash{\begin{tabular}[t]{c}{$A$}\end{tabular}}}}%
    \put(0,0){\includegraphics[width=\unitlength,page=2]{equivariant_quadrilateral.pdf}}%
    \put(0.33163186,0.13527954){\color[rgb]{0,0,0}\rotatebox{-180}{\makebox(0,0)[t]{\smash{\begin{tabular}[t]{c}{$\reflectbox{$A$}$}\end{tabular}}}}}%
    \put(0,0){\includegraphics[width=\unitlength,page=3]{equivariant_quadrilateral.pdf}}%
    \put(0.66842024,0.39312361){\color[rgb]{0,0,0}\makebox(0,0)[t]{\smash{\begin{tabular}[t]{c}{$B$}\end{tabular}}}}%
    \put(0,0){\includegraphics[width=\unitlength,page=4]{equivariant_quadrilateral.pdf}}%
    \put(0.66841287,0.13527954){\color[rgb]{0,0,0}\rotatebox{-180}{\makebox(0,0)[t]{\smash{\begin{tabular}[t]{c}{$\reflectbox{$B$}$}\end{tabular}}}}}%
    \put(0,0){\includegraphics[width=\unitlength,page=5]{equivariant_quadrilateral.pdf}}%
  \end{picture}%
\endgroup%

    \caption{A class of rectangles admitting a unique (real) holomorphic representative.}
\label{fig:equivariant_quadrilateral}
\end{figure}

\begin{rem}
    We note that a simple modification of this proof shows that for a $2n$-gon $$\psi \in \pi_2^R(\theta_{n-1}, \hdots, \theta_{1}, \x, R(\theta_1), \hdots, R(\theta_{n-1}), \y),$$ we have 
    \begin{align}\label{eqn:index_for_polygons}
        \ind(\tilde{\psi}) = 2\ind_R(\psi) + \frac{\sigma(L_0,\x) - \sigma(L_1,\y)}{2}
    \end{align}
    as well.
\end{rem}

\subsection{Real Heegaard Floer homology} Given a real sutured manifold $(Y, \gamma, \tau)$, the associated real Heegaard Floer complex is defined as a special case of real Lagrangian Floer homology. We choose an embedded real sutured Heegaard diagram $\cH = (\Sigma, \bm \alpha, \bm \beta, \tau)$ representing $(Y, \gamma, \tau)$, and consider $M = \Sym^m(\Sigma)$  equipped with the involution $R: M \ra M$ induced by the restriction of $\tau$ to $\Sigma$; since $\tau$ restricts to an orientation-reversing involution of $\Sigma$, the map $R$ is anti-symplectic. Utilizing the work of Perutz, we may pick an anti-invariant symplectic form $\omega$ on $M$ with respect to which $\Ta = \prod_{i=1}^m \alpha_i$ is a Lagrangian submanifold of $M$. We then define 
\begin{align*}
    \CFR^\circ(\cH):= \CFR^\circ(\Ta, M^R).
\end{align*}
In \cite{guth_manolescu2025real}, we proved that $\CFR^\circ(H)$ is a weak real Heegaard Floer invariant; namely, that it is an invariant up to isomorphism of $(Y, \gamma, \tau)$. This amounts to verifying that real isotopies, handleslides, and stabilizations induce chain homotopy equivalences. In \cite{guth_manolescu2025real}, we defined maps associated to real isotopies and handleslides in terms of continuation maps. However, for the sake of computations, it is more convenient to define these maps in terms of counts of holomorphic rectangles. 

First, we recall a basic fact in the theory.
\begin{lemma}
    Let $(\S, \betas, \betas')$ be an admissible (unreal) Heegaard diagram such that $\betas \sim \betas'$. Then, there is a unique $\SpinC$-structure $\frs_0$ with $c_1(\frs_0) = 0$ as well as a highest-graded element $\Theta_{\beta, \beta'} \in \CF^\circ(\S,\betas,\betas',\frs_0)$, well defined up to boundary.
\end{lemma}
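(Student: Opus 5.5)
The plan is to reduce to the standard computation for the diagram $(\S,\betas,\betas')$ and to argue about torsion $\SpinC$ structures using the topology of the three-manifold it represents. Since $\betas\sim\betas'$, both attaching sets determine the same sutured compression body $C(\betas)$. The sutured manifold $Y_{\beta\beta'}$ described by $(\S,\betas,\betas')$ is therefore obtained by doubling that compression body along $\S$. Concretely, $Y_{\beta\beta'}$ is a connected sum of copies of $S^1\times S^2$ with a product sutured piece; in the closed, singly pointed case it is $\#^{g}(S^1\times S^2)$.

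The first step is to show the uniqueness of $\frs_0$. We have $H^2(Y_{\beta\beta'};\Z)\cong H_1(Y_{\beta\beta'})$, which is torsion-free. Hence the map $\frs\mapsto c_1(\frs)$ is injective, because $\frs-\frs' = x$ forces $c_1(\frs)-c_1(\frs')=2x$. We also need existence of some $\frs$ with $c_1(\frs)=0$. This follows because the doubled manifold carries a $\SpinC$ structure restricted from the product/double structure; equivalently, any spin structure on $\#^g(S^1\times S^2)$ works. Uniqueness then gives a single torsion structure $\frs_0$.

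The second step is to identify the homology. We relate $\betas'$ to a small Hamiltonian pushoff of $\betas$ by a sequence of handleslides and isotopies, which exists by \cite[Lemma 2.11]{JTZ_naturality_mapping_class_groups}. Invariance of Heegaard Floer homology under these moves then gives
\[
\HF^\circ(\S,\betas,\betas',\frs_0)\cong \HF^\circ(\#^g S^1\times S^2,\frs_0)
\]
(or its sutured analogue $\Lambda^*H_1\otimes$ the appropriate base ring). This group has a one-dimensional top-graded piece, with respect to the absolute $\Q$-grading, which is defined because $c_1(\frs_0)$ is torsion.

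The third step defines $\Theta_{\beta,\beta'}$ as a cycle in that top degree representing the generator. For the small-pushoff diagram this is the explicit top intersection point $\Theta=\{\theta_1^+,\dots,\theta_g^+\}$, a cycle by the usual cancellation of the two bigons in each annulus. In general we transport it through the handleslide maps, which preserve grading. Any two cycles representing the top class differ by a boundary, and this gives the claimed well-definedness.

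The main obstacle is the general admissible (possibly sutured, multi-pointed) setting, where the identification of $Y_{\beta\beta'}$ and the top grading must be handled uniformly. I would address it by citing the sutured/multi-pointed version of the $\#^g(S^1\times S^2)$ computation, rather than redoing the grading argument case by case.
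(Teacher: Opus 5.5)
Your argument is correct and is the standard one. The paper gives no proof of this lemma and simply recalls it as a basic fact; the usual Ozsv\'ath--Szab\'o / Juh\'asz--Thurston--Zemke justification is exactly yours: the diagram represents the double of a compression body (e.g.\ $\#^g(S^1\times S^2)$), its torsion-free $H^2$ forces a unique $\SpinC$ structure with $c_1=0$, and the Floer homology in that structure has a one-dimensional top-degree piece.

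One caveat applies equally to the lemma as written. For $\circ\in\{+,\infty\}$ the group $\HF^\circ(\#^g(S^1\times S^2),\frs_0)$ is unbounded above, so your claim that it ``has a one-dimensional top-graded piece'' is false for those flavors. Your argument, and the lemma, should be read for $\circ\in\{-,\hat{\phantom{o}}\}$. The resulting $\Theta$ in $\CF^-$ then serves as the fixed input when the rectangle maps are defined on the other flavors.
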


\begin{definition}
    Let $(\Sigma, \alphas', \alphas, \betas, \betas', \tau)$ be an admissible quadruple diagram with $\betas \sim \betas'$. We will write $\Psi^{\alpha \ra \alpha'}_{\beta\ra\beta'}$ for the map
    \begin{align*}
        F_{\alpha', \alpha, \beta, \beta'}(\Theta_{\alpha',\alpha}\otimes \bm {-}\otimes \Theta_{\beta,\beta'}): \CFR^\circ(\S, \alphas, \betas,\tau) \ra \CFR^\circ(\S, \alphas', \betas', \tau).
    \end{align*}
    Note that $\HF^\circ(\Sigma, \alphas', \alphas)$ and $\HF^\circ(\Sigma, \betas, \betas')$ are isomorphic via $\tau$, and in particular, $\tau_*(\Theta_{\beta, \beta'}) = \Theta_{\alpha',\alpha}$.
\end{definition}

Before proceeding, we state the following lemma regarding admissibility. 

\begin{lemma} \label{lem:adm-multi}\cite[Lemma 9.5]{JTZ_naturality_mapping_class_groups} For every $i \in \{\, 1,\dots,k \,\}$, let
  \[
  (\Sigma,\tau(\etas_n),\tau(\etas_{n-1}^i), \hdots, \tau(\etas_0^i), \etas_0^i,\dots,\etas_{n-1}^i,\etas_n, \tau)
  \]
  be a real sutured multi-diagram such that $$(\Sigma,\tau(\etas_{n-1}^i), \hdots, \tau(\etas_0^i), \etas_0^i,\dots,\etas_{n-1}^i,\tau)$$ is admissible.
  Then, there is a translate $\etas_n'$ of $\etas_n$ that is Hamiltonian isotopic to $\etas_n$ such that $$(\Sigma,\tau(\etas_n'),\tau(\etas_{n-1}^i), \hdots, \tau(\etas_0^i), \etas_0^i,\dots,\etas_{n-1}^i,\etas_n', \tau)$$ is
  admissible for every $i \in \{\, 1,\dots, k \,\}$.
\end{lemma}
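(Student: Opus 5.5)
The plan is to follow the proof of \cite[Lemma 9.5]{JTZ_naturality_mapping_class_groups}, run equivariantly. The diagram has $\etas_n$ at one end and $\tau(\etas_n)$ at the other, so we will translate only $\etas_n$ and define the translate of $\tau(\etas_n)$ by symmetry. This way the real multi-diagram stays symmetric automatically.

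For admissibility we will use the standard criterion: every nonzero periodic domain must have both positive and negative multiplicities. By hypothesis, each interior subdiagram is admissible. A periodic domain $P$ for the full multi-diagram then decomposes as a combination whose boundary involves $\etas_n$ or $\tau(\etas_n)$ nontrivially; otherwise $P$ is a periodic domain of the interior diagram and we are done. Following JTZ, we take $\etas_n'$ to be obtained by winding each component of $\etas_n$ many times along a collection of transverse curves. Equivalently, we take a ``translate'' along a closed $1$-form whose dual curves meet every component. After sufficiently many windings, any periodic domain with nonzero $\etas_n$-boundary multiplicity acquires regions of both signs near the winding region. The mirrored winding makes the same true for $\tau(\etas_n)$.

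Each such translation is realized by a Hamiltonian isotopy of $\etas_n$. We will choose the translation region $W$ so that it is disjoint from $\tau(W)$, since it can be placed in a collar of the components away from the fixed set $C$. With that choice the windings for $\etas_n$ and for $\tau(\etas_n)$ do not interfere. Because there are finitely many indices $i$, a single sufficiently large winding number works for all of them simultaneously.

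The step I expect to be the main obstacle is a periodic domain whose $\etas_n$-boundary and $\tau(\etas_n)$-boundary contributions cancel each other's sign changes. This would happen if the winding region for $\etas_n$ interacts with the mirror region. To rule it out, we will use the symmetrization $P+\tau_*P$ and the disjointness of $W$ and $\tau(W)$: the multiplicities near $W$ are governed only by the $\etas_n$ coefficients. If instead some component of $\etas_n$ meets $C$, we will first perform the winding in a $\tau$-invariant annular neighborhood using an anti-invariant $1$-form. We will then check positivity of signs directly from the local model near $C$.
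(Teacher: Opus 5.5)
\textbf{The gap is in the step that does the work.} Your outer reduction matches the paper, whose proof just combines the multi-diagram argument of \cite{Grigsby_Wehrli_colored_jones} with \cite[Lemma 3.34]{guth_manolescu2025real}. You move only $\etas_n$, take the other end to be $\tau(\etas_n')$, and note that a periodic domain with vanishing coefficients on $\etas_n\cup\tau(\etas_n)$ is a periodic domain of the admissible inner diagram.

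The problem is the claim that ``after sufficiently many windings'' every periodic domain with a nonzero $\etas_n$-coefficient has both signs. This is not justified. In a winding region along a closed curve, the multiplicities of $P$ have the form $m_P+kc_P$. The base value $m_P$ is the multiplicity where the winding starts; it depends on $P$ and is not controlled by $c_P$. So a fixed winding number gives no uniform control over the infinitely many periodic domains, and finiteness of the index set is not what is at stake.

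The missing idea is to anchor at $\partial\Sigma$, where every periodic domain of a sutured diagram has multiplicity $0$. Every component of $\Sigma\setminus\etas_n$ meets $\partial\Sigma$. So for each component $\eta_j$ you can choose arcs from $\eta_j$ to $\partial\Sigma$ on both sides of $\eta_j$, disjoint from the rest of $\etas_n$. Push a finger of $\eta_j$ along each arc into a collar of $\partial\Sigma$ that meets no curves. The two finger tips then have multiplicities $c_j$ and $-c_j$ for every $P$ with coefficient $c_j$ on $\eta_j$. This holds independently of $P$, of $i$, and of any winding number.

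Separately, translating along a non-exact closed $1$-form whose dual curves meet $\eta_j$ has nonzero flux, so it is not a Hamiltonian isotopy. Finger moves whose swept area is compensated elsewhere on each curve are Hamiltonian.

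\textbf{The equivariance discussion guards against a non-issue and leaves a real case unspecified.} You cannot in general confine the modification to a collar of $\etas_n$ with $W\cap\tau(W)=\emptyset$. The relevant arcs run from $\eta_j$ out to $\partial\Sigma$ and may have to cross $C$. Also, components of $\etas_n$ meeting $C$ is a common situation (for example, the curves created by a $\{1\}$-stabilization meet $C$), not an exception to be handled by an unspecified anti-invariant $1$-form.

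Disjointness is also unnecessary. Choose the finger endpoints in $\partial\Sigma\setminus C$ and let the fingers of $\tau(\etas_n)$ be their mirror images. Fingers of $\etas_n'$ and of $\tau(\etas_n')$ belong to different attaching sets, so they may cross each other, even on $C$. Since an endpoint $x$ and its mirror $\tau(x)$ are distinct, each tip is still adjacent to a region of multiplicity $0$. The tips therefore exhibit both signs for any periodic domain, real or not, with a nonzero coefficient on some component of $\etas_n\cup\tau(\etas_n)$. The cancellation between the two ends that you anticipate cannot occur, and the symmetrization $P+\tau_*P$ plays no role.
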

\begin{proof}
    This follows by combining \cite{Grigsby_Wehrli_colored_jones} and \cite[Lemma 3.34]{guth_manolescu2025real}.
\end{proof}

Suppose that $\cH = (\Sigma, \bm \alpha, \bm \beta, \tau)$ and $\cH' = (\Sigma, \bm \alpha', \bm \beta', \tau)$ are two real sutured Heegaard diagrams for $(Y, \gamma, \tau)$ which differ by a real Hamiltonian isotopy. Then, there is a continuation map
\begin{align*}
    \Gamma_{\bm \beta \ra \bm \beta'}^{\bm \alpha \ra \bm \alpha'}: \CFR^\circ(\cH) \ra \CFR^\circ(\cH'),
\end{align*}
which is a chain homotopy equivalence \cite[Proposition 5.3]{guth_manolescu2025real}. On the other hand, $\CF^\circ(\Ta, \mathbb{T}_{\alpha'})$ has a canonical top-graded class $\Theta_{\alpha',\alpha}$, so there is another map 
\begin{align*}
    \Psi_{\bm \beta \ra \bm \beta'}^{\bm \alpha \ra \bm \alpha'}: \CFR^\circ(\cH) \ra \CFR^\circ(\cH'),
\end{align*}
given by counting rigid real-invariant holomorphic rectangles with one vertex on $\Theta_{\alpha',\alpha}$ and another on $R(\Theta_{\alpha',\alpha})$. The two maps agree up to homotopy. 

\begin{lem}\label{lem:iso-is-rectangle-map}
    Let $H$ and $H'$ be admissible diagrams which differ by a real Hamiltonian isotopy. Then, 
    \begin{align*}
        \Gamma_{\bm \beta \ra \bm \beta'}^{\bm \alpha \ra \bm \alpha'} \sim \Psi_{\bm \beta \ra \bm \beta'}^{\bm \alpha \ra \bm \alpha'}.
    \end{align*}
    In particular, $\Psi_{\bm \beta \ra \bm \beta'}^{\bm \alpha \ra \bm \alpha'}$ is a chain homotopy equivalence and these maps compose under concatenation of isotopies.  
\end{lem}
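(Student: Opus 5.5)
The plan is to adapt the standard unreal argument (as in \cite[Lemma 9.6]{JTZ_naturality_mapping_class_groups} and \cite{os_holodisks}) that continuation maps for Hamiltonian isotopies agree with triangle maps with a top-graded input. The real version uses rectangles with a $\tau$-symmetric pair of inputs. We work throughout in the formulation $\CFR^\circ(\cH) = \CF^\circ(\Ta, M^R)$. There, $\Gamma^{\alpha\to\alpha'}_{\beta\to\beta'}$ is the ordinary Lagrangian continuation map for the Hamiltonian isotopy $\Ta \rightsquigarrow \mathbb{T}_{\alpha'}$, with $M^R$ held fixed. Under the correspondence between real rectangles and triangles with boundary on $\mathbb{T}_{\alpha'}$, $\Ta$ and $M^R$, the map $\Psi^{\alpha\to\alpha'}_{\beta\to\beta'}$ is the triangle map $\mu_2(\Theta_{\alpha',\alpha}, -)$ in the Fukaya-type category containing $\Ta$, $\mathbb{T}_{\alpha'}$ and $M^R$. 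Admissibility of the relevant triple and quadruple diagrams is arranged using Lemma~\ref{lem:adm-multi}.

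\textbf{Step 1: degenerating the isotopy.} Take the isotopy to be $C^0$-small, so that $\mathbb{T}_{\alpha'}$ is a small Hamiltonian translate of $\Ta$. In this regime there is a nearest-point bijection $\x \mapsto \x'$ between generators. We then show that both maps equal this bijection plus terms of strictly lower filtration (by energy or area).

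\begin{itemize}
\item \emph{The continuation map.} This is the standard small-isotopy argument for continuation maps.
\item \emph{The triangle map.} Each $\x$ has a unique small triangle with vertices $\Theta_{\alpha',\alpha}$, $\x$, $\x'$. In the real picture this is a small symmetric rectangle, counted exactly once. One checks this using the index relation \eqref{eqn:index_for_rectangles}, the fact that real rectangles have a fixed conformal structure, and the transversality of the small model.
\end{itemize}

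Both maps are then upper-triangular with identity diagonal. This alone does not yield a homotopy between them, so a further argument is needed.

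\textbf{Step 2: a homotopy in a one-parameter family.} Consider the family of isotopies $\Ta \rightsquigarrow \Ta^{s}$ for $s \in [0,1]$. We compare them via the standard moving-boundary argument. Counting triangles whose $\mathbb{T}_{\alpha'}$ boundary moves along the isotopy interpolates between $\Psi$ (the triangle count) and the composite of $\Gamma$ with the triangle map for the trivial isotopy. The latter map is $\mu_2(\Theta_{\alpha',\alpha'}, -)$, and it is the identity up to homotopy: the unit property in Floer theory for a small pushoff, again proved by small-triangle counting.

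The ends of the one-dimensional moduli spaces of such real triangles come from three sources:

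\begin{itemize}
\item strip breaking at $\x$ or at the output, which gives the homotopy terms;
\item breaking at the $\Theta$ vertex, which is killed because $\Theta_{\alpha',\alpha}$ is a cycle;
\item in the real picture, splitting into two symmetric triangles along the $R$-symmetric cut. This cannot occur, since the real conformal structure on the rectangle is fixed. That is the observation already used to show that the rectangle maps are chain maps.
\end{itemize}

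Hence $\Gamma \sim \Psi$.

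\textbf{Step 3: conclusion and main obstacle.} Since $\Gamma$ is a homotopy equivalence by \cite[Proposition 5.3]{guth_manolescu2025real}, so is $\Psi$. Composition under concatenation follows from functoriality of continuation maps, or directly from real pentagon (associativity) counts together with $\mu_2(\Theta_{\alpha'',\alpha'}, \Theta_{\alpha',\alpha}) = \Theta_{\alpha'',\alpha}$.

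The main obstacle is achieving equivariant transversality for the parametrized moduli spaces of real rectangles and triangles with a moving boundary. This needs almost complex structures in $\cJ_R$ that are generic for the parametrized problem. I would handle it as in \cite{guth_manolescu2025real}: pass to the non-equivariant description with boundary on $M^R$, where the usual genericity arguments apply. I would also check carefully that the correspondence between triangles with an $M^R$ edge and symmetric rectangles persists when the boundary moves.
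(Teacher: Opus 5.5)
Your proposal is correct and follows essentially the same route as the paper. The paper also rewrites $\CFR^\circ(\cH)$ as $\CF^\circ(\Ta, M^R)$, treats $\Gamma$ and $\Psi$ as the ordinary continuation and triangle maps for the Lagrangians $\mathbb{T}_{\alpha'}, \Ta, M^R$, and then simply invokes the standard unreal argument (\cite[Theorem 2.3]{os_holotri}, \cite[Lemma 9.7]{JTZ_naturality_mapping_class_groups}) rather than sketching it as you do in Steps 1--2. (Two small points. Your alternative associativity argument for composition would use quadrilaterals with an $M^R$ edge, i.e.\ real-invariant hexagons, not pentagons. Your moving-boundary step also implicitly needs that the continuation image of the top generator $\Theta$ is again $\Theta$ up to a boundary, which follows from the grading.)
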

\begin{proof}
    Since we may instead view $\CFR^\circ(\cH)$ and $\CFR^\circ(\cH')$ as $\CF^\circ(\Ta, M^R)$ and $\CF^\circ(\mathbb{T}_{\alpha'}, M^R)$ respectively, and the maps $\Gamma_{\bm \beta \ra \bm \beta'}^{\bm \alpha \ra \bm \alpha'}$ and $\Psi_{\bm \beta \ra \bm \beta'}^{\bm \alpha \ra \bm \alpha'}$ as maps between them, this follows from the proof in the usual case, which equates the continuation and triangle counting maps; see \cite[Theorem 2.3]{os_holotri} and \cite[Lemma 9.7]{JTZ_naturality_mapping_class_groups}.
\end{proof}

Next, we consider the situation in which $\cH = (\Sigma, \bm \alpha, \bm \beta, \tau)$ and $\cH' = (\Sigma, \bm \alpha', \bm \beta', \tau)$ are two real sutured Heegaard diagrams for $(Y, \gamma, \tau)$ which differ by a real handleslide. Then, by appealing to \cite{perutz}, we once again have a continuation map
\begin{align*}
    \Gamma_{\bm \beta \ra \bm \beta'}^{\bm \alpha \ra \bm \alpha'}: \CFR^\circ(\cH) \ra \CFR^\circ(\cH'),
\end{align*}
which is a chain homotopy equivalence \cite[Proposition 5.4]{guth_manolescu2025real}. Again, $\CF^\circ(\Ta, \mathbb{T}_{\alpha'})$ has a canonical top-graded class $\Theta_{\alpha',\alpha}$, so there is another rectangle counting map 
\begin{align*}
    \Psi_{\bm \beta \ra \bm \beta'}^{\bm \alpha \ra \bm \alpha'}: \CFR^\circ(\cH) \ra \CFR^\circ(\cH').
\end{align*}
These two maps are homotopic.

\begin{lem}\label{lem:handleslide-is-rectangle-map}
    Let $\cH$ and $\cH'$ be admissible diagrams which differ by a real handleslide. Then, 
    \begin{align*}
        \Gamma_{\bm \beta \ra \bm \beta'}^{\bm \alpha \ra \bm \alpha'} \sim \Psi_{\bm \beta \ra \bm \beta'}^{\bm \alpha \ra \bm \alpha'}.
    \end{align*}
    In particular, $\Psi_{\bm \beta \ra \bm \beta'}^{\bm \alpha \ra \bm \alpha'}$ is a chain homotopy equivalence.
\end{lem}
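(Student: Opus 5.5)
The plan is to reduce to the unreal statement, as in the proof of \Cref{lem:iso-is-rectangle-map}. We view $\CFR^\circ(\cH)$ as $\CF^\circ(\Ta, M^R)$ and $\CFR^\circ(\cH')$ as $\CF^\circ(\mathbb{T}_{\alpha'}, M^R)$. Under the correspondence between real-invariant rectangles and triangles with one edge on $M^R$, the map $\Psi^{\alpha\ra\alpha'}_{\beta\ra\beta'}$ becomes the triangle map
\[
\CF^\circ(\mathbb{T}_{\alpha'},\Ta)\otimes \CF^\circ(\Ta, M^R)\to \CF^\circ(\mathbb{T}_{\alpha'},M^R),
\]
evaluated on $\Theta_{\alpha',\alpha}$. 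The continuation map $\Gamma$ of \cite[Proposition 5.4]{guth_manolescu2025real} is, by construction via \cite{perutz}, a continuation map for a Lagrangian isotopy from $\Ta$ to $\mathbb{T}_{\alpha'}$ in $M$. This isotopy is made exact or Hamiltonian by Perutz's choice of symplectic form, after deforming $\omega$ in a family of anti-invariant forms. So the claim is the usual equality ``continuation map $=$ multiplication by the continuation element''.

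The key steps are as follows.

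\emph{Step 1.} Realize the handleslide as a Hamiltonian isotopy $\Ta\rightsquigarrow\mathbb{T}_{\alpha'}$ in $(M,\omega_s)$ for a path of anti-invariant Perutz forms. Compatibility with $R$ is automatic, because only the Lagrangian $\Ta$ moves and $M^R$ is fixed.

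\emph{Step 2.} Show that the continuation element $\Gamma_{\alpha\ra\alpha'}(\Theta_{\alpha,\alpha})\in \CF^\circ(\mathbb{T}_{\alpha'},\Ta)$ is homologous to $\Theta_{\alpha',\alpha}$. This holds because it is a cycle of top grading in the torsion $\SpinC$ structure $\frs_0$, and that class is unique: $\HF(\Sigma,\alphas',\alphas)$ is the homology of a connected sum of copies of $S^1\times S^2$, whose top class is one-dimensional.

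\emph{Step 3.} Apply the standard argument, \cite[Theorem 2.3]{os_holotri} and \cite[Lemma 9.7]{JTZ_naturality_mapping_class_groups}. The map $\Gamma$ agrees up to homotopy with the triangle map fed the continuation element, by a degeneration of moving-boundary triangles, and the triangle map is a chain map that depends on its first input only up to boundary. Admissibility of the triple $(\alphas',\alphas, M^R)$, that is, of the real quadruple diagram, is supplied by \Cref{lem:adm-multi}.

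The main obstacle is Step 1 together with the degeneration in Step 3. In \cite{guth_manolescu2025real} the handleslide invariance uses Perutz's symplectic forms, which vary, so $\Ta$ and $\mathbb{T}_{\alpha'}$ are not literally Hamiltonian isotopic for a fixed form. I would first try to use Perutz's result that the relevant forms are deformation equivalent with $\Ta$ and $\mathbb{T}_{\alpha'}$ Hamiltonian isotopic after a Moser-type correction, and to check that the Moser flow can be chosen $R$-equivariant by averaging the generating vector field. If that fails, I would instead compare both maps through an intermediate diagram obtained by a small real isotopy, and use \Cref{lem:iso-is-rectangle-map} together with associativity of real rectangle maps to move between them.

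The chain homotopy equivalence claim then follows, since $\Gamma$ is one.
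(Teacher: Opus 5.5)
Your proposal is correct and is essentially the paper's argument: regard $\CFR^\circ$ as $\CF^\circ(\Ta, M^R)$, so that only $\Ta$ moves, and degenerate the moving-boundary continuation strips into a monogon (your continuation element, which picks out $\Theta_{\alpha',\alpha}$) followed by the triangle map with one edge on $M^R$, which is exactly the real rectangle map $\Psi^{\alphas\ra\alphas'}_{\betas\ra\betas'}$. Your Step 2 grading argument and the equivariant Moser discussion in Step 1 supply details the paper leaves implicit: it simply takes $\Gamma$ from \cite[Proposition 5.4]{guth_manolescu2025real} and cites Perutz and Lipshitz for the degeneration.
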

\begin{proof}
    This again follows from the unreal case, since the statement only involves $\Ta$, not $\Tb$ nor $M^R$. Compare with \cite[Remark after Proof of Corollary 1.3]{perutz} or \cite[Proposition 11.4]{Lipshitz_cylindrical}. The continuation map counts strips with dynamic boundary conditions; by degenerating the source, one can see that the continuation map is homotopic to the composition of a monogon map and a triangle counting map. The monogon map picks out the top generator of $\CF^\circ(\Ta, \mathbb{T}_{\alpha'})$, and therefore composing with the triangle counting map is precisely the map associated to the handleslide. 
\end{proof}

Having related the continuation maps to holomorphic rectangle maps in the real setting, we show that if $\cH = (\Sigma, \bm \alpha, \bm \beta, \tau)$ and $\cH' = (\Sigma, \bm \alpha', \bm \beta, \tau)$ are real equivalent, then $\Psi_{\bm \beta \ra \bm \beta'}^{\bm \alpha \ra \bm \alpha'}$ is a chain homotopy equivalence. 

\begin{proposition}\label{prop:Compatibility1}
\hspace{2em}
  \begin{enumerate}
  \item \label{item:psi-iso} Suppose that
    $(\Sigma,\alphas',\alphas,\betas,\betas')$ is an admissible quadruple and we have
    $\betas \sim \betas'$.  Then the map
    \[
    \Psi^{\alphas\ra \alphas'}_{\betas \ra \betas'} \colon
    \CFR^\circ(\Sigma,\alphas,\betas, \t)\ra \CFR^\circ(\Sigma,\alphas',\betas', \t)
    \]
    is a chain homotopy equivalence.
  \item \label{item:psi-compose} These isomorphisms are compatible in
    the sense that if the quadruple diagrams
    $(\Sigma,\alphas',\alphas,\betas,\betas', \tau)$, $(\Sigma,\alphas'',\alphas',\betas',\betas'', \tau)$,
    and $(\Sigma,\alphas'',\alphas,\betas,\betas'', \tau)$ are admissible, then
    \begin{equation*}
      \Psi^{\alphas'\ra \alphas''}_{\betas'\ra\betas''}\circ \Psi^{\alphas\ra\alphas'}_{\betas\ra\betas'} \sim
      \Psi^{\alphas\ra \alphas''}_{\betas\ra\betas''}.
    \end{equation*}
  \end{enumerate}
\end{proposition}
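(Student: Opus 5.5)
The plan is to prove part (2) first, as an associativity statement for real polygon maps, and then deduce part (1) from it. Throughout, we use the viewpoint of the previous subsections. A real rectangle with corners at $\Theta_{\alpha',\alpha}$ and $R(\Theta_{\alpha',\alpha})$ is the same as a triangle with boundary on $\mathbb{T}_{\alpha'}$, $\Ta$ and $M^R$, where $\Theta_{\beta,\beta'} = \tau_*(\Theta_{\alpha',\alpha})$ is determined by symmetry. So $\Psi^{\alphas\ra\alphas'}_{\betas\ra\betas'}$ is the ordinary triangle map
\[
\CF^\circ(\mathbb{T}_{\alpha'},\Ta)\otimes \CF^\circ(\Ta, M^R)\ra \CF^\circ(\mathbb{T}_{\alpha'},M^R)
\]
applied to $\Theta_{\alpha',\alpha}\otimes -$. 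Here $M^R$ is treated as an ordinary Lagrangian, and we work with $J\in\cJ_R$.

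For part (2), we would consider the real-invariant hexagons with inputs $\Theta_{\alpha'',\alpha'}$, $\Theta_{\alpha',\alpha}$, their $R$-images, $\x$ and the output. Equivalently, these are quadrilaterals with boundary on $\mathbb{T}_{\alpha''}$, $\mathbb{T}_{\alpha'}$, $\Ta$ and $M^R$. The real conformal structures on such hexagons form a one-dimensional space $\cC(\P_4)$, as noted before. Admissibility of all the relevant multi-diagrams comes from \Cref{lem:adm-multi}. The index relation \eqref{eqn:index_for_polygons} identifies which classes are rigid, so we can count the ends of the one-dimensional moduli spaces $\cM^J_R(\psi)$ with $\ind_R(\psi)=0$.

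The ends come in three kinds:
\begin{itemize}
\item strip breaking, which contributes the homotopy terms $\partial H + H\partial$;
\item degeneration of the cross-ratio where a triangle on $(\mathbb{T}_{\alpha''},\mathbb{T}_{\alpha'},\Ta)$ splits off symmetrically, which gives $\Psi^{\alphas\ra\alphas''}_{\betas\ra\betas''}$ applied after the triangle product $F(\Theta_{\alpha'',\alpha'}\otimes\Theta_{\alpha',\alpha})$;
\item the other degeneration, where two real triangles compose, which gives $\Psi^{\alphas'\ra\alphas''}_{\betas'\ra\betas''}\circ\Psi^{\alphas\ra\alphas'}_{\betas\ra\betas'}$.
\end{itemize}
The key input is then the standard unreal fact that $F(\Theta_{\alpha'',\alpha'}\otimes\Theta_{\alpha',\alpha}) = \Theta_{\alpha'',\alpha}$ in homology, as in \cite[Lemma 9.7]{JTZ_naturality_mapping_class_groups}. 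This fact involves only $\alpha$-type tori, so no real input is needed for it. Together these give (2).

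For part (1), we would take (2) with $\alphas''=\alphas$ and $\betas''=\betas$. Then we would note that $\Psi^{\alphas\ra\alphas}_{\betas\ra\betas}$ is homotopic to the identity, by first replacing $\alphas$ with a small Hamiltonian translate to achieve admissibility and then using \Cref{lem:iso-is-rectangle-map}. This yields a two-sided homotopy inverse. Alternatively, we would reduce to the case where $\alphas,\alphas'$ differ by isotopies and real handleslides, which is handled by \Cref{lem:iso-is-rectangle-map} and \Cref{lem:handleslide-is-rectangle-map}, and then compose using (2).

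The step we expect to be the main obstacle is the degeneration analysis in the real setting. We must show that at the ends of the one-parameter family of real hexagons, bubbling happens only in $R$-symmetric pairs. We must also show that the two cross-ratio limits correspond exactly to the two compositions, with no extra contributions from triangles involving both $\Ta$ and $R(\Ta)$. We would handle this by working entirely in the $M^R$-boundary model, where the degenerations are the usual quadrilateral degenerations of \cite{os_holodisks}, and by choosing $J$ generic within $\cJ_R$ to obtain transversality.
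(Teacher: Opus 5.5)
Your route is the paper's route. The paper's proof just says that \cite[Proposition 9.10]{JTZ_naturality_mapping_class_groups} goes through with $\Tb$ replaced by $M^R$: you view $\Psi^{\alphas\ra\alphas'}_{\betas\ra\betas'}$ as the triangle map on $(\mathbb{T}_{\alpha'},\Ta,M^R)$, prove (2) by associativity together with $F(\Theta_{\alpha'',\alpha'}\otimes\Theta_{\alpha',\alpha})=\Theta_{\alpha'',\alpha}$, and deduce (1) from (2) and the small-translate case.

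\textbf{The gap.} One step fails as you wrote it. The hypotheses of (2) give admissibility only of the three quadruples. They do not imply that the hexagon diagram $(\Sigma,\alphas'',\alphas',\alphas,\betas,\betas',\betas'')$ is admissible (equivalently, that the quadruple $(\mathbb{T}_{\alpha''},\mathbb{T}_{\alpha'},\Ta,M^R)$ is). Without that, the ends-of-moduli-space argument does not run for the given curves. \Cref{lem:adm-multi} does not supply this: it only gives admissibility after replacing the last attaching set by a Hamiltonian translate.

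\textbf{The repair for (2).} Use a translate-and-cancel step. Apply \Cref{lem:adm-multi} with $k=3$; its hypotheses are exactly the three given admissible quadruples. This yields a translate $\alphas'''$ of $\alphas''$, with $\betas'''=\tau(\alphas''')$, such that the three hexagon diagrams obtained by appending $\alphas'''$ and $\betas'''$ to those quadruples are admissible. Associativity then gives, suppressing the $\betas$-subscripts (determined by $\tau$),
\begin{gather*}
\Psi^{\alphas'\ra\alphas'''}\circ\Psi^{\alphas\ra\alphas'}\sim\Psi^{\alphas\ra\alphas'''},\qquad
\Psi^{\alphas''\ra\alphas'''}\circ\Psi^{\alphas'\ra\alphas''}\sim\Psi^{\alphas'\ra\alphas'''},\\
\Psi^{\alphas''\ra\alphas'''}\circ\Psi^{\alphas\ra\alphas''}\sim\Psi^{\alphas\ra\alphas'''}.
\end{gather*}
Combining these, $\Psi^{\alphas''\ra\alphas'''}\circ\bigl(\Psi^{\alphas'\ra\alphas''}\circ\Psi^{\alphas\ra\alphas'}\bigr)\sim\Psi^{\alphas''\ra\alphas'''}\circ\Psi^{\alphas\ra\alphas''}$. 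You then cancel $\Psi^{\alphas''\ra\alphas'''}$, which is a homotopy equivalence by \Cref{lem:iso-is-rectangle-map}.

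\textbf{The repair for (1).} You cannot literally set $\alphas''=\alphas$. Applying (2) once with a translate $\overline{\alphas}$ of $\alphas$ only shows that $\Psi^{\alphas'\ra\overline{\alphas}}\circ\Psi^{\alphas\ra\alphas'}$ is an equivalence. That gives $\Psi^{\alphas\ra\alphas'}$ a one-sided inverse, not the two-sided one you claim.

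Apply (2) a second time to $(\alphas',\overline{\alphas},\overline{\alphas}')$, where $\overline{\alphas}'$ is a translate of $\alphas'$. This shows $\Psi^{\alphas'\ra\overline{\alphas}}$ also has a left inverse, so it is an equivalence. Hence $\Psi^{\alphas\ra\alphas'}\sim(\Psi^{\alphas'\ra\overline{\alphas}})^{-1}\circ\Psi^{\alphas\ra\overline{\alphas}}$ is an equivalence as well. Choose $\overline{\alphas}$ and $\overline{\alphas}'$ successively via \Cref{lem:adm-multi} so that all quadruples involved are admissible. With these additions the argument is complete.
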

\begin{proof}
    This is an analogue of \cite[Proposition 9.10]{JTZ_naturality_mapping_class_groups}, and the proof is identical, since we may simply replace the role of $\Tb$ with $M^R$.
\end{proof}

Hence, when $(\Sigma,\alphas',\alphas,\betas,\betas', \t)$ is admissible, we have canonical identifications between $\CFR^\circ(\S, \alphas, \betas, \t)$ and $\CFR^\circ(\S, \alphas', \betas', \t)$. In the case that $(\Sigma,\alphas',\alphas,\betas,\betas', \t)$ is not admissible, we make use of the following lemma.

\begin{lem}\label{lem:factor-admissible}
    Suppose $(\S, \alphas, \betas, \t)$ and $(\S, \alphas', \betas', \t)$ are admissible real diagrams and that $\alphas \sim \alphas'$ and therefore $\betas \sim \betas'$. Suppose that $\overline{\alphas}$ and $\overline{\betas}$ are attaching curves isotopic to $\alphas$ and $\betas$ respectively so that the quadruples $(\Sigma,\overline{\alphas},\alphas,\betas,\overline{\betas}, \t)$ and $(\Sigma,\alphas',\overline{\alphas},\overline{\betas},\betas', \t)$ are admissible. Then the map 
    \begin{align*}
        \Psi^{\overline{\alpha} \to \alpha'}_{\overline{\beta} \to \beta'}\circ\Psi^{\alpha \to \overline{\alpha}}_{\beta \to \overline{\beta}}
    \end{align*}
    is a homotopy equivalence. Furthermore, the composite map $\Psi^{\overline{\alpha} \to \alpha'}_{\overline{\beta} \to \beta'}\circ\Psi^{\alpha \to \overline{\alpha}}_{\beta \to \overline{\beta}}$ is independent of $\overline{\alphas}$ and $\overline{\betas}$.
\end{lem}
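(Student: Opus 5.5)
The argument mirrors the unreal one in \cite[Lemma 9.11]{JTZ_naturality_mapping_class_groups}. As in the proof of Proposition~\ref{prop:Compatibility1}, we regard every complex $\CFR^\circ(\S,\alphas,\betas,\t)$ as $\CF^\circ(\Ta, M^R)$. We also regard every rectangle map $\Psi$ as the corresponding triangle map between $\Ta$, $\mathbb{T}_{\alpha'}$ and $M^R$, with input $\Theta_{\alpha',\alpha}$. In this form the $\betas$ side is determined by symmetry, so only the $\alpha$-side admissibility conditions need checking.

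\textbf{Homotopy equivalence.} Each factor $\Psi^{\alpha\to\overline{\alpha}}_{\beta\to\overline{\beta}}$ and $\Psi^{\overline{\alpha}\to\alpha'}_{\overline{\beta}\to\beta'}$ is associated to an admissible quadruple in which $\overline{\alphas}\sim\alphas$ and $\alphas'\sim\overline{\alphas}$. By Proposition~\ref{prop:Compatibility1}(\ref{item:psi-iso}) each factor is a chain homotopy equivalence, and hence so is their composite.

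\textbf{Independence of the auxiliary curves.} Let $(\overline{\alphas}_1,\overline{\betas}_1)$ and $(\overline{\alphas}_2,\overline{\betas}_2)$ be two admissible choices. The plan is to find a third pair $(\widehat{\alphas},\widehat{\betas}=\t(\widehat{\alphas}))$, isotopic to $\alphas$, such that every quadruple pairing $\widehat{\alphas}$ with any of $\alphas$, $\alphas'$, $\overline{\alphas}_1$, $\overline{\alphas}_2$ is admissible. Lemma~\ref{lem:adm-multi} provides this: it lets us take $\widehat{\alphas}$ to be a sufficiently generic small Hamiltonian translate, chosen simultaneously for the finitely many required multi-diagrams. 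The real version is obtained by translating $\widehat{\alphas}$ and setting $\widehat{\betas}=\t(\widehat{\alphas})$.

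With this choice, Proposition~\ref{prop:Compatibility1}(\ref{item:psi-compose}) gives
\[
\Psi^{\overline{\alpha}_i\to\alpha'}_{\overline{\beta}_i\to\beta'}\circ\Psi^{\alpha\to\overline{\alpha}_i}_{\beta\to\overline{\beta}_i}
\sim \Psi^{\overline{\alpha}_i\to\alpha'}\circ\Psi^{\widehat{\alpha}\to\overline{\alpha}_i}\circ\Psi^{\overline{\alpha}_i\to\widehat{\alpha}}\circ\Psi^{\alpha\to\overline{\alpha}_i}
\sim \Psi^{\widehat{\alpha}\to\alpha'}\circ\Psi^{\alpha\to\widehat{\alpha}},
\]
with $\beta$-labels suppressed. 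Several ingredients enter here.
\begin{itemize}
\item The first step inserts the loop $\Psi^{\widehat{\alpha}\to\overline{\alpha}_i}\circ\Psi^{\overline{\alpha}_i\to\widehat{\alpha}}$. This loop is homotopic to $\Psi^{\overline{\alpha}_i\to\overline{\alpha}_i}$ by Proposition~\ref{prop:Compatibility1}(\ref{item:psi-compose}).
\item $\Psi^{\overline{\alpha}_i\to\overline{\alpha}_i}$ is homotopic to the identity. The reason is that in the small-translate limit the rectangle map with top generator $\Theta$ is the nearest-point map, as in the unreal case.
\item The second step regroups the four factors using (\ref{item:psi-compose}) twice.
\end{itemize}
The right-hand side does not depend on $i$, so the two composites are homotopic.

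\textbf{Main obstacle.} The hard part is producing a single $\widehat{\alphas}$ that makes all the needed real quadruple and pentagon diagrams admissible at once, while remaining $\t$-symmetric. I would first apply Lemma~\ref{lem:adm-multi} with $n=1$ to the finite list of multi-diagrams involving $\overline{\alphas}_1$, $\overline{\alphas}_2$, $\alphas$ and $\alphas'$. If the intermediate quadruple $(\overline{\alphas}_i,\widehat{\alphas},\widehat{\betas},\overline{\betas}_i)$ fails to be admissible, the fallback is to chain through two successive generic translates.
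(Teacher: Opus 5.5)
Your homotopy-equivalence argument is fine, and your auxiliary pair $(\widehat{\alphas},\widehat{\betas})$ is the right object: it is chosen via \Cref{lem:adm-multi} so that its quadruples with each of $\alphas,\alphas',\overline{\alphas}_1,\overline{\alphas}_2$ are admissible. That lemma already handles finitely many multi-diagrams at once, so no ``fallback'' is needed.

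The gap is the first step of your chain. You justify inserting $\Psi^{\widehat{\alpha}\to\overline{\alpha}_i}\circ\Psi^{\overline{\alpha}_i\to\widehat{\alpha}}$ by claiming it is homotopic to $\Psi^{\overline{\alpha}_i\to\overline{\alpha}_i}$ via \Cref{prop:Compatibility1}(\ref{item:psi-compose}). That would require $(\Sigma,\overline{\alphas}_i,\overline{\alphas}_i,\overline{\betas}_i,\overline{\betas}_i,\tau)$ to be an admissible quadruple. It is not a diagram at all: the attaching sets coincide, there is no $\Theta_{\overline{\alpha}_i,\overline{\alpha}_i}$, and no rectangle map is defined. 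The assertion that such a round trip is homotopic to the identity is essentially \Cref{lem:identity}. The paper proves that only after the present lemma, and using it, since it is phrased in terms of $\Phi$. Establishing it directly requires going through the continuation maps of \Cref{lem:iso-is-rectangle-map}, and the ``nearest-point map'' remark does not supply this.

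The detour is unnecessary. Use \Cref{prop:Compatibility1}(\ref{item:psi-compose}) once to split and once to merge (with $\beta$-labels suppressed):
\[
\Psi^{\overline{\alpha}_i\to\alpha'}\circ\Psi^{\alpha\to\overline{\alpha}_i}
\sim\Psi^{\widehat{\alpha}\to\alpha'}\circ\Psi^{\overline{\alpha}_i\to\widehat{\alpha}}\circ\Psi^{\alpha\to\overline{\alpha}_i}
\sim\Psi^{\widehat{\alpha}\to\alpha'}\circ\Psi^{\alpha\to\widehat{\alpha}}.
\]
The first step needs the quadruples for the pairs $(\overline{\alphas}_i,\widehat{\alphas})$, $(\widehat{\alphas},\alphas')$ and $(\overline{\alphas}_i,\alphas')$. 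The second needs those for $(\alphas,\overline{\alphas}_i)$, $(\overline{\alphas}_i,\widehat{\alphas})$ and $(\alphas,\widehat{\alphas})$. Each is either a hypothesis or guaranteed by your choice of $\widehat{\alphas}$. The right-hand side does not depend on $i$, which proves independence. This is precisely the argument of \cite[Proposition 9.13]{JTZ_naturality_mapping_class_groups} that the paper invokes, with $\mathbb{T}_\beta$ replaced by $M^R$.
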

\begin{proof}
    The proof of \cite[Proposition 9.13]{JTZ_naturality_mapping_class_groups} carries over to our context with no change.
\end{proof}

Therefore, for any quadruple $(\Sigma,\alphas',\alphas,\betas,\betas')$ such that $\alphas \sim \alphas'$ and $\betas \sim \betas'$, there is a canonical homotopy equivalence 
\begin{align*}
    \Phi_{\beta \to \beta'}^{\alpha \to \alpha'}:=\Psi^{\overline{\alpha} \to \alpha'}_{\overline{\beta} \to \beta'}\circ\Psi^{\alpha \to \overline{\alpha}}_{\beta \to \overline{\beta}},
\end{align*}
which is well defined by \Cref{lem:factor-admissible}. In particular, there is a well-defined map $\Psi_{\alpha \ra \alpha}^{\beta \ra \beta}$, which ought to be the identity map. This is the case.

\begin{lemma} \label{lem:identity}
Suppose that the diagram $(\S,\alphas,\betas, \tau)$ is admissible and $\betas \sim \betas'$. Then
  \begin{equation} \label{eqn:id} \Phi^{\alphas \ra \alphas}_{\betas
      \ra \betas} \sim \text{Id}_{\CFR^\circ(\S,\alphas,\betas)}.
  \end{equation}
  In particular, if $\betas \sim \betas'$, then $\Psi^{\alphas' \ra \alphas}_{\betas' \ra \betas} \sim (\Psi^{\alphas \ra \alphas'}_{\betas \ra \betas'})^{-1}$.
\end{lemma}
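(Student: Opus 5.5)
We want $\Phi^{\alphas\ra\alphas}_{\betas\ra\betas}\sim \mathrm{Id}$. The plan follows \cite[Lemma 9.14]{JTZ_naturality_mapping_class_groups}, reduced to the unreal statement by viewing $\CFR^\circ(\S,\alphas,\betas,\tau)$ as $\CF^\circ(\Ta, M^R)$. Everything in $\Phi$ is a polygon map with one boundary on $M^R$ and the others on $\alpha$-type tori. The $\beta$-curves enter only through the symmetric partners $\tau(\alphas)$.

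\textbf{Step 1.} Choose a small real Hamiltonian translate $\overline{\alphas}$ of $\alphas$, with $\overline{\betas}=\tau(\overline{\alphas})$. Take it close enough that the quadruples $(\S,\overline{\alphas},\alphas,\betas,\overline{\betas},\tau)$ and $(\S,\alphas,\overline{\alphas},\overline{\betas},\betas,\tau)$ are admissible; this is possible by \Cref{lem:adm-multi}. By \Cref{lem:factor-admissible}, $\Phi^{\alphas\ra\alphas}_{\betas\ra\betas}$ can be computed as $\Psi^{\overline{\alpha}\ra\alpha}_{\overline{\beta}\ra\beta}\circ\Psi^{\alpha\ra\overline{\alpha}}_{\beta\ra\overline{\beta}}$.

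\textbf{Step 2.} Apply \Cref{prop:Compatibility1}(\ref{item:psi-compose}) to this pair of maps, provided the degenerate quadruple $(\S,\alphas,\alphas,\betas,\betas,\tau)$ is interpreted appropriately. Cleaner is to argue directly with the small-translate model. For $\overline{\alphas}$ close to $\alphas$, \Cref{lem:iso-is-rectangle-map} identifies each $\Psi$ with a continuation map $\Gamma$ for a real Hamiltonian isotopy. Continuation maps compose under concatenation, so the composite is homotopic to the continuation map of a loop of isotopies $\alphas\ra\overline{\alphas}\ra\alphas$. That loop is homotopic rel endpoints to the constant isotopy, so its continuation map is homotopic to the identity. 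This homotopy invariance is standard for $\CF^\circ(\Ta,M^R)$, since $M^R$ is fixed throughout.

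\textbf{Step 3.} For the consequence, suppose $\betas\sim\betas'$. Combine \Cref{prop:Compatibility1}(\ref{item:psi-compose}) with \eqref{eqn:id}, again factoring through admissible intermediate translates via \Cref{lem:factor-admissible} if needed. This gives $\Psi^{\alphas'\ra\alphas}_{\betas'\ra\betas}\circ\Psi^{\alphas\ra\alphas'}_{\betas\ra\betas'}\sim\Phi^{\alphas\ra\alphas}_{\betas\ra\betas}\sim \mathrm{Id}$, and symmetrically in the other order. Hence the two maps are mutually inverse up to homotopy.

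\textbf{Main obstacle.} The delicate point is Step 2. One must check that the rectangle maps with $\Theta$-inputs agree with continuation maps uniformly for all small translates, including admissibility for both orderings. One must also check that the top-graded elements $\Theta_{\alpha,\overline{\alpha}}$ and $\Theta_{\overline{\alpha},\alpha}$ correspond under $\tau$ to the ones used on the $\beta$ side, so that the real rectangle maps are the ones being compared. If that identification causes trouble, the fallback is a direct small-translate argument as in Ozsváth--Szabó. There one shows that the real rectangle map with $\Theta$-inputs counts only small invariant rectangles near the nearest-point map, up to an energy filtration. This makes it upper triangular with identity leading term, and compatibility then gives homotopy to the identity.
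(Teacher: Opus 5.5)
Your proposal is correct and follows the paper's proof: factor $\Phi^{\alphas\ra\alphas}_{\betas\ra\betas}$ through a small real Hamiltonian translate, replace each rectangle map $\Psi$ by the corresponding continuation map $\Gamma$ using \Cref{lem:iso-is-rectangle-map}, and conclude from how continuation maps compose under juxtaposition. The hedging in your Step 2 and your ``main obstacle'' paragraph is unnecessary: the paper's definition of $\Psi$ already builds in $\tau_*(\Theta_{\beta,\beta'})=\Theta_{\alpha',\alpha}$, and no degenerate quadruple is ever needed.
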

\begin{proof}
    Let $\alphas'$ be a Hamiltonian translate of $\alphas$ so that $(\Sigma,\alphas',\alphas,\betas,\betas')$ is admissible. Then, by \Cref{prop:Compatibility1}, we have 
    \begin{align*}
        \Phi^{\alphas \ra \alphas}_{\betas
      \ra \betas} \sim \Psi^{\alphas' \ra \alphas}_{\betas' \ra \betas} \circ \Psi^{\alphas \ra \alphas'}_{\betas \ra \betas'}.
    \end{align*}
    But by \Cref{lem:iso-is-rectangle-map}, we have that 
    \begin{align*}
        \Psi^{\alphas' \ra \alphas}_{\betas' \ra \betas} \circ \Psi^{\alphas \ra \alphas'}_{\betas \ra \betas'} \sim \Gamma^{\alphas' \ra \alphas}_{\betas' \ra \betas} \circ \Gamma^{\alphas \ra \alphas'}_{\betas \ra \betas'} \sim \text{Id}_{\CFR^\circ(\S,\alphas,\betas)},
    \end{align*}
    by the naturality of the continuation maps under juxtaposition. 
\end{proof}

Let $H = (\S,A,B, \tau)$ be an isotopy diagram and let $Y_{(\S,A,B, \tau)}$ be
the set of admissible diagrams $(\S,\alphas,\betas, \tau)$ such that
$[\alphas] = A$ and $[\betas] = B$. This is non-empty by
Lemma~\ref{lem:adm-multi}.  It follows from
\Cref{prop:Compatibility1} and \Cref{lem:identity} that the
groups $\CFR^\circ(\S,\alphas,\betas, \tau)$ for $(\S,\alphas,\betas,\tau) \in
Y_{(\S,A,B,\tau)}$, together with the isomorphisms $\Phi^{\alphas \ra
\alphas'}_{\betas \ra \betas'}$ form a transitive system of (curved) chain complexes, which will be denoted $\CFR^\circ(H)$.

\subsection{Real Heegaard Floer homology as a weak real Heegaard invariant} We now show that the maps induced by real equivalences, $\cO$-stabilizations, and diffeomorphisms between isotopy diagrams commute with the maps appearing in this transitive system. We address real equivalences first, since they are the most straightforward.

\begin{lemma}\label{lem:real equivs}
    Let $(\S,\alphas_1,\betas_1, \tau)$, $(\S,\alphas_1',\betas_1', \tau)$, $(\S,\alphas_2,\betas_2, \tau)$, and $(\S,\alphas_2',\betas_2', \tau)$ be admissible diagrams such that $\alphas_1 \sim \alphas_2$ and $\alphas_1' \sim \alphas_2'$. Then, the diagram
    \begin{align*}
        \begin{tikzcd}[ampersand replacement = \&]
            \CFR^\circ(\S,\alphas_1,\betas_1, \tau) \ar[r,"\Psi^{\alphas_1 \ra \alphas_1'}_{\betas_1 \ra \betas_1'}"]  \ar[d,"\Psi^{\alphas_1 \ra \alphas_2}_{\betas_1 \ra \betas_2}"] \& \CFR^\circ(\S,\alphas_1',\betas_1', \tau) \ar[d,"\Psi^{\alphas_1' \ra \alphas_2'}_{\betas_1' \ra \betas_2'}"] \\
            \CFR^\circ(\S,\alphas_2,\betas_2, \tau) \ar[r,"\Psi^{\alphas_2 \ra \alphas_2'}_{\betas_2 \ra \betas_2'}"] \& \CFR^\circ(\S,\alphas_2',\betas_2', \tau) 
        \end{tikzcd}
    \end{align*}
     commutes up to homotopy.
\end{lemma}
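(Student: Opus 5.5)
The strategy is to reduce the square to the composition law of \Cref{prop:Compatibility1}(\ref{item:psi-compose}) and the well-definedness statement of \Cref{lem:factor-admissible}. Both composites in the square are maps $\CFR^\circ(\S,\alphas_1,\betas_1,\tau)\to\CFR^\circ(\S,\alphas_2',\betas_2',\tau)$ built from real-invariant rectangle counts. The goal is to show that each composite is homotopic to the single canonical map $\Phi^{\alphas_1\ra\alphas_2'}_{\betas_1\ra\betas_2'}$.

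First, note the hypotheses needed for this map to exist. Implicitly, $\alphas_1\sim\alphas_1'$ holds as well, since the horizontal maps are assumed to be defined. Hence all four attaching sets $\alphas_1,\alphas_1',\alphas_2,\alphas_2'$ are real-equivalent, and by the symmetry $\betas=\tau(\alphas)$ the same holds for the $\betas$'s. So $\Phi^{\alphas_1\ra\alphas_2'}_{\betas_1\ra\betas_2'}$ is defined.

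When the relevant quadruple diagrams are admissible, the argument is short. If $(\S,\alphas_1',\alphas_1,\betas_1,\betas_1',\tau)$, $(\S,\alphas_2',\alphas_1',\betas_1',\betas_2',\tau)$ and $(\S,\alphas_2',\alphas_1,\betas_1,\betas_2',\tau)$ are all admissible, then \Cref{prop:Compatibility1}(\ref{item:psi-compose}) gives
\[
\Psi^{\alphas_1'\ra\alphas_2'}_{\betas_1'\ra\betas_2'}\circ\Psi^{\alphas_1\ra\alphas_1'}_{\betas_1\ra\betas_1'}\sim\Psi^{\alphas_1\ra\alphas_2'}_{\betas_1\ra\betas_2'}.
\]
The same proposition applies to the other path through $\alphas_2$. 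The square therefore commutes.

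In general these quadruples need not be admissible. Here I will follow the proof of the unreal analogue \cite[Lemma 9.14]{JTZ_naturality_mapping_class_groups}, interpreting every map $\Psi$ in the square as the canonical $\Phi$.
\begin{enumerate}
\item Use \Cref{lem:adm-multi} to choose a single Hamiltonian translate $\overline{\alphas}$ of $\alphas_1$, with $\overline{\betas}=\tau(\overline{\alphas})$, such that every quadruple pairing $(\overline{\alphas},\overline{\betas})$ with any of the four diagrams, in either order, is admissible. This requires finitely many admissibility conditions, and \Cref{lem:adm-multi} handles any finite family simultaneously.
\item By \Cref{lem:factor-admissible}, each map in the square can be computed by factoring through $(\overline{\alphas},\overline{\betas})$. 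For example, the top map becomes $\Psi^{\overline{\alphas}\ra\alphas_1'}_{\overline{\betas}\ra\betas_1'}\circ\Psi^{\alphas_1\ra\overline{\alphas}}_{\betas_1\ra\overline{\betas}}$.
\item Each composite around the square then contains an interior pair $\Psi^{\alphas_1'\ra\overline{\alphas}}\circ\Psi^{\overline{\alphas}\ra\alphas_1'}$ (or the analogous pair through $\alphas_2$). By \Cref{prop:Compatibility1}(\ref{item:psi-compose}) and \Cref{lem:identity}, each such pair is homotopic to $\Psi^{\overline{\alphas}\ra\overline{\alphas}}\sim\mathrm{Id}$.
\item After these cancellations, both composites reduce to $\Psi^{\overline{\alphas}\ra\alphas_2'}_{\overline{\betas}\ra\betas_2'}\circ\Psi^{\alphas_1\ra\overline{\alphas}}_{\betas_1\ra\overline{\betas}}$, which proves the claim.
\end{enumerate}

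The main obstacle is step 1. It requires a single real translate $\overline{\alphas}$ that is simultaneously admissible with all four diagrams, including the symmetric multi-diagrams containing both $\overline{\alphas}$ and $\tau(\overline{\alphas})$. This is exactly the setting of \Cref{lem:adm-multi} with $k=4$ and $n=1$, so I expect that lemma to cover it. Step 3 additionally uses the fact that the triple compositions appearing in \Cref{prop:Compatibility1} remain admissible after the translation. If this fails, I would compose with a further generic translate and repeat the argument.
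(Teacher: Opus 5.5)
Your proposal is correct, and its admissible case is exactly the paper's argument: the paper's one-line proof applies the composition law of \Cref{prop:Compatibility1} to both paths around the square. That proof tacitly assumes the diagonal quadruple $(\S,\alphas_2',\alphas_1,\betas_1,\betas_2',\tau)$ is admissible, and your reduction through a common translate $\overline{\alphas}$ via \Cref{lem:adm-multi} and \Cref{lem:factor-admissible} fills in that omitted case; the cancellation in your step 3 is best justified by the final clause of \Cref{lem:identity}, $\Psi^{\alphas'\ra\alphas}_{\betas'\ra\betas}\sim(\Psi^{\alphas\ra\alphas'}_{\betas\ra\betas'})^{-1}$, rather than through $\Psi^{\overline{\alphas}\ra\overline{\alphas}}$, which is not itself a rectangle count.
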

\begin{proof}
    This follows immediately from \Cref{prop:Compatibility1}.
\end{proof}

\begin{definition} \label{def:diffeo}
  Let $(\S,\alphas,\betas,\tau)$ be an admissible diagram and $d \colon \S
  \to \S'$ a diffeomorphism.  We write $\alphas' = d(\alphas)$,
  $\betas' = d(\betas)$, and $\tau' = d\circ \tau\circ d^{-1}$.  Then $d$ induces an isomorphism
  \[
  d_* \colon \CFR^\circ(\S, \alphas, \betas,\tau) \to \CFR^\circ(\S', \alphas',
  \betas',\tau'),
  \]
  as follows. We fix a generic path of symmetric almost complex
  structures $J_s$ on $M = \Sym^{|\alphas|}(\Sigma)$. Pushing $J_s$
  forward along $d$ produces an almost complex structure $J_s'$ on
  $M' = \Sym^{|\alphas|}(\Sigma')$. Of course, $d$ induces an isomorphism
  \[
  d_{J_s,J_s'} \colon \CFR^\circ_{J_s}(\S,\alphas,\betas,\tau) \to
  \CFR^\circ_{J_s'}(\S',\alphas', \betas', \tau').
  \]
  Furthermore, since the map $d_{J_s,J_s'}$ commutes with the continuation maps $\Psi_{J_s \to J_s'}$, these diffeomorphism maps descend to a map $d_*$ of transitive systems.
\end{definition}

\begin{lemma} \label{lem:diffeo-triangle} 
  Suppose that $(\S,\alphas',\alphas,\betas,\betas')$ is an admissible quadruple. Then, the diagram
    \begin{align*}
        \begin{tikzcd}[ampersand replacement = \&, column sep = large]
            \CFR^\circ(\S,\alphas,\betas, \tau) \ar[r,"\Psi^{\alphas \ra \alphas'}_{\betas \ra \betas'}"]  \ar[d,"d_*"] \& \CFR^\circ(\S,\alphas',\betas', \tau) \ar[d,"d_*"] \\
            \CFR^\circ(d(\S),d(\alphas),d(\betas), d_*(\tau)) \ar[r,"\Psi^{d(\alphas) \ra d(\alphas')}_{d(\betas) \ra d(\betas')}"] \& \CFR^\circ(d(\S),d(\alphas'),d(\betas'), d_*(\tau))
        \end{tikzcd}
    \end{align*}
    is commutative up to homotopy. Here, $d_*(\tau) = d\circ \tau\circ d^{-1}$.
\end{lemma}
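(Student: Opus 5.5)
The plan is to reduce the statement to naturality of holomorphic polygon counts under diffeomorphism, exactly as in \cite[Lemma 9.14]{JTZ_naturality_mapping_class_groups}. The rectangle map $\Psi^{\alphas \ra \alphas'}_{\betas \ra \betas'}$ is computed using a generic path of real almost complex structures $J_s \in \cJ_R$ on $M=\Sym^{m}(\S)$. It counts rigid real-invariant rectangles with boundary on $\mathbb{T}_{\alpha'},\Ta,\Tb,\mathbb{T}_{\beta'}$ and corners at $\Theta_{\alpha',\alpha}$ and $R(\Theta_{\alpha',\alpha})=\Theta_{\beta,\beta'}$. Equivalently, it counts triangles with boundary on $\mathbb{T}_{\alpha'},\Ta,M^R$.

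First, push $J_s$ forward by $\Sym^m(d)$ to obtain $J_s'=d_*J_s$ on $M'=\Sym^m(d(\S))$. This lies in $\cJ_R$ for the involution $R'$ induced by $d_*(\tau)=d\circ\tau\circ d^{-1}$, since $\Sym^m(d)$ intertwines $R$ and $R'$. Because $d_*$ is defined at the level of transitive systems and commutes with the continuation maps $\Psi_{J_s\to J_s'}$, we may compute every map in the square with these compatible choices.

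Next, observe that $\Sym^m(d)$ carries $\mathbb{T}_{\alpha}$, $\mathbb{T}_{\alpha'}$ and $M^R$ to $\mathbb{T}_{d(\alpha)}$, $\mathbb{T}_{d(\alpha')}$ and $(M')^{R'}$. It also carries intersection points to intersection points, Maslov/real indices to real indices, and admissibility of the quadruple to admissibility of the image quadruple. Postcomposition with $\Sym^m(d)$ is therefore a bijection between moduli spaces $\cM^{J}_R(\psi)$ and $\cM^{J'}_R(d_*\psi)$. We also need $d_*\Theta_{\alpha',\alpha}=\Theta_{d(\alpha'),d(\alpha)}$. This holds because $d$ induces an isomorphism of $\CF^\circ(\Ta,\mathbb{T}_{\alpha'})$ preserving the grading and the torsion $\SpinC$ structure, so the top-graded element is sent to the top-graded element up to boundaries. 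Thus
\[
d_*\circ \Psi^{\alphas \ra \alphas'}_{\betas \ra \betas'} = \Psi^{d(\alphas) \ra d(\alphas')}_{d(\betas) \ra d(\betas')}\circ d_*
\]
already holds on the chain level for these choices of $J$.

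The main obstacle is the implicit auxiliary data: the anti-invariant symplectic form from Perutz making $\Ta$ Lagrangian, and the choice of $J_s$. The transported form and $J$ need not agree with those one would otherwise pick on $d(\S)$. For this step I will use the independence results already established. The continuation maps \eqref{eqn:continuation_J} and \Cref{lem:iso-is-rectangle-map} and \Cref{prop:Compatibility1} show that, for any other admissible choice, the rectangle maps change by conjugation with continuation maps up to homotopy. This yields commutativity of the square up to homotopy, as claimed.
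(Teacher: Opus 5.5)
Your argument is correct and is essentially the paper's: the paper just defers to \cite[Lemma 9.24]{JTZ_naturality_mapping_class_groups} (not 9.14). That proof is exactly your transport $J_s \mapsto d_*J_s$, the resulting bijection of real rectangle (equivalently, triangle with $M^R$) moduli spaces, and $d_*\Theta_{\alpha',\alpha}=\Theta_{d(\alpha'),d(\alpha)}$. One correction: the independence of the rectangle maps from $J$ in your last step comes from the standard fact that polygon maps intertwine $J$-continuation maps up to homotopy, not from \Cref{lem:iso-is-rectangle-map} or \Cref{prop:Compatibility1}, which concern moving the attaching curves.
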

\begin{proof}
    This follows exactly as in \cite[Lemma 9.24]{JTZ_naturality_mapping_class_groups}.
\end{proof}

Importantly, for diffeomorphisms isotopic to the identity, the induced maps are homotopic to maps induced by real equivalences. 

\begin{proposition} \label{prop:continuity} Let $(\S,\alphas,\betas,\tau)$
  be an admissible diagram. Suppose that $d \colon \S \to \S$ is an equivariant
  diffeomorphism equivariantly isotopic to $\text{Id}_\S$, and let $\alphas' =
  d(\alphas)$, $\betas' = d(\betas)$, and $\tau' = d\circ \tau \circ d^{-1}$. Then
  \begin{equation}\label{eqn:continuity}
  d_* = \Phi^{\alphas \to \alphas'}_{\betas \to \betas'} \colon
  \CFR^\circ(\S,\alphas,\betas, \tau) \to \CFR^\circ(\S,\alphas',\betas',\tau').
  \end{equation}
\end{proposition}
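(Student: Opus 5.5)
We follow the unreal proof of \cite[Proposition 9.27]{JTZ_naturality_mapping_class_groups} and reduce to the comparison between continuation maps and rectangle maps in \Cref{lem:iso-is-rectangle-map}.

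First, choose an equivariant isotopy $d_t$, $t\in[0,1]$, from $\text{Id}_\S$ to $d$. Since both sides of \eqref{eqn:continuity} compose well under concatenation, it suffices to treat short isotopies. For $d_*$ this is functoriality of pushforward; for $\Phi$ it is \Cref{prop:Compatibility1}\eqref{item:psi-compose} and \Cref{lem:factor-admissible}. We therefore subdivide $[0,1]$ so finely that, on each piece, $d_{t_{i+1}}\circ d_{t_i}^{-1}$ is $C^\infty$-close to the identity. We may also assume that every intermediate quadruple $(\S,d_{t_{i+1}}(\alphas),d_{t_i}(\alphas),d_{t_i}(\betas),d_{t_{i+1}}(\betas),\tau)$ is admissible. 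Admissibility is open, and the equivariant translate trick of \Cref{lem:adm-multi} handles any failures.

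Next, for a single short step with $d$ close to the identity, we interpret $\Tap=d(\Ta)$ as a real Hamiltonian (exact) isotopic image of $\Ta$ in $\Sym^m(\S)$. We choose the Perutz form compatibly so that exactness holds. Since $d$ commutes with $\tau$, it preserves $M^R$ setwise, though not pointwise. So we write $d_*$ on $\CF(\Ta,M^R)$ as the composite of two maps:
\begin{itemize}
\item the naturality isomorphism for the pair $(d(\Ta), d(M^R))=(\Tap,M^R)$, computed with the pushed-forward almost complex structure $d_*J_s$;
\item the identity, read as the continuation map $\Psi_{d_*J_s\to J_s}$ of \eqref{eqn:continuation_J}.
\end{itemize}
By Definition~\ref{def:diffeo}, $d_*$ already incorporates this continuation in the transitive system.

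The standard argument of \cite[Lemma 9.26]{JTZ_naturality_mapping_class_groups} then compares this composite with the continuation map for the moving boundary condition $d_t(\Ta)$ against the fixed $M^R$. The idea is to consider the $1$-parameter family of moving-boundary strips with data $(d_{st}(\Ta), d_{st*}J)$, which interpolates between the two maps, and count its ends. This identifies $d_*$ with $\Gamma^{\alphas\to\alphas'}_{\betas\to\betas'}$. \Cref{lem:iso-is-rectangle-map} then gives $\Gamma\sim\Psi=\Phi$.

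The main obstacle is keeping equivariance throughout this interpolation. The family of almost complex structures must stay in $\cJ_R$, and the isotopy $d_t$ must be generated by real Hamiltonians. To arrange this, we work in the model $\CF(\Ta,M^R)$, where only $\Ta$ moves. We average the Hamiltonian generating $d_t|_\S$ under $\tau$, giving an anti-invariant function whose induced flow on $\Sym^m$ is real. Transversality for $\cJ_R$ is the same as for ordinary Floer theory of the pair $(\Ta,M^R)$, as already used in \cite{guth_manolescu2025real}, so no further equivariant transversality input should be required.
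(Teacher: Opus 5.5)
Your strategy is the same as the paper's, which simply defers to \cite[Proposition 9.27]{JTZ_naturality_mapping_class_groups}: factor $d$ into small equivariant steps, identify both $d_*$ and $\Phi$ with continuation maps in the model $\CF(\mathbb{T}_\alpha, M^R)$, and compare them through a $1$-parameter moduli space. The reduction step, however, has a real hole. Being $C^\infty$-close to the identity is not the right smallness condition, because a small step can push some $\alpha_i$ to a parallel copy that is disjoint from $\alpha_i$. This already happens when $d$ itself is a small equivariant push transverse to $\alpha_i$, and then subdividing does not help.

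For such a step the following all fail.
\begin{itemize}
\item $\mathbb{T}_\alpha \cap d(\mathbb{T}_\alpha) = \emptyset$, so $\Theta_{\alpha',\alpha}$ does not exist and $\Psi^{\alphas\to\alphas'}_{\betas\to\betas'}$ is undefined.
\item The thin annulus between $\alpha_i$ and $d(\alpha_i)$ is a nonzero periodic domain with nonnegative multiplicities, so the quadruple is not admissible.
\item $d(\mathbb{T}_\alpha)$ is not Hamiltonian isotopic to $\mathbb{T}_\alpha$. The flux along $\alpha_i$ is the area of that annulus, which is positive for every area form. So no choice of Perutz form makes the step exact, and there is no Hamiltonian to average.
\end{itemize}
Admissibility is not ``open'' in this situation: as the step shrinks, the quadruple degenerates rather than approaching an admissible one. \Cref{lem:adm-multi} does not rescue it either, since it replaces the last attaching set by a translate. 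That changes the target diagram $d(\alphas)$ instead of factoring $d$.

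The missing ingredient is the factorization the paper states explicitly. Write $d = d_n \circ \dots \circ d_1$, with each $d_i$ equivariant and isotopic to the identity, such that every $\alpha$ curve meets its image under $d_i$ transversely in exactly two points. A parallel push is split into two such steps through an intermediate curve crossing both; the $\beta$ side follows by equivariance.

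For such a step the problems above disappear:
\begin{itemize}
\item The thin annular periodic domains now have multiplicities of both signs, so the quadruple is admissible and $\Theta_{\alpha',\alpha}$ exists.
\item After adjusting the $\tau$-anti-invariant area form so that the two bigons cut out by each $\alpha$ and $d_i(\alpha)$ have equal area, the step is a real exact isotopy. Hence \Cref{lem:iso-is-rectangle-map} applies.
\end{itemize}
The area form must be allowed to change from step to step. Fluxes add, so when $d$ is a parallel push the steps cannot all be exact for a single form.

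With this factorization in place, the rest of your argument goes through as you describe. That rest consists of the pushforward of $J_s$, the continuation in $J$, the moving-boundary comparison for $(\mathbb{T}_\alpha, M^R)$, and the final appeal to \Cref{lem:iso-is-rectangle-map}.
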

\begin{proof}
    This follows just as \cite[Proposition 9.27]{JTZ_naturality_mapping_class_groups}. The point is to decompose $d$ into a composition of diffeomorphisms $H_0 \xra{d_1} \hdots \xra{d_n} H_0$ where $H_{i+1}$ differs from $H_{i}$ by an equivariant Hamiltonian isotopy so that $|\alpha \cap d_i(\alpha)| = 2$ for each $i$. They argue that both $d_*$ and $\Phi^{\alphas \to \alphas'}_{\betas \to \betas'}$ are homotopic to continuation maps. They then construct an explicit homotopy between the continuation maps by constructing a 1-dimensional moduli space whose ends give the desired relation.
\end{proof}

Finally, we turn to stabilizations. Suppose that $\cH' = (\S', \alphas', \betas', \tau')$ is obtained from $\cH = (\S, \alphas, \betas, \tau)$ by a $\{1\}$-stabilization. Then, for a suitable almost complex structure, the map
\begin{align*}
    \sigma_{\cH \ra \cH'}^{\{1\}}: \CFR^\circ(\cH) \ra \CFR^\circ(\cH')
\end{align*}
is given by $\x \mapsto \x \times \{c\}$, where $c$ is the unique new intersection point introduced in the $\{1\}$-stabilization. Similarly, if $\cH'$ is obtained from $\cH$ by a $\Z/2$-stabilization, then, for a suitable almost complex structure, the map
\begin{align*}
    \sigma_{\cH \ra \cH'}^{\Z/2}: \CFR^\circ(\cH) \ra \CFR^\circ(\cH')
\end{align*}
is given by $\x \mapsto \{a\} \times \x \times \{\tau(a)\}$, where $a$ and $\tau(a)$ are the two new intersection points introduced in the $\Z/2$-stabilization. To prove that these induce maps of transitive systems, we appeal to the following result. 

\begin{figure}[h]
\def\svgwidth{.8\linewidth}
\begingroup%
  \makeatletter%
  \providecommand\color[2][]{%
    \errmessage{(Inkscape) Color is used for the text in Inkscape, but the package 'color.sty' is not loaded}%
    \renewcommand\color[2][]{}%
  }%
  \providecommand\transparent[1]{%
    \errmessage{(Inkscape) Transparency is used (non-zero) for the text in Inkscape, but the package 'transparent.sty' is not loaded}%
    \renewcommand\transparent[1]{}%
  }%
  \providecommand\rotatebox[2]{#2}%
  \newcommand*\fsize{\dimexpr\f@size pt\relax}%
  \newcommand*\lineheight[1]{\fontsize{\fsize}{#1\fsize}\selectfont}%
  \ifx\svgwidth\undefined%
    \setlength{\unitlength}{841.88976378bp}%
    \ifx\svgscale\undefined%
      \relax%
    \else%
      \setlength{\unitlength}{\unitlength * \real{\svgscale}}%
    \fi%
  \else%
    \setlength{\unitlength}{\svgwidth}%
  \fi%
  \global\let\svgwidth\undefined%
  \global\let\svgscale\undefined%
  \makeatother%
  \begin{picture}(1,0.34343434)%
    \lineheight{1}%
    \setlength\tabcolsep{0pt}%
    \put(0,0){\includegraphics[width=\unitlength,page=1]{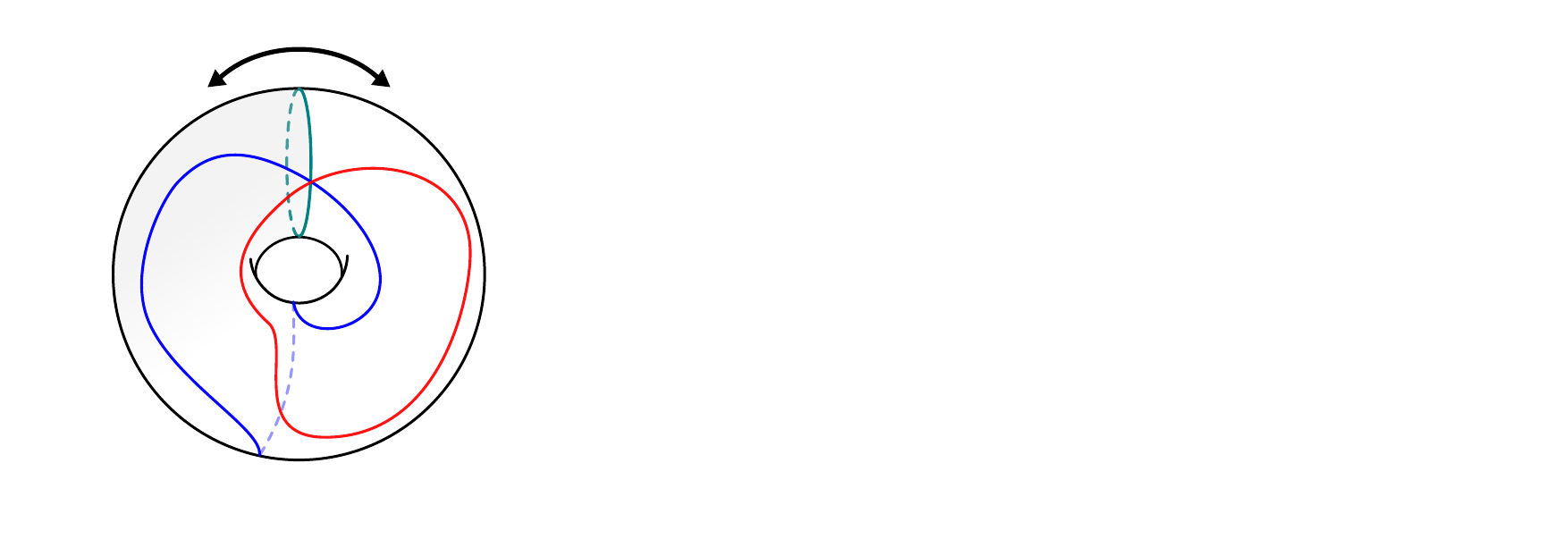}}%
    \put(0.18943641,0.32153186){\color[rgb]{0,0,0}\makebox(0,0)[lt]{\smash{\begin{tabular}[t]{l}{$\tau$}\end{tabular}}}}%
    \put(0.1906339,0.01151504){\color[rgb]{0,0,0}\makebox(0,0)[t]{\smash{\begin{tabular}[t]{c}{$E_{\{1\}}$}\end{tabular}}}}%
    \put(0,0){\includegraphics[width=\unitlength,page=2]{s3_diagrams.pdf}}%
    \put(0.67133067,0.32153186){\color[rgb]{0,0,0}\makebox(0,0)[lt]{\smash{\begin{tabular}[t]{l}{$\tau$}\end{tabular}}}}%
    \put(0.6753501,0.01151504){\color[rgb]{0,0,0}\makebox(0,0)[t]{\smash{\begin{tabular}[t]{c}{$E_{\Z/2}$}\end{tabular}}}}%
    \put(0,0){\includegraphics[width=\unitlength,page=3]{s3_diagrams.pdf}}%
  \end{picture}%
\endgroup%

    \caption{The diagrams $E_{\Z/2}$ and $E_{\{1\}}$ appearing in \Cref{prop:gluing-rectangles-stab}.}
\label{fig:s3_diagrams}
\end{figure}

\begin{prop}\label{prop:gluing-rectangles-stab}
    Let $\cH = (\S, \alphas, \betas, \tau, z)$ be a real Heegaard diagram and $E^{\{1\}}$ be a genus 1, real Heegaard diagram $(\Sigma_0, \alpha_0, \beta_0, \tau_0, z_0)$ for $(S^3, \mathrm{rot})$ shown in the first frame of \Cref{fig:s3_diagrams}. Let $\cH \# E_{\{1\}}$ be the connected sum of $\cH$ and $E_{\{1\}}$ at the points $z$ and $z_0$, extending the two involutions in the obvious way. Let $\psi\in \pi_2^R(\tau(\bm w), \x, \y, \bm w)$ be a class of real-invariant rectangles in $\cH$ and let $\psi_0\in \pi_2^R(\tau(\bm w_0), \x_0, \y_0, \bm w_0)$ be a class of real invariant rectangles in $E_{\{1\}}$ with $n_{z_0}(\psi_0) = 0$. Then, for an appropriate almost complex structure, there is a diffeomorphism
    \begin{align*}
        \cM_R(\psi\#\psi_0) \cong \cM_R(\psi) \times \cM_R(\psi_0).
    \end{align*}
    Further, define $E_{\Z/2} = (\S_0 \sqcup \S_1,\alpha_0\cup \alpha_1, \beta_0\cup \beta_1, z_0 \cup z_1, \tau_{\Z/2})$ where $\Sigma_0$ and $\Sigma_1$ are two copies of a genus 1 surface and $\tau_{\Z/2}$ is the involution exchanging them, swapping the alphas and betas. See the second frame of \Cref{fig:s3_diagrams}. Also, let $z_L$ and $z_R$ be two distinct points of $\Sigma$ near $z$ that are exchanged by $\tau$. Then, let $\cH\#_{\Z/2} E_{\Z/2}$ be the diagram obtained by taking an equivariant connected sum, identifying $z_L$ with $z_0$ and $z_R$ with $z_1$, extending the involution in the obvious way. Let $\psi\in \pi_2^R(\tau(\bm w), \x, \y, \bm w)$ be a class of real-invariant rectangles in $\cH$ and let $\psi_0\in \pi_2^R(\tau(\bm w_0), \x_0, \y_0, \bm w_0)$ be a class of real invariant rectangles in $E_{\Z/2}$ with $n_{z_0}(\psi_0) = 0 = n_{z_1}(\psi_0)$. Then, for an appropriate almost complex structure, there is a diffeomorphism
    \begin{align*}
        \cM_R(\psi\#\psi_0) \cong \cM_R(\psi) \times \cM_R(\psi_0).
    \end{align*}
\end{prop}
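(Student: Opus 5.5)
The plan is to reduce both statements to the corresponding unreal gluing result, the stabilization gluing lemma of Ozsváth--Szabó, in the form used in \cite[Proposition 9.31]{JTZ_naturality_mapping_class_groups}. The device is the correspondence from the previous subsections. A real-invariant rectangle between $\tau(\Ta),\Ta$ and the $\tau$-images of the second tuple is the same thing as a holomorphic triangle with one boundary on $M^R$. Alternatively, and more usefully here, it is a genuine rectangle whose conformal structure is pinned to the symmetric one. So the approach is to run the standard neck-stretching argument while carrying the involution along.

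\textbf{Step 1 (neck-stretching setup).} Choose a family of almost complex structures on $\Sym^{m+1}(\Sigma\# \Sigma_0)$ (resp.\ $\Sym^{m+2}(\Sigma\#_{\Z/2}\Sigma_0\sqcup\Sigma_1)$) that is split near the connected-sum neck(s), with neck length $T\to\infty$. In the $\Z/2$ case the two necks at $z_L,z_R$ are exchanged by $\tau$, and we stretch them symmetrically. In the $\{1\}$ case the single neck is centered at a fixed point $z\in C$, and we take it $\tau$-invariant. Averaging as in \Cref{prop:fv} produces structures in $\cJ_R$, i.e.\ satisfying $J_t\circ R_*=-R_*\circ J_{1-t}$, that are still split. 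Real index additivity follows from \eqref{eqn:index_for_rectangles} together with the additivity of ordinary index under connected sum.

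\textbf{Step 2 (compactness/degeneration).} As $T\to\infty$, a sequence of real rectangles in class $\psi\#\psi_0$ converges to a pair: a real rectangle $u$ in $\Sym^m(\Sigma)$ and a real rectangle $u_0$ in $\Sym^1(\Sigma_0)$ (resp.\ $\Sym^2(\Sigma_0\sqcup\Sigma_1)$). Their branched-cover data must match over the points $z$ (resp.\ $z_L,z_R$). The hypothesis $n_{z_0}(\psi_0)=0$ (resp.\ $n_{z_0}=n_{z_1}=0$) means the $E$-component has zero multiplicity at the neck, so no matching condition is imposed. Hence the limit lies in $\cM_R(\psi)\times\cM_R(\psi_0)$. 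The symmetry is preserved in the limit because the involution commutes with the stretching.

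\textbf{Step 3 (gluing and bijection).} Conversely, glue a transversely cut out pair $(u,u_0)$ to a real rectangle for large $T$ by the standard gluing theorem. The unique gluing of an invariant pair is invariant, by uniqueness of the glued solution. Since no conformal parameter varies for real rectangles, and $E$ is genus one with $\Sym^1$ (resp.\ a product of two $\Sym^1$'s swapped by $\tau$), transversality for $\cM_R(\psi_0)$ can be checked by hand via Riemann mapping. The gluing map is then a diffeomorphism onto the moduli space for $T$ large.

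The main obstacle is the $\{1\}$ case, where the neck sits on the fixed curve $C$. There the averaged almost complex structure must stay split and generic near a point where $\tau$ acts as a reflection on the neck. I would handle this by first proving the statement for the triangle picture with the Lagrangian $M^R$, where transversality is ordinary and no symmetry constraint on curves is required, and then translating back through the triangle/real-rectangle correspondence.
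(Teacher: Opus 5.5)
Your Steps 1--3 are essentially the paper's argument. The paper also runs the Ozsv\'ath--Szab\'o stabilization gluing: it inserts one $\tau$-invariant tube at $z\in C$ (two tubes exchanged by $\tau$ in the $\Z/2$ case) and applies the inverse function theorem near the symmetric pre-glued map. You are right that $n_{z_0}(\psi_0)=0$ removes any matching condition, and that the glued curve is real by local uniqueness; these are the right way to fill in the paper's sketch. What I would change is your last paragraph: the obstacle you flag in the $\{1\}$ case is not there, and the proposed detour would not help.

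\emph{The symmetric split structure.} Choose complex structures on $\Sigma$ and $\Sigma_0$ for which $\tau$ and $\tau_0$ are anti-holomorphic, standard near $z,z_0$ with $\tau(w)=\bar w$. Plumb by $w\,w_0=\varepsilon$ with $\varepsilon>0$ real. This commutes with conjugation, so the resulting $j$ is split on the tube and $\Sym^{m+1}(j)$ is automatically $R$-anti-invariant. Note that almost complex structures cannot be averaged linearly the way metrics are in Proposition~\ref{prop:fv}.

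\emph{No genericity is needed on the tube.} The glued class has multiplicity $n_{z_0}(\psi_0)=0$ on the entire region containing the tube. Positivity of intersections with the divisors $\{q\}\times\Sym^{m}$, for $q$ in the tube, therefore keeps every curve away from it, and symmetric perturbations can be supported elsewhere. With a product structure on $\Sym^m(\Sigma\setminus D)\times(\Sigma_0\setminus D_0)$, the real moduli space then splits on the nose, because both factors use the same fixed symmetric conformal structure on $\Diamond$.

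\emph{The triangle picture makes the $\{1\}$ case worse.}
\begin{enumerate}
\item Since $z\in C$, the fixed set runs through the tube. So $M^R$ meets the stretched region and does not decompose under the degeneration, unlike $\mathbb{T}_{\alpha'},\mathbb{T}_\alpha,\mathbb{T}_\beta,\mathbb{T}_{\beta'}$, which stay away from the tube.
\item For $q$ on the fixed arcs in the tube, $\{q\}\times\Sym^m$ meets $M^R$ in a hypersurface of $M^R$. So $n_q$, and hence your hypothesis $n_{z_0}=0$, is not even homotopy-invariant for triangles with an edge on $M^R$ until you double back to rectangles.
\item Translating back still forces $J$ to be $R$-anti-invariant along the $M^R$ edge, so the symmetry constraint does not go away.
\end{enumerate}
Stay in the rectangle picture, where every boundary condition avoids the tube.
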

\begin{proof}
    This follows just as in \cite{os_properties_apps} and \cite[Theorem 9.4]{os_holotri}. In the first case, we choose holomorphic representatives $u$ and $u_0$, which produce a map to $\Sym^{k+1}(\Sigma\vee \Sigma_0)$. We splice in spheres to construct a map $u\#u_0$ to $\Sym^{k+1}(\Sigma \# \Sigma_0)$ which is approximately holomorphic. Then, we insert a long connected-sum tube; as the length goes to infinity, we can apply the inverse function theorem to find a nearby pseudoholomorphic rectangle. The second case is nearly identical, though we insert two long connected-sum tubes.
\end{proof}

As a consequence, the maps for stabilizations descend to the transitive systems.

\begin{lemma}
  \label{lem:StabilizePhi}
  Let $\cH_1=(\Sigma,\alphas_1,\betas_1, \tau_1)$ and
  $\cH_2=(\Sigma,\alphas_2,\betas_2, \tau_2)$ be two admissible Heegaard
  diagrams such that $\alphas_1 \sim \alphas_2$.  If $\cH_1' = (\S',\alphas_1',\betas_1')$ and $\cH_2' =
  (\S',\alphas_2',\betas_2')$ are $\cO$-stabilizations of $\cH_1$ and
  $\cH_2$ respectively, then the diagram
    \begin{align*}
        \begin{tikzcd}[ampersand replacement = \&]
            \CFR^\circ(\S,\alphas_1,\betas_1, \tau) \ar[r,"\Phi_{\betas_1 \ra \betas_2}^{\alphas_1 \ra \alphas_2}"]  \ar[d,"\sigma_{\cH_1\to \cH_1'}^\cO"] \& \CFR^\circ(\S,\alphas_2,\betas_2, \tau) \ar[d,"\sigma_{\cH_2\to \cH_2'}^\cO"] \\
            \CFR^\circ(\S,\alphas_1',\betas_1', \tau) \ar[r,"\Phi_{\beta_1'
    \to \betas_2'}^{\alphas_1' \ra \alphas_2'}"] \& \CFR^\circ(\S,\alphas_2',\betas_2', \tau) 
        \end{tikzcd}
    \end{align*}
    commutes up to homotopy. In particular, there are well-defined maps $\sigma^\cO_{H \to H'}: \CFR^\circ(H)\to \CFR^\circ(H')$ associated to stabilizations.
\end{lemma}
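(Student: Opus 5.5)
The plan is to follow the proof of the corresponding unreal statement in \cite{JTZ_naturality_mapping_class_groups} (their Lemma 9.29 on stabilization and the transitive system), replacing triangle counts by real-invariant rectangle counts throughout. First I would reduce to the case in which $(\S,\alphas_2,\alphas_1,\betas_1,\betas_2,\tau)$ is an admissible quadruple, so that $\Phi^{\alphas_1\ra\alphas_2}_{\betas_1\ra\betas_2} = \Psi^{\alphas_1\ra\alphas_2}_{\betas_1\ra\betas_2}$. This reduction uses \Cref{lem:factor-admissible}, which writes a general $\Phi$ as a composite of two $\Psi$ maps through an intermediate Hamiltonian translate, together with \Cref{lem:adm-multi}, which provides translates making the stabilized quadruples admissible as well; the square for $\Phi$ then follows by stacking two squares for $\Psi$. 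I would also arrange, using Hamiltonian translates inside the stabilization region, that the stabilized quadruple $(\S',\alphas_2',\alphas_1',\betas_1',\betas_2',\tau')$ is admissible. In that region the new curves $\alpha_0,\alpha_0'$ are small translates meeting in two points, and symmetrically for $\beta_0=\tau(\alpha_0)$ and $\beta_0'$.

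Next I would identify the top generator on the stabilized side. For a $\{1\}$-stabilization, $\Theta_{\alpha_2',\alpha_1'} = \Theta_{\alpha_2,\alpha_1}\times \theta_0$, where $\theta_0$ is the top-graded intersection point of $\alpha_0'\cap\alpha_0$. For a $\Z/2$-stabilization there are two such points, one in $T$ and one in $\tau(T)$, and the same product description holds. The stabilized rectangle map then counts real-invariant rectangles in $\cH_1'$ with corners at $\Theta'$, $\x\times\{c\}$, $R(\Theta')$ and an output of the form $\y\times\{c'\}$. It suffices to show this count equals the count for $\cH_1$ with $c'$ equal to the new intersection point in $\cH_2'$ (respectively $\{a'\}\times\y\times\{\tau(a')\}$ in the $\Z/2$ case).

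The core step is \Cref{prop:gluing-rectangles-stab}. Stretching the neck, each index-zero real class in the stabilized diagram splits as $\psi\#\psi_0$, with $\psi_0$ a real rectangle in $E_{\{1\}}$ or $E_{\Z/2}$ satisfying $n_{z_0}(\psi_0)=0$. I would carry out the following three checks.
\begin{itemize}
\item[(a)] Using the index formula \eqref{eqn:index_for_rectangles} and additivity of the real index, rigid real classes must have $\ind_R(\psi_0)=0$ and $\ind_R(\psi)=0$. Alternatively, a domain multiplicity argument in the genus-one piece should show that the only contributing $\psi_0$ is the small invariant rectangle (or small pair of triangles) supported near $c$, $\theta_0$ and $c'$.
\item[(b)] That local class should have exactly one real holomorphic representative, in the spirit of the rectangle of \Cref{fig:equivariant_quadrilateral}, but simpler since the domain is a union of embedded disks with the conformal class fixed by the symmetry.
\item[(c)] Any $\psi$ with nonzero multiplicity in the stabilization region cannot be rigid.
\end{itemize}
From these, the count factors as $\#\cM_R(\psi)\cdot 1$. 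This shows $\Psi'\circ\sigma = \sigma\circ\Psi$ on the nose for a suitable almost complex structure, and hence up to homotopy in general, since the continuation maps \eqref{eqn:continuation_J} commute with everything involved.

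I expect the main obstacle to be step (a)/(b) in the $\{1\}$-case. There the local rectangle lies on a torus preserved by $\tau$ and crosses the fixed circle, so one has to check carefully that the symmetric rectangle through $\theta_0$, $c$, $R(\theta_0)$, $c'$ is the unique real-rigid class and contributes $\pm1 = 1$ over $\F$. My plan is to compute this either by the real index formula applied to the explicit domain in \Cref{fig:s3_diagrams}, or by passing to the quotient, where the real rectangle becomes an ordinary triangle with one edge on the fixed Lagrangian and the standard count of small triangles applies. Finally, the statement about well-defined maps $\sigma^\cO_{H\to H'}$ follows formally: the commuting square says that the $\sigma$'s form a morphism of transitive systems indexed by $Y_H$ and $Y_{H'}$.
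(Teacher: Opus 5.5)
Your strategy is the paper's. Its proof of this lemma is just an appeal to Proposition~\ref{prop:gluing-rectangles-stab}, and your steps (reduction to an admissible quadruple, the product form of the top generator, neck-stretching) unpack that appeal. However, step (c) is false, and this breaks the argument outside the hat case. Proposition~\ref{prop:gluing-rectangles-stab} only treats classes with $n_{z_0}(\psi_0)=0$; it does not say that every rigid real class on the stabilized quadruple has this form. Suppose the $\Sigma$-part $\psi$ has multiplicity $k>0$ at the connected-sum point. Then the matching genus-one class is $\psi_0^{\mathrm{loc}}+k[E]$. The extra copies of the genus-one surface raise $\mu_R(\psi_0)$ by exactly what the connected-sum index formula subtracts, so $\mu_R(\psi\#\psi_0)=\mu_R(\psi)$, and these classes are rigid exactly when $\psi$ is. For $\circ\in\{-,\infty,+\}$, and for $\widehat{\CFR}$ whenever the stabilization is away from the basepoint, these classes produce the $U^{n_z(\psi)}$-terms of $\Psi(\x)$. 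Discarding them would make the square fail to commute. What you must supply instead is a matching argument. After neck-stretching, $\#\cM_R(\psi\#\psi_0)$ is the number of pairs $(u,u_0)$ with $\rho^{z}(u)=\rho^{z_0}(u_0)$. You then need the number of real rectangles in $\psi_0^{\mathrm{loc}}+k[E]$ matching a generic real divisor to be odd. This is proved by pushing the divisor to a corner of $\Diamond$, as in Claim~4 of Proposition~\ref{prop:trade-loop-rectangles}, with the same care about $\Sym^k(\Diamond)^R$ minus the diagonal being disconnected.

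Your justification of (b) also fails in the $\Z/2$ case, and that case, not the $\{1\}$ case, is the delicate one. In the $\{1\}$ case the involution acts on the local family and singles out a unique real member. In the $\Z/2$ case, the component of the source over $T$ maps with degree one to $\Diamond$, so it carries the fixed symmetric conformal structure. Its image in $T$ is a quadrilateral with edges on all four curves of the quadruple meeting $T$, not a triangle. Since $\tau(T)\cap T=\emptyset$, nothing forces this quadrilateral to have the symmetric cross-ratio. If the local domain were a rigid convex quadrilateral, the real moduli space would have index $-1$ and be empty for generic data. The count works because $\Theta$ and $R(\Theta)$ are both top-graded, so the local class in $T$ is the concatenation of the two small triangles. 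In the local model, its unreal representatives with varying cross-ratio form an interval whose ends are the two broken pairs of triangles, so it meets the symmetric cross-ratio an odd number of times. This is the mechanism of Figure~\ref{fig:equivariant_quadrilateral}, not a simpler one. Your quotient fallback also works: view the rectangle as a triangle with one edge on $M^R$, and use Lemma~\ref{lem:iso-is-rectangle-map} to see that the top-generator map is homotopic to the continuation map, hence an isomorphism on the one-generator local complexes. But in the $\Z/2$ case this is a statement about triangles in $T\times\tau(T)$ with an edge on the graph of $\tau$, not a count of small triangles in a surface.
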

\begin{proof}
    This follows from \Cref{prop:gluing-rectangles-stab}.
\end{proof}

It follows from the previous lemmas that $\CFR^\circ$ is a weak real Heegaard invariant. We now verify the remaining axioms to upgrade it to a strong invariant.

\subsection{Real Heegaard Floer homology as a strong real Heegaard invariant}\label{sub:HFR-strong} We now verify the axioms from \Cref{def:strong-Heegaard}.
\begin{enumerate}
    \item \textbf{Functoriality:} The real equivalence maps are functorial according to \Cref{prop:Compatibility1} and \Cref{lem:identity}. Functoriality of diffeomorphisms is a consequence of the definition. Similarly, the destabilization map $\sigma^{\cO}_{H' \ra H}$ is defined to be the homotopy inverse of $\sigma^{\cO}_{H \ra H'}$.
    \item \textbf{Commutativity:} For the four kinds of distinguished rectangles, we must have homotopy commutativity. Squares of type \eqref{item:rect-alpha-stab} commute according to \Cref{lem:StabilizePhi}. Squares of type \eqref{item:rect-alpha-diff} commute by \Cref{lem:diffeo-triangle}. Squares of type \eqref{item:rect-stab-stab} commute on the nose for the appropriate choice of almost complex structure. For instance, suppose that $H_1$ and $H_2$ (as well as $H_3$ and $H_4$) differ by a $\{1\}$-stabilization and that $H_1$ and $H_3$ (as well as $H_2$ and $H_4$) differ by a $\Z/2$-stabilization, then we have that 
    \begin{align*}
        \sigma_{H_3 \ra H_4}^{\{1\}}\circ \sigma_{H_1 \ra H_3}^{\Z/2}(\x)
        & = \sigma_{H_3 \ra H_4}^{\{1\}}(\{a\} \times \x \times \{\tau_4(a)\}) \\
        &= \{a\} \times \x \times \{\tau_4(a)\}\times \{c\}\\
        &= \{a\} \times \x \times \{c\} \times \{\tau_4(a)\}\\
        &= \sigma_{H_2 \ra H_4}^{\Z/2}(\x \times \{c\}) \\
        &= \sigma_{H_2 \ra H_4}^{\Z/2}\circ \sigma_{H_1 \ra H_2}^{\{1\}} (\x).
    \end{align*}
    The other two cases are similar. 

    Finally, for squares of type \eqref{item:rect-stab-diff}, we have that $H_2$ is an $\cO$-stabilization of $H_1$ and $H_4$ is an $\cO$-stabilization of $H_3$ and $f(H_1) = H_3$ and $g(H_2) = H_4$. Further, we assume that $f$ and $g$ satisfy $f(\cO\times D) = \cO\times D'$,  $g(\cO\times T) = \cO\times T'$, and $f|_{\S_1 \setminus (\cO\times D)} = g|_{\S_2 \setminus (\cO\times T)}$. In the case of $\{1\}$-stabilizations, we have that $\alphas_2 \cap \betas_2 \cap T = \{c\}$ and $\alphas_4 \cap \betas_4 \cap T' = \{c'\}$; note also that $g(c) = c'$. Hence, for the appropriate choice of almost complex structure, we have 
    \begin{align*}
        g_* \circ \sigma^{\{1\}}_{H_1 \ra H_2}(\x) &= g(\x\times \{c\})\\
        &= g(\x) \times \{g(c)\} \\
        &= f(\x) \times \{c'\} \\
        &= \sigma^{\{1\}}_{H_3 \ra H_4}\circ f_*(\x).
    \end{align*}
    The case of $\Z/2$-stabilizations is identical.

    \item \textbf{Continuity:} Continuity is precisely \Cref{prop:continuity}.
\end{enumerate}

The remaining two axioms (real handleswap and trade invariance) are more subtle, and require explicit counts of holomorphic rectangles. In order to make these computations more tractable, one typically makes use of Lipshitz's cylindrical reformulation of Heegaard Floer homology \cite{Lipshitz_cylindrical}. We will describe the analogue of his approach in the real setting.

\subsection{Cylindrical real Floer homology}

Our treatment here follows \cite{lipshitz_ozsvath:real_bordered}, so we will be terse, only emphasizing the differences between the bordered setup and ours.

Let $W = \Sigma \times [0,1]\times \R$ and define $\pi_{\Sigma}: W \ra \Sigma$ and $\pi_\D: W \ra [0,1]\times \R$ to be the natural projections. An involution $\tau$ of $\Sigma$ induces an involution $\tu$ of $W$ defined by $\tu(x,s,t) = (\t(x), 1-s, t)$. This involution clearly exchanges the cylinders $\bm{\alpha}\times \{1\}\times \R$ and $\bm{\beta}\times \{0\}\times \R$. Fix a point $z_i$ in each region $R_i$ of $\Sigma \smallsetminus (\bm{\alpha}\cup\bm{\beta})$ which satisfies 
\begin{align*}
    \begin{cases}
        \tau(z_i) = z_i & \text{if } \tau(R_i) = R_i, \\
        \tau(z_i) = z_j & \text{if } \tau(R_i) = R_j.
    \end{cases}
\end{align*}
We will write $P$ for this collection of points and $D_i$ for a small disk in $R_i$ centered at $z_i$.

Fix a $\tau$-anti-invariant area form $dA$ on $\Sigma$ and a complex structure $j_\Sigma$ which is tamed by $dA$ and anti-holomorphic, i.e.
\begin{align*}
    j_\Sigma \circ \t_* = -\t_* \circ j_\Sigma. 
\end{align*}
Let $\omega = ds \wedge dt + dA$ be a split symplectic form on $W$. Note that $\tu$ is an anti-symplectic involution: $\tu^* \omega = -\omega.$ In the cylindrical setting, one considers the set $\cJ$ of almost complex structures on $W$ which satisfy
\begin{enumerate}
    \item[(J1)] $J$ is tamed by $\omega$;
    \item[(J2)] In a cylindrical neighborhood of $P\times [0,1]\times \R$, $J = j_\Sigma \times j_\D$ is split.
    \item[(J3)] $J$ is translation invariant in the $\R$-factor.
    \item [(J4)] $J(\partial_t) = \partial_s$
    \item[(J5)] $J$ preserves $T_p \Sigma \times (s, t)$ for all $(s, t) \in [0,1]\times \R$.   
\end{enumerate}
As in \cite{Lipshitz_cylindrical,HHSZ_surgery_exact_invol}, we will sometimes work with a more general class of almost complex structures, which satisfy
\begin{enumerate}
    \item[(J5')] $T_p\Sigma \oplus \{0\}$ is a complex line near $(\bm \alpha \cup \bm \beta)\times [0,1] \times \R$ and $\Sigma \times \{0,\frac{1}{2},1\} \times \R$. 
\end{enumerate}

Let $\cJ_R$ be the subset of $\cJ$ consisting of almost complex structures $J$ satisfying 
\begin{align*}
    J \circ \tu_* = - \tu_* \circ J.
\end{align*}
We refer to these as \emph{symmetric almost complex structures}.

Let $(S, j)$ be a Riemann surface with boundary which contains $m = |\alphas| = |\betas|$ negative punctures $\bm{p} = \{p_1,\hdots,p_m\}$ and $m$ positive punctures $\bm{q} = \{q_1,\hdots,q_m\}$. 
For a symmetric almost complex structure $J$ satisfying (J1)-(J5) (or (J5')), we are interested in $J$-holomorphic maps $u: S \ra W$ which satisfy:
\begin{enumerate}
    \item[(M0)] The source $S$ is smooth.
    \item[(M1)] $u(\partial(S))\subset (\bm{\alpha}\times \{1\}\times \R) \cup (\bm{\beta}\times \{0\}\times \R)$.
    \item[(M2)] There are no components of $S$ on which $\pi_\D\circ u$ is constant.
    \item[(M3)] For each $i$, $u^{-1}(\alpha_i \times \{1\}\times \R)$ and  $u^{-1}(\beta_i \times \{0\}\times \R)$ consist of exactly one component of $\partial S \smallsetminus (\bm{p}\cup \bm{q})$.
    \item[(M4)] $\lim_{w \ra p_i}\pi_\R \circ u(w) = -\infty$ and $\lim_{w \ra q_i}\pi_\R \circ u(w) = +\infty$.
    \item[(M5)] The energy of $u$ is finite.
    \item[(M6)] $u$ is an embedding.
\end{enumerate}

If $u$ is in the class $\phi \in \pi_2(\x, \y)$, we will write $\cM^{cyl}(\phi)$ for the moduli space of $J$-holomorphic curves which satisfy (M0)-(M6). Let $\cM_R^{cyl}(\phi)$ be the subspace of \emph{real holomorphic curves} $u: S \ra W$ for which there is an anti-holomorphic involution $\sigma$ of $S$ which satisfies $u\circ \sigma = \tu \circ u$. We will also at times consider the moduli space $\cM^{cyl}(\phi; S)$ of curves with a fixed source $S$ and the associated subspace $\cM_R^{cyl}(\phi; S)$. 

We say that $J \in \cJ^R$ is \emph{generic} if $\cM_R^{cyl}(\phi; S)$ is transversely cut out. Let $\cB^R$ be the space of triples $(j, J, u)$ where $j$ is a symmetric complex structure on $S$, and $J$ is a symmetric almost complex structure on $W$ and $u$ is a real smooth map from $S$ to $W$. Let $\cE^R \ra \cB^R$ be the bundle of symmetric $(0, 1)$-forms on $S$ valued in $u^*(TW)$, i.e. those $\alpha$ satisfying 
\begin{align*}
    \alpha \circ d \sigma = d \tu \circ \alpha.    
\end{align*}
We write $\cB^R_{j, J}$ for the subspace of $\cB^R$ with fixed $j$ and $J$ and similarly for $\cE^R_{j, J, u}$. The linearization of the $\overline{\partial}$-operator gives a map $T\cB^R \ra \cE^R$. It follows from \cite[Lemma 3.3]{lipshitz_ozsvath:real_bordered} that for any real $u$ and symmetric $j$ and $J$,  the map $D_u\overline{\partial}: T_u \cB^R_{j,J} \ra \cE^R_{j,J,u}$ is well-defined and Fredholm. Let $\cM^R$ be the subspace of $\cB^R$ consisting of those triples $(j, J, u)$ such that $\overline{\partial}_{j,J}(u) = 0$.

An \emph{annoying curve} is a map $u: S \ra W$ such that there is a nonempty open subset of $S$ on which $\pi_\D \circ u$ is constant. 

\begin{prop}
    With respect to a symmetric almost complex structure satisfying (J1)-(J5), the space $\cM^R$ is a smooth manifold away from annoying curves. 
\end{prop}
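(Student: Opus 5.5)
The plan is to run the standard Sard--Smale setup of Lipshitz's cylindrical theory \cite{Lipshitz_cylindrical} restricted to the fixed locus of the involution. We regard $\cM^R$ as the zero set of the section $\overline{\partial}\colon \cB^R \to \cE^R$, $(j,J,u)\mapsto \overline{\partial}_{j,J}u$. Here $\cB^R$ is a Banach manifold: it is the fixed point set of the smooth involution $(j,J,u)\mapsto(\sigma^* j \text{ conjugated}, \tu_*J \text{ conjugated}, \tu\circ u\circ\sigma)$ on the unreal configuration space, and is therefore a split submanifold. Likewise $\cE^R$ is the fixed subbundle of the induced involution on $(0,1)$-forms. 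Fredholmness of the vertical linearization $D_u\overline{\partial}$ restricted to real sections is already given by \cite[Lemma 3.3]{lipshitz_ozsvath:real_bordered}. By the implicit function theorem, it therefore suffices to show that the full linearization
\[
D\overline{\partial}\colon T_{(j,J,u)}\cB^R \to \cE^R_{j,J,u},
\]
including the variations in $J$ (and $j$), is surjective at every non-annoying $u$.

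To prove surjectivity, argue by contradiction with a nonzero real element $\eta$ of the cokernel, i.e.\ a real $(0,1)$-form $L^2$-orthogonal to the image. Orthogonality to $\operatorname{im} D_u\overline{\partial}$ makes $\eta$ satisfy the formal adjoint equation, so by unique continuation it cannot vanish on any open set. Orthogonality to variations of $J$ means
\[
\int_S \langle \eta, \delta J\circ du\circ j\rangle = 0
\]
for all admissible $\delta J$. We need $\delta J$ that is anti-invariant in the appropriate sense ($\delta J\circ\tu_* = -\tu_*\circ\delta J$) and preserves (J1)--(J5). Conditions (J4) and (J5) only allow $\delta J$ acting on the $T\Sigma$ directions as a function of $(p,s,t)$, supported away from the cylinders over $P$. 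As in Lipshitz's argument, the non-annoying hypothesis ensures there is an open set of points of $S$ where $\pi_\D\circ u$ is a local diffeomorphism, $u$ is injective, and $u$ avoids the split region near $P$. Near such a point we pick a local $\delta J$ supported in a small ball $B$ around $u(w)$, which pairs positively with $\eta$. We then symmetrize by setting $\delta J^R=\delta J - \tu_*^{-1}\circ\delta J\circ\tu_*$, making $B$ small enough that $B\cap\tu(B)=\emptyset$ or, if $u(w)$ lies on the fixed set, arranging the local model compatibly.

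Because $\eta$ and $u$ are both real, the contributions from $B$ and $\tu(B)$ are equal rather than cancelling. Concretely, the integrand over $\sigma(u^{-1}(B))$ transforms to the integrand over $u^{-1}(B)$, so the pairing equals twice the unreal one. This is nonzero, which is the desired contradiction.

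The main obstacle is the injectivity point. We need a point of $S$ whose image is hit only once by $u$ and which is either off the fixed locus $\Sigma^\tau\times\{1/2\}\times\R$ of $\tu$, or at least not forced onto it. Since the fixed locus of $\tu$ has codimension $2$ in $W$ while $u(S)$ has dimension $2$, generic points of a non-annoying curve should map off it. Condition (M6) gives embeddedness, so $u$ is injective; the remaining issue is only to choose $w$ with $u(w)$ off $\mathrm{Fix}(\tu)$ and with $\sigma(w)\neq w$. I would handle this by noting that $\mathrm{Fix}(\sigma)$ is a $1$-dimensional subset of $S$, so its complement is open dense and contains such points. This yields smoothness of $\cM^R$ away from annoying curves.
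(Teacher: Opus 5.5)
Your architecture is the same as the ``usual transversality argument plus averaging'' the paper invokes: Sard--Smale for the universal space $\cM^R$, a real cokernel element $\eta$, and symmetrized $\delta J$ so that the contributions over $B$ and $\tu(B)$ double rather than cancel. That equivariant bookkeeping is correct. The gap is in the non-equivariant core, where you treat $J$-variations as if they were arbitrary. By (J4)--(J5), every admissible $J$ splits as $j_\Sigma\oplus j_{\D}$ with $j_{\D}$ fixed, and by (J3) $j_\Sigma$ is independent of $t$. So an admissible variation has the form $\delta J=\delta j_\Sigma(p,s)\oplus 0$. In particular you cannot take $\delta J$ supported in a small ball around $u(w)$: its support is $\R$-invariant. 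The pairing $\int_S\langle \eta,\delta j_\Sigma\circ d(\pi_\Sigma\circ u)\circ j\rangle$ therefore collects contributions from every point of $S$ lying over the same point of $\Sigma\times[0,1]$ as $w$. What you need is a point $w$ with $d(\pi_\Sigma\circ u)(w)\neq 0$ at which $u$ is injective modulo $\R$-translation. Embeddedness of $u$ does not give this. Moreover, (M6) is not imposed on points of $\cM^R$, which are arbitrary real maps, so invoking it would only prove smoothness on a smaller set. Producing such a point for every non-annoying curve is exactly the somewhere-injectivity input the paper imports from Lipshitz's Proposition 3.3. Your chosen open set, where $\pi_{\D}\circ u$ is a local diffeomorphism, is also the wrong criterion. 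On a trivial strip $\pi_{\D}\circ u$ is a diffeomorphism, yet $d(\pi_\Sigma\circ u)\equiv 0$, so no admissible $\delta J$ sees anything there.

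Second, admissible $\delta J$ only produce $u^*T\Sigma$-valued forms, so they can only detect the $T\Sigma$-component $\eta_\Sigma$ of $\eta$. Unique continuation for $\eta$ as a whole does not give $\eta_\Sigma\neq 0$ near $w$. Since $\pi_{\D}$ is $J$-holomorphic, $D_u\overline{\partial}$ is block triangular with respect to $u^*T\Sigma\oplus u^*T\D$. Its $\D$-block is the standard $\overline{\partial}$ on the trivial bundle, with boundary values tangent to $\{0,1\}\times\R$. At fixed $j$, the cokernel of this block is governed by the topology of $S$ and is in general nonzero. A cokernel element with $\eta_\Sigma=0$ is invisible to every admissible $\delta J$. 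You must kill it with variations of the source complex structure $j$; these lie in $T\cB^R$, but your argument never uses them. Once $\eta_\Sigma\equiv 0$ is established, the remaining conditions say that $\eta_{\D}$ annihilates the linearization of the space of holomorphic branched maps $\pi_{\D}\circ u\colon S\to[0,1]\times\R$. That linearization is surjective precisely because $\pi_{\D}\circ u$ is nonconstant on every component. This is where non-annoyingness genuinely enters, and the real version follows by averaging the $j$-variations under $\sigma$ as well. By comparison, the issue you single out as the main obstacle, choosing $w$ off $\mathrm{Fix}(\sigma)$ and $\mathrm{Fix}(\tu)$, is harmless.
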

\begin{proof}
    This is standard; see \cite[Proposition 3.4.1]{McDuffSalamon},\cite[Lemma 3.3]{Lipshitz_cylindrical}, or \cite[Proposition 2.2]{guth_manolescu2025real}. The key point is that according to \cite[Proposition 3.3]{Lipshitz_cylindrical} $u$ is somewhere injective, provided that $u$ is not annoying. Then, we can run the usual transversality argument, obtaining symmetric objects by averaging when needed. See \cite[Proposition 2.2]{guth_manolescu2025real} or \cite[Proposition 3.4]{lipshitz_ozsvath:real_bordered} for details.
\end{proof}

In particular, the moduli spaces $\cM_R^{cyl}(\phi) \subset \cM^R$ are smooth manifolds of dimension $\ind_R(\phi)$ for generic symmetric almost complex structures. In particular, we can define a cylindrical version of real Heegaard Floer homology.

Just as in the unreal setting, there is a tautological correspondence between $j\times j_\D$-holomorphic curves in $W$ and $\Sym^m(j)$-pseudo-holomorphic strips in $M = \Sym^m(\Sigma)$. 

\begin{lem}\label{lem:taut-corr}
    Let $\t$ be an involution on $\Sigma$, let $j$ be a symmetric complex structure on $\Sigma$, and let $J:=\Sym(j)$ be the induced almost complex structure on $\Sym^m(\Sigma)$. There is a one-to-one correspondence between $J$-holomorphic maps
    \begin{align*}
        u: \D \ra \Sym^m(\Sigma)
    \end{align*}
    satisfying $\tu\circ u =u\circ \rho$ and diagrams of the form 
    \begin{align*}
        \begin{tikzcd}[ampersand replacement = \&]
            \& S\ar[rr,"\hat{u}"]\ar[dd] \& \& \Sigma \\
            S\ar[ur,"\sigma"]\ar[dd]\ar[rr,crossing over,"\hat{u}" pos=0.65] \& \& \Sigma \ar[ur,"\t"]\& \\
            \& \D \& \&  \\
            \D \ar[ur,"\rho"]\& \&  \& \\
        \end{tikzcd}
    \end{align*}
    where $S$ is a Riemann surface with real structure, $\hat{u}$ is a $j$-holomorphic map, and $S \ra \D$ is a holomorphic $d$-fold branched cover. The latter data is equivalent to a $j \times j_\D$-holomorphic map $S \ra \Sigma \times [0,1]\times \R$.
\end{lem}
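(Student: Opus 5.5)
The plan is to reduce the statement to the unreal tautological correspondence of Lipshitz \cite[Section 13]{Lipshitz_cylindrical}, and then check that the real structures on the two sides match. First, recall the unreal construction. Given a $J$-holomorphic $u\colon \D \to \Sym^m(\Sigma)$, form the pullback of the branched cover $\Sigma^{\times m}\to \Sym^m(\Sigma)$:
\[S := \{(z,x)\in \D\times \Sigma : x\in u(z)\}.\]
This comes with projections $S\to \D$ (an $m$-fold branched cover) and $\hat u\colon S\to \Sigma$. Since $J=\Sym(j)$ is integrable near the diagonal, $S$ acquires a complex structure (after normalization) making both projections holomorphic. Conversely, given $(S,\hat u, S\to \D)$, set $u(z)$ to be $\hat u$ applied to the fiber over $z$, counted with multiplicity. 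These constructions are mutually inverse. The final sentence of the lemma, identifying such data with a $j\times j_\D$-holomorphic map $(\hat u, \pi)\colon S\to \Sigma\times[0,1]\times\R$, is also part of the unreal statement.

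Next, transport the symmetry. Suppose $\tu\circ u = u\circ\rho$, where $\tu=\Sym(\tau)$ on $\Sym^m(\Sigma)$ and $\rho(s,t)=(1-s,t)$. Define
\[\sigma\colon S\to S,\qquad \sigma(z,x) = (\rho(z),\tau(x)).\]
This lands in $S$ because $\tau(x)\in \tau(u(z)) = u(\rho(z))$. It is an involution, since both $\rho$ and $\tau$ are. It commutes with the projections exactly as in the displayed cube: $\pi\circ\sigma=\rho\circ\pi$ and $\hat u\circ\sigma=\tau\circ\hat u$. To see that $\sigma$ is anti-holomorphic, note that away from branch points $\pi$ is a local biholomorphism and $\sigma$ covers the anti-holomorphic map $\rho$; extend over the finitely many branch points by removable singularities (continuity). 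Conversely, given the cube with $\sigma$ an anti-holomorphic involution, the reconstructed map satisfies $u(\rho(z)) = \hat u(\pi^{-1}(\rho z)) = \hat u(\sigma\pi^{-1}(z)) = \tau(u(z))$.

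Finally, check that the two constructions are inverse on the real data and not only on the underlying maps. The only issue is uniqueness of $\sigma$. Any two real structures on $S$ covering $\rho$ and compatible with $\hat u$ agree wherever $(\pi,\hat u)$ is injective, which is a dense open set once degenerate components are excluded (as in condition (M2)). Hence they agree everywhere by continuity. In the equivalent formulation, the real condition becomes $(\hat u,\pi)\circ\sigma = \tu\circ(\hat u,\pi)$ for the involution $\tu(x,s,t)=(\tau(x),1-s,t)$ on $W$.

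I expect the main obstacle to be the anti-holomorphicity and smoothness of $\sigma$ at branch points and where $S$ needs normalization. This is what I would verify most carefully, using the local model of $\Sym^m$ near the diagonal.
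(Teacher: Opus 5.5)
Your proposal is correct and follows the paper's route. The paper's proof simply invokes the unreal correspondence of \cite[Lemma 3.6]{os_holodisks} and says to keep track of the real structures; your involution $\sigma(z,x)=(\rho(z),\tau(x))$ on the pullback, the removable-singularity argument at branch points, and the converse check make exactly that explicit. Two small corrections: the $m$-fold cover is the pullback of $\Sigma\times\mathrm{Sym}^{m-1}(\Sigma)\to\mathrm{Sym}^m(\Sigma)$ rather than of $\Sigma^{\times m}$ (your explicit formula for $S$ is nonetheless the right one), and the generic injectivity of $(\pi,\hat u)$ you need for uniqueness of $\sigma$ comes from the pullback construction (or an embeddedness condition like (M6)), not from (M2).
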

\begin{proof}
    This follows just as in \cite[Lemma 3.6]{os_holodisks} while keeping track of the real structures. 
\end{proof}

Following \cite{Lipshitz_cylindrical}, we will show that the moduli spaces in the cylindrical setting agree with those in the Lagrangian Floer setting. By \Cref{lem:taut-corr}, we can identify the two moduli spaces when $j \times j_\D$ achieves transversality, but this cannot always be achieved. To circumvent this issue, Lipshitz considers a broader class of almost complex structures than those considered by Ozsv\'ath and Szab\'o. Fix an open neighborhood $V_1$ of $z_i \times \Sym^{m-1}(\Sigma)$ and an open neighborhood $V_2$ of the diagonal, $\Delta$. Let $\pi: \Sigma^m \ra \Sym^m(\Sigma)$ be the natural projection and let $\omega_0 = (dA)^m$. Fix a complex structure $j_{\Sigma}$ on $\Sigma$.

\begin{definition}{\cite[Definition 13.1]{Lipshitz_cylindrical}}
    We say that an almost complex structure $\Tilde{J}$ on $\Sym^m(\Sigma)$ is a \emph{quasi-nearly-symmetric almost complex structure} if:
    \begin{enumerate}
        \item $\Tilde{J}$ is tamed by $\pi_*(\omega_0)$ on $\Sym^m(\Sigma)\smallsetminus V_2$.
        \item $\Tilde{J}$ agrees with $\Sym^m(j_\Sigma)$ on $V_1$.
        \item There is some complex structure $j$ on $\Sigma$ such that $\Sym^m(j)$ agrees with $\Tilde{J}$ on $V_2$.
    \end{enumerate}
\end{definition}
We will refer to such almost complex structures as \emph{QNS almost complex structures}, so that when we consider \emph{symmetric QNS almost complex structures} we can avoid referring to ``symmetric quasi-nearly-symmetric almost complex structures''. If $J$ is a symmetric almost complex structure on $W$ satisfying (J1)-(J5), then $J$ determines a path $J_s$ of symmetric complex structures on $\Sigma$; furthermore $\Sym^m(J_s)$ is a path of symmetric QNS almost complex structures on $\Sym^m(\Sigma)$. Conversely, if $\Tilde{J}_t$ is a path of symmetric QNS almost complex structures on $\Sym^m(\Sigma)$, we write $j_t$ for the complex structure on $\Sigma$ which agrees with $\Tilde{J}_t$ near the diagonal. 

\begin{lem}
    The class of paths of symmetric almost complex structures of the form $\Sym^m(J_s)$ is sufficient to achieve transversality for real-invariant strips $u: (\D, \partial \D) \ra (\Sym^m(\Sigma), \bT_\alpha \cup M^R)$. 
\end{lem}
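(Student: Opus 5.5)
The plan is to run the Ozsváth–Szabó/Lipshitz argument for paths of the form $\Sym^m(J_s)$ (compare \cite[Proposition 3.9]{os_holodisks} and \cite[Section 13]{Lipshitz_cylindrical}). At each step I only need to check that the variations of the almost complex structure can be taken symmetric without losing surjectivity. The key point is that I perturb only within the class of split structures, so the tautological correspondence applies at every stage.

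\textbf{Step 1: the universal moduli space.} Let $\mathcal{P}^R$ be the Banach manifold (Floer $C^\epsilon$ topology) of paths $\{j_s\}_{s\in[0,1]}$ of complex structures on $\Sigma$ that
\begin{itemize}
\item are fixed near the basepoints $z_i$, and
\item satisfy the symmetry condition $j_{1-s}\circ\tau_* = -\tau_*\circ j_s$.
\end{itemize}
For such a path, $J=\{j_s\}\times j_\D$ is a symmetric almost complex structure on $W$ satisfying (J1)--(J5). By \Cref{lem:taut-corr}, $\Sym^m(j_s)$-holomorphic real strips with boundary on $\Ta\cup M^R$ are the same as real $J$-holomorphic curves $u\colon S\to W$ with an anti-holomorphic involution $\sigma$ satisfying $u\circ\sigma=\tu\circ u$. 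So it suffices to prove the following. The universal real moduli space
\[
\{(\{j_s\},u)\},
\]
with $u$ real and satisfying (M0)--(M6), is cut out transversely by the section $\overline\partial$ of the bundle $\mathcal{E}^R$ of symmetric $(0,1)$-forms. Sard--Smale then gives a comeager set of symmetric paths $\{j_s\}$, and the correspondence transfers regularity back to $\Sym^m(\Sigma)$.

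\textbf{Step 2: surjectivity.} Suppose $\eta$ is a real element of the cokernel, i.e. a symmetric $(0,1)$-form annihilated by the image of $D_u\overline\partial$ (Fredholm by \cite[Lemma 3.3]{lipshitz_ozsvath:real_bordered}). Suppose also that $\eta$ is annihilated by every term
\[
\delta j_s\circ d(\pi_\Sigma\circ u)\circ j_S .
\]
Components with $\pi_\Sigma\circ u$ constant are trivial strips $\{x\}\times[0,1]\times\R$, which are regular by a direct index computation as in \cite{Lipshitz_cylindrical}. (Condition (M2) rules out components with $\pi_\D\circ u$ constant.) On every other component I look for a point $w\in S$ with the following properties:
\begin{itemize}
\item $\pi_\Sigma\circ u$ is an immersion at $w$;
\item $u(w)=(x,s,t)$ with $s\neq\tfrac12$;
\item $u^{-1}\bigl(\{x\}\times\{s\}\times\R\bigr)$ meets only $w$ near $t$, and $\eta(w)\neq0$.
\end{itemize}
Such points are open and dense in the complement of the zero set of $\eta$, using the embeddedness (M6) and the fact that $S\to\D$ is a branched cover. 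I choose $\delta j_s$ supported near $(x,s)$, and add the mirror perturbation near $(\tau(x),1-s)$ so that the variation stays in $T\mathcal{P}^R$.

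Because $u$ and $\eta$ are both real, the pairing of $\eta$ with the mirrored bump equals the pairing with the original bump. The symmetrized variation therefore pairs to twice a quantity we can make nonzero. This forces $\eta\equiv0$ on an open set, and then everywhere by unique continuation. The same argument handles the $\Ta$ side versus the $M^R$ side, since on the strip side the boundary on $M^R$ corresponds to the $s=\tfrac12$ symmetric locus. The choice $s\neq\tfrac12$ guarantees that the two bumps are disjoint.

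\textbf{Main obstacle.} The main obstacle is the injectivity/immersion step, and specifically preventing the symmetrization from killing the variation. This can happen if the only available points are fixed by $(x,s)\mapsto(\tau x,1-s)$, or if a point and its mirror lie on the same sheet over $(x,s)$. I would handle this by restricting to the open set $s\neq\tfrac12$ and using the real structure $\sigma$ on $S$ to see that $w$ and $\sigma(w)$ lie over distinct points of $\D$. A secondary point is admitting only variations fixed near the $z_i$ and near the diagonal stratum. If this is insufficient, I would enlarge the class to symmetric QNS perturbations supported away from $V_2$, as in \cite[Section 13]{Lipshitz_cylindrical}, by the same averaging trick.
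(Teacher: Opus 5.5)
Your overall route is the same as the paper's. The paper proves this lemma in one line: run the unreal transversality argument (via \cite{guth_manolescu2025real} and \cite{lipshitz_ozsvath:real_bordered}) and average to make the perturbations symmetric. Your mirrored bumps, the restriction to $s\neq\tfrac12$, and the use of injective points of $w\mapsto(\pi_\Sigma u(w),s(u(w)))$ are the right ingredients for that averaging.

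However, Step~2 as written has a genuine gap. A variation of the path $\{j_s\}$ changes $J=j_s\oplus j_{\D}$ only on the $T\Sigma$ summand. So $\delta j_s\circ d(\pi_\Sigma\circ u)\circ j_S$ is $u^*T\Sigma$-valued, and its $L^2$-pairing with $\eta$ sees only the component $\eta_\Sigma$ in the splitting $u^*TW=u^*T\Sigma\oplus u^*T\D$. Choosing $w$ with $\eta(w)\neq0$ therefore does not let you make the pairing nonzero. A cokernel element with $\eta_\Sigma\equiv0$ but $\eta_{\D}\not\equiv0$ is invisible to every perturbation in $\mathcal{P}^R$, and ``$\eta\equiv0$ on an open set'' does not follow.

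The standard repair is a two-step argument.
\begin{enumerate}
\item Since $\pi_\D$ is $(J,j_\D)$-holomorphic, $D_u$ is upper triangular for this splitting. Its $T\D$-component is just $\overline\partial\xi_\D$, and the boundary conditions split as $T\alpha\oplus\R\partial_t$. Hence $D_u^*$ is lower triangular, and $\eta_\Sigma$ satisfies its own elliptic equation. Run your symmetric bump argument with $\eta_\Sigma(w)\neq0$ in place of $\eta(w)\neq0$, and use unique continuation to get $\eta_\Sigma\equiv0$.
\item Then $\overline\partial^*\eta_\D=0$, and you kill $\eta_\D$ with variations of the symmetric source complex structure $j_S$, which are part of the moduli problem ($\mathcal{B}^R$ consists of triples $(j,J,u)$). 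Away from the finitely many branch points of $\pi_\D\circ u$, the forms $d(\pi_\D\circ u)\circ\delta j_S$ realize every $(0,1)$-form with values in $u^*T\D$. Symmetrize as before: pick $w$ off the fixed locus of $\sigma$ and add the $\sigma$-conjugate variation, so the pairing doubles.
\end{enumerate}

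With this in place your argument goes through. The transfer of regularity back to $\Sym^m(\Sigma)$ then rests, as in the paper, on \cite[Section 13]{Lipshitz_cylindrical}.
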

\begin{proof}
    This again follows along the same lines as \cite[Proposition 2.2]{guth_manolescu2025real} and \cite[Proposition 3.4]{lipshitz_ozsvath:real_bordered}; we appeal to the fact that transversality can be achieved in the unreal case and then average to obtain symmetric objects when necessary.
\end{proof}

Therefore, we have a tautological correspondence in the real setting.
\begin{prop}\label{prop:real-taut-corr}
    Equipped with symmetric almost complex structures $J_s$ and $\Sym^m(J_s)$, the moduli spaces $\cM_R^{cyl}(\phi)$ and $\cM_R(\phi)$ agree.
\end{prop}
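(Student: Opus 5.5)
The plan follows Lipshitz's argument \cite[Section 13]{Lipshitz_cylindrical}, carried out inside the symmetric category. The first step is to fix a symmetric almost complex structure $J$ on $W$ satisfying (J1)--(J5). This $J$ determines a path $J_s$ of symmetric complex structures on $\Sigma$. The corresponding path $\Sym^m(J_s)$ consists of symmetric QNS almost complex structures on $\Sym^m(\Sigma)$, and it lies in $\cJ_R$ because the anti-holomorphicity of each $J_s$ with respect to $\tau$ induces the relation $J_t \circ R_* = -R_* \circ J_{1-t}$.

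Next I would apply \Cref{lem:taut-corr} pointwise in $s$. A real $J$-holomorphic curve $u\colon S\to W$ with anti-holomorphic involution $\sigma$ of $S$ satisfying $u\circ\sigma=\tu\circ u$ gives a $\Sym^m(J_s)$-holomorphic strip. The map sends a point $(s,t)$ to the unordered $m$-tuple $\pi_\Sigma(u(\pi_\D^{-1}(s,t)))$. Condition (M3) shows the boundary lands in $\bT_\alpha$ and $\bT_\beta$, and the real structure shows the strip is $R$-equivariant in the sense $\tu\circ u=u\circ\rho$. Conversely, given an equivariant strip, pull back the branched cover $\Sigma\times\Sym^{m-1}(\Sigma)\to\Sym^m(\Sigma)$ to obtain $S$. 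The involution $\sigma$ is then induced by $\tau$ together with $\rho$, and the tautological map into $W$ is real. Since these constructions are mutually inverse and preserve homotopy classes $\phi$, they give a bijection $\cM_R^{cyl}(\phi)\leftrightarrow \cM_R(\phi)$.

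Finally, I would upgrade the bijection to an identification of smooth manifolds. The plan is to match the linearized operators. Lipshitz shows that $D_u\overline{\partial}$ on the cylindrical side and the linearization on the symmetric-product side have isomorphic kernels and cokernels. I would check that this isomorphism is $\Z/2$-equivariant with respect to the involutions induced by $(\sigma,\tu)$ and $(\rho,R)$. The real moduli spaces are cut out by the invariant parts of these operators, using \cite[Lemma 3.3]{lipshitz_ozsvath:real_bordered} and the previous lemma on transversality via $\Sym^m(J_s)$. So regularity of one side is equivalent to regularity of the other, and the indices satisfy $\ind_R$ on both sides.

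I expect the main obstacle to be the diagonal and annoying-curve issues. Points where $\pi_\D\circ u$ fails to be a branched cover correspond to strips meeting $\Delta$, where $\Sym^m(J_s)$ is only QNS rather than tamed. Following Lipshitz, I would handle this by perturbing within symmetric QNS structures that agree with $\Sym^m(j)$ near $V_2$, averaging under $\tau$ to keep the structures symmetric.
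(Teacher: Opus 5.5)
The first three steps of your plan are the paper's argument written out in more detail. The paper's proof is a one-line appeal to the unreal comparison \cite[Proposition 13.2]{Lipshitz_cylindrical}, together with the real tautological correspondence (\Cref{lem:taut-corr}, used with $s$-dependent $J_s$ as Lipshitz does) and the lemma just before the proposition, which says that paths of the form $\Sym^m(J_s)$ already suffice for transversality of real strips.

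The problem is your final paragraph, which misidentifies the obstacle and proposes a step that would break the statement.
\begin{itemize}
\item For $J$ satisfying (J4)--(J5), $\pi_\D\circ u$ is holomorphic. By (M2) it is therefore a branched cover on every component, so annoying curves cannot occur in $\cM_R^{cyl}(\phi)$.
\item Given embeddedness (M6), the points where the associated disk meets $\Delta$ are exactly the branch points of $\pi_\D\circ u$. The pull-back construction in the correspondence already handles these.
\item The QNS class exists precisely so that $\Sym^m(J_s)$ is an admissible path near $\Delta$, so nothing remains to be perturbed.
\end{itemize}
If you perturb to a general symmetric QNS path not of the form $\Sym^m(J'_s)$, the tautological correspondence no longer applies. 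You would then be computing $\cM_R(\phi)$ for a different almost complex structure than the one in the statement. Genericity has to be imposed upstream, on $J$ itself, chosen generic among symmetric structures on $W$ satisfying (J1)--(J5). Then $\cM_R^{cyl}(\phi)$ is transversely cut out, and regularity transfers to the $\Sym^m(J_s)$ side. You can do the transfer with your equivariant matching of linearized operators, or, as the paper does, with the preceding transversality lemma.

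Two smaller corrections.
\begin{itemize}
\item The symmetry of the path is not anti-holomorphicity of each $J_s$. Since $\tu(x,s,t)=(\tau(x),1-s,t)$, the condition $J\circ\tu_*=-\tu_*\circ J$ restricted to $T\Sigma$ reads $J_{1-s}\circ\tau_*=-\tau_*\circ J_s$. This is what gives $J_t\circ R_*=-R_*\circ J_{1-t}$ for $\Sym^m(J_s)$; pointwise anti-holomorphicity would give $J_t\circ R_*=-R_*\circ J_t$ instead.
\item In the converse direction, the pull-back of $\Sigma\times\Sym^{m-1}(\Sigma)\to\Sym^m(\Sigma)$ may be singular. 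You must normalize, then use genericity together with the index formula relating $\ind_R$, $\mu_R$ and $\sing(u)$, to see that the resulting curves satisfy (M0) and (M6). Your claim that the two constructions are ``mutually inverse'' needs this, and it is part of what the citation of Lipshitz supplies.
\end{itemize}
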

\begin{proof}
    This follows just as in the unreal case, making use of \cite[Proposition 13.2]{Lipshitz_cylindrical}.
\end{proof}

\begin{rem}
    Having established the equivalence of the two theories, we will henceforth write $\cM_R(\phi)$ rather than $\cM_R^{cyl}(\phi)$.
\end{rem}

\subsection{Real cylindrical boundary degenerations} It will ultimately be more useful to work with the larger class of almost complex structures satisfying (J1)-(J4) and (J5'). This larger class has the advantage that the $\overline\partial$-operator achieves transversality for all curves appearing in codimension-1 degenerations. For this reason it will be useful to have an explicit count of boundary degenerations in the real cylindrical setting.

\begin{defn}\label{def:cyl-boundary-degenerations} (cf.\cite[Definition 7.6]{HHSZ_surgery_exact_invol})
    A \emph{cylindrical beta boundary degeneration} is a smooth map
    \[
    u\colon (S,\partial S)\to (\Sigma\times [0,\infty)\times \R, \betas\times \{0\}\times \R),
    \]
    satisfying the following:
    \begin{enumerate}
    \item[(N-1)] \label{N-1} $u$ is $(j,J)$-holomorphic.
    \item[(N-2)] \label{N-2} $u$ is proper.
    \item[(N-3)] \label{N-3} For each $t\in \R$ and $i\in \{1,\dots, n\}$, $u^{-1}(\beta_i\times \{t\})$ consists of a single point.
    \item[(N-4)] \label{N-4} $u$ has finite energy.
    \item[(N-5)] \label{N-5} $\pi_{\bH}\circ u$ is non-constant on each component of $S$, where $\pi_{\bH}$ denotes the projection onto $[0,\infty)\times \R$.
\end{enumerate}

\begin{definition}\label{def:real-boundary-degenerations}
    A \emph{$\Z/2$-cylindrical boundary degeneration} is a smooth map 
    \[
    u\sqcup (\underline{\t}\circ u) \colon (S\sqcup -S,\partial (S\sqcup -S))\to (\Sigma\times ([0,\infty)\sqcup (-\infty,1])\times \R, (\alphas\cup\betas)\times \{0,1\}\times \R),
    \]
    such that $u$ is a cylindrical beta boundary degeneration.
    
    A \emph{$\{1\}$-cylindrical boundary degeneration} is a smooth map 
    \[
    u \colon (S,\partial S) \to (\Sigma\times (\R\times \R, C\times \{1/2\}\times \R),
    \]
    such that there exists an anti-holomorphic involution $\sigma$ of $S$ satisfying  $u \circ \sigma = \underline{\t} \circ u$.
\end{definition}
    
    If $B$ is a relative 2-cycle in $(\Sigma, \betas)$, we will write $\cN^{\Z/2}_J(\t(B) \sqcup B, \x)$ for the moduli space of $\Z/2$-cylindrical boundary degenerations, and $\widetilde\cN^{\Z/2}_J(\t(B) \sqcup B, \x)$ for the quotient $\cN^{\Z/2}_J(\t(B) \sqcup B, \x)/\Aut(\mathbb{H})$, where $\Aut(\mathbb{H})$ is the group of conformal automorphisms of the half-plane which acts pointwise on the $[0,\infty)\times \R$ factor. In the $\{1\}$-orbit case, we write $\cN^{\{1\}}_J(B, \x)$ for the moduli space of $\{1\}$-cylindrical boundary degenerations, and $\widetilde\cN^{\{1\}}_J(B, \x)$ for the quotient $\cN^{\{1\}}_J(B, \x)/\Aut(\mathbb{H})$. 
\end{defn}

\begin{defn}\label{def:admissible-acs} We say an almost complex structure $J$ on $\Sigma\times [0,\infty)\times \R$ is \emph{admissible} if it satisfies the following:
\begin{enumerate}
\item[(J-1')]\label{J'1} $J$ is tamed by $\omega$.
\item[(J-2')]\label{J'2} $J$ is split on  $(D_1\cup \cdots \cup D_n)\times [0,\infty)\times \R$.
\item[(J-3')]\label{J'3} $J$ is invariant under the action of $\Aut(\bH)$.
\item[(J-4')]\label{J'4} $J \partial /\partial s=\partial/\partial t$.
\item[(J-5')]\label{J'5} $T_p \Sigma \times \{0\}$ is a complex line in a neighborhood of $\betas\times [0,\infty)\times \R$ and $C\times [0,\infty)\times \R$, and along $\Sigma\times \{0\}\times \R$.
\end{enumerate}
\end{defn}

We note that any admissible almost complex structure on $\Sigma \times [0,\infty) \times \R$ clearly induces one on $\Sigma \times ([0,\infty)\sqcup (-\infty,0]) \times \R$. Additionally, Hendricks-Hom-Stoffregen-Zemke show that any almost complex structure on $\Sigma \times [0,1] \times \R$ gives rise to one on $\Sigma \times [0,\infty) \times \R$. 

\begin{prop}\label{prop:boundary-degenerations} (cf. \cite[Proposition 7.9]{HHSZ_surgery_exact_invol}) 
    Let $(\S, \alphas, \betas, \w, \tau)$ be a real Heegaard diagram, and let $J$ be a generic symmetric split almost complex structure on $\S \times ([0, \infty)\cup (-\infty,1]) \times \R$. If $B\cup \tau(B)$ is a real Maslov-index-$2$ class of real boundary degenerations at $\x$, then the moduli space $\widetilde{\cN}_J^R(B\cup \tau(B), \x)$ is transversely cut out and 
    \begin{align*}
        \# \widetilde{\cN}_J^{\Z/2}(\tau(B)\cup B, \x) \equiv 1 \mod 2,
    \end{align*}
    independent of $|\w|$. Similarly, if the class $[\Sigma]$ has real index 2, then
    \begin{align*}
        \# \widetilde{\cN}_J^{\{1\}}([\Sigma], \x) \equiv 1 \mod 2.
    \end{align*}
    In particular, the real cylindrical differential satisfies
    \begin{align*}
        \partial^2 = \begin{cases}
            0 & |\w| = 1\\
            \sum_{w_i \in \w} U_{w_i}^2 & |\w| > 1.
        \end{cases}
    \end{align*}
\end{prop}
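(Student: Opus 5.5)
We prove Proposition~\ref{prop:boundary-degenerations} by reducing both counts to boundary-degeneration counts that are already known, treating the $\Z/2$ and $\{1\}$ cases separately.

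\textbf{The $\Z/2$ case.} By Definition~\ref{def:real-boundary-degenerations}, a $\Z/2$-cylindrical boundary degeneration is a pair $u\sqcup(\underline{\tau}\circ u)$. The second half is determined by the first, so restricting to the first component gives a bijection
\[
\cN^{\Z/2}_J(\tau(B)\sqcup B,\x)\;\cong\;\cN_J(B,\x),
\]
the latter being the ordinary moduli space of cylindrical beta boundary degenerations. This bijection is $\Aut(\bH)$-equivariant, so it descends to the reduced spaces. It is also compatible with the deformation theory: a symmetric variation is determined by its restriction to one component. Hence transversality for the real problem is the same as transversality for $\cN_J(B,\x)$.

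We need that a generic symmetric split $J$ restricts to a generic admissible $J$ on $\Sigma\times[0,\infty)\times\R$. This holds because the two half-spaces are disjoint and exchanged by $\tau$: any perturbation on one half can be transported to the other to give a symmetric perturbation. Finally, the real index of $\tau(B)\cup B$ equals the ordinary index of $B$, so a real Maslov-index-$2$ class corresponds to an index-$2$ class $B$. Then \cite[Proposition 7.9]{HHSZ_surgery_exact_invol} gives $\#\widetilde\cN_J(B,\x)\equiv 1$, independent of $|\w|$.

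\textbf{The $\{1\}$ case.} Here the degeneration lies in $\Sigma\times\R\times\R$ with boundary on $C\times\{1/2\}\times\R$, and it is invariant under $\underline\tau$. Quotienting by $\sigma$ and $\underline\tau$ does not give a surface with a clean ordinary boundary condition. Instead we fold along the fixed set, exactly as in the passage from $\CFR$ to $\CF(\Ta,M^R)$. Under the tautological correspondence (Lemma~\ref{lem:taut-corr}), such curves correspond to disks in $\Sym^m(\Sigma)$ with boundary on the real Lagrangian $M^R$ in the class $[\Sigma]$ of real index $2$.

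We then compute the count in a model. Choose $J$ split, so the curve is a branched cover of $\bH$ by a real surface $S$ whose $\Sigma$-component is a degree-one holomorphic map to $\Sigma$ that is real with respect to $\tau$ and $\sigma$. Following the argument of \cite{HHSZ_surgery_exact_invol} (and \cite[Section 5]{os_holodisks}), we degenerate by stretching the neck, or equivalently by fixing the positions of the branch points modulo $\Aut(\bH)$. We expect the reduced moduli space to be identified with real points of a symmetric product of $\Sigma$ cut out by the $\x$-constraint. Its parity would then be computed as a degree: one for the unique real holomorphic identification $S\cong\Sigma$ compatible with $\x$.

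Independence of $J$ follows by a cobordism argument. In a generic $1$-parameter family of symmetric $J$, index-$2$ real boundary degenerations cannot bubble further for energy reasons, so the count is constant.

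\textbf{The formula for $\partial^2$.} We examine the ends of the $1$-dimensional real moduli spaces in index $2$. Broken strips cancel in pairs as usual. The remaining ends are real boundary degenerations of index $2$.
\begin{itemize}
\item When $|\w|=1$, the class $[\Sigma]$ gives $\{1\}$-degenerations whose contributions cancel or vanish. Here the $U$-power is $U^{n_w}$ with $n_w$ counted on both sides, and the total contribution is zero.
\item When $|\w|>1$, each pair of $\Z/2$ degenerations $\tau(B)\cup B$, with $B$ a component region containing $w_i$, contributes $U_{w_i}^2$ with coefficient $1$. This gives the curvature $\sum U_{w_i}^2$.
\end{itemize}

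\textbf{Main obstacle.} The hardest step is the $\{1\}$ count: showing that $\widetilde\cN^{\{1\}}_J([\Sigma],\x)$ is transversely cut out and has odd cardinality. Transversality must be achieved within symmetric $J$, where averaging can fail along $C$. We would first attempt a direct index computation on the doubled surface, then verify oddness in a genus-one model (the diagram $E_{\{1\}}$), and finally extend to general diagrams by a gluing/stabilization argument as in Proposition~\ref{prop:gluing-rectangles-stab}.
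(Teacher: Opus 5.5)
Your $\Z/2$ argument is essentially the paper's: these degenerations are symmetric pairs of the cylindrical boundary degenerations of \cite{HHSZ_surgery_exact_invol}, one on each side. The deduction of $\partial^2$ for $|\w|=1$, however, does not close.

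When $|\w|=1$, $\Sigma\smallsetminus\betas$ is connected, so your own $\Z/2$ analysis applies with $B=[\Sigma]$. The pair $\tau(\Sigma)\sqcup\Sigma$ then has real index $2$ and odd count, and it contributes a $U^2$ term exactly as in your $|\w|>1$ bullet. Your $|\w|=1$ bullet never accounts for this term. It says only that the $\{1\}$-degenerations ``cancel or vanish,'' which contradicts your claim that their count is odd and, taken literally, leaves $\partial^2=U^2$.

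The paper's mechanism is different. For $|\w|=1$, the $\Z/2$-degeneration and the $\{1\}$-degeneration both occur in real index $2$ with the same $U$-weight, and each has odd count, so they cancel against each other. This is the only reason the $\{1\}$ count is needed at all. Likewise, for $|\w|>1$ you must explain why no $\{1\}$-degeneration appears in real index $2$, so that the $\Z/2$ terms survive. Without that, the curvature formula is not justified.

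The $\{1\}$ count itself is also not established. The identification with real points of a symmetric product is only ``expected,'' and the parity is read off from $S\cong\Sigma$ with no count of the $\D$-component. Your final paragraph is a plan rather than an argument. The paper instead takes the $\{1\}$-degenerations to be copies of $\Sigma$ lying over the real line, i.e.\ mapping into $\Sigma\times\{1/2\}\times\R$, and then runs the argument of \cite[Proposition 7.9]{HHSZ_surgery_exact_invol}.

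The model you set up cannot give an odd count. Take split $J$ and a real curve of $\Sigma$-degree one whose $\D$-component is a nonconstant branched cover asymptotic to $\x$. Via the tautological correspondence, this is a nonconstant real meromorphic function on $\Sigma$ with polar divisor $\x$. By Riemann--Roch, none exists when $\x$ is non-special. So in your picture the $\{1\}$ term really would vanish, which is exactly what would leave the uncancelled $U^2$ above.
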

\begin{proof}
    The transversality result and the curve count follow by the same proof as \cite[Proposition 7.9]{HHSZ_surgery_exact_invol}; indeed, our $\Z/2$-cylindrical degenerations are simply symmetric pairs of the boundary degenerations appearing in \cite[Proposition 7.9]{HHSZ_surgery_exact_invol}, limiting to curves in $\Sigma \times \{0\}\times \R$ and $\Sigma \times \{1\}\times \R$.  Our $\{1\}$-boundary degenerations are given by copies of $\Sigma$ mapping to $\Sigma \times \{1/2\}\times \R$. 
    
    In the case $|\bm w|>1$, there is a $\Z/2$-boundary degeneration for each of the $|\bm w|$ basepoints, just as in \cite[Theorem 5.5]{os_linkinvts}, represented by $B \cup \t(B)$ for $B \in \S \smallsetminus \betas$. In this case, there are no analogous $\{1\}$-boundary degenerations with which these contributions can cancel, hence they give rise to the curvature term appearing in \cite{guth_manolescu2025real}. 
    
    In the case that $|\bm w| = 1$, the class $[\Sigma]$ has real index 1, and so the class $[\Sigma\cup \t(\S)]$ has real index 2. Therefore the moduli spaces $\# \widetilde{\cN}_J^{\Z/2}(\Sigma\cup \t(\S), \x)$ and $\# \widetilde{\cN}_J^{\{1\}}(\Sigma\cup \t(\S), \x)$ always contribute to $\partial^2(\x)$ and  cancel according to  \cite[Proposition 7.9]{HHSZ_surgery_exact_invol}.
\end{proof}

\subsection{Compactness, transversality, and gluing results}

In the cylindrical setting, holomorphic rectangle maps are replaced by counts of holomorphic curves in $\Sigma \times \Diamond$, where $\Diamond$ is the subset of $\C$ shown in \Cref{fig:cylindrical_quadrilateral}, with four cylindrical ends modeled on $[0,1] \times [0, \infty)$. 

\begin{figure}[h]
\def\svgwidth{.8\linewidth}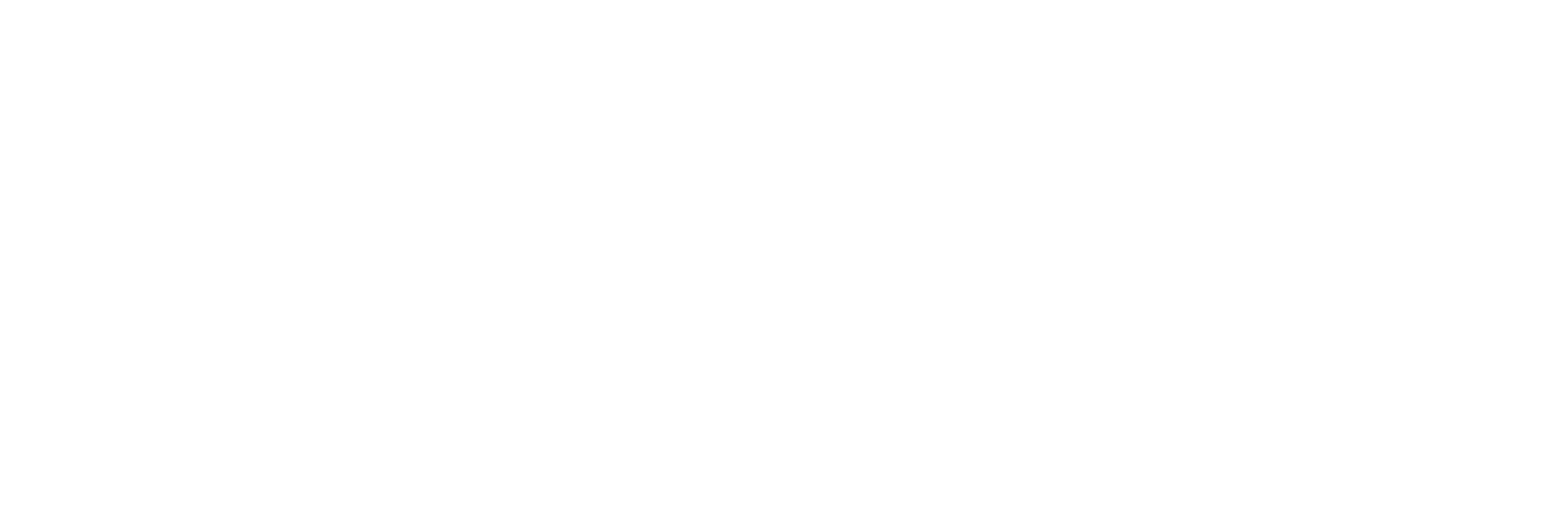
    \caption{The four-manifold $\Sigma \times \Diamond$ used to define the holomorphic quadrilateral maps.}
\label{fig:cylindrical_quadrilateral}
\end{figure}

In this setting we consider almost complex structures on $\Sigma\times \Diamond$ satisfying: 
\begin{enumerate}
\item[$(J'1')$] $J$ is tamed by the split symplectic form on $\Sigma \times \Diamond$.
\item[$(J'2')$] There is a finite collection of points 
    $P \subset \Sigma \setminus (\alphas'\cup \alphas\cup \betas\cup \betas')$,
    with at least one point in each component of  $\Sigma \setminus (\alphas' \cup \alphas\cup \betas\cup \betas')$,
    such that the almost complex structure is split on a product neighborhood of $P \times \Diamond$;
    i.e., $J=\mathfrak{j}_\Sigma \times \mathfrak{j}_\Diamond$.
\item[$(J'3')$] Near  the cylindrical ends of $\Diamond$,
    the almost complex structure~$J$ agrees with cylindrical almost complex structures
    on $\Sigma \times [0,1] \times \R$ satisfying condition $(J5')$ above.
\item[$(J'4')$] The 2-planes $T_d(\{p\}\times \Diamond)$ are complex lines
    of $J$ for all $(p,d) \in \Sigma \times \Diamond$.
\item[$(J'5')$] The 2-planes $T_p(\Sigma \times \{d\})$ are complex lines of $J$
    for $(p,d)$ near $(\alphas' \cup \alphas \cup \betas \cup \betas') \times \Diamond$ and
    for $(p,d) \in \Sigma \times U$, where $U\subset \Diamond$ is an open subset containing
    $\partial \Diamond \setminus \{v_{\alpha\alpha'},v_{\alpha\beta},v_{\beta\beta'},v_{\alpha'\beta'}\}$.
\end{enumerate}

We will write $\cJ_R(\Diamond)$ for the set of symmetric almost complex structures on $\Sigma \times \Diamond$ satisfying (J'1')-(J'5').

\begin{lem}\label{lem:open_mapping}
    Let $J \in \cJ_{R}(\Diamond).$ If $u: S \ra \Sigma \times \Diamond$ is a real $J$-holomorphic  curve and $\pi_\Sigma \circ u$ is nonconstant on a component $S_0$ of $S$, then $\pi_\Sigma \circ u|_{S_0}$ is an open map. Moreover, there are coordinates near any critical point of $\pi_\Sigma \circ u|_{S_0}$ where $\pi_\Sigma \circ u$ has the form $z \mapsto z^k$ for some $k > 0$.
\end{lem}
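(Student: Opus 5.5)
The plan is to reduce to the corresponding unreal statement, \cite[Lemma 3.2]{Lipshitz_cylindrical} (and its quadrilateral analogue used in \cite{JTZ_naturality_mapping_class_groups}). The realness of $u$ plays no role in the local analysis: the conclusion concerns only the restriction of $\pi_\Sigma\circ u$ to a single component $S_0$, so we forget the involution $\sigma$ and treat $u|_{S_0}$ as an ordinary $J$-holomorphic curve. The symmetry condition $J\circ\tu_*=-\tu_*\circ J$ only constrains how $J$ at a point relates to $J$ at its image, so it imposes nothing here.

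The key step uses axiom $(J'4')$. Since each $T_d(\{p\}\times\Diamond)$ is a $J$-complex line, the vertical distribution $\ker d\pi_\Sigma$ is $J$-invariant. Hence $J$ descends along $\pi_\Sigma$: for each $(p,d)$ the map $d\pi_\Sigma$ identifies $T_{(p,d)}(\Sigma\times\Diamond)/T_d(\{p\}\times\Diamond)$ with $T_p\Sigma$, and $J$ induces a complex structure $j_{(p,d)}$ on $T_p\Sigma$. Thus $v=\pi_\Sigma\circ u|_{S_0}$ satisfies $dv\circ j_S = j_{u(z)}\circ dv$. This is a first-order elliptic system
\[
\partial_s v + j(v,w)\,\partial_t v=0,
\]
where the complex structure depends smoothly on the point, and $w=\pi_\Diamond\circ u$ enters only as a smooth coefficient.

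In local coordinates around a point of $S_0$ and its image in $\Sigma$, this becomes $\bar\partial v + q(z)\,\partial v=0$ (more precisely $\bar\partial v + Q(z)\,\overline{\partial v}=0$ type), with $|q|<1$ and $q$ smooth. Here $q$ depends on $z$ through $u$. We then invoke the Carleman similarity principle, or equivalently the Hartman--Wintner theorem as in \cite[Lemma 3.2]{Lipshitz_cylindrical}, or \cite[Section 2.3]{McDuffSalamon}. This shows that if $v$ is nonconstant, which follows from unique continuation on the connected $S_0$, then near each critical point $v(z)=\phi(z)^k\cdot(\text{unit})$. After a $C^1$ change of coordinates via a quasiconformal homeomorphism, $v$ takes the form $z\mapsto z^k$, and nonconstant holomorphic-type maps are open.

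The main obstacle is justifying that the coefficient makes the equation a genuine Beltrami-type equation in $v$ alone, given that $j$ depends on $w$. We would handle this by viewing $j(u(z))$ as a given smooth family of complex structures along the curve, so that the equation is linear-like in $dv$ with smooth coefficients. At boundary points, where $v$ maps to $\alphas\cup\betas$ or $C$, we would reflect across the totally real boundary using $(J'5')$, which makes $T\Sigma$ complex near the attaching curves, and then apply the interior argument.
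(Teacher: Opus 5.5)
Your proposal is correct and is essentially the paper's argument: the paper's proof is the single observation that the statement follows from \cite[Proposition 3.1]{Lipshitz_cylindrical}. As you note, a real $J$-holomorphic curve is in particular an ordinary $J$-holomorphic curve for a structure satisfying the unreal axioms. Your further sketch (using $(J'4')$ to turn $\pi_\Sigma\circ u$ into a solution of a Beltrami-type equation and then applying the similarity principle) just unpacks the cited unreal result and is not needed beyond the citation.
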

\begin{proof}
    This follows from \cite[Proposition 3.1]{Lipshitz_cylindrical}.
\end{proof}

A (possibly nodal) holomorphic rectangle in a real quadruple diagram $\cQ = (\Sigma, \bm \alpha', \bm \alpha, \bm \beta, \bm \beta')$ with $d = |\bm \alpha| = |\bm \alpha'| = |\bm \beta| = |\bm \beta'|$ is a map $u: S \ra \Sigma \times \Diamond$ that satisfies the following:
\begin{enumerate}
\item[$(M1)$] $(S,j)$ is a (possibly nodal) Riemann surface with boundary and $4d$ punctures on~$\partial S$.
\item[$(M2)$] $u$ is locally nonconstant and $(j,J)$-holomorphic.
\item[$(M3)$] $u(\partial S)\subset (\alphas'\times e_{\alpha'})\cup  (\alphas\times e_\alpha)\cup (\betas\times e_\beta)\cup (\betas'\times e_{\beta'}) $.
\item[$(M4)$] $u$ has finite energy.
\item[$(M5)$] For each $i \in \{\,1, \dots, d\,\}$ and $\eta \in \{\,\alpha',\alpha,\beta,\beta'\,\}$, the preimage
$u^{-1}(\eta_i \times e_\eta)$ consists of exactly one component of the punctured boundary of~$S$.
\item[$(M6)$] As one approaches the punctures along $\partial S$, the map $u$ converges to a collection of Reeb chords (i.e., intersection points on the Heegaard quadruple) in the cylindrical ends of $\Sigma \times \Diamond$.
\end{enumerate}
For the holomorphic rectangle maps, we also require:
\begin{enumerate}
\item[$(M7)$] $\pi_\Diamond\circ u$ is nonconstant on each component of $S$.
\item[$(M8)$] $S$ is smooth (i.e., not nodal), and the map
    $u \colon S \to \Sigma \times\Diamond$ is an embedding.
\end{enumerate}
If $\psi$ is a homology class of rectangles on a real Heegaard quadruple
$\mathcal{T}$, we write $\mathcal{M}_R(\psi)$ for the moduli space of real
holomorphic maps $u \colon S \to \Sigma \times \Diamond$ that satisfy
$(M1)$--$(M6)$. If $S$ is a decorated surface (possibly with nodes), then we
write $\mathcal{M}_R(\psi,S)$ for the subset of~$\mathcal{M}_R(\psi)$ consisting
of holomorphic curves with underlying source~$S$. We will see that if $\psi$
is a real Maslov-index-$0$ homology class of rectangles, then, for a generic choice of symmetric almost complex structure $J$ in $\cJ_R(\Diamond)$, any real
$J$-holomorphic map $u \colon S \to \Sigma \times \Diamond$ representing $\psi$,
and satisfying $(M1)$--$(M6)$ also satisfies $(M7)$ and $(M8)$.

\begin{defn}\label{def:brokenholrectangle}
    Let $\cQ = (\Sigma, \bm \alpha', \bm \alpha, \bm \beta, \bm \beta')$ be a quadruple diagram. A \emph{broken holomorphic rectangle} on $\cQ$ representing a homology class $\psi$ of rectangles is a collection of (possibly nodal) $(j, J)$-holomorphic real surfaces $(u_1,v_1, \hdots, v_n, w_1, \hdots, w_m)$ such that 
    \begin{enumerate}
        \item[(BR1)] $u_1$ maps into $\Sigma \times \Diamond$ and satisfies axioms (M1) and (M3)-(M6) for rectangles. 
        \item[(BR2)] $v_1, \hdots, v_n$ map into $\Sigma \times [0,1] \times \R$ and satisfy axioms (M1) and (M3)-(M6) for bigons and represent homology classes of disks in the diagrams $(\Sigma, \bm \alpha',  \bm \alpha)$, $(\Sigma, \bm \alpha,  \bm \beta)$, $(\Sigma, \bm \beta,  \bm \beta')$, and $(\Sigma, \bm \alpha',  \bm \beta')$.
        \item[(BR3)] $w_1, \hdots, w_m$ map into $\Sigma \times \Diamond$ and $\Sigma \times [0,1] \times \R$. For every $i \in \{1, \hdots, m\}$, the curve $w_i\bigsqcup R(w_i)$ has $2d$ boundary components, each with a single puncture, and maps onto one set of attaching curves (boundary degenerations).
        \item[(BR4)] The total homology class of the curves $u_1,v_1, \hdots, v_n, w_1, \hdots, w_m$ represents $\psi$.
    \end{enumerate}
\end{defn}

\begin{prop}
    Let $J\in \cJ_R(\Diamond)$. If $u_i$ is a sequence of real $J$-holomorphic rectangles satisfying axioms (M1)-(M6) in $\cM_R(\psi)$ for a homology class $\psi$, then there is a subsequence that converges to a broken holomorphic rectangle. 
\end{prop}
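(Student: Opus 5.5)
The plan is to deduce this Gromov compactness statement from the unreal one and then use the real structure to constrain the limit. Forgetting the involution, each $u_i \in \cM_R(\psi)$ is a $J$-holomorphic rectangle in $\Sigma\times\Diamond$ for an almost complex structure satisfying $(J'1')$--$(J'5')$. So Lipshitz's compactness theorem for cylindrical curves \cite{Lipshitz_cylindrical}, in the form used for quadrilaterals in \cite[Section 9]{JTZ_naturality_mapping_class_groups} and \cite{HHSZ_surgery_exact_invol}, applies directly. It gives a subsequence converging, in the sense of SFT/holomorphic buildings, to a possibly nodal curve in $\Sigma\times\Diamond$. Attached to this curve are holomorphic bigons in the cylindrical ends $\Sigma\times[0,1]\times\R$, together with boundary degenerations and closed components (spheres or copies of $\Sigma$ in fibers). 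The energy bound needed for this step comes from the fact that the homology class $\psi$ is fixed, since energy is determined by $\omega$ evaluated on $\psi$.

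The next step is to show that the limit inherits the real structure. Each $u_i$ carries an anti-holomorphic involution $\sigma_i$ of its source with $u_i\circ\sigma_i = \tu\circ u_i$. The sequence $\tu\circ u_i\circ\sigma_i = u_i$ therefore converges to the same building. By uniqueness of Gromov limits, the involutions $\sigma_i$ converge, possibly after passing to a further subsequence, to an involution of the limiting nodal source, and this involution intertwines with $\tu$ on every level. In particular, the involution on $\Diamond$ (complex conjugation fixing the conformal structure, as discussed for real rectangles) maps each cylindrical end to its partner. Hence a bigon level in the end $(\Sigma,\alphas',\alphas)$ is paired with one in the $(\Sigma,\betas,\betas')$ end, while the $(\alphas,\betas)$ and $(\alphas',\betas')$ ends carry real bigons. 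This is exactly the pairing that (BR2) encodes.

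Finally, I sort the components into the types (BR1)--(BR3). The main component lying in $\Sigma\times\Diamond$ is real and satisfies (M1), (M3)--(M6); this uses Lemma~\ref{lem:open_mapping} to rule out ghost pieces and to control the local form near branch points. Components in the ends are the bigons of (BR2). Components collapsing onto a single attaching set are boundary degenerations; by the symmetry just established, these come either in $\tau$-exchanged pairs $w\sqcup R(w)$ or as $\{1\}$-type degenerations along $C$, matching (BR3) and Definition~\ref{def:real-boundary-degenerations}. Closed components, such as sphere bubbles or $\Sigma$-fibers, are absorbed into the nodal surfaces allowed in the definition. Homology additivity under gluing then gives (BR4).

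I expect the main obstacle to be the second step: checking that the limiting involution genuinely exists on a nodal, multi-level building. The danger is that the involution might permute levels or nodes in a way that does not correspond to a real broken rectangle; this issue arises especially for components that meet the fixed locus $C\times\{1/2\}$. To handle it, I would proceed as in \cite[Section 3]{lipshitz_ozsvath:real_bordered}. There one passes to the quotient orbifold curve, equivalently a holomorphic triangle with an $M^R$ edge, applies ordinary compactness to that quotient, and then lifts the limit back.
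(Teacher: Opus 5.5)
Your proposal is correct and follows the paper's route. The paper invokes Lipshitz's compactness theorem \cite[Section 7]{Lipshitz_cylindrical} for the underlying unreal curves and remarks that the real structure (the fixed, conjugation-invariant conformal structure on $\Diamond$) prevents rectangles from splitting into triangles; your argument that the Gromov limit inherits an anti-holomorphic involution compatible with $\tu$ supplies the detail the paper leaves implicit. One small correction: Lemma~\ref{lem:open_mapping} does not rule out constant (ghost) components, but you need not rule them out, since (BR1) only requires (M1) and (M3)--(M6), and the paper excludes constant components separately when it needs to (e.g.\ in Claim~3 of the proof of Proposition~\ref{prop:trade-loop-rectangles}).
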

\begin{proof}
    This follows from \cite[Section 7]{Lipshitz_cylindrical}. The real structure prevents rectangles from splitting into triangles. 
\end{proof}

Given symmetric almost complex structures $J$ and $J_0$ on $\Sigma \times \Diamond$ and $\Sigma_0 \times \Diamond$, form their connected sum by removing small disks $D$ and $D_0$ which are fixed setwise by the involution. Let $J(T)$ be the almost complex structure on  $(\Sigma\# \S_0) \times \Diamond$ obtained by inserting a connected sum tube $S^1 \times [0,T]$ between $\S$ and $\S_0$ equipped with the involution $(z, t) \ra (\overline{z}, t)$. The following compactness result will be useful.

\begin{prop}\label{lem:neck-compactness}
    If $u_{T_{i}}$ is a sequence of holomorphic rectangles on $(\Sigma \# \Sigma_0)\times \Diamond$ for a sequence of real-invariant almost complex structures $J(T_i)$ with $T_i \ra \infty$, then a subsequence can be extracted that converges to a triple $(U, V, U_0)$, where $U$ is a broken real holomorphic rectangle in $\Sigma \times \Diamond$ and $U_0$ is a broken real holomorphic rectangle in $\Sigma_0 \times \Diamond$. Moreover, $V$ is a collection of real holomorphic curves mapping into the neck regions $S^1 \times \R \times \Diamond$ or $S^1 \times \R \times [0,1] \times \R$ which are asymptotic to possibly multiply covered Reeb chord orbits of the form $S^1 \times \{d\}$ in $S^1 \times \Diamond$ or $S^1 \times \R \times [0,1]$ for $d \in \Diamond$ or $d \in [0,1] \times \R$, respectively.
\end{prop}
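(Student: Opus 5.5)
This is a neck-stretching compactness statement, and I would prove it by adapting the standard SFT-type argument (Bourgeois–Eliashberg–Hofer–Wysocki–Zehnder, in the form used by Lipshitz \cite[Section 7]{Lipshitz_cylindrical} and by Ozsv\'ath–Szab\'o for connected sums). The reality condition only enters through a final symmetry argument.

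The first step is a uniform energy bound. All $u_{T_i}$ lie in the fixed homology class $\psi$. The almost complex structures $J(T_i)$ are tamed by a split symplectic form, and on the neck $S^1\times[0,T_i]\times \Diamond$ that form is a product with an exact form on the cylinder. So the $\omega$-energy of $u_{T_i}$ is bounded by a topological quantity depending only on $\psi$ and the areas of the domains, uniformly in $i$. Together with \Cref{lem:open_mapping} (positivity of domains) and a bound on the Euler characteristic of the sources, coming from the index and Riemann–Hurwitz for $\pi_\Diamond\circ u$, this puts us in the setting of Gromov–Hofer compactness.

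Next I would decompose the target. Write $(\Sigma\#\Sigma_0)\times\Diamond$ as the union of $(\Sigma\setminus D)\times \Diamond$, the neck, and $(\Sigma_0\setminus D_0)\times\Diamond$. I would then apply the stretching compactness theorem of \cite{BEHWZ} (equivalently \cite[Appendix A]{Lipshitz_cylindrical}) to the hypersurface $S^1\times\{pt\}\times\Diamond$. This hypersurface is foliated by the circles $S^1\times\{d\}$, which play the role of Reeb orbits for the stable Hamiltonian structure.

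A subsequence then converges to a holomorphic building in three kinds of pieces. The outer pieces live in $\Sigma\times\Diamond$ and $\Sigma_0\times\Diamond$ after capping the punctured disks; the disks are capped because the curves there are asymptotic to orbits $S^1\times\{d\}$, and removable singularities let us fill in the points $z\times \Diamond$ and $z_0\times\Diamond$. The middle levels live in $S^1\times\R\times\Diamond$. In addition, along the cylindrical ends of $\Diamond$ one can have the usual strip breaking and boundary degenerations, which gives bigon levels in $S^1\times \R\times[0,1]\times\R$ and in the outer pieces. Applying Lipshitz's compactness for the outer pieces, as in the preceding proposition on broken rectangles, identifies them as broken holomorphic rectangles $U$ and $U_0$ in the sense of \Cref{def:brokenholrectangle}.

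Finally, I would verify reality. Each $u_{T_i}$ comes with an anti-holomorphic involution $\sigma_i$ of its source satisfying $u_{T_i}\circ\sigma_i = \underline{\tau}\circ u_{T_i}$, and $\underline{\tau}$ preserves the neck decomposition, since the neck involution is $(z,t)\mapsto(\bar z,t)$ and $D,D_0$ are $\tau$-invariant. I would pass to a further subsequence so that the $\sigma_i$ converge on the Deligne–Mumford limit of the sources. Uniqueness of limits then shows each limiting level is $\underline{\tau}$-equivariant, possibly with components interchanged in pairs, so $U$, $U_0$ and $V$ are real.

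I expect this symmetric convergence to be the main obstacle. The difficulty is showing that the involutions of the sources converge to involutions of the limit nodal surface that are compatible with the level structure and the asymptotic orbits. I would handle it by working with the quotient curves: Schwarz reflection identifies real curves with curves on the half-domain, with boundary on $C$-type totally real conditions, and I would apply compactness there.
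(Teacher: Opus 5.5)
Your proposal is correct and is essentially the paper's argument. The paper notes that the statement needs no transversality and that real curves form a subspace of the unreal ones, so it invokes the unreal neck-stretching compactness of \cite[Appendix A]{Lipshitz_cylindrical} (cf.\ \cite[Proposition 9.40]{JTZ_naturality_mapping_class_groups}) and leaves the reality of the limit implicit; that reality step is exactly what you spell out, and it is easier than you fear. Reality is a closed condition, so the Schwarz-reflection detour is unnecessary, and here it would not even give a local totally real boundary problem, since over the fixed axis of $\Diamond$ sheets of the source may be swapped in pairs rather than map into $C$. Concretely, $J(T_i)$ is $\underline{\tau}$-anti-invariant and $\underline{\tau}$ respects the neck and end decompositions, so $u\mapsto\underline{\tau}\circ u$ on the conjugate source commutes with Gromov--Hofer limits; uniqueness of limits then makes the limit building equivalent to its conjugate, and the $\sigma_i$ (isometries of symmetric hyperbolic metrics on symmetrically stabilized sources) subconverge to the required anti-holomorphic involution.
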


Since this statement does not depend on transversality and the real moduli spaces are subspaces of the unreal ones, we can deduce it from the corresponding result in the unreal theory (which is a consequence of \cite[Appendix A]{Lipshitz_cylindrical}). Compare with \cite[Proposition 9.40]{JTZ_naturality_mapping_class_groups}. 

Fix a point $p$ in $\Sigma$ in the complement of the attaching curves. Let $u: S \ra \Sigma \times \Diamond$ be a real holomorphic curve. By \Cref{lem:open_mapping}, $u$ intersects $p \times \Diamond$ in finitely many points, with positive intersections. Let $(x_1, \hdots,x_{n_p(u)}) =: (\pi_\Sigma \circ u)^{-1}(p) \in \Sym^{n_p(u)}(S)$ and define
\begin{align*}
    \rho^p(u) = (\pi_\Diamond\circ u(x_1),\hdots, \pi_\Diamond\circ u(x_{n_p(u)}))\in\Sym^{n_p(u)}(\Diamond).
\end{align*}

\begin{defn}
    Let $\psi$ be a homology class of real rectangles with $n_p(\psi) = k$. Given $X \sub \Sym^k(\Diamond)$, define 
    \begin{align*}
        \cM_R(\psi, X):= \{u \in \cM_R(\psi): \rho^p(u) \in X\}.
    \end{align*}
    As above, for a particular real source $S$, we write $\cM_R(\psi, X, S)$ for the subspace of holomorphic curves in the homology class of $\psi$ with source $S$ that match $X$ at $p$.
\end{defn}

\begin{lem}\label{lem:somehere-injective}
    Let $\cQ$ be a quadruple diagram and fix a point $p$ in the complement of the attaching curves. Say $\bm d \in (\Sym^k(\Diamond))^R$ is not in the diagonal. Let $u: S \ra \Sigma\times \Diamond$ be a real $J$-holomorphic curve in $\cM_R$ for a symmetric almost complex structure $J \in \cJ_R$ so that $\rho^p(u) = \bm d$. Then each component of $u$ is somewhere injective.
\end{lem}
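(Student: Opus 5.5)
The plan is to reduce the statement to the corresponding unreal result for cylindrical polygon maps, namely the somewhere-injectivity lemma of \cite{Lipshitz_cylindrical} (compare \cite[Lemma 9.41]{JTZ_naturality_mapping_class_groups}). The point is that somewhere injectivity is a property of the map $u$ alone, not of the real structure. A real $J$-holomorphic curve with $J\in\cJ_R(\Diamond)$ is in particular a $J$-holomorphic curve in the ordinary sense, so any argument that uses only holomorphicity, the open mapping property of \Cref{lem:open_mapping}, and the matching condition $\rho^p(u)=\bm d$ will apply verbatim.

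Concretely, I would argue by contradiction on a component $S_0$ of $S$. First suppose $\pi_\Diamond\circ u|_{S_0}$ is constant, so that $S_0$ maps into a fiber $\Sigma\times\{d\}$. In that case the boundary conditions force $d$ to lie on an edge of $\Diamond$. Condition $(J'5')$ makes $\Sigma\times\{d\}$ a $J$-complex curve there. Then $u|_{S_0}$ is a branched cover of a closed subsurface, and somewhere injectivity follows from the standard fact that a multiple cover of an embedded curve can be reparametrized. In the setting of the lemma this case is either excluded or handled as in \cite[Lemma 3.3]{Lipshitz_cylindrical}.

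Now suppose $\pi_\Diamond\circ u|_{S_0}$ is nonconstant and $u|_{S_0}$ is not somewhere injective. By the standard factorization (\cite[Proposition 2.5.1]{McDuffSalamon}), $u|_{S_0}=v\circ\phi$ for a holomorphic branched cover $\phi\colon S_0\to S'$ of degree $m\ge 2$. I then compare intersections with the fiber $\{p\}\times\Diamond$:
\begin{enumerate}
\item The points $(\pi_\Sigma\circ u)^{-1}(p)\cap S_0$ come in groups of the $m$ preimages under $\phi$ of each point of $(\pi_\Sigma\circ v)^{-1}(p)$, counted with multiplicity.
\item Each such group maps under $\pi_\Diamond\circ u$ to a single point of $\Diamond$.
\item Hence $\rho^p(u)$ has a repeated entry, so it lies on the diagonal of $\Sym^k(\Diamond)$.
\end{enumerate}
This contradicts the hypothesis that $\bm d$ is off the diagonal. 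One degenerate possibility remains, namely $n_p(u|_{S_0})=0$. Then $\pi_\Sigma\circ u|_{S_0}$ misses $p$, and by \Cref{lem:open_mapping} together with the boundary conditions this forces $\pi_\Sigma\circ u|_{S_0}$ to be constant. A component with $\pi_\Sigma$ constant maps into $\{q\}\times\Diamond$, which is a $J$-complex line by $(J'4')$. There $u$ is an unbranched degree-one map by (M5)-type boundary counting, hence injective.

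The main obstacle I expect is bookkeeping rather than analysis. I need to make sure the multiplicity count in step (1) is correct at branch points of $\pi_\Sigma\circ u$. This should be handled by the local normal form $z\mapsto z^k$ from \Cref{lem:open_mapping}: a branch point of $\phi$ over $p$ still produces coincident $\Diamond$-coordinates. I also need to check that the real structure introduces nothing new. Since the involution $\sigma$ on $S$ only permutes components and the conclusion is componentwise, it does not, and the statement follows from the unreal one by restriction.
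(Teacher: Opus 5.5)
Your first paragraph is exactly the paper's proof, which is just a citation of \cite[Lemma 9.45]{JTZ_naturality_mapping_class_groups} and \cite[Lemma 3.3]{Lipshitz_cylindrical}: somewhere injectivity does not see the real structure. The problem is the concrete argument you then give for the unreal statement, which contains two false steps.

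\textbf{First step: the case $\pi_\Diamond\circ u|_{S_0}$ constant.} Such a component need not reach an edge of $\Diamond$. Closed components are allowed in $\cM_R(\psi)$, since only (M1)--(M6) are imposed; an example is a copy of $\Sigma$ over an interior point. Moreover, a degree $m\ge 2$ cover is by definition not somewhere injective, so ``reparametrizing'' proves nothing.

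Closed components are precisely the case where $\bm d\notin\mathrm{Diag}^k(\Diamond)$ is needed. On such a component $\pi_\Sigma\circ u|_{S_0}$ cannot be constant: a closed curve in the noncompact complex line $\{q\}\times\Diamond$ is constant, contradicting (M2). So by \Cref{lem:open_mapping} it maps onto $\Sigma$, giving $n_p(u|_{S_0})\ge 1$. Your count (1)--(3) then forces degree one. This is also where the branched-cover factorization from \cite{McDuffSalamon} is legitimately available, since that result concerns closed curves; for curves with boundary, non-simple does not automatically mean multiply covered.

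\textbf{Second step: the case $n_p(u|_{S_0})=0$.} Missing $p$ does not force $\pi_\Sigma\circ u|_{S_0}$ to be constant. \Cref{lem:open_mapping} only says the image is a union of closures of regions of $\Sigma\smallsetminus(\alphas'\cup\alphas\cup\betas\cup\betas')$, and it can simply avoid the region containing $p$. For example, every component does so when $k=0$, and so does the rectangle $\mathfrak r$ with $n_{p_0}(\mathfrak r)=0$ in the proof of Claim~4.

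For these components the matching condition gives nothing. The missing ingredient is Lipshitz's asymptotic argument. By (M5), the boundary punctures at a given corner of $\Diamond$ converge to pairwise distinct chords. Hence, far out in that cylindrical end, each point of the image near such a chord has a unique preimage, and $du\neq 0$ there. This handles every component with a boundary puncture, without using $\bm d$. Components with boundary but no puncture are excluded by (M5). So the correct split is: punctured components via (M5)/(M6), and closed components via your diagonal count.
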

\begin{proof}
    This follows from \cite[Lemma 9.45]{JTZ_naturality_mapping_class_groups} and \cite[Lemma 3.3]{Lipshitz_cylindrical}.
\end{proof}

For a homology class of holomorphic rectangles $\psi$, the Maslov index $\mu(\psi)$ is the expected dimension of $\cM(\psi)$. If we fix a holomorphic rectangle $u: S \ra \Sigma\times \Diamond$ in the class of $\psi$, the expected dimension of the moduli space $\cM(\psi, S)$ is given by the Fredholm index of the linearization of the $\overline{\partial}$-operator at $u$, which we write as $\ind(\psi, S)$, and satisfies
\begin{align}\label{eqn:immered-index}
    \ind(\psi, S) = \mu(\psi) - 2\sing(u),
\end{align}
where $\sing(u)$ is the order of singularity of $u$; double points in the interior of $S$ contribute $+1$ and double points along the boundary contribute $+1/2$.

If $\psi$ represents a real holomorphic rectangle, then $\mu_R(\psi)$ is the expected dimension of $\cM_R(\psi)$ and if $u: S \ra \S\times \Diamond$ is a real holomorphic representative, then $\ind_R(\psi, S)$ is the expected dimension of $\cM_R(\psi, S)$. By \cite[Proposition 2.3]{guth_manolescu2025real}, we have the relation:
\begin{align*}
    \ind(\psi) - 2\ind_R(\psi) = \mu(\psi) - 2\mu_R(\psi).
\end{align*}
Combining this observation with \eqref{eqn:immered-index}, it follows that 
\begin{align}\label{eqn:real-immered-index}
    \ind_R(\psi, S) = \mu_R(\psi) - \sing(u).
\end{align}

We will also make use of the following transversality result.
\begin{prop}\label{prop:matching-transversality}
    Let $\cQ$ be a quadruple diagram and $p$ a point in the complement of the attaching curves. Suppose $X \subset (\Sym^k(\Diamond))^R$ for some $k$ is a non-empty submanifold which does not intersect the fat diagonal. Suppose that for each $x \in X$, the tuple $x$ has no coordinate in the open set $U \sub \Diamond$ from (J'5') which contains $\partial \Diamond \smallsetminus \{v_{\alpha\alpha'},v_{\alpha\beta},v_{\beta\beta'},v_{\alpha'\beta'}\}$. For a generic symmetric almost complex structure $J$, the moduli space $\cM_R(\psi, S, X)$ is a smooth manifold of dimension 
    \begin{align*}
        \ind_R(\psi, S) - \mathrm{codim}(X).
    \end{align*}
    When $X = (\Sym^k(\Diamond))^R$, the same result holds near any curve $u$ which has no component $T$ on which $\pi_\Diamond \circ u|T$ is constant and has image in $U$ and such that all components of $u$ are somewhere injective. 
\end{prop}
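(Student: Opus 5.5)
The plan is to deduce this from the corresponding unreal transversality statement, \cite[Proposition 9.43]{JTZ_naturality_mapping_class_groups} (itself built on \cite[Proposition 3.8]{Lipshitz_cylindrical}). Along the way I will check that the averaging trick used for Proposition 2.2 of \cite{guth_manolescu2025real} and Proposition 3.4 of \cite{lipshitz_ozsvath:real_bordered} still applies once the matching condition at $p$ is imposed.

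\textbf{Setup.} First I set up the universal moduli space. Let $\cB^R(\psi,S)$ be the Banach manifold of triples $(j,J,u)$, where $j$ is a symmetric complex structure on $S$, $J\in\cJ_R(\Diamond)$, and $u$ is a real map in the class $\psi$. Let $\cE^R$ be the bundle of symmetric $(0,1)$-forms, and consider the real evaluation map $\rho^p:\cB^R\to(\Sym^k(\Diamond))^R$.

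\textbf{Surjectivity.} The key claim is that the section $\overline\partial$ together with $\rho^p$ is transverse to $0\times X$ at every solution $u$ with $\rho^p(u)\in X$. I argue this in two steps.
\begin{itemize}
\item Because $X$ avoids the fat diagonal, \Cref{lem:somehere-injective} shows each component of $u$ is somewhere injective. Indeed, distinct points of $u^{-1}(p\times\Diamond)$ have distinct $\Diamond$-coordinates.
\item Because the points of $\rho^p(u)$ lie outside $U$, there are injective points of $u$ in the region where $J$ is not constrained by (J'5'). I take these injective points off the fixed locus of $\sigma$ and away from $p\times\Diamond$.
\end{itemize}
The usual argument (\cite[Proposition 3.4.1]{McDuffSalamon}) then produces a variation $Y$ of $J$, supported near the image of such a point. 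Since $u$ is real, the averaged variation $\tfrac12(Y-d\tu\circ Y\circ d\tu)$ is again symmetric. It pairs with a symmetric cokernel element $\eta$ exactly as $Y$ does, because $\eta$ is $\sigma$-equivariant and the contributions near $x$ and $\sigma(x)$ are equal. So symmetric variations of $J$ kill the symmetric cokernel.

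\textbf{Evaluation map.} Transversality of $\rho^p$ to $X$ I expect to obtain as in the unreal case. Variations of $J$ supported near $p\times\Diamond$, kept compatible with (J'2') by moving the split region slightly, can move the points $\pi_\Diamond\circ u(x_i)$ freely in $\Diamond$. Doing this symmetrically moves them in $(\Sym^k\Diamond)^R$, which is the ambient space of $X$.

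\textbf{Conclusion.} Sard--Smale then gives that, for generic symmetric $J$, $\cM_R(\psi,S,X)$ is a manifold of dimension $\ind_R(\psi,S)-\mathrm{codim}(X)$. The index of the real linearized operator $\ind_R(\psi,S)$ is computed by \eqref{eqn:real-immered-index}. For the last sentence, with $X=(\Sym^k\Diamond)^R$, there is no matching constraint, so I only need the local surjectivity near the given $u$. The hypotheses there supply exactly the two ingredients used above: components that are somewhere injective, and components not mapped constantly into $U$, so that some injective point lies where $J$ can be perturbed.

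\textbf{Main obstacle.} I expect the hardest step to be ensuring that the injective points used for perturbation are not forced onto the fixed locus of $\sigma$ or into $U$. A perturbation at a $\sigma$-fixed point would have to be symmetric pointwise, which restricts its form. I will handle this by noting that the set of injective points is open and dense in each component. Its intersection with $\mathrm{Fix}(\sigma)$ is one-dimensional, so points off the fixed locus are always available.
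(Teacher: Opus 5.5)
Your architecture (a universal moduli space over symmetric $J\in\cJ_R(\Diamond)$, somewhere injectivity from the off-diagonal matching condition via \Cref{lem:somehere-injective}, averaged perturbations, then Sard--Smale) is what the paper means by running the argument of \cite[Proposition 9.47]{JTZ_naturality_mapping_class_groups} equivariantly. The step that fails as written is the genuinely new one: transversality of $\rho^p$ to $X$.

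You propose to perturb $J$ near $p\times\Diamond$ and to move the split region of (J'2') off $p$. But the proposition is needed for structures that are split on a neighborhood of $p\times\Diamond$: Claim 2 of \Cref{prop:trade-loop-rectangles} and the dimension count in \Cref{prop: gluing} both use it that way. Only such structures let you form the neck-stretched $J(T)$ of \Cref{lem:neck-compactness}, so genericity must be achieved with $J$ fixed near $p\times\Diamond$. Moreover, a perturbation of $J$ localized near $(p,d_i)$ changes $u$ globally through the linearized equation. So the claim that it moves the points $\pi_\Diamond\circ u(x_i)$ ``freely'' would itself require a cokernel argument.

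The standard repair needs no freedom near $p$; it is the argument that evaluation maps on universal moduli spaces are submersions (compare \cite[Section 3.4]{McDuffSalamon}). Show that the map $(\xi,Y)\mapsto\bigl(D_u\xi+\tfrac12 Y\circ du\circ j,\ d\rho^p(\xi)\bigr)$, with $\xi$ and $Y$ symmetric, is onto the symmetric $(0,1)$-forms times $T_{\rho^p(u)}(\Sym^k\Diamond)^R$. Suppose $(\eta,v)$ annihilates the image.
\begin{itemize}
\item Taking $\xi=0$ and averaged $Y$ supported at injective points in the region where $J$ is unconstrained shows that $\eta$ vanishes on an open subset of each component.
\item Since $D_u^*\eta=0$ away from the finitely many points $x_i\in u^{-1}(p\times\Diamond)$, unique continuation gives $\eta=0$.
\item Then $v=0$. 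Because $\rho^p(u)$ is off the diagonal, each $x_i$ is a multiplicity-one, hence transverse, intersection with $\{p\}\times\Diamond$. Because $p$ is $\tau$-fixed, the set $\{x_i\}$ is $\sigma$-invariant. So symmetric $\xi$ supported near it realize every vector of $T(\Sym^k\Diamond)^R$.
\end{itemize}
Thus the universal moduli space is a Banach manifold on which $\rho^p$ is a submersion. Hence $(\rho^p)^{-1}(X)$ has codimension $\mathrm{codim}(X)$, and Sard--Smale applied to its projection to $\cJ_R(\Diamond)$ gives the result.

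Separately, the obstacle you single out is not one. The equality of pairings for the averaged $Y$ comes from the change of variables $z\mapsto\sigma(z)$, and it holds whether or not the support meets $\mathrm{Fix}(\sigma)$. The real care is needed in the regions where (J'2'), (J'3') and (J'5') constrain $J$. There the hypothesis on $U$ handles components lying in a fiber $\Sigma\times\{d\}$: such a component covers $\Sigma$, so $d$ is a coordinate of $\rho^p(u)$ and hence lies outside $U$.
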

\begin{proof}
    This is a modification of \cite[Proposition 9.47]{JTZ_naturality_mapping_class_groups} and follows by their argument.
\end{proof}

Finally, we state gluing results for real holomorphic rectangles. We will be interested in the setting in which $\psi$ is a homology class of real rectangles on $\cQ$ and $\psi_0$ is a class of real rectangles on $\cQ_0$. We consider the class $\psi \# \psi_0$ in $\cQ \# \cQ_0$. 

\begin{prop}\label{prop: gluing}
    Let $u$ and $u_0$ be real holomorphic rectangles in $\Sigma \times \Diamond$ and $\Sigma_0 \times \Diamond$ of real index 0 and $k$, respectively, representing homology classes $\psi$ and $\psi_0$. Suppose 
    \begin{align*}
        \rho^p(u) = \rho^p(u_0) \in (\Sym^k(\Diamond))^R \smallsetminus \mathrm{Diag}^k(\Diamond)
    \end{align*}
    and that $\cM_R(\psi)$ and $\cM_R(\psi_0)$ are transversely cut out near $u$ and $u_0$. Then, there is a homeomorphism $h$ between a neighborhood of $(u, u_0)$ in the compactified moduli space 
    \begin{align*}
        \overline{\bigcup_T \cM_{R, J(T)}(\psi\# \psi_0)}
    \end{align*}
    and $[0,1)$ so that $h(u, u_0) = \{0\}$.
\end{prop}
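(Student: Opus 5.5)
The plan is to follow the standard neck-stretching gluing argument (as in \cite[Proposition 9.48]{JTZ_naturality_mapping_class_groups} and \cite[Appendix A]{Lipshitz_cylindrical}), carried out inside the space of real objects. I would first form the preglued curve $u\#_T u_0$ for each neck length $T$. Near the $k$ points of $(\pi_\Sigma\circ u)^{-1}(p)$ and of $(\pi_{\Sigma_0}\circ u_0)^{-1}(p_0)$, both curves are locally graphs over $\Diamond$. The hypothesis $\rho^p(u)=\rho^p(u_0)$ lets us remove small disks around these points and glue in the trivial cylinders $S^1\times[0,T]\times\{d_i\}$ in the neck. The matching point $\bm d$ lies in $(\Sym^k(\Diamond))^R$ and the connected-sum disks are chosen $\tau$-invariant. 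The neck involution is $(z,t)\mapsto(\bar z,t)$. Hence the cutoff functions and cylinders can be chosen symmetrically, so $u\#_T u_0$ is real. Because $\bm d$ avoids the diagonal, the $k$ necks are disjoint and the preglued curve is embedded near the necks.

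Next I would set up the nonlinear Cauchy--Riemann problem on the Banach manifold of real maps $\cB^R$ with values in the real bundle $\cE^R$, following the setup of the cylindrical subsection. The linearization $D_{u\#_Tu_0}$ restricted to real sections is Fredholm by \cite[Lemma 3.3]{lipshitz_ozsvath:real_bordered}. I then need a uniform right inverse as $T\to\infty$. This comes from transversality of $\cM_R(\psi)$ at $u$, which has index $0$, together with transversality of the matching problem for $u_0$. Concretely, I would work with $\cM_R(\psi_0,\{\bm d\})$ and the evaluation map $\rho^p$, whose real index is $k-\dim_\R(\Sym^k(\Diamond))^R$. I would use Proposition~\ref{prop:matching-transversality}, with somewhere-injectivity supplied by Lemma~\ref{lem:somehere-injective}, to get surjectivity of the combined operator. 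The total real index of $\psi\#\psi_0$ plus the neck parameter $T$ should then be exactly $1$. This dimension count is where the formulas \eqref{eqn:index_for_polygons} and \eqref{eqn:real-immered-index} are used. A real version of the implicit function theorem then produces a unique real holomorphic curve near $u\#_Tu_0$ for each large $T$, giving an injective continuous map $[T_0,\infty)\to\bigcup_T\cM_{R,J(T)}(\psi\#\psi_0)$. Reparametrizing by $1/T$ gives the candidate homeomorphism $h$ onto $[0,1)$, with $h(u,u_0)=0$.

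Finally I would prove surjectivity onto a neighborhood using the compactness result Proposition~\ref{lem:neck-compactness}. Any sequence of real curves for $J(T_i)$ converging to $(u,u_0)$ must eventually lie in the image of the gluing map, by the uniqueness part of the implicit function theorem. Since every real curve converging to $(u,u_0)$ has only trivial cylinders in the neck at the points $\bm d$, there are no other nearby configurations.

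The main obstacle I expect is the uniform invertibility of the linearized operator restricted to real sections. One has to check that averaging $\frac12(\xi+\tu_*\xi\circ\sigma)$ commutes with the cutoff-and-patch construction of the approximate right inverse, so that the real right inverse remains uniformly bounded. One also has to check that the matching condition is transverse within the real locus, meaning that the real evaluation map $\rho^p$ is a submersion onto $(\Sym^k(\Diamond))^R$ at $u_0$. I would try to deduce both from the unreal estimates by restricting equivariant operators to the $+1$ eigenspace of the involution.
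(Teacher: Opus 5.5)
Your proposal is correct and follows the same route as the paper. The paper's proof simply cites the neck-stretching gluing of \cite[Appendix]{Lipshitz_cylindrical}, carried out for real curves, with the dimension count for the matched problem taken from \Cref{prop:matching-transversality}; your outline (symmetric pregluing along the necks at $\bm d$, the real Fredholm setup, a uniform right inverse on invariant sections, and surjectivity onto a neighborhood via \Cref{lem:neck-compactness}) makes those same steps explicit, and the right inverses must indeed come from the real transversality on invariant sections, as you indicate, since the unreal linearizations at $u$ and $u_0$ need not be surjective.
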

\begin{proof}
    This follows immediately from \cite[Appendix]{Lipshitz_cylindrical} together with the dimension counts from \Cref{prop:matching-transversality}.
\end{proof}

\subsection{Simple trade loops}

In this section, we prove that real Heegaard Floer homology satisfies the trade axiom from \Cref{def:strong-Heegaard} \Cref{item:strong-trade}. Most of the work will consist in describing the equivalence $g$ that appears in the trade pentagon. This equivalence splits into three handleslides, shown in \Cref{fig:simple_trade_diagram0,fig:simple_trade_diagram1}.

\begin{figure}[h]
\def\svgwidth{.8\linewidth}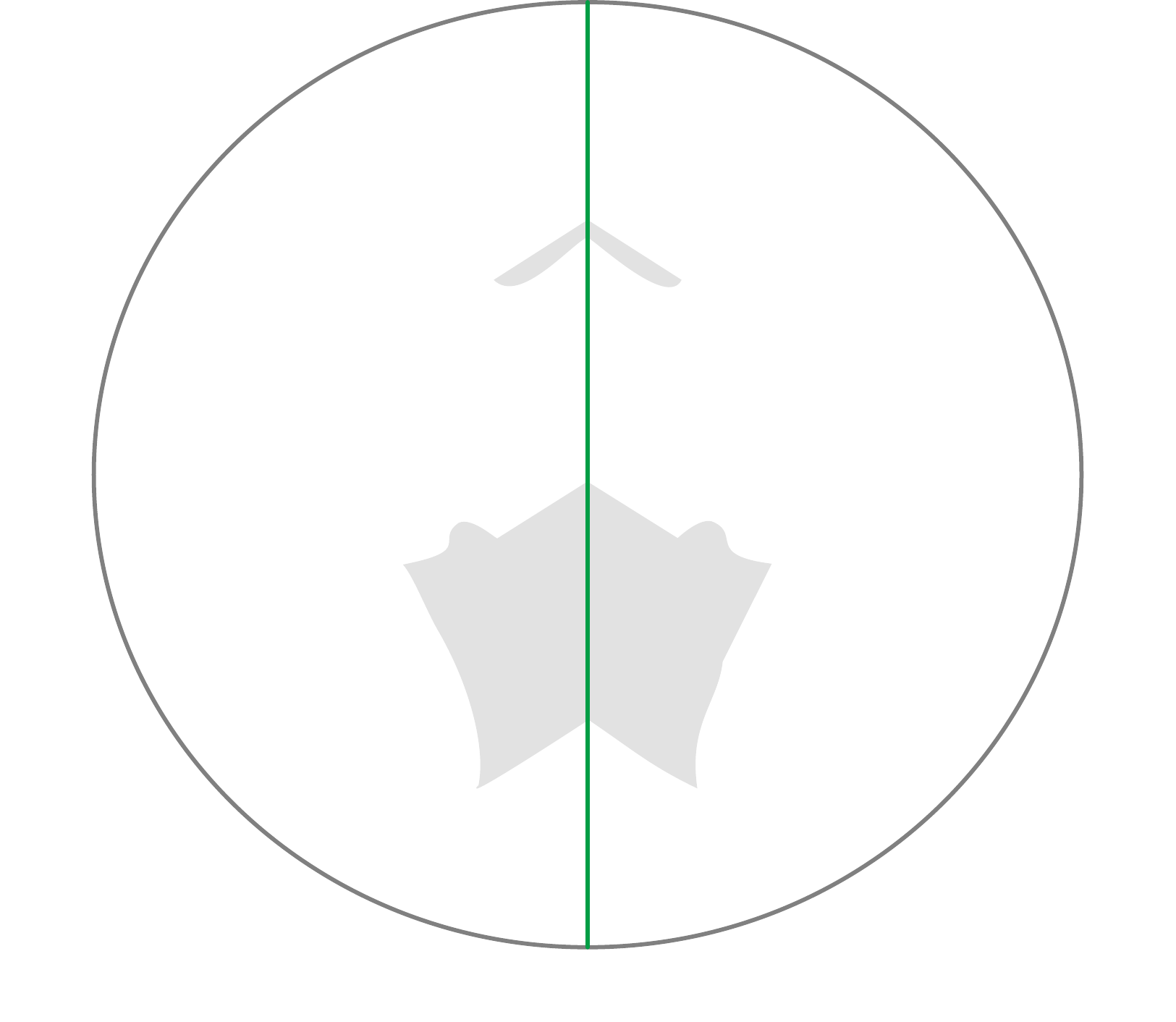
    \caption{A quadruple diagram used for computing the real handleslides in a trade loop.}
\label{fig:simple_trade_diagram0}
\end{figure}

Consider the real quadruple diagram $\cQ_{a,b} = (\Sigma_0, \bm \alpha_b, \bm \alpha_a, \bm \beta_a, \bm \beta_b,\t_0)$ shown in the first frame of \Cref{fig:simple_trade_diagram0}. If $\cQ = (\Sigma, \bm \alpha', \bm \alpha, \bm \beta, \bm \beta',\t)$ is any other real quadruple diagram, we consider the quadruple
\begin{align*}
    \cQ\#\cQ_{a,b} = (\Sigma \#\Sigma_0, \bm \alpha'\cup \bm \alpha_b, \bm \alpha\cup \bm \alpha_a, \bm \beta\cup \bm \beta_a, \bm \beta'\cup \bm \beta_b,\t \# \t_0),
\end{align*}
where the connected sum is taken at points $p\in \Sigma$ and $p_0 \in \S_0$ fixed by $\t$ and $\t_0$, respectively. Let $\bT_{\a_a} \cap \bT_{\b_a} = \bm a$ and $\bT_{\a_b} \cap \bT_{\b_b} = \bm b$  and 
\begin{align*}
    \bT_{\alpha_b}\cap \bT_{\alpha_a} =\{\theta_1^{\ep_1}\theta_2^{\ep_2}\theta_3^{\ep_3}\},
\end{align*}
where $\ep_i \in \{1,-1\}$; let $\Theta = \{\theta_1^+\theta_2^+\theta_3^+\}$ be the top generator. In \Cref{fig:simple_trade_diagram0}, we have labeled all $\theta_i^+$ by $\theta^+$ and all elements of $\bm a$ as $a$ and $\bm b$ as $b$ for simplicity. Since $(\Sigma_0, \bm \alphas_b, \alphas_a)$ represents $\#^3 (S^1 \times S^2)$, we have that $\dim \CFh(\Sigma_0, \bm \alphas_b, \alphas_a) = \dim \HFh(\#^3 (S^1 \times S^2))$, and hence the differential on $\dim \CFh(\Sigma_0, \bm \alphas_b, \alphas_a)$ must vanish.

\begin{lem}
    Consider the quadruple diagram $\cQ_{a,b} =  (\Sigma_0, \bm \alpha_b, \bm \alpha_a, \bm \beta_a, \bm \beta_b, \t_0)$ shown in \Cref{fig:simple_trade_diagram0}. For $\x \in \bT_{\alpha_b} \cap \bT_{\alpha_a}$ and $\psi_0 \in \pi_2^R(\x,\bm a, \tau_0(\x), \bm b)$, 
    \begin{align}\label{eqn:S_0-index-formula}
        \mu_R(\psi_0) = n_{p_0}(\psi_0) + \mu(\x, \Theta)
    \end{align}
    where $\mu(\bm z, \Theta)$ is the (ordinary) relative Maslov grading. 
\end{lem}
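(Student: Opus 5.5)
We prove \eqref{eqn:S_0-index-formula} by showing that both sides change in the same way under the basic operations that connect any two classes, and then checking one model class.

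\textbf{Reduction to invariance under splicing.} Define, for $\psi_0 \in \pi_2^R(\x,\bm a,\tau_0(\x),\bm b)$, the defect
\[
\delta(\psi_0) := \mu_R(\psi_0) - n_{p_0}(\psi_0) - \mu(\x,\Theta).
\]
The goal is to show that $\delta$ vanishes identically. First we show $\delta$ does not depend on the class $\psi_0$ once $\x$ is fixed. Any two real classes with the same corners differ by a real periodic domain of the quadruple $\cQ_{a,b}$. Since $(\Sigma_0,\alphas_b,\alphas_a,\betas_a,\betas_b)$ is a genus-three diagram whose constituent pairs represent connected sums of $S^1\times S^2$ and $S^3$, the periodic domains are generated by three kinds of domains:
\begin{itemize}
\item the full surface $[\Sigma_0]$;
\item thin annuli between an $\alpha_a$ curve and its small translate in $\alphas_b$;
\item the $\tau_0$-images of those annuli, namely thin annuli between the $\beta$ curves.
\end{itemize}
A real domain is a combination in which each annulus appears together with its $\tau_0$-image. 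We treat the generators in turn.
\begin{itemize}
\item Adding a symmetric pair of thin annuli changes neither $n_{p_0}$ nor $\mu_R$. The Maslov index of each annulus is zero, and the real index of the pair is half the total, by \eqref{eqn:index_for_rectangles} and \cite[Proposition 2.3]{guth_manolescu2025real}.
\item Adding $[\Sigma_0]$ increases $n_{p_0}$ by one. For a genus-three surface with one basepoint in the real setting, it increases $\mu_R$ by one, as used in the boundary degeneration count of \Cref{prop:boundary-degenerations}.
\end{itemize}
Hence $\delta$ depends only on $\x$.

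\textbf{Dependence on $\x$.} Next we relate different corners $\x,\x'\in \bT_{\alpha_b}\cap\bT_{\alpha_a}$. Splice onto $\psi_0$ a bigon $\phi\in\pi_2(\x',\x)$ in $(\Sigma_0,\alphas_b,\alphas_a)$ at the $\x$ corner, together with its mirror $\tau_0(\phi)$ at the $\tau_0(\x)$ corner. The resulting class lies in $\pi_2^R(\x',\bm a,\tau_0(\x'),\bm b)$. We then track each term:
\begin{itemize}
\item $\mu_R$ increases by $\mu(\phi)$, since the symmetric pair contributes $2\mu(\phi)$ to the ordinary index and the real index is half of that. I will justify this via the additivity argument already used in the proof of \eqref{eqn:index_for_rectangles}; the $\sigma$-correction terms are unchanged because $\bm a,\bm b$ are fixed.
\item $n_{p_0}$ increases by $n_{p_0}(\phi)$.
\item $\mu(\x',\Theta)=\mu(\x,\Theta)+\mu(\phi)-2n_{p_0}(\phi)$, by the usual relative grading formula.
\end{itemize}
The main obstacle is this bookkeeping between $n_{p_0}$ and the $2n_{p_0}$ term. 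The symmetric pair covers $p_0$ on both sides, but $p_0$ is $\tau_0$-fixed, so the multiplicity at $p_0$ must be counted once in the real class. I expect the honest computation to give consistency precisely because $\mu(\phi)-2n_{p_0}(\phi)$ is the grading difference. If instead a factor of two mismatch appears, I will redo it using the cylindrical real index \eqref{eqn:real-immered-index}.

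\textbf{Model computation.} Finally, evaluate $\delta$ on one explicit class. Take $\x=\Theta$ and the union of the small real rectangles in \Cref{fig:simple_trade_diagram0}: three disjoint $\tau_0$-symmetric rectangles near each $\theta^+$, which avoid $p_0$. This is a product of genus-one pieces, each of the type in \Cref{fig:equivariant_quadrilateral}, with $\mu_R=0$ by the lemma proved there. Therefore $\delta=0$ for this class, which completes the proof.
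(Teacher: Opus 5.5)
Your three-step structure is the same as the paper's proof: a base case at $\x=\Theta$, invariance under adding $[\Sigma_0]$ and real periodic domains, and moving the corner by splicing $\phi$ and $\tau_0(\phi)$. However, two steps are wrong as written.

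\emph{The corner-changing step fails with your bookkeeping.} The spliced real class has domain $D(\psi_0)+D(\phi)+D(\tau_0\phi)$. Because $p_0$ is $\tau_0$-fixed, $n_{p_0}(\tau_0\phi)=n_{\tau_0(p_0)}(\phi)=n_{p_0}(\phi)$. So $n_{p_0}$ increases by $2n_{p_0}(\phi)$, not by $n_{p_0}(\phi)$; there is no sense in which the fixed point is ``counted once''.
\begin{itemize}
\item With your count, $\delta$ changes by $n_{p_0}(\phi)$, which is generally nonzero, so the step fails.
\item With the correct count, $\delta$ changes by $\mu(\phi)-2n_{p_0}(\phi)-\bigl(\mu(\phi)-2n_{p_0}(\phi)\bigr)=0$.
\item Your own first step confirms this. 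Splicing $\phi=[\Sigma_0]$ adds $2[\Sigma_0]$ to the real class, which raises both $\mu_R$ and $n_{p_0}$ by $2$.
\end{itemize}
No cylindrical index computation is needed. The paper sidesteps the issue by choosing $\phi\in\pi_2(\x,\Theta)$ with $n_{p_0}(\phi)=0$. Then $\mu(\phi)=\mu(\x,\Theta)$ and the multiplicity at $p_0$ does not change at all.

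\emph{$\cQ_{a,b}$ is not a connected sum of three genus-one real quadruples.} $\bm\alpha_b$ is obtained from $\bm\alpha_a$ by the real handleslide of $G$ over $F$, which is the first of the three slides making up $g$; it is not a set of small translates. This affects both your periodic-domain step and your base case.
\begin{itemize}
\item The periodic domains of $(\Sigma_0,\bm\alpha_b,\bm\alpha_a)$ are not just thin annuli. What you actually need, and what the paper uses, is weaker: every periodic domain of $(\Sigma_0,\bm\alpha_b,\bm\alpha_a)$ with $n_{p_0}=0$ has Maslov index $0$. This holds because it is a diagram of $\#^3(S^1\times S^2)$.
\item Consequently each symmetric pair $P+\tau_0(P)$ has real index $0$.
\item Every real quadruply periodic domain is such a pair plus a multiple of $[\Sigma_0]$. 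The $\alpha$- and $\beta$-curves span complementary subspaces of $H_1(\Sigma_0)$, so the $\alpha$-part and $\beta$-part of the boundary are separately null-homologous.
\item The model class at $\Theta$ is not three small rectangles. The paper uses the union of a small real rectangle and a real octagon in \Cref{fig:simple_trade_diagram0}, which misses $p_0$ and has real index $0$.
\item You have to check the real index of that specific class directly. The example in \Cref{fig:equivariant_quadrilateral} is a different domain and does not settle it.
\end{itemize}
With these two corrections, your argument becomes the paper's.
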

\begin{proof}
    This is the analogue of \cite[Lemma 9.50]{JTZ_naturality_mapping_class_groups}. First, assume $\x = \Theta$ and that $\psi_0$ is equal to the union of the small rectangle and octagon shown in \Cref{fig:simple_trade_diagram0}. The real index of $\psi_0$ is $0$ and $n_{p_0}(\psi_0) = 0$, so the claim holds in this case. The equation respects adding multiples of $[\Sigma_0]$. Every real doubly periodic domain has real index 0 and every real quadruply periodic domain is a sum of these. Hence, the equation is preserved under addition of real quadruply periodic domains. 

    Finally, if $\x$ is any other point of $\bT_{\alpha_b} \cap \bT_{\alpha_a}$, then we can choose a (non-invariant) domain $\phi \in \pi_2(\x, \Theta)$. Then $\psi_0 - (\phi - \tau(\phi))$ is an element of $\pi_2^R(\Theta,\bm a, \tau(\Theta), \bm b)$, and so satisfies \eqref{eqn:S_0-index-formula} by the argument above. However, $\mu(\phi)$ is exactly $\mu(\x, \Theta)$; this does not depend on $\phi$ since every periodic domain between $\bm \alpha_b$ and $\bm \alpha_a$ has unreal Maslov index 0. 
\end{proof}

\begin{prop}\label{prop:trade-loop-rectangles}
    Suppose that $\cQ = (\Sigma, \bm \alpha', \bm \alpha, \bm \beta, \bm \beta',\t)$ is an admissible real Heegaard quadruple diagram and let $\cQ \# \cQ_{a,b}$ be the connected sum where $\cQ_{a,b}$ is the quadruple shown in the first frame of \Cref{fig:simple_trade_diagram0}. For a sufficiently stretched real-invariant almost complex structure, the holomorphic rectangle maps satisfy
    \begin{align*}
        F_{\cQ \# \cQ_{a,b}}((\z \times \Theta) \otimes (\y\times \bm a) \otimes ( \t(\z) \times \t_0(\Theta))) = F_{\cQ}( \z \otimes \y \otimes \t(\z)) \times \bm b 
    \end{align*}
    for every $\z \in \bT_{\alpha'} \cap \Ta$ and $\y \in (\Ta \cap \Tb)^R$. 
\end{prop}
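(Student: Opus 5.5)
The approach is a neck-stretching argument, modeled on the proof of handleswap invariance in \cite[Proposition 9.51]{JTZ_naturality_mapping_class_groups}. We use the real cylindrical setup above and insert a long real-invariant neck at the fixed points $p \in \Sigma$ and $p_0 \in \Sigma_0$. Fix $\z$, $\y$, and a target generator $\w \times \x'$, where $\x' \in \bT_{\alpha_b}\cap\bT_{\beta_b}$; a priori $\x'$ need not be $\bm b$. A real homology class of rectangles on $\cQ\#\cQ_{a,b}$ splits as $\psi \# \psi_0$ with $n_p(\psi) = n_{p_0}(\psi_0) = k$. The index is additive in the form
\begin{align*}
    \mu_R(\psi\#\psi_0) = \mu_R(\psi) + \mu_R(\psi_0) - k .
\end{align*}
This should follow as in the unreal case from \eqref{eqn:index_for_polygons}, since the real Euler measure and point measures at the fixed point $p$ are halved. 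Combining it with \eqref{eqn:S_0-index-formula} at $\x = \Theta$ gives $\mu_R(\psi\#\psi_0) = \mu_R(\psi)$.

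\textbf{Step 1 (degeneration).} Take a sequence $T_i \to \infty$ of index-0 real rectangles and apply the compactness result \Cref{lem:neck-compactness}. This produces a limit $(U, V, U_0)$. Transversality on the $\Sigma$ side (\Cref{prop:matching-transversality}) forces the $\Sigma$ part to have $\mu_R(\psi) \ge 0$. A generic choice with real index $0$ then rules out bigon breaking and boundary degenerations, because their indices are positive; the real structure also prevents splitting into triangles. So $U$ is a single rigid real rectangle $u$. Its matching data $\rho^p(u) \in (\Sym^k(\Diamond))^R$ is finite, and generically lies off the fat diagonal.

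\textbf{Step 2 (the $\Sigma_0$ side).} Now count real rectangles $u_0$ in class $\psi_0 \in \pi_2^R(\Theta, \bm a, \tau_0(\Theta), \x')$ that satisfy the matching condition $\rho^{p_0}(u_0) = \rho^p(u)$. The matching condition has real codimension $k$ and $\mu_R(\psi_0) = k$, so the expected dimension is $0$. Positivity of domains on $\cQ_{a,b}$, read off from \Cref{fig:simple_trade_diagram0}, should force $\x' = \bm b$. It should also force $\psi_0$ to be the small rectangle-plus-octagon class $\psi_0^{\mathrm{small}}$ plus $k[\Sigma_0]$, or the analogous degenerate class.

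The main obstacle is then to show that, for every generic $\bm d \in (\Sym^k(\Diamond))^R$, the count $\#\cM_R(\psi_0^{\mathrm{small}} + k[\Sigma_0], \bm d)$ is $1$ mod $2$. For this I would mimic \cite[Lemmas 9.52--9.53]{JTZ_naturality_mapping_class_groups}:
\begin{itemize}
    \item For $k=0$, the unique holomorphic representative comes from the argument in the proof of \eqref{eqn:index_for_rectangles}, where exactly one equivariant slit length works.
    \item For general $k$, move $\bm d$ in a generic real path and check that the ends of the resulting $1$-dimensional moduli space come in pairs. Here one uses the real boundary degeneration counts of \Cref{prop:boundary-degenerations}.
    \item Finally, degenerate $\bm d$ toward the $\beta$-boundary, so that the count reduces to the $k=0$ count times a count of $\{1\}$-boundary degenerations, each of which is $1$ mod $2$.
\end{itemize}

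\textbf{Step 3 (gluing).} Conversely, \Cref{prop: gluing} glues each matched pair $(u, u_0)$ to a unique real rectangle on $\cQ\#\cQ_{a,b}$ for $T$ large, using that both moduli spaces are transversely cut out. Summing over classes gives
\begin{align*}
    \#\cM_R(\psi\#\psi_0) = \#\cM_R(\psi)\cdot 1 .
\end{align*}
Summing over $\psi$ then yields the stated identity $F_{\cQ\#\cQ_{a,b}}(\cdots) = F_{\cQ}(\z\otimes\y\otimes\tau(\z))\times \bm b$.
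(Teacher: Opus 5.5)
Your overall architecture (stretch at the fixed points, show the limit is a matched pair $(u,u_0)$ with $\mu_R(u)=0$, glue back) matches the paper's. The gap is in Step~2, which is exactly where the real setting departs from the unreal argument.

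\textbf{The divisor space is disconnected.} The real diagonal $\mathrm{Diag}^k(\Diamond)\cap(\Sym^k(\Diamond))^R$ has codimension \emph{one} in $(\Sym^k(\Diamond))^R$: it is reached when two points of $\bm d$ collide on $\Diamond^R$, or when a conjugate pair lands on $\Diamond^R$. Its complement therefore splits into components $\Sym^{k-2\ell}(\Diamond^R)\times\Sym^{\ell}(\Diamond/R)$ minus the diagonal. A generic real path of divisors must cross this wall, and there \Cref{lem:somehere-injective} and \Cref{prop:matching-transversality} are unavailable. So your second bullet only gives invariance of the count within a component.

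\textbf{Your final degeneration does not fit the real geometry.} Real points of $\bm d$ lie on $\Diamond^R$, which meets $\partial\Diamond$ only at the two ends $v_{\alpha\beta}$ and $v_{\alpha'\beta'}$. They cannot be pushed to the $\beta$-edge, and such points exist whenever $k$ is odd. Moreover, \Cref{prop:boundary-degenerations} counts unconstrained degenerations modulo $\Aut(\mathbb{H})$; it says nothing about curves constrained through $p_0\times d$. The paper instead works inside each component separately: it spreads the points of $\bm d$ apart and sends them into the cylindrical end at $v_{\alpha\beta}$. The curves then break into a real index-$0$ rectangle with $n_{p_0}=0$ together with $k$ matched real index-$1$ strips of class $e_{\bm a}+[\Sigma_0]$ in $(\Sigma_0,\alphas_a,\betas_a)$. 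This gives
\[
\#\cM_{R,\Theta}(\bm d)\equiv\bigl(\#\cM_{R,\bm a}(d)\bigr)^k\cdot\sum_{n_{p_0}(\psi)=0}\#\cM_R(\psi)\pmod 2,
\]
which is the same in every component.

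\textbf{Both factors need arguments your plan lacks.}
\begin{itemize}
\item $\#\cM_{R,\bm a}(d)$ is odd: triply $\{1\}$-stabilize inside a real bigon of a genus-one diagram for $\Sigma_2(U\cup U)$, invoke invariance of $\widehat{\HFR}$, and stretch the neck.
\item The $n_{p_0}=0$ sum is odd because it is the coefficient of the handleslide map between two rank-one complexes, which is a quasi-isomorphism. The paper never counts the small class directly by a slit-length argument.
\end{itemize}

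\textbf{You cannot restrict to one class.} Positivity does not reduce you to $\psi_0^{\mathrm{small}}+k[\Sigma_0]$. Take $P$ an $\alphas_b$--$\alphas_a$ periodic domain (these exist, since $(\Sigma_0,\alphas_b,\alphas_a)$ represents $\#^3(S^1\times S^2)$). Adding $P+\tau_0(P)$ preserves the corners, $n_{p_0}$, and the real index, and the domain stays positive once $k$ is large. So you must sum over all classes, which the formula above does automatically.

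\textbf{Step~1 omits a needed claim.} You must show that the neck curves $V$ are trivial cylinders and that $U$, $U_0$ have no constant components. The paper gets this from an Euler characteristic comparison: otherwise $\ind_R(\psi,S)+\ind_R(\psi_0,S_0)<k-1$, contradicting transversality of the matched moduli spaces.
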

\begin{proof}
    We follow the proof strategy for handleswaps \cite[Proposition 9.31]{JTZ_naturality_mapping_class_groups}, pointing out where changes must be made in the real setting. Fix a class $\psi \in \pi_2^R(\bm z, \y, R(\bm z), \bm u)$ of rectangles in $\cQ$ as well as a class $\psi_0 \in \pi_2^R(\x, \bm a,\tau_0(\x), \bm b)$ of rectangles in $\cQ_{a,b}$ with $n_{p}(\psi) = n_{p_0}(\psi_0)$ and form $\psi \# \psi_0$. The index formula combined with the previous lemma implies that
    \begin{align}\label{eqn:index for sum}
        \mu_R(\psi \# \psi_0) = \mu_R(\psi) + \mu_R(\psi_0) - n_{p_0}(\psi_0) = \mu_R(\psi) + \mu(\x, \Theta).
    \end{align}

    Suppose the class $\psi \# \psi_0$ has holomorphic representatives $u_{T_i}$ for a sequence of neck lengths $T_i \ra \infty$. There is a subsequence converging to a broken rectangle $U$ on $\cQ$ representing $\psi$, a broken rectangle $U_0$ on $\cQ_0$ representing $\psi_0$ and a collection of curves $V$ in the connected sum regions $S^1 \times \R \times \Diamond$ or $S^1 \times \R \times [0,1]\times \R$. As we are interested in real index $0$ rectangles, Equation \eqref{eqn:index for sum} implies that 
    \begin{align*}
        0 = \mu_R(\psi\# \psi_0) = \mu_R(\psi) + \mu(\Theta, \Theta) = \mu_R(\psi) + \mu(\Theta, \Theta),
    \end{align*}
    and therefore that $\mu_R([U]) = 0$. 

    Following \cite[Proof of Proposition 9.31]{JTZ_naturality_mapping_class_groups}, we break the remainder of the proof into four claims. The analogous statements are listed in \cite[Equation 9.55]{JTZ_naturality_mapping_class_groups}.

    \begin{enumerate}
        \item[\textbf{Claim 1:}] If $U$ has no constant components, then it consists of a single holomorphic rectangle $u$ of real index 0 satisfying (M1)-(M8).
        \begin{proof}
            The broken curve $U$ has no components mapping into $\Sigma \times [0,1] \times \R$; by transversality, curves into $\Sigma\times [0,1] \times \R$ have index at least 1, which would force components mapping into $\Sigma\times \Diamond$ to have negative index. Hence, $U$ consists of a single real index 0 rectangle.  

            We now claim that $U$ satisfies (M1)-(M6). (M1)-(M4) and (M6) are immediate. The fact that property (M5) is preserved follows just as in \cite[Proposition 9.31, Proof of Claim (1)]{JTZ_naturality_mapping_class_groups}.  Roughly, (M5) holds when the source curve is degenerated along a simple closed curve (although this is impossible by index considerations) and strip breaking is ruled out by the previous paragraph. Hence, it suffices to consider the case that the source curve is pinched along (symmetric pairs) of arcs connecting boundary components of the same type, resulting in boundary degenerations, which increase the real index by at least 2 (compare to \cite[Lemma 4.9]{guth_manolescu2025real}). Hence, by transversality, boundary degenerations are ruled out. Properties (M7) and (M8) also hold; nodal source curves and boundary degenerations have already been ruled out, and we have assumed $U$ contains no constant curves. 
        \end{proof}

    \item[\textbf{Claim 2:}] If $U_0$ has no constant components, it consists of a single rectangle $u_0$ of real Maslov index $k:= n_{p_0}$ satisfying (M1)-(M8).
    \begin{proof}
        As in \cite[Proposition 9.31, Proof of Claim (2)]{JTZ_naturality_mapping_class_groups}, we may assume that $p$ is not a branch point of $\pi_\Diamond\circ v$ for any index 0 rectangle $v$ in $(\Sigma, \alphas', \alphas, \betas,\betas',\t)$ and that $\rho^p(v)$ is not an element of $\mathrm{Diag}^k(\Diamond)$. This forces curves in $V$ to map to $S^1 \times \R \times \Diamond$. 

        By Equation \eqref{eqn:index for sum}, we have that $\mu_R(\psi_0) = k$. A priori, the curve $U_0$ might consist of broken rectangles with nodal curves, boundary degenerations, and holomorphic strips. The asymptotics of $u$ and $V$ agree, so there must be a curve $u_0$ in $U_0$ which is matched with $u$, i.e.
        \begin{align*}
            \rho^p(u) = \rho^{p_0}(u_0).
        \end{align*}
        Since $\rho^p(u)$ is in $\Sym^k(\Diamond)^R\smallsetminus\mathrm{Diag}^k(\Diamond)$, \Cref{lem:somehere-injective} can be applied to guarantee that curve $u_0$ satisfying the matching condition is somewhere injective on each component, and hence Proposition \Cref{prop:matching-transversality} can be applied to show that the moduli space 
        \begin{align*}
            \cM_R(\psi_0, \bm d) = \{u_0 \in \cM_R(\psi_0): \rho^{p_0}(u_0) = \bm d\}
        \end{align*}
        is a smooth manifold of dimension 
        \begin{align*}
            \mu_R(\psi_0) - \mathrm{codim}(\{\bm d\})
        \end{align*}
        for $\bm d \in \{\rho^{p_0}(u): u \text{ holomorphic on } \cQ, \mu_R(u) = 0 \}$. The collection $\{\bm d\}$ is finite, and therefore, we have that 
        \begin{align*}
            \dim \cM_R(\psi_0, \bm d) = n_{p_0}(\psi_0) - n_{p}(u) = 0.
        \end{align*}
    Repeating the arguments from Claim 1, we see that $U_0$ must consist of a single rectangle in $\cM_R(\psi_0, \bm d)$ mapping into $\Sigma_0 \times \Diamond$ satisfying (M1)-(M8).
    \end{proof}
    \item[\textbf{Claim 3:}] $V$ consists entirely of trivial cylinders and there are no constant components of $U$ or $U_0$.
    \begin{proof}
        We follow the arguments in \cite[Proposition 9.31, Proof of Claim (3)]{JTZ_naturality_mapping_class_groups}. By passing to a subsequence, we can assume each $u_{T_i}$ has the same source curve, $\hat{S}$. This surface can be reconstructed from the curves in  $U$, $V$, and $U_0$ and a simple Euler characteristic argument shows that if $V$ did not consist entirely of trivial cylinders, then one would have that
        \begin{align*}
            \chi(\hat{S}) < \chi(S) + \chi(S_0) - 2k.
        \end{align*}
        According to \cite[Section 10.2]{Lipshitz_cylindrical}, we have
        \begin{align*}
            \ind(\psi\#\psi_0,\hat{S}) = \frac{1}{2}(d+2) - \chi(\hat{S}) + 2e(\psi\#\psi_0).
        \end{align*}
        This in turn implies
        \begin{align*}
            \ind(\psi, S) + \ind(\psi_0, S_0) < 2k - 2 + \ind(\psi \# \psi_0, \hat{S}) = 2k-2.
        \end{align*}
        The relation between the real and unreal indices then implies that
        \begin{align*}
            \ind_R(\psi, S) + \ind_R(\psi_0, S_0) < \frac{1}{2}(2k - 2 + \ind_R(\psi \# \psi_0, \hat{S})) = k-1.
        \end{align*}
        However, since we assumed that $\cM_R(\psi)$ and $\cM_R(\psi_0, \bm d)$ are smoothly cut out and are of the expected dimension, this is impossible, as one of $\cM_R(\psi)$ or $\cM_R(\psi_0, \bm d)$ would have to have negative dimension. The same argument also rules out constant components of $U$ and $U_0$.
    \end{proof}
    \end{enumerate}
    
    From these claims, it follows that the sequence $u_{t_i}$ limits to a pair $(u, u_0)$ with $\mu_R(u) = 0$, $\mu_R(\psi_0) = k$, and $\rho^p(u) = \rho^{p_0}(u_0)$. As \Cref{prop: gluing} describes a neighborhood of $(u, u_0)$ in the compactified moduli space
    \begin{align*}
        \overline{\bigcup_T \cM_{R,J(T)}(\psi \# \psi_0)},
    \end{align*}
    it follows that 
    \begin{align*}
        \# \cM_{R,J_T}(\psi\#\psi_0) = \# \{(u,u_0)\in \cM_R(\psi)\times \cM_R(\psi_0): \rho^p(u) = \rho^{p_0}(u_0)\}.
    \end{align*}
    For $\x \in \bT_{\alpha_b} \cap \bT_{\alpha_a}$ and $\bm d \in (\Sym^k(\Diamond))^R$ a generic divisor, define
    \begin{align*}
        \cM_{R, \x}(\bm d) = \coprod_{\substack{\psi_0 \in \pi_2^R(\x, \bm a, \t_0( \x), \bm b)\\ n_{p_0}(\psi_0) = k}} \cM_R(\psi_0,\bm d).
    \end{align*}
    It therefore suffices to prove that 
    \begin{align*}
        \cM_{R, \Theta}(\rho^{p}(u)) \equiv 1 \mod 2.
    \end{align*}
    In fact, we have the following.
    \begin{enumerate}
        \item[\textbf{Claim 4:}] Let $\bm d \in \Sym^k(\Diamond)^R$ be in the complement of the diagonal. Then, $\cM_{R, \Theta}(\bm d)$ is a smoothly cut out zero-dimensional manifold for a generic real almost complex structure. For such a $J$, we have 
    \begin{align*}
        \cM_{R, \Theta}(\bm d) \equiv 1 \mod 2.
    \end{align*}
    \begin{proof}
        For a fixed $\bm d \in \Sym^k(\Diamond)^R$ avoiding the diagonal and a neighborhood of $\partial \Diamond\smallsetminus \{v_{\alpha', \alpha},v_{\alpha, \beta},v_{\beta, \beta'}\}$, transversality for $\cM_{R, \x}(\bm d)$ can be achieved for a generic real almost complex structure satisfying (J1)-(J5), by arguments similar to those above.

        Further, the count of points in the space $\cM_{R, \x}(\bm d)$ is independent of $\bm d$ for a generic $\bm d$. This follows as in \cite[Lemma 9.58]{JTZ_naturality_mapping_class_groups}. In the standard case, one chooses a path of divisors avoiding the diagonal, and shows that the counts of points in the moduli spaces at the two ends agree. In the unreal case, such paths generically miss the diagonal, which is codimension 2. In the present situation, however, $\mathrm{Diag}^k(\Diamond)\cap \Sym^k(\Diamond)^R$ is codimension-1: even worse, $\Sym^k(\Diamond)^R \smallsetminus \mathrm{Diag}^k(\Diamond)$ is disconnected. $\Sym^k(\Diamond)^R$ decomposes into connected strata as
        \begin{align*}
            \Sym^k(\Diamond)^R = \bigcup_{\ell=1}^k \Sym^{k-2\ell}(\Diamond^R) \times \Sym^{\ell}(\Diamond/R).
        \end{align*}
        Each $(\Sym^{k-2\ell}(\Diamond^R) \times \Sym^{\ell}(\Diamond/R))\smallsetminus \mathrm{Diag}^k(\Diamond)$ stratum is connected. Indeed, 
        \[\Sym^q(\Diamond^R) \smallsetminus \mathrm{Diag}^q(\Diamond^R) = \Sym^q(\{1/2\}\times \R)\smallsetminus \mathrm{Diag}^q(\{1/2\}\times \R)
        \] is connected as any \emph{unordered} tuple of points can be connected by a path to any other avoiding the diagonal. Similarly for $$\Sym^q([0,1/2)\times \R) \smallsetminus \mathrm{Diag}^q([0,1/2)\times \R).$$ However, the strata $\Sym^{k-2\ell}(\Diamond^R) \times \Sym^{\ell}(\Diamond/R)$ are glued along facets contained in $\mathrm{Diag}^k(\Diamond)$. 
        
        Therefore, we begin by showing that within a given connected component $(\Sym^{k-2\ell}(\Diamond^R) \times \Sym^{\ell}(\Diamond/R)) \smallsetminus \mathrm{Diag}^k(\Diamond)$, the space $\cM_{R, \x}(\bm d)$ is independent of the choice of divisor. To do so, we count the ends of the 1-dimensional moduli space 
        \begin{align*}
            \cM_{R, \x}(\bm D) = \bigcup_{t \in [0,1]} \cM_{R, \x}(\bm D(t)).
        \end{align*}
        The space $\cM_{R, \x}(\bm D)$ has ends corresponding to $\cM_{R, \x}(\bm D(0))$ and $\cM_{R, \x}(\bm D(1))$ as well as ends corresponding to broken holomorphic rectangles. We claim that ends of the second kind necessarily correspond to index $k$ rectangles breaking into index $k-1$ rectangles matching $\bm D(t)$ and an index 1 strip with multiplicity zero at $p_0$ or into an index $k-1$ rectangle and a constant copy of $\Sigma_0$ along $\Diamond^R$. 
    
        Let $u_i: S_0 \ra \Sigma \times \Diamond$ be a sequence in $\cM_{R, \x}(\bm D)$ representing $\psi_0$. Suppose that $u_i$ degenerates along a $\Z/2$-orbit of disjoint, closed curves in $S_0$. Then, the limiting source has at least two interior nodal singularities which map into $\Sigma_0 \times \Diamond$. Let $S_0'$ be the collection of components of the broken curve and let $\psi_0'$ be its homology class. Since 
        \begin{align*}
            \ind_R(\psi_0', S_0') = \mu_R(\psi_0') - \mathrm{sing}(u),
        \end{align*}
        we have that $\ind_R(\psi_0', S_0') \le \mu_R(\psi_0') - 2 = k - 2$, as $S_0'$ has at least two nodal singularities. We can compute: 
        \begin{align*}
            \dim \cM_R(\psi_0', S_0', \bm D) = \ind_R(\psi_0', S_0') - \mathrm{codim}(\bm D) \le (k-2) - (k-1) = -1.
        \end{align*}
        Hence, by transversality, no such degenerations can occur. In the case that the degeneration occurs along an invariant closed curve (appearing in a trivial orbit), the limiting curve contains an interior nodal singularity which is fixed by the involution. In this case, the argument above fails, and we cannot rule out such degenerations. However, just as in \cite[Lemma 4.12]{guth_manolescu2025real}, such degenerations necessarily come in pairs. Hence, these ends do not contribute to the total $\Z/2$-count of points $\partial \cM_{R, \x}(\bm D)$. Compare with \Cref{prop:boundary-degenerations}.

        Next, we consider curves with boundary nodes. Each such node contributes only $\frac{1}{2}$ to $\mathrm{sing}(u)$, and therefore the index argument above fails. Instead, as in \cite[Lemma 9.58]{JTZ_naturality_mapping_class_groups}, we rule these out using the matching condition. Were such a curve to form, there would be a limiting curve mapping into $\Sigma_0 \times \{x\}$ with $x \in \partial \Diamond$ with nonzero multiplicity on all of $\Sigma_0$.  This is impossible. Since the limiting curve has nonzero multiplicity on all of $\Sigma_0$, $x$ must appear as a point of $\rho^{p_0}(u_0)$, but $\bm D$ misses the boundary of $\Diamond$. Hence, $\rho^{p_0}(u_0)$ cannot coincide with any $\bm d(t)$.

        Finally, we consider the case that $u_i$ converges to a broken rectangle and a nontrivial holomorphic strip. The limiting rectangle $u$ represents a class $\psi$ in $\pi_2^R(\x, \bm a, \t_0(\x), \bm b)$, for $\x \in \bT_{\alpha_b} \cap \bT_{\alpha_a}$. If $\x = \Theta$, then by \eqref{eqn:index for sum}, we have that  $\mu_R(\psi) = k = \mu_R(\psi_0)$, and the remaining curves must have index zero, miss $p_0$, and therefore be constant. By transversality, such $u$ is in the interior of $\cM_{R,\x}(\bm D)$, and therefore does not contribute. If $x = \theta^+_1\theta^+_2\theta^-_3$, $\theta^+_1\theta^-_2\theta^+_3$, or $\theta^-_1\theta^+_2\theta^+_3$ then $u$ has index $k-1$ by \eqref{eqn:index for sum}, so the remaining curves could consist of a pair of holomorphic strips. The count of such ends is trivial, since $\CFh(\Sigma_0, \bm\alpha_b, \bm\alpha_a,p_0)$ has trivial differential. In the remaining cases, if $\x = \theta^+_1\theta^-_2\theta^-_3$, $\theta^-_1\theta^+_2\theta^-_3$, $\theta^-_1\theta^-_2\theta^+_3$, or $\theta^-_1\theta^-_2\theta^-_3$ and $\psi \in \pi_2^R(\x, \bm a, \t_0(\x),\bm b)$ with $n_{p_0}(\psi) = k$, then $\cM_R(\psi, \bm D)$ is empty. Indeed, applying \eqref{eqn:index for sum} once more, we have that $\mu_R(\psi) \in \{ k-2, k-3\}$. By \Cref{prop:matching-transversality}, $\cM_R(\psi,S^, \bm D)$ is a smooth manifold of dimension 
        \begin{align*}
            \ind_R(\psi, S) - (k-1) \le \mu_R(\psi) - (k-1) < 0.
        \end{align*}
        Hence, for a generic symmetric $J$, there are no holomorphic curves representing $\psi$ satisfying the matching condition. This completes the proof that $\cM_{R, (\Theta, \bm a,\tau_0(\Theta),\bm b )}(\bm d_0) = \cM_{R, (\Theta, \bm a,\tau_0(\Theta),\bm b)}(\bm d_1) \mod 2.$

        From here, the proof is the same as in \cite[Lemma 9.58]{JTZ_naturality_mapping_class_groups}. A priori, $\cM_{R, x}(\bm d)$ may depend on the connected component of $\Sym^k(\Diamond) \smallsetminus \mathrm{Diag}^k(\Diamond)$, but it is independent of the divisor within the component. Therefore, suppose that $\cD: [0,\infty) \ra \Sym^k(\Diamond)^R$ is a smooth path in $\Sym^k(\Diamond) \smallsetminus \mathrm{Diag}^k(\Diamond)$ starting at a divisor $\bm d$ where points of $\bm D(T)$ are spaced at least distance $T$ apart and approach $v_{\alpha,\beta}$ as $T \ra \infty$. We consider the space 
        \begin{align*}
            \cM_{R, \Theta}(\bm D) = \bigcup_{T \in [1,\infty)} \cM_{R, \Theta}(\bm D(T)),
        \end{align*}
        which has ends corresponding to $\cM_{R, \Theta}(\bm d)$, ends corresponding to degenerations of holomorphic curves, and ends corresponding to curves which appear as $T \ra \infty$. The argument above applies here to show that only strips can break off at finite times $T$, and the mod two count of these contributions is zero. 

        In the limit, the curve breaks into an index-zero rectangle $\mathfrak r$ and $k$ curves of real Maslov index $1$ satisfying a matching condition. Points of $\bm D(T)$ separate as they approach $v_{\alpha, \beta}$, so in the limit there must be $k$ such curves on $(\Sigma_0, \bm \alpha_0, \bm \beta_0)$, all of the form $e_a +  [\Sigma_0]$, i.e. a constant disk at $a$ and a copy of $\Sigma_0$, each contributing $1$ to the index. Hence, the remaining curve $\frak r$ which maps to $\Sigma_0 \times \Diamond$ must be index 0 and for the matching condition to be satisfied, we must have $n_{p_0}(\frak r) = 0$. As above, one argues the limiting curves satisfy (M1)-(M8).

        Hence, by \cite{Lipshitz_cylindrical}, we have that 
        \begin{align*}
            \# \cM_{R,\Theta}(\bm d) \equiv \left(\# \cM_{R, \bm a}(d) \right)^k \cdot \sum_{\substack{\psi \in \pi_2^R(\Theta, \bm a,\tau_0(\Theta),\bm b)\\ n_{p_0}(\psi) = 0 }} \# \cM_R(\psi),
        \end{align*}
        where $d \in \{1/2\}\times \R$ and $\cM_{R, \bm a}(d)$ is the moduli space of real index 1 curves $u$ on $(\Sigma_0, \alphas_0, \betas_0)$ with $\rho^{p_0} = d$ for a generic symmetric almost complex structure satisfying (J1)-(J4) and (J5'). Furthermore, the count above depended in no way on the component of $\Sym^k(\Diamond) \smallsetminus \mathrm{Diag}^k(\Diamond)$.
    \end{proof}
    \end{enumerate}

\begin{figure}[h]
\def\svgwidth{.8\linewidth}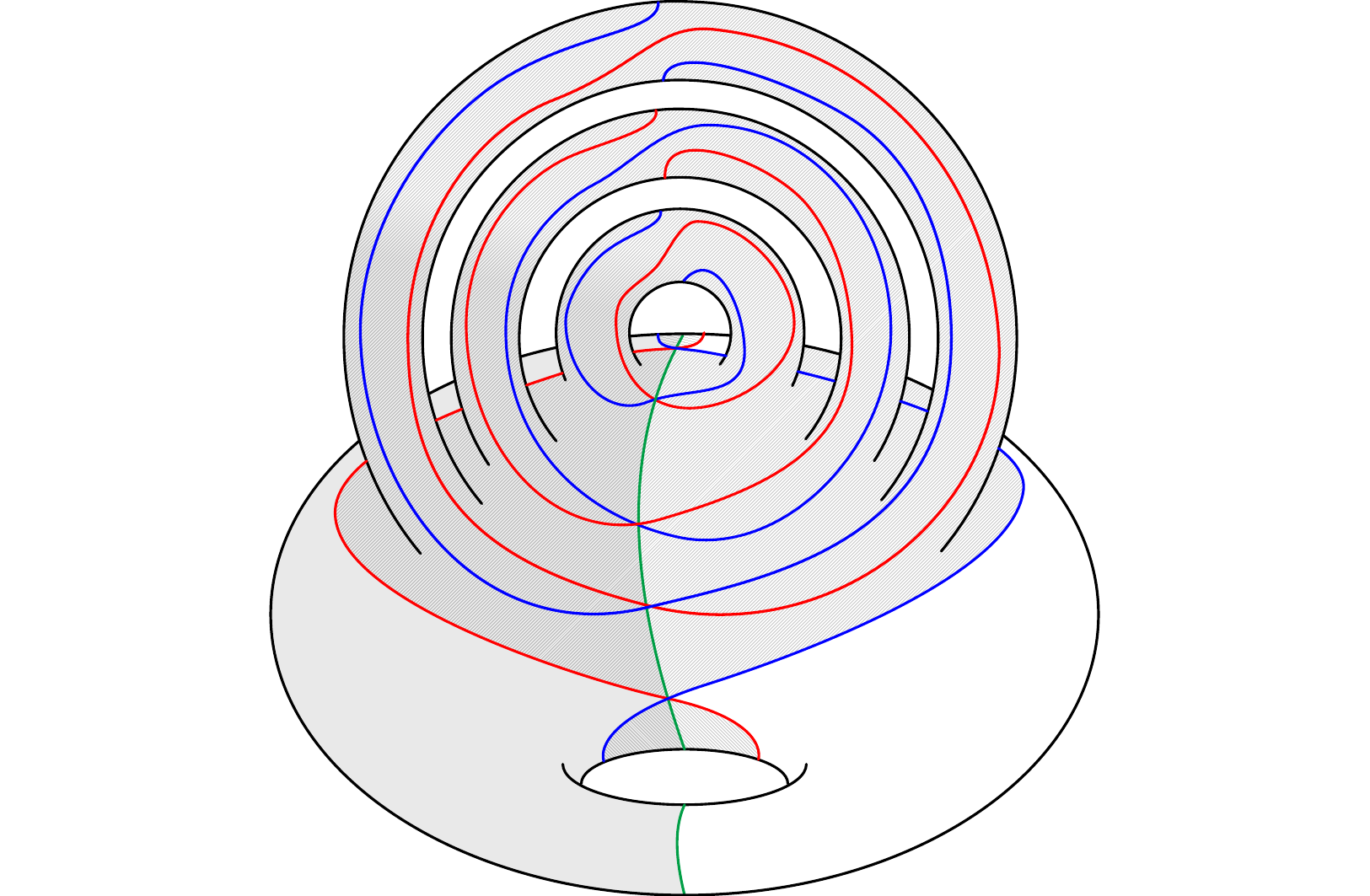
    \caption{A triple stabilization of a real Heegaard diagram for $(\Sigma_2(U \cup U), \tau)$. The two domains contributing to the differential are shaded.
}
\label{fig:triple_stabilization}
\end{figure}
        
        To count the points of $\cM_{R,\bm a}(d)$, we take a genus 1 diagram for the branched double cover of the two-component unlink, equipped with the covering action. There are two real-invariant bigons which contribute to the differential. By performing three $\{1\}$-stabilizations inside one of the bigons, we obtain a class which must admit an odd number of holomorphic representatives, by the invariance of $\widehat\HFR$. See \Cref{fig:triple_stabilization}. By stretching the neck, this count must agree with the count of strips in $\cM_R(e_a + [\Sigma_0])$ which match the connected sum point. Hence, $\#\cM_{R,\bm a}(d) \equiv 1 \mod 2$. Furthermore, it must be that 
    \begin{align*}
        \sum_{\substack{\psi \in \pi_2^R(\Theta, \bm a,\tau_0(\Theta),\bm b)\\ n_{p_0}(\psi) = 0 }} \# \cM_R(\psi) \equiv 1 \mod 2,
    \end{align*}
        since this map is a quasi-isomorphism of 1-dimensional chain complexes. This concludes the proof.
\end{proof}

We now turn to the handleslides in \Cref{fig:hd_d2_trade_loop}; let $\cQ_{b,c}$ and $\cQ_{c,d}$ be the corresponding real quadruple diagrams shown in \Cref{fig:simple_trade_diagram1}. For both diagrams, $\CFh(\alphas_c, \alphas_b)$ has a unique generator in the maximal grading, and we will write $\Theta$ for both. In $\cQ_{b,c}$, we write $\bT_{\alpha_b}\cap \bT_{\beta_b} = \bm b$ and $\bT_{\alpha_c}\cap \bT_{\beta_c} = \bm c$. For $\cQ_{c,d}$, we write $\bT_{\alpha_c}\cap \bT_{\beta_c} = \bm c$ and $\bT_{\alpha_d}\cap \bT_{\beta_d} = \bm d$. For these rectangle counting maps, we have the analogue of \Cref{prop:trade-loop-rectangles}.

\begin{prop}\label{prop:remaining-trade-loops-counts}
    Suppose that $\cQ = (\Sigma, \bm \alpha', \bm \alpha, \bm \beta, \bm \beta',\t)$ is an admissible real Heegaard quadruple diagram and let $\cQ \# \cQ_*$ be the connected sum where $\cQ_*$  is one of the quadruple diagrams shown in \Cref{fig:simple_trade_diagram1} for $*\in\{(b,c),(c,d)\}$. For a sufficiently stretched real-invariant almost complex structure, the holomorphic rectangle maps satisfy
    \begin{align*}
        F_{\cQ \# \cQ_{b,c}}((\z \times \Theta) \otimes (\y\times \bm b) \otimes ( \t(\z) \times \t_1(\Theta))) &\sim F_{\cQ}( \z \otimes \y \otimes \t(\z)) \times \bm c \\
        F_{\cQ \# \cQ_{c,d}}((\z \times \Theta) \otimes (\y\times \bm c) \otimes ( \t(\z) \times \t_2(\Theta))) &\sim F_{\cQ}( \z \otimes \y \otimes \t(\z)) \times \bm d \\
    \end{align*}
    for every $\z \in \bT_{\alpha'} \cap \Ta$ and $\y \in (\Ta \cap \Tb)^R$. 
\end{prop}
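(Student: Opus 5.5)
The plan is to run the neck-stretching argument of Proposition~\ref{prop:trade-loop-rectangles} again, once for each of $\cQ_{b,c}$ and $\cQ_{c,d}$, and to redo only the pieces that depend on the particular quadruple glued in.

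\textbf{Index formula.} First I will prove the analogue of Equation~\eqref{eqn:S_0-index-formula} for $\cQ_*$. For $\x\in\bT_{\alpha_c}\cap\bT_{\alpha_b}$ (respectively $\bT_{\alpha_d}\cap\bT_{\alpha_c}$) and $\psi_0\in\pi_2^R(\x,\bm b,\t_1(\x),\bm c)$, I want
\[
\mu_R(\psi_0)=n_{p_0}(\psi_0)+\mu(\x,\Theta).
\]
The argument follows the earlier lemma.
\begin{enumerate}
\item Exhibit one explicit real rectangle from $\Theta$ with real index $0$ and $n_{p_0}=0$; it is a small rectangle plus a symmetric polygon, read off from Figure~\ref{fig:simple_trade_diagram1}.
\item Check that the formula is unchanged by adding $[\Sigma_0]$ and by adding real quadruply periodic domains, since these are sums of real doubly periodic domains of real index $0$.
\item Pass from $\Theta$ to a general $\x$ by subtracting $\phi-\tau(\phi)$ for some $\phi\in\pi_2(\x,\Theta)$.
\end{enumerate}
Combined with additivity, this gives
\[
\mu_R(\psi\#\psi_0)=\mu_R(\psi)+\mu(\x,\Theta).
\]
Since $\Theta$ is the unique top-graded generator, this forces $\mu_R(\psi)=0$ for any limit involving $\Theta$.

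\textbf{Degeneration analysis.} Next I will repeat Claims~1--3 verbatim. They use only Proposition~\ref{lem:neck-compactness}, Lemma~\ref{lem:somehere-injective}, Proposition~\ref{prop:matching-transversality}, the Euler characteristic argument and Proposition~\ref{prop: gluing}, none of which depend on the specific diagram. The outcome is that
\[
\#\cM_{R,J(T)}(\psi\#\psi_0)=\#\{(u,u_0):\rho^p(u)=\rho^{p_0}(u_0)\},
\]
so the proposition reduces to showing $\#\cM_{R,\Theta}(\bm d)\equiv 1 \pmod 2$ for generic real divisors $\bm d$. The statement is only up to homotopy (``$\sim$''). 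This matters because the complex $\CFh(\alphas_c,\alphas_b)$, unlike the $\#^3(S^1\times S^2)$ case, may have nonvanishing differential or extra generators. So I will only claim the identity modulo terms that are boundaries, and I will absorb any other top-graded cycle representative by a homotopy. That is legitimate because $\Theta$ is well defined up to boundary.

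\textbf{Main obstacle: Claim~4.} The difficult step is Claim~4 for these diagrams. Within each connected stratum
\[
\Sym^{k-2\ell}(\Diamond^R)\times\Sym^\ell(\Diamond/R)\smallsetminus\mathrm{Diag}^k(\Diamond),
\]
I will show divisor-independence by counting the ends of $\cM_{R,\x}(\bm D)$, dealing with each type of end as follows.
\begin{itemize}
\item Pairs of interior nodes are excluded by the index bound.
\item Invariant interior nodes come in pairs and cancel.
\item Boundary nodes are excluded by the matching condition.
\item Strip breaking at generators $\x\neq\Theta$ either cancels, because such strips are counted by $\partial\x$ in $\CFh(\alphas_c,\alphas_b)$ and therefore cancel mod $2$ against the differential, or is excluded for dimension reasons.
\end{itemize}
If the differential does not vanish, these strip ends will instead contribute terms of the form $F(\partial(\cdot))$. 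Those terms are exactly what the homotopy ``$\sim$'' absorbs.

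After that I push $\bm D(T)$ toward $v_{\alpha\beta}$. The curve splits into $k$ copies of $e_{b}+[\Sigma_0]$ and one index-$0$ rectangle $\frak r$ with $n_{p_0}(\frak r)=0$. The factor $\#\cM_{R,\bm b}(d)\equiv 1$ is already known from the triple-stabilization argument of Figure~\ref{fig:triple_stabilization}. The count of $\frak r$ is the small-triangle count in the (closed) quadruple for $S^3$, and I will get its value $1$ from the fact that this map must be a quasi-isomorphism of one-dimensional complexes.
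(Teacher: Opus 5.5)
This is the paper's approach: its proof of this proposition just says that the argument for Proposition~\ref{prop:trade-loop-rectangles} (the index formula, Claims~1--4, and pushing the divisor toward $v_{\alpha\beta}$) carries over unchanged to $\cQ_{b,c}$ and $\cQ_{c,d}$, which is exactly what you propose. Two minor corrections: (i) the strip-breaking ends at the $\Theta$ corner in Claim~4 vanish because $\widehat{\partial}\Theta=0$ (the local $\alpha$--$\alpha'$ diagram still represents $\#^3(S^1\times S^2)$, and $\Theta$ is the top-graded cycle), not because a chain homotopy ``absorbs'' them, since no homotopy of maps can compensate for a divisor-dependence of the number $\#\cM_{R,\Theta}(\bm d)$; (ii) the factor $\#\cM_{R,\bm b}(d)$ (resp.\ $\#\cM_{R,\bm c}(d)$) is a count on $(\Sigma_0,\alphas_b,\betas_b)$ (resp.\ $(\Sigma_0,\alphas_c,\betas_c)$), so the argument of Figure~\ref{fig:triple_stabilization} must be rerun with that diagram inserted in the bigon in place of the triple stabilization, rather than quoted as already known.
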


\begin{proof}
    The proofs are identical to those of \Cref{prop:trade-loop-rectangles}.
\end{proof}

\begin{figure}[h]
\def\svgwidth{.8\linewidth}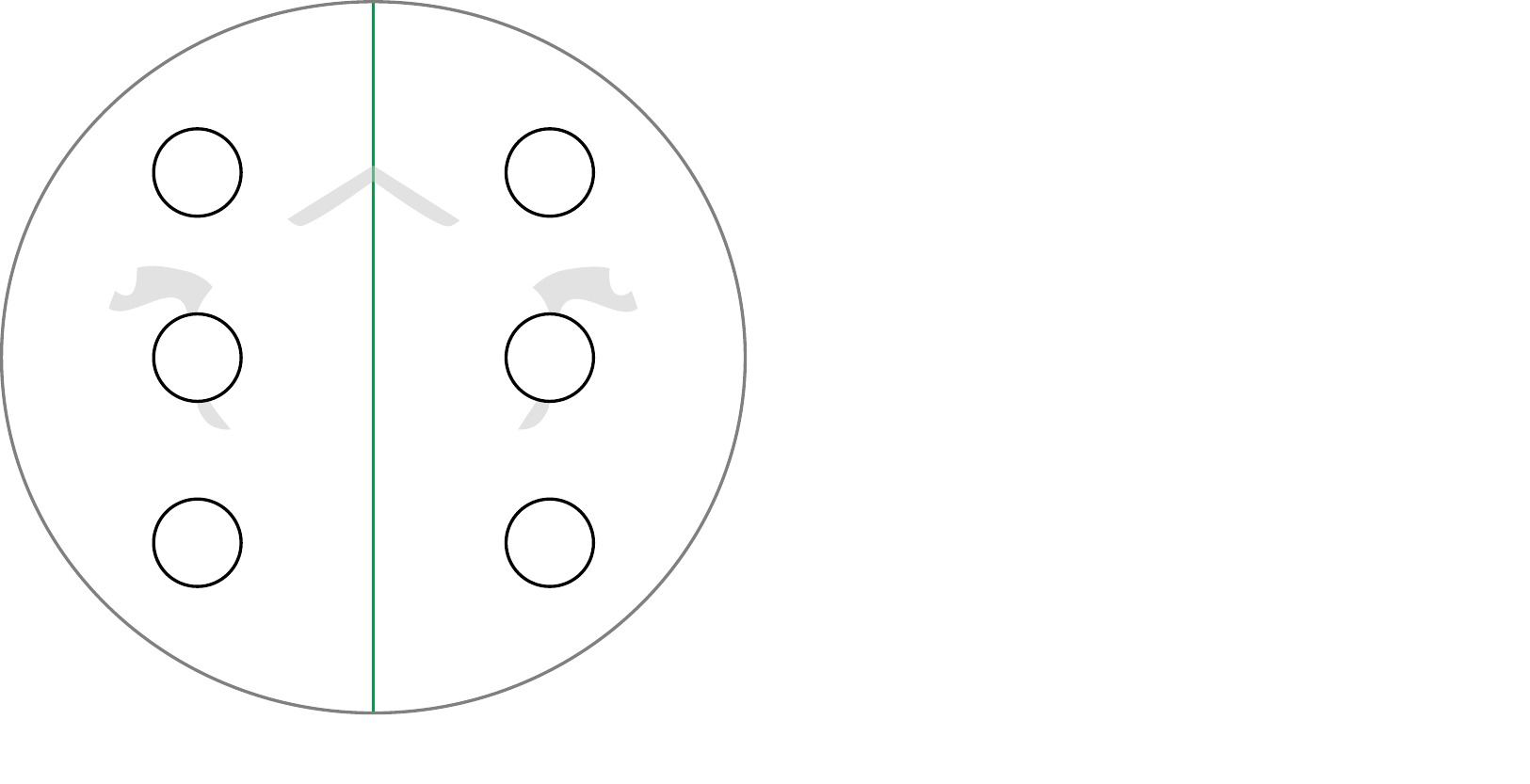
    \caption{The quadruple diagrams used for computing the real handleslides in a trade loop.}
\label{fig:simple_trade_diagram1}
\end{figure}

Let 
    \begin{align*}
    \begin{tikzcd}[ampersand replacement = \&,column sep = small]
        \&\& H_1 \ar[rrd,"e"] \&\& \\
        H_5 \ar[urr,"i"] \&\&\&\& H_2  \ar[dl,"f"] \\
       \& H_4 \ar[ul,"h"] \& \& H_3  \ar[ll,"g"] \&\\
    \end{tikzcd}
    \end{align*}
be a simple trade loop as in \Cref{def:simple-trade}. Recall that edges $e$ and $f$ are $\{1\}$-stabilizations, $g$ is a real equivalence, $h$ is an equivariant diffeomorphism, and $i$ is a $\Z/2$-destabilization. The real equivalence is the composition of the three handleslides appearing in \Cref{fig:simple_trade_diagram0,fig:simple_trade_diagram1}. Therefore, the map $\Phi_g$ can be computed as the composition of the three handleslide maps described in \Cref{prop:trade-loop-rectangles,prop:remaining-trade-loops-counts}.

To prove invariance under simple trade loops, we must prove the following.
\begin{thm}\label{thm:trade-loop-is-id}
    The induced maps $\Phi_{\ast}$ for $\ast \in \{e,f,g,h,i\}$ satisfy
    \begin{align*}
        \Phi_{i}\circ\Phi_{h}\circ\Phi_{g}\circ\Phi_{f}\circ\Phi_{e} \sim \id_{\CFR^\circ(H_1)}.
    \end{align*}
\end{thm}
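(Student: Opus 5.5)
The plan is to compute each of the five maps explicitly on generators, using almost complex structures that are stretched along the connected-sum neck separating $\Sigma$ from the genus-three region $\S_0\#\S_1\#\S_2$ where the loop is supported. I would then check that the composite is the identity on the nose, up to the homotopies already built into the transitive system.

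\textbf{Stabilization maps.} For a suitable almost complex structure, the two $\{1\}$-stabilizations $e$ and $f$ act by the formulas recorded before \Cref{lem:StabilizePhi}. Thus $\Phi_f\circ\Phi_e(\x)=\x\times\{c_1\}\times\{c_2\}$, where $c_1,c_2$ are the unique new intersection points on the fixed set in the two new tori. In the notation of \Cref{fig:simple_trade_diagram0} this generator is $\x\times\bm a$.

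\textbf{Real equivalence $g$.} This map is the composition of three handleslide maps, each given by a rectangle map $\Psi$, as in \Cref{lem:handleslide-is-rectangle-map} and \Cref{prop:Compatibility1}. I would take $\cQ$ in \Cref{prop:trade-loop-rectangles} to be the trivial quadruple on $\Sigma$, consisting of small Hamiltonian translates with top generators $\Theta_{\alpha',\alpha}$. By \Cref{lem:iso-is-rectangle-map} and \Cref{lem:identity}, the map $F_\cQ(\Theta\otimes-\otimes\tau(\Theta))$ is then homotopic to the identity.

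\Cref{prop:trade-loop-rectangles} then gives $\Psi_{a\to b}(\x\times\bm a)\sim\x\times\bm b$. Applying \Cref{prop:remaining-trade-loops-counts} twice gives $\bm b\mapsto\bm c\mapsto\bm d$. Admissibility of the required quadruples is arranged by \Cref{lem:adm-multi}, and the resulting composite is independent of choices by \Cref{lem:factor-admissible}.

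\textbf{Diffeomorphism $h$ and destabilization $i$.} The diffeomorphism $h$ acts tautologically by \Cref{def:diffeo}, sending $\x\times\bm d$ to $\x\times h(\bm d)$. The key point to verify from \Cref{fig:simple_trade_loop} is that $h(\bm d)$ is exactly the pair $\{a,\tau(a)\}$ of intersection points in the two exchanged tori of the $\Z/2$-stabilized diagram $H_5$. Here $\bm d$ is the unique intersection point in the genus-three region after the slides, so the only thing to check is that its three coordinates go to the correct new points. Since $\Phi_i$ is the inverse of $\x\mapsto\{a\}\times\x\times\{\tau(a)\}$, the composite returns $\x$.

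\textbf{Two bookkeeping points.} First, for each map I must use an almost complex structure adapted to that map. These choices are compared through continuation maps, which commute with all the relevant identifications by the transitive-system construction. Second, the sign and orientation conventions of the stabilizations, including the horizontal-reflection gluing of \Cref{fig:simple_trade_loop}, must match those used to define $\cQ_{a,b},\cQ_{b,c},\cQ_{c,d}$.

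\textbf{Main obstacle.} The real analytic content is in \Cref{prop:trade-loop-rectangles} and \Cref{prop:remaining-trade-loops-counts}, which I take as given. What remains to check is that each local genus-three diagram has a unique intersection point on the fixed set, so that the top generators and $\bm a,\bm b,\bm c,\bm d$ are forced. This is a short inspection of \Cref{fig:simple_trade_diagram0,fig:simple_trade_diagram1}. I would also justify that the connected sum can be taken at a $\tau$-fixed point, so that those propositions apply with $\cQ$ equal to an arbitrary admissible real diagram.
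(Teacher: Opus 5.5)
Your proposal is correct and follows essentially the same route as the paper: compute $\Phi_f\circ\Phi_e$, then $\Phi_g$ as three rectangle maps via Propositions~\ref{prop:trade-loop-rectangles} and \ref{prop:remaining-trade-loops-counts}, then $\Phi_h$ and $\Phi_i$ on generators, and observe that the composite is the identity on the $\Sigma$-factor times $a_1$. The differences are only bookkeeping. The paper keeps the Hamiltonian translate $\alphas'$ on $\Sigma$ and disposes of $(\varphi|_\Sigma)_*\circ\Phi^{\alphas\to\alphas'}_{\betas\to\betas'}$ via Proposition~\ref{prop:continuity}, whereas you implicitly take $h|_\Sigma=\id$ and absorb this into the transitive-system identity; also, $h(\bm d)$ is the triple $\{a_1, e, \tau_0(e)\}$ rather than just a pair, since the point $a_1$ of $H_1$ survives.
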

\begin{proof}
    Let $\cH = (\S, \alphas, \betas, \t)$ be a real sutured Heegaard diagram. Let $\cH_k = \cH \#^k E_{\{1\}}$, where $E_{\{1\}}$ is the genus 1 real Heegaard diagram for $\Sigma_2(U)$ such that each $\cH_k$ is a representative of $[H_k]$ for $k\in \{1, 2, 3\}$. $\cH_4$ is obtained from $\cH_3$ by a real equivalence, which is the composition of three real handleslides. Let $\cH_3(a) = H_3$, $\cH_3(b)$, $\cH_3(c)$, and $\cH_3(d)= \cH_4$ be the intermediate diagrams obtained by the real handleslides shown in \Cref{fig:hd_d2_trade_loop}. The diagram $\cH_5 = \cH_1 \#E_{\Z/2}$ is obtained from $\cH_4$ by a diffeomorphism, $\varphi$, where $E_{\Z/2}$ is the standard genus 2 diagram for $\S_2(U)$. We write $e$ and $\t_0(e)$ for the two symmetric intersection points of $E_{\Z/2}$. Evidently, $\cH_5$ can be $\Z/2$-destabilized to return to $\cH_1.$ According to \Cref{prop:gluing-rectangles-stab}, we can fix symmetric almost complex structures on $\cH_i$ so that the stabilization maps and holomorphic rectangle maps commute. 
    
    In \Cref{fig:simple_trade_diagram0}, there are three intersection points in $\S_0$ which (for simplicity) are all labeled $a$ (similarly for $b$, $c$, $d$, and $\theta^+$ in \Cref{fig:simple_trade_diagram1}). Let $a_1$, $a_2,$ and $a_3$ be the intersection points associated to the handles labeled $R$, $F$, and $G$ respectively. In this way, if $\x \times a_1$ is a generator for $\CFR^\circ(\cH_1)\simeq \CFR^\circ(\cH) \otimes \CFR^\circ(E_{\{1\}})$, then $\Phi_{f}\circ\Phi_{e}(\x \times a_1) = \x \times \bm a$, where $\bm a = \{a_1a_2a_3\}$. Let $\bm b$, $\bm c$, and $\bm d$ be the analogous triples of intersection points in the diagrams $\cH_3(b)$, $\cH_3(c)$, and $\cH_3(d)$  shown  in \Cref{fig:simple_trade_diagram1}. The real equivalence $\Phi_{g}$ is the composition of three real handleslides\footnote{Here we abuse notation: we write $\alphas$ for the alpha curves in all four diagrams $\cH_3(a)$ through $\cH_3(d)$, though implicitly we are applying small Hamiltonian perturbations to pass from one collection to the next.} 
    \begin{align*}
        \Phi_{g} = \Psi^{\alphas \cup \alphas_c \to \alphas \cup \alphas_d}_{\betas \cup \betas_c \to \betas \cup \betas_d} \circ \Psi^{\alphas \cup \alphas_b \to \alphas \cup \alphas_c}_{\betas \cup \betas_b \to \betas \cup \betas_c} \circ \Psi^{\alphas \cup \alphas_a \to \alphas \cup \alphas_b}_{\betas \cup \betas_a \to \betas \cup \betas_b},
    \end{align*}
    where $\alphas_i$ are the alpha curves of $\cH_3(i)$ for $i \in \{a,b,c,d\}$. The curves in $\cH_3(a)$ and $\cH_3(b)$ form a real quadruple diagram $\cQ\#\cQ_{a,b}$. The map $\Psi^{\alphas \cup \alphas_a \to \alphas \cup \alphas_b}_{\betas \cup \betas_a \to \betas \cup \betas_b}$ is defined in terms of holomorphic rectangle counts:
    \begin{align*}
        \Psi^{\alphas \cup \alphas_a \to \alphas \cup \alphas_b}_{\betas \cup \betas_a \to \betas \cup \betas_b}(\x \times \bm a) =  F_{\cQ \# \cQ_{a,b}}((\Theta_{\alphas',\alphas} \times \Theta) \otimes (\x\times \bm a) \otimes ( \t(\Theta_{\alphas',\alphas}) \times \t_0(\Theta))).
    \end{align*}
    According to \Cref{prop:trade-loop-rectangles}, for the appropriate choice of symmetric almost complex structure
    \begin{align*}
        F_{\cQ \# \cQ_{a,b}}((\Theta_{\alphas',\alphas} \times \Theta) \otimes (\x\times \bm a) \otimes ( \t(\Theta_{\alphas',\alphas}) \times \t_0(\Theta))) = F_{\cQ}( \Theta_{\alphas',\alphas} \otimes \x \otimes \t(\Theta_{\alphas',\alphas})) \times \bm b.
    \end{align*}
    Similarly, by \Cref{prop:remaining-trade-loops-counts}, 
     \begin{align*}
        \Psi^{\alphas \cup \alphas_b \to \alphas \cup \alphas_c}_{\betas \cup \betas_b \to \betas \cup \betas_c} ((\Theta_{\alphas',\alphas} \times \Theta) \otimes (\x\times \bm b) \otimes ( \t(\Theta_{\alphas',\alphas}) \times \t_1(\Theta))) &\sim F_{\cQ}( \Theta_{\alphas',\alphas} \otimes \x \otimes \t(\Theta_{\alphas',\alphas})) \times \bm c \\
        \Psi^{\alphas \cup \alphas_c \to \alphas \cup \alphas_d}_{\betas \cup \betas_c \to \betas \cup \betas_d}((\Theta_{\alphas',\alphas} \times \Theta) \otimes (\x\times \bm c) \otimes ( \t(\Theta_{\alphas',\alphas}) \times \t_2(\Theta))) &\sim F_{\cQ}( \Theta_{\alphas',\alphas} \otimes \x \otimes \t(\Theta_{\alphas',\alphas})) \times \bm d.
    \end{align*}
    Combining these facts, we have that 
    \begin{align*}
        \Phi_g(\x \times \bm a) \sim \Psi^{\alphas \to \alphas'}_{\betas \to \betas'}(\x) \times \bm d,
    \end{align*}
    where the curves $\alphas'$ are obtained by a small Hamiltonian perturbation of $\alphas$; the curves $\betas$ and $\betas'$ are symmetric.

    Outside of $\Sigma_0$, we assume that $\varphi$ is isotopic to the identity, taking $\alphas'$ back to the curves $\alphas$. Hence, 
    \begin{align*}
        \Phi_{h}(\x \times \bm d) := \varphi_*(\x \times \bm d)  = (\varphi|_{\Sigma})_*(\x) \times \{a_1\, e\, \t_0(e)\},
    \end{align*}
    where $e$ and $\t_0(e)$ are the symmetric intersection points in $\cH_5 = \cH_1 \# E_{\Z/2}$. 
    Finally, we compute the destabilization map:
    \begin{align*}
        \Phi_i(\x \times \{a_1\, e\, \t_0(e)\}) = (\sigma^{\Z/2}_{\cH_1 \to \cH_5})^{-1}(\x \times \{a_1\, e\, \t_0(e)\})= \x \times a_1.
    \end{align*}
    Putting everything together, we have
    \begin{align*}  \Phi_{i}\circ\Phi_{h}\circ\Phi_{g}\circ\Phi_{f}\circ\Phi_{e}(\x \times a_1) \sim ((\varphi|_{\Sigma})_*\circ \Phi^{\alphas\to \alphas'}_{\betas\to \betas'})(\x) \times a_1.
    \end{align*}
    According to \Cref{prop:continuity}, we have 
    \begin{align*}
        (\varphi|_{\Sigma})_*\circ \Phi^{\alphas\to \alphas'}_{\betas\to \betas'} \sim \id_{\CFR^\circ(\cH)}.
    \end{align*}
    Hence, 
    \begin{align*}
        \Phi_{i}\circ\Phi_{h}\circ\Phi_{g}\circ\Phi_{f}\circ\Phi_{e} \sim \id_{\CFR^\circ(\cH)}\otimes \id_{\CFR^\circ(E_{\{1\}})},
    \end{align*} 
    completing the proof.
\end{proof}

\subsection{Real handleswap loops}

Finally, we turn to real handleswaps. 

Consider the quadruple diagram $\cQ_L = (\Sigma_0, \bm \alpha'_0, \bm \alpha_0, \bm \beta_0, \bm \beta'_0)$ shown in the upper left of \Cref{fig:handleswap_quadrule_diagrams}; let $\cQ_{R}$ be the diagram obtained from $\cQ_L$ by swapping the roles of alpha and beta. If $\cQ = (\Sigma, \bm \alpha', \bm \alpha, \bm \beta, \bm \beta',\t)$ is any real quadruple diagram, we consider the quadruple
\begin{align*}
    \cQ_L\#\cQ\#\cQ_R
\end{align*}
by taking a connected sum at points $p \neq \t(p) \in \Sigma$ and points $p_L$ and $p_R$ in $\Sigma_0$. We extend the involution by swapping the two $\Sigma_0$ summands. Consider the diagram $\cQ_L$, and let $\bT_{\a_0} \cap \bT_{\b_0} = \bm a$ and $\bT_{\a_0'} \cap \bT_{\b_0'} = \bm a'$ and 
\begin{align*}
    \bT_{\alpha_0}\cap \bT_{\alpha_0'} =\{\theta_1^\pm\theta_2^\pm\}.
\end{align*}
Let $\Theta = \{\theta_1^+\theta_2^+\}$ be the top generator. Since $\cQ_L$ represents $\#^2 (S^1 \times S^2)$, we have that $\dim \CFh(\cQ_L) = \dim \HFh(\#^2 (S^1 \times S^2))$, and hence the differential of $\dim \CFh(\cQ_L)$ must vanish.

\begin{figure}[h]
\def\svgwidth{.8\linewidth}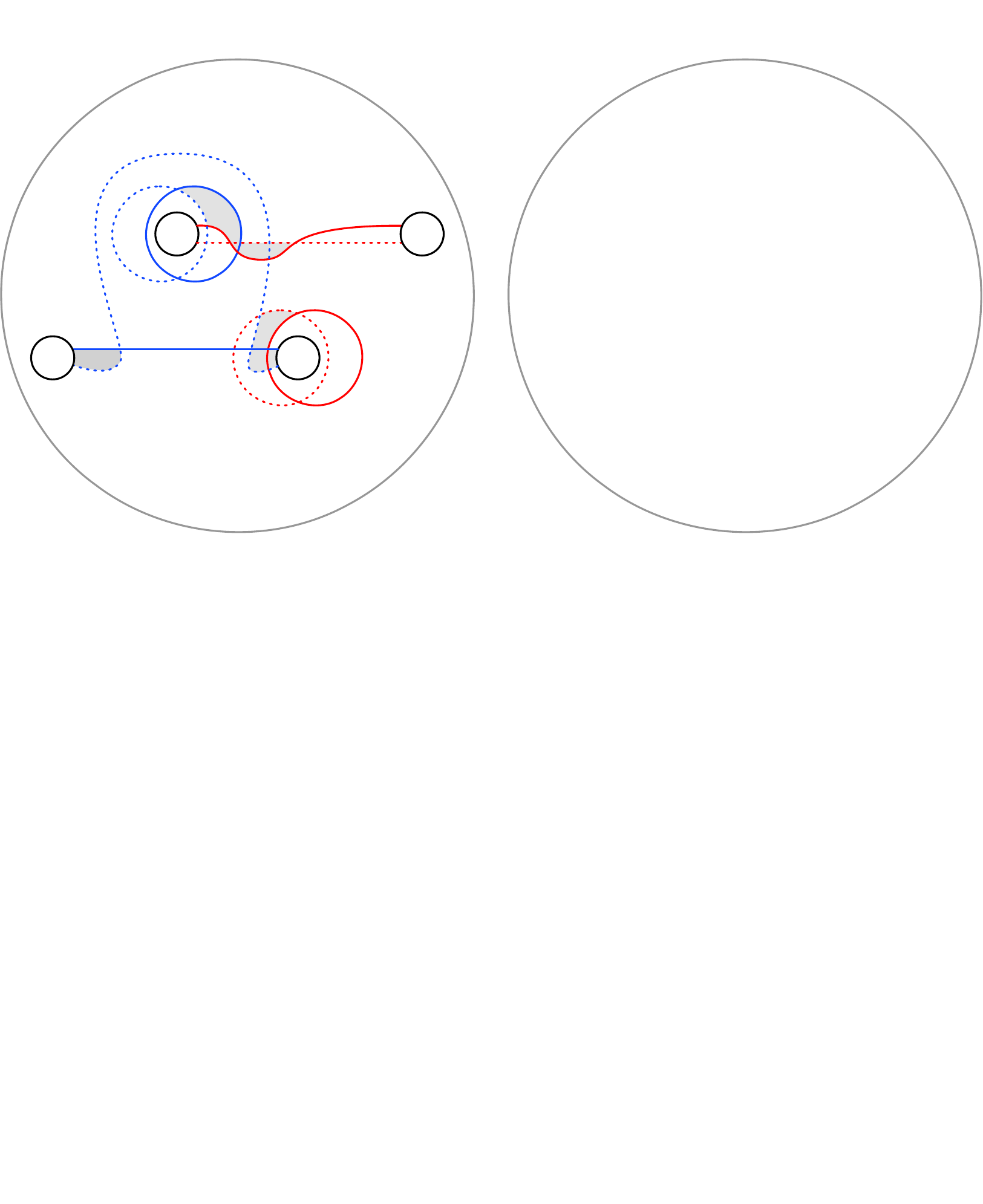
    \caption{The quadruple diagrams used for computing the real handleslides in a real handleswap loop.}
\label{fig:handleswap_quadrule_diagrams}
\end{figure}

\begin{prop}\label{prop:real-handleside-loop-rectangles}
    Suppose that $\cQ = (\Sigma, \bm \alpha', \bm \alpha, \bm \beta, \bm \beta',\t)$ is an admissible real Heegaard quadruple diagram and let $\cQ_L\#\cQ \# \cQ_R$ be the connected sum where $\cQ_L$ and $\cQ_R$ are the quadruple diagrams shown in the top of \Cref{fig:handleswap_quadrule_diagrams}. For a sufficiently stretched real-invariant almost complex structure, the holomorphic rectangle map $F_{\cQ_L\#\cQ \# \cQ_R}$ is given by 
    \begin{align*}
        F_{\cQ_L\#\cQ \# \cQ_R}((\Theta\times \x \times \t(\Theta)) \otimes (\bm a\times \y\times \t(\bm a)) \otimes (\t(\Theta)\times \t(\x) \times \Theta)) \sim \bm a'\times F_{\cQ}( \x \otimes \y \otimes \t(\x)) \times \t(\bm a') 
    \end{align*}
    for every $\x \in \bT_{\alpha'} \cap \Ta$ and $\y \in (\Ta \cap \Tb)^R$. 
\end{prop}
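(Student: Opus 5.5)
The plan is to follow the same neck-stretching template as in the proof of \Cref{prop:trade-loop-rectangles}, now with two connected-sum necks exchanged by the involution. Since $p\neq\t(p)$, the summands $\cQ_L$ and $\cQ_R=\t(\cQ_L)$ are glued at a $\Z/2$-orbit of points. A real rectangle on $\cQ_L\#\cQ\#\cQ_R$ is therefore determined by its restriction to $\Sigma\times\Diamond$ together with its restriction to one copy of $\Sigma_0$, say the $\cQ_L$ side; the $\cQ_R$ side is forced by the symmetry.

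\textbf{Index formula.} First I will prove the analogue of \eqref{eqn:S_0-index-formula} for $\cQ_L$. Here $\cQ_L$ carries no involution, so classes $\psi_L\in\pi_2(\x_0,\bm a,\bm a',\cdot)$ are ordinary (unreal) rectangles. I expect $\mu(\psi_L)=2n_{p_L}(\psi_L)+\mu(\x_0,\Theta)$. The plan for proving this is:
\begin{itemize}
\item check it on the small rectangle class;
\item show it is preserved by adding $[\Sigma_0]$ and periodic domains;
\item transfer to other $\x_0$ by splicing bigons, exactly as in the previous lemma and \cite[Lemma 9.50]{JTZ_naturality_mapping_class_groups}.
\end{itemize}
Combining this with \eqref{eqn:index_for_polygons} and the additivity of the real index under the symmetric double connected sum should give
\[
\mu_R(\psi_L\#\psi\#\t(\psi_L))=\mu_R(\psi)+\mu(\x_0,\Theta).
\]
Here the $2n_{p_L}$ correction is absorbed because one symmetric pair of tubes contributes to the real index like a single unreal tube.

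\textbf{Degeneration analysis.} Next I take a sequence of stretched real-invariant structures $J(T_i)$ and apply \Cref{lem:neck-compactness}, used equivariantly with two necks. I then rerun Claims 1--3:
\begin{itemize}
\item the limit on $\Sigma$ is a single real index-$0$ rectangle $u$ satisfying (M1)--(M8);
\item the limit on $\Sigma_0$ (left side) is a single unreal rectangle $u_L$ with $\rho^{p_L}(u_L)=\rho^{p}(u)$;
\item the neck curves are trivial cylinders, by the Euler characteristic argument.
\end{itemize}
This case is easier than the trade loop. The matching divisor $\rho^p(u)\in\Sym^k(\Diamond)$ lies at a non-fixed point $p$, so it imposes an ordinary (non-real) matching condition. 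That means unreal transversality \cite[Proposition 9.47]{JTZ_naturality_mapping_class_groups} applies directly to $u_L$. Real gluing (\Cref{prop: gluing}) then identifies $\#\cM_{R}(\psi_L\#\psi\#\t\psi_L)$ with the count of matched pairs $(u,u_L)$.

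\textbf{Counting the matched curves.} It remains to show that $\#\cM_{\Theta}(\bm d)\equiv 1$ for generic $\bm d\in\Sym^k(\Diamond)$ off the diagonal. Here the matching is unreal, so the diagonal has codimension $2$ and the complement is connected. Consequently the argument of \cite[Lemma 9.58]{JTZ_naturality_mapping_class_groups} applies essentially verbatim. The steps are:
\begin{itemize}
\item show the count is independent of $\bm d$, using that $\CFh(\Sigma_0,\bm\alpha_0',\bm\alpha_0)$ has vanishing differential;
\item send $\bm d$ to $v_{\alpha\beta}$ so that the curve splits into $k$ curves of class $e_a+[\Sigma_0]$ and an index-$0$ rectangle with $n_{p_L}=0$;
\item evaluate the result as $(\#\cM_{\bm a}(d))^k$ times a single small-rectangle count.
\end{itemize}
Both factors are odd: the first by the unreal computation in \cite{JTZ_naturality_mapping_class_groups}, the second because the map is an isomorphism between rank-one complexes.

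\textbf{Main obstacle.} I expect the hardest step to be the equivariant compactness and transversality with two exchanged necks. In particular, I need to rule out asymmetric degenerations, where only one neck breaks or $u_L$ and its mirror degenerate differently. The plan is to argue that any real limit is symmetric by construction. Working on the quotient (one neck) then reduces everything to the unreal argument for $u_L$. This is where the homotopy ``$\sim$'', rather than equality, may be needed, to account for the choice of almost complex structure.
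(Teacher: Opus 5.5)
Your proposal is correct and is the same argument as the paper's: since $\mathcal{Q}_L$ and $\mathcal{Q}_R$ are attached at a pair of points exchanged by $\tau$, a real rectangle is determined by its $\Sigma$-part together with one unreal $\Sigma_0$-part, so the neck-stretching proof of \cite[Proposition 9.31]{JTZ_naturality_mapping_class_groups} carries over. You spell out what the paper leaves implicit, namely the index formula, the analogues of Claims 1--3, and divisor independence; you also correctly identify the key point that matching at a non-fixed point is an unreal condition in $\mathrm{Sym}^k(\Diamond)$ with codimension-two diagonal, so the disconnectedness issue from the trade-loop computation does not arise.
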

\begin{proof}
    The proof follows by an equivariant neck-stretching argument. Since $\cQ_L$ and $\cQ_R$ are disjoint, the arguments in \cite[Proposition 9.31]{JTZ_naturality_mapping_class_groups} apply here. Indeed, pairs of real-invariant rectangles behave exactly like unreal triangles.
\end{proof}

Let 
\begin{align*}
\begin{tikzcd}[ampersand replacement = \&]
    H_1 \ar[rr,"e"] \& \& H_2\ar[ld,"f"] \\
    \& H_3 \ar[ul,"g"] \& 
\end{tikzcd}
\end{align*}
be a simple real handleswap as in \Cref{def:simple-handleswap}. Recall that edges $e$ and $f$ are real equivalences and $g$ is an equivariant diffeomorphism. The real equivalences are given by the real handleslides appearing in \Cref{fig:handleswap_quadrule_diagrams}, and therefore the map can be computed as the composition of two real handleslide maps. To prove invariance under a simple real handleswap, we establish the following.

\begin{thm}\label{thm:real-handleswap-loop-is-id}
    The induced maps $\Phi_{\ast}$ for $\ast \in \{e,f,g\}$ satisfy
    \begin{align*}
        \Phi_{g}\circ\Phi_{f}\circ\Phi_{e} \sim \id_{\CFR^\circ(H_1)}.
    \end{align*}
\end{thm}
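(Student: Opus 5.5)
The plan is to mirror the proof of \Cref{thm:trade-loop-is-id}, with \Cref{prop:real-handleside-loop-rectangles} playing the role that \Cref{prop:trade-loop-rectangles} played there. First fix representatives. Write $\cH = (\S, \alphas, \betas, \t)$ for the part of $H_1$ living on the middle summand $\S$ of $\S_0 \# \S \# \S_1$. Realize $H_1, H_2, H_3$ by diagrams $\cH_L \# \cH \# \cH_R$, where $\cH_R$ is the $\tau$-image of $\cH_L$. By \Cref{def:simple-handleswap}, these agree with $\cH$ on $\S$ and restrict to the genus-two pictures of \Cref{fig:simple_swap} on $\S_0$, and symmetrically on $\S_1$. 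Use \Cref{lem:adm-multi} to choose admissible small Hamiltonian translates $\alphas'$ of $\alphas$ on $\S$, and symmetric translates on the two $\S_0$-summands.

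Next compute the two real equivalences. Each of $e$ and $f$ is a single real handleslide, which on the $\S_0$-side is exactly the handleslide of the quadruple $\cQ_L$ of \Cref{fig:handleswap_quadrule_diagrams} (and on the $\S_1$-side its $\tau$-image $\cQ_R$). So $\Phi_e$ and $\Phi_f$ are the rectangle maps $\Psi$ of the corresponding quadruples $\cQ_L \# \cQ \# \cQ_R$, with inputs the top generators $\Theta \times \Theta_{\alphas',\alphas} \times \t(\Theta)$ and their $\tau$-images. By \Cref{prop:real-handleside-loop-rectangles}, with a sufficiently stretched symmetric almost complex structure, for a generator $\bm a \times \x \times \t(\bm a)$ we get
\begin{align*}
\Phi_f \circ \Phi_e(\bm a \times \x \times \t(\bm a)) \sim \bm a'' \times \Psi(\Psi(\x)) \times \t(\bm a'').
\end{align*}
Here $\bm a''$ is the unique intersection point on the genus-two summand of $H_3$. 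Then \Cref{prop:Compatibility1} collapses $\Psi \circ \Psi$ on $\S$ to $\Phi^{\alphas \to \alphas''}_{\betas \to \betas''}$. The two handleslides are exchanged by $\tau$ between the two summands, so the real counts reduce to a pair of unreal triangle counts, as in \cite[Section 9.3]{JTZ_naturality_mapping_class_groups}.

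Then compute $\Phi_g$. The diffeomorphism $g$ is the identity on $\S$ up to isotopy, and it carries $\bm a''$ to $\bm a$ on $\S_0$ and symmetrically on $\S_1$. Hence $\Phi_g(\bm a'' \times \z \times \t(\bm a'')) = \bm a \times (g|_\S)_*(\z) \times \t(\bm a)$. Composing gives $(g|_\S)_* \circ \Phi^{\alphas \to \alphas''}_{\betas\to\betas''}$ on the $\S$ factor, tensored with the identity on the one-generator summands. By \Cref{prop:continuity} this is homotopic to the identity.

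The main obstacle is justifying the application of \Cref{prop:real-handleside-loop-rectangles} to the equivariant handleswap. The connected-sum points $p$ and $\t(p)$ must be chosen off the fixed set, with the necks stretched symmetrically, so that the limiting broken curves on the two $\S_0$ summands are $\tau$-swapped copies rather than a single invariant configuration. This is what makes the matching-condition argument reduce to the unreal Claim 4 of \cite[Proposition 9.31]{JTZ_naturality_mapping_class_groups}, where the count of matched curves is $1 \bmod 2$. In particular, I would check that no diagonal-crossing issue of the kind met in \Cref{prop:trade-loop-rectangles} arises, because the divisors on the two summands are swapped rather than lying in $(\Sym^k(\Diamond))^R$.
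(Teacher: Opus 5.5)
Your proposal is correct and follows essentially the paper's argument. The paper deduces the statement from \Cref{prop:real-handleside-loop-rectangles}: equivariant neck-stretching at connected-sum points $p \neq \tau(p)$ exchanges the two genus-two summands, so the real count reduces to the unreal one of \cite[Proposition 9.31]{JTZ_naturality_mapping_class_groups}. It applies this to both real handleslides and concludes as in \cite[Theorem 9.30]{JTZ_naturality_mapping_class_groups} by composing with $\Phi_g$ and using compatibility and continuity on the middle summand, which is the structure you describe. Your observation that the matching divisors at $p$ and $\tau(p)$ are reflections of each other, rather than lying in $(\Sym^k(\Diamond))^R$, is the right justification for the paper's brief remark that pairs of real-invariant rectangles behave like unreal triangles.
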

\begin{proof}
    This follows from \Cref{prop:real-handleside-loop-rectangles}. The arguments are the same as in \cite[Theorem 9.30]{JTZ_naturality_mapping_class_groups} in the unreal setting.
\end{proof}

\begin{proof}[Proof of \Cref{thm:naturality}]
    Let $\CFR^\circ$ be any  of the weak real Heegaard Floer theories from Theorems~\ref{thm:HF-weak}, \ref{thm:HFL-weak}, or \ref{thm:RSFH-weak}. Axioms (1)-(3) were verified in \Cref{sub:HFR-strong} and Axioms (4) and (5) were verified in \Cref{thm:trade-loop-is-id,thm:real-handleswap-loop-is-id}.
\end{proof}

\section{Involutive real Heegaard Floer homology}
\label{sec:HFRI}

Heegaard Floer homology has an intrinsic symmetry, given by $\SpinC$-conjugation. In \cite{hendricks_manolescu_Invol}, Hendricks and the second author used this symmetry to define  \emph{involutive Heegaard Floer homology}. We review their  construction. 

For simplicity, fix $(Y, \frs)$, where $\frs$ is a self-conjugate $\SpinC$-structure, i.e.,  $\overline{\frs} = \frs$. Choose a pointed Heegaard diagram $\cH = (\S, \alpha, \beta, z)$ for $Y$. There is a canonical isomorphism 
\begin{align*}
    \eta: \CF^\circ(\cH, \frs) \ra \CF^\circ(\overline{\cH}, \frs),
\end{align*}
where $\overline{\cH}$ is the diagram $(-\S, \betas, \alphas, z)$. Since $\cH$ and $\overline{\cH}$ both represent $Y$, a choice of Heegaard moves connecting the two diagrams gives a homotopy equivalence
\begin{align*}
    \Phi_{\overline{\cH} \ra \cH}: \CF^\circ(\overline{\cH}, \frs) \ra \CF^\circ(\cH, \frs), 
\end{align*}
which is well-defined by naturality. Let
\begin{align*}
    \iota:= \Phi_{\overline{\cH} \ra \cH} \circ \eta: \CF^\circ(\cH, \frs) \to \CF^\circ(\cH, \frs).
\end{align*}
The involutive Heegaard Floer homology $\HFI(Y, \frs)$ is defined to be 
\begin{align*}
    H_*(\Cone(\CF^\circ(\cH, \frs) \xra{Q(1 + \iota)}\CF^\circ(\cH, \frs))[-1]),
\end{align*}
where $Q$ is a formal variable and $[-1]$ denotes a degree shift. 

The same construction can be carried out in the real context. Given a real Heegaard diagram $(\cH, \t) = (\S, \alpha, \beta, z, \t)$ for $(Y, \t)$ and a self-conjugate real $\SpinC$-structure $\frs^R$, there is, again, a canonical map
\begin{align*}
    \eta: \CFR^\circ(\cH, \t, \frs^R) \ra \CFR^\circ(\overline{\cH}, \t, \frs^R).
\end{align*}
The diagrams $(\cH, \t)$ and $(\overline{\cH},\t)$ both represent $(Y, \t)$, so a sequence of real Heegaard moves between them induces a homotopy equivalence 
\begin{align*}
    \Phi_{(\overline{\cH}, \t) \ra (\cH, \t)}: \CFR^\circ(\overline{\cH},\t, \frs^R) \ra \CFR^\circ(\cH,\t, \frs^R).  
\end{align*}
 This gives rise to a homotopy involution 
\begin{align*}
    \iota :=  \Phi_{(\overline{\cH}, \t) \ra (\cH, \t)} \circ \eta: \CFR^\circ(\cH, \t, \frs^R) \to \CFR^\circ(\cH, \t, \frs^R).
\end{align*}
We define $\HFRI^\circ(Y,\t,\frs^R)$, the \emph{involutive real Heegaard Floer homology of $(Y, \t, \frs^R)$}, to be the homology of the mapping cone
\begin{align*}
    \Cone(\CFR^\circ(\cH,\t, \frs^R) \xra{Q(1 + \iota)}\CFR^\circ(\cH, \t,\frs^R))[-1],
\end{align*}
over the ring $\F[Q, v]/(Q^2)$ with $v$ and $Q$ both in degree $-1$.

\begin{proof}[Proof of Theorem~\ref{thm:HFRI}]
This follows just as in the proof of \cite[Theorem 1.1]{hendricks_manolescu_Invol}, making use of the fact that $\Phi_{(\overline{\cH}, \t) \ra (\cH, \t)}$ is well defined up to chain homotopy. This last fact is naturality at the chain level, which was established in the proof of Theorem~\ref{thm:real-nat}.
\end{proof}

\begin{proposition} \label{propn:exact} For $\circ \in \{-, \infty, +, \hat{\phantom{o}} \}$, there is an exact triangle of $\F[v]$-linear maps:
\begin{equation}
\label{eq:Iexact}
\begin{tikzpicture}[baseline=(current  bounding  box.center)]
\node(1)at(0,0){$\HFRI^\circ(Y,\tau,\s^R)$.};
\node(2)at (-3,1){$\HFR^\circ(Y, \tau,\s^R)$};
\node(3)at (3,1){$Q \cdot \HFR^\circ(Y,\tau, \s^R)[-1]$};
\path[->](2)edge node[above]{$Q(1+ \iota_*)$}(3);
\path[->](3)edge (1);
\path[->](1)edge(2);
\end{tikzpicture}
\end{equation}
\end{proposition}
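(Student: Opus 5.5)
The plan is to follow \cite[Proposition 4.6]{hendricks_manolescu_Invol}: everything follows formally from the mapping cone definition of $\HFRI^\circ$. Fix a real diagram $(\cH,\t)$ representing $(Y,\t,\s^R)$ and write $C=\CFR^\circ(\cH,\t,\s^R)$. By definition, $\HFRI^\circ$ is the homology of
\[
\Cone\bigl(C \xrightarrow{Q(1+\iota)} Q\cdot C[-1]\bigr)
\]
over $\F[Q,v]/(Q^2)$. Concretely, this is the complex $C\oplus Q\cdot C[-1]$ with differential $\partial(x+Qy)=\partial x + Q(x+\iota x+\partial y)$. Here $v$ acts as $U$ on the real complex, and $\iota$ commutes with this action up to homotopy (it is built from $\eta$ and the maps $\Phi$, both of which are $\F[U]$-equivariant).

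First, I write down the short exact sequence of chain complexes
\[
0\to Q\cdot C[-1]\xrightarrow{\ j\ } \Cone \xrightarrow{\ p\ } C\to 0,
\]
where $j$ includes the $Q$-summand and $p$ projects onto the first summand. Both maps are chain maps by the formula for $\partial$: the $Q$-part is a subcomplex since $\partial(Qy)=Q\partial y$, and the quotient by it carries the differential of $C$. Both maps are $\F[v]$-linear.

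Second, I pass to the long exact sequence in homology and identify the connecting homomorphism. For a cycle $x\in C$, lift it to $x\in\Cone$; its boundary is $Q(1+\iota)x$. Hence the connecting map is $Q(1+\iota_*)\colon \HFR^\circ\to Q\cdot\HFR^\circ[-1]$, with the grading shifts matching the diagram. Reading the long exact sequence cyclically gives the triangle in the statement, with $j_*$ as the map $Q\cdot\HFR^\circ[-1]\to\HFRI^\circ$ and $p_*$ as the map $\HFRI^\circ\to\HFR^\circ$.

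The only non-formal point is that the triangle should be an invariant rather than depend on the diagram, and that $\iota_*$ is well defined. This is exactly the chain-level naturality input used in the proof of Theorem~\ref{thm:HFRI}. A change of diagram induces homotopy equivalences of cones that commute with $j$ and $p$, because they are built from $\Phi$ maps that intertwine the $\iota$'s up to homotopy. I expect this compatibility (choosing the homotopies so that the cone maps respect the sequence) to be the main thing to check, but it is the same as in the unreal case and needs no new geometric input beyond Theorem~\ref{thm:real-nat}.
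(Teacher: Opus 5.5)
Your proposal is correct and is the argument the paper intends. The paper's proof just says the triangle is clear from the mapping cone definition (citing \cite[Proposition 4.6]{hendricks_manolescu_Invol}), and that is exactly the long exact sequence of $0\to Q\cdot C[-1]\to \Cone\to C\to 0$ with connecting map $Q(1+\iota_*)$ that you write down. Your final paragraph on diagram-independence is not needed for the statement itself, since that input is supplied by Theorem~\ref{thm:HFRI} and the chain-level naturality from Theorem~\ref{thm:real-nat}.
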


\begin{proof}
This is clear from the definition. Compare \cite[Proposition 4.6]{hendricks_manolescu_Invol}.
\end{proof}

\begin{example} 
\label{ex:Lspace}
    Suppose $(Y, \tau)$ is the double branched cover over a knot $K \subset S^3$ and that $Y$ is an $L$-space (with $\F$-coefficients). The latter condition means that  $\widehat{\HF}(Y, \frs; \F) \cong \F$ for every $\SpinC$
structure $\frs$. Then, by the work of Hendricks \cite[Theorem 1.2]{hendricks:real}, we also have $\widehat{\HFR}(Y, \tau, \frs^R) \cong \F$, for every real $\SpinC$ structure $\frs^R$. If $\frs^R$ is self-conjugate, we obtain
\begin{align*}
    \HFRI^-(Y,\t, \frs^R) & \cong \F[Q, v]/(Q^2),\\
    \widehat{\HFRI} (Y,\t, \frs^R) &\cong \F[Q]/(Q^2). 
\end{align*}
The proof follows that of \cite[Corollary 4.8]{hendricks_manolescu_Invol}, making use of the exact triangle \eqref{eq:Iexact}. To show that the extension of $\HFR^\circ$ by $\HFR^\circ$ splits, we observe that these are free modules over the respective base rings. 
\end{example}

\begin{example} (This was suggested to us by Kristen Hendricks.) Let $Y$ be the Brieskorn sphere $\Sigma(2,3,7)$,
viewed as the double cover of $S^3$ branched along the torus knot $T(3,7)$, and let $\tau$ be the deck transformation. The real manifold $(Y, \tau)$  admits a unique real $\SpinC$ structure, which we drop from the notation. By \cite[Theorem 1.2]{hendricks:real}, there is a spectral sequence whose $E_1$ page is $\widehat{\HF}(Y) \otimes \F[\theta, \theta^{-1}]$, which converges to $\widehat{\HFR}(Y, \tau) \otimes \F[\theta, \theta^{-1}]$, and whose $d_1$ differential is $(1+\iota_*\tau_*)\theta$. By the calculations in \cite[Equation (27)]{OS:AbsGrading},  \cite[Proposition 6.26]{HLS_flexible}, and \cite[Section (6.8)]{hendricks_manolescu_Invol}, we have that $\dim \widehat{\HF}(Y; \F)=3$, with $\tau_*$ being the identity and $\iota_*$ exchanging two generators, so that $1+\iota_*\tau_*$ has rank $1$. From here it follows that 
$$\dim \widehat{\HFR}(Y, \tau) =1.$$
The involution $\iota_*$ must act as the identity on $\widehat{\HFR}(Y, \tau)$. Using the exact triangle \eqref{eq:Iexact}, we deduce that $\widehat{\HFRI}(Y, \tau)$ has dimension $2$. On the other hand, from \cite[Section (6.8)]{hendricks_manolescu_Invol} we see that $\dim \widehat{\HFI}(Y)=4$. Thus, unlike in Example~\ref{ex:Lspace}, in this case we have  
$$  \dim\widehat{\HFRI}(Y, \tau) < \dim \widehat{\HFI}(Y).$$
\end{example}

\clearpage

\appendix 
\section{Resolutions of codimension-2 bifurcations}\label{appendix}

\begin{figure}[h]
    \def\svgwidth{.8\linewidth}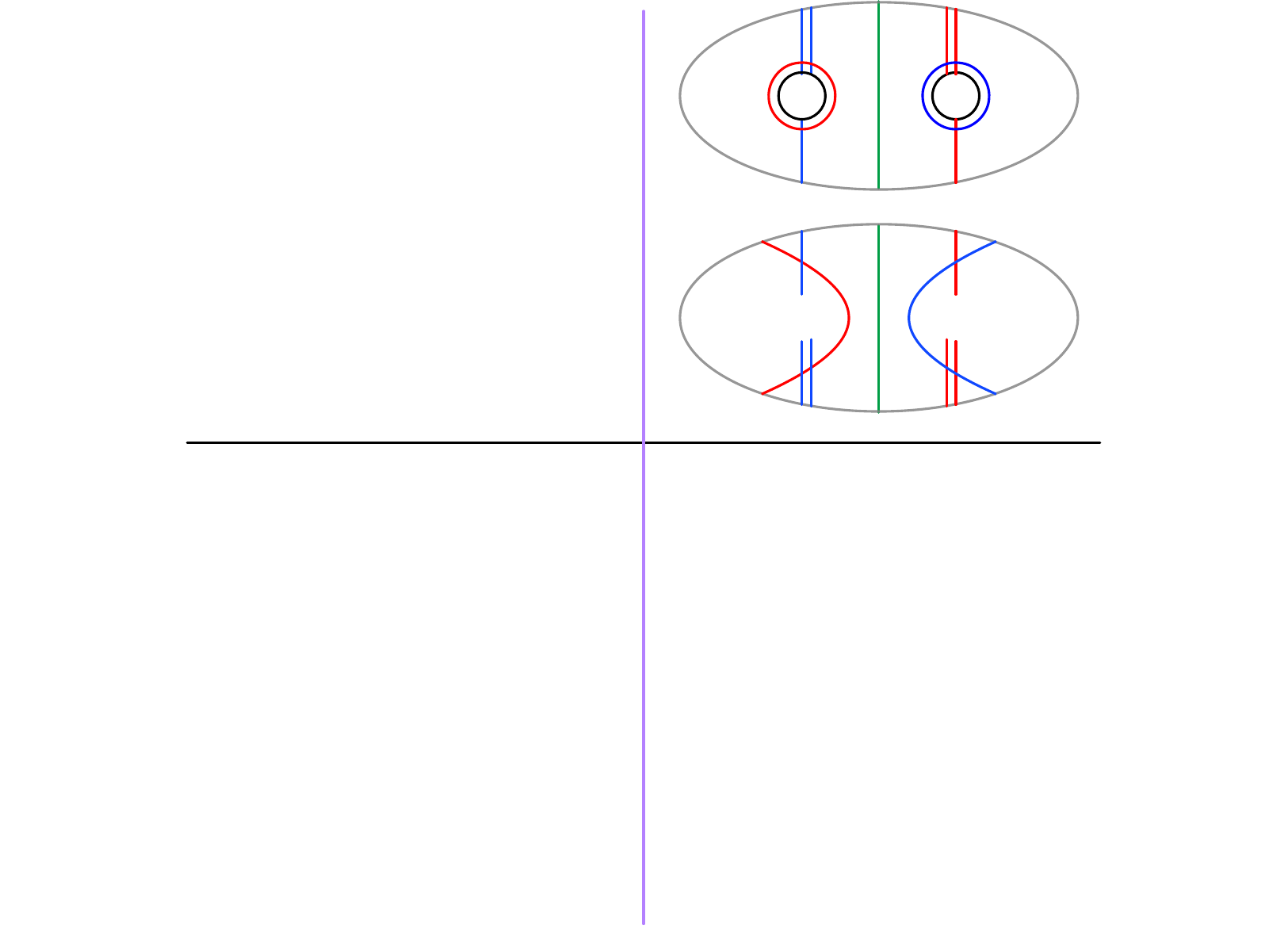
        \caption{Resolving an (A4b) bifurcation.}
    \label{fig:hd_a4b_resolved}
    \end{figure}

\begin{figure}[h]
    \def\svgwidth{.8\linewidth}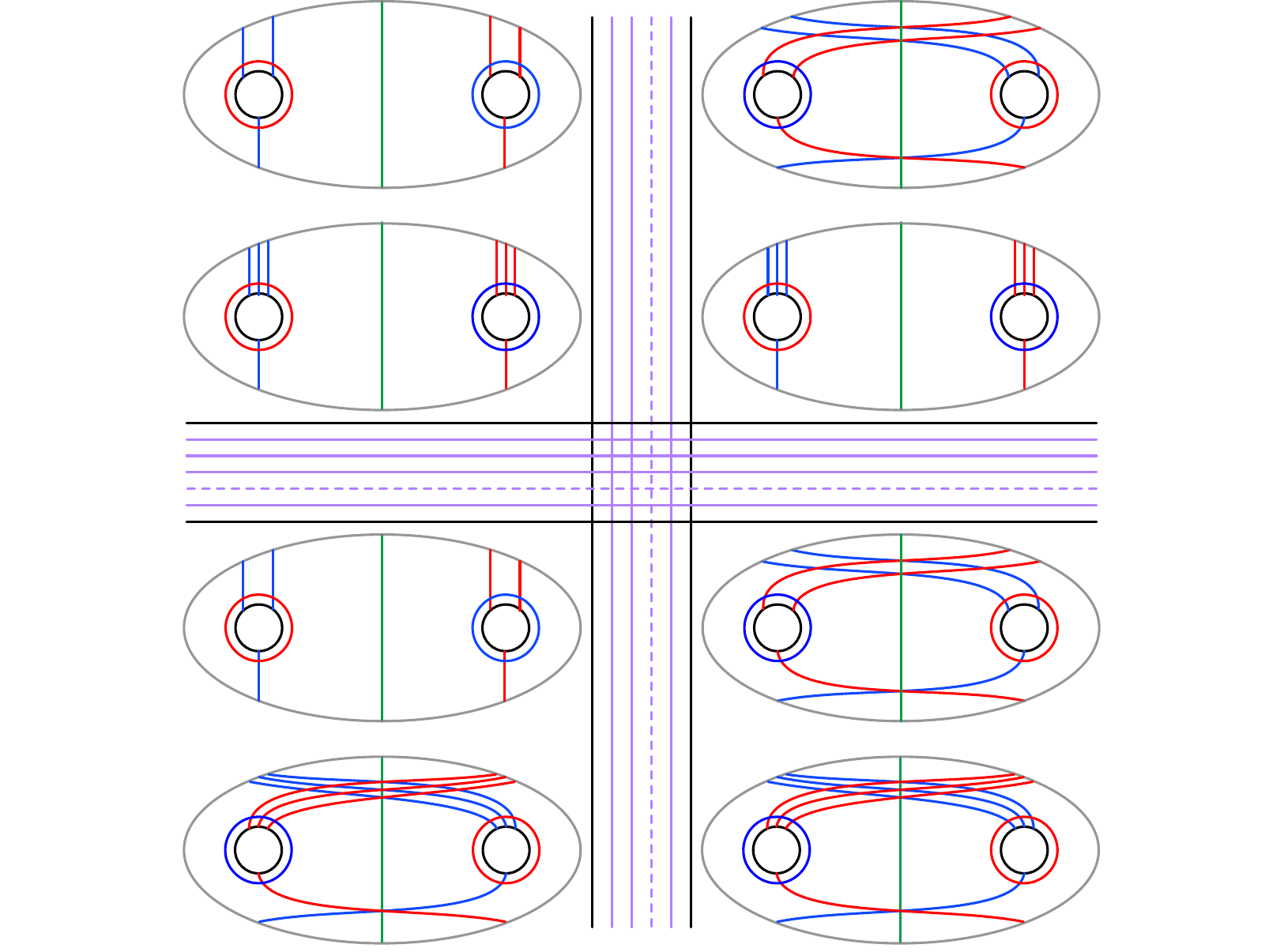
        \caption{Resolving an (A3) bifurcation.}
    \label{fig:hd_a3_resolved}
\end{figure}

\begin{figure}[h]
    \def\svgwidth{.8\linewidth}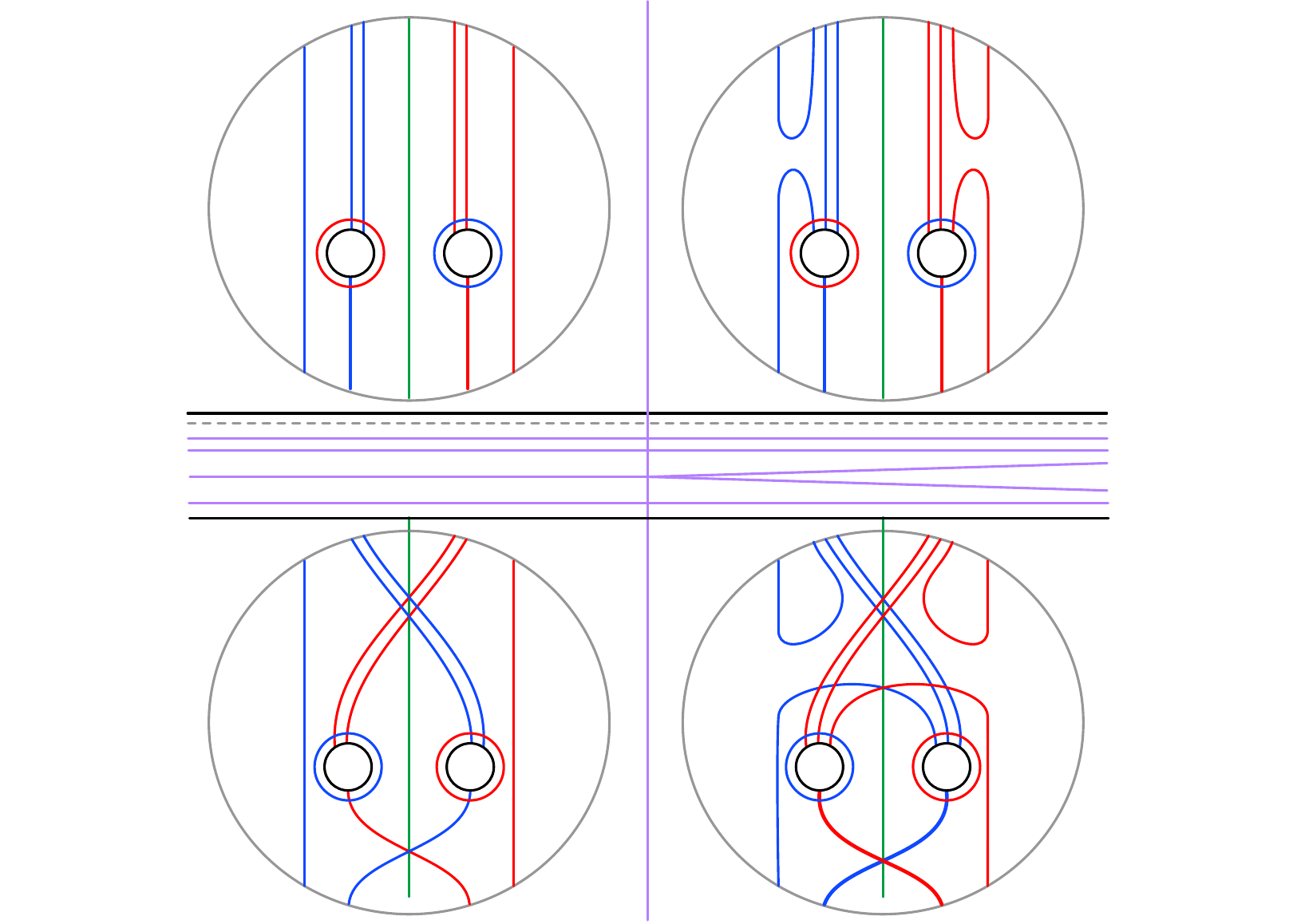
        \caption{Resolving an (A4a) bifurcation.}
    \label{fig:hd_a4a_resolved}
\end{figure}
\begin{figure}[h]
    \def\svgwidth{.8\linewidth}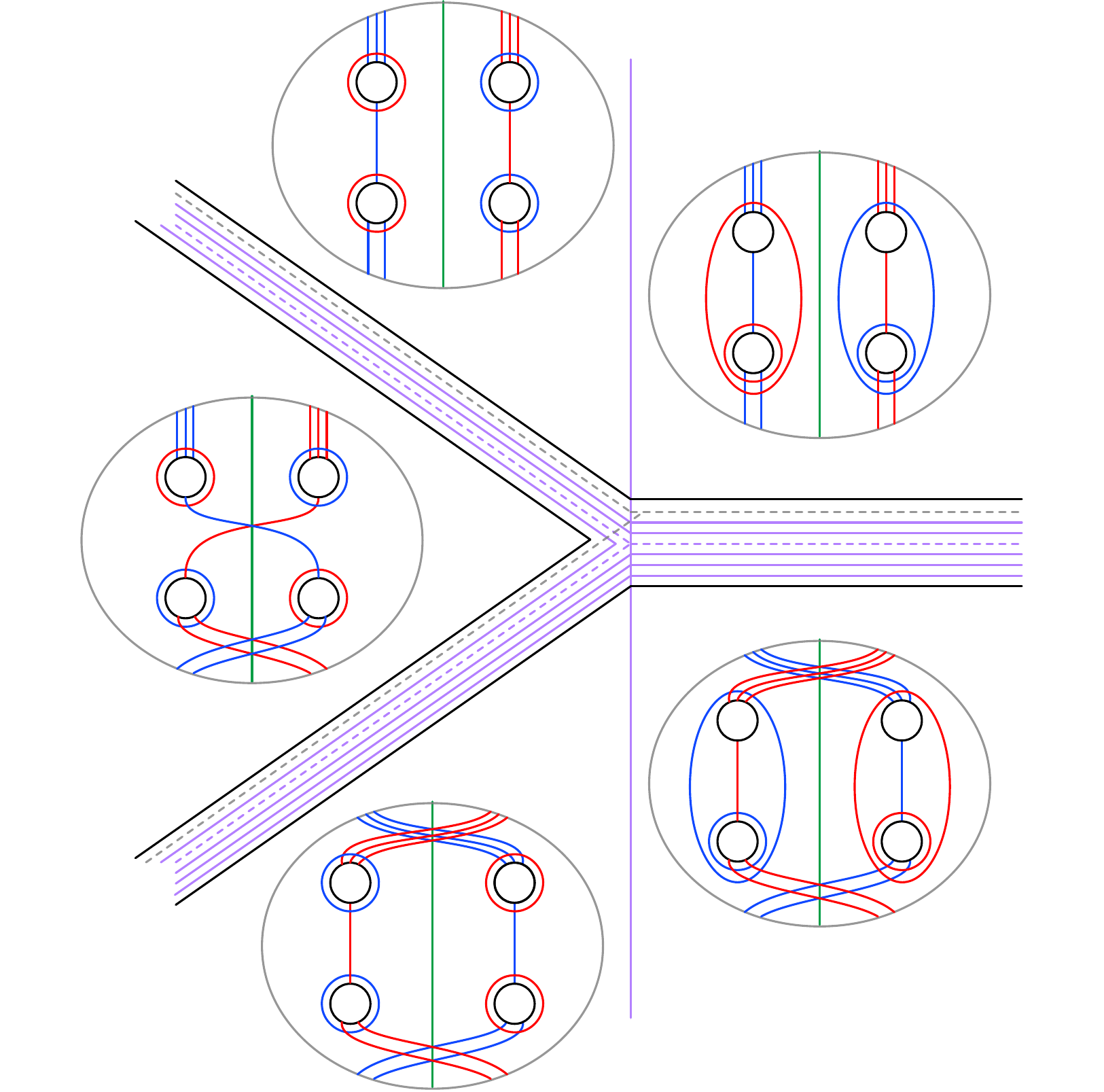
        \caption{Resolving a simple (A4c) bifurcation.}
    \label{fig:hd_a4c_resolved}
\end{figure}
\begin{figure}[h]
    \def\svgwidth{.8\linewidth}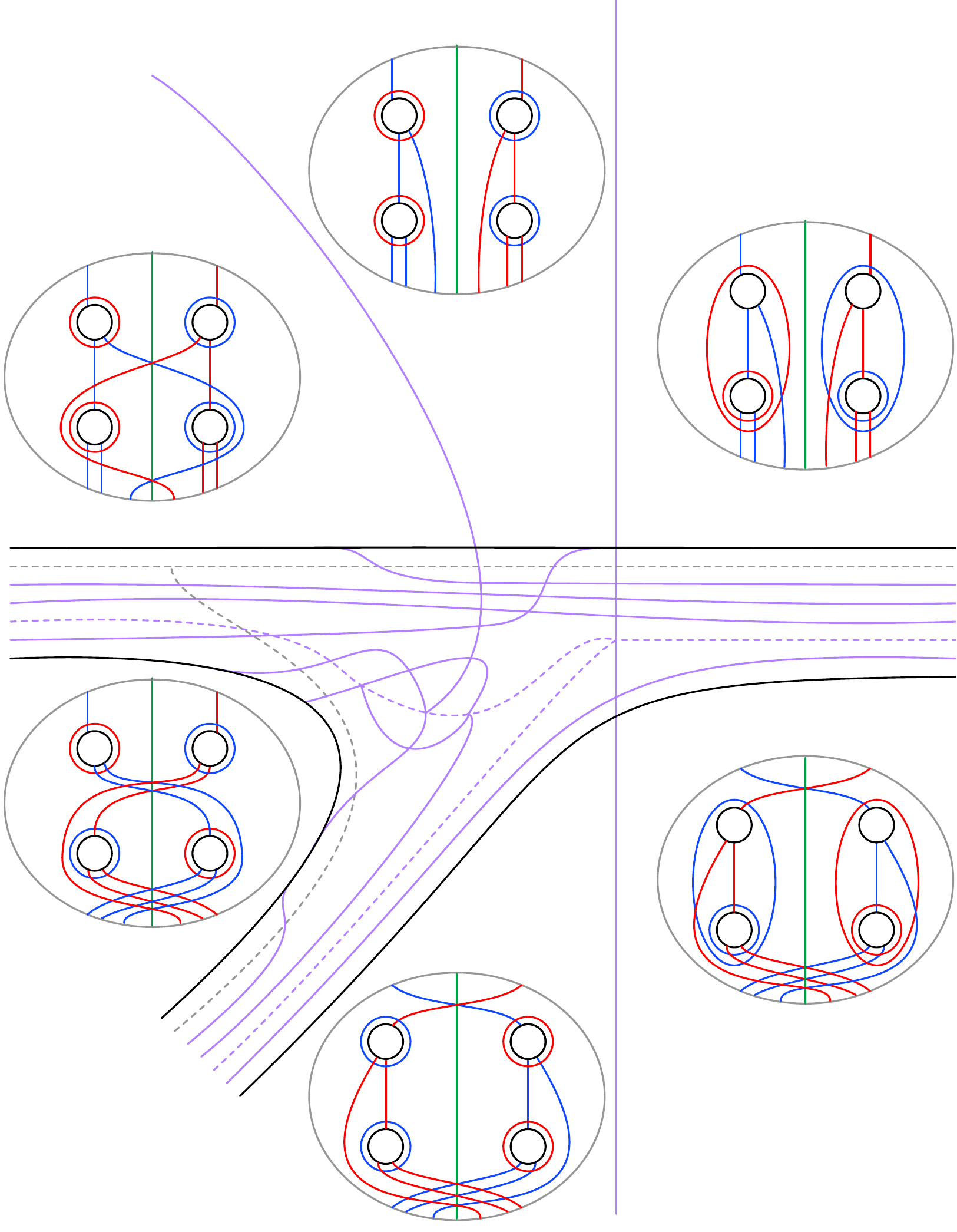
        \caption{Resolving a more complicated (A4c) bifurcation.}
    \label{fig:hd_a4c_resolved2}
\end{figure}
\begin{figure}[h]
    \def\svgwidth{.8\linewidth}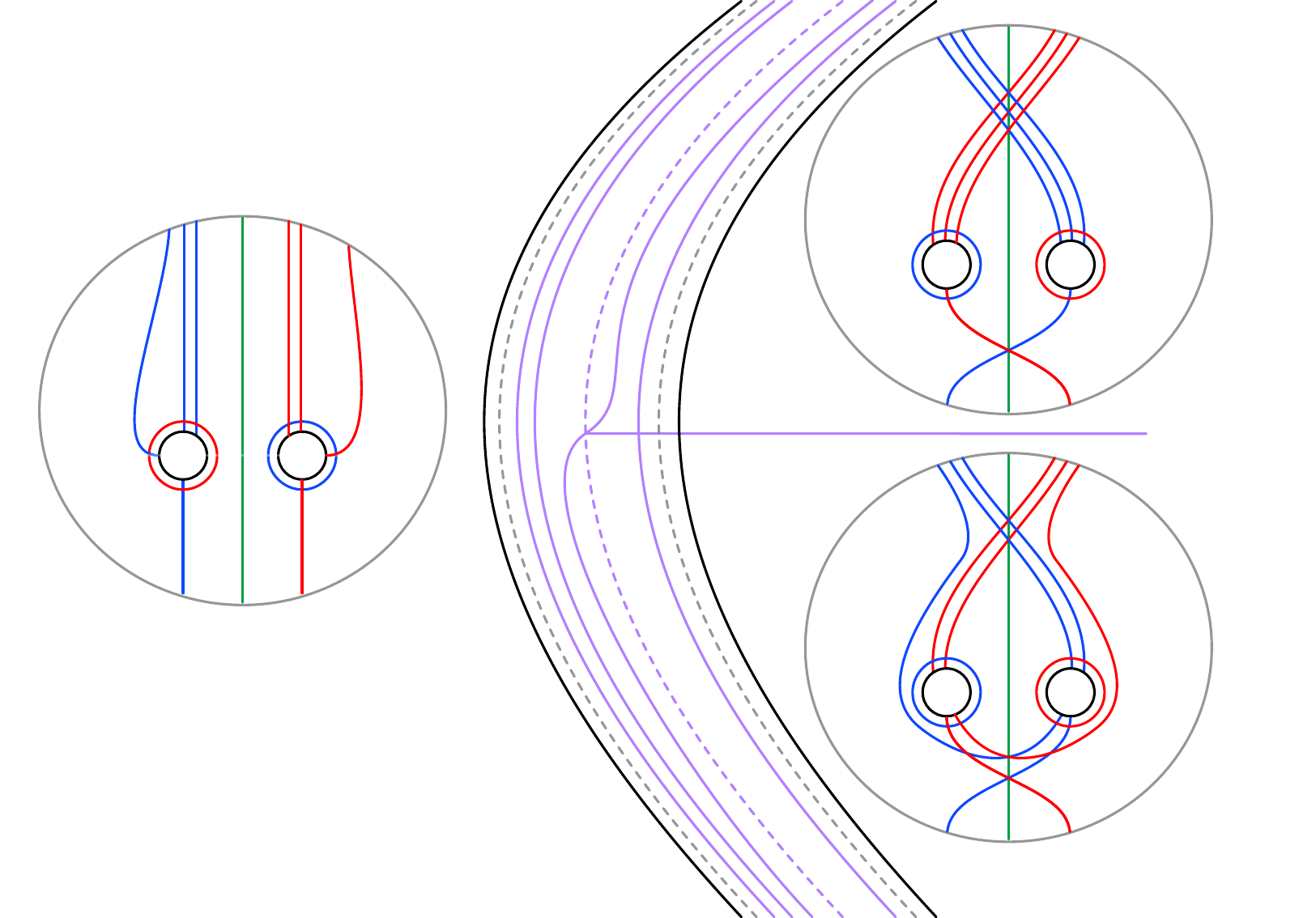
        \caption{Resolving an (E2) bifurcation.}
    \label{fig:hd_e2_resolved}
\end{figure}
\begin{figure}[h]
    \def\svgwidth{.8\linewidth}
\begingroup%
  \makeatletter%
  \providecommand\color[2][]{%
    \errmessage{(Inkscape) Color is used for the text in Inkscape, but the package 'color.sty' is not loaded}%
    \renewcommand\color[2][]{}%
  }%
  \providecommand\transparent[1]{%
    \errmessage{(Inkscape) Transparency is used (non-zero) for the text in Inkscape, but the package 'transparent.sty' is not loaded}%
    \renewcommand\transparent[1]{}%
  }%
  \providecommand\rotatebox[2]{#2}%
  \newcommand*\fsize{\dimexpr\f@size pt\relax}%
  \newcommand*\lineheight[1]{\fontsize{\fsize}{#1\fsize}\selectfont}%
  \ifx\svgwidth\undefined%
    \setlength{\unitlength}{779.52755906bp}%
    \ifx\svgscale\undefined%
      \relax%
    \else%
      \setlength{\unitlength}{\unitlength * \real{\svgscale}}%
    \fi%
  \else%
    \setlength{\unitlength}{\svgwidth}%
  \fi%
  \global\let\svgwidth\undefined%
  \global\let\svgscale\undefined%
  \makeatother%
  \begin{picture}(1,0.72)%
    \lineheight{1}%
    \setlength\tabcolsep{0pt}%
    \put(0,0){\includegraphics[width=\unitlength,page=1]{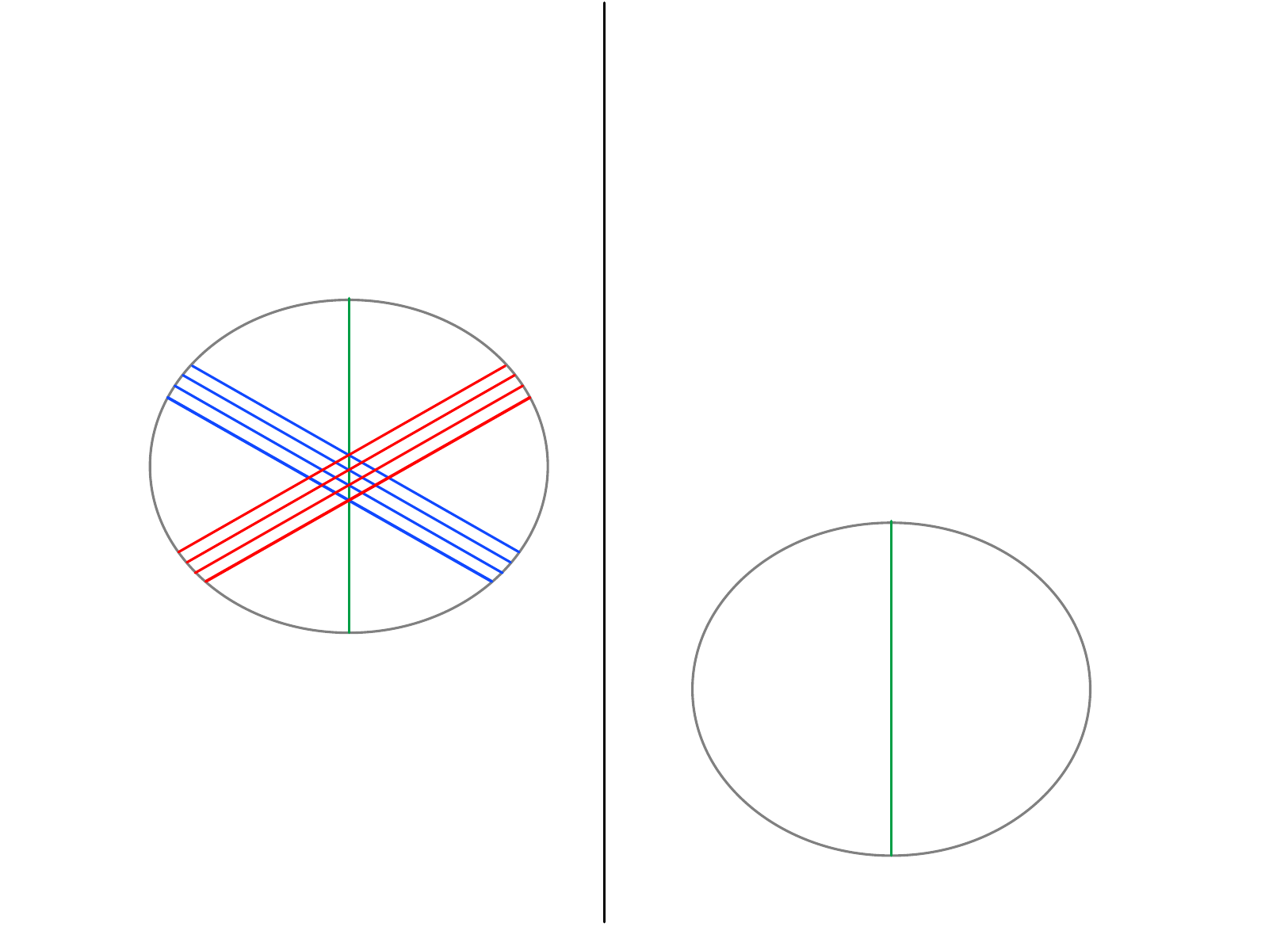}}%
    \put(0.6191155,0.53067368){\color[rgb]{0,0,0}\makebox(0,0)[t]{\smash{\begin{tabular}[t]{c}{$F$}\end{tabular}}}}%
    \put(0,0){\includegraphics[width=\unitlength,page=2]{hd_b3_triv_resolved.pdf}}%
    \put(0.76557202,0.55147016){\color[rgb]{0,0,0}\rotatebox{-180}{\makebox(0,0)[t]{\smash{\begin{tabular}[t]{c}{$\reflectbox{$F$}$}\end{tabular}}}}}%
    \put(0,0){\includegraphics[width=\unitlength,page=3]{hd_b3_triv_resolved.pdf}}%
    \put(0.6191155,0.16506753){\color[rgb]{0,0,0}\makebox(0,0)[t]{\smash{\begin{tabular}[t]{c}{$F$}\end{tabular}}}}%
    \put(0,0){\includegraphics[width=\unitlength,page=4]{hd_b3_triv_resolved.pdf}}%
    \put(0.76557202,0.18586404){\color[rgb]{0,0,0}\rotatebox{-180}{\makebox(0,0)[t]{\smash{\begin{tabular}[t]{c}{$\reflectbox{$F$}$}\end{tabular}}}}}%
    \put(0,0){\includegraphics[width=\unitlength,page=5]{hd_b3_triv_resolved.pdf}}%
  \end{picture}%
\endgroup%

        \caption{Resolving a $\{1\}$-orbit (B3) bifurcation.}
    \label{fig:hd_b3_triv_resolved}
\end{figure}
\begin{figure}[h]
    \def\svgwidth{.8\linewidth}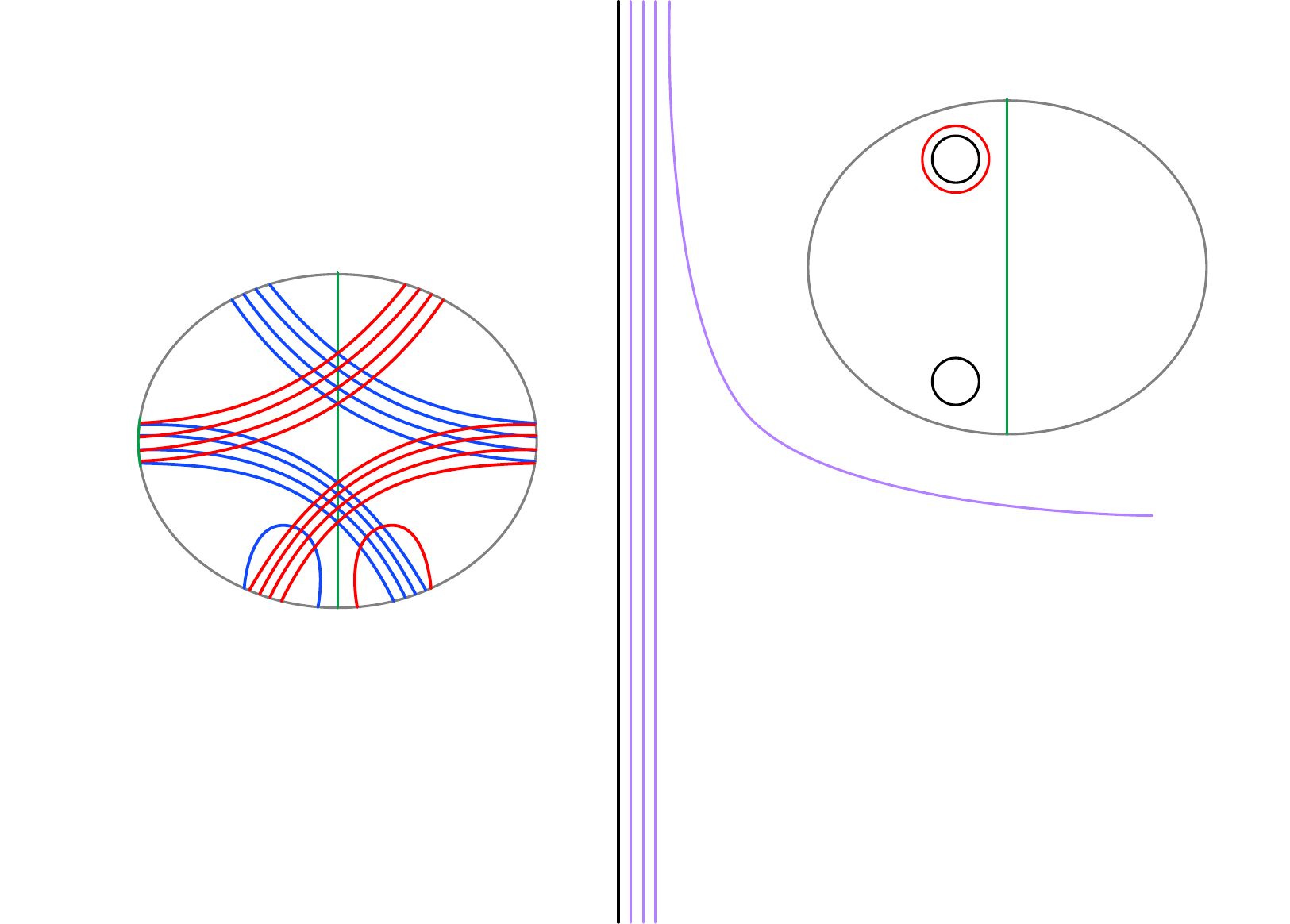
        \caption{Resolving a $\Z/2$-orbit (B3) bifurcation.}
    \label{fig:hd_b3_Z2_resolved}
\end{figure}
\begin{figure}[h]
    \def\svgwidth{.8\linewidth}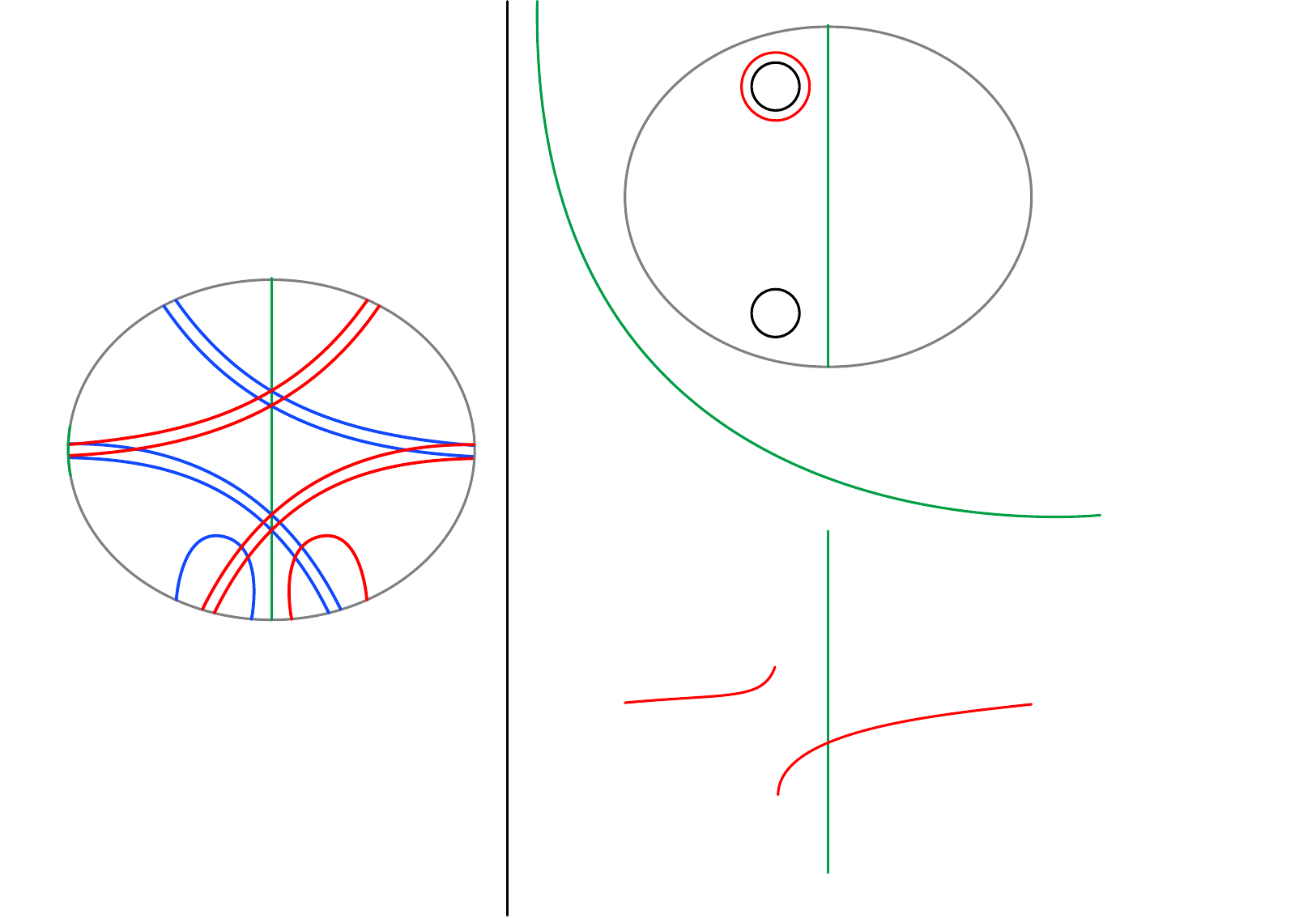
        \caption{Resolving a (B5) bifurcation.}
    \label{fig:hd_b5_resolved}
\end{figure}
\begin{figure}[h]
    \def\svgwidth{.8\linewidth}
\begingroup%
  \makeatletter%
  \providecommand\color[2][]{%
    \errmessage{(Inkscape) Color is used for the text in Inkscape, but the package 'color.sty' is not loaded}%
    \renewcommand\color[2][]{}%
  }%
  \providecommand\transparent[1]{%
    \errmessage{(Inkscape) Transparency is used (non-zero) for the text in Inkscape, but the package 'transparent.sty' is not loaded}%
    \renewcommand\transparent[1]{}%
  }%
  \providecommand\rotatebox[2]{#2}%
  \newcommand*\fsize{\dimexpr\f@size pt\relax}%
  \newcommand*\lineheight[1]{\fontsize{\fsize}{#1\fsize}\selectfont}%
  \ifx\svgwidth\undefined%
    \setlength{\unitlength}{779.52755906bp}%
    \ifx\svgscale\undefined%
      \relax%
    \else%
      \setlength{\unitlength}{\unitlength * \real{\svgscale}}%
    \fi%
  \else%
    \setlength{\unitlength}{\svgwidth}%
  \fi%
  \global\let\svgwidth\undefined%
  \global\let\svgscale\undefined%
  \makeatother%
  \begin{picture}(1,0.88)%
    \lineheight{1}%
    \setlength\tabcolsep{0pt}%
    \put(0,0){\includegraphics[width=\unitlength,page=1]{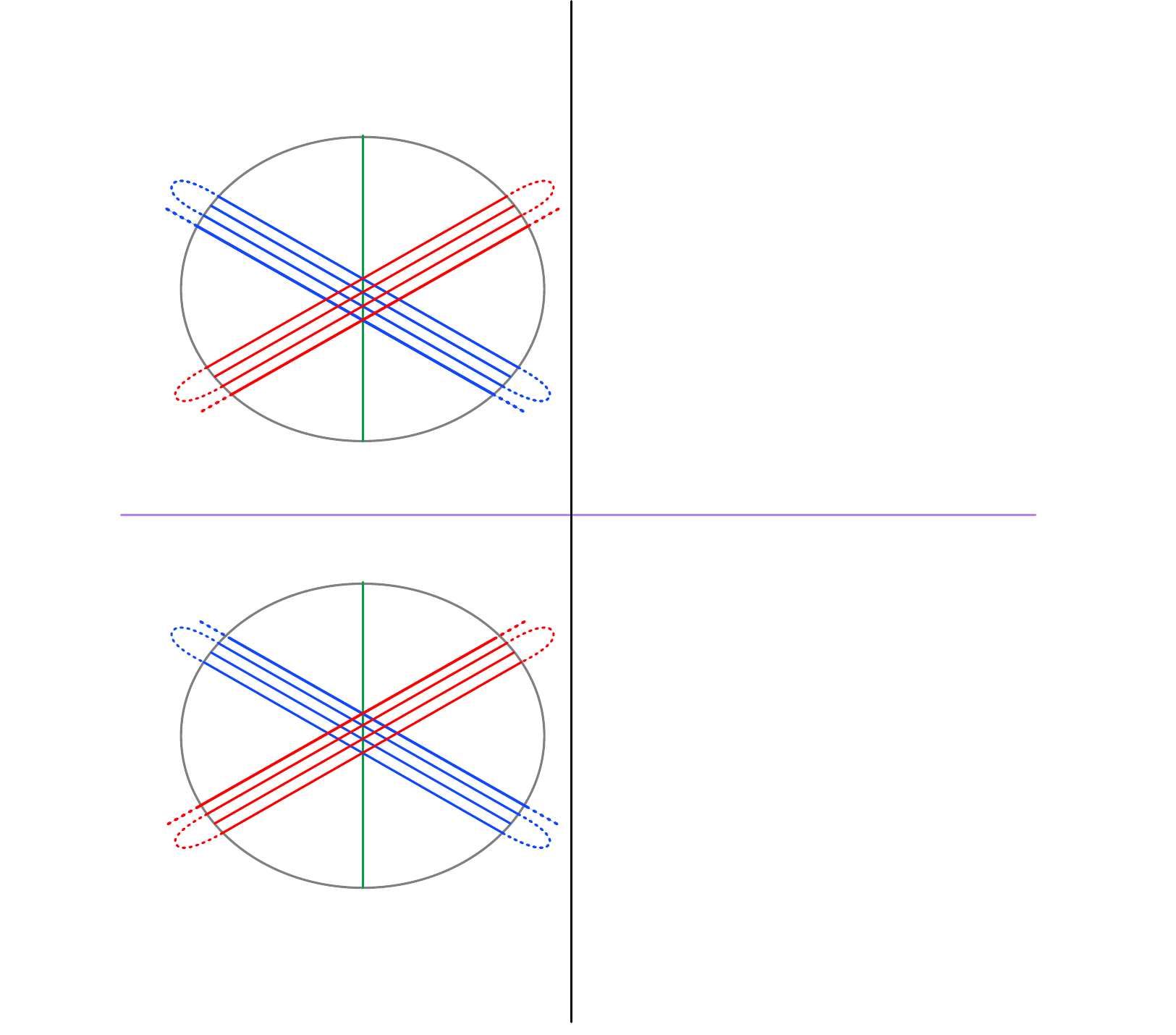}}%
    \put(0.65663276,0.61477603){\color[rgb]{0,0,0}\makebox(0,0)[t]{\smash{\begin{tabular}[t]{c}{$F$}\end{tabular}}}}%
    \put(0,0){\includegraphics[width=\unitlength,page=2]{hd_b1_triv_resolved.pdf}}%
    \put(0.80308928,0.63557251){\color[rgb]{0,0,0}\rotatebox{-180}{\makebox(0,0)[t]{\smash{\begin{tabular}[t]{c}{$\reflectbox{$F$}$}\end{tabular}}}}}%
    \put(0,0){\includegraphics[width=\unitlength,page=3]{hd_b1_triv_resolved.pdf}}%
    \put(0.65661467,0.234738){\color[rgb]{0,0,0}\makebox(0,0)[t]{\smash{\begin{tabular}[t]{c}{$F$}\end{tabular}}}}%
    \put(0.80307119,0.25553445){\color[rgb]{0,0,0}\rotatebox{-180}{\makebox(0,0)[t]{\smash{\begin{tabular}[t]{c}{$\reflectbox{$F$}$}\end{tabular}}}}}%
    \put(0,0){\includegraphics[width=\unitlength,page=4]{hd_b1_triv_resolved.pdf}}%
  \end{picture}%
\endgroup%

        \caption{Resolving a $\{1\}$-orbit (B1) bifurcation.}
    \label{fig:hd_b1_triv_resolved}
\end{figure}
\begin{figure}[h]
    \def\svgwidth{.8\linewidth}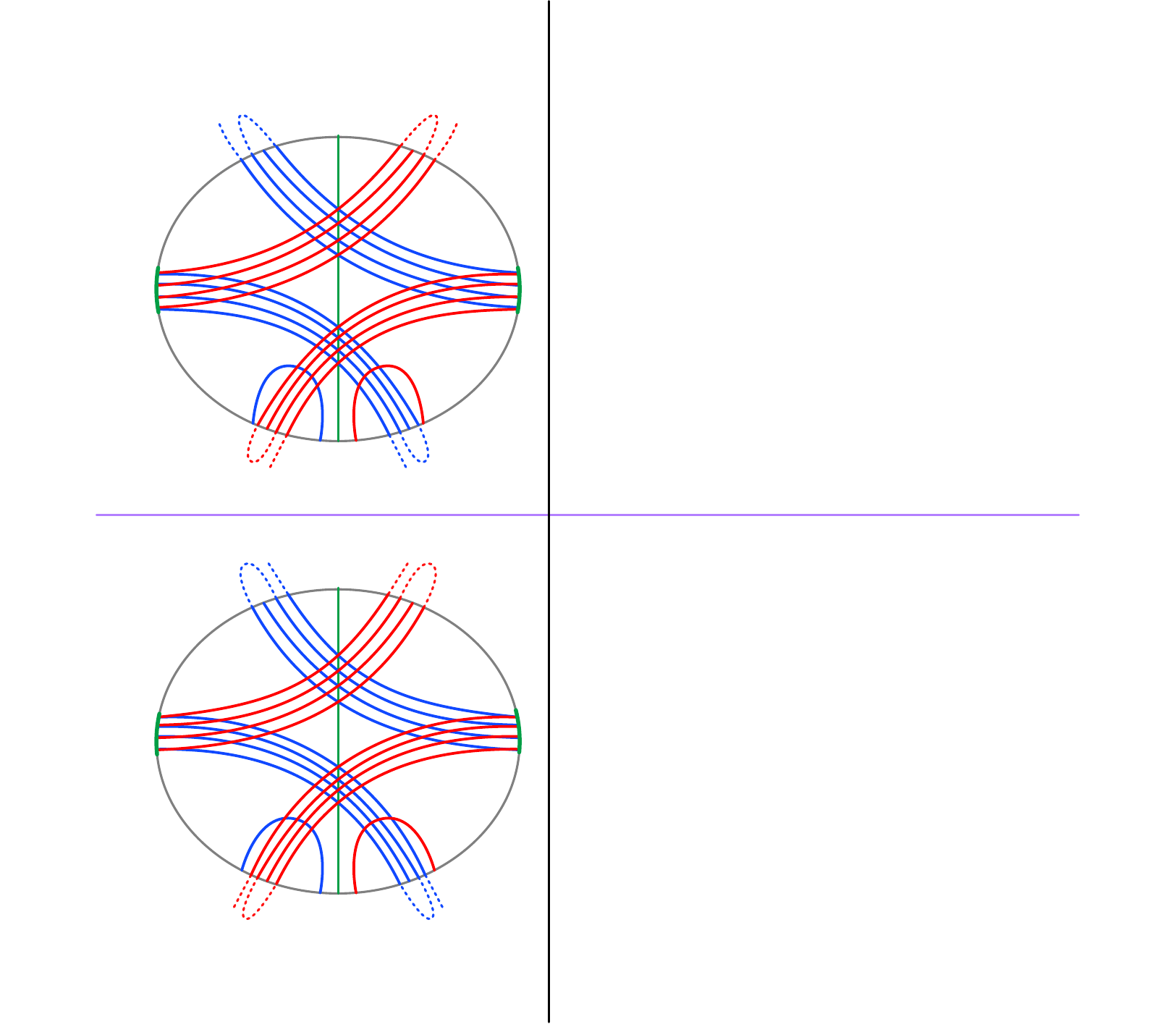
        \caption{Resolving a $\Z/2$-orbit (B1) bifurcation.}
    \label{fig:hd_b1_Z2_resolved}
\end{figure}
\begin{figure}[h]
    \def\svgwidth{.8\linewidth}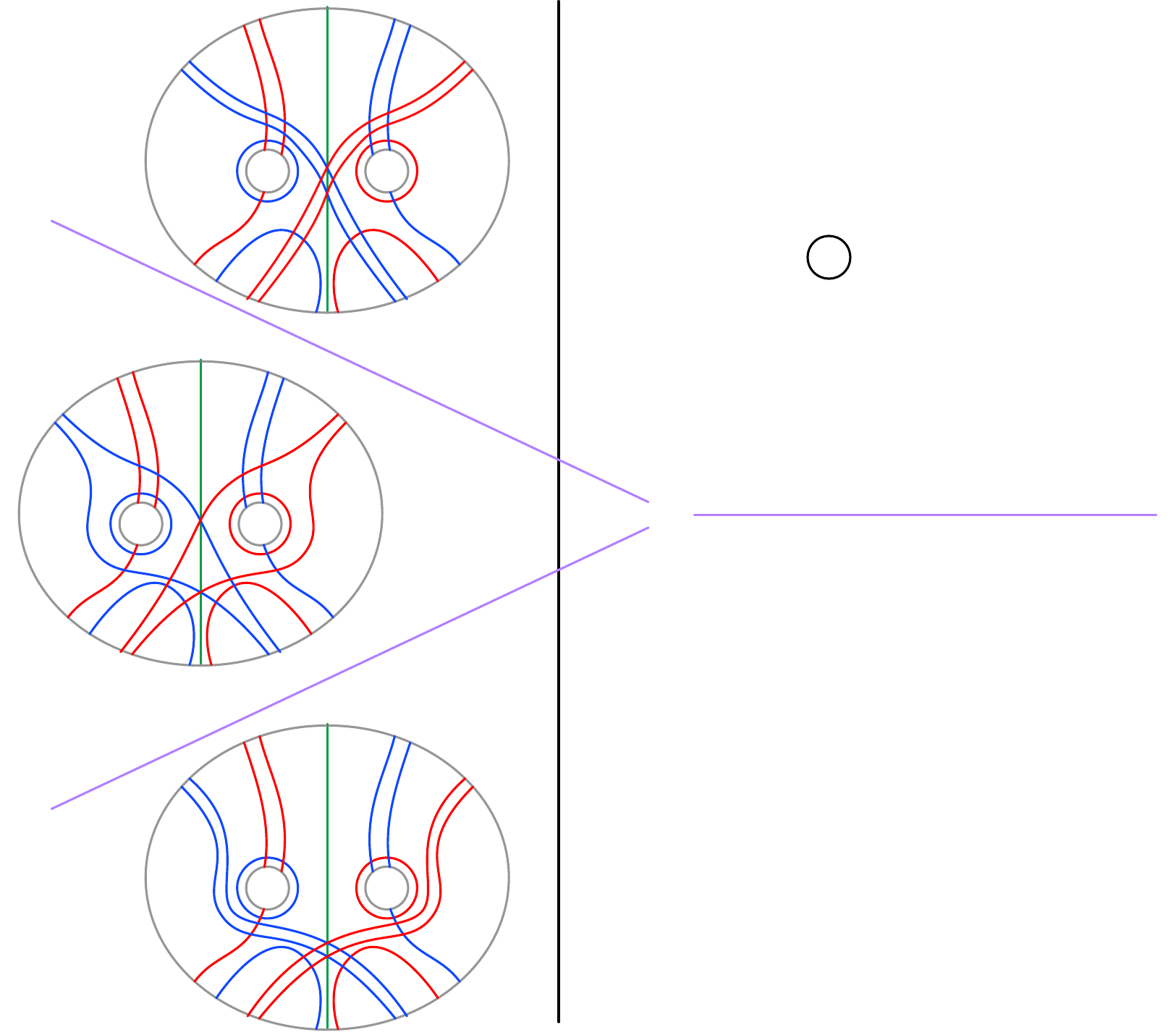
        \caption{Resolving a $\Z/2$-orbit (B2) bifurcation.}
    \label{fig:hd_b2_Z2_resolved}
\end{figure}
\begin{figure}[h]
    \def\svgwidth{.8\linewidth}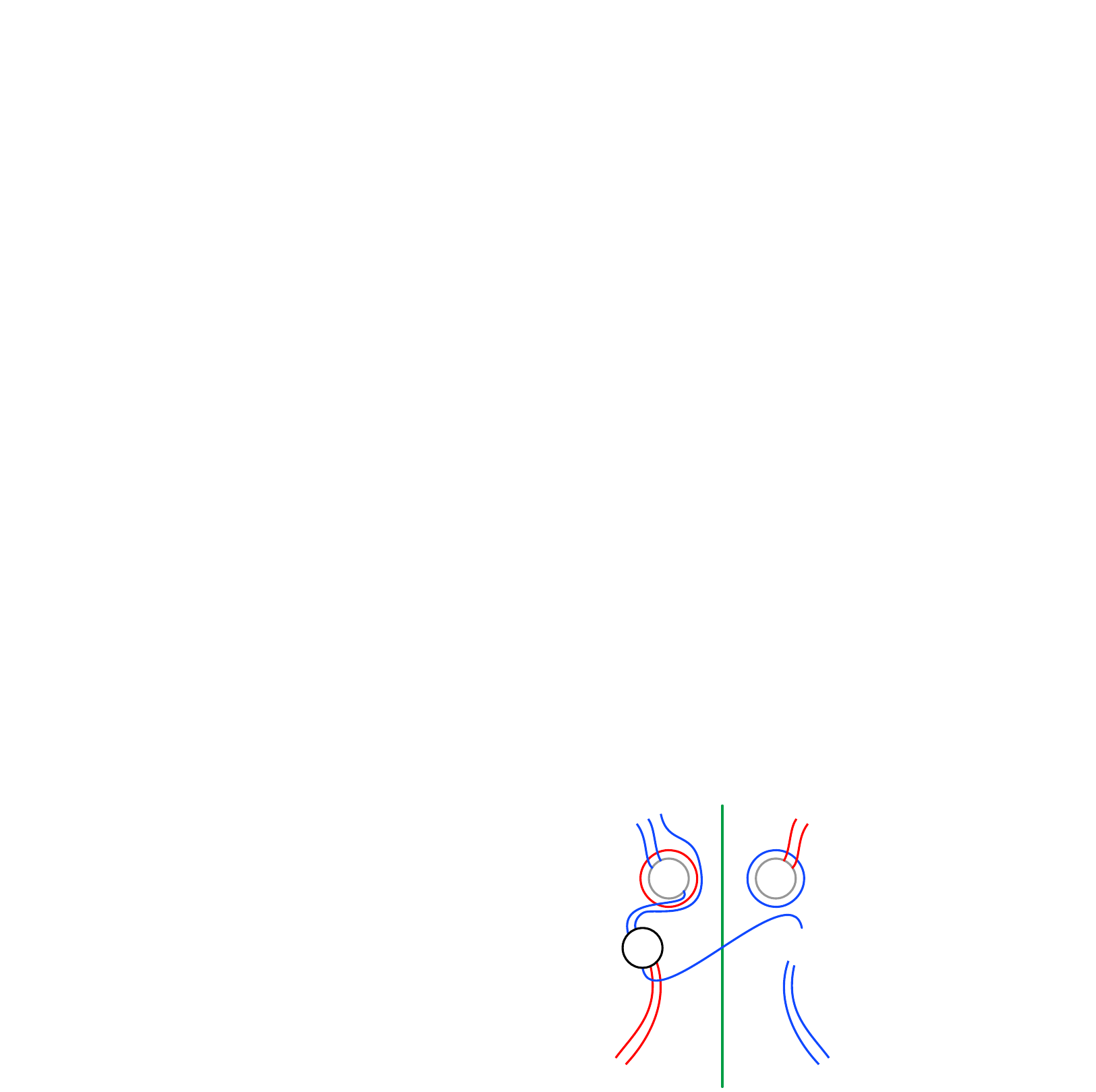
        \caption{Resolving a $\{1\}$-orbit (B2) bifurcation.}
    \label{fig:hd_b2_triv_resolved}
\end{figure}
\begin{figure}[h]
    \def\svgwidth{.8\linewidth}
\begingroup%
  \makeatletter%
  \providecommand\color[2][]{%
    \errmessage{(Inkscape) Color is used for the text in Inkscape, but the package 'color.sty' is not loaded}%
    \renewcommand\color[2][]{}%
  }%
  \providecommand\transparent[1]{%
    \errmessage{(Inkscape) Transparency is used (non-zero) for the text in Inkscape, but the package 'transparent.sty' is not loaded}%
    \renewcommand\transparent[1]{}%
  }%
  \providecommand\rotatebox[2]{#2}%
  \newcommand*\fsize{\dimexpr\f@size pt\relax}%
  \newcommand*\lineheight[1]{\fontsize{\fsize}{#1\fsize}\selectfont}%
  \ifx\svgwidth\undefined%
    \setlength{\unitlength}{779.52755906bp}%
    \ifx\svgscale\undefined%
      \relax%
    \else%
      \setlength{\unitlength}{\unitlength * \real{\svgscale}}%
    \fi%
  \else%
    \setlength{\unitlength}{\svgwidth}%
  \fi%
  \global\let\svgwidth\undefined%
  \global\let\svgscale\undefined%
  \makeatother%
  \begin{picture}(1,0.88)%
    \lineheight{1}%
    \setlength\tabcolsep{0pt}%
    \put(0,0){\includegraphics[width=\unitlength,page=1]{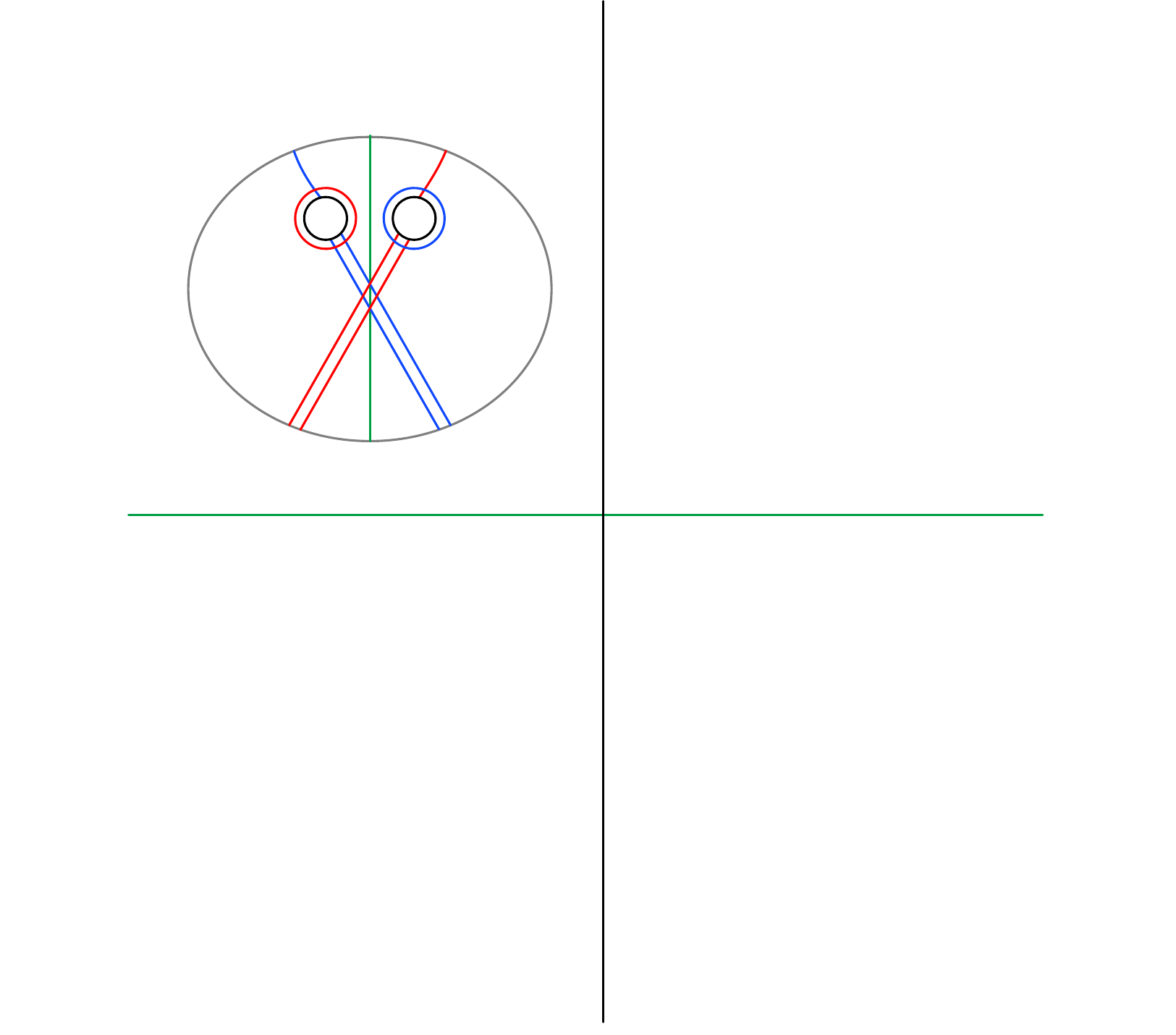}}%
    \put(0.66280995,0.61477603){\color[rgb]{0,0,0}\makebox(0,0)[t]{\smash{\begin{tabular}[t]{c}{$F$}\end{tabular}}}}%
    \put(0,0){\includegraphics[width=\unitlength,page=2]{hd_b4_triv_resolved.pdf}}%
    \put(0.80926647,0.63557251){\color[rgb]{0,0,0}\rotatebox{-180}{\makebox(0,0)[t]{\smash{\begin{tabular}[t]{c}{$\reflectbox{$F$}$}\end{tabular}}}}}%
    \put(0,0){\includegraphics[width=\unitlength,page=3]{hd_b4_triv_resolved.pdf}}%
    \put(0.66280995,0.23377602){\color[rgb]{0,0,0}\makebox(0,0)[t]{\smash{\begin{tabular}[t]{c}{$F$}\end{tabular}}}}%
    \put(0,0){\includegraphics[width=\unitlength,page=4]{hd_b4_triv_resolved.pdf}}%
    \put(0.80926647,0.25457253){\color[rgb]{0,0,0}\rotatebox{-180}{\makebox(0,0)[t]{\smash{\begin{tabular}[t]{c}{$\reflectbox{$F$}$}\end{tabular}}}}}%
    \put(0,0){\includegraphics[width=\unitlength,page=5]{hd_b4_triv_resolved.pdf}}%
  \end{picture}%
\endgroup%

        \caption{Resolving a $\{1\}$-orbit (B4) bifurcation.}
    \label{fig:hd_b4_triv_resolved}
\end{figure}
\begin{figure}[h]
    \def\svgwidth{.8\linewidth}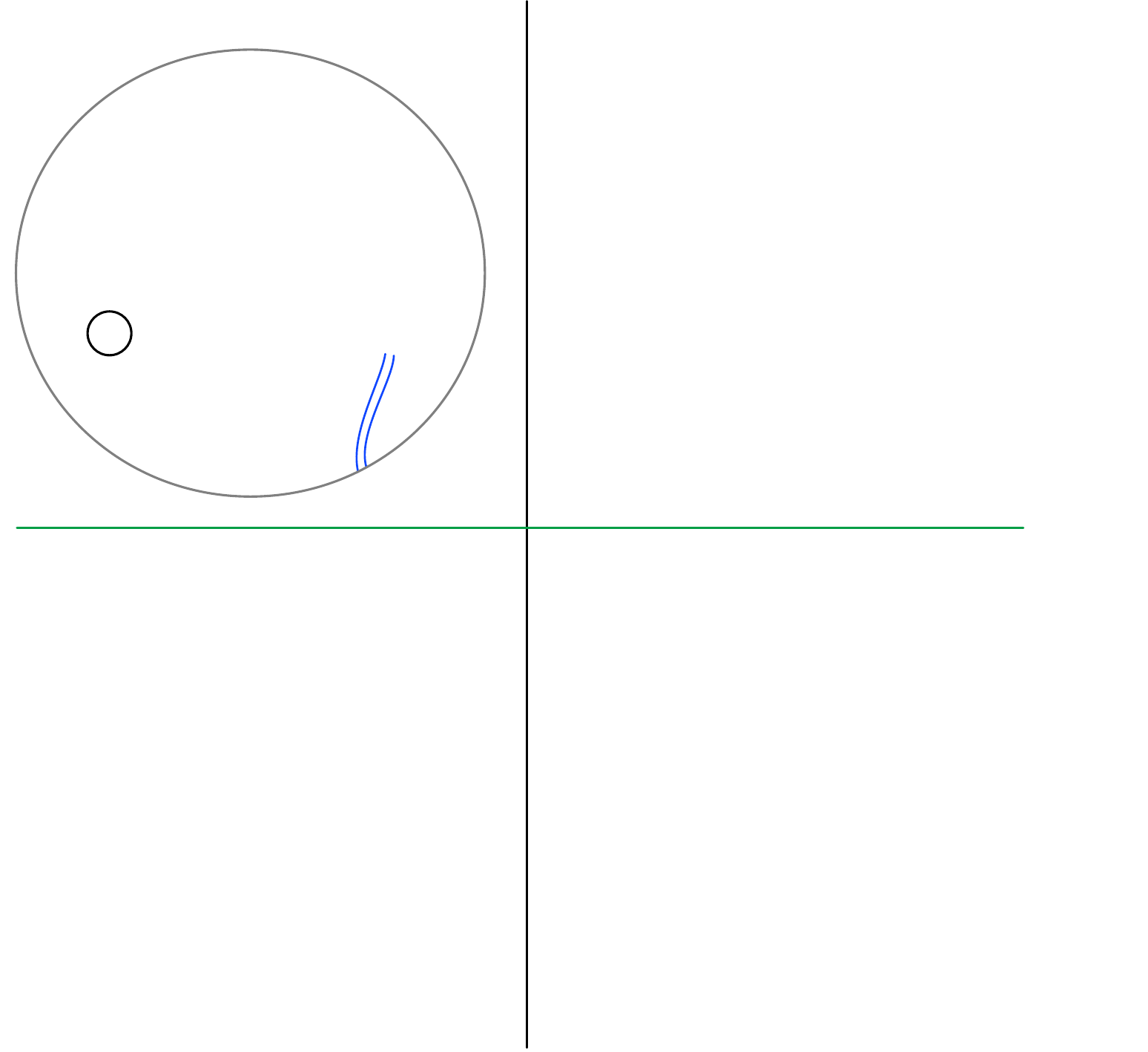
        \caption{Resolving a $\Z/2$-orbit (B4) bifurcation.}
    \label{fig:hd_b4_Z2_resolved}
\end{figure}
\begin{figure}[h]
    \def\svgwidth{.8\linewidth}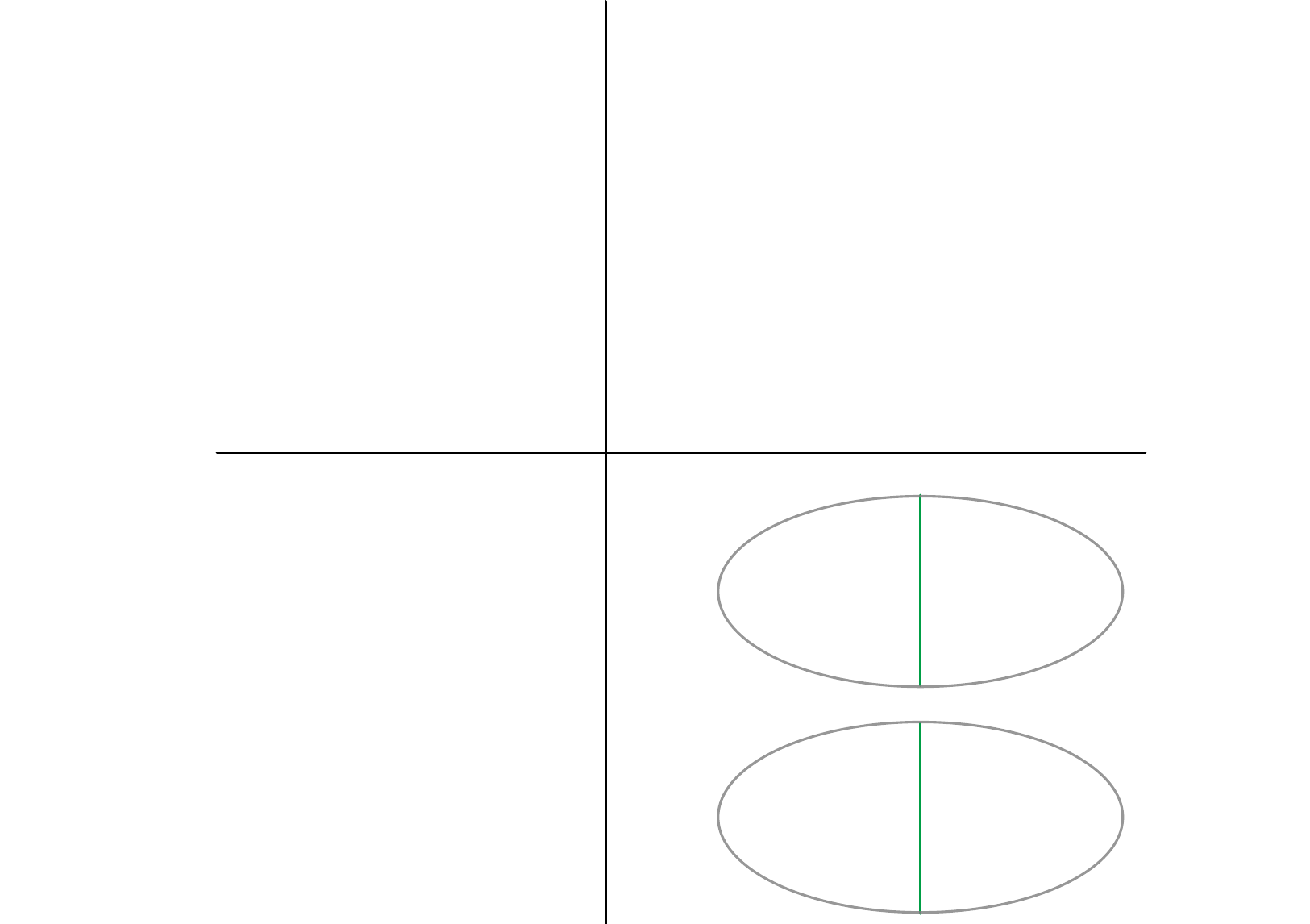
        \caption{Resolving a $\{1\}$-$\{1\}$-orbit bifurcation of type (C).}
    \label{fig:hd_c_1_1_resolved}
\end{figure}
\begin{figure}[h]
    \def\svgwidth{.8\linewidth}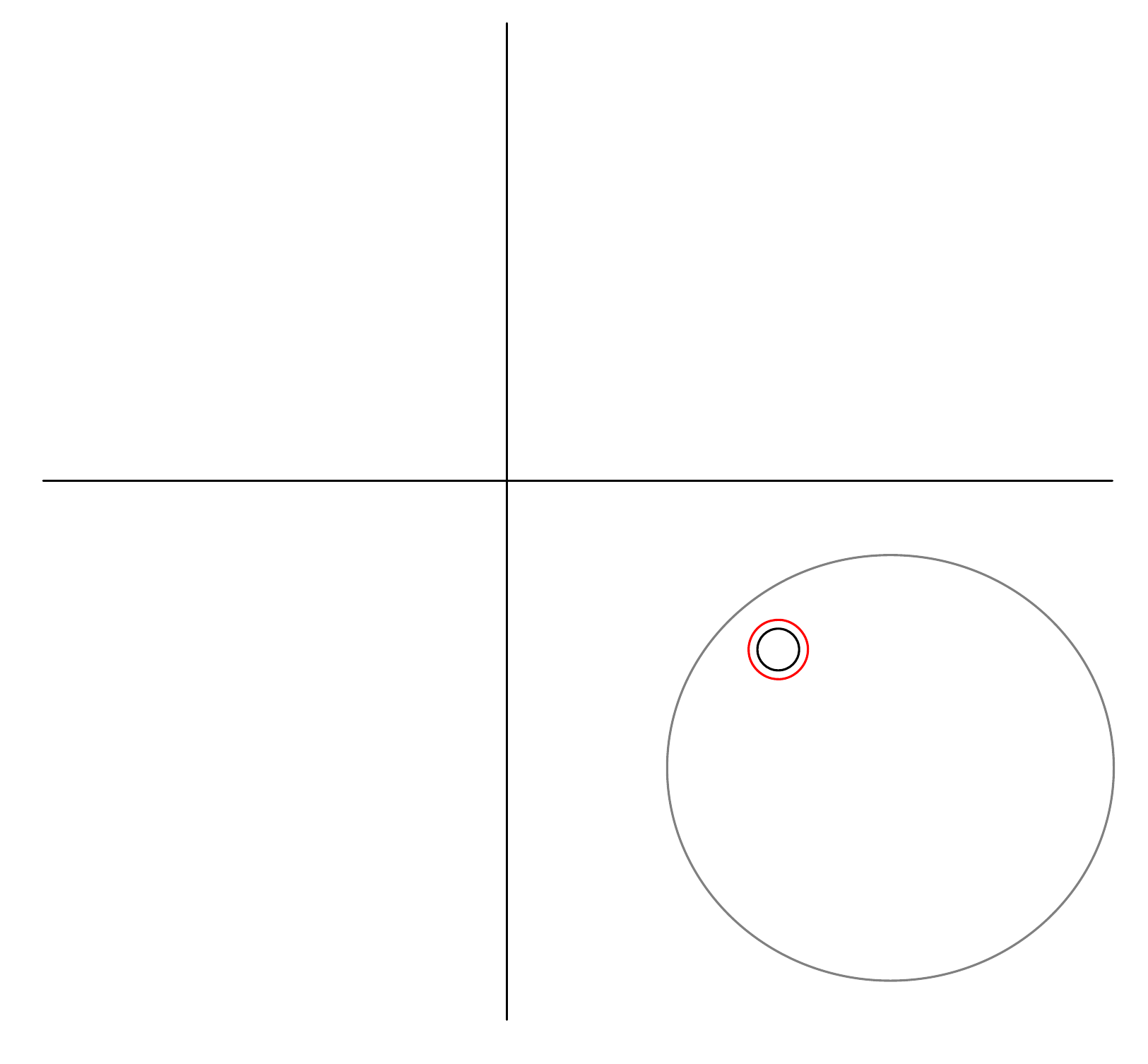
        \caption{Resolving a $\Z/2$-$\{1\}$-orbit bifurcation of type (C).}
    \label{fig:hd_C_Z2_1_resolved}
\end{figure}
\begin{figure}[h]
    \def\svgwidth{.8\linewidth}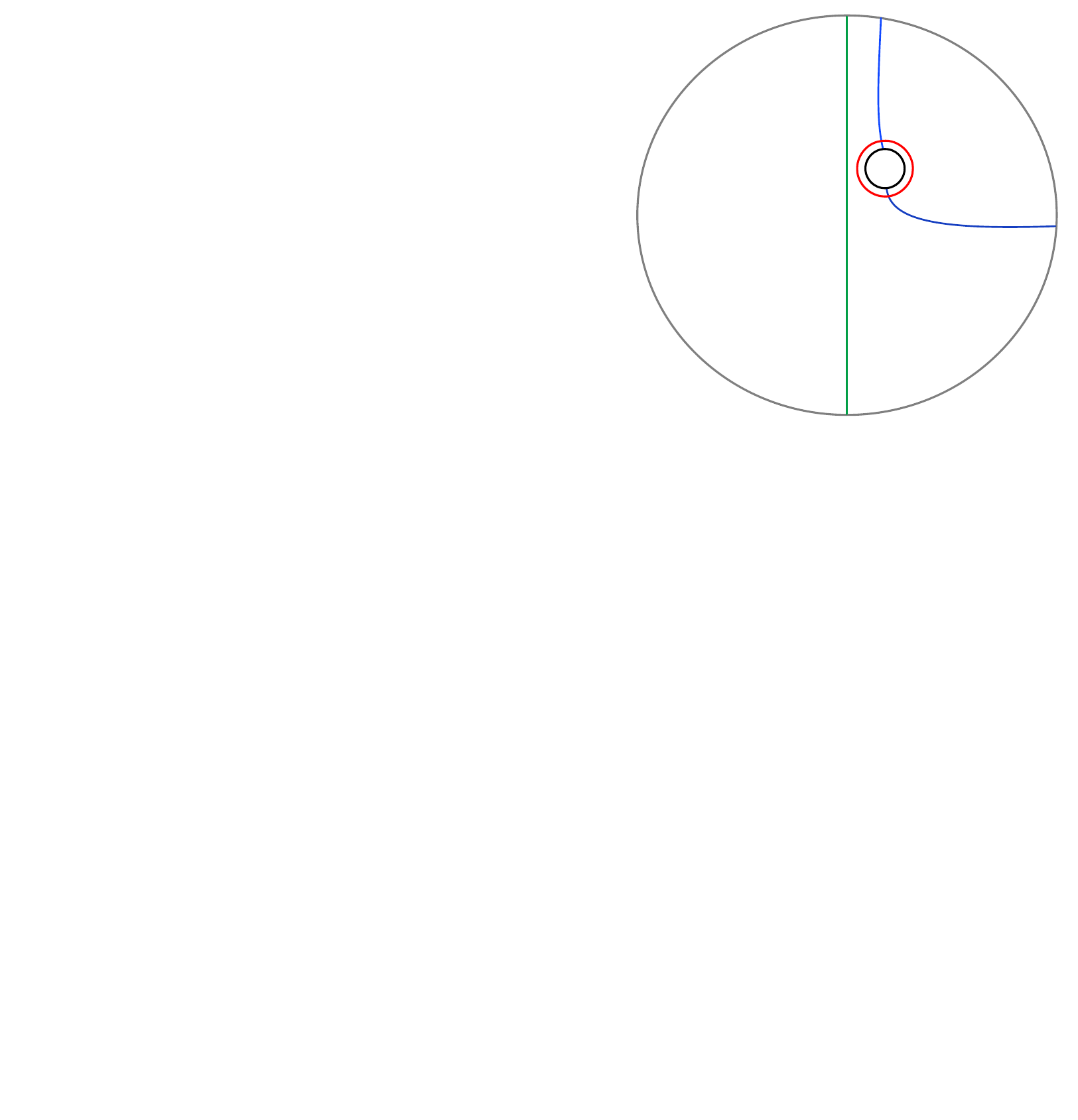
        \caption{Resolving a $\Z/2$-$\Z/2$-orbit bifurcation of type (C).}
    \label{fig:hd_C_Z2_Z2_resolved}
\end{figure}

\begin{figure}[h]
    \def\svgwidth{.8\linewidth}
\begingroup%
  \makeatletter%
  \providecommand\color[2][]{%
    \errmessage{(Inkscape) Color is used for the text in Inkscape, but the package 'color.sty' is not loaded}%
    \renewcommand\color[2][]{}%
  }%
  \providecommand\transparent[1]{%
    \errmessage{(Inkscape) Transparency is used (non-zero) for the text in Inkscape, but the package 'transparent.sty' is not loaded}%
    \renewcommand\transparent[1]{}%
  }%
  \providecommand\rotatebox[2]{#2}%
  \newcommand*\fsize{\dimexpr\f@size pt\relax}%
  \newcommand*\lineheight[1]{\fontsize{\fsize}{#1\fsize}\selectfont}%
  \ifx\svgwidth\undefined%
    \setlength{\unitlength}{779.52755906bp}%
    \ifx\svgscale\undefined%
      \relax%
    \else%
      \setlength{\unitlength}{\unitlength * \real{\svgscale}}%
    \fi%
  \else%
    \setlength{\unitlength}{\svgwidth}%
  \fi%
  \global\let\svgwidth\undefined%
  \global\let\svgscale\undefined%
  \makeatother%
  \begin{picture}(1,0.69818182)%
    \lineheight{1}%
    \setlength\tabcolsep{0pt}%
    \put(0,0){\includegraphics[width=\unitlength,page=1]{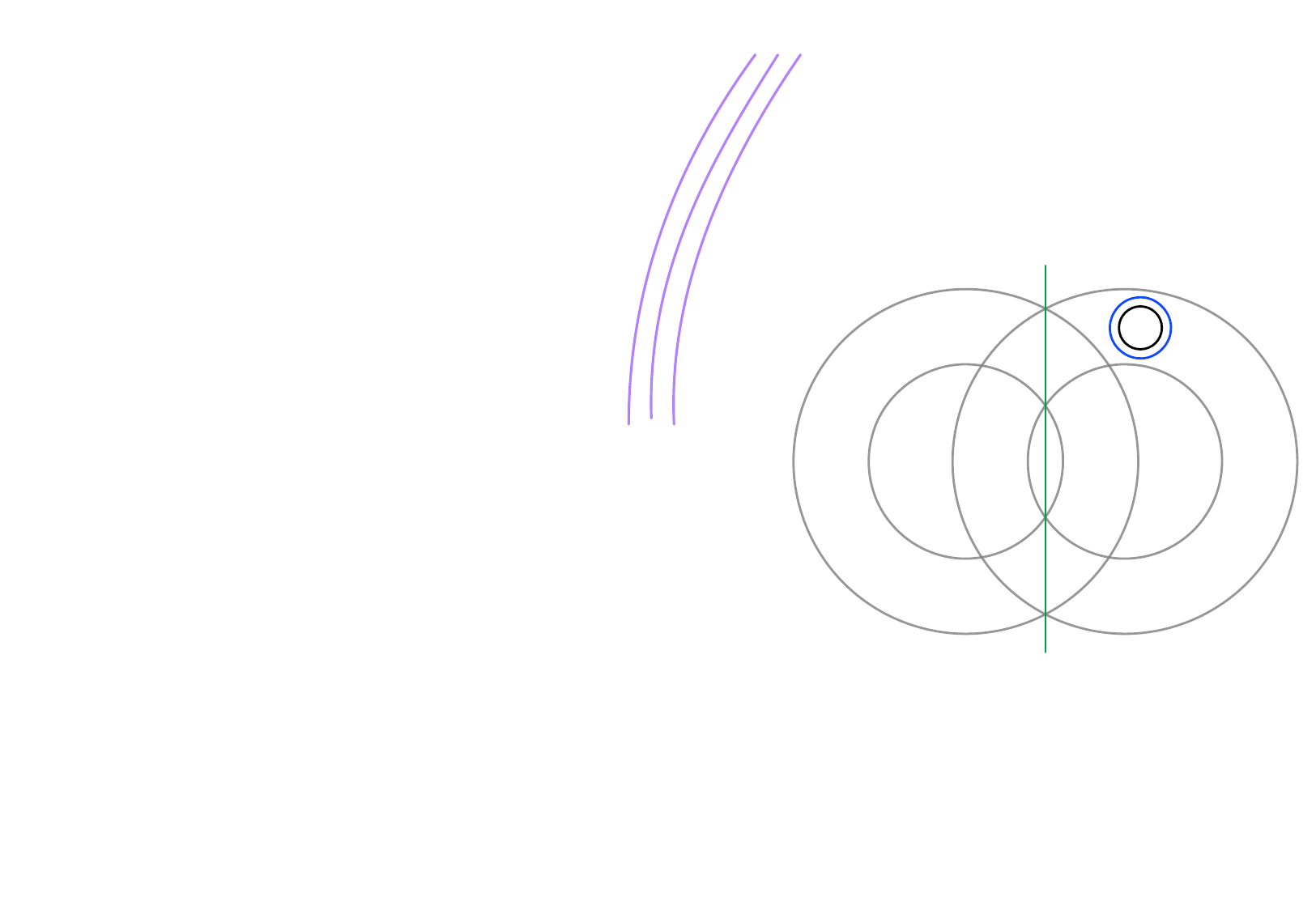}}%
    \put(0.86710378,0.43987767){\color[rgb]{0,0,0}\makebox(0,0)[t]{\smash{\begin{tabular}[t]{c}{$B$}\end{tabular}}}}%
    \put(0,0){\includegraphics[width=\unitlength,page=2]{hd_d1_resolved.pdf}}%
    \put(0.86713515,0.25557677){\color[rgb]{0,0,0}\rotatebox{-180}{\makebox(0,0)[t]{\smash{\begin{tabular}[t]{c}{$\reflectbox{$B$}$}\end{tabular}}}}}%
    \put(0.7228981,0.25557677){\color[rgb]{0,0,0}\rotatebox{-180}{\makebox(0,0)[t]{\smash{\begin{tabular}[t]{c}{$\reflectbox{$A$}$}\end{tabular}}}}}%
    \put(0,0){\includegraphics[width=\unitlength,page=3]{hd_d1_resolved.pdf}}%
    \put(0.72286671,0.43987768){\color[rgb]{0,0,0}\makebox(0,0)[t]{\smash{\begin{tabular}[t]{c}{$A$}\end{tabular}}}}%
    \put(0,0){\includegraphics[width=\unitlength,page=4]{hd_d1_resolved.pdf}}%
  \end{picture}%
\endgroup%

        \caption{Resolving a bifurcation of type (D1).}
    \label{fig:hd_d1_resolved}
\end{figure}

\begin{figure}[h]
    \def\svgwidth{.8\linewidth}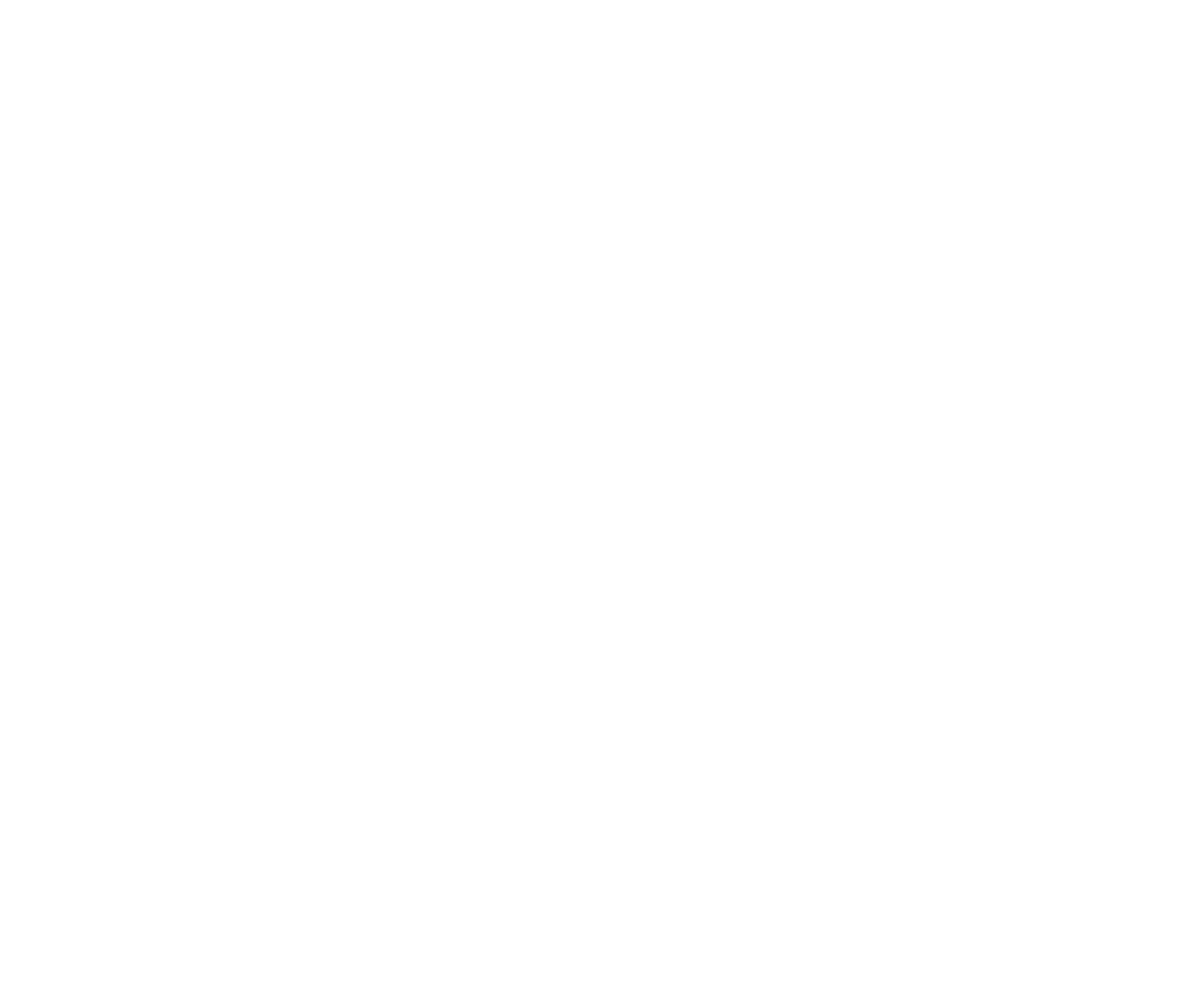
        \caption{Resolving a bifurcation of type (D2).}
    \label{fig:hd_d2_resolved}
\end{figure}

\begin{figure}[h]
    \def\svgwidth{.8\linewidth}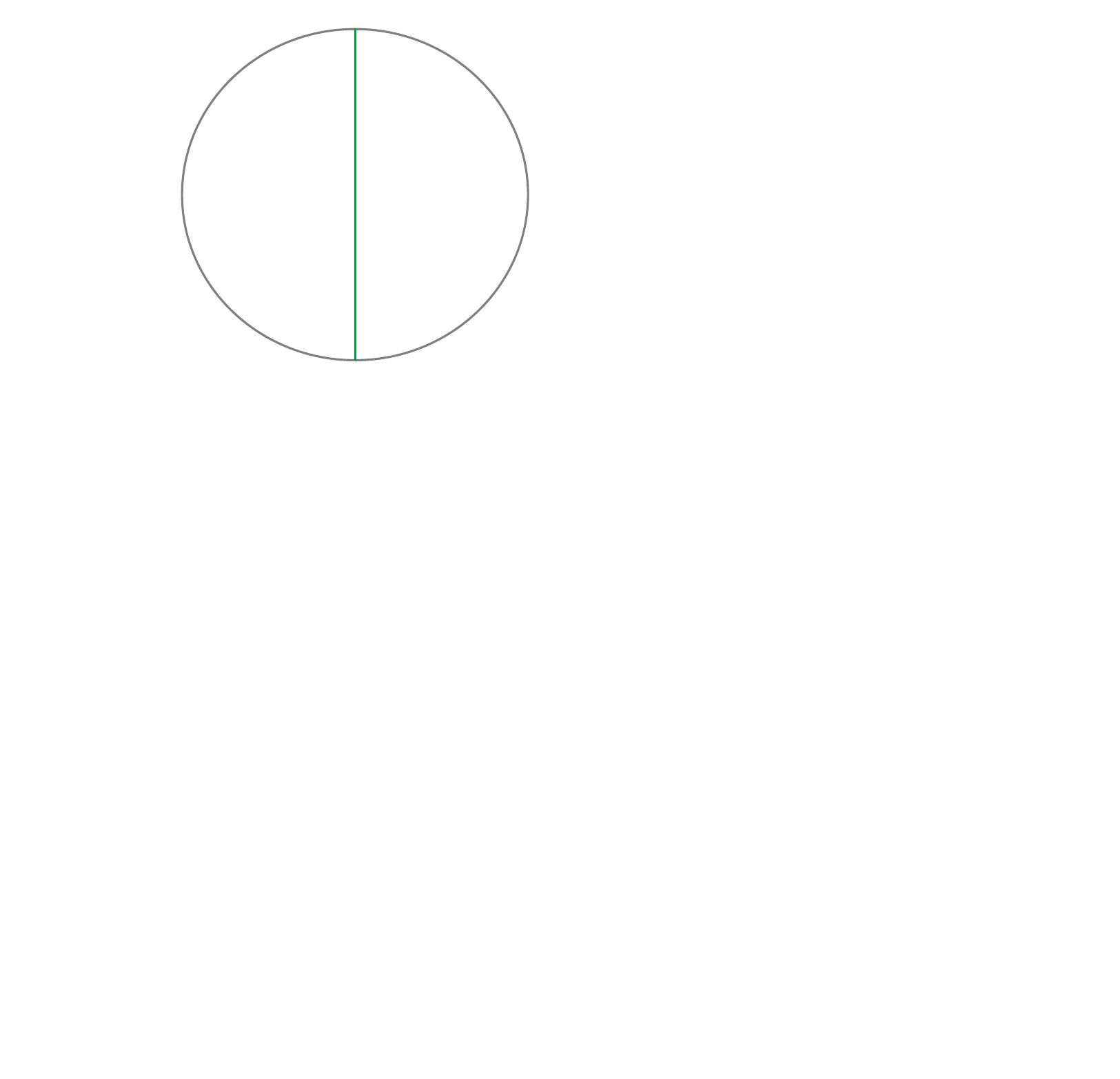
        \caption{Resolving a bifurcation of type (D2).}
    \label{fig:hd_d2_trade_loop}
\end{figure}

\begin{figure}[h]
    \def\svgwidth{.8\linewidth}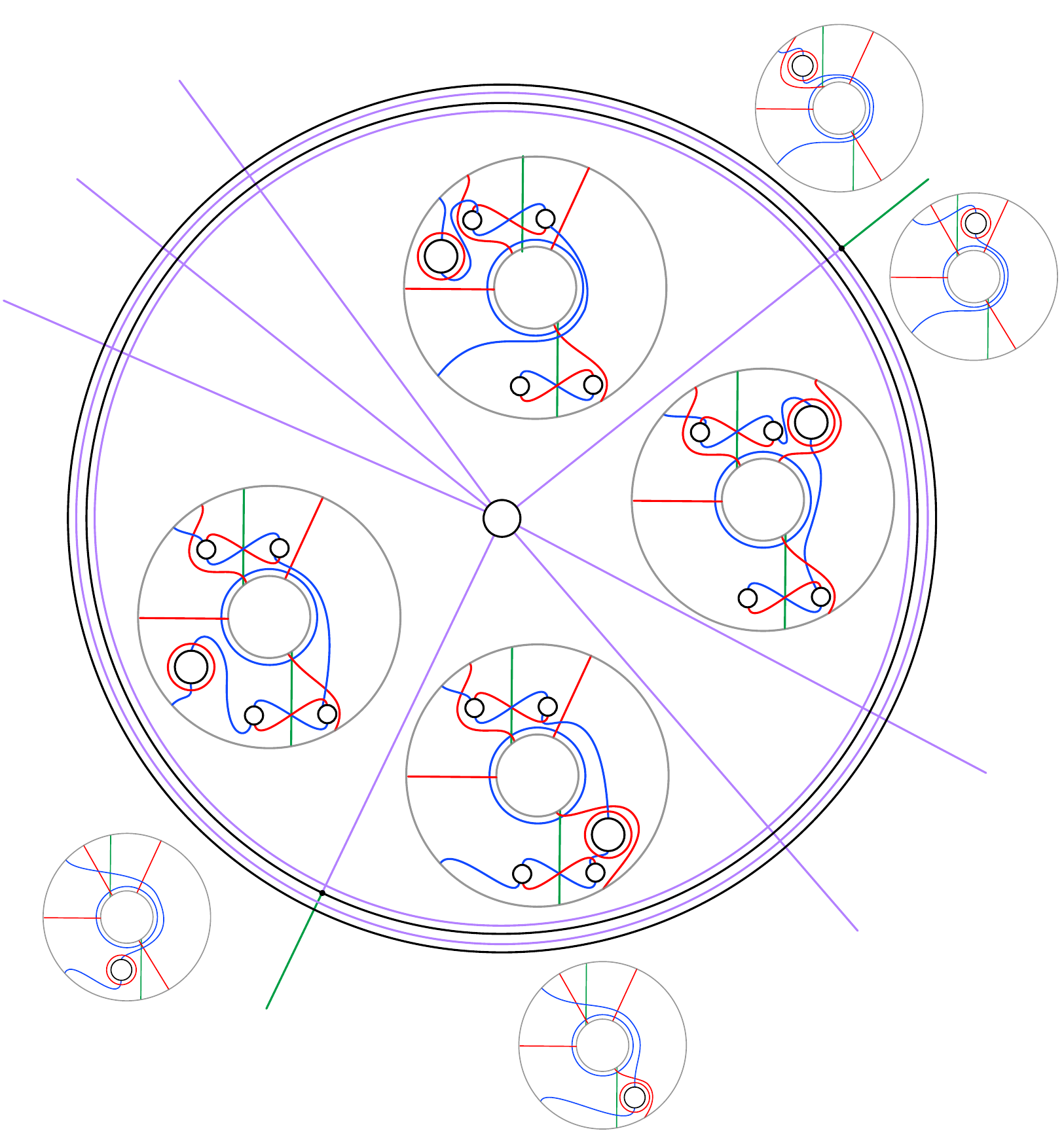
        \caption{Simplifying a real handleswap of type $(k, \ell, r)$ to one of type $(k, \ell, 0)$.}
    \label{fig:hd_e1_resolved0}
\end{figure}
\begin{figure}[h]
    \def\svgwidth{.8\linewidth}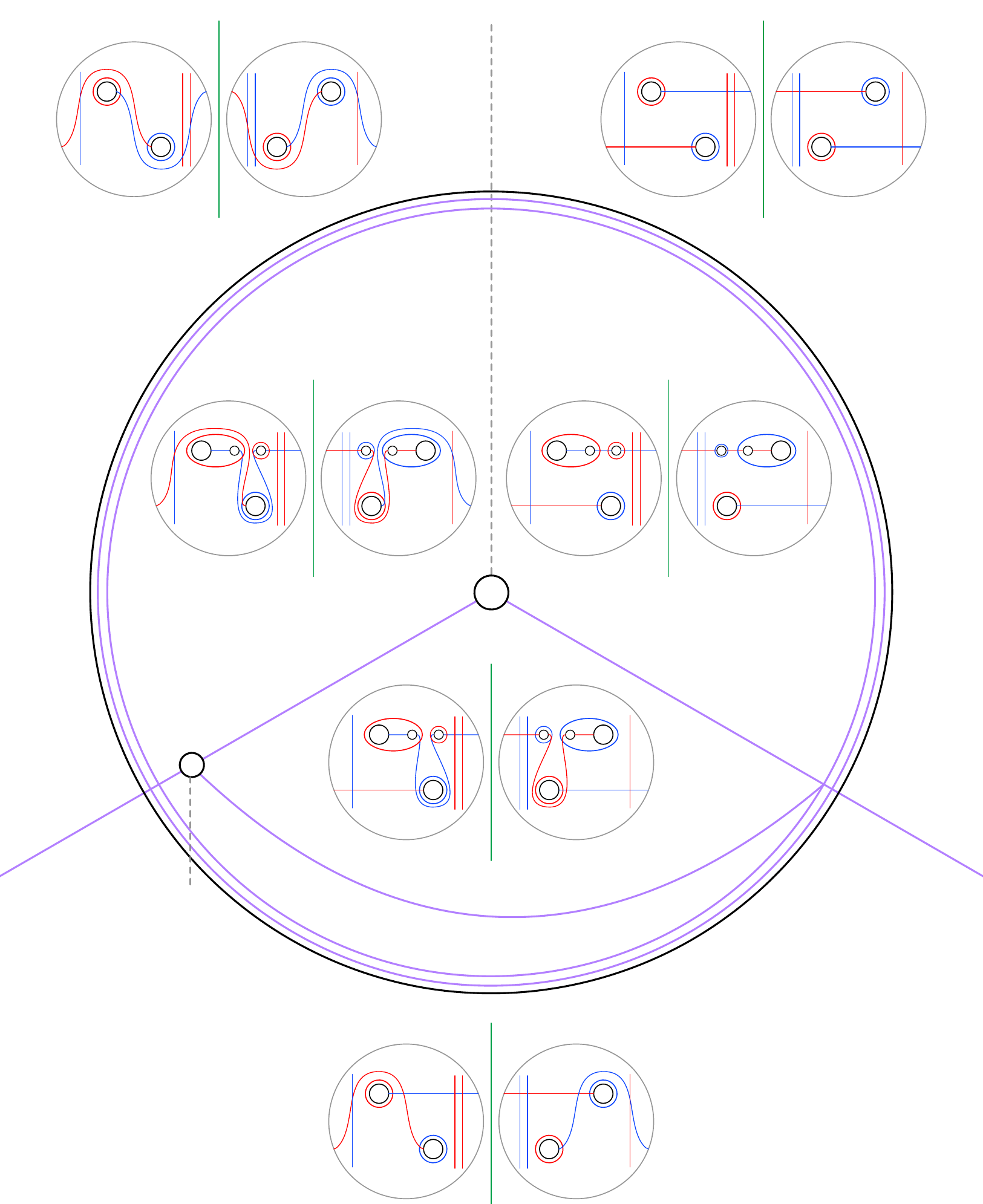
        \caption{Reducing to a pair of unreal handleswaps. Compare with \cite[Figure 54]{JTZ_naturality_mapping_class_groups}.}
    \label{fig:hd_e1_resolved1}
    \end{figure}
\clearpage
\bibliographystyle{amsalpha}
\bibliography{mathbib}

\end{document}